\documentclass[12pt]{report}
\usepackage[titles]{tocloft}

\setlength{\cftchapnumwidth}{2.5cm}

%\usepackage{mathptmx}
%\usepackage{lmodern}

%\makeatletter 
%\newcommand\semiHuge{\@setfontsize\semiHuge{22.72}{27.38}}
%\makeatother

\makeatletter 
\newcommand\semiHuge{\@setfontsize\semiHuge{15}{20}}
\makeatother

\usepackage{titlesec}
\titleformat{\chapter}[display]% OLD
{\normalfont\huge\bfseries}{\chaptertitlename\ \thechapter}{0pt}{\Huge}% OLD
 \titlespacing*{\chapter}{0pt}{40pt}{40pt}% OLD
%\titleformat{\chapter}[display]% NEW
%    {\semiHuge\bfseries}{\chaptertitlename\ \thechapter}{-10pt}{\semiHuge}% NEW
%\titlespacing*{\chapter}{0pt}{0pt}{20pt}% NEW

%\titleformat*{\section}{\bfseries}
%\titleformat*{\subsection}{\bfseries}

\usepackage[hang,flushmargin]{footmisc} 

\usepackage{tikz}
\usepackage{doublespace}
\usepackage{endnotes}
\usepackage{hyperref}
\usepackage{hyperendnotes}
\usepackage{float}
\usepackage{latexsym,calligra,amsmath,amssymb,stmaryrd,MnSymbol}
\usepackage[parfill]{parskip} 
\usepackage[bf, small, justification=justified,singlelinecheck=false]{caption}
\addtolength{\textwidth}{3cm} \addtolength{\hoffset}{-1.5cm}
\addtolength{\textheight}{3.8cm} \addtolength{\voffset}{-1.9cm}
\setlength{\footnotesep}{\baselineskip}
\newif\ifdviwin
\usepackage{ulem}
\usepackage[all,cmtip]{xy}
\usepackage[latin1]{inputenc}
\usepackage[english]{babel}
\addto\captionsenglish{}
\usepackage{indentfirst}
\usepackage[mathscr]{eucal}
\usepackage{amssymb,amsmath,amsfonts}
\usepackage{graphicx}
\usepackage{amsmath,amscd}
\usepackage{xxcolor}

\newtheorem{theorem}{Theorem}
\newtheorem{cor}[theorem]{Corollary}
\newtheorem{lem}[theorem]{Lemma}
\newtheorem{defi}[theorem]{Definition}
\newtheorem{example}[theorem]{Example}
\numberwithin{theorem}{subsection} 
\numberwithin{equation}{subsection} 
%\numberwithin{defi}{subsection} 
%\numberwithin{lem}{subsection} 
 \newenvironment{proof}{\rm \trivlist \item[\hskip \labelsep{\it
      Proof}:]}{\par\nopagebreak \hfill $\Box$ \endtrivlist}

\DeclareMathAlphabet{\mathcalligra}{T1}{calligra}{m}{n}
\DeclareFontShape{T1}{calligra}{m}{n}{<->s*[2.2]callig15}{}
%**************************************

\newcommand{\mc}[1]{\mathcal{#1}}
\newcommand{\ms}[1]{\mathscr{#1}}
\newcommand{\mbf}[1]{\mathbf{#1}}
\newcommand{\mbb}[1]{\mathbb{#1}}
\newcommand{\mfr}[1]{\mathfrak{#1}}

\def\ee{\varepsilon}
\def\PP{\mathbb{P}}
\def\AA{\mathbb{A}}
\def\QQ{\mathbb{Q}}
\def\ZZ{\mathbb{Z}}

\def\RR{\mathbb{R}}
\def\HH{\mathbb{H}}
\def\CC{\mathbb{C}}
\newcommand{\tn}[1]{\textnormal{#1}}
\newcommand{\fsz}[1]{\footnotesize{#1}}
\tikzset{isometric2XYZ/.style={x={(-0.7cm,-0.3cm)}, y={(0.9cm,-0.1cm)}, z={(0cm,.8cm)}}}
\tikzset{isometricXYZ/.style={x={(-0.707cm,-0.354cm)}, y={(0.707cm,-0.354cm)}, z={(0cm,1cm)}}}
\newcommand{\comment}[1]{}
\setcounter{secnumdepth}{3}

\begin{document}

\begin{singlespace}
\pagenumbering{roman} \thispagestyle{empty} \vspace*{2.5cm}
\centerline{\huge{ON THE INFINITESIMAL THEORY}} \vspace{.5cm}
\centerline{\huge{OF CHOW GROUPS}} \vspace{14.5cm}
\centerline{\large\bf Benjamin F. Dribus}  \vspace{.5cm}
\centerline{\large Louisiana State University} 
\centerline{\large bdribus@math.lsu.edu} 
%\centerline{B.A. in
%Mathematics, University of Oklahoma, 2003}  \centerline{B.S. in
%Physics, University of Oklahoma, 2003}\centerline{M.A. in Mathematics, Tulane University, 2006} 
%\centerline{May 2014}

\setcounter{secnumdepth}{-2}
\chapter{Preface}\label{ChapterAbstract}

The Chow groups $\tn{Ch}_X^p$ of codimension-$p$ algebraic cycles modulo rational equivalence on a smooth algebraic variety $X$ have steadfastly resisted the efforts of algebraic geometers to fathom their structure.  Except for the case $p=1$, for which $\tn{Ch}_X^p$ is an algebraic group, the Chow groups remain mysterious.   This book explores a ``linearization" approach to this problem, focusing on the {\it infinitesimal structure} of $\tn{Ch}_X^p$ near its identity element.  This method was adumbrated in recent work of Mark Green and Phillip Griffiths.  Similar topics have been explored by Bloch, Stienstra, Hesselholt, Van der Kallen, and others.  A famous formula of Bloch expresses the Chow groups $\tn{Ch}_X^p$ as Zariski sheaf cohomology groups of the algebraic $K$-theory sheaves $\ms{K}_{p,X}$ on $X$.   Bloch's formula follows from the fact that the {\it coniveau spectral sequence} for algebraic $K$-theory on $X$ induces flasque resolutions of the sheaves $\ms{K}_{p,X}$.  Algebraic cycles and algebraic $K$-theory are thereby related via the general method of {\it coniveau filtration} of a topological space.  Hence, ``linearization" of the Chow groups is related to ``linearization" of algebraic $K$-theory, which may be described in terms of negative cyclic homology.

The ``proper formal construction" arising from this approach is a ``machine" involving the coniveau spectral sequences arising from four different generalized cohomology theories on $X$, which I will call $K$-theory, augmented $K$-theory, relative $K$-theory, and relative negative cyclic homology.   The objects of principal interest are certain sheaf cohomology groups defined in terms of these theories.  In particular, the Chow group $\tn{Ch}_X^p$ is the $p$th cohomology group of the $K$-theory sheaf $\ms{K}_{p,X}$ on $X$.  Due to the critical role of the coniveau filtration, I refer to this construction as the {\it coniveau machine.}  The main theorem in this book establishes the existence of the coniveau machine for algebraic $K$-theory on a smooth algebraic variety.  This result depends on a large body of prior work of Bloch-Ogus, Thomason, Colliot-Th\'el\`ene, Hoober, and Kahn, Loday, and most recently, Corti\~nas, Haesemeyer, and Weibel.  An immediate corollary is a new formula expressing {\it generalized tangent groups} of Chow groups in terms of negative cyclic homology, which is generally more tractable than algebraic $K$-theory.  

This book is a work in progress based on my Ph.D. dissertation, defended in March of 2014.  The present version, as of late April 2014, is substantially the same as the dissertation, differing only in format.  During the last few months, a number of highly esteemed mathematicians, including Jan Stienstra, Wilberd Van der Kallen, Lars Hesselholt, Charles Weibel, Jon Rosenberg, and Marco Schlichting, have offered kind and constructive feedback that I have not yet had a chance to put to proper use.  I have also become aware of recent related work of Bloch, Esnault, and Kerz, Patel and Ravindra, and others.   Any failure to accord these efforts proper credit in the present version of this book is due only to the inevitable delays involved in the process of reception, absorption, and incorporation of new material.

%These changes correct some of the more obvious embarrassments arising from certain institutional guidelines at variance with modern mathematical 

\newpage
\tableofcontents
\setcounter{secnumdepth}{2}
\newpage
\pagenumbering{arabic}
\chapter{Preliminaries}\label{ChapterIntro}

The Chow groups $\tn{Ch}_X^p$ of codimension-$p$ algebraic cycles modulo rational equivalence on a smooth algebraic variety $X$ over a field $k$ containing the rational numbers remain poorly understood despite intensive study by algebraic geometers over the last half-century.   Results of Mumford \cite{MumfordZeroCycles69}, Griffiths \cite{GriffithsRationalIntegralsIandII69}, and many others have demonstrated the general intractability of $\tn{Ch}_X^p$ in the case where $p$ exceeds $1$.  

Bloch's formula \cite{BlochK2Cycles74}, extended by Quillen \cite{QuillenHigherKTheoryI72}, expresses the Chow groups of $X$ as Zariski sheaf cohomology groups of the algebraic $K$-theory\footnotemark\footnotetext{An immediate question is, ``which version of $K$-theory?"  As explained below, Bloch's formula itself holds for a variety of different versions.  However, the infinitesimal theory developed in this book requires use of the {\it nonconnective $K$-theory} of Bass \cite{BassKTheory68} and Thomason \cite{Thomason-Trobaugh90}.} sheaves $\ms{K}_{p,X}$ on $X$:
\begin{equation}\label{blochstheoremintro}
\tn{Ch}_X^p=H_{\tn{\fsz{Zar}}}^p\big(X,\ms{K}_{p,X}\big).
\end{equation}
Bloch's theorem follows from the fact that the sheafified Cousin complexes arising from the coniveau spectral sequence for algebraic $K$-theory are flasque resolutions of the sheaves $\ms{K}_{p,X}$, using the machinery of Grothendieck and Hartshorne \cite{HartshorneResiduesDuality66}.   Algebraic cycles and algebraic $K$-theory are thereby connected by means of the very general technique of filtration of a topological space by the codimensions of its points.  Bloch's theorem is a striking consequence of the close relationship between dimension and inclusions of closed subsets of algebraic schemes, arising from the coarseness of the Zariski topology.  

%Think about how divisors and line bundles are already connected by more elementary considerations. 

Mark Green and Phillip Griffiths \cite{GreenGriffithsTangentSpaces05} have recently introduced an interesting new avenue of investigation into the structure of Chow groups: the study of their tangent groups at the identity, which I will denote by $T\tn{Ch}_{X}^p$.  This approach carries the theme of linearization, familiar from elementary calculus, differential, analytic, and algebraic geometry, and Lie theory, into an unfamiliar realm populated by objects far larger and more complicated than manifolds or schemes.   This is by no means the first effort to study Chow groups in a manner analogous to Lie groups; for example, Jan Stienstra's {\it Cartier-Dieudonn\'{e} theory for Chow groups} \cite{StienstraCartierDieudonne}, \cite{StienstraCartierDieudonneCorrection}, goes beyond the tangent space to investigate higher-order deformation theory, but in a somewhat different context. 

Via Bloch's theorem, the tangent groups $T\tn{Ch}_{X}^p$ may be  identified\footnotemark\footnotetext{Here the specific version of $K$-theory used makes a crucial difference, unlike in Bloch's theorem \hyperref[blochstheoremintro]{\ref{blochstheoremintro}}.} as the sheaf cohomology groups $H_{\tn{\fsz{Zar}}}^p(X,T\ms{K}_{p,X})$, where $T\ms{K}_{p,X}$ is the tangent sheaf to algebraic $K$-theory.   Restricting consideration to Milnor $K$-theory $\ms{K}_{p,X}^{\tn{M}}$, a simpler version of algebraic $K$-theory with a straightforward presentation by generators and relations, the tangent sheaf is the $(p-1)$st sheaf of absolute K\"ahler differentials $\varOmega_{X/\QQ}^{p-1}$ on $X$.  

Based on these results, Green and Griffiths give the definition:

%&&&&&&&&&&&&&&&&&&&&&&&&&&&&&&&&&&&&&&&&&&&&&&&&&&&&&&&&&&&&&&&&&&&&&&&&&&&&&&&&&&
%&&&&&&&&&&&&&&&&&&&&&&&&&&&&&&&&&&&&&&&&&&&&&&&&&&&&&&&&&&&&&&&&&&&&&&&&&&&&&&&&&&
%&&&&                 &&&&    &&&               &&&                    &&&    &&&&    &&&              &&&&                   &&&&&&&&&&&&&&&&&&
%&&&&   &&&&   &&&&    &&&   &&&&&&&&&&&   &&&&&&    &&&&    &&&    &&&     &&&    &&&&&&&&&&&&&&&&&&&&&&&&
%&&&&   &&&&   &&&&    &&&   &&&&&&&&&&&   &&&&&&    &&&&    &&&    &&&     &&&    &&&&&&&&&&&&&&&&&&&&&&&&
%&&&&                 &&&&    &&&   &&&&&&&&&&&   &&&&&&    &&&&    &&&            &&&&&             &&&&&&&&&&&&&&&&&&&&
%&&&&    &&&&&&&&&    &&&   &&&&&&&&&&&   &&&&&&    &&&&    &&&    &&&   &&&&    &&&&&&&&&&&&&&&&&&&&&&&
%&&&&    &&&&&&&&&    &&&   &&&&&&&&&&&   &&&&&&    &&&&    &&&    &&&&   &&&    &&&&&&&&&&&&&&&&&&&&&&&
%&&&&    &&&&&&&&&    &&&                &&&&&&   &&&&&&                   &&&    &&&&&   &&                  &&&&&&&&&&&&&&&&&&
%&&&&&&&&&&&&&&&&&&&&&&&&&&&&&&&&&&&&&&&&&&&&&&&&&&&&&&&&&&&&&&&&&&&&&&&&&&&&&&&&&&
%&&&&&&&&&&&&&&&&&&&&&&&&&&&&&&&&&&&&&&&&&&&&&&&&&&&&&&&&&&&&&&&&&&&&&&&&&&&&&&&&&&
\begin{pgfpicture}{0cm}{0cm}{17cm}{1.25cm}
%\begin{pgfmagnify}{1}{1}
%\end{pgfmagnify}
\begin{pgftranslate}{\pgfpoint{-1cm}{-.75cm}}
\pgfputat{\pgfxy(7.5,.75)}{\pgfbox[center,center]{$T\tn{Ch}_{X}^p$}}
\pgfputat{\pgfxy(8.75,.75)}{\pgfbox[center,center]{$:=$}}
\pgfputat{\pgfxy(8.75,1.2)}{\pgfbox[center,center]{\footnotesize{Griffiths}}}
\pgfputat{\pgfxy(8.75,1.55)}{\pgfbox[center,center]{\footnotesize{Green-}}}
\pgfputat{\pgfxy(10.7,.75)}{\pgfbox[center,center]{$H_{\tn{\fsz{Zar}}}^p\big(X,\varOmega_{X/\QQ}^{p-1}\big).$}}
\end{pgftranslate}
\pgfputat{\pgfxy(16,.25)}{\pgfbox[center,center]{$(1.0.0.2)$}}
\end{pgfpicture}
\label{equgreengriffithstangentintro}
%&&&&&&&&&&&&&&&&&&&&&&&&&&&&&&&&&&&&&&&&&&&&&&&&&&&&&&&&&&&&&&&&&&&&&&&&&&&&&&&&&&
%&&&&&&&&&&&&&&&&&&&&&&&&&&&&&&&&&&&&&&&&&&&&&&&&&&&&&&&&&&&&&&&&&&&&&&&&&&&&&&&&&&
%&&&                    &&&       &&&&&    &&&                &&&&&&&&&&&&&&&&&&&&&&&&&&&&&&&&&&&&&&&&&&&&&&&&&&&
%&&&    &&&&&&&&&    &    &&&&   &&&    &&&&    &&&&&&&&&&&&&&&&&&&&&&&&&&&&&&&&&&&&&&&&&&&&&&&&&& 
%&&&    &&&&&&&&&    &&    &&&   &&&    &&&&&    &&&&&&&&&&&&&&&&&&&&&&&&&&&&&&&&&&&&&&&&&&&&&&&&& 
%&&&                 &&&&    &&&    &&   &&&    &&&&&    &&&&&&&&&&&&&&&&&&&&&&&&&&&&&&&&&&&&&&&&&&&&&&&&& 
%&&&    &&&&&&&&&    &&&&    &   &&&    &&&&&    &&&&&&&&&&&&&&&&&&&&&&&&&&&&&&&&&&&&&&&&&&&&&&&&& 
%&&&    &&&&&&&&&    &&&&&       &&&    &&&&     &&&&&&&&&&&&&&&&&&&&&&&&&&&&&&&&&&&&&&&&&&&&&&&&& 
%&&&                    &&&    &&&&&&    &&&                  &&&&&&&&&&&&&&&&&&&&&&&&&&&&&&&&&&&&&&&&&&&&&&&&&&
%&&&&&&&&&&&&&&&&&&&&&&&&&&&&&&&&&&&&&&&&&&&&&&&&&&&&&&&&&&&&&&&&&&&&&&&&&&&&&&&&&&
%&&&&&&&&&&&&&&&&&&&&&&&&&&&&&&&&&&&&&&&&&&&&&&&&&&&&&&&&&&&&&&&&&&&&&&&&&&&&&&&&&&

This definition brings {\it arithmetic} considerations to the forefront in the study of algebraic cycles, even in the case of complex algebraic geometry.

%SSSSSSSSSSSSSSSSSSSSSSSSSSSSSSSSSSSSSSSSSSSSSSSSSSSSSSSSSSSSSSSSSSSSSSSSSSSSSSSSSS
%SSSSSSSSSSSSSSSSSSSSSSSSSSSSSSSSSSSSSSSSSSSSSSSSSSSSSSSSSSSSSSSSSSSSSSSSSSSSSSSSSS
%SSS                      SSS                       SSS                    SS                          SS           SSS                         SSS          SSSSS      SSSS
%SSS      SSSSSSSSS    SSSSSSSSSS     SSSSSSSSSSSS     SSSSSSS     SSSS    SSSSSS     SSS     S     SSSS     SSSSS
%SSS      SSSSSSSSS    SSSSSSSSSS     SSSSSSSSSSSS     SSSSSSS     SSSS    SSSSSS     SSS     SS     SSS     SSSSS
%SSS                      SSS                  SSSSS    SSSSSSSSSSSS     SSSSSSS     SSSS    SSSSSS     SSS     SSS     SS     SSSSS
%SSSSSSSSSS    SSS    SSSSSSSSSS    SSSSSSSSSSSS     SSSSSSS     SSSS    SSSSSS     SSS     SSSS     S     SSSSS
%SSSSSSSSSS    SSS    SSSSSSSSSS    SSSSSSSSSSSS     SSSSSSS     SSSS    SSSSSS     SSS     SSSSS          SSSSS
%SSS                       SSS                      SSS                       SSSSS     SSSSSS          SSS                         SSS     SSSSSS        SSSSS
%SSSSSSSSSSSSSSSSSSSSSSSSSSSSSSSSSSSSSSSSSSSSSSSSSSSSSSSSSSSSSSSSSSSSSSSSSSSSSSSSSS
%SSSSSSSSSSSSSSSSSSSSSSSSSSSSSSSSSSSSSSSSSSSSSSSSSSSSSSSSSSSSSSSSSSSSSSSSSSSSSSSSSS

\section{General Approach}\label{sectionimyapproach}

The purpose of this book is to offer a broad and deep generalization of Green and Griffiths' approach to the infinitesimal theory of Chow groups, expressed in terms of a a construction which I will call the {\it coniveau machine}.  The coniveau machine extends, generalizes, and formalizes the use of filtration by codimension to analyze the infinitesimal structure of objects in an abelian category arising as sheaf cohomology groups of a generalized cohomology theory with supports.  The guiding example is, of course, the Chow groups, which arise as the sheaf cohomology groups of algebraic $K$-theory, viewed as a cohomology theory with supports on an appropriate category of schemes.   ``Precursors" of the coniveau machine appear in Green and Griffiths \cite{GreenGriffithsTangentSpaces05}, and in Sen Yang's thesis, \cite{SenYangThesis}.  %A version will also appear in my joint paper with J. W. Hoffman and Sen Yang \cite{DribusHoffmanYang}. 

%SSSSSSSSSSSSSSSSSSSSSSSSSSSSSSSSSSSSSSSSSSSSSSSSSSSSSSSSSSSSSSSSSSSSSSSSSSSSSSSSSS
%SSSSSSSSSSSSSSSSSSSSSSSSSSSSSSSSSSSSSSSSSSSSSSSSSSSSSSSSSSSSSSSSSSSSSSSSSSSSSSSSSS
%SSS                      SSS                       SSS                    SS                          SS           SSS                         SSS          SSSSS      SSSS
%SSS      SSSSSSSSS    SSSSSSSSSS     SSSSSSSSSSSS     SSSSSSS     SSSS    SSSSSS     SSS     S     SSSS     SSSSS
%SSS      SSSSSSSSS    SSSSSSSSSS     SSSSSSSSSSSS     SSSSSSS     SSSS    SSSSSS     SSS     SS     SSS     SSSSS
%SSS                      SSS                  SSSSS    SSSSSSSSSSSS     SSSSSSS     SSSS    SSSSSS     SSS     SSS     SS     SSSSS
%SSSSSSSSSS    SSS    SSSSSSSSSS    SSSSSSSSSSSS     SSSSSSS     SSSS    SSSSSS     SSS     SSSS     S     SSSSS
%SSSSSSSSSS    SSS    SSSSSSSSSS    SSSSSSSSSSSS     SSSSSSS     SSSS    SSSSSS     SSS     SSSSS          SSSSS
%SSS                       SSS                      SSS                       SSSSS     SSSSSS          SSS                         SSS     SSSSSS        SSSSS
%SSSSSSSSSSSSSSSSSSSSSSSSSSSSSSSSSSSSSSSSSSSSSSSSSSSSSSSSSSSSSSSSSSSSSSSSSSSSSSSSSS
%SSSSSSSSSSSSSSSSSSSSSSSSSSSSSSSSSSSSSSSSSSSSSSSSSSSSSSSSSSSSSSSSSSSSSSSSSSSSSSSSSS

\subsection{Coniveau Machine in this Book}\label{subsectionconiveathisthesis}

{\bf Abstract General Form of the Coniveau Machine.} In its most general form, the coniveau machine may be represented by the following commutative diagram of functors and natural transformations, with ``exact rows:"

%&&&&&&&&&&&&&&&&&&&&&&&&&&&&&&&&&&&&&&&&&&&&&&&&&&&&&&&&&&&&&&&&&&&&&&&&&&&&&&&&&&
%&&&&&&&&&&&&&&&&&&&&&&&&&&&&&&&&&&&&&&&&&&&&&&&&&&&&&&&&&&&&&&&&&&&&&&&&&&&&&&&&&&
%&&&&                 &&&&    &&&               &&&                    &&&    &&&&    &&&              &&&&                   &&&&&&&&&&&&&&&&&&
%&&&&   &&&&   &&&&    &&&   &&&&&&&&&&&   &&&&&&    &&&&    &&&    &&&     &&&    &&&&&&&&&&&&&&&&&&&&&&&&
%&&&&   &&&&   &&&&    &&&   &&&&&&&&&&&   &&&&&&    &&&&    &&&    &&&     &&&    &&&&&&&&&&&&&&&&&&&&&&&&
%&&&&                 &&&&    &&&   &&&&&&&&&&&   &&&&&&    &&&&    &&&            &&&&&             &&&&&&&&&&&&&&&&&&&&
%&&&&    &&&&&&&&&    &&&   &&&&&&&&&&&   &&&&&&    &&&&    &&&    &&&   &&&&    &&&&&&&&&&&&&&&&&&&&&&&
%&&&&    &&&&&&&&&    &&&   &&&&&&&&&&&   &&&&&&    &&&&    &&&    &&&&   &&&    &&&&&&&&&&&&&&&&&&&&&&&
%&&&&    &&&&&&&&&    &&&                &&&&&&   &&&&&&                   &&&    &&&&&   &&                  &&&&&&&&&&&&&&&&&&
%&&&&&&&&&&&&&&&&&&&&&&&&&&&&&&&&&&&&&&&&&&&&&&&&&&&&&&&&&&&&&&&&&&&&&&&&&&&&&&&&&&
%&&&&&&&&&&&&&&&&&&&&&&&&&&&&&&&&&&&&&&&&&&&&&&&&&&&&&&&&&&&&&&&&&&&&&&&&&&&&&&&&&&
\begin{pgfpicture}{0cm}{0cm}{17cm}{3cm}
\begin{pgftranslate}{\pgfpoint{3.5cm}{-.2cm}}
\begin{pgfmagnify}{1.1}{1.1}
\pgfputat{\pgfxy(1.3,2.2)}{\pgfbox[center,center]{$E_{H_{\tn{\fsz{rel}}},\mbf{S}}$}}
\pgfputat{\pgfxy(4,2.2)}{\pgfbox[center,center]{$E_{H_{\tn {aug}},\mbf{S}}$}}
\pgfputat{\pgfxy(6.8,2.2)}{\pgfbox[center,center]{$E_{H,\mbf{S}}$}}
\pgfputat{\pgfxy(2.5,2.5)}{\pgfbox[center,center]{$i$}}
\pgfputat{\pgfxy(5.5,2.5)}{\pgfbox[center,center]{$j$}}
\pgfsetendarrow{\pgfarrowlargepointed{3pt}}
\pgfxyline(2,2.2)(3.2,2.2)
\pgfxyline(4.8,2.2)(6.2,2.2)
\begin{colormixin}{50!white}
\pgfputat{\pgfxy(1.3,.2)}{\pgfbox[center,center]{$E_{H_{\tn{\fsz{rel}}}^+,\mbf{S}}$}}
\pgfputat{\pgfxy(4,.2)}{\pgfbox[center,center]{$E_{H_{\tn{\fsz{aug}}}^+,\mbf{S}}$}}
\pgfputat{\pgfxy(6.8,.2)}{\pgfbox[center,center]{$E_{H^+,\mbf{S}}$}}
\pgfputat{\pgfxy(2.5,.5)}{\pgfbox[center,center]{$i^+$}}
\pgfputat{\pgfxy(5.5,.5)}{\pgfbox[center,center]{$j^+$}}
\pgfputat{\pgfxy(.8,1.2)}{\pgfbox[center,center]{$\tn{ch}_{\tn{\fsz{rel}}}$}}
\pgfputat{\pgfxy(1.5,1.1)}{\pgfbox[center,center]{$\sim$}}
\pgfputat{\pgfxy(3.5,1.2)}{\pgfbox[center,center]{$\tn{ch}_{\tn{\fsz{aug}}}$}}
\pgfputat{\pgfxy(6.4,1.2)}{\pgfbox[center,center]{$\tn{ch}$}}
\pgfsetendarrow{\pgfarrowlargepointed{3pt}}
\pgfxyline(2,.2)(3.2,.2)
\pgfxyline(4.8,.2)(6.2,.2)
\pgfxyline(1.3,1.8)(1.3,.5)
\pgfxyline(4,1.8)(4,.5)
\pgfxyline(6.8,1.8)(6.8,.5)
\end{colormixin}
\end{pgfmagnify}
\end{pgftranslate}
\pgfputat{\pgfxy(16,1)}{\pgfbox[center,center]{$(1.1.1.3)$}}
\end{pgfpicture}
\label{equconiveaumachinefunctorintro}

%&&&&&&&&&&&&&&&&&&&&&&&&&&&&&&&&&&&&&&&&&&&&&&&&&&&&&&&&&&&&&&&&&&&&&&&&&&&&&&&&&&
%&&&&&&&&&&&&&&&&&&&&&&&&&&&&&&&&&&&&&&&&&&&&&&&&&&&&&&&&&&&&&&&&&&&&&&&&&&&&&&&&&&
%&&&                    &&&       &&&&&    &&&                &&&&&&&&&&&&&&&&&&&&&&&&&&&&&&&&&&&&&&&&&&&&&&&&&&&
%&&&    &&&&&&&&&    &    &&&&   &&&    &&&&    &&&&&&&&&&&&&&&&&&&&&&&&&&&&&&&&&&&&&&&&&&&&&&&&&& 
%&&&    &&&&&&&&&    &&    &&&   &&&    &&&&&    &&&&&&&&&&&&&&&&&&&&&&&&&&&&&&&&&&&&&&&&&&&&&&&&& 
%&&&                 &&&&    &&&    &&   &&&    &&&&&    &&&&&&&&&&&&&&&&&&&&&&&&&&&&&&&&&&&&&&&&&&&&&&&&& 
%&&&    &&&&&&&&&    &&&&    &   &&&    &&&&&    &&&&&&&&&&&&&&&&&&&&&&&&&&&&&&&&&&&&&&&&&&&&&&&&& 
%&&&    &&&&&&&&&    &&&&&       &&&    &&&&     &&&&&&&&&&&&&&&&&&&&&&&&&&&&&&&&&&&&&&&&&&&&&&&&& 
%&&&                    &&&    &&&&&&    &&&                  &&&&&&&&&&&&&&&&&&&&&&&&&&&&&&&&&&&&&&&&&&&&&&&&&&
%&&&&&&&&&&&&&&&&&&&&&&&&&&&&&&&&&&&&&&&&&&&&&&&&&&&&&&&&&&&&&&&&&&&&&&&&&&&&&&&&&&
%&&&&&&&&&&&&&&&&&&&&&&&&&&&&&&&&&&&&&&&&&&&&&&&&&&&&&&&&&&&&&&&&&&&&&&&&&&&&&&&&&&

Tthe top right functor $E_{H,\mbf{S}}$ assigns to a topological space $X$ belonging to a distinguished category $\mbf{S}$ of topological spaces the {\it coniveau spectral sequence} $\{E_{H,X,r}\}$ associated with an appropriate {\it cohomology theory with supports} $H$ on the {\it category of pairs} $\mbf{P}$ over $\mbf{S}$.  Here, $H_{\tn{\footnotesize{aug}}}$ and $H_{\tn{\footnotesize{rel}}}$ are ``augmented" and ``relative" versions of $H$, respectively.  The functor $E_{H_{\tn{\fsz{aug}}},\mbf{S}}$ assigns to $X$ the coniveau spectral sequence associated with the augmented theory $H_{\tn{\fsz{aug}}}$, and the functor $E_{H_{\tn{\fsz{rel}}},\mbf{S}}$ is the ``fiber" of the ``epimorphism" $j$, with $i$ the ``fiber map."  The downward arrows and the bottom row of the diagram are shaded, to indicate that they exist only under the hypothesis that $H$ possesses an appropriate ``relative additive version" $H^+$, which is related to $H$ by a ``logarithmic-type map" labeled $\tn{ch}$.  Under this hypothesis, $H_{\tn{\fsz{aug}}}^+$ and $H_{\tn{\fsz{rel}}}^+$ are ``augmented" and ``relative" versions of the additive theory $H^+$, and the functor $E_{H_{\tn{\fsz{rel}}}^+,\mbf{S}}$ is the ``fiber" of the ``epimorphism" $j^+$, with $i^+$ being the ``fiber map." 

%SSSSSSSSSSSSSSSSSSSSSSSSSSSSSSSSSSSSSSSSSSSSSSSSSSSSSSSSSSSSSSSSSSSSSSSSSSSSSSSSSS
%SSSSSSSSSSSSSSSSSSSSSSSSSSSSSSSSSSSSSSSSSSSSSSSSSSSSSSSSSSSSSSSSSSSSSSSSSSSSSSSSSS
%SSS                      SSS                       SSS                    SS                          SS           SSS                         SSS          SSSSS      SSSS
%SSS      SSSSSSSSS    SSSSSSSSSS     SSSSSSSSSSSS     SSSSSSS     SSSS    SSSSSS     SSS     S     SSSS     SSSSS
%SSS      SSSSSSSSS    SSSSSSSSSS     SSSSSSSSSSSS     SSSSSSS     SSSS    SSSSSS     SSS     SS     SSS     SSSSS
%SSS                      SSS                  SSSSS    SSSSSSSSSSSS     SSSSSSS     SSSS    SSSSSS     SSS     SSS     SS     SSSSS
%SSSSSSSSSS    SSS    SSSSSSSSSS    SSSSSSSSSSSS     SSSSSSS     SSSS    SSSSSS     SSS     SSSS     S     SSSSS
%SSSSSSSSSS    SSS    SSSSSSSSSS    SSSSSSSSSSSS     SSSSSSS     SSSS    SSSSSS     SSS     SSSSS          SSSSS
%SSS                       SSS                      SSS                       SSSSS     SSSSSS          SSS                         SSS     SSSSSS        SSSSS
%SSSSSSSSSSSSSSSSSSSSSSSSSSSSSSSSSSSSSSSSSSSSSSSSSSSSSSSSSSSSSSSSSSSSSSSSSSSSSSSSSS
%SSSSSSSSSSSSSSSSSSSSSSSSSSSSSSSSSSSSSSSSSSSSSSSSSSSSSSSSSSSSSSSSSSSSSSSSSSSSSSSSSS

{\bf Algebraic $K$-Theory, Negative Cyclic Homology, and Chow Groups.}  This ``vague nonsense" acquires a concrete and useful meaning when $X$ is a smooth algebraic variety, $H$ is Bass-Thomason algebraic $K$-theory $K$, and $H^+$ is negative cyclic homology $\tn{HN}$.  In this case, $\tn{ch}$ is the generalized algebraic Chern character.  The rationale for this elaborate setup is based on the fact that when $K$-theory is ``augmented" by means of a ``nilpotent thickening" of the variety $X$, then the relative generalized algebraic Chern character $\tn{ch}_{\tn{\fsz{rel}}}$ is an isomorphism of functors.  The significance of this is that cyclic homology is generally much easier to compute than algebraic $K$-theory; in particular, it can often be expressed in terms of differential forms.  Since the goal is to analyze objects arising from $K$-theory; namely, Chow groups, the additive theory $H^+$ and its augmented version are not a necessary part of the picture; hence the coniveau machine may be simplified by eliminating them: 

%&&&&&&&&&&&&&&&&&&&&&&&&&&&&&&&&&&&&&&&&&&&&&&&&&&&&&&&&&&&&&&&&&&&&&&&&&&&&&&&&&&
%&&&&&&&&&&&&&&&&&&&&&&&&&&&&&&&&&&&&&&&&&&&&&&&&&&&&&&&&&&&&&&&&&&&&&&&&&&&&&&&&&&
%&&&&                 &&&&    &&&               &&&                    &&&    &&&&    &&&              &&&&                   &&&&&&&&&&&&&&&&&&
%&&&&   &&&&   &&&&    &&&   &&&&&&&&&&&   &&&&&&    &&&&    &&&    &&&     &&&    &&&&&&&&&&&&&&&&&&&&&&&&
%&&&&   &&&&   &&&&    &&&   &&&&&&&&&&&   &&&&&&    &&&&    &&&    &&&     &&&    &&&&&&&&&&&&&&&&&&&&&&&&
%&&&&                 &&&&    &&&   &&&&&&&&&&&   &&&&&&    &&&&    &&&            &&&&&             &&&&&&&&&&&&&&&&&&&&
%&&&&    &&&&&&&&&    &&&   &&&&&&&&&&&   &&&&&&    &&&&    &&&    &&&   &&&&    &&&&&&&&&&&&&&&&&&&&&&&
%&&&&    &&&&&&&&&    &&&   &&&&&&&&&&&   &&&&&&    &&&&    &&&    &&&&   &&&    &&&&&&&&&&&&&&&&&&&&&&&
%&&&&    &&&&&&&&&    &&&                &&&&&&   &&&&&&                   &&&    &&&&&   &&                  &&&&&&&&&&&&&&&&&&
%&&&&&&&&&&&&&&&&&&&&&&&&&&&&&&&&&&&&&&&&&&&&&&&&&&&&&&&&&&&&&&&&&&&&&&&&&&&&&&&&&&
%&&&&&&&&&&&&&&&&&&&&&&&&&&&&&&&&&&&&&&&&&&&&&&&&&&&&&&&&&&&&&&&&&&&&&&&&&&&&&&&&&&
\begin{pgfpicture}{0cm}{0cm}{17cm}{3cm}
\begin{pgftranslate}{\pgfpoint{3.5cm}{-.2cm}}
\begin{pgfmagnify}{1.1}{1.1}
\pgfputat{\pgfxy(1.3,2.2)}{\pgfbox[center,center]{$E_{K_{\tn{rel}},\mbf{S}}$}}
\pgfputat{\pgfxy(4,2.2)}{\pgfbox[center,center]{$E_{K_{\tn {aug}},\mbf{S}}$}}
\pgfputat{\pgfxy(6.8,2.2)}{\pgfbox[center,center]{$E_{K,\mbf{S}}$}}
\pgfputat{\pgfxy(2.5,2.5)}{\pgfbox[center,center]{$i$}}
\pgfputat{\pgfxy(5.5,2.5)}{\pgfbox[center,center]{$j$}}
\pgfsetendarrow{\pgfarrowlargepointed{3pt}}
\pgfxyline(2,2.2)(3.2,2.2)
\pgfxyline(4.8,2.2)(6.2,2.2)
\begin{colormixin}{50!white}
\pgfputat{\pgfxy(1.3,.2)}{\pgfbox[center,center]{$E_{\tn{HN}_{\tn{rel}},\mbf{S}}$}}
\pgfputat{\pgfxy(.8,1.2)}{\pgfbox[center,center]{$\tn{ch}_{\tn{\fsz{rel}}}$}}
\pgfputat{\pgfxy(1.5,1.1)}{\pgfbox[center,center]{$\sim$}}
\pgfsetendarrow{\pgfarrowlargepointed{3pt}}
\pgfxyline(1.3,1.8)(1.3,.5)
\end{colormixin}
\end{pgfmagnify}
\end{pgftranslate}
\pgfputat{\pgfxy(16,1)}{\pgfbox[center,center]{$(1.1.1.4)$}}
\end{pgfpicture}
\label{equconiveaumachinefunctorintro}

{\bf Simplified ``Four-Column Version" for a Nilpotent Thickening.} Now suppose that one is interested in analyzing the infinitesimal structure of a particular Chow group $\tn{Ch}_X^p$ of a particular smooth algebraic variety $X$ over a field $k$.   The meaning of ``infinitesimal structure" in this context is that the augmented theory $K_{\tn{aug}}$ is defined by adding ``nilpotent variables," providing ``infinitesimal degrees of freedom" for algebraic cycles on $X$.  Technically, this involves multiplying $X$, in the sense of fiber products, by the prime spectrum $Y$ of a $k$-algebra generated over $k$ by nilpotent elements; e.g., a local artinian $k$-algebra.  In this case, the transformations $i$ and $j$ in equation \hyperref[equconiveaumachinefunctorintro]{1.1.0.4} split; i.e., there are arrows in the opposite directions.  

Due to Bloch's formula \hyperref[blochstheoremintro]{\ref{blochstheoremintro}}, the objects of interest in this context are the sheaf cohomology groups of the $p$th $K$-theory sheaf $\ms{K}_{p,X}$ on $X$, the augmented $K$-theory sheaf $\ms{K}_{p,X\times_kY}$, the relative $K$-theory sheaf $\ms{K}_{p,X\times_kY,Y}$ and the corresponding ``relative additive $K$-theory sheaf,"  which is the relative negative cyclic homology sheaf $\ms{HN}_{p,X\times_kY,Y}$.  The sheaf cohomology groups of these sheaves may be computed via their sheafified Cousin complexes, which are flasque resolutions in this case.  These complexes arise by sheafifying the $-p$th rows of the corresponding spectral sequences.  

Transposing these sheaf complexes into columns yields the schematic diagram appearing in figure \hyperref[figsimplifiedfourcolumnKtheoryintro]{\ref{figsimplifiedfourcolumnKtheoryintro}} below.  Note that the diagram has been rearranged so that the arrows go from left to right.

%&&&&&&&&&&&&&&&&&&&&&&&&&&&&&&&&&&&&&&&&&&&&&&&&&&&&&&&&&&&&&&&&&&&&&&&&&&&&&&&&&&
%&&&&&&&&&&&&&&&&&&&&&&&&&&&&&&&&&&&&&&&&&&&&&&&&&&&&&&&&&&&&&&&&&&&&&&&&&&&&&&&&&&
%&&&&                 &&&&    &&&               &&&                    &&&    &&&&    &&&              &&&&                   &&&&&&&&&&&&&&&&&&
%&&&&   &&&&   &&&&    &&&   &&&&&&&&&&&   &&&&&&    &&&&    &&&    &&&     &&&    &&&&&&&&&&&&&&&&&&&&&&&&
%&&&&   &&&&   &&&&    &&&   &&&&&&&&&&&   &&&&&&    &&&&    &&&    &&&     &&&    &&&&&&&&&&&&&&&&&&&&&&&&
%&&&&                 &&&&    &&&   &&&&&&&&&&&   &&&&&&    &&&&    &&&            &&&&&             &&&&&&&&&&&&&&&&&&&&
%&&&&    &&&&&&&&&    &&&   &&&&&&&&&&&   &&&&&&    &&&&    &&&    &&&   &&&&    &&&&&&&&&&&&&&&&&&&&&&&
%&&&&    &&&&&&&&&    &&&   &&&&&&&&&&&   &&&&&&    &&&&    &&&    &&&&   &&&    &&&&&&&&&&&&&&&&&&&&&&&
%&&&&    &&&&&&&&&    &&&                &&&&&&   &&&&&&                   &&&    &&&&&   &&                  &&&&&&&&&&&&&&&&&&
%&&&&&&&&&&&&&&&&&&&&&&&&&&&&&&&&&&&&&&&&&&&&&&&&&&&&&&&&&&&&&&&&&&&&&&&&&&&&&&&&&&
%&&&&&&&&&&&&&&&&&&&&&&&&&&&&&&&&&&&&&&&&&&&&&&&&&&&&&&&&&&&&&&&&&&&&&&&&&&&&&&&&&&
\begin{figure}[H]
\begin{pgfpicture}{0cm}{0cm}{17cm}{4.7cm}
\begin{pgfmagnify}{.9}{.9}
\begin{pgftranslate}{\pgfpoint{.2cm}{-1.7cm}}
\begin{pgftranslate}{\pgfpoint{-.5cm}{0cm}}
\pgfxyline(.9,1.8)(.9,7)
\pgfxyline(.9,7)(3.5,7)
\pgfxyline(3.5,7)(3.5,1.8)
\pgfxyline(3.5,1.8)(.9,1.8)
\pgfputat{\pgfxy(2.2,5.8)}{\pgfbox[center,center]{Cousin}}
\pgfputat{\pgfxy(2.2,5.35)}{\pgfbox[center,center]{resolution }}
\pgfputat{\pgfxy(2.2,4.85)}{\pgfbox[center,center]{of $K$-theory}}
\pgfputat{\pgfxy(2.2,4.45)}{\pgfbox[center,center]{sheaf}}
\pgfputat{\pgfxy(2.2,3.9)}{\pgfbox[center,center]{$\ms{K}_{p,X}$}}
\pgfputat{\pgfxy(2.2,3.45)}{\pgfbox[center,center]{of smooth}}
\pgfputat{\pgfxy(2.2,2.95)}{\pgfbox[center,center]{variety $X$}}
\pgfnodecircle{Node0}[stroke]{\pgfxy(2.2,6.5)}{0.33cm}
\pgfputat{\pgfxy(2.2,6.5)}{\pgfbox[center,center]{\large{$1$}}}
\end{pgftranslate}
\begin{pgftranslate}{\pgfpoint{4.05cm}{0cm}}
\pgfxyline(1,1.8)(1,7)
\pgfxyline(1,7)(3.8,7)
\pgfxyline(3.8,7)(3.8,1.8)
\pgfxyline(3.8,1.8)(1,1.8)
\pgfputat{\pgfxy(2.4,5.8)}{\pgfbox[center,center]{Cousin}}
\pgfputat{\pgfxy(2.4,5.35)}{\pgfbox[center,center]{resolution }}
\pgfputat{\pgfxy(2.4,4.85)}{\pgfbox[center,center]{of $K$-theory}}
\pgfputat{\pgfxy(2.4,4.45)}{\pgfbox[center,center]{sheaf}}
\pgfputat{\pgfxy(2.4,3.9)}{\pgfbox[center,center]{$\ms{K}_{p,X\times_kY}$}}
\pgfputat{\pgfxy(2.4,3.45)}{\pgfbox[center,center]{of thickened}}
\pgfputat{\pgfxy(2.4,2.95)}{\pgfbox[center,center]{variety $X\times_kY$}}
 \pgfnodecircle{Node0}[stroke]{\pgfxy(2.4,6.5)}{0.33cm}
\pgfputat{\pgfxy(2.4,6.5)}{\pgfbox[center,center]{\large{$2$}}}
\pgfsetendarrow{\pgfarrowtriangle{6pt}}
\pgfsetlinewidth{3pt}
\pgfxyline(-.8,4.3)(.4,4.3)
\pgfputat{\pgfxy(-.1,5.4)}{\pgfbox[center,center]{\small{split}}}
\pgfputat{\pgfxy(-.1,5)}{\pgfbox[center,center]{\small{inclusion}}}
\end{pgftranslate}
\begin{pgftranslate}{\pgfpoint{9.1cm}{0cm}}
\pgfxyline(1,1.8)(1,7)
\pgfxyline(1,7)(3.4,7)
\pgfxyline(3.4,7)(3.4,1.8)
\pgfxyline(3.4,1.8)(1,1.8)
\pgfputat{\pgfxy(2.2,5.8)}{\pgfbox[center,center]{Cousin}}
\pgfputat{\pgfxy(2.2,5.35)}{\pgfbox[center,center]{resolution}}
\pgfputat{\pgfxy(2.2,4.9)}{\pgfbox[center,center]{of relative}}
\pgfputat{\pgfxy(2.2,4.4)}{\pgfbox[center,center]{$K$-theory}}
\pgfputat{\pgfxy(2.2,3.95)}{\pgfbox[center,center]{sheaf}}
\pgfputat{\pgfxy(2.2,3.4)}{\pgfbox[center,center]{$\ms{K}_{p,X\times_kY,Y}$}}
\pgfnodecircle{Node0}[stroke]{\pgfxy(2.2,6.5)}{0.33cm}
\pgfputat{\pgfxy(2.2,6.5)}{\pgfbox[center,center]{\large{$3$}}}
\pgfsetendarrow{\pgfarrowtriangle{6pt}}
\pgfsetlinewidth{3pt}
\pgfxyline(-.8,4.3)(.4,4.3)
\pgfputat{\pgfxy(-.1,5.4)}{\pgfbox[center,center]{\small{split}}}
\pgfputat{\pgfxy(-.1,5)}{\pgfbox[center,center]{\small{projection}}}
\end{pgftranslate}
\begin{pgftranslate}{\pgfpoint{13.75cm}{0cm}}
\pgfxyline(1,1.8)(1,7)
\pgfxyline(1,7)(3.4,7)
\pgfxyline(3.4,7)(3.4,1.8)
\pgfxyline(3.4,1.8)(1,1.8)
\pgfputat{\pgfxy(2.2,5.8)}{\pgfbox[center,center]{Cousin}}
\pgfputat{\pgfxy(2.2,5.35)}{\pgfbox[center,center]{resolution}}
\pgfputat{\pgfxy(2.2,4.9)}{\pgfbox[center,center]{of relative}}
\pgfputat{\pgfxy(2.2,4.45)}{\pgfbox[center,center]{negative}}
\pgfputat{\pgfxy(2.2,4)}{\pgfbox[center,center]{cyclic}}
\pgfputat{\pgfxy(2.2,3.55)}{\pgfbox[center,center]{homology}}
\pgfputat{\pgfxy(2.2,3.1)}{\pgfbox[center,center]{sheaf}}
\pgfputat{\pgfxy(2.2,2.55)}{\pgfbox[center,center]{$\ms{HN}_{p,X\times_kY,Y}$}}
\pgfputat{\pgfxy(-.1,5.8)}{\pgfbox[center,center]{\small{relative}}}
\pgfputat{\pgfxy(-.1,5.4)}{\pgfbox[center,center]{\small{Chern}}}
\pgfputat{\pgfxy(-.1,5)}{\pgfbox[center,center]{\small{character}}}
\pgfputat{\pgfxy(-.1,4.3)}{\pgfbox[center,center]{\huge{$\cong$}}}
\pgfnodecircle{Node0}[stroke]{\pgfxy(2.2,6.5)}{0.33cm}
\pgfputat{\pgfxy(2.2,6.5)}{\pgfbox[center,center]{\large{$4$}}}
\end{pgftranslate}
\begin{colormixin}{30!white}
\color{black}
\pgfmoveto{\pgfxy(8.05,3)}
\pgflineto{\pgfxy(13.6,3)}
\pgflineto{\pgfxy(13.6,3.3)}
\pgflineto{\pgfxy(14.6,2.7)}
\pgflineto{\pgfxy(13.6,2.1)}
\pgflineto{\pgfxy(13.6,2.4)}
\pgflineto{\pgfxy(8.05,2.4)}
\pgflineto{\pgfxy(8.05,3)}
\pgffill
\end{colormixin}
\pgfputat{\pgfxy(10.5,2.7)}{\pgfbox[center,center]{tangent}}
\pgfputat{\pgfxy(12,2.67)}{\pgfbox[center,center]{map}}
\end{pgftranslate}
\end{pgfmagnify}
\end{pgfpicture}
\caption{Simplified ``four column version" of the coniveau machine for algebraic $K$-theory on a smooth algebraic variety in the case of a nilpotent thickening.}
\label{figsimplifiedfourcolumnKtheoryintro}
\end{figure}
%&&&&&&&&&&&&&&&&&&&&&&&&&&&&&&&&&&&&&&&&&&&&&&&&&&&&&&&&&&&&&&&&&&&&&&&&&&&&&&&&&&
%&&&&&&&&&&&&&&&&&&&&&&&&&&&&&&&&&&&&&&&&&&&&&&&&&&&&&&&&&&&&&&&&&&&&&&&&&&&&&&&&&&
%&&&                    &&&       &&&&&    &&&                &&&&&&&&&&&&&&&&&&&&&&&&&&&&&&&&&&&&&&&&&&&&&&&&&&&
%&&&    &&&&&&&&&    &    &&&&   &&&    &&&&    &&&&&&&&&&&&&&&&&&&&&&&&&&&&&&&&&&&&&&&&&&&&&&&&&& 
%&&&    &&&&&&&&&    &&    &&&   &&&    &&&&&    &&&&&&&&&&&&&&&&&&&&&&&&&&&&&&&&&&&&&&&&&&&&&&&&& 
%&&&                 &&&&    &&&    &&   &&&    &&&&&    &&&&&&&&&&&&&&&&&&&&&&&&&&&&&&&&&&&&&&&&&&&&&&&&& 
%&&&    &&&&&&&&&    &&&&    &   &&&    &&&&&    &&&&&&&&&&&&&&&&&&&&&&&&&&&&&&&&&&&&&&&&&&&&&&&&& 
%&&&    &&&&&&&&&    &&&&&       &&&    &&&&     &&&&&&&&&&&&&&&&&&&&&&&&&&&&&&&&&&&&&&&&&&&&&&&&& 
%&&&                    &&&    &&&&&&    &&&                  &&&&&&&&&&&&&&&&&&&&&&&&&&&&&&&&&&&&&&&&&&&&&&&&&&
%&&&&&&&&&&&&&&&&&&&&&&&&&&&&&&&&&&&&&&&&&&&&&&&&&&&&&&&&&&&&&&&&&&&&&&&&&&&&&&&&&&
%&&&&&&&&&&&&&&&&&&&&&&&&&&&&&&&&&&&&&&&&&&&&&&&&&&&&&&&&&&&&&&&&&&&&&&&&&&&&&&&&&&
\vspace*{-.5cm}

%SSSSSSSSSSSSSSSSSSSSSSSSSSSSSSSSSSSSSSSSSSSSSSSSSSSSSSSSSSSSSSSSSSSSSSSSSSSSSSSSSS
%SSSSSSSSSSSSSSSSSSSSSSSSSSSSSSSSSSSSSSSSSSSSSSSSSSSSSSSSSSSSSSSSSSSSSSSSSSSSSSSSSS
%SSS                      SSS                       SSS                    SS                          SS           SSS                         SSS          SSSSS      SSSS
%SSS      SSSSSSSSS    SSSSSSSSSS     SSSSSSSSSSSS     SSSSSSS     SSSS    SSSSSS     SSS     S     SSSS     SSSSS
%SSS      SSSSSSSSS    SSSSSSSSSS     SSSSSSSSSSSS     SSSSSSS     SSSS    SSSSSS     SSS     SS     SSS     SSSSS
%SSS                      SSS                  SSSSS    SSSSSSSSSSSS     SSSSSSS     SSSS    SSSSSS     SSS     SSS     SS     SSSSS
%SSSSSSSSSS    SSS    SSSSSSSSSS    SSSSSSSSSSSS     SSSSSSS     SSSS    SSSSSS     SSS     SSSS     S     SSSSS
%SSSSSSSSSS    SSS    SSSSSSSSSS    SSSSSSSSSSSS     SSSSSSS     SSSS    SSSSSS     SSS     SSSSS          SSSSS
%SSS                       SSS                      SSS                       SSSSS     SSSSSS          SSS                         SSS     SSSSSS        SSSSS
%SSSSSSSSSSSSSSSSSSSSSSSSSSSSSSSSSSSSSSSSSSSSSSSSSSSSSSSSSSSSSSSSSSSSSSSSSSSSSSSSSS
%SSSSSSSSSSSSSSSSSSSSSSSSSSSSSSSSSSSSSSSSSSSSSSSSSSSSSSSSSSSSSSSSSSSSSSSSSSSSSSSSSS

{\bf Toy Version for Smooth Complex Projective Curves.}  For illustrative purposes, it is useful to examine a ``toy version" of the coniveau machine, given by restricting $X$ to be a smooth algebraic curve over the complex numbers.   This is the subject of Chapter \hyperref[chaptercurves]{\ref{chaptercurves}} of this book.  In this context, I examine the ``simplest possible" augmented version of algebraic $K$-theory; namely, the version given by taking the scheme $Y$ in the schematic diagram of figure \hyperref[figsimplifiedfourcolumnKtheoryintro]{\ref{figsimplifiedfourcolumnKtheoryintro}} above to the the prime spectrum of the algebra of dual numbers $\CC[\ee]/\ee^2$.   Since the only interesting Chow group of a smooth projective curve $X$ is $\tn{Ch}_X^1$, I set $p=1$ in figure \hyperref[figsimplifiedfourcolumnKtheoryintro]{\ref{figsimplifiedfourcolumnKtheoryintro}}.   Computing the resulting $K$-theory and negative cyclic homology sheaves in terms of ``elementary objects" then yields the diagram shown in figure \hyperref[figconiveaucurveintro]{\ref{figconiveaucurveintro}} below:  

%&&&&&&&&&&&&&&&&&&&&&&&&&&&&&&&&&&&&&&&&&&&&&&&&&&&&&&&&&&&&&&&&&&&&&&&&&&&&&&&&&&
%&&&&&&&&&&&&&&&&&&&&&&&&&&&&&&&&&&&&&&&&&&&&&&&&&&&&&&&&&&&&&&&&&&&&&&&&&&&&&&&&&&
%&&&&                 &&&&    &&&               &&&                    &&&    &&&&    &&&              &&&&                   &&&&&&&&&&&&&&&&&&
%&&&&   &&&&   &&&&    &&&   &&&&&&&&&&&   &&&&&&    &&&&    &&&    &&&     &&&    &&&&&&&&&&&&&&&&&&&&&&&&
%&&&&   &&&&   &&&&    &&&   &&&&&&&&&&&   &&&&&&    &&&&    &&&    &&&     &&&    &&&&&&&&&&&&&&&&&&&&&&&&
%&&&&                 &&&&    &&&   &&&&&&&&&&&   &&&&&&    &&&&    &&&            &&&&&             &&&&&&&&&&&&&&&&&&&&
%&&&&    &&&&&&&&&    &&&   &&&&&&&&&&&   &&&&&&    &&&&    &&&    &&&   &&&&    &&&&&&&&&&&&&&&&&&&&&&&
%&&&&    &&&&&&&&&    &&&   &&&&&&&&&&&   &&&&&&    &&&&    &&&    &&&&   &&&    &&&&&&&&&&&&&&&&&&&&&&&
%&&&&    &&&&&&&&&    &&&                &&&&&&   &&&&&&                   &&&    &&&&&   &&                  &&&&&&&&&&&&&&&&&&
%&&&&&&&&&&&&&&&&&&&&&&&&&&&&&&&&&&&&&&&&&&&&&&&&&&&&&&&&&&&&&&&&&&&&&&&&&&&&&&&&&&
%&&&&&&&&&&&&&&&&&&&&&&&&&&&&&&&&&&&&&&&&&&&&&&&&&&&&&&&&&&&&&&&&&&&&&&&&&&&&&&&&&&
\begin{figure}[H]
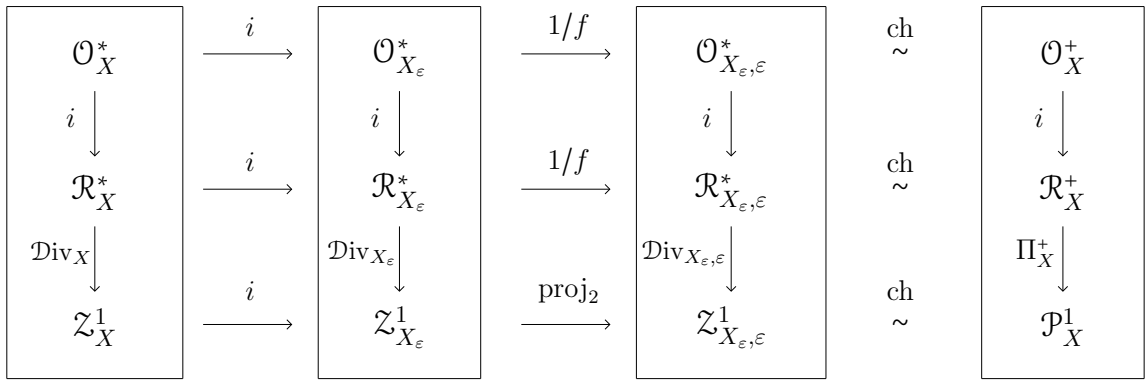

\begin{pgfpicture}{0cm}{0cm}{17cm}{5cm}
\begin{pgfmagnify}{.9}{.9}
\begin{pgftranslate}{\pgfpoint{.2cm}{-1.3cm}}
\begin{pgftranslate}{\pgfpoint{-.5cm}{0cm}}
\pgfxyline(.9,1.5)(.9,7)
\pgfxyline(.9,7)(3.5,7)
\pgfxyline(3.5,7)(3.5,1.5)
\pgfxyline(3.5,1.5)(.9,1.5)
\pgfputat{\pgfxy(2.2,6.25)}{\pgfbox[center,center]{\large{$\ms{O}_X^*$}}}
\pgfputat{\pgfxy(2.2,4.25)}{\pgfbox[center,center]{\large{$\ms{R}^*_X$}}}
\pgfputat{\pgfxy(2.2,2.25)}{\pgfbox[center,center]{\large{$\ms{Z}_X^1$}}}
\pgfputat{\pgfxy(1.85,5.35)}{\pgfbox[center,center]{$i$}}
\pgfputat{\pgfxy(1.7,3.35)}{\pgfbox[center,center]{\small{$\ms{D}\tn{iv}_X$}}}
\pgfsetendarrow{\pgfarrowpointed{3pt}}
\pgfxyline(2.2,5.75)(2.2,4.8)
\pgfxyline(2.2,3.75)(2.2,2.8)
\end{pgftranslate}
\begin{pgftranslate}{\pgfpoint{4cm}{0cm}}
\pgfxyline(1,1.5)(1,7)
\pgfxyline(1,7)(3.4,7)
\pgfxyline(3.4,7)(3.4,1.5)
\pgfxyline(3.4,1.5)(1,1.5)
\pgfputat{\pgfxy(2.2,6.25)}{\pgfbox[center,center]{\large{$\ms{O}^*_{X_\ee}$}}}
\pgfputat{\pgfxy(2.2,4.25)}{\pgfbox[center,center]{\large{$\ms{R}^*_{X_\ee}$}}}
\pgfputat{\pgfxy(1.85,5.35)}{\pgfbox[center,center]{$i$}}
\pgfsetendarrow{\pgfarrowpointed{3pt}}
\pgfxyline(-.7,6.3)(.6,6.3)
\pgfxyline(-.7,4.3)(.6,4.3)
\pgfxyline(2.2,5.75)(2.2,4.8)
\pgfputat{\pgfxy(0,6.7)}{\pgfbox[center,center]{$i$}}
\pgfputat{\pgfxy(0,4.7)}{\pgfbox[center,center]{$i$}}
\pgfputat{\pgfxy(2.2,2.25)}{\pgfbox[center,center]{\large{$\ms{Z}^1_{X_\ee}$}}}
\pgfputat{\pgfxy(1.65,3.35)}{\pgfbox[center,center]{\small{$\ms{D}\tn{iv}_{X_\ee}$}}}
\pgfputat{\pgfxy(0,2.8)}{\pgfbox[center,center]{$i$}}
\pgfsetendarrow{\pgfarrowpointed{3pt}}
\pgfxyline(-.7,2.3)(.6,2.3)
\pgfxyline(2.2,3.75)(2.2,2.8)
\end{pgftranslate}
\begin{pgftranslate}{\pgfpoint{8.9cm}{0cm}}
\pgfxyline(.8,1.5)(.8,7)
\pgfxyline(.8,7)(3.6,7)
\pgfxyline(3.6,7)(3.6,1.5)
\pgfxyline(3.6,1.5)(.8,1.5)
\pgfputat{\pgfxy(2.2,6.25)}{\pgfbox[center,center]{\large{$\ms{O}^*_{X_\ee,\ee}$}}}
\pgfputat{\pgfxy(2.2,4.25)}{\pgfbox[center,center]{\large{$\ms{R}^*_{X_\ee,\ee}$}}}
\pgfputat{\pgfxy(1.85,5.35)}{\pgfbox[center,center]{$i$}}
\pgfsetendarrow{\pgfarrowtriangle{6pt}}
\pgfsetendarrow{\pgfarrowpointed{3pt}}
\pgfxyline(-.9,6.3)(.4,6.3)
\pgfxyline(-.9,4.3)(.4,4.3)
\pgfxyline(2.2,5.75)(2.2,4.8)
\pgfputat{\pgfxy(-.2,6.7)}{\pgfbox[center,center]{$1/f$}}
\pgfputat{\pgfxy(-.2,4.7)}{\pgfbox[center,center]{$1/f$}}
\pgfputat{\pgfxy(2.2,2.25)}{\pgfbox[center,center]{\large{$\ms{Z}^1_{X_\ee,\ee}$}}}
\pgfputat{\pgfxy(1.5,3.35)}{\pgfbox[center,center]{\small{$\ms{D}\tn{iv}_{X_\ee,\ee}$}}}
\pgfputat{\pgfxy(-.2,2.8)}{\pgfbox[center,center]{$\tn{proj}_2$}}
\pgfsetendarrow{\pgfarrowpointed{3pt}}
\pgfxyline(-.9,2.3)(.4,2.3)
\pgfxyline(2.2,3.75)(2.2,2.8)
\end{pgftranslate}
\begin{pgftranslate}{\pgfpoint{13.8cm}{0cm}}
\pgfxyline(1,1.5)(1,7)
\pgfxyline(1,7)(3.4,7)
\pgfxyline(3.4,7)(3.4,1.5)
\pgfxyline(3.4,1.5)(1,1.5)
\pgfputat{\pgfxy(2.2,6.25)}{\pgfbox[center,center]{\large{$\ms{O}_X^+$}}}
\pgfputat{\pgfxy(2.2,4.25)}{\pgfbox[center,center]{\large{$\ms{R}_X^+$}}}
\pgfputat{\pgfxy(-.2,6.7)}{\pgfbox[center,center]{$\tn{ch}$}}
\pgfputat{\pgfxy(1.85,5.35)}{\pgfbox[center,center]{$i$}}
\pgfputat{\pgfxy(-.2,4.7)}{\pgfbox[center,center]{$\tn{ch}$}}
\pgfputat{\pgfxy(-.2,6.3)}{\pgfbox[center,center]{\large{$\sim$}}}
\pgfputat{\pgfxy(-.2,4.3)}{\pgfbox[center,center]{\large{$\sim$}}}
\pgfsetendarrow{\pgfarrowpointed{3pt}}
\pgfxyline(2.2,5.75)(2.2,4.8)
\pgfputat{\pgfxy(2.2,2.25)}{\pgfbox[center,center]{\large{$\ms{P}_X^1$}}}
\pgfputat{\pgfxy(1.8,3.35)}{\pgfbox[center,center]{$\Pi_X^+$}}
\pgfputat{\pgfxy(-.2,2.8)}{\pgfbox[center,center]{$\tn{ch}$}}
\pgfsetendarrow{\pgfarrowpointed{3pt}}
\pgfxyline(2.2,3.75)(2.2,2.8)
\pgfputat{\pgfxy(-.2,2.3)}{\pgfbox[center,center]{\large{$\sim$}}}
\end{pgftranslate}
\end{pgftranslate}
\end{pgfmagnify}
\end{pgfpicture}
\caption{Coniveau machine for $\tn{Ch}_X^1$ of a smooth complex projective algebraic curve $X$.}
\label{figconiveaucurveintro}
\end{figure}%&&&&&&&&&&&&&&&&&&&&&&&&&&&&&&&&&&&&&&&&&&&&&&&&&&&&&&&&&&&&&&&&&&&&&&&&&&&&&&&&&&
%&&&&&&&&&&&&&&&&&&&&&&&&&&&&&&&&&&&&&&&&&&&&&&&&&&&&&&&&&&&&&&&&&&&&&&&&&&&&&&&&&&
%&&&                    &&&       &&&&&    &&&                &&&&&&&&&&&&&&&&&&&&&&&&&&&&&&&&&&&&&&&&&&&&&&&&&&&
%&&&    &&&&&&&&&    &    &&&&   &&&    &&&&    &&&&&&&&&&&&&&&&&&&&&&&&&&&&&&&&&&&&&&&&&&&&&&&&&& 
%&&&    &&&&&&&&&    &&    &&&   &&&    &&&&&    &&&&&&&&&&&&&&&&&&&&&&&&&&&&&&&&&&&&&&&&&&&&&&&&& 
%&&&                 &&&&    &&&    &&   &&&    &&&&&    &&&&&&&&&&&&&&&&&&&&&&&&&&&&&&&&&&&&&&&&&&&&&&&&& 
%&&&    &&&&&&&&&    &&&&    &   &&&    &&&&&    &&&&&&&&&&&&&&&&&&&&&&&&&&&&&&&&&&&&&&&&&&&&&&&&& 
%&&&    &&&&&&&&&    &&&&&       &&&    &&&&     &&&&&&&&&&&&&&&&&&&&&&&&&&&&&&&&&&&&&&&&&&&&&&&&& 
%&&&                    &&&    &&&&&&    &&&                  &&&&&&&&&&&&&&&&&&&&&&&&&&&&&&&&&&&&&&&&&&&&&&&&&&
%&&&&&&&&&&&&&&&&&&&&&&&&&&&&&&&&&&&&&&&&&&&&&&&&&&&&&&&&&&&&&&&&&&&&&&&&&&&&&&&&&&
%&&&&&&&&&&&&&&&&&&&&&&&&&&&&&&&&&&&&&&&&&&&&&&&&&&&&&&&&&&&&&&&&&&&&&&&&&&&&&&&&&&
\vspace*{-.5cm}

The first column on the left in figure \hyperref[figconiveaucurveintro]{\ref{figconiveaucurveintro}} is the familiar {\it sheaf divisor sequence.}  This sequence is the Cousin flasque resolution of the sheaf $\ms{O}_X^*$ of multiplicative groups of invertible regular functions on $X$, which coincides with the first sheaf of algebraic $K$-theory $\ms{K}_{1,X}$ on $X$.   The first Zariski sheaf cohomology group $H_{\tn{Zar}}^1(X,\ms{O}_X^*)$ is therefore the first Chow group $\tn{Ch}_X^1$ of $X$ by Bloch's formula \hyperref[blochstheoremintro]{\ref{blochstheoremintro}}.  The second column  is the ``thickened divisor sequence," given by tensoring the structure sheaf $\ms{O}_X$ of $X$ with the algebra of dual numbers $\CC[\ee]/\ee^2$.  This is the ``sheaf-level manifestation" of taking the fiber product $X\times_k \tn{Spec } \CC[\ee]/\ee^2$.  The third column is the ``relative sequence" defined in terms of the first two columns.  The fourth column is the ``additive version" of the relative sequence.  As noted by Green and Griffith \cite{GreenGriffithsTangentSpaces05}, the fourth column may be recognized in this case as the so-called {\it principal parts sequence.}   This viewpoint may be understood very concretely in terms of {\it Laurent tails} in the case of complex algebraic curves, but ultimately involves profound connections among algebraic $K$-theory, local cohomology, cyclic homology, and differential forms. 

%SSSSSSSSSSSSSSSSSSSSSSSSSSSSSSSSSSSSSSSSSSSSSSSSSSSSSSSSSSSSSSSSSSSSSSSSSSSSSSSSSS
%SSSSSSSSSSSSSSSSSSSSSSSSSSSSSSSSSSSSSSSSSSSSSSSSSSSSSSSSSSSSSSSSSSSSSSSSSSSSSSSSSS
%SSS                      SSS                       SSS                    SS                          SS           SSS                         SSS          SSSSS      SSSS
%SSS      SSSSSSSSS    SSSSSSSSSS     SSSSSSSSSSSS     SSSSSSS     SSSS    SSSSSS     SSS     S     SSSS     SSSSS
%SSS      SSSSSSSSS    SSSSSSSSSS     SSSSSSSSSSSS     SSSSSSS     SSSS    SSSSSS     SSS     SS     SSS     SSSSS
%SSS                      SSS                  SSSSS    SSSSSSSSSSSS     SSSSSSS     SSSS    SSSSSS     SSS     SSS     SS     SSSSS
%SSSSSSSSSS    SSS    SSSSSSSSSS    SSSSSSSSSSSS     SSSSSSS     SSSS    SSSSSS     SSS     SSSS     S     SSSSS
%SSSSSSSSSS    SSS    SSSSSSSSSS    SSSSSSSSSSSS     SSSSSSS     SSSS    SSSSSS     SSS     SSSSS          SSSSS
%SSS                       SSS                      SSS                       SSSSS     SSSSSS          SSS                         SSS     SSSSSS        SSSSS
%SSSSSSSSSSSSSSSSSSSSSSSSSSSSSSSSSSSSSSSSSSSSSSSSSSSSSSSSSSSSSSSSSSSSSSSSSSSSSSSSSS
%SSSSSSSSSSSSSSSSSSSSSSSSSSSSSSSSSSSSSSSSSSSSSSSSSSSSSSSSSSSSSSSSSSSSSSSSSSSSSSSSSS

\subsection{Precursors in the Literature}\label{subsectionconiveathisthesis}

{\bf Green and Griffiths' Version for an Algebraic Surface.}  Green and Griffiths \cite{GreenGriffithsTangentSpaces05} use a rudimentary version of the coniveau machine, though they do not use this terminology.  In particular, they work directly in terms of sheaf resolutions rather than beginning with the coniveau spectral sequence.  For purposes of comparison, I include a diagram in figure \hyperref[figconiveausurfaceGG]{\ref{figconiveausurfaceGG}} below representing their version of the machine for analyzing the first-order infinitesimal theory of $\tn{Ch}_X^2$ for a smooth algebraic surface $X$, using their notation.  The clearest reference to this construction appears in the proposition on page 131 of \cite{GreenGriffithsTangentSpaces05}, where Green and Griffiths state, {\it``The tangent sequence to the Bloch-Gersten-Quillen sequence \tn{[column 1]} is the Cousin flasque resolution of $\varOmega_{X/\QQ}^1$ \tn{[column 4]}."} 

%&&&&&&&&&&&&&&&&&&&&&&&&&&&&&&&&&&&&&&&&&&&&&&&&&&&&&&&&&&&&&&&&&&&&&&&&&&&&&&&&&&
%&&&&&&&&&&&&&&&&&&&&&&&&&&&&&&&&&&&&&&&&&&&&&&&&&&&&&&&&&&&&&&&&&&&&&&&&&&&&&&&&&&
%&&&&                 &&&&    &&&               &&&                    &&&    &&&&    &&&              &&&&                   &&&&&&&&&&&&&&&&&&
%&&&&   &&&&   &&&&    &&&   &&&&&&&&&&&   &&&&&&    &&&&    &&&    &&&     &&&    &&&&&&&&&&&&&&&&&&&&&&&&
%&&&&   &&&&   &&&&    &&&   &&&&&&&&&&&   &&&&&&    &&&&    &&&    &&&     &&&    &&&&&&&&&&&&&&&&&&&&&&&&
%&&&&                 &&&&    &&&   &&&&&&&&&&&   &&&&&&    &&&&    &&&            &&&&&             &&&&&&&&&&&&&&&&&&&&
%&&&&    &&&&&&&&&    &&&   &&&&&&&&&&&   &&&&&&    &&&&    &&&    &&&   &&&&    &&&&&&&&&&&&&&&&&&&&&&&
%&&&&    &&&&&&&&&    &&&   &&&&&&&&&&&   &&&&&&    &&&&    &&&    &&&&   &&&    &&&&&&&&&&&&&&&&&&&&&&&
%&&&&    &&&&&&&&&    &&&                &&&&&&   &&&&&&                   &&&    &&&&&   &&                  &&&&&&&&&&&&&&&&&&
%&&&&&&&&&&&&&&&&&&&&&&&&&&&&&&&&&&&&&&&&&&&&&&&&&&&&&&&&&&&&&&&&&&&&&&&&&&&&&&&&&&
%&&&&&&&&&&&&&&&&&&&&&&&&&&&&&&&&&&&&&&&&&&&&&&&&&&&&&&&&&&&&&&&&&&&&&&&&&&&&&&&&&&
\begin{figure}[H]
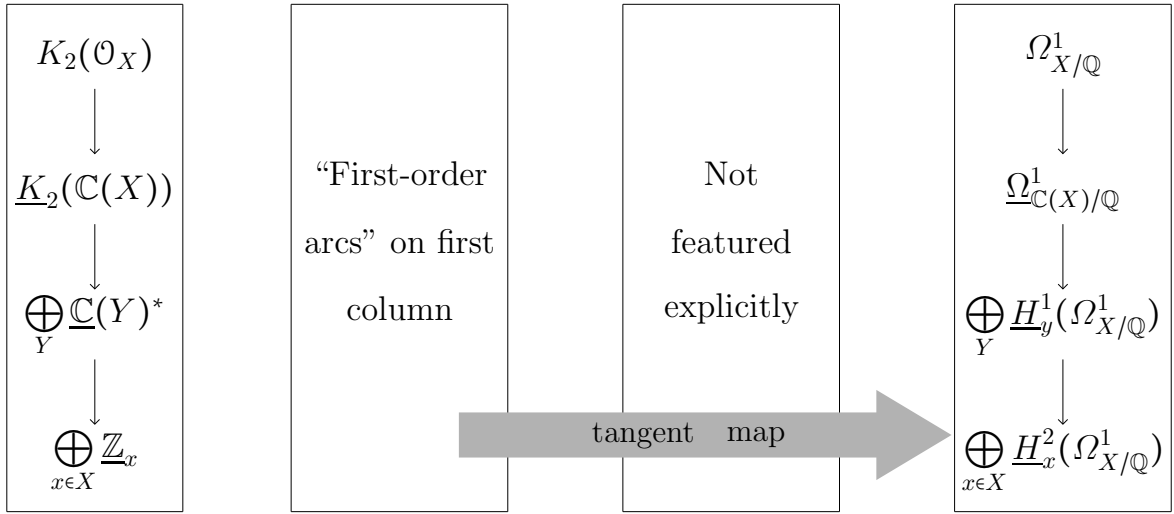

\begin{pgfpicture}{0cm}{0cm}{17cm}{7cm}
\begin{pgfmagnify}{.9}{.9}
\begin{pgftranslate}{\pgfpoint{.2cm}{-1.3cm}}
\begin{pgftranslate}{\pgfpoint{-.5cm}{0cm}}
\pgfxyline(.9,1.5)(.9,9)
\pgfxyline(.9,9)(3.5,9)
\pgfxyline(3.5,9)(3.5,1.5)
\pgfxyline(3.5,1.5)(.9,1.5)
\pgfputat{\pgfxy(2.2,8.25)}{\pgfbox[center,center]{\large{$K_2(\ms{O}_X)$}}}
\pgfputat{\pgfxy(2.2,6.25)}{\pgfbox[center,center]{\large{$\underline{K}_2(\CC(X))$}}}
\pgfputat{\pgfxy(2.2,4.25)}{\pgfbox[center,center]{\large{$\displaystyle\bigoplus_Y\underline{\CC}(Y)^*$}}}
\pgfputat{\pgfxy(2.2,2.25)}{\pgfbox[center,center]{\large{$\displaystyle\bigoplus_{x\in X}\underline{\ZZ}_x$}}}
\pgfsetendarrow{\pgfarrowpointed{3pt}}
\pgfxyline(2.2,7.75)(2.2,6.8)
\pgfxyline(2.2,5.75)(2.2,4.8)
\pgfxyline(2.2,3.75)(2.2,2.8)
\end{pgftranslate}
\begin{pgftranslate}{\pgfpoint{4cm}{0cm}}
\pgfxyline(.6,1.5)(.6,9)
\pgfxyline(.6,9)(3.8,9)
\pgfxyline(3.8,9)(3.8,1.5)
\pgfxyline(3.8,1.5)(.6,1.5)
\pgfputat{\pgfxy(2.2,6.5)}{\pgfbox[center,center]{\large{``First-order}}}
\pgfputat{\pgfxy(2.2,5.5)}{\pgfbox[center,center]{\large{arcs" on first}}}
\pgfputat{\pgfxy(2.2,4.5)}{\pgfbox[center,center]{\large{column}}}
\end{pgftranslate}
\begin{pgftranslate}{\pgfpoint{8.9cm}{0cm}}
\pgfxyline(.6,1.5)(.6,9)
\pgfxyline(.6,9)(3.8,9)
\pgfxyline(3.8,9)(3.8,1.5)
\pgfxyline(3.8,1.5)(.6,1.5)
\pgfputat{\pgfxy(2.2,6.5)}{\pgfbox[center,center]{\large{Not}}}
\pgfputat{\pgfxy(2.2,5.5)}{\pgfbox[center,center]{\large{featured}}}
\pgfputat{\pgfxy(2.2,4.5)}{\pgfbox[center,center]{\large{explicitly}}}
\end{pgftranslate}
\begin{pgftranslate}{\pgfpoint{13.8cm}{0cm}}
\pgfxyline(.6,1.5)(.6,9)
\pgfxyline(.6,9)(3.8,9)
\pgfxyline(3.8,9)(3.8,1.5)
\pgfxyline(3.8,1.5)(.6,1.5)
\pgfputat{\pgfxy(2.2,8.25)}{\pgfbox[center,center]{\large{$\varOmega_{X/\QQ}^1$}}}
\pgfputat{\pgfxy(2.2,6.25)}{\pgfbox[center,center]{\large{$\underline{\Omega}_{\CC(X)/\QQ}^1$}}}
\pgfputat{\pgfxy(2.2,4.25)}{\pgfbox[center,center]{\large{$\displaystyle\bigoplus_Y\underline{H}_y^1(\varOmega_{X/\QQ}^1)$}}}
\pgfsetendarrow{\pgfarrowpointed{3pt}}
\pgfxyline(2.2,5.75)(2.2,4.8)
\pgfputat{\pgfxy(2.2,2.25)}{\pgfbox[center,center]{\large{$\displaystyle\bigoplus_{x\in X}\underline{H}_{x}^2(\varOmega_{X/\QQ}^1)$}}}
\pgfsetendarrow{\pgfarrowpointed{3pt}}
\pgfxyline(2.2,3.75)(2.2,2.8)
\pgfxyline(2.2,7.75)(2.2,6.8)
\end{pgftranslate}
\end{pgftranslate}
\end{pgfmagnify}
\begin{pgftranslate}{\pgfpoint{-1.5cm}{-1.5cm}}
\begin{colormixin}{30!white}
\color{black}
\pgfmoveto{\pgfxy(8.05,3)}
\pgflineto{\pgfxy(13.6,3)}
\pgflineto{\pgfxy(13.6,3.3)}
\pgflineto{\pgfxy(14.6,2.7)}
\pgflineto{\pgfxy(13.6,2.1)}
\pgflineto{\pgfxy(13.6,2.4)}
\pgflineto{\pgfxy(8.05,2.4)}
\pgflineto{\pgfxy(8.05,3)}
\pgffill
\end{colormixin}
\pgfputat{\pgfxy(10.5,2.7)}{\pgfbox[center,center]{tangent}}
\pgfputat{\pgfxy(12,2.67)}{\pgfbox[center,center]{map}}
\end{pgftranslate}
\end{pgfpicture}
\caption{``Coniveau machine" of Green and Griffiths for $\tn{Ch}_X^2$ of a smooth algebraic surface $X$.}
\label{figconiveausurfaceGG}
\end{figure}
%&&&&&&&&&&&&&&&&&&&&&&&&&&&&&&&&&&&&&&&&&&&&&&&&&&&&&&&&&&&&&&&&&&&&&&&&&&&&&&&&&&
%&&&&&&&&&&&&&&&&&&&&&&&&&&&&&&&&&&&&&&&&&&&&&&&&&&&&&&&&&&&&&&&&&&&&&&&&&&&&&&&&&&
%&&&                    &&&       &&&&&    &&&                &&&&&&&&&&&&&&&&&&&&&&&&&&&&&&&&&&&&&&&&&&&&&&&&&&&
%&&&    &&&&&&&&&    &    &&&&   &&&    &&&&    &&&&&&&&&&&&&&&&&&&&&&&&&&&&&&&&&&&&&&&&&&&&&&&&&& 
%&&&    &&&&&&&&&    &&    &&&   &&&    &&&&&    &&&&&&&&&&&&&&&&&&&&&&&&&&&&&&&&&&&&&&&&&&&&&&&&& 
%&&&                 &&&&    &&&    &&   &&&    &&&&&    &&&&&&&&&&&&&&&&&&&&&&&&&&&&&&&&&&&&&&&&&&&&&&&&& 
%&&&    &&&&&&&&&    &&&&    &   &&&    &&&&&    &&&&&&&&&&&&&&&&&&&&&&&&&&&&&&&&&&&&&&&&&&&&&&&&& 
%&&&    &&&&&&&&&    &&&&&       &&&    &&&&     &&&&&&&&&&&&&&&&&&&&&&&&&&&&&&&&&&&&&&&&&&&&&&&&& 
%&&&                    &&&    &&&&&&    &&&                  &&&&&&&&&&&&&&&&&&&&&&&&&&&&&&&&&&&&&&&&&&&&&&&&&&
%&&&&&&&&&&&&&&&&&&&&&&&&&&&&&&&&&&&&&&&&&&&&&&&&&&&&&&&&&&&&&&&&&&&&&&&&&&&&&&&&&&
%&&&&&&&&&&&&&&&&&&&&&&&&&&&&&&&&&&&&&&&&&&&&&&&&&&&&&&&&&&&&&&&&&&&&&&&&&&&&&&&&&&
\vspace*{-.5cm}

Here, the ``Bloch-Gersten-Quillen" sequence appearing in the first column is the sheafified Cousin complex coming from the row in degree $-2$ of the coniveau spectral sequence for algebraic $K$-theory on $X$, while the ``Cousin flasque resolution of $\varOmega_{X/\QQ}^1$" appearing in the fourth column is the sheafified Cousin complex coming from the row in degree $-2$ of the coniveau spectral sequence for relative negative cyclic homology on $X$.  In this particular case, negative cyclic homology reduces to the absolute K\"{a}hler differentials $\varOmega_{X/\QQ}^1$, since the augmentation is taken with respect to the algebra of dual numbers.  The ``tangent map" takes ``first-order arcs" on the first column to their ``tangent elements" in the fourth column.   The tangent map descends to give maps on sheaf cohomology; in particular, it takes ``first-order arcs" in $\tn{Ch}_X^2$ to their ``tangents" in $H_{\tn{Zar}}^2(X,\varOmega_{X/\QQ}^1)$, which is identified as the ``tangent group at the identity" of $\tn{Ch}_X^2$.  

%SSSSSSSSSSSSSSSSSSSSSSSSSSSSSSSSSSSSSSSSSSSSSSSSSSSSSSSSSSSSSSSSSSSSSSSSSSSSSSSSSS
%SSSSSSSSSSSSSSSSSSSSSSSSSSSSSSSSSSSSSSSSSSSSSSSSSSSSSSSSSSSSSSSSSSSSSSSSSSSSSSSSSS
%SSS                      SSS                       SSS                    SS                          SS           SSS                         SSS          SSSSS      SSSS
%SSS      SSSSSSSSS    SSSSSSSSSS     SSSSSSSSSSSS     SSSSSSS     SSSS    SSSSSS     SSS     S     SSSS     SSSSS
%SSS      SSSSSSSSS    SSSSSSSSSS     SSSSSSSSSSSS     SSSSSSS     SSSS    SSSSSS     SSS     SS     SSS     SSSSS
%SSS                      SSS                  SSSSS    SSSSSSSSSSSS     SSSSSSS     SSSS    SSSSSS     SSS     SSS     SS     SSSSS
%SSSSSSSSSS    SSS    SSSSSSSSSS    SSSSSSSSSSSS     SSSSSSS     SSSS    SSSSSS     SSS     SSSS     S     SSSSS
%SSSSSSSSSS    SSS    SSSSSSSSSS    SSSSSSSSSSSS     SSSSSSS     SSSS    SSSSSS     SSS     SSSSS          SSSSS
%SSS                       SSS                      SSS                       SSSSS     SSSSSS          SSS                         SSS     SSSSSS        SSSSS
%SSSSSSSSSSSSSSSSSSSSSSSSSSSSSSSSSSSSSSSSSSSSSSSSSSSSSSSSSSSSSSSSSSSSSSSSSSSSSSSSSS
%SSSSSSSSSSSSSSSSSSSSSSSSSSSSSSSSSSSSSSSSSSSSSSSSSSSSSSSSSSSSSSSSSSSSSSSSSSSSSSSSSS

{\bf Sen Yang's Version.}  In his recent thesis \cite{SenYangThesis}, my colleague Sen Yang presents a version of the coniveau machine, using different terminology, which enables analysis of the first-order infinitesimal theory\footnotemark\footnotetext{I believe Sen has extended this to higher-order, but his thesis states $\ee=2$.  Also, the definition of $\Omega^\bullet_{O_X/\QQ}$ gives the correct answer only if $\ee=0$, by Hesselholt's theorem on relative $K$-theory of truncated polynomial algebras \cite{HesselholtTruncated05}.} of the $m$th Chow group $\tn{Ch}_X^m$ of a smooth $n$-dimensional algebraic variety $X$ over a field $k$ of characteristic zero.  Yang's version is represented by the commutative diagram in figure \hyperref[figSenversion]{\ref{figSenversion}} below, using his notation.\footnotemark\footnotetext{Actually, I ``correct" what I regard as a few small typos here.}  The columns are exact and the rows split exact.

%&&&&&&&&&&&&&&&&&&&&&&&&&&&&&&&&&&&&&&&&&&&&&&&&&&&&&&&&&&&&&&&&&&&&&&&&&&&&&&&&&&
%&&&&&&&&&&&&&&&&&&&&&&&&&&&&&&&&&&&&&&&&&&&&&&&&&&&&&&&&&&&&&&&&&&&&&&&&&&&&&&&&&&
%&&&&                 &&&&    &&&               &&&                    &&&    &&&&    &&&              &&&&                   &&&&&&&&&&&&&&&&&&
%&&&&   &&&&   &&&&    &&&   &&&&&&&&&&&   &&&&&&    &&&&    &&&    &&&     &&&    &&&&&&&&&&&&&&&&&&&&&&&&
%&&&&   &&&&   &&&&    &&&   &&&&&&&&&&&   &&&&&&    &&&&    &&&    &&&     &&&    &&&&&&&&&&&&&&&&&&&&&&&&
%&&&&                 &&&&    &&&   &&&&&&&&&&&   &&&&&&    &&&&    &&&            &&&&&             &&&&&&&&&&&&&&&&&&&&
%&&&&    &&&&&&&&&    &&&   &&&&&&&&&&&   &&&&&&    &&&&    &&&    &&&   &&&&    &&&&&&&&&&&&&&&&&&&&&&&
%&&&&    &&&&&&&&&    &&&   &&&&&&&&&&&   &&&&&&    &&&&    &&&    &&&&   &&&    &&&&&&&&&&&&&&&&&&&&&&&
%&&&&    &&&&&&&&&    &&&                &&&&&&   &&&&&&                   &&&    &&&&&   &&                  &&&&&&&&&&&&&&&&&&
%&&&&&&&&&&&&&&&&&&&&&&&&&&&&&&&&&&&&&&&&&&&&&&&&&&&&&&&&&&&&&&&&&&&&&&&&&&&&&&&&&&
%&&&&&&&&&&&&&&&&&&&&&&&&&&&&&&&&&&&&&&&&&&&&&&&&&&&&&&&&&&&&&&&&&&&&&&&&&&&&&&&&&&
\begin{figure}[H]
\begin{pgfpicture}{0cm}{0cm}{17cm}{13cm}
\begin{pgftranslate}{\pgfpoint{1cm}{-.5cm}}
\pgfputat{\pgfxy(1,13.5)}{\pgfbox[center,center]{$0$}}
\pgfputat{\pgfxy(1,12)}{\pgfbox[center,center]{$\Omega^\bullet_{O_X/\QQ}$}}
\pgfputat{\pgfxy(1,10)}{\pgfbox[center,center]{$\Omega^\bullet_{k(X)/\QQ}$}}
\pgfputat{\pgfxy(1,8)}{\pgfbox[center,center]{$\displaystyle\bigoplus_{d\in X^{(1)}}\underline{H}^1_d(\Omega^\bullet_{O_X/\QQ})$}}
\pgfputat{\pgfxy(1,6)}{\pgfbox[center,center]{$\displaystyle\bigoplus_{y\in X^{(2)}}\underline{H}^2_y(\Omega^\bullet_{O_X/\QQ})$}}
\pgfputat{\pgfxy(1,4.2)}{\pgfbox[center,center]{$\vdots$}}
\pgfputat{\pgfxy(1,2)}{\pgfbox[center,center]{$\displaystyle\bigoplus_{x\in X^{(n)}}\underline{H}^n_x(\Omega^\bullet_{O_X/\QQ})$}}
\pgfputat{\pgfxy(1,.7)}{\pgfbox[center,center]{$0$}}
\pgfputat{\pgfxy(7,13.5)}{\pgfbox[center,center]{$0$}}
\pgfputat{\pgfxy(7,12)}{\pgfbox[center,center]{$K_m(O_{X[\ee]})$}}
\pgfputat{\pgfxy(7,10)}{\pgfbox[center,center]{$K_m(k(X)[\ee])$}}
\pgfputat{\pgfxy(7,8)}{\pgfbox[center,center]{$\displaystyle\bigoplus_{d[\ee]\in X[\ee]^{(1)}}\underline{K}_{m-1}(O_{X[\ee]}\tn{ on } d[\ee])$}}
\pgfputat{\pgfxy(7,6)}{\pgfbox[center,center]{$\displaystyle\bigoplus_{y[\ee]\in X[\ee]^{(2)}}\underline{K}_{m-2}(O_{X[\ee]}\tn{ on } y[\ee])$}}
\pgfputat{\pgfxy(7,4.2)}{\pgfbox[center,center]{$\vdots$}}
\pgfputat{\pgfxy(7,2)}{\pgfbox[center,center]{$\displaystyle\bigoplus_{x[\ee]\in X[\ee]^{(n)}}\underline{K}_{m-n}(O_{X[\ee]}\tn{ on } x[\ee])$}}
\pgfputat{\pgfxy(7,.7)}{\pgfbox[center,center]{$0$}}
\pgfputat{\pgfxy(13,13.5)}{\pgfbox[center,center]{$0$}}
\pgfputat{\pgfxy(13,12)}{\pgfbox[center,center]{$K_m(O_{X})$}}
\pgfputat{\pgfxy(13,10)}{\pgfbox[center,center]{$K_m(k(X))$}}
\pgfputat{\pgfxy(13,8)}{\pgfbox[center,center]{$\displaystyle\bigoplus_{d\in X^{(1)}}\underline{K}_{m-1}(O_{X}\tn{ on } d)$}}
\pgfputat{\pgfxy(13,6)}{\pgfbox[center,center]{$\displaystyle\bigoplus_{y\in X^{(2)}}\underline{K}_{m-2}(O_{X}\tn{ on } y)$}}
\pgfputat{\pgfxy(13,4.2)}{\pgfbox[center,center]{$\vdots$}}
\pgfputat{\pgfxy(13,2)}{\pgfbox[center,center]{$\displaystyle\bigoplus_{x\in X^{(n)}}\underline{K}_{m-n}(O_{X}\tn{ on } x)$}}
\pgfputat{\pgfxy(13,.7)}{\pgfbox[center,center]{$0$}}
\pgfsetendarrow{\pgfarrowpointed{3pt}}
\pgfxyline(1,13)(1,12.5)
\pgfxyline(1,11.5)(1,10.5)
\pgfxyline(1,9.5)(1,8.5)
\pgfxyline(1,7.7)(1,6.7)
\pgfxyline(1,5.7)(1,4.7)
\pgfxyline(1,3.7)(1,2.7)
\pgfxyline(1,1.7)(1,1.2)
\pgfxyline(7,13)(7,12.5)
\pgfxyline(7,11.5)(7,10.5)
\pgfxyline(7,9.5)(7,8.5)
\pgfxyline(7,7.7)(7,6.7)
\pgfxyline(7,5.7)(7,4.7)
\pgfxyline(7,3.7)(7,2.7)
\pgfxyline(7,1.7)(7,1.2)
\pgfxyline(13,13)(13,12.5)
\pgfxyline(13,11.5)(13,10.5)
\pgfxyline(13,9.5)(13,8.5)
\pgfxyline(13,7.7)(13,6.7)
\pgfxyline(13,5.7)(13,4.7)
\pgfxyline(13,3.7)(13,2.7)
\pgfxyline(13,1.7)(13,1.2)
\pgfxyline(11.5,12)(8.5,12)
\pgfxyline(5.1,12)(2.1,12)
\pgfxyline(11.5,10)(8.5,10)
\pgfxyline(5.1,10)(2.1,10)
\pgfxyline(10.75,8.15)(10,8.15)
\pgfxyline(4.6,8.15)(2.6,8.15)
\pgfxyline(10.75,6.15)(10,6.15)
\pgfxyline(4.6,6.15)(2.6,6.15)
\pgfxyline(10.75,2.15)(10,2.15)
\pgfxyline(4.6,2.15)(2.6,2.15)
\pgfputat{\pgfxy(3.6,12.3)}{\pgfbox[center,center]{\fsz{$tan1$}}}
\pgfputat{\pgfxy(3.6,10.3)}{\pgfbox[center,center]{\fsz{$tan2$}}}
\pgfputat{\pgfxy(3.6,8.45)}{\pgfbox[center,center]{\fsz{$tan3$}}}
\pgfputat{\pgfxy(3.6,6.45)}{\pgfbox[center,center]{\fsz{$tan4$}}}
\pgfputat{\pgfxy(3.6,2.45)}{\pgfbox[center,center]{\fsz{$tan(n+2)$}}}
\end{pgftranslate}
\end{pgfpicture}
\caption{``Coniveau machine" of Green and Griffiths for $\tn{Ch}_X^2$ of a smooth algebraic surface $X$.}
\label{figSenversion}
\end{figure}
%&&&&&&&&&&&&&&&&&&&&&&&&&&&&&&&&&&&&&&&&&&&&&&&&&&&&&&&&&&&&&&&&&&&&&&&&&&&&&&&&&&
%&&&&&&&&&&&&&&&&&&&&&&&&&&&&&&&&&&&&&&&&&&&&&&&&&&&&&&&&&&&&&&&&&&&&&&&&&&&&&&&&&&
%&&&                    &&&       &&&&&    &&&                &&&&&&&&&&&&&&&&&&&&&&&&&&&&&&&&&&&&&&&&&&&&&&&&&&&
%&&&    &&&&&&&&&    &    &&&&   &&&    &&&&    &&&&&&&&&&&&&&&&&&&&&&&&&&&&&&&&&&&&&&&&&&&&&&&&&& 
%&&&    &&&&&&&&&    &&    &&&   &&&    &&&&&    &&&&&&&&&&&&&&&&&&&&&&&&&&&&&&&&&&&&&&&&&&&&&&&&& 
%&&&                 &&&&    &&&    &&   &&&    &&&&&    &&&&&&&&&&&&&&&&&&&&&&&&&&&&&&&&&&&&&&&&&&&&&&&&& 
%&&&    &&&&&&&&&    &&&&    &   &&&    &&&&&    &&&&&&&&&&&&&&&&&&&&&&&&&&&&&&&&&&&&&&&&&&&&&&&&& 
%&&&    &&&&&&&&&    &&&&&       &&&    &&&&     &&&&&&&&&&&&&&&&&&&&&&&&&&&&&&&&&&&&&&&&&&&&&&&&& 
%&&&                    &&&    &&&&&&    &&&                  &&&&&&&&&&&&&&&&&&&&&&&&&&&&&&&&&&&&&&&&&&&&&&&&&&
%&&&&&&&&&&&&&&&&&&&&&&&&&&&&&&&&&&&&&&&&&&&&&&&&&&&&&&&&&&&&&&&&&&&&&&&&&&&&&&&&&&
%&&&&&&&&&&&&&&&&&&&&&&&&&&&&&&&&&&&&&&&&&&&&&&&&&&&&&&&&&&&&&&&&&&&&&&&&&&&&&&&&&&

\vspace*{-.5cm}

Here, the notation $\Omega^\bullet_{O_X/\QQ}$ denotes the direct sum $\Omega^{m-1}_{O_X/\QQ}\oplus\Omega^{m-3}_{O_X/\QQ}\oplus...$, and underlines indicate skyscraper sheaves defined by the corresponding groups at the points indexing each summand.  

Yang's first column on the left corresponds to the column on the far right in figure \hyperref[figsimplifiedfourcolumnKtheoryintro]{\ref{figsimplifiedfourcolumnKtheoryintro}} above, and the arrows are reversed.  As in Green and Griffith's version, the column involving relative $K$-theory is absent.  Note that the sign of the index $m-n$ in the bottom row may be positive, zero, or negative.  This reflects the existence of nontrivial $K$-theory groups with supports in negative degrees for Bass-Thomason $K$-theory.

\section{Points of Departure}\label{sectionpointsofdeparture}

The principal foundations of this book are as follows:
\begin{enumerate}
\item The general theory of filtration of a topological space by the codimensions of its points, or {\it coniveau filtration},\footnotemark\footnotetext{The French word {\it coniveau} means ``codimension" in this context.} developed by Grothendieck and Hartshorne \cite{HartshorneResiduesDuality66}.  This theory leads to the definition of the {\it coniveau spectral sequence} underlying the construction of the coniveau machine. 
\item Bloch's formula for the Chow groups \cite{BlochK2Cycles74}, extended by Quillen \cite{QuillenHigherKTheoryI72}, first cited here in equation \hyperref[blochstheoremintro]{\ref{blochstheoremintro}} above.  This formula expresses the Chow groups as sheaf cohomology groups of algebraic $K$-theory sheaves.  It is a seminal early application of the coniveau filtration, underlying the $K$-theoretic approach to the theory of Chow groups.
\item The {\it nonconnective algebraic $K$-theory} of Bass \cite{BassKTheory68} and Thomason \cite{Thomason-Trobaugh90}.  Here, {\it nonconnective} refers to the properties of the {\it topological spectrum} $\mbf{K}$ defining this version of $K$-theory; in particular, it admits homotopy groups in negative degrees.  While Bloch's formula \hyperref[blochstheoremintro]{\ref{blochstheoremintro}} itself holds for the older version of $K$-theory due to Quillen, and even for Milnor's symbolic $K$-theory up to torsion, an adequate description of the infinitesimal theory requires Bass-Thomason $K$-theory.  Technically, this is because $\mbf{K}$ is an {\it effaceable substratum} in the sense of Colliot-Th\'{e}l\`{e}ne, Hoobler, and Kahn \cite{CHKBloch-Ogus-Gabber97}.  {\it Thomason's localization theorem} (\cite{Thomason-Trobaugh90}, theorem 7.4) captures some of the most important properties of $\mbf{K}$ in this sense. 
\item The relationships between algebraic $K$-theory and cyclic homology, explored by Goodwillie \cite{GoodWillieRelativeK86}, Loday \cite{LodayCyclicHomology98}, Keller \cite{KellerCycHomofDGAlgebras96}, \cite{KellerCyclicHomologyofExactCat96}, \cite{KellerCyclicHomologyofSchemes98}, Weibel \cite{WeibelCycliccdh-CohomNegativeK06}, \cite{WeibelCyclicSchemes91}, \cite{WeibelHodgeCyclic94}, \cite{WeibelInfCohomChernNegCyclic08}, and many others.\footnotemark\footnotetext{There is a large literature on this subject.  The few papers I mention here are some of the references of particular importance to this book.}  Roughly speaking, the {\it negative variant} of cyclic homology may be viewed as a ``linearization" of algebraic $K$-theory, playing approximately the same role as a Lie algebra does to its Lie group.  
\item The {\it Bloch-Ogus theorem} \cite{BlochOgusGersten74} and its generalizations, as explicated by Colliot-Th\'{e}l\`{e}ne, Hoobler, and Kahn \cite{CHKBloch-Ogus-Gabber97}.  Theorems of this type permit algebraic $K$-theory and cyclic homology to be recognized as {\it effaceable cohomology theories with supports.}  This, in turn, implies that the sheafified {\it Cousin complexes} corresponding to the sheaves of algebraic $K$-theory and negative cyclic homology on an appropriate scheme $X$ are {\it flasque resolutions,} which enables computation of their sheaf cohomology groups.  In the case of algebraic $K$-theory, of course, these groups include the Chow groups. 
\item Green and Griffiths' infinitesimal theory of cycle groups and Chow groups \cite{GreenGriffithsTangentSpaces05}, and related results of Van der Kallen \cite{VanderKallenRingswithManyUnits77}, \cite{VanderKallenEarlyTK271}, \cite{VanMaazenStienstra}, Bloch \cite{BlochK2Cycles74},  \cite{BlochTangentSpace72}, \cite{BlochK2Artinian75}, Stienstra \cite{StienstraCartierDieudonne}, \cite{StienstraCartierDieudonneCorrection},  \cite{MaazenStienstra77}, \cite{StienstraFormalCompletion83}, Hesselholt \cite{HesselholtTruncated05}, \cite{HesselholtBigdeRhamWitt}, myself \cite{DribusMilnorK14}, and others.  The main ambition of this book is to place at least some of these results in a rigorous broader context.  
\end{enumerate}

\section{What is New?}\label{sectionwhatisnew}

The main innovation in this book is the coniveau machine, presented as a structural organizing principle for studying the infinitesimal structure of generalized cohomology theories on appropriate categories of topological spaces.   The guiding example, and the only one examined in any detail, is the case of algebraic $K$-theory and negative cyclic homology, viewed as cohomology theories with supports on a distinguished category of algebraic schemes.  The resulting version of the coniveau machine enables analysis of the infinitesimal theory of Chow groups of smooth algebraic varieties via Bloch's formula \hyperref[blochstheoremintro]{\ref{blochstheoremintro}}. 

In more detail, this book contains the following new material:

\begin{enumerate}
\item {\bf A new definition of ``generalized tangent groups at the identity" of the Chow groups of a smooth algebraic variety.}   Here arises an important subtlety: although Bloch's formula \hyperref[blochstheoremintro]{\ref{blochstheoremintro}} is ``relatively insensitive" to the specific version of $K$-theory used, the corresponding tangent objects differ in important ways for different versions of $K$-theory.  Green and Griffiths \cite{GreenGriffithsTangentSpaces05} use Milnor $K$-theory $K^\tn{M}$ to define the tangent groups; in this book, I use Bass-Thomason $K$-theory.  

\hyperref[defigendefgroupChow]{\bf Definition 3.8.3.1(2) }{\it Let $X$ be a smooth algebraic variety over a field $k$, and let $Y$ be a separated $k$-scheme, not necessarily smooth.  Let $\mbf{K}_X$ be the spectrum of algebraic $K$-theory on $X$.  Let $\mbf{K}_{X\times_kY}$ be the ``augmented $K$-theory spectrum of $X$ with respect to $Y$," where $X\times_kY$ is shorthand for the fiber product $X\times_{\tn{Spec }k} Y$.  Define the relative $K$-theory spectrum of $X$ with respect to $Y$ to be the homotopy fiber $\mbf{K}_{X\times_kY,Y}$ of the morphism of spectra $\mbf{K}_{X\times_kY}\rightarrow \mbf{K}_X$.   Define augmented and relative $K$-theory groups of $X$, and augmented and relative $K$-theory sheaves on $X$ with respect to $Y$, in the usual way, via the corresponding spectra.   Under these conditions, the {\bf generalized tangent group at the identity} $T_Y\tn{Ch}_X^p$ of the $p$th Chow group $\tn{Ch}_X^p$ of $X$ with respect to $Y$ is the $p$th Zariski sheaf cohomology group of the relative $K$-theory sheaf $\ms{K}_{p,X\times_kY,Y}$ on $X$:
\[T_Y\tn{Ch}_X^p:=H_{\tn{\fsz{Zar}}}^p\big(X,T\ms{K}_{p,X\times_kY,Y}\big).\]
}

The assignment $X\mapsto X\times_kY$ is called ``multiplication by the fixed separated scheme $Y$."   The corresponding assignments of spectra $X\mapsto \mbf{K}_{X\times_kY,Y}$ and $X\mapsto \mbf{K}_{X\times_kY}$ define ``new cohomology theories with supports" from algebraic $K$ theory, in the sense of Colliot-Th\'{e}l\`{e}ne, Hoobler, and Kahn \cite{CHKBloch-Ogus-Gabber97}. 

The prototypical example of a generalized tangent group is the ``ordinary tangent group" $T\tn{Ch}_X^p$, for which $Y$ is the spectrum of the algebra of dual numbers $\tn{Spec }k[\ee]/\ee^2$. In this case, the tangent sheaf $T\ms{K}_{p,X}$ is just the kernel $\tn{Ker}\big[\ms{K}_{p,X_\ee}\rightarrow\ms{K}_{p,X}\big]$.   Note that even in this simple case, the definition given here differs from that of Green and Griffiths.  For example, comparing the tangent sheaves at the identity of Milnor and Bass-Thomason $K$-theory in the case $p=3$ yields:
\[T\ms{K}_{3,X}^{\tn{M}}\cong\varOmega^2_{X/\QQ}\mbox{\hspace*{.5cm} while \hspace*{.5cm}} T\ms{K}_{3,X}\cong\varOmega^2_{X/\QQ}\oplus\ms{O}_X,\]
and the ``extra factor" $\ms{O}_X$ {\it may} lead to interesting new invariants in cohomology, inaccessible to the approach of Green and Griffiths.

\item {\bf A new, rigorous definition of ``generalized deformation groups" of Chow groups.}  Logically, this definition precedes the definition of the corresponding generalized tangent groups, as reflected in definition \hyperref[defigendefgroupChow]{\ref{defigendefgroupChow}} below.  However, since the generalized tangent groups are the main focus, I reverse the order in this introductory setting.  Generalized deformation groups include groups of ``$n$th-order arcs" on the Chow groups.  Green and Griffiths \cite{GreenGriffithsTangentSpaces05} do not formally define such groups, but resort to manipulating special types of ``arcs" ad hoc.  A rigorous definition is enabled by the ``existence of the second column of the coniveau machine," which involves the procedure of ``multiplying by a fixed separated scheme" \`a la Colliot-Th\'{e}l\`{e}ne, Hoobler, and Kahn \cite{CHKBloch-Ogus-Gabber97}, as in the definition of the generalized tangent groups above.  

\hyperref[defigendefgroupChow]{\bf Definition 3.8.3.1(1) }{\it Let $X$ be a smooth algebraic variety over a field $k$, and let $Y$ be a separated $k$-scheme, not necessarily smooth. The {\bf generalized deformation group of $\tn{Ch}_X^p$ with respect to $Y$} is the group $H_{\tn{Zar}}^p(X,\ms{K}_{p, X\times_kY})$.}

The prototypical example is the group of {\bf first-order arcs} on $\tn{Ch}_X^p$, which is the group $H_{\tn{Zar}}^p(X,\ms{K}_{p, X_\ee})$, where $X_\ee$ is shorthand for the fiber product $X\times_{\tn{Spec }k} \tn{Spec }k[\ee]/\ee^2$.  

\item {\bf Existence of the Coniveau Machine for Bass-Thomason $K$-theory and negative cyclic homology.}  The meaning of this existence result is that certain very general constructions, such as the coniveau spectral sequence for a cohomology theory with supports on a category of pairs over a distinguished category of schemes, when applied to Bass-Thomason $K$-theory and negative cyclic homology, exhibit certain desirable properties, such as exactness of the corresponding sheafified Cousin complexes.  Proving this involves combining several known existence results:

\begin{enumerate}
\item Combining results of Thomason and Bloch-Ogus \cite{BlochOgusGersten74},  \`a la Colliot-Th\'{e}l\`{e}ne, Hoobler, and Kahn \cite{CHKBloch-Ogus-Gabber97}, the Cousin complexes appearing as the rows of the $E_1$-level of the coniveau spectral sequence for Bass-Thomason $K$-theory on a smooth algebraic variety $X$ sheafify to yield flasque resolutions of the $K$-theory sheaves $\ms{K}_{p,X}$ on $X$.  
\newpage
\item  ``Multiplying by a fixed separated scheme," as mentioned above, generates an ``augmented cohomology theory" satisfying the same conditions.  
\item In the case where the augmentation arises from a {\it nilpotent thickening}, a similar result holds for the relative $K$-theory sheaves with respect to the augmentation.  
\item  In this case, the relative algebraic Chern character induces an isomorphism of functors between relative algebraic $K$-theory and relative negative cyclic homology. 
\end{enumerate}

These results lead to the following ``main theorem:" 

\hyperref[theoremconiveaumachineexists]{\bf Theorem 4.4.3.2 } {\it Let $\mbf{S}_k$ be a distinguished category of schemes over a field $k$, satisfying the conditions given at the beginning of section \hyperref[subsectioncohomsupportssubstrata]{\ref{subsectioncohomsupportssubstrata}} above, and let $X$ in $\mbf{S}_k$ be smooth, equidimensional, and noetherian.  Let $X\mapsto X\times_kY$ be a nilpotent thickening of $X$.  Then the coniveau machine for Bass-Thomason $K$-theory on $X$ with respect to $Y$ exists; that is, the Cousin complexes appearing as the rows of the $E_1$-level of the coniveau spectral sequence for augmented and relative $K$-theory and negative cyclic homology on $X$ with respect to $Y$ sheafify to yield flasque resolutions of the corresponding sheaves, and the algebraic Chern character induces an isomorphism of functors between the coniveau spectral sequences of relative $K$-theory and relative negative cyclic homology.}

\item {\bf Identification of the ``generalized tangent groups" of the Chow groups $\tn{Ch}_X^p$ in terms of negative cyclic homology in the case of nilpotent thickenings of $X$.}  This follows from the existence of the coniveau machine for Bass-Thomason $K$-theory and negative cyclic homology, and the existence and properties of the algebraic Chern character.  

\hyperref[corollarytangpnegcyc]{\bf Corollary 4.4.3.4 }{\it Let $X$ be a smooth algebraic variety over a field $k$, and let $Y$ be a separated scheme over $k$.  Then for all $p$, the relative algebraic Chern character induces canonical isomorphisms
\begin{equation}\label{equtangentgroupsarenegcychomgroups}T_Y\tn{Ch}_X^p\cong H_{\tn{Zar}}^p(X,\ms{HN}_{p,X\times_kY,Y})\end{equation}
between the generalized tangent groups at the identity of the Chow groups $\tn{Ch}_X^p$ are the sheaf cohomology groups of relative negative cyclic homology on $X$ with respect to the nilpotent thickening $X\mapsto X\times_kY$.}

\item {\bf ``Almost new:" a Goodwillie-type theorem for Milnor $K$-theory; details of the proof appear in a very recent supporting paper}  \cite{DribusMilnorK14}.  This theorem is a sharp result about the relationship between cyclic homology and differential forms, based on Van der Kallen's notion of stability for a commutative ring:

\hyperref[theoremgoodwilliemilnor]{\bf Theorem 3.6.5.10 }{\it Suppose that $R$ is a split nilpotent extension of a $5$-fold stable ring $S$, with extension ideal $I$, 
whose index of nilpotency is $N$.  Suppose further that every positive integer less than or equal to $N$
 is invertible in $S$.  Then for every positive integer $n$,
\[K_{n+1,R,I} ^{\tn{M}}\cong \frac{\Omega_{R,I} ^n}{d\Omega_{R,I} ^{n-1}}.\]
}
\newpage
Jan Stienstra \cite{StienstraPrivate} has suggested re-interpreting this result in terms of his Cartier-Dieudonn\'e theory for Chow groups \cite{StienstraCartierDieudonne}, \cite{StienstraCartierDieudonneCorrection}.  Wilberd Van der Kallen \cite{VanderKallenprivate14} and Lars Hesselholt \cite{Hesselholtprivate14} have suggested generalizing the theorem in terms of de Rham-Witt cohomology.  

\end{enumerate}

\section{Summary by Chapter and Section}\label{sectionsummary}

Following is a detailed summary of the material and results appearing in chapters \hyperref[chaptercurves]{\ref{chaptercurves}},   \hyperref[ChapterTechnical]{\ref{ChapterTechnical}}, \hyperref[ChapterConiveau]{\ref{ChapterConiveau}}, and appendix  \hyperref[chapterappendix]{\ref{chapterappendix}} of this book.  The present introductory chapter is omitted. 

{\bf Chapter \hyperref[chaptercurves]{\ref{chaptercurves}}} presents a detailed exposition of the special case involving zero-cycles on a smooth projective algebraic curve $X$ over the complex numbers, and the corresponding Chow group $\tn{Ch}_X^1$ of zero-cycles on $X$ modulo rational equivalence.\footnotemark\footnotetext{Recall that {\it zero}-cycles are associated with the {\it first} Chow group is because they are {\it codimension-one} cycles, since a curve is one-dimensional.} In this case, the geometry is well-understood; indeed, many of the most interesting and important geometric aspects of the general case are reduced to trivialities in this context.  This case should be viewed as a ``toy version" of the theory, providing useful illustrations, but failing to represent all the intricacies of the general case.  Green and Griffiths \cite{GreenGriffithsTangentSpaces05} follow a similar procedure, exploring the theory of curves in their Chapter 2.  My exposition is much more detailed, but also much more focused on those aspects of structure directly contributing to the coniveau machine.  In particular, I do not discuss abelian sums, Puiseaux series, the $\ms{E}xt$-interpretation, et cetera.  The principal purpose of this chapter is to provide an avenue of easy access to the theory for graduate students and those unfamiliar with modern algebraic $K$-theory.   The more accomplished may well choose to skip this chapter entirely.  
 
Section \hyperref[SectionPrelimCurves]{\ref{SectionPrelimCurves}} introduces elementary material on smooth projective algebraic curves over the complex numbers, with the primary purpose of fixing notation.   Basic concepts such as the Zariski topology, regular and rational functions, and local rings are presented.

Section \hyperref[ZeroCycles]{\ref{ZeroCycles}} discusses zero-cycles on a curve $X$, and introduces divisors, rational equivalence, and the first Chow group $\tn{Ch}_X^1$.   A certain amount of ``notational overkill" appears in this section, with the purpose of facilitating later generalizations.   The coniveau filtration makes its first appearance here, in connection with the sheaf divisor sequence.  The first Chow group is described in terms of the Picard group $\tn{Pic}_X$ (isomorphic to $\tn{Ch}_X^1$) and the Jacobian variety $\tn{J}_X$ (isomorphic to the component of $\tn{Ch}_X^1$ containing the identity).  In particular, since $\tn{Pic}_X$ is an algebraic group, one may use algebraic Lie theory to determine independently ``what the coniveau machine ought to reveal" about the infinitesimal structure of $\tn{Ch}_X^1$. 

Section \hyperref[SectionNaive]{\ref{SectionNaive}} gives a brief intuitive overview of ``first-order infinitesimal theory" in a generic sense, using motivation from elementary differential geometry and Lie theory.  These analogies provide conceptual foreshadowing for the more complicated constructions to follow.  The theme of linearization is invoked, and the basic structure of the coniveau machine for $\tn{Ch}_X^1$ of a smooth complex projective curve $X$ is outlined, with the details to be filled in by the material in the rest of the chapter.  

Section \hyperref[SectionInfRational]{\ref{SectionInfRational}} commences the detailed exploration of first-order infinitesimal theory of zero-cycles on a smooth complex algebraic curve $X$.  Since zero-cycles on $X$ may be expressed in terms of regular functions and rational functions on $X$, it is natural to begin by studying the infinitesimal theory of these functions.  Rational functions are studied first, due to the simplicity afforded by the fact that different open subsets of $X$ have isomorphic rational function fields.  Formal ``first-order arcs" of nonzero rational functions, expressed in terms of dual numbers, are examined.  Tangent elements, sets, maps, and groups are introduced, and the constructions are sheafified.  

Section \hyperref[SectionInfRegular]{\ref{SectionInfRegular}} repeats these constructions for invertible regular functions on $X$.  The only new feature is that different open subsets of $X$ have essentially different rings of regular functions, despite having isomorphic rational function fields.   Sections \hyperref[SectionInfRational]{\ref{SectionInfRational}} and \hyperref[SectionInfRegular]{\ref{SectionInfRegular}} are not ``intrinsically geometric," in the sense that a zero-cycle may generally be described in many different ways in terms of regular and rational functions.  However, these sections do lay the groundwork for the geometric material in sections \hyperref[SectionInfZeroCycles]{\ref{SectionInfZeroCycles}} and \hyperref[SectionInfChow]{\ref{SectionInfChow}}.  

Section \hyperref[SectionThickened]{\ref{SectionThickened}} takes a brief detour to explain the notion of the ``thickened curve" $X_\ee$ corresponding to an algebraic curve $X$, which possesses the same topological space as $X$, but a different structure sheaf.  The new structure sheaf $\ms{O}_{X_\ee}$ is obtained from the structure sheaf $\ms{O}_X$ of $X$ by introducing a nilpotent element $\ee$.   Geometrically, the thickened curve $X_\ee$ is the fiber product $X\times_\CC\tn{Spec }\CC[\ee]/\ee^2$.  The thickened curve $X_\ee$ is {\it not} an algebraic variety, but an ``augmented variety;" i.e., a special type of singular scheme.  In the context of the coniveau machine, it is better to think of ``augmenting the cohomology theory," rather than ``augmenting the variety," essentially because Chow groups are intrinsically topological.  However, it is useful to keep the latter viewpoint in view, since the cohomology theories of principal interest are ``mediated by ring structure" in the sense that the corresponding functors are defined in terms of rings of functions on a topological space, in this case a scheme.   In this context, rather than ``re-proving" standard results for an augmented cohomology theory, one may sometimes use existing results for the original cohomology theory that hold for certain types of singular schemes. 

Section \hyperref[SectionInfZeroCycles]{\ref{SectionInfZeroCycles}} applies the material of sections \hyperref[SectionInfRational]{\ref{SectionInfRational}} and \hyperref[SectionInfRegular]{\ref{SectionInfRegular}} to the first-order theory of zero-cycles on $X$.  This is the first detailed discussion of infinitesimal {\it geometric} structure in this book.   In particular, the infinitesimal theory of zero-cycles may be understood by taking {\it regular structure to be the trivial part of rational structure} in an appropriate sense.  First-order arcs of zero-cycles are defined, tangent elements, sets, maps, and groups are constructed.   The constructions are sheafified, and the coniveau machine for zero-cycles on $X$, the ``toy version" shown in figure \hyperref[figconiveaucurveintro]{\ref{figconiveaucurveintro}} above, is completed.   In particular, the {\it principal parts sequence} is shown to the be the natural target of the {\it tangent map,} which carries first-order arcs of invertible regular functions, rational functions, or zero-cycles to their ``tangent elements."

Section \hyperref[SectionInfChow]{\ref{SectionInfChow}} applies the coniveau machine to analyze the infinitesimal structure of the first Chow group $\tn{Ch}_X^1$.  The results of this analysis are shown to coincide with the corresponding results from algebraic Lie theory, using the fact that the first Chow group is isomorphic to the Picard group $\tn{Pic}_X$.   The section concludes by identifying ``more sophisticated interpretations" of the objects involved, in anticipation of the general theory to follow.  In particular, the sheaf $\ms{O}_X^*$ of multiplicative groups of invertible regular functions on $X$, appearing as the first nontrivial term in the sheaf divisor sequence, is re-interpreted as the first algebraic $K$-theory sheaf $\ms{K}_{1,X}$ on $X$.  The remaining terms of the sheaf divisor sequence are interpreted in terms of $K$-theory groups with supports $K_{1,X\tn{ on } x}$ and $K_{0,X\tn{ on } x}$.  The terms in the principal parts sequence are interpreted in terms of negative cyclic homology.  The tangent map is interpreted as a rudimentary version of the relative algebraic Chern character.

%SSSSSSSSSSSSSSSSSSSSSSSSSSSSSSSSSSSSSSSSSSSSSSSSSSSSSSSSSSSSSSSSSSSSSSSSSSSSSSSSSS
%SSSSSSSSSSSSSSSSSSSSSSSSSSSSSSSSSSSSSSSSSSSSSSSSSSSSSSSSSSSSSSSSSSSSSSSSSSSSSSSSSS
%SSS                      SSS                       SSS                    SS                          SS           SSS                         SSS          SSSSS      SSSS
%SSS      SSSSSSSSS    SSSSSSSSSS     SSSSSSSSSSSS     SSSSSSS     SSSS    SSSSSS     SSS     S     SSSS     SSSSS
%SSS      SSSSSSSSS    SSSSSSSSSS     SSSSSSSSSSSS     SSSSSSS     SSSS    SSSSSS     SSS     SS     SSS     SSSSS
%SSS                      SSS                  SSSSS    SSSSSSSSSSSS     SSSSSSS     SSSS    SSSSSS     SSS     SSS     SS     SSSSS
%SSSSSSSSSS    SSS    SSSSSSSSSS    SSSSSSSSSSSS     SSSSSSS     SSSS    SSSSSS     SSS     SSSS     S     SSSSS
%SSSSSSSSSS    SSS    SSSSSSSSSS    SSSSSSSSSSSS     SSSSSSS     SSSS    SSSSSS     SSS     SSSSS          SSSSS
%SSS                       SSS                      SSS                       SSSSS     SSSSSS          SSS                         SSS     SSSSSS        SSSSS
%SSSSSSSSSSSSSSSSSSSSSSSSSSSSSSSSSSSSSSSSSSSSSSSSSSSSSSSSSSSSSSSSSSSSSSSSSSSSSSSSSS
%SSSSSSSSSSSSSSSSSSSSSSSSSSSSSSSSSSSSSSSSSSSSSSSSSSSSSSSSSSSSSSSSSSSSSSSSSSSSSSSSSS

{\bf Chapter \hyperref[ChapterTechnical]{\ref{ChapterTechnical}}} develops the technical building blocks of the general theory underlying the abstract expression of the coniveau machine given in equation \hyperref[equconiveaumachinefunctorintro]{1.1.1.3} above.  The material ranges from the level of standard coursework to the level of original research.  The object is to enable a graduate student or nonexpert to acquire relatively quickly the background necessary to understand and apply the succeeding constructions.  Included is basic material from algebraic geometry and the theory of algebraic cycles, followed by an overview of topics in algebraic $K$-theory, cyclic homology, and cohomology theories with supports. 

Section \hyperref[SectionBasicAG]{\ref{SectionBasicAG}} presents basic material from algebraic geometry necessary for the remainder of the book.  This section covers similar ground for algebraic varieties as section \hyperref[SectionPrelimCurves]{\ref{SectionPrelimCurves}} does for smooth complex projective curves.  In this book, an {\it algebraic variety} is a {\it noetherian integral separated algebraic scheme of finite type over an algebraically closed field.}   The technical meaning of these terms is explained in this section.  Much of the material discussed in the remainder of the book applies to more general objects such as singular schemes over arbitrary commutative rings, but the principal focus is on smooth algebraic varieties.  The more general category of separated schemes over a field $k$ is also important, since ``multiplying by a fixed separated scheme" \`a la Colliot-Th\'{e}l\`{e}ne, Hoobler, and Kahn \cite{CHKBloch-Ogus-Gabber97}, is the principal method of defining ``augmented cohomology theories" in this book.  Also included in this section is a brief discussion of dimension, codimension, and coniveau filtration.  

Section \hyperref[sectioncyclegroups]{\ref{sectioncyclegroups}} discusses background on the theory of algebraic cycles, cycle groups, and Chow groups of a smooth algebraic variety $X$.   Included is a brief discussion of equivalence relations on cycles and intersection theory, which motivates the definition of the Chow groups.  Cycle groups themselves are generally ``too large and cumbersome" to provide reasonable access to ``the most interesting questions" in this context.  Samuel's notion of {\it adequate equivalence} \cite{SamuelAdequate58} allows cycle groups to be replaced in this role by groups of cycle classes that fit together to form a graded ring under intersection.  {\it Numerical}, {\it homological}, {\it algebraic}, and {\it rational equivalence} are four adequate equivalence relations, with rational equivalence the ``finest" of the four.  The $p$th Chow group $\tn{Ch}_X^p$ of $X$ is defined to be the group of rational equivalence classes of codimension-$p$ cycles on $X$.  The section briefly describes how the codimension-one case; i.e., the case of {\it divisors}, is particularly simple, and outlines a few of the complications arising in higher codimensions, as discovered by Mumford \cite{MumfordZeroCycles69}, Griffiths \cite{GriffithsRationalIntegralsIandII69}, Clemens \cite{ClemensHommodalg83}, and others.  

Section \hyperref[sectiontangentgroupsatidentity]{\ref{sectiontangentgroupsatidentity}} defines and discusses the concept of {\it tangent groups at the identity} of Chow groups, beginning with a motivational detour from Lie theory.  The Lie-theoretic connection is more than just an analogy; for example, Jan Stienstra has carried this line of thought to great heights in his {\it Cartier-Dieudonn\'e theory for Chow groups} \cite{StienstraCartierDieudonne}, \cite{StienstraCartierDieudonneCorrection}.   The tangent group at the identity $T\tn{Ch}_X^p$ of $\tn{Ch}_X^p$ is formally defined to be the Zariski sheaf cohomology group $H_{\tn{Zar}}^p(X,T\ms{K}_{p,X})$, where $T\ms{K}_{p,X}$ is the {\it tangent sheaf at the identity} of the $p$th sheaf of Bass-Thomason algebraic $K$-theory on $X$, which may be identified in this context with kernel $\tn{Ker}[\ms{K}_{p,X_\ee}\rightarrow\ms{K}_{p,X}]$.  The section includes an explanation of why a direct definition of the tangent groups in terms of the Chow functors is inadeqaute, and compares the definition to the corresponding definition of Green and Griffiths, which uses the ``more primitive" Milnor $K$-theory instead of Bass-Thomason $K$-theory.  

Section \hyperref[sectionKtheory]{\ref{sectionKtheory}} discusses background from algebraic $K$-theory necessary for the remainder of the book.  While the subject is far too vast and technically involved to give an adequate general overview, I make an attempt to provide at least a limited quantity of historical and contextual motivation, since it is unreasonable to expect everyone interested in algebraic cycles and Chow groups to possess a deep understanding of $K$-theory.   I mention the origins of $K$-theory in Riemann-Roch-Grothendieck-Atiyah-Singer theory, and discuss how ``lower $K$-theory" was developed and understood by Milnor and his contemporaries before the first ``reasonable version of higher $K$-theory" by Quillen, and subsequent developments by Waldhausen, Bass, Thomason, and others.  I discuss ``symbolic $K$-theory," with particular emphasis on Milnor $K$-theory.  As illustrated by the work of Green and Griffiths, symbolic $K$-theory is very interesting in its own right in the study of Chow groups.  I then briefly describe the more sophisticated modern $K$-theoretic constructions in terms of homotopy theory, and particularly in terms of topological spectra.  In particular, I give a bit more detail on Waldhausen's $K$-theory, which forms the basis of Thomason's approach.  I then give the necessary technical background concerning the $K$-theory of Bass and Thomason, and particularly Thomason's nonconnective $K$-theory of perfect complexes on algebraic schemes. 

Section \hyperref[sectioncyclichomology]{\ref{sectioncyclichomology}} provides background on cyclic homology.  As mentioned above, cyclic homology may be viewed as a linearization, or ``additive version," of $K$-theory.  More precisely, relative negative cyclic homology is the ``relative additive version" of $K$-theory appearing in the fourth column of the ``simplified four-column version" of coniveau machine in figure \hyperref[figsimplifiedfourcolumnKtheoryintro]{\ref{figsimplifiedfourcolumnKtheoryintro}} above.   Alternatively, cyclic homology may be viewed as a generalization of differential forms.  The section begins from the latter viewpoint, with a discussion of the theory of {\it K\"{a}hler differentials.}  In particular, {\it absolute K\"{a}hler differentials} are the ``additive version" of algebraic $K$-theory appearing in the work of Green and Griffiths, who do not explicitly work in terms of cyclic homology.  Stienstra \cite{StienstraPrivate} suggests viewing the graded module of absolute K\"{a}hler differentials as a {\it Dieudonn\'e module} in the context of his Cartier-Dieudonn\'e theory for Chow groups.  Next, I turn to the subject of cyclic homology of an algebra over a commutative ring, discussing several different definitions.  The reason for this level of detail is that actual computations of the infinitesimal structure of Chow groups involve calculating (negative) cyclic homology groups of algebras, and their relative versions.  At the end of the section, I discuss cyclic homology of schemes, as defined by Weibel and Keller.   Particularly useful in theoretical and structural settings is Keller's general machinery of {\it localization pairs.}  

Section \hyperref[subsectionrelativeKcyclic]{\ref{subsectionrelativeKcyclic}} discusses relative $K$-theory and relative cyclic homology, which appear in the third and fourth columns of the ``simplified four-column version" of the coniveau machine, respectively.  Here, I focus mostly on the case of relative groups with respect to {\it split nilpotent extensions} of rings, since this provides the proper algebraic notion of infinitesimal structure.  In the context of analyzing the infinitesimal structure of Chow groups of a smooth algebraic variety over a field $k$, a split nilpotent extension corresponds to ``multiplying by the spectrum of a $k$-algebra generated over $k$ by nilpotent elements;" e.g., a local artinian $k$-algebra with residue field $k$.  I then turn to the discussion of {\it Goodwillie's theorem} and related results.  Goodwillie's theorem states that, under appropriate assumptions, relative algebraic $K$-theory is rationally isomorphic to relative negative cyclic homology.   In some cases, the qualifier ``rationally" can be removed.  I present a {\it Goodwillie-type theorem for Milnor $K$-theory}, proven in a separate supporting paper \cite{DribusMilnorK14}, which establishes an isomorphism between the relative Milnor $K$-theory of a split nilpotent extension of a {\it five-fold stable ring}, in the sense of Van der Kallen, and a corresponding group of absolute K\"{a}hler differentials modulo exact differentials.   Van der Kallen \cite{VanderKallenprivate14} and Hesselholt \cite{Hesselholtprivate14} suggest generalizing this result in terms of {\it de Rham-Witt cohomology.} 

Section \hyperref[sectionChern]{\ref{sectionChern}} introduces and discusses the {\it algebraic Chern character.}  The relative version of the algebraic Chern character provides the natural isomorphism between the third and fourth columns of the ``simplified four-column version" of coniveau machine in figure \hyperref[figsimplifiedfourcolumnKtheoryintro]{\ref{figsimplifiedfourcolumnKtheoryintro}} above.  In this case, the relative algebraic Chern character may be described as a composition of Goodwillie's isomorphism and a particular map between cyclic homology and negative cyclic homology.  I describe the algebraic Chern character explicitly for groups of low degree. 

Section \hyperref[sectioncohomtheoriessupports]{\ref{sectioncohomtheoriessupports}} introduces the notion of {\it cohomology theories with supports,} in the sense of Colliot-Th\'{e}l\`{e}ne, Hoobler, and Kahn \cite{CHKBloch-Ogus-Gabber97}.   I denote an arbitrary such theory by $H$; it consists of a family of contravariant functors from an appropriate category $\mbf{P}$ of pairs $(X,Z)$ of topological spaces, to an abelian category $\mbf{A}$.  Here, $X$ belongs to a distinguished category $\mbf{S}$, and $Z$ is a closed subset of $X$.  In the examples of principal interest, $\mbf{S}$ is a category of algebraic schemes.  The individual cohomology object assigned to the pair $(X,Z)$ by $H$ is denoted by $H_{X\tn{\fsz{ on }} Z}^n$, and is called ``the $n$th cohomology object of $X$ with supports in $Z$."  In particular, the cohomology objects $H_{X\tn{\fsz{ on }} X}^n$ are assigned the abbreviated notation $H_{X}^n$.   An important method of defining a cohomology theory with supports is via a special spectrum called a {\it substratum}; examples include the spectra of algebraic $K$-theory and negative cyclic homology.   These spectra can be modified via the method of ``multiplying by a fixed separated scheme" to define augmented and relative substrata, along with their corresponding cohomology groups and sheaves.  This permits the definition of {\it generalized deformation groups and tangent groups of the Chow groups} $\tn{Ch}_X^p$ of a smooth algebraic variety.  A condition on a cohomology theory with supports $H$, called {\it effaceability}, if satisfied, permits construction of a {\it coniveau spectral sequence} for $H$ with certain desirable properties.  In particular, effaceability implies a version of the Bloch-Ogus theorem \cite{BlochOgusGersten74}, which guarantees that the rows of the coniveau spectral sequence of $H$ sheafify to yield flasque resolutions of the sheaves $\ms{H}_X^n$ associated to the presheaves $U\mapsto H_{U}^n$ on $X$.  Effaceability may also be defined at the substratum level; this enables the proof of {\it universal exactness} discussed in section \hyperref[subsectionuniversalexactness]{\ref{subsectionuniversalexactness}}.

%SSSSSSSSSSSSSSSSSSSSSSSSSSSSSSSSSSSSSSSSSSSSSSSSSSSSSSSSSSSSSSSSSSSSSSSSSSSSSSSSSS
%SSSSSSSSSSSSSSSSSSSSSSSSSSSSSSSSSSSSSSSSSSSSSSSSSSSSSSSSSSSSSSSSSSSSSSSSSSSSSSSSSS
%SSS                      SSS                       SSS                    SS                          SS           SSS                         SSS          SSSSS      SSSS
%SSS      SSSSSSSSS    SSSSSSSSSS     SSSSSSSSSSSS     SSSSSSS     SSSS    SSSSSS     SSS     S     SSSS     SSSSS
%SSS      SSSSSSSSS    SSSSSSSSSS     SSSSSSSSSSSS     SSSSSSS     SSSS    SSSSSS     SSS     SS     SSS     SSSSS
%SSS                      SSS                  SSSSS    SSSSSSSSSSSS     SSSSSSS     SSSS    SSSSSS     SSS     SSS     SS     SSSSS
%SSSSSSSSSS    SSS    SSSSSSSSSS    SSSSSSSSSSSS     SSSSSSS     SSSS    SSSSSS     SSS     SSSS     S     SSSSS
%SSSSSSSSSS    SSS    SSSSSSSSSS    SSSSSSSSSSSS     SSSSSSS     SSSS    SSSSSS     SSS     SSSSS          SSSSS
%SSS                       SSS                      SSS                       SSSSS     SSSSSS          SSS                         SSS     SSSSSS        SSSSS
%SSSSSSSSSSSSSSSSSSSSSSSSSSSSSSSSSSSSSSSSSSSSSSSSSSSSSSSSSSSSSSSSSSSSSSSSSSSSSSSSSS
%SSSSSSSSSSSSSSSSSSSSSSSSSSSSSSSSSSSSSSSSSSSSSSSSSSSSSSSSSSSSSSSSSSSSSSSSSSSSSSSSSS

{\bf Chapter \hyperref[ChapterConiveau]{\ref{ChapterConiveau}}} presents the construction of the coniveau machine, focusing on the case of algebraic $K$-theory on algebraic schemes, and particularly smooth algebraic varieties.  Statements of lemmas and theorems are not always as general as possible in this chapter, but enough general context is provided to enable the reader to apply the same ideas to other specific theories.  

Section \hyperref[sectionintroductionconiveau]{\ref{sectionintroductionconiveau}} introduces the coniveau machine at an overall structural level.  The general definition is deliberately vague to allow flexibility.   The machine is described in terms of ``exact sequences of functors and natural transformations" involving ``absolute," ``augmented," and ``relative" versions of a cohomology theory with supports $H$ on a topological space $X$ , as well as corresponding ``additive versions" of $H$.  I focus on the specific context of algebraic $K$-theory, negative cyclic homology, and Chow groups, but also describe what questions need to be answered when applying the same ideas more generally.  The ``simplified four-column version" of the coniveau machine mentioned in section \hyperref[subsectionconiveathisthesis]{\ref{subsectionconiveathisthesis}} is also introduced.  

Section \hyperref[sectionconiveauSS]{\ref{sectionconiveauSS}}  gives an exposition of the construction of the coniveau spectral sequence in a context suitable for application to the study of cohomology theories with supports.  The section begins with a brief discussion of the general topic of filtration by codimension, and follows with the definition of the coniveau spectral sequence, and its expression in terms of the coniveau filtration.  

Section \hyperref[sectioncousinblochogus]{\ref{sectioncousinblochogus}} discusses the Cousin complexes of a cohomology theory $H$ with supports on a scheme $X$, which may be viewed as substructures of the corresponding coniveau spectral sequence.   A crucial result in this context is the Bloch-Ogus theorem \cite{BlochOgusGersten74}, which identifies conditions under which the sheafified Cousin complexes are flasque resolutions of the corresponding sheafified cohomology objects.   This permits study of the Zariski sheaf cohomology groups $H_{\tn{\fsz{Zar}}}^p(X,H_X^p)$, since sheaf cohomology may be computed in terms of flasque resolutions.  The prototypical example is when $H$ is algebraic $K$-theory, and $X$ is a smooth complex projective algebraic variety; in this case, the Cousin resolution of $H_X^n$ is the so-called Bloch-Gersten-Quillen resolution, whose $n$th sheaf cohomology group is the $n$th Chow group $\tn{Ch}_X^n$ by Bloch's theorem.  Of course, so much can also be accomplished in terms of Quillen $K$-theory, but the introduction of nilpotent elements for the purpose of studying infinitesimal geometric structure spoils the picture in this context, essentially because of the breakdown of Quillen's {\it devissage.}  Bass-Thomason $K$-theory enables nilpotent extensions, however, allowing access to the infinitesimal theory.  

Section \hyperref[sectionconiveaumachine]{\ref{sectionconiveaumachine}} presents the construction of the coniveau machine for a generalized cohomology theory with supports on an appropriate topological space, focusing on the ``simplified four-column version" case of Bass-Thomason algebraic $K$-theory for a nilpotent thickening of a smooth algebraic variety over a field $k$.  The construction is based on the fact that the spectrum-valued functor $\mbf{K}$ of Bass-Thomason $K$-theory is an effaceable substratum on a suitable category of schemes.   For smooth schemes, the first column of the machine {\it could} be constructed using Quillen $K$-theory, but this is insufficient for studying infinitesimal structure.  ``Augmenting algebraic $K$-theory" by ``multiplying by a fixed separated scheme," \`a la Colliot-Th\'{e}l\`{e}ne, Hoobler, and Kahn \cite{CHKBloch-Ogus-Gabber97}, enables the construction of the second and third columns of the machine.  The construction of the coniveau machine is completed by incorporating the Chern character to relative negative cyclic homology.  The property of {\it universal exactness} enables the construction to be further leveraged by applying additive functors at the level of Cousin complexes.

%SSSSSSSSSSSSSSSSSSSSSSSSSSSSSSSSSSSSSSSSSSSSSSSSSSSSSSSSSSSSSSSSSSSSSSSSSSSSSSSSSS
%SSSSSSSSSSSSSSSSSSSSSSSSSSSSSSSSSSSSSSSSSSSSSSSSSSSSSSSSSSSSSSSSSSSSSSSSSSSSSSSSSS
%SSS                      SSS                       SSS                    SS                          SS           SSS                         SSS          SSSSS      SSSS
%SSS      SSSSSSSSS    SSSSSSSSSS     SSSSSSSSSSSS     SSSSSSS     SSSS    SSSSSS     SSS     S     SSSS     SSSSS
%SSS      SSSSSSSSS    SSSSSSSSSS     SSSSSSSSSSSS     SSSSSSS     SSSS    SSSSSS     SSS     SS     SSS     SSSSS
%SSS                      SSS                  SSSSS    SSSSSSSSSSSS     SSSSSSS     SSSS    SSSSSS     SSS     SSS     SS     SSSSS
%SSSSSSSSSS    SSS    SSSSSSSSSS    SSSSSSSSSSSS     SSSSSSS     SSSS    SSSSSS     SSS     SSSS     S     SSSSS
%SSSSSSSSSS    SSS    SSSSSSSSSS    SSSSSSSSSSSS     SSSSSSS     SSSS    SSSSSS     SSS     SSSSS          SSSSS
%SSS                       SSS                      SSS                       SSSSS     SSSSSS          SSS                         SSS     SSSSSS        SSSSS
%SSSSSSSSSSSSSSSSSSSSSSSSSSSSSSSSSSSSSSSSSSSSSSSSSSSSSSSSSSSSSSSSSSSSSSSSSSSSSSSSSS
%SSSSSSSSSSSSSSSSSSSSSSSSSSSSSSSSSSSSSSSSSSSSSSSSSSSSSSSSSSSSSSSSSSSSSSSSSSSSSSSSSS

\newpage

{\bf Appendix \hyperref[chapterappendix]{\ref{chapterappendix}}} provides supporting material on algebraic $K$-theory and cyclic homology.

Section \hyperref[subsecspectra]{\ref{subsecspectra}} presents basic information on topological spectra to support the material on Bass-Thomason $K$-theory and negative cyclic homology. 

Section \hyperref[sectioncyclichomologymixedkeller]{\ref{sectioncyclichomologymixedkeller}} describes Keller's mixed complex for an algebra, not necessarily unital.  The reason for including this construction is that it underlies Keller's machinery of localization pairs. 

Section \hyperref[chaptersimplicialcyclic]{\ref{chaptersimplicialcyclic}} provides basics on simplicial and cyclic theory.  The theory of simplicial and cyclic categories and modules provides a natural language for much existing material on algebra cohomology theories including Hochschild, cyclic, and Andr\'e-Quillen cohomology, which may be viewed as generalizations of the theory of differential forms. 

Section \hyperref[sectionweibelcychomschemes]{\ref{sectionweibelcychomschemes}} describes Weibel's construction of cyclic homology for schemes.  This construction is important both because it predates Keller's more sophisticated constructions, and because it offers a convenient computational tool in the form of Cartan-Eilenberg hypercohomology.  

Section \hyperref[sectioncartaneilenberg]{\ref{sectioncartaneilenberg}} describes Cartan-Eilenberg hypercohomology, with the object of supporting Weibel's Weibel's construction of cyclic homology for schemes in section \hyperref[sectionweibelcychomschemes]{\ref{sectionweibelcychomschemes}}. 

Section \hyperref[spectralsequences]{\ref{spectralsequences}} provides background on spectral sequences.  The material is standard, but the choice of focus and careful explanation of convention should be very helpful to the reader.   In particular, the exposition is designed to remove the annoyance of worrying about details and conventions regarding spectral sequences.    In particular, this section describes the spectral sequence of an exact couple, which supports the definition of the coniveau spectral sequence in section \hyperref[sectionconiveauSS]{\ref{sectionconiveauSS}}. 

Section \hyperref[subsectionlocalcohomsheaves]{\ref{subsectionlocalcohomsheaves}} provides background on local cohomology and Cousin complexes.

%SSSSSSSSSSSSSSSSSSSSSSSSSSSSSSSSSSSSSSSSSSSSSSSSSSSSSSSSSSSSSSSSSSSSSSSSSSSSSSSSSS
%SSSSSSSSSSSSSSSSSSSSSSSSSSSSSSSSSSSSSSSSSSSSSSSSSSSSSSSSSSSSSSSSSSSSSSSSSSSSSSSSSS
%SSS                      SSS                       SSS                    SS                          SS           SSS                         SSS          SSSSS      SSSS
%SSS      SSSSSSSSS    SSSSSSSSSS     SSSSSSSSSSSS     SSSSSSS     SSSS    SSSSSS     SSS     S     SSSS     SSSSS
%SSS      SSSSSSSSS    SSSSSSSSSS     SSSSSSSSSSSS     SSSSSSS     SSSS    SSSSSS     SSS     SS     SSS     SSSSS
%SSS                      SSS                  SSSSS    SSSSSSSSSSSS     SSSSSSS     SSSS    SSSSSS     SSS     SSS     SS     SSSSS
%SSSSSSSSSS    SSS    SSSSSSSSSS    SSSSSSSSSSSS     SSSSSSS     SSSS    SSSSSS     SSS     SSSS     S     SSSSS
%SSSSSSSSSS    SSS    SSSSSSSSSS    SSSSSSSSSSSS     SSSSSSS     SSSS    SSSSSS     SSS     SSSSS          SSSSS
%SSS                       SSS                      SSS                       SSSSS     SSSSSS          SSS                         SSS     SSSSSS        SSSSS
%SSSSSSSSSSSSSSSSSSSSSSSSSSSSSSSSSSSSSSSSSSSSSSSSSSSSSSSSSSSSSSSSSSSSSSSSSSSSSSSSSS
%SSSSSSSSSSSSSSSSSSSSSSSSSSSSSSSSSSSSSSSSSSSSSSSSSSSSSSSSSSSSSSSSSSSSSSSSSSSSSSSSSS

\section{Notation and Syntax}\label{subsectionnotation}

In any lengthy work drawing on material from many different sources, one must make significant decisions about notation and syntax.  A few important conventions in this book are as follows:  

\begin{enumerate}
\item {\it Parentheses are often banished in favor of subscripts.}  For example, I denote the $p$th Chow group of an algebraic variety $X$ by $\tn{Ch}_X^p$, rather than $\tn{Ch}^p(X)$.  Multiple subscripts are sometimes used, separated by parentheses.  For instance, the $p$th sheaf of algebraic $K$-groups on an algebraic variety $X$ is denoted by $\ms{K}_{p,X}$.  
\item {\it Primed notation is often used for comparing pairs of elements, objects, or categories ``on an equal footing."}  For example, $X$ and $X'$ denote two arbitrary schemes belonging to the same category, but $(X,Z)$ denotes a scheme $X$ together with a distinguished closed subset $Z$. 
\item {\it Categories} are usually denoted by a fragment of a descriptive word, written in bold font.  For example, the category of perfect complexes on an algebraic scheme $X$ is denoted by $\mbf{Per}_X$.    
\item {\it Sheaves} are usually denoted by capital mathscript letters $\ms{O},\ms{K},...$, etc., but groups or rings of sections of sheaves are denoted by ordinary Roman lettering.  For example, the algebraic structure sheaf of an algebraic variety $X$ is denoted by $\ms{O}_X$, but the ring of sections of $\ms{O}_X$ over an open subset $U$ of $X$ is denoted by $O_U$, instead of the usual $\ms{O}_X(U)$.  This notation is permissible when working with one topological space at a time, which is generally the case in this book.  The objective of this convention is to make it clear at a glance ``at which ``algebraic level one is working." 
\item {\it Skyscraper sheaves} are denoted by underlining their underlying objects, using subscripts to indicate their points of support.  For example, the sheaf $\ms{R}_X$ of fields of rational functions is canonically isomorphic to the skyscraper sheaf $\underline{R_X}_x$ at the generic point $x$ of $X$.   
\item {\it Objects or morphisms whose notation derives from a proper name or descriptive word} are usually written with non-italicized lettering, rather than the usual italicized math lettering.  For example, the $p$th Chow group of a smooth algebraic variety $X$ is written as $\tn{Ch}_X^p$, not $Ch_X^p$.   The objective of this convention is to make it clear at a glance which symbols in a formula are ``individually functional," and which are merely combined to produce a single functional symbol. 
\item {\it Important technical terms and concepts} are generally written in {\bf bold font} when they are defined in the text, whether or not they appear in a formal definition.  When such a term or concept appears in the text either before or after its bold-font definition, it is usually written in {\it italic font}.  For example, important technical terms and concepts appearing in the summary in section \hyperref[sectionsummary]{\ref{sectionsummary}} are written in italic font rather than bold font.  Italics are also used for non-English words and phrases, such as {\it a priori}, and for ordinary English emphasis. 
\item A box $\oblong$ denotes the end of a proof or example. 
\end{enumerate}

\chapter{Algebraic Curves: An Illustrative Toy Version}\label{chaptercurves}

In this chapter, $X$ is a smooth projective algebraic curve over the field of complex numbers $\CC$.  In modern language, such a curve is an integral scheme of dimension $1$, proper over $\CC$, all of whose local rings are regular.   The category of such curves is equivalent to two other categories: the category of compact Riemann surfaces, and the category of function fields of transcendence degree $1$ over $\CC$, each category with its appropriate class of morphisms.  The first equivalence is a special case of the more general correspondence between analytic and algebraic geometry explicated by Serre.  The second is an immediate consequence of the fact that smooth projective curves are birationally equivalent if and only if they are isomorphic. 

The study of algebraic curves is an ancient subject, already well-developed before the introduction of modern algebraic geometry by Grothendieck and his school.   The modern machinery of algebraic schemes, algebraic $K$-theory, and cyclic homology, which has rapidly become indispensable to the general theory of algebraic cycles on algebraic varieties, may be expressed in older, more concrete, and more familiar terms in the case of algebraic curves.  In this chapter, I follow this ``low road" to the description of curves and zero-cycles.   This approach serves to fix notation in a well-understood setting and to motivate the more abstract constructions to follow, while providing a new structural perspective on the classical theory.   The main purpose of the chapter, however, is to provide an accessible avenue for graduate students and mathematician unfamiliar with modern algebraic $K$-theory to begin understanding the intuition behind the coniveau machine.  More accomplished practitioners of the art may well choose to skip this chapter entirely, at least on the first reading.

\section{Preliminaries for Smooth Projective Curves}\label{SectionPrelimCurves}

\subsection{Smooth Projective Curves; Affine Subsets}\label{subsectionsmoothprojcurves}

Let $X$ be a smooth projective algebraic curve over $\CC$.  I will sometimes shorten this to {\bf smooth projective curve}, {\bf smooth curve}, or simply {\bf curve}, when there is no danger of ambiguity.  Since $X$ is an integral scheme, it is irreducible, and therefore has a unique one-dimensional point, its generic point.   Every other point of $X$ is zero-dimensional.  By choosing a specific embedding of $X$ into complex projective space $\PP^n$ for some positive integer $n$, the set of zero-dimensional points of $X$ may be identified with the common vanishing locus of a set $\{F_1,...,F_m\}$ of homogeneous polynomials in $n+1$ variables $z_0,...,z_{n}$.  {\bf Smoothness} in this context is expressed by the condition that the matrix of partial derivatives $\{\partial F_i/\partial z_j\}$ has rank $n-1$ at every zero-dimensional point of $X$.\footnotemark\footnotetext{This matrix is often called the Jacobian (matrix) of $X$; it should not be confused with the Jacobian (variety) of $X$.}  If an algebraic curve is not smooth, it is called {\bf singular}.  

It is often useful to work with {\bf affine open subsets} of the smooth projective curve $X$.  A convenient collection $\{X_i\}$ of such subsets is given by intersecting $X$ with the open sets $V_i$ of $\PP^n$ on which $z_i\ne 0$.  Each $V_i$ is isomorphic to complex affine $n$-space $\AA^n$, with $n$ coordinates $\{z_j/z_i|j\ne i\}$.  The subsets $X_i$ are {\bf smooth affine algebraic curves}. Their sets of zero-dimensional points may be identified with the common vanishing loci of polynomials, no longer required to be homogeneous, in the $n$ coordinates of $\AA^n$, related in a straightforward way to the homogeneous polynomials defining $X$.   The {\bf coordinate ring} $A_i$ of $X_i$ is the quotient of the ring of polynomials in $n$ variables over $\CC$ by the ideal of polynomials vanishing on $X_i$.  

In figure \hyperref[figthreecurves]{\ref{figthreecurves}} below I have illustrated a smooth projective curve $X$ and two singular projective curves $Y$ and $Z$.  These curves are plane cubic curves, meaning that each may be defined in terms of a single third-degree polynomial in the homogeneous coordinates $z_1, z_2$, and $z_3$ of $\PP^2$.  Only real zero-dimensional points of the affine open subsets $X_3$, $Y_3$, and $Z_3$ are shown.  These are the subsets on which the third coordinate $z_3$ is nonzero.  Here $x$ and $y$ stand for the affine coordinates $z_1/z_3$ and $z_2/z_3$.   

%\comment{
%&&&&&&&&&&&&&&&&&&&&&&&&&&&&&&&&&&&&&&&&&&&&&&&&&&&&&&&&&&&&&&&&&&&&&&&&&&&&&&&&&&&
%&&&&&&&&&&&&&&&&&&&&&&&&&&&&&&&&&&&&&&&&&&&&&&&&&&&&&&&&&&&&&&&&&&&&&&&&&&&&&&&&&&&
%&&&&                 &&&&    &&&               &&&                    &&&    &&&&    &&&              &&&&                   &&&&&&&&&&&&&&&&&&&
%&&&&   &&&&   &&&&    &&&   &&&&&&&&&&&   &&&&&&    &&&&    &&&    &&&     &&&    &&&&&&&&&&&&&&&&&&&&&&&&
%&&&&   &&&&   &&&&    &&&   &&&&&&&&&&&   &&&&&&    &&&&    &&&    &&&     &&&    &&&&&&&&&&&&&&&&&&&&&&&&
%&&&&                 &&&&    &&&   &&&&&&&&&&&   &&&&&&    &&&&    &&&            &&&&&             &&&&&&&&&&&&&&&&&&&&&
%&&&&    &&&&&&&&&    &&&   &&&&&&&&&&&   &&&&&&    &&&&    &&&    &&&   &&&&    &&&&&&&&&&&&&&&&&&&&&&&&
%&&&&    &&&&&&&&&    &&&   &&&&&&&&&&&   &&&&&&    &&&&    &&&    &&&&   &&&    &&&&&&&&&&&&&&&&&&&&&&&&
%&&&&    &&&&&&&&&    &&&                &&&&&&   &&&&&&                   &&&    &&&&&   &&                  &&&&&&&&&&&&&&&&&&&
%&&&&&&&&&&&&&&&&&&&&&&&&&&&&&&&&&&&&&&&&&&&&&&&&&&&&&&&&&&&&&&&&&&&&&&&&&&&&&&&&&&&
%&&&&&&&&&&&&&&&&&&&&&&&&&&&&&&&&&&&&&&&&&&&&&&&&&&&&&&&&&&&&&&&&&&&&&&&&&&&&&&&&&&&
\begin{figure}[H]
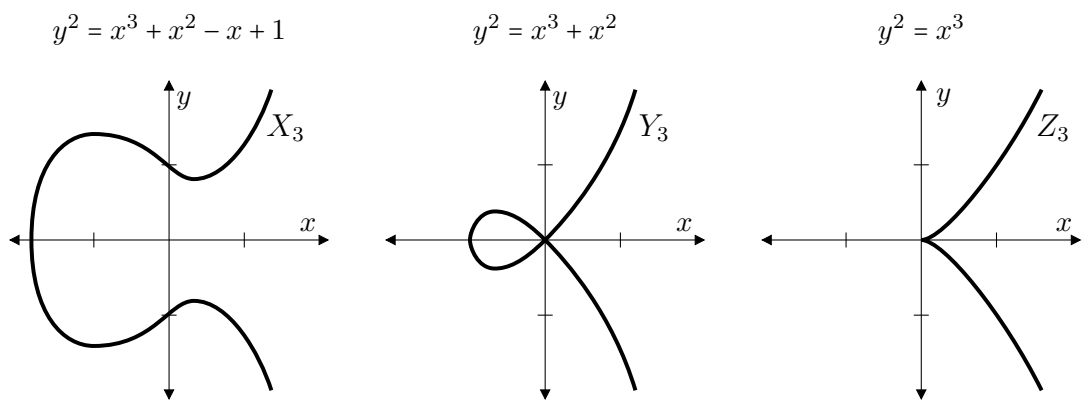

\begin{pgfpicture}{0cm}{0cm}{16cm}{5cm}
\begin{pgftranslate}{\pgfpoint{1cm}{-1cm}}
\begin{pgfscope}
\pgfxyline(2,2.9)(2,3.1)
\pgfxyline(4,2.9)(4,3.1)
\pgfxyline(2.9,2)(3.1,2)
\pgfxyline(2.9,4)(3.1,4)
\pgfsetendarrow{\pgfarrowtriangle{3pt}}
\pgfsetstartarrow{\pgfarrowtriangle{3pt}}
\pgfxyline(3,.9)(3,5.1)
\pgfxyline(.9,3)(5.1,3)
\end{pgfscope}
\begin{pgfscope}
\pgfsetlinewidth{1.5pt}
\pgfmoveto{\pgfxy(4.36,5)}
\pgfcurveto{\pgfxy(4.2,4.5)}{\pgfxy(3.8,3.81)}{\pgfxy(3.33,3.81)}
\pgfcurveto{\pgfxy(3,3.81)}{\pgfxy(2.8,4.41)}{\pgfxy(2,4.41)}
\pgfcurveto{\pgfxy(1.6,4.41)}{\pgfxy(1.17,4)}{\pgfxy(1.17,3)}
\pgfcurveto{\pgfxy(1.17,2)}{\pgfxy(1.6,1.59)}{\pgfxy(2,1.59)}
\pgfcurveto{\pgfxy(2.8,1.59)}{\pgfxy(3,2.19)}{\pgfxy(3.33,2.19)}
\pgfcurveto{\pgfxy(3.8,2.19)}{\pgfxy(4.2,1.5)}{\pgfxy(4.36,1)}
\pgfstroke
\end{pgfscope}
\pgfputat{\pgfxy(4.55,4.5)}{\pgfbox[center,center]{$X_3$}}
\pgfputat{\pgfxy(4.85,3.2)}{\pgfbox[center,center]{\small{$x$}}}
\pgfputat{\pgfxy(3.2,4.85)}{\pgfbox[center,center]{\small{$y$}}}
\pgfputat{\pgfxy(3,5.8)}{\pgfbox[center,center]{\small{$y^2=x^3+x^2-x+1$}}}
\end{pgftranslate}
\begin{pgftranslate}{\pgfpoint{6cm}{-1cm}}
\begin{pgfscope}
\pgfxyline(2,2.9)(2,3.1)
\pgfxyline(4,2.9)(4,3.1)
\pgfxyline(2.9,2)(3.1,2)
\pgfxyline(2.9,4)(3.1,4)
\pgfsetendarrow{\pgfarrowtriangle{3pt}}
\pgfsetstartarrow{\pgfarrowtriangle{3pt}}
\pgfxyline(3,.9)(3,5.1)
\pgfxyline(.9,3)(5.1,3)
\end{pgfscope}
\begin{pgfscope}
\pgfsetlinewidth{1.5pt}
\pgfmoveto{\pgfxy(4.2,5)}
\pgfcurveto{\pgfxy(3.9,4)}{\pgfxy(3.3,3.3)}{\pgfxy(3,3)}
\pgfcurveto{\pgfxy(2.7,2.7)}{\pgfxy(2.5,2.62)}{\pgfxy(2.33,2.62)}
\pgfcurveto{\pgfxy(2.1,2.62)}{\pgfxy(2,2.9)}{\pgfxy(2,3)}
\pgfcurveto{\pgfxy(2,3.1)}{\pgfxy(2.1,3.38)}{\pgfxy(2.33,3.38)}
\pgfcurveto{\pgfxy(2.5,3.38)}{\pgfxy(2.7,3.3)}{\pgfxy(3,3)}
\pgfcurveto{\pgfxy(3.3,2.7)}{\pgfxy(3.9,2)}{\pgfxy(4.2,1)}
\pgfstroke
\end{pgfscope}
\pgfputat{\pgfxy(4.45,4.5)}{\pgfbox[center,center]{$Y_3$}}
\pgfputat{\pgfxy(4.85,3.2)}{\pgfbox[center,center]{\small{$x$}}}
\pgfputat{\pgfxy(3.2,4.85)}{\pgfbox[center,center]{\small{$y$}}}
\pgfputat{\pgfxy(3,5.8)}{\pgfbox[center,center]{\small{$y^2=x^3+x^2$}}}
\end{pgftranslate}
\begin{pgftranslate}{\pgfpoint{11cm}{-1cm}}
\begin{pgfscope}
\pgfxyline(2,2.9)(2,3.1)
\pgfxyline(4,2.9)(4,3.1)
\pgfxyline(2.9,2)(3.1,2)
\pgfxyline(2.9,4)(3.1,4)
\pgfsetendarrow{\pgfarrowtriangle{3pt}}
\pgfsetstartarrow{\pgfarrowtriangle{3pt}}
\pgfxyline(3,.9)(3,5.1)
\pgfxyline(.9,3)(5.1,3)
\end{pgfscope}
\begin{pgfscope}
\pgfsetlinewidth{1.5pt}
\pgfmoveto{\pgfxy(4.6,5)}
\pgfcurveto{\pgfxy(4.1,4)}{\pgfxy(3.3,3)}{\pgfxy(3,3)}
\pgfcurveto{\pgfxy(3.3,3)}{\pgfxy(4.1,2)}{\pgfxy(4.6,1)}
\pgfstroke
\end{pgfscope}
\pgfputat{\pgfxy(4.75,4.5)}{\pgfbox[center,center]{$Z_3$}}
\pgfputat{\pgfxy(4.9,3.2)}{\pgfbox[center,center]{\small{$x$}}}
\pgfputat{\pgfxy(3.3,4.9)}{\pgfbox[center,center]{\small{$y$}}}
\pgfputat{\pgfxy(3,5.8)}{\pgfbox[center,center]{\small{$y^2=x^3$}}}
\end{pgftranslate}
\end{pgfpicture}
\caption{A smooth projective curve and two singular projective curves.}
\label{figthreecurves}
\end{figure}
%&&&&&&&&&&&&&&&&&&&&&&&&&&&&&&&&&&&&&&&&&&&&&&&&&&&&&&&&&&&&&&&&&&&&&&&&&&&&&&&&&&&
%&&&&&&&&&&&&&&&&&&&&&&&&&&&&&&&&&&&&&&&&&&&&&&&&&&&&&&&&&&&&&&&&&&&&&&&&&&&&&&&&&&&
%&&&                    &&&       &&&&&    &&&                &&&&&&&&&&&&&&&&&&&&&&&&&&&&&&&&&&&&&&&&&&&&&&&&&&&
%&&&    &&&&&&&&&    &    &&&&   &&&    &&&&    &&&&&&&&&&&&&&&&&&&&&&&&&&&&&&&&&&&&&&&&&&&&&&&&&&& 
%&&&    &&&&&&&&&    &&    &&&   &&&    &&&&&    &&&&&&&&&&&&&&&&&&&&&&&&&&&&&&&&&&&&&&&&&&&&&&&&&& 
%&&&                 &&&&    &&&    &&   &&&    &&&&&    &&&&&&&&&&&&&&&&&&&&&&&&&&&&&&&&&&&&&&&&&&&&&&&&&& 
%&&&    &&&&&&&&&    &&&&    &   &&&    &&&&&    &&&&&&&&&&&&&&&&&&&&&&&&&&&&&&&&&&&&&&&&&&&&&&&&&& 
%&&&    &&&&&&&&&    &&&&&       &&&    &&&&     &&&&&&&&&&&&&&&&&&&&&&&&&&&&&&&&&&&&&&&&&&&&&&&&&& 
%&&&                    &&&    &&&&&&    &&&                  &&&&&&&&&&&&&&&&&&&&&&&&&&&&&&&&&&&&&&&&&&&&&&&&&&&
%&&&&&&&&&&&&&&&&&&&&&&&&&&&&&&&&&&&&&&&&&&&&&&&&&&&&&&&&&&&&&&&&&&&&&&&&&&&&&&&&&&&
%&&&&&&&&&&&&&&&&&&&&&&&&&&&&&&&&&&&&&&&&&&&&&&&&&&&&&&&&&&&&&&&&&&&&&&&&&&&&&&&&&&&
%}
\vspace*{-.5cm}

The singularity at the origin in $Y_3\subset Y$ is called a node, and the singularity at the origin in $Z_3\subset Z$ is called a cusp.  Although I will discuss only smooth curves in this chapter, singular curves such as $Y$ and $Z$ will appear in various contexts in later chapters.  

The homogeneous polynomial defining $X$ in $\PP^2$ is $z_2^2z_3=z_1^3+z_1^2z_3-z_1z_3^2+z_3^3$.  ``Dehomogenizing" this polynomial with respect to each of the three coordinates $z_1, z_2$, and $z_3$ yields affine curves $X_1, X_2$, and $X_3$, of which $X_3$ is shown above.  In the next diagram, in figure \hyperref[figaltviewscurveX]{\ref{figaltviewscurveX}} below, I have illustrated the other two affine curves $X_2$ and $X_1$.    I have also illustrated the structure of the set of all zero-dimensional points of $X$ as a smooth real surface of genus $1$, suitably deformed to fit into $3$-dimensional real space.  

%\comment{
%&&&&&&&&&&&&&&&&&&&&&&&&&&&&&&&&&&&&&&&&&&&&&&&&&&&&&&&&&&&&&&&&&&&&&&&&&&&&&&&&&&&
%&&&&&&&&&&&&&&&&&&&&&&&&&&&&&&&&&&&&&&&&&&&&&&&&&&&&&&&&&&&&&&&&&&&&&&&&&&&&&&&&&&&
%&&&&                 &&&&    &&&               &&&                    &&&    &&&&    &&&              &&&&                   &&&&&&&&&&&&&&&&&&&
%&&&&   &&&&   &&&&    &&&   &&&&&&&&&&&   &&&&&&    &&&&    &&&    &&&     &&&    &&&&&&&&&&&&&&&&&&&&&&&&
%&&&&   &&&&   &&&&    &&&   &&&&&&&&&&&   &&&&&&    &&&&    &&&    &&&     &&&    &&&&&&&&&&&&&&&&&&&&&&&&
%&&&&                 &&&&    &&&   &&&&&&&&&&&   &&&&&&    &&&&    &&&            &&&&&             &&&&&&&&&&&&&&&&&&&&&
%&&&&    &&&&&&&&&    &&&   &&&&&&&&&&&   &&&&&&    &&&&    &&&    &&&   &&&&    &&&&&&&&&&&&&&&&&&&&&&&&
%&&&&    &&&&&&&&&    &&&   &&&&&&&&&&&   &&&&&&    &&&&    &&&    &&&&   &&&    &&&&&&&&&&&&&&&&&&&&&&&&
%&&&&    &&&&&&&&&    &&&                &&&&&&   &&&&&&                   &&&    &&&&&   &&                  &&&&&&&&&&&&&&&&&&&
%&&&&&&&&&&&&&&&&&&&&&&&&&&&&&&&&&&&&&&&&&&&&&&&&&&&&&&&&&&&&&&&&&&&&&&&&&&&&&&&&&&&
%&&&&&&&&&&&&&&&&&&&&&&&&&&&&&&&&&&&&&&&&&&&&&&&&&&&&&&&&&&&&&&&&&&&&&&&&&&&&&&&&&&&
\begin{figure}[H]
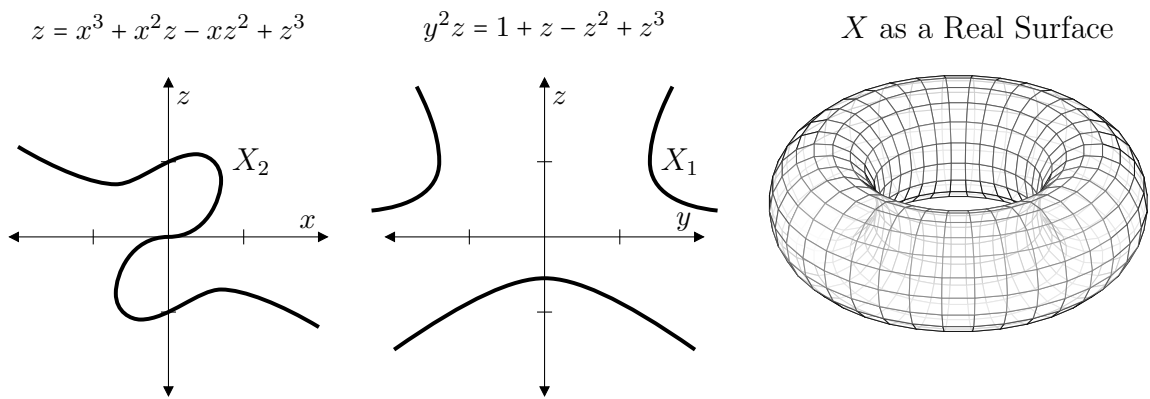

\begin{pgfpicture}{0cm}{0cm}{16cm}{5cm}
\begin{pgftranslate}{\pgfpoint{.5cm}{-1cm}}
\begin{pgfscope}
\pgfxyline(2,2.9)(2,3.1)
\pgfxyline(4,2.9)(4,3.1)
\pgfxyline(2.9,2)(3.1,2)
\pgfxyline(2.9,4)(3.1,4)
\pgfsetendarrow{\pgfarrowtriangle{3pt}}
\pgfsetstartarrow{\pgfarrowtriangle{3pt}}
\pgfxyline(3,.9)(3,5.1)
\pgfxyline(.9,3)(5.1,3)
\end{pgfscope}
\begin{pgfscope}
\pgfsetlinewidth{1.5pt}
\pgfmoveto{\pgfxy(1,4.2)}
\pgfcurveto{\pgfxy(1.5,3.9)}{\pgfxy(2,3.7)}{\pgfxy(2.3,3.7)}
\pgfcurveto{\pgfxy(2.6,3.7)}{\pgfxy(3,4.1)}{\pgfxy(3.37,4.1)}
\pgfcurveto{\pgfxy(3.5,4.1)}{\pgfxy(3.7,4)}{\pgfxy(3.7,3.75)}
\pgfcurveto{\pgfxy(3.7,3.5)}{\pgfxy(3.5,3)}{\pgfxy(3,3)}
\pgfcurveto{\pgfxy(2.5,3)}{\pgfxy(2.3,2.5)}{\pgfxy(2.3,2.25)}
\pgfcurveto{\pgfxy(2.3,2)}{\pgfxy(2.5,1.9)}{\pgfxy(2.63,1.9)}
\pgfcurveto{\pgfxy(3,1.9)}{\pgfxy(3.4,2.3)}{\pgfxy(3.7,2.3)}
\pgfcurveto{\pgfxy(4,2.3)}{\pgfxy(4.5,2.1)}{\pgfxy(5,1.8)}
\pgfstroke
\end{pgfscope}
%\pgfputat{\pgfxy(4,4.2)}{\pgfbox[center,center]{$X_1$}}
\pgfputat{\pgfxy(4.1,4)}{\pgfbox[center,center]{$X_2$}}
\pgfputat{\pgfxy(4.85,3.2)}{\pgfbox[center,center]{\small{$x$}}}
\pgfputat{\pgfxy(3.2,4.85)}{\pgfbox[center,center]{\small{$z$}}}
\pgfputat{\pgfxy(3,5.8)}{\pgfbox[center,center]{\small{$z=x^3+x^2z-xz^2+z^3$}}}
\end{pgftranslate}
\begin{pgftranslate}{\pgfpoint{5.5cm}{-1cm}}
\begin{pgfscope}
\pgfxyline(2,2.9)(2,3.1)
\pgfxyline(4,2.9)(4,3.1)
\pgfxyline(2.9,2)(3.1,2)
\pgfxyline(2.9,4)(3.1,4)
\pgfsetendarrow{\pgfarrowtriangle{3pt}}
\pgfsetstartarrow{\pgfarrowtriangle{3pt}}
\pgfxyline(3,.9)(3,5.1)
\pgfxyline(.9,3)(5.1,3)
\end{pgfscope}
\begin{pgfscope}
\pgfsetlinewidth{1.5pt}
\pgfmoveto{\pgfxy(1.3,5)}
\pgfcurveto{\pgfxy(1.4,4.8)}{\pgfxy(1.6,4.4)}{\pgfxy(1.6,4)}
\pgfcurveto{\pgfxy(1.6,3.6)}{\pgfxy(1.2,3.4)}{\pgfxy(.7,3.35)}
\pgfstroke
\pgfmoveto{\pgfxy(4.7,5)}
\pgfcurveto{\pgfxy(4.6,4.8)}{\pgfxy(4.4,4.4)}{\pgfxy(4.4,4)}
\pgfcurveto{\pgfxy(4.4,3.6)}{\pgfxy(4.8,3.4)}{\pgfxy(5.3,3.35)}
\pgfstroke
\pgfmoveto{\pgfxy(1,1.5)}
\pgfcurveto{\pgfxy(2,2.2)}{\pgfxy(2.6,2.45)}{\pgfxy(3,2.45)}
\pgfcurveto{\pgfxy(3.4,2.45)}{\pgfxy(4,2.2)}{\pgfxy(5,1.5)}
\pgfstroke
\end{pgfscope}
%\pgfputat{\pgfxy(4.85,4.2)}{\pgfbox[center,center]{$X_1$}}
\color{white}
%\pgfnodecircle{Node0}[fill]{\pgfxy(4.9,3.3)}{0.15cm}
\color{black}
\pgfputat{\pgfxy(4.85,3.2)}{\pgfbox[center,center]{\small{$y$}}}
\pgfputat{\pgfxy(3.2,4.85)}{\pgfbox[center,center]{\small{$z$}}}
\pgfputat{\pgfxy(4.8,4)}{\pgfbox[center,center]{$X_1$}}
\pgfputat{\pgfxy(3,5.8)}{\pgfbox[center,center]{\small{$y^2z=1+z-z^2+z^3$}}}
\pgfputat{\pgfxy(8.75,5.8)}{\pgfbox[center,center]{$X$ as a Real Surface}}
\end{pgftranslate}
\end{pgfpicture}
\caption{Alternate views of the smooth projective curve $X$ defined by the equation $z_2^2z_3=z_1^3+z_1^2z_3-z_1z_3^2+z_3^3$.}
\label{figaltviewscurveX}
\end{figure}
%&&&&&&&&&&&&&&&&&&&&&&&&&&&&&&&&&&&&&&&&&&&&&&&&&&&&&&&&&&&&&&&&&&&&&&&&&&&&&&&&&&&
%&&&&&&&&&&&&&&&&&&&&&&&&&&&&&&&&&&&&&&&&&&&&&&&&&&&&&&&&&&&&&&&&&&&&&&&&&&&&&&&&&&&
%&&&                    &&&       &&&&&    &&&                &&&&&&&&&&&&&&&&&&&&&&&&&&&&&&&&&&&&&&&&&&&&&&&&&&&
%&&&    &&&&&&&&&    &    &&&&   &&&    &&&&    &&&&&&&&&&&&&&&&&&&&&&&&&&&&&&&&&&&&&&&&&&&&&&&&&&& 
%&&&    &&&&&&&&&    &&    &&&   &&&    &&&&&    &&&&&&&&&&&&&&&&&&&&&&&&&&&&&&&&&&&&&&&&&&&&&&&&&& 
%&&&                 &&&&    &&&    &&   &&&    &&&&&    &&&&&&&&&&&&&&&&&&&&&&&&&&&&&&&&&&&&&&&&&&&&&&&&&& 
%&&&    &&&&&&&&&    &&&&    &   &&&    &&&&&    &&&&&&&&&&&&&&&&&&&&&&&&&&&&&&&&&&&&&&&&&&&&&&&&&& 
%&&&    &&&&&&&&&    &&&&&       &&&    &&&&     &&&&&&&&&&&&&&&&&&&&&&&&&&&&&&&&&&&&&&&&&&&&&&&&&& 
%&&&                    &&&    &&&&&&    &&&                  &&&&&&&&&&&&&&&&&&&&&&&&&&&&&&&&&&&&&&&&&&&&&&&&&&&
%&&&&&&&&&&&&&&&&&&&&&&&&&&&&&&&&&&&&&&&&&&&&&&&&&&&&&&&&&&&&&&&&&&&&&&&&&&&&&&&&&&&
%&&&&&&&&&&&&&&&&&&&&&&&&&&&&&&&&&&&&&&&&&&&&&&&&&&&&&&&&&&&&&&&&&&&&&&&&&&&&&&&&&&&
%}
\vspace*{-.5cm}

%\comment{
%&&&&&&&&&&&&&&&&&&&&&&&&&&&&&&&&&&&&&&&&&&&&&&&&&&&&&&&&&&&&&&&&&&&&&&&&&&&&&&&&&&&
%&&&&&&&&&&&&&&&&&&&&&&&&&&&&&&&&&&&&&&&&&&&&&&&&&&&&&&&&&&&&&&&&&&&&&&&&&&&&&&&&&&&
%&&&&                 &&&&    &&&               &&&                    &&&    &&&&    &&&              &&&&                   &&&&&&&&&&&&&&&&&&&
%&&&&   &&&&   &&&&    &&&   &&&&&&&&&&&   &&&&&&    &&&&    &&&    &&&     &&&    &&&&&&&&&&&&&&&&&&&&&&&&
%&&&&   &&&&   &&&&    &&&   &&&&&&&&&&&   &&&&&&    &&&&    &&&    &&&     &&&    &&&&&&&&&&&&&&&&&&&&&&&&
%&&&&                 &&&&    &&&   &&&&&&&&&&&   &&&&&&    &&&&    &&&            &&&&&             &&&&&&&&&&&&&&&&&&&&&
%&&&&    &&&&&&&&&    &&&   &&&&&&&&&&&   &&&&&&    &&&&    &&&    &&&   &&&&    &&&&&&&&&&&&&&&&&&&&&&&&
%&&&&    &&&&&&&&&    &&&   &&&&&&&&&&&   &&&&&&    &&&&    &&&    &&&&   &&&    &&&&&&&&&&&&&&&&&&&&&&&&
%&&&&    &&&&&&&&&    &&&                &&&&&&   &&&&&&                   &&&    &&&&&   &&                  &&&&&&&&&&&&&&&&&&&
%&&&&&&&&&&&&&&&&&&&&&&&&&&&&&&&&&&&&&&&&&&&&&&&&&&&&&&&&&&&&&&&&&&&&&&&&&&&&&&&&&&&
%&&&&&&&&&&&&&&&&&&&&&&&&&&&&&&&&&&&&&&&&&&&&&&&&&&&&&&&&&&&&&&&&&&&&&&&&&&&&&&&&&&&
\begin{tikzpicture} [scale=.9, isometricXYZ, line join=round,
        opacity=.8, text opacity=1.0,
        >=latex,
        inner sep=0pt,
        outer sep=2pt,
    ]
    
 \newcommand{\torus}[6]{
	 \foreach \t in {#3} \foreach \f in {#4}
            \draw [color=#5, fill=#6]
	    ({cos(\t)*(#1+#2*cos(\f))},{sin(\t)*(#1+#2*cos(\f))},{#2*sin(\f)})
	-- ({cos(\t+10)*(#1+#2*cos(\f))},{sin(\t+10)*(#1+#2*cos(\f))},{#2*sin(\f)})
	-- ({cos(\t+10)*(#1+#2*cos(\f-20))},{sin(\t+10)*(#1+#2*cos(\f-20))},{#2*sin(\f-20)})
	-- ({cos(\t)*(#1+#2*cos(\f-20))},{sin(\t)*(#1+#2*cos(\f-20))},{#2*sin(\f-20)})
	-- cycle;
	} 
\begin{pgftranslate}{\pgfpoint{14cm}{4.5cm}}
%back left bottom
\torus{2}{.8}{220,230,...,300}{0,-20,...,-100}{black!50}{black!0};
\torus{2}{.8}{220,230,...,300}{-120,-140}{black!100}{black!0};
\torus{2}{.8}{220,230,...,300}{-160}{black!70}{black!0};
%back right bottom
\torus{2}{.8}{210,200,...,130}{0,-20,...,-100}{black!50}{black!0};
\torus{2}{.8}{210,200,...,130}{-120,-140}{black!100}{black!0};
\torus{2}{.8}{210,200,...,130}{-160}{black!70}{black!0};
%forward right bottom
\torus{2}{.8}{120,110,...,50}{-160}{black!100}{black!0}; 
\torus{2}{.8}{120,110,...,50}{-140}{black!100}{black!0}; 
\torus{2}{.8}{120,110,...,50}{-120}{black!70}{black!0}; 
\torus{2}{.8}{120,110,...,50}{-100,-80}{black!50}{black!0}; 
\torus{2}{.8}{120,110,...,50}{-60,-40}{black!100}{black!0}; 
\torus{2}{.8}{120,110,...,50}{-20}{black!70}{black!0}; 
\torus{2}{.8}{120,110,...,50}{0}{black!50}{black!0}; 
%forward left bottom
\torus{2}{.8}{-50,-40,...,40}{-160}{black!100}{black!0};   
\torus{2}{.8}{-50,-40,...,40}{-140}{black!100}{black!0};   
\torus{2}{.8}{-50,-40,...,40}{-120}{black!70}{black!0};   
\torus{2}{.8}{-50,-40,...,40}{-100,-80}{black!50}{black!0};   
\torus{2}{.8}{-50,-40,...,40}{-60,-40}{black!100}{black!0};    
\torus{2}{.8}{-50,-40,...,40}{-20}{black!70}{black!0}; 
\torus{2}{.8}{-50,-40,...,40}{0}{black!50}{black!0};      
%back left top
\torus{2}{.8}{220,230,...,300}{20,40,60}{black!100}{black!0};
\torus{2}{.8}{220,230,...,300}{80}{black!100}{black!0};
\torus{2}{.8}{220,230,...,300}{100,120,140}{black!70}{black!0};
\torus{2}{.8}{220,230,...,300}{160}{black!60}{black!0};
\torus{2}{.8}{220,230,...,300}{180}{black!70}{black!0};
%back right top
\torus{2}{.8}{210,200,...,130}{20,40,60}{black!100}{black!0}; 
\torus{2}{.8}{210,200,...,130}{80}{black!100}{black!0}; 
\torus{2}{.8}{210,200,...,130}{100,120,140}{black!70}{black!0}; 
\torus{2}{.8}{210,200,...,130}{160}{black!60}{black!0}; 
\torus{2}{.8}{210,200,...,130}{180}{black!70}{black!0}; 
%forward right top
\torus{2}{.8}{120}{180}{black!100}{black!0}; 
\torus{2}{.8}{120}{160}{black!100}{black!0}; 
\torus{2}{.8}{120}{140}{black!80}{black!0}; 
\torus{2}{.8}{120}{120}{black!70}{black!0}; 
\torus{2}{.8}{120}{100}{black!70}{black!0}; 
\torus{2}{.8}{120}{80}{black!70}{black!0}; 
\torus{2}{.8}{120}{60}{black!70}{black!0}; 
\torus{2}{.8}{120}{40}{black!80}{black!0}; 
\torus{2}{.8}{120}{20}{black!100}{black!0}; 
\torus{2}{.8}{120}{0}{black!100}{black!0}; 
\torus{2}{.8}{110}{180}{black!100}{black!0}; 
\torus{2}{.8}{110}{160}{black!100}{black!0}; 
\torus{2}{.8}{110}{140}{black!70}{black!0}; 
\torus{2}{.8}{110}{120}{black!60}{black!0}; 
\torus{2}{.8}{110}{100}{black!60}{black!0}; 
\torus{2}{.8}{110}{80}{black!60}{black!0}; 
\torus{2}{.8}{110}{60}{black!60}{black!0}; 
\torus{2}{.8}{110}{40}{black!60}{black!0}; 
\torus{2}{.8}{110}{20}{black!70}{black!0}; 
\torus{2}{.8}{110}{0}{black!70}{black!0}; 
\torus{2}{.8}{100}{180}{black!100}{black!0}; 
\torus{2}{.8}{100}{160}{black!100}{black!0}; 
\torus{2}{.8}{100}{140}{black!70}{black!0}; 
\torus{2}{.8}{100}{120}{black!60}{black!0}; 
\torus{2}{.8}{100}{100}{black!60}{black!0}; 
\torus{2}{.8}{100}{80}{black!50}{black!0}; 
\torus{2}{.8}{100}{60}{black!50}{black!0}; 
\torus{2}{.8}{100}{40}{black!60}{black!0}; 
\torus{2}{.8}{100}{20}{black!70}{black!0}; 
\torus{2}{.8}{90}{180}{black!100}{black!0}; 
\torus{2}{.8}{90}{160}{black!100}{black!0}; 
\torus{2}{.8}{90}{140}{black!80}{black!0}; 
\torus{2}{.8}{90}{120}{black!70}{black!0}; 
\torus{2}{.8}{90}{100}{black!60}{black!0}; 
\torus{2}{.8}{90}{80}{black!50}{black!0}; 
\torus{2}{.8}{90}{60}{black!40}{black!0}; 
\torus{2}{.8}{90}{40}{black!40}{black!0}; 
\torus{2}{.8}{90}{20}{black!50}{black!0}; 
\torus{2}{.8}{80}{180}{black!100}{black!0}; 
\torus{2}{.8}{80}{160}{black!100}{black!0}; 
\torus{2}{.8}{80}{140}{black!80}{black!0}; 
\torus{2}{.8}{80}{120}{black!70}{black!0}; 
\torus{2}{.8}{80}{100}{black!60}{black!0}; 
\torus{2}{.8}{80}{80}{black!50}{black!0}; 
\torus{2}{.8}{80}{60}{black!40}{black!0}; 
\torus{2}{.8}{80}{40}{black!40}{black!0}; 
\torus{2}{.8}{80}{20}{black!50}{black!0}; 
\torus{2}{.8}{70,60,...,50}{180}{black!100}{black!0}; 
\torus{2}{.8}{70,60,...,50}{160}{black!100}{black!0}; 
\torus{2}{.8}{70,60,...,50}{140}{black!100}{black!0}; 
\torus{2}{.8}{70,60,...,50}{120}{black!70}{black!0}; 
\torus{2}{.8}{70,60,...,50}{100}{black!60}{black!0}; 
\torus{2}{.8}{70,60,...,50}{80}{black!50}{black!0}; 
\torus{2}{.8}{70,60,...,50}{60}{black!40}{black!0}; 
\torus{2}{.8}{70,60,...,50}{40}{black!40}{black!0}; 
\torus{2}{.8}{70,60,...,50}{20}{black!50}{black!0}; 
%forward left top
\torus{2}{.8}{-50}{180}{black!100}{black!0};
\torus{2}{.8}{-50}{160}{black!100}{black!0};
\torus{2}{.8}{-50}{140}{black!80}{black!0};
\torus{2}{.8}{-50}{120}{black!70}{black!0};
\torus{2}{.8}{-50}{100}{black!60}{black!0};
\torus{2}{.8}{-50}{80}{black!60}{black!0};
\torus{2}{.8}{-50}{60}{black!70}{black!0};
\torus{2}{.8}{-50}{40}{black!80}{black!0};
\torus{2}{.8}{-50}{20}{black!100}{black!0};
\torus{2}{.8}{-50}{0}{black!100}{black!0};
\torus{2}{.8}{-40}{180}{black!100}{black!0};
\torus{2}{.8}{-40}{160}{black!100}{black!0};
\torus{2}{.8}{-40}{140}{black!80}{black!0};
\torus{2}{.8}{-40}{120}{black!70}{black!0};
\torus{2}{.8}{-40}{100}{black!60}{black!0};
\torus{2}{.8}{-40}{80}{black!60}{black!0};
\torus{2}{.8}{-40}{60}{black!50}{black!0};
\torus{2}{.8}{-40}{40}{black!60}{black!0};
\torus{2}{.8}{-40}{20}{black!70}{black!0};
\torus{2}{.8}{-40}{0}{black!80}{black!0};
\torus{2}{.8}{-30}{180}{black!100}{black!0};
\torus{2}{.8}{-30}{160}{black!100}{black!0};
\torus{2}{.8}{-30}{140}{black!70}{black!0};
\torus{2}{.8}{-30}{120}{black!60}{black!0};
\torus{2}{.8}{-30}{100}{black!60}{black!0};
\torus{2}{.8}{-30}{80}{black!50}{black!0};
\torus{2}{.8}{-30}{60}{black!50}{black!0};
\torus{2}{.8}{-30}{40}{black!60}{black!0};
\torus{2}{.8}{-30}{20}{black!70}{black!0};
\torus{2}{.8}{-20}{180}{black!100}{black!0};
\torus{2}{.8}{-20}{160}{black!100}{black!0};
\torus{2}{.8}{-20}{140}{black!80}{black!0};
\torus{2}{.8}{-20}{120}{black!70}{black!0};
\torus{2}{.8}{-20}{100}{black!60}{black!0};
\torus{2}{.8}{-20}{80}{black!50}{black!0};
\torus{2}{.8}{-20}{60}{black!40}{black!0};
\torus{2}{.8}{-20}{40}{black!40}{black!0};
\torus{2}{.8}{-20}{20}{black!50}{black!0};
\torus{2}{.8}{-10}{180}{black!100}{black!0};
\torus{2}{.8}{-10}{160}{black!100}{black!0};
\torus{2}{.8}{-10}{140}{black!80}{black!0};
\torus{2}{.8}{-10}{120}{black!70}{black!0};
\torus{2}{.8}{-10}{100}{black!60}{black!0};
\torus{2}{.8}{-10}{80}{black!50}{black!0};
\torus{2}{.8}{-10}{60}{black!40}{black!0};
\torus{2}{.8}{-10}{40}{black!40}{black!0};
\torus{2}{.8}{-10}{20}{black!50}{black!0};
\torus{2}{.8}{-20,-10,...,40}{180}{black!100}{black!0};
\torus{2}{.8}{-20,-10,...,40}{160}{black!100}{black!0};
\torus{2}{.8}{-20,-10,...,40}{140}{black!100}{black!0};
\torus{2}{.8}{-20,-10,...,40}{120}{black!70}{black!0};
\torus{2}{.8}{-20,-10,...,40}{100}{black!60}{black!0};
\torus{2}{.8}{-20,-10,...,40}{80}{black!50}{black!0};
\torus{2}{.8}{-20,-10,...,40}{60}{black!40}{black!0};
\torus{2}{.8}{-20,-10,...,40}{40}{black!40}{black!0};
\torus{2}{.8}{-20,-10,...,40}{20}{black!50}{black!0};
\end{pgftranslate}
\end{tikzpicture}

\subsection{Zariski Topology; Regular and Rational Functions; Sheaves}\label{subsectionzariski}

The topology of greatest interest on a smooth projective curve $X$ is the {\bf Zariski topology}, which is defined by taking the closed subsets of $X$ to be its  {\bf algebraic subsets}, defined locally as vanishing loci of polynomials.  It is sometimes convenient to isolate the underlying topological space of $X$, which I will denote by $\tn{Zar}_X$ to distinguish it from $X$.  Expressions such as ``points of $X$,"  ``open subsets of $X$," and ``sheaves on $X$"  are common abbreviations for topological data associated with $\tn{Zar}_X$.   Since $X$ is one-dimensional as a complex projective curve, $\tn{Zar}_X$ may be partitioned into two subsets: the set $\tn{Zar}_X^{0}$ of codimension-zero (i.e., dimension-one) points, and the set $\tn{Zar}_X^{1}$ of codimension-one (i.e., dimension-zero) points.   $\tn{Zar}_X^{0}$ is a singleton set consisting of the unique generic point of $\tn{Zar}_X$, while $\tn{Zar}_X^{1}$ may be identified, as mentioned above, with the zero-locus in $\PP^n$ of an appropriate set of homogeneous polynomials.  The reason for using codimension is that it is more natural than dimension in the context of algebraic cycles. 

A {\bf regular function} on an open subset $U$ of $X$ is a function locally equal to a ratio of homogeneous polynomials of equal degrees in $n+1$ variables, with nowhere-vanishing denominator.  The set $O_U$ of regular functions on $U$ is a ring, called the {\bf ring of regular functions} on $U$.  The multiplicative subgroup $O_U^*$ of invertible elements of $O_U$ is called the {\bf group of invertible regular functions} on $U$.  A {\bf rational function} on $U$ is an equivalence class of regular functions, defined on open subsets of $U$, where two regular functions belong to the same equivalence class if and only if they coincide on the intersection of their sets of definition.   The set $R_X$ of rational functions on $X$ is a field called the {\bf rational function field} of $X$.   Since a rational function is determined by its values on any nonempty open subset, the set $R_U$ of rational functions on a nonempty open subset $U$ of $X$ is a field canonically isomorphic to $R_X$.   It is usually harmless to view this canonical isomorphism as equality.  In this sense, the affine open subsets $X_i$ of $X$ all share the same function field $R_X$.  This field may be realized by taking the quotient field of any of the coordinate rings $A_i$ of $X_i$.   The multiplicative subgroup $R_X^*$ of invertible elements of $R_X$ contains all nonzero elements of $R_X$, since $R_X$ is a field.  This group is called the {\bf group of nonzero rational functions} on $X$.   The groups $R_U^*$ of nonzero rational functions on nonempty open subsets $U$ of $X$ are canonically isomorphic to $R_X^*$, and these isomorphisms are usually viewed as equality. 

The {\bf sheaf of regular functions} $\ms{O}_X$ is the sheaf of rings on $X$ whose ring of sections over an open set $U$ of $X$ is $O_U$.  This sheaf is also called the  {\bf algebraic structure sheaf} of $X$.   The {\bf sheaf of invertible regular functions} $\ms{O}_X^*$ on $X$ is the sheaf of multiplicative groups whose group of sections over $U$ is the multiplicative subgroup $O_U^*$ of nowhere-vanishing elements of $O_U$.  The {\bf sheaf of rational functions} $\ms{R}_X$ on $X$ is the sheaf of fields on $X$ whose field of sections over $U$ is $R_U=R_X$.  This sheaf is canonically isomorphic to a skyscraper sheaf at the generic point of $X$ with underlying field $R_X$.  The {\bf sheaf of nonzero rational functions} $\ms{R}_X^*$ is the sheaf of multiplicative groups on $X$ whose group of sections over $U$ is the multiplicative subgroup $R_U^*=R_X^*$ of nonzero elements of $R_U=R_X$.  This sheaf is canonically isomorphic to a skyscraper sheaf at the generic point of $X$ whose underlying multiplicative group is $R_X^*$.

\subsection{Local Rings; $X$ as a Locally Ringed Space}\label{subsectionlocalrings}

The {\bf local ring} $O_x$ of the smooth projective curve $X$ at a point $x\in X$ is defined to be the {\bf stalk} of the structure sheaf $\ms{O}_X$ at $x$.  This ring consists of equivalence classes of regular functions over open neighborhoods of $x$, where two regular functions belong to the same equivalence class if and only if they coincide on some such neighborhood.  Since $O_x$ is a local ring, it has a unique maximal ideal, denoted by $m_x$.  The quotient $r_x:=O_x/m_x$ is a field called the {\bf residue field at $x$} of $X$.  The stalk $R_x$ of the sheaf $\ms{R}_X$ of rational functions on $X$ at a point $x\in X$ is canonically isomorphic to the rational function field $R_X$ of $X$, and again there is usually no harm in viewing this isomorphism as equality.  $R_x$ is obviously {\it not} isomorphic to the residue field $r_x$ in most cases; for instance, the residue field $r_x$ at a zero-dimensional point $x$ of a smooth complex projective curve $X$ is just $\CC$, while the field $R_x=R_X$ has transcendence degree one over $\CC$. 

$X$ is a {\bf locally ringed space}, which is a topological space together with a distinguished sheaf of rings, called the {\bf structure sheaf}, whose stalks are local rings.  In the present case, the topological space is $\tn{Zar}_X$, and the structure sheaf is the algebraic structure sheaf $\ms{O}_X$.  Each of the affine open subsets $X_i$ of $X$ is isomorphic as a locally ringed space to the prime spectrum $\tn{Spec }A_i$ of its coordinate ring $A_i$, and is therefore an {\bf affine algebraic scheme}.  $X$ itself is covered by the affine open subsets $X_i$, and is therefore an {\bf algebraic scheme}.

\section{Zero-Cycles on Curves; First Chow Group}\label{ZeroCycles}

\subsection{Zero-Cycles; Groups and Sheaves of Zero-Cycles}\label{subsectionzerocycles}

A {\bf zero-cycle} $z$ on a smooth projective curve $X$ is a finite formal $\ZZ$-linear combination of zero-dimensional points $x_i$ of $X$:
\[z=\sum_i n_ix_i, \hspace*{1cm} n\in\ZZ.\]
The integer $n_i$ associated with each point $x_i$ is called the {\bf multiplicity} of $x_i$ in $z$.  The sum defining $z$ may be regarded as a sum over the entire set $\tn{Zar}_X^{1}$ of codimension-one (i.e., dimension-zero) points of $X$, with the understanding that only a finite number of the multiplicities are nonzero.  The union $\bigcup_i\{x_i\}$ of the points in $z$ with nonzero multiplicities is called the {\bf support} of $z$.  The set $Z_X^1$ is an abelian group, called the {\bf group of zero-cycles} on $X$, with group operation defined by pointwise addition of multiplicities.  The identity in $Z_X^1$ is the {\bf empty cycle,} in which all zero-dimensional points of $X$ are assigned zero multiplicity.   The empty cycle is sometimes denoted by $0$ because the group operation on $Z_X^1$ is additive. 

Given an open subset $U$ of $X$, the group $Z_U^1$ of zero-cycles on $U$ may be defined in a similar way.  The {\bf sheaf of zero-cycles} on $X$ is the sheaf $\ms{Z}_X^1$ of additive groups on $X$ whose group of sections over $U$ is $Z_U^1$.  Its stalk $Z_x^1$ at a codimension-one point $x$ in $X$ is isomorphic to the underlying additive group $\ZZ^+$ of the integers, with elements representing multiplicities at $x$.  Below, in figure \hyperref[figzerocyclesX]{\ref{figzerocyclesX}}, I have illustrated three zero-cycles on a smooth projective curve $X$.  The zero-cycle on the right is the sum of the other two.  I have indicated points with positive multiplicities by filled nodes and points with negative multiplicities by empty nodes.  Observe how the multiplicities at the point indicated by the arrows cancel additively in the sum.  

\vspace*{4cm}
%\comment{
%&&&&&&&&&&&&&&&&&&&&&&&&&&&&&&&&&&&&&&&&&&&&&&&&&&&&&&&&&&&&&&&&&&&&&&&&&&&&&&&&&&&
%&&&&&&&&&&&&&&&&&&&&&&&&&&&&&&&&&&&&&&&&&&&&&&&&&&&&&&&&&&&&&&&&&&&&&&&&&&&&&&&&&&&
%&&&&                 &&&&    &&&               &&&                    &&&    &&&&    &&&              &&&&                   &&&&&&&&&&&&&&&&&&&
%&&&&   &&&&   &&&&    &&&   &&&&&&&&&&&   &&&&&&    &&&&    &&&    &&&     &&&    &&&&&&&&&&&&&&&&&&&&&&&&
%&&&&   &&&&   &&&&    &&&   &&&&&&&&&&&   &&&&&&    &&&&    &&&    &&&     &&&    &&&&&&&&&&&&&&&&&&&&&&&&
%&&&&                 &&&&    &&&   &&&&&&&&&&&   &&&&&&    &&&&    &&&            &&&&&             &&&&&&&&&&&&&&&&&&&&&
%&&&&    &&&&&&&&&    &&&   &&&&&&&&&&&   &&&&&&    &&&&    &&&    &&&   &&&&    &&&&&&&&&&&&&&&&&&&&&&&&
%&&&&    &&&&&&&&&    &&&   &&&&&&&&&&&   &&&&&&    &&&&    &&&    &&&&   &&&    &&&&&&&&&&&&&&&&&&&&&&&&
%&&&&    &&&&&&&&&    &&&                &&&&&&   &&&&&&                   &&&    &&&&&   &&                  &&&&&&&&&&&&&&&&&&&
%&&&&&&&&&&&&&&&&&&&&&&&&&&&&&&&&&&&&&&&&&&&&&&&&&&&&&&&&&&&&&&&&&&&&&&&&&&&&&&&&&&&
%&&&&&&&&&&&&&&&&&&&&&&&&&&&&&&&&&&&&&&&&&&&&&&&&&&&&&&&&&&&&&&&&&&&&&&&&&&&&&&&&&&&
\begin{figure}[H]
\begin{tikzpicture} [scale=.9, isometricXYZ, line join=round,
        opacity=1, text opacity=1.0,
        >=latex,
        inner sep=0pt,
        outer sep=2pt,
    ]
 \newcommand{\torus}[6]{
	 \foreach \t in {#3} \foreach \f in {#4}
            \draw [color=#5, fill=#6]
	    ({cos(\t)*(#1+#2*cos(\f))},{sin(\t)*(#1+#2*cos(\f))},{#2*sin(\f)})
	-- ({cos(\t+10)*(#1+#2*cos(\f))},{sin(\t+10)*(#1+#2*cos(\f))},{#2*sin(\f)})
	-- ({cos(\t+10)*(#1+#2*cos(\f-20))},{sin(\t+10)*(#1+#2*cos(\f-20))},{#2*sin(\f-20)})
	-- ({cos(\t)*(#1+#2*cos(\f-20))},{sin(\t)*(#1+#2*cos(\f-20))},{#2*sin(\f-20)})
	-- cycle;
	} 	
\begin{pgftranslate}{\pgfpoint{3cm}{2cm}}
%back left bottom
\torus{2}{.8}{220,230,...,300}{0,-20,...,-100}{black!50}{black!0};
\torus{2}{.8}{220,230,...,300}{-120,-140}{black!100}{black!0};
\torus{2}{.8}{220,230,...,300}{-160}{black!70}{black!0};
%back right bottom
\torus{2}{.8}{210,200,...,130}{0,-20,...,-100}{black!50}{black!0};
\torus{2}{.8}{210,200,...,130}{-120,-140}{black!100}{black!0};
\torus{2}{.8}{210,200,...,130}{-160}{black!70}{black!0};
%forward right bottom
\torus{2}{.8}{120,110,...,50}{-160}{black!100}{black!0}; 
\torus{2}{.8}{120,110,...,50}{-140}{black!100}{black!0}; 
\torus{2}{.8}{120,110,...,50}{-120}{black!70}{black!0}; 
\torus{2}{.8}{120,110,...,50}{-100,-80}{black!50}{black!0}; 
\torus{2}{.8}{120,110,...,50}{-60,-40}{black!100}{black!0}; 
\torus{2}{.8}{120,110,...,50}{-20}{black!70}{black!0}; 
\torus{2}{.8}{120,110,...,50}{0}{black!50}{black!0}; 
%forward left bottom
\torus{2}{.8}{-50,-40,...,40}{-160}{black!100}{black!0};   
\torus{2}{.8}{-50,-40,...,40}{-140}{black!100}{black!0};   
\torus{2}{.8}{-50,-40,...,40}{-120}{black!70}{black!0};   
\torus{2}{.8}{-50,-40,...,40}{-100,-80}{black!50}{black!0};   
\torus{2}{.8}{-50,-40,...,40}{-60,-40}{black!100}{black!0};    
\torus{2}{.8}{-50,-40,...,40}{-20}{black!70}{black!0}; 
\torus{2}{.8}{-50,-40,...,40}{0}{black!50}{black!0};      
%back left top  
\torus{2}{.8}{220,230,...,300}{20,40,60}{black!100}{black!0};
\torus{2}{.8}{220,230,...,300}{80}{black!80}{black!0};
\torus{2}{.8}{220,230,...,300}{100}{black!60}{black!0};
\torus{2}{.8}{220,230,...,300}{120}{black!40}{black!0};
\torus{2}{.8}{220,230,...,300}{140}{black!20}{black!0};
\torus{2}{.8}{220,230,...,300}{160}{black!20}{black!0};
\torus{2}{.8}{220,230,...,300}{180}{black!30}{black!0};
%back right top
\torus{2}{.8}{210,200,...,130}{20,40,60}{black!100}{black!0}; 
\torus{2}{.8}{210,200,...,130}{80}{black!80}{black!0}; 
\torus{2}{.8}{210,200,...,130}{100}{black!60}{black!0}; 
\torus{2}{.8}{210,200,...,130}{120}{black!40}{black!0}; 
\torus{2}{.8}{210,200,...,130}{140}{black!20}{black!0}; 
\torus{2}{.8}{210,200,...,130}{160}{black!20}{black!0}; 
\torus{2}{.8}{210,200,...,130}{180}{black!30}{black!0}; 
%forward right top
\torus{2}{.8}{120}{180}{black!100}{black!0}; 
\torus{2}{.8}{120}{160}{black!50}{black!0}; 
\torus{2}{.8}{120}{140}{black!30}{black!0}; 
\torus{2}{.8}{120}{120}{black!40}{black!0}; 
\torus{2}{.8}{120}{100}{black!40}{black!0}; 
\torus{2}{.8}{120}{80}{black!40}{black!0}; 
\torus{2}{.8}{120}{60}{black!40}{black!0}; 
\torus{2}{.8}{120}{40}{black!60}{black!0}; 
\torus{2}{.8}{120}{20}{black!80}{black!0}; 
\torus{2}{.8}{120}{0}{black!100}{black!0}; 
\torus{2}{.8}{110}{180}{black!100}{black!0}; 
\torus{2}{.8}{110}{160}{black!50}{black!0}; 
\torus{2}{.8}{110}{140}{black!30}{black!0}; 
\torus{2}{.8}{110}{120}{black!40}{black!0}; 
\torus{2}{.8}{110}{100}{black!40}{black!0}; 
\torus{2}{.8}{110}{80}{black!40}{black!0}; 
\torus{2}{.8}{110}{60}{black!50}{black!0}; 
\torus{2}{.8}{110}{40}{black!60}{black!0}; 
\torus{2}{.8}{110}{20}{black!60}{black!0}; 
\torus{2}{.8}{110}{0}{black!70}{black!0}; 
\torus{2}{.8}{100}{180}{black!100}{black!0}; 
\torus{2}{.8}{100}{160}{black!50}{black!0}; 
\torus{2}{.8}{100}{140}{black!40}{black!0}; 
\torus{2}{.8}{100}{120}{black!40}{black!0}; 
\torus{2}{.8}{100}{100}{black!40}{black!0}; 
\torus{2}{.8}{100}{80}{black!40}{black!0}; 
\torus{2}{.8}{100}{60}{black!40}{black!0}; 
\torus{2}{.8}{100}{40}{black!50}{black!0}; 
\torus{2}{.8}{100}{20}{black!60}{black!0}; 
\torus{2}{.8}{90}{180}{black!100}{black!0}; 
\torus{2}{.8}{90}{160}{black!100}{black!0}; 
\torus{2}{.8}{90}{140}{black!70}{black!0}; 
\torus{2}{.8}{90}{120}{black!50}{black!0}; 
\torus{2}{.8}{90}{100}{black!40}{black!0}; 
\torus{2}{.8}{90}{80}{black!30}{black!0}; 
\torus{2}{.8}{90}{60}{black!20}{black!0}; 
\torus{2}{.8}{90}{40}{black!30}{black!0}; 
\torus{2}{.8}{90}{20}{black!40}{black!0}; 
\torus{2}{.8}{80}{180}{black!100}{black!0}; 
\torus{2}{.8}{80}{160}{black!100}{black!0}; 
\torus{2}{.8}{80}{140}{black!70}{black!0}; 
\torus{2}{.8}{80}{120}{black!50}{black!0}; 
\torus{2}{.8}{80}{100}{black!40}{black!0}; 
\torus{2}{.8}{80}{80}{black!30}{black!0}; 
\torus{2}{.8}{80}{60}{black!20}{black!0}; 
\torus{2}{.8}{80}{40}{black!30}{black!0}; 
\torus{2}{.8}{80}{20}{black!40}{black!0}; 
\torus{2}{.8}{70,60,...,50}{180}{black!100}{black!0}; 
\torus{2}{.8}{70,60,...,50}{160}{black!100}{black!0}; 
\torus{2}{.8}{70,60,...,50}{140}{black!100}{black!0}; 
\torus{2}{.8}{70,60,...,50}{120}{black!80}{black!0}; 
\torus{2}{.8}{70,60,...,50}{100}{black!50}{black!0}; 
\torus{2}{.8}{70,60,...,50}{80}{black!30}{black!0}; 
\torus{2}{.8}{70,60,...,50}{60}{black!20}{black!0}; 
\torus{2}{.8}{70,60,...,50}{40}{black!30}{black!0}; 
\torus{2}{.8}{70,60,...,50}{20}{black!40}{black!0}; 
%forward left top
\torus{2}{.8}{-50}{180}{black!100}{black!0};
\torus{2}{.8}{-50}{160}{black!50}{black!0};
\torus{2}{.8}{-50}{140}{black!50}{black!0};
\torus{2}{.8}{-50}{120}{black!50}{black!0};
\torus{2}{.8}{-50}{100}{black!50}{black!0};
\torus{2}{.8}{-50}{80}{black!50}{black!0};
\torus{2}{.8}{-50}{60}{black!50}{black!0};
\torus{2}{.8}{-50}{40}{black!60}{black!0};
\torus{2}{.8}{-50}{20}{black!80}{black!0};
\torus{2}{.8}{-50}{0}{black!100}{black!0};
\torus{2}{.8}{-40}{180}{black!100}{black!0};
\torus{2}{.8}{-40}{160}{black!50}{black!0};
\torus{2}{.8}{-40}{140}{black!50}{black!0};
\torus{2}{.8}{-40}{120}{black!50}{black!0};
\torus{2}{.8}{-40}{100}{black!50}{black!0};
\torus{2}{.8}{-40}{80}{black!50}{black!0};
\torus{2}{.8}{-40}{60}{black!50}{black!0};
\torus{2}{.8}{-40}{40}{black!60}{black!0};
\torus{2}{.8}{-40}{20}{black!70}{black!0};
\torus{2}{.8}{-40}{0}{black!80}{black!0};
\torus{2}{.8}{-30}{180}{black!100}{black!0};
\torus{2}{.8}{-30}{160}{black!100}{black!0};
\torus{2}{.8}{-30}{140}{black!70}{black!0};
\torus{2}{.8}{-30}{120}{black!60}{black!0};
\torus{2}{.8}{-30}{100}{black!50}{black!0};
\torus{2}{.8}{-30}{80}{black!40}{black!0};
\torus{2}{.8}{-30}{60}{black!30}{black!0};
\torus{2}{.8}{-30}{40}{black!40}{black!0};
\torus{2}{.8}{-30}{20}{black!50}{black!0};
\torus{2}{.8}{-20}{180}{black!100}{black!0};
\torus{2}{.8}{-20}{160}{black!100}{black!0};
\torus{2}{.8}{-20}{140}{black!70}{black!0};
\torus{2}{.8}{-20}{120}{black!60}{black!0};
\torus{2}{.8}{-20}{100}{black!50}{black!0};
\torus{2}{.8}{-20}{80}{black!40}{black!0};
\torus{2}{.8}{-20}{60}{black!30}{black!0};
\torus{2}{.8}{-20}{40}{black!40}{black!0};
\torus{2}{.8}{-20}{20}{black!50}{black!0};
\torus{2}{.8}{-10}{180}{black!100}{black!0};
\torus{2}{.8}{-10}{160}{black!100}{black!0};
\torus{2}{.8}{-10}{140}{black!70}{black!0};
\torus{2}{.8}{-10}{120}{black!50}{black!0};
\torus{2}{.8}{-10}{100}{black!40}{black!0};
\torus{2}{.8}{-10}{80}{black!30}{black!0};
\torus{2}{.8}{-10}{60}{black!20}{black!0};
\torus{2}{.8}{-10}{40}{black!30}{black!0};
\torus{2}{.8}{-10}{20}{black!40}{black!0};
\torus{2}{.8}{-20,-10,...,40}{180}{black!100}{black!0};
\torus{2}{.8}{-20,-10,...,40}{160}{black!100}{black!0};
\torus{2}{.8}{-20,-10,...,40}{140}{black!100}{black!0};
\torus{2}{.8}{-20,-10,...,40}{120}{black!80}{black!0};
\torus{2}{.8}{-20,-10,...,40}{100}{black!50}{black!0};
\torus{2}{.8}{-20,-10,...,40}{80}{black!40}{black!0};
\torus{2}{.8}{-20,-10,...,40}{60}{black!20}{black!0};
\torus{2}{.8}{-20,-10,...,40}{40}{black!30}{black!0};
\torus{2}{.8}{-20,-10,...,40}{20}{black!40}{black!0};
\draw (0,0,-.65) node {\large{$X$}};
\node at (3,1,1) [circle,opacity=1,draw=black,fill=white,minimum size=6pt] {};
\draw (3,1.25,.8) node {$\mathbf{-1}$};
\node at (2,2,0) [circle,opacity=1,draw=black,fill=black,minimum size=6pt] {};
\draw (2,2.7,.4) node {$\mathbf{+2}$};
\node at (1,2,2.5) [circle,opacity=1,draw=black,fill=white,minimum size=6pt] {};
\draw (1,2.25,2.35) node {$\mathbf{-1}$};
\pgfsetendarrow{\pgfarrowtriangle{4pt}}
\draw [line width=2, opacity=1] (3,-.5,1) -- (3,.5,1);
\end{pgftranslate}

\begin{pgftranslate}{\pgfpoint{8.75cm}{2cm}}
%back left bottom
\torus{2}{.8}{220,230,...,300}{0,-20,...,-100}{black!50}{black!0};
\torus{2}{.8}{220,230,...,300}{-120,-140}{black!100}{black!0};
\torus{2}{.8}{220,230,...,300}{-160}{black!70}{black!0};
%back right bottom
\torus{2}{.8}{210,200,...,130}{0,-20,...,-100}{black!50}{black!0};
\torus{2}{.8}{210,200,...,130}{-120,-140}{black!100}{black!0};
\torus{2}{.8}{210,200,...,130}{-160}{black!70}{black!0};
%forward right bottom
\torus{2}{.8}{120,110,...,50}{-160}{black!100}{black!0}; 
\torus{2}{.8}{120,110,...,50}{-140}{black!100}{black!0}; 
\torus{2}{.8}{120,110,...,50}{-120}{black!70}{black!0}; 
\torus{2}{.8}{120,110,...,50}{-100,-80}{black!50}{black!0}; 
\torus{2}{.8}{120,110,...,50}{-60,-40}{black!100}{black!0}; 
\torus{2}{.8}{120,110,...,50}{-20}{black!70}{black!0}; 
\torus{2}{.8}{120,110,...,50}{0}{black!50}{black!0}; 
%forward left bottom
\torus{2}{.8}{-50,-40,...,40}{-160}{black!100}{black!0};   
\torus{2}{.8}{-50,-40,...,40}{-140}{black!100}{black!0};   
\torus{2}{.8}{-50,-40,...,40}{-120}{black!70}{black!0};   
\torus{2}{.8}{-50,-40,...,40}{-100,-80}{black!50}{black!0};   
\torus{2}{.8}{-50,-40,...,40}{-60,-40}{black!100}{black!0};    
\torus{2}{.8}{-50,-40,...,40}{-20}{black!70}{black!0}; 
\torus{2}{.8}{-50,-40,...,40}{0}{black!50}{black!0};      
%back left top  
\torus{2}{.8}{220,230,...,300}{20,40,60}{black!100}{black!0};
\torus{2}{.8}{220,230,...,300}{80}{black!80}{black!0};
\torus{2}{.8}{220,230,...,300}{100}{black!60}{black!0};
\torus{2}{.8}{220,230,...,300}{120}{black!40}{black!0};
\torus{2}{.8}{220,230,...,300}{140}{black!20}{black!0};
\torus{2}{.8}{220,230,...,300}{160}{black!20}{black!0};
\torus{2}{.8}{220,230,...,300}{180}{black!30}{black!0};
%back right top
\torus{2}{.8}{210,200,...,130}{20,40,60}{black!100}{black!0}; 
\torus{2}{.8}{210,200,...,130}{80}{black!80}{black!0}; 
\torus{2}{.8}{210,200,...,130}{100}{black!60}{black!0}; 
\torus{2}{.8}{210,200,...,130}{120}{black!40}{black!0}; 
\torus{2}{.8}{210,200,...,130}{140}{black!20}{black!0}; 
\torus{2}{.8}{210,200,...,130}{160}{black!20}{black!0}; 
\torus{2}{.8}{210,200,...,130}{180}{black!30}{black!0}; 
%forward right top
\torus{2}{.8}{120}{180}{black!100}{black!0}; 
\torus{2}{.8}{120}{160}{black!50}{black!0}; 
\torus{2}{.8}{120}{140}{black!30}{black!0}; 
\torus{2}{.8}{120}{120}{black!40}{black!0}; 
\torus{2}{.8}{120}{100}{black!40}{black!0}; 
\torus{2}{.8}{120}{80}{black!40}{black!0}; 
\torus{2}{.8}{120}{60}{black!40}{black!0}; 
\torus{2}{.8}{120}{40}{black!60}{black!0}; 
\torus{2}{.8}{120}{20}{black!80}{black!0}; 
\torus{2}{.8}{120}{0}{black!100}{black!0}; 
\torus{2}{.8}{110}{180}{black!100}{black!0}; 
\torus{2}{.8}{110}{160}{black!50}{black!0}; 
\torus{2}{.8}{110}{140}{black!30}{black!0}; 
\torus{2}{.8}{110}{120}{black!40}{black!0}; 
\torus{2}{.8}{110}{100}{black!40}{black!0}; 
\torus{2}{.8}{110}{80}{black!40}{black!0}; 
\torus{2}{.8}{110}{60}{black!50}{black!0}; 
\torus{2}{.8}{110}{40}{black!60}{black!0}; 
\torus{2}{.8}{110}{20}{black!60}{black!0}; 
\torus{2}{.8}{110}{0}{black!70}{black!0}; 
\torus{2}{.8}{100}{180}{black!100}{black!0}; 
\torus{2}{.8}{100}{160}{black!50}{black!0}; 
\torus{2}{.8}{100}{140}{black!40}{black!0}; 
\torus{2}{.8}{100}{120}{black!40}{black!0}; 
\torus{2}{.8}{100}{100}{black!40}{black!0}; 
\torus{2}{.8}{100}{80}{black!40}{black!0}; 
\torus{2}{.8}{100}{60}{black!40}{black!0}; 
\torus{2}{.8}{100}{40}{black!50}{black!0}; 
\torus{2}{.8}{100}{20}{black!60}{black!0}; 
\torus{2}{.8}{90}{180}{black!100}{black!0}; 
\torus{2}{.8}{90}{160}{black!100}{black!0}; 
\torus{2}{.8}{90}{140}{black!70}{black!0}; 
\torus{2}{.8}{90}{120}{black!50}{black!0}; 
\torus{2}{.8}{90}{100}{black!40}{black!0}; 
\torus{2}{.8}{90}{80}{black!30}{black!0}; 
\torus{2}{.8}{90}{60}{black!20}{black!0}; 
\torus{2}{.8}{90}{40}{black!30}{black!0}; 
\torus{2}{.8}{90}{20}{black!40}{black!0}; 
\torus{2}{.8}{80}{180}{black!100}{black!0}; 
\torus{2}{.8}{80}{160}{black!100}{black!0}; 
\torus{2}{.8}{80}{140}{black!70}{black!0}; 
\torus{2}{.8}{80}{120}{black!50}{black!0}; 
\torus{2}{.8}{80}{100}{black!40}{black!0}; 
\torus{2}{.8}{80}{80}{black!30}{black!0}; 
\torus{2}{.8}{80}{60}{black!20}{black!0}; 
\torus{2}{.8}{80}{40}{black!30}{black!0}; 
\torus{2}{.8}{80}{20}{black!40}{black!0}; 
\torus{2}{.8}{70,60,...,50}{180}{black!100}{black!0}; 
\torus{2}{.8}{70,60,...,50}{160}{black!100}{black!0}; 
\torus{2}{.8}{70,60,...,50}{140}{black!100}{black!0}; 
\torus{2}{.8}{70,60,...,50}{120}{black!80}{black!0}; 
\torus{2}{.8}{70,60,...,50}{100}{black!50}{black!0}; 
\torus{2}{.8}{70,60,...,50}{80}{black!30}{black!0}; 
\torus{2}{.8}{70,60,...,50}{60}{black!20}{black!0}; 
\torus{2}{.8}{70,60,...,50}{40}{black!30}{black!0}; 
\torus{2}{.8}{70,60,...,50}{20}{black!40}{black!0}; 
%forward left top
\torus{2}{.8}{-50}{180}{black!100}{black!0};
\torus{2}{.8}{-50}{160}{black!50}{black!0};
\torus{2}{.8}{-50}{140}{black!50}{black!0};
\torus{2}{.8}{-50}{120}{black!50}{black!0};
\torus{2}{.8}{-50}{100}{black!50}{black!0};
\torus{2}{.8}{-50}{80}{black!50}{black!0};
\torus{2}{.8}{-50}{60}{black!50}{black!0};
\torus{2}{.8}{-50}{40}{black!60}{black!0};
\torus{2}{.8}{-50}{20}{black!80}{black!0};
\torus{2}{.8}{-50}{0}{black!100}{black!0};
\torus{2}{.8}{-40}{180}{black!100}{black!0};
\torus{2}{.8}{-40}{160}{black!50}{black!0};
\torus{2}{.8}{-40}{140}{black!50}{black!0};
\torus{2}{.8}{-40}{120}{black!50}{black!0};
\torus{2}{.8}{-40}{100}{black!50}{black!0};
\torus{2}{.8}{-40}{80}{black!50}{black!0};
\torus{2}{.8}{-40}{60}{black!50}{black!0};
\torus{2}{.8}{-40}{40}{black!60}{black!0};
\torus{2}{.8}{-40}{20}{black!70}{black!0};
\torus{2}{.8}{-40}{0}{black!80}{black!0};
\torus{2}{.8}{-30}{180}{black!100}{black!0};
\torus{2}{.8}{-30}{160}{black!100}{black!0};
\torus{2}{.8}{-30}{140}{black!70}{black!0};
\torus{2}{.8}{-30}{120}{black!60}{black!0};
\torus{2}{.8}{-30}{100}{black!50}{black!0};
\torus{2}{.8}{-30}{80}{black!40}{black!0};
\torus{2}{.8}{-30}{60}{black!30}{black!0};
\torus{2}{.8}{-30}{40}{black!40}{black!0};
\torus{2}{.8}{-30}{20}{black!50}{black!0};
\torus{2}{.8}{-20}{180}{black!100}{black!0};
\torus{2}{.8}{-20}{160}{black!100}{black!0};
\torus{2}{.8}{-20}{140}{black!70}{black!0};
\torus{2}{.8}{-20}{120}{black!60}{black!0};
\torus{2}{.8}{-20}{100}{black!50}{black!0};
\torus{2}{.8}{-20}{80}{black!40}{black!0};
\torus{2}{.8}{-20}{60}{black!30}{black!0};
\torus{2}{.8}{-20}{40}{black!40}{black!0};
\torus{2}{.8}{-20}{20}{black!50}{black!0};
\torus{2}{.8}{-10}{180}{black!100}{black!0};
\torus{2}{.8}{-10}{160}{black!100}{black!0};
\torus{2}{.8}{-10}{140}{black!70}{black!0};
\torus{2}{.8}{-10}{120}{black!50}{black!0};
\torus{2}{.8}{-10}{100}{black!40}{black!0};
\torus{2}{.8}{-10}{80}{black!30}{black!0};
\torus{2}{.8}{-10}{60}{black!20}{black!0};
\torus{2}{.8}{-10}{40}{black!30}{black!0};
\torus{2}{.8}{-10}{20}{black!40}{black!0};
\torus{2}{.8}{-20,-10,...,40}{180}{black!100}{black!0};
\torus{2}{.8}{-20,-10,...,40}{160}{black!100}{black!0};
\torus{2}{.8}{-20,-10,...,40}{140}{black!100}{black!0};
\torus{2}{.8}{-20,-10,...,40}{120}{black!80}{black!0};
\torus{2}{.8}{-20,-10,...,40}{100}{black!50}{black!0};
\torus{2}{.8}{-20,-10,...,40}{80}{black!40}{black!0};
\torus{2}{.8}{-20,-10,...,40}{60}{black!20}{black!0};
\torus{2}{.8}{-20,-10,...,40}{40}{black!30}{black!0};
\torus{2}{.8}{-20,-10,...,40}{20}{black!40}{black!0};
\node at (3,1,1) [circle,opacity=1,draw=black,fill=black,minimum size=6pt] {};
\draw (3,1.25,.8) node {$\mathbf{+1}$};
\node at (1,2,2.5) [circle,opacity=1,draw=black,fill=white,minimum size=6pt] {};
\draw (1,2.25,2.35) node {$\mathbf{-1}$};
\node at (0,3,1) [circle,opacity=1,draw=black,fill=black,minimum size=6pt] {};
\draw (0,3,.55) node {$\mathbf{+1}$};
\node at (0,-1,1) [circle,opacity=1,draw=black,fill=white,minimum size=6pt] {};
\draw (0,-1,.6) node {$\mathbf{-1}$};
\draw (0,0,-.65) node {\large{$X$}};
\pgfsetendarrow{\pgfarrowtriangle{4pt}}
\draw [line width=2, opacity=1] (3,-.5,1) -- (3,.5,1);
\end{pgftranslate}

\begin{pgftranslate}{\pgfpoint{14.5cm}{2cm}}
%back left bottom
\torus{2}{.8}{220,230,...,300}{0,-20,...,-100}{black!50}{black!0};
\torus{2}{.8}{220,230,...,300}{-120,-140}{black!100}{black!0};
\torus{2}{.8}{220,230,...,300}{-160}{black!70}{black!0};
%back right bottom
\torus{2}{.8}{210,200,...,130}{0,-20,...,-100}{black!50}{black!0};
\torus{2}{.8}{210,200,...,130}{-120,-140}{black!100}{black!0};
\torus{2}{.8}{210,200,...,130}{-160}{black!70}{black!0};
%forward right bottom
\torus{2}{.8}{120,110,...,50}{-160}{black!100}{black!0}; 
\torus{2}{.8}{120,110,...,50}{-140}{black!100}{black!0}; 
\torus{2}{.8}{120,110,...,50}{-120}{black!70}{black!0}; 
\torus{2}{.8}{120,110,...,50}{-100,-80}{black!50}{black!0}; 
\torus{2}{.8}{120,110,...,50}{-60,-40}{black!100}{black!0}; 
\torus{2}{.8}{120,110,...,50}{-20}{black!70}{black!0}; 
\torus{2}{.8}{120,110,...,50}{0}{black!50}{black!0}; 
%forward left bottom
\torus{2}{.8}{-50,-40,...,40}{-160}{black!100}{black!0};   
\torus{2}{.8}{-50,-40,...,40}{-140}{black!100}{black!0};   
\torus{2}{.8}{-50,-40,...,40}{-120}{black!70}{black!0};   
\torus{2}{.8}{-50,-40,...,40}{-100,-80}{black!50}{black!0};   
\torus{2}{.8}{-50,-40,...,40}{-60,-40}{black!100}{black!0};    
\torus{2}{.8}{-50,-40,...,40}{-20}{black!70}{black!0}; 
\torus{2}{.8}{-50,-40,...,40}{0}{black!50}{black!0};      
%back left top     
\torus{2}{.8}{220,230,...,300}{20,40,60}{black!100}{black!0};
\torus{2}{.8}{220,230,...,300}{80}{black!80}{black!0};
\torus{2}{.8}{220,230,...,300}{100}{black!60}{black!0};
\torus{2}{.8}{220,230,...,300}{120}{black!40}{black!0};
\torus{2}{.8}{220,230,...,300}{140}{black!20}{black!0};
\torus{2}{.8}{220,230,...,300}{160}{black!20}{black!0};
\torus{2}{.8}{220,230,...,300}{180}{black!30}{black!0};
%back right top
\torus{2}{.8}{210,200,...,130}{20,40,60}{black!100}{black!0}; 
\torus{2}{.8}{210,200,...,130}{80}{black!80}{black!0}; 
\torus{2}{.8}{210,200,...,130}{100}{black!60}{black!0}; 
\torus{2}{.8}{210,200,...,130}{120}{black!40}{black!0}; 
\torus{2}{.8}{210,200,...,130}{140}{black!20}{black!0}; 
\torus{2}{.8}{210,200,...,130}{160}{black!20}{black!0}; 
\torus{2}{.8}{210,200,...,130}{180}{black!30}{black!0}; 
%forward right top
\torus{2}{.8}{120}{180}{black!100}{black!0}; 
\torus{2}{.8}{120}{160}{black!50}{black!0}; 
\torus{2}{.8}{120}{140}{black!30}{black!0}; 
\torus{2}{.8}{120}{120}{black!40}{black!0}; 
\torus{2}{.8}{120}{100}{black!40}{black!0}; 
\torus{2}{.8}{120}{80}{black!40}{black!0}; 
\torus{2}{.8}{120}{60}{black!40}{black!0}; 
\torus{2}{.8}{120}{40}{black!60}{black!0}; 
\torus{2}{.8}{120}{20}{black!80}{black!0}; 
\torus{2}{.8}{120}{0}{black!100}{black!0}; 
\torus{2}{.8}{110}{180}{black!100}{black!0}; 
\torus{2}{.8}{110}{160}{black!50}{black!0}; 
\torus{2}{.8}{110}{140}{black!30}{black!0}; 
\torus{2}{.8}{110}{120}{black!40}{black!0}; 
\torus{2}{.8}{110}{100}{black!40}{black!0}; 
\torus{2}{.8}{110}{80}{black!40}{black!0}; 
\torus{2}{.8}{110}{60}{black!50}{black!0}; 
\torus{2}{.8}{110}{40}{black!60}{black!0}; 
\torus{2}{.8}{110}{20}{black!60}{black!0}; 
\torus{2}{.8}{110}{0}{black!70}{black!0}; 
\torus{2}{.8}{100}{180}{black!100}{black!0}; 
\torus{2}{.8}{100}{160}{black!50}{black!0}; 
\torus{2}{.8}{100}{140}{black!40}{black!0}; 
\torus{2}{.8}{100}{120}{black!40}{black!0}; 
\torus{2}{.8}{100}{100}{black!40}{black!0}; 
\torus{2}{.8}{100}{80}{black!40}{black!0}; 
\torus{2}{.8}{100}{60}{black!40}{black!0}; 
\torus{2}{.8}{100}{40}{black!50}{black!0}; 
\torus{2}{.8}{100}{20}{black!60}{black!0}; 
\torus{2}{.8}{90}{180}{black!100}{black!0}; 
\torus{2}{.8}{90}{160}{black!100}{black!0}; 
\torus{2}{.8}{90}{140}{black!70}{black!0}; 
\torus{2}{.8}{90}{120}{black!50}{black!0}; 
\torus{2}{.8}{90}{100}{black!40}{black!0}; 
\torus{2}{.8}{90}{80}{black!30}{black!0}; 
\torus{2}{.8}{90}{60}{black!20}{black!0}; 
\torus{2}{.8}{90}{40}{black!30}{black!0}; 
\torus{2}{.8}{90}{20}{black!40}{black!0}; 
\torus{2}{.8}{80}{180}{black!100}{black!0}; 
\torus{2}{.8}{80}{160}{black!100}{black!0}; 
\torus{2}{.8}{80}{140}{black!70}{black!0}; 
\torus{2}{.8}{80}{120}{black!50}{black!0}; 
\torus{2}{.8}{80}{100}{black!40}{black!0}; 
\torus{2}{.8}{80}{80}{black!30}{black!0}; 
\torus{2}{.8}{80}{60}{black!20}{black!0}; 
\torus{2}{.8}{80}{40}{black!30}{black!0}; 
\torus{2}{.8}{80}{20}{black!40}{black!0}; 
\torus{2}{.8}{70,60,...,50}{180}{black!100}{black!0}; 
\torus{2}{.8}{70,60,...,50}{160}{black!100}{black!0}; 
\torus{2}{.8}{70,60,...,50}{140}{black!100}{black!0}; 
\torus{2}{.8}{70,60,...,50}{120}{black!80}{black!0}; 
\torus{2}{.8}{70,60,...,50}{100}{black!50}{black!0}; 
\torus{2}{.8}{70,60,...,50}{80}{black!30}{black!0}; 
\torus{2}{.8}{70,60,...,50}{60}{black!20}{black!0}; 
\torus{2}{.8}{70,60,...,50}{40}{black!30}{black!0}; 
\torus{2}{.8}{70,60,...,50}{20}{black!40}{black!0}; 
%forward left top
\torus{2}{.8}{-50}{180}{black!100}{black!0};
\torus{2}{.8}{-50}{160}{black!50}{black!0};
\torus{2}{.8}{-50}{140}{black!50}{black!0};
\torus{2}{.8}{-50}{120}{black!50}{black!0};
\torus{2}{.8}{-50}{100}{black!50}{black!0};
\torus{2}{.8}{-50}{80}{black!50}{black!0};
\torus{2}{.8}{-50}{60}{black!50}{black!0};
\torus{2}{.8}{-50}{40}{black!60}{black!0};
\torus{2}{.8}{-50}{20}{black!80}{black!0};
\torus{2}{.8}{-50}{0}{black!100}{black!0};
\torus{2}{.8}{-40}{180}{black!100}{black!0};
\torus{2}{.8}{-40}{160}{black!50}{black!0};
\torus{2}{.8}{-40}{140}{black!50}{black!0};
\torus{2}{.8}{-40}{120}{black!50}{black!0};
\torus{2}{.8}{-40}{100}{black!50}{black!0};
\torus{2}{.8}{-40}{80}{black!50}{black!0};
\torus{2}{.8}{-40}{60}{black!50}{black!0};
\torus{2}{.8}{-40}{40}{black!60}{black!0};
\torus{2}{.8}{-40}{20}{black!70}{black!0};
\torus{2}{.8}{-40}{0}{black!80}{black!0};
\torus{2}{.8}{-30}{180}{black!100}{black!0};
\torus{2}{.8}{-30}{160}{black!100}{black!0};
\torus{2}{.8}{-30}{140}{black!70}{black!0};
\torus{2}{.8}{-30}{120}{black!60}{black!0};
\torus{2}{.8}{-30}{100}{black!50}{black!0};
\torus{2}{.8}{-30}{80}{black!40}{black!0};
\torus{2}{.8}{-30}{60}{black!30}{black!0};
\torus{2}{.8}{-30}{40}{black!40}{black!0};
\torus{2}{.8}{-30}{20}{black!50}{black!0};
\torus{2}{.8}{-20}{180}{black!100}{black!0};
\torus{2}{.8}{-20}{160}{black!100}{black!0};
\torus{2}{.8}{-20}{140}{black!70}{black!0};
\torus{2}{.8}{-20}{120}{black!60}{black!0};
\torus{2}{.8}{-20}{100}{black!50}{black!0};
\torus{2}{.8}{-20}{80}{black!40}{black!0};
\torus{2}{.8}{-20}{60}{black!30}{black!0};
\torus{2}{.8}{-20}{40}{black!40}{black!0};
\torus{2}{.8}{-20}{20}{black!50}{black!0};
\torus{2}{.8}{-10}{180}{black!100}{black!0};
\torus{2}{.8}{-10}{160}{black!100}{black!0};
\torus{2}{.8}{-10}{140}{black!70}{black!0};
\torus{2}{.8}{-10}{120}{black!50}{black!0};
\torus{2}{.8}{-10}{100}{black!40}{black!0};
\torus{2}{.8}{-10}{80}{black!30}{black!0};
\torus{2}{.8}{-10}{60}{black!20}{black!0};
\torus{2}{.8}{-10}{40}{black!30}{black!0};
\torus{2}{.8}{-10}{20}{black!40}{black!0};
\torus{2}{.8}{-20,-10,...,40}{180}{black!100}{black!0};
\torus{2}{.8}{-20,-10,...,40}{160}{black!100}{black!0};
\torus{2}{.8}{-20,-10,...,40}{140}{black!100}{black!0};
\torus{2}{.8}{-20,-10,...,40}{120}{black!80}{black!0};
\torus{2}{.8}{-20,-10,...,40}{100}{black!50}{black!0};
\torus{2}{.8}{-20,-10,...,40}{80}{black!40}{black!0};
\torus{2}{.8}{-20,-10,...,40}{60}{black!20}{black!0};
\torus{2}{.8}{-20,-10,...,40}{40}{black!30}{black!0};
\torus{2}{.8}{-20,-10,...,40}{20}{black!40}{black!0};
\node at (1,2,2.5) [circle,opacity=1,draw=black,fill=white,minimum size=6pt] {};
\draw (1,2.25,2.35) node {$\mathbf{-2}$};
\node at (0,3,1) [circle,opacity=1,draw=black,fill=black,minimum size=6pt] {};
\draw (0,3,.55) node {$\mathbf{+1}$};
\node at (0,-1,1) [circle,opacity=1,draw=black,fill=white,minimum size=6pt] {};
\draw (0,-1,.6) node {$\mathbf{-1}$};
\node at (2,2,0) [circle,opacity=1,draw=black,fill=black,minimum size=6pt] {};
\draw (2,2.7,.4) node {$\mathbf{+2}$};
\draw (0,0,-.65) node {\large{$X$}};
\end{pgftranslate}
\end{tikzpicture}
\caption{zero-cycles on a smooth projective curve $X$.}
\label{figzerocyclesX}
\end{figure}
%&&&&&&&&&&&&&&&&&&&&&&&&&&&&&&&&&&&&&&&&&&&&&&&&&&&&&&&&&&&&&&&&&&&&&&&&&&&&&&&&&&&
%&&&&&&&&&&&&&&&&&&&&&&&&&&&&&&&&&&&&&&&&&&&&&&&&&&&&&&&&&&&&&&&&&&&&&&&&&&&&&&&&&&&
%&&&                    &&&       &&&&&    &&&                &&&&&&&&&&&&&&&&&&&&&&&&&&&&&&&&&&&&&&&&&&&&&&&&&&&
%&&&    &&&&&&&&&    &    &&&&   &&&    &&&&    &&&&&&&&&&&&&&&&&&&&&&&&&&&&&&&&&&&&&&&&&&&&&&&&&&& 
%&&&    &&&&&&&&&    &&    &&&   &&&    &&&&&    &&&&&&&&&&&&&&&&&&&&&&&&&&&&&&&&&&&&&&&&&&&&&&&&&& 
%&&&                 &&&&    &&&    &&   &&&    &&&&&    &&&&&&&&&&&&&&&&&&&&&&&&&&&&&&&&&&&&&&&&&&&&&&&&&& 
%&&&    &&&&&&&&&    &&&&    &   &&&    &&&&&    &&&&&&&&&&&&&&&&&&&&&&&&&&&&&&&&&&&&&&&&&&&&&&&&&& 
%&&&    &&&&&&&&&    &&&&&       &&&    &&&&     &&&&&&&&&&&&&&&&&&&&&&&&&&&&&&&&&&&&&&&&&&&&&&&&&& 
%&&&                    &&&    &&&&&&    &&&                  &&&&&&&&&&&&&&&&&&&&&&&&&&&&&&&&&&&&&&&&&&&&&&&&&&&
%&&&&&&&&&&&&&&&&&&&&&&&&&&&&&&&&&&&&&&&&&&&&&&&&&&&&&&&&&&&&&&&&&&&&&&&&&&&&&&&&&&&
%&&&&&&&&&&&&&&&&&&&&&&&&&&&&&&&&&&&&&&&&&&&&&&&&&&&&&&&&&&&&&&&&&&&&&&&&&&&&&&&&&&&
%}

Zero-cycles on algebraic curves are special both because they are zero-dimensional and because they are of codimension-one.   The first property eliminates the possibility of complicated internal structure of individual cycles, and the second property means there is ``insufficient room" for the group of cycles as a whole to be too cumbersome.  Because of these properties, the case of zero-cycles on curves is far better understood than any other case of algebraic cycles.

\subsection{Zero-Cycles as Divisors; Principal Divisors}\label{subsectionzerocyclesdivisors}

\label{ZeroCyclesDivisors}

Codimension-one cycles on an algebraic variety $X$ are called {\bf divisors}.  In the special case where $X$ is a smooth projective curve, divisors are zero-cycles.  As discussed in section \hyperref[sectioncyclegroups]{\ref{sectioncyclegroups}} below, the case of divisors exhibits many simplifications and special relationships not present for higher-codimensional cycles.  A combination of advantages arising from working with the smallest nontrivial dimension and the smallest nontrivial codimension is what renders the case of zero-cycles on curves so simple.  Fortunately, from a suitable viewpoint, this case still presents enough structure to serve as a useful illustration.  

Associated with rational functions are special divisors called principal divisors.   If $f$ is a nonzero rational function on an open subset $U$ of $X$, then the {\bf principal divisor} $\tn{div}_U(f)$ of $f$ is defined in terms of the {\bf valuations} $\nu_x(f)$ of $f$ at the codimension-one points $x$ of $U$ as follows:
\[\tn{div}_X(f)=\sum_{x\in \tn{\footnotesize{Zar}}_U^{1}}\big(\nu_x(f)\big)x.\]
Since $X$ is a smooth complex curve, the valuations $\nu_x$ may be described in terms of complex function theory.  If $x$ is a zero of the rational function $f$, then $\nu_x(f)$ is the order of vanishing of $f$ at $x$.  If $x$ is a pole of $f$, then $\nu_x(f)$ is the order of the pole.  If $x$ is neither a zero nor a pole of $f$, then $\nu_x(f)=0$.  The sum in the definition of $\tn{div}_U(f)$ is finite, rational functions have a finite number of zeros and poles.  The set of principal divisors on $U$ is a subgroup of $Z_U^1$, called the {\bf group of principal divisors} on $U$, and denoted by $\tn{PDiv}_U$.\footnotemark\footnotetext{The sheaf associated to the presheaf $U\mapsto\tn{PDiv}_U$ is just $\ms{Z}_X^1$, since every divisor is locally principal.}   The assignment $f\mapsto \tn{div}_U(f)$ defines a map 
\[\tn{div}_U:R_U^*=R_X^*\rightarrow\tn{PDiv}_U\subset Z_U^1\]
called the {\bf divisor map}.   The divisor map is a group homomorphism, which transforms multiplication of rational functions to addition of principal divisors.  The case $U=X$ gives the {\bf global divisor map} $\tn{div}_X$.  Taking limits over open subsets containing a point $x$ of $X$ gives a {\bf pointwise divisor map} $\tn{div}_x$, which may be identified with the valuation $\nu_x$.   
The kernels of the divisor maps $\tn{div}_X$, $\tn{div}_U$, and $\tn{div}_x$ are the groups $O_X^*$, $O_U^*$,  and $O_x^*$,  respectively, since invertible regular functions have vanishing valuations. 

Not every divisor is principal.  The question of whether or not a particular divisor $z$ on an open subset $U$ of $X$ is principal involves specifying valuations at the points of $U$, given by the multiplicities of $z$, then asking whether there exists a rational function on $U$ with the specified valuations.  When $U=X$, this is an example of what is called a {\bf local-to-global problem}.\footnotemark\footnotetext{Such problems specify local conditions and ask whether a global object exists satisfying these conditions. The answer to this problem for zero-cycles on smooth projective curves is given by Abel's theorem, which involves the image of the Abel-Jacobi map in the Jacobian variety $\tn{J}_X$ of $X$.  I will revisit this important subject in greater depth in Chapter \hyperref[ChapterCyclesChow]{3}.}  Given any point $x$ of $U$, it is easy to define a rational function $f$ with specified valuations in some smaller neighborhood $U'$ of $x$.  Thus, every divisor is {\bf locally principal}.  The difficulty is that $f$ will generally have ``unwanted" zeros and poles outside $U'$.   An arbitrary zero-cycle $z$ on an open subset $U$ of $X$ is defined, in terms of rational functions, by a {\bf family of compatible pairs} $\{U_i,f_i\}$, where $\{U_i\}$ is an open cover of $U$, and $z$ equals $\tn{div}_{U_i}(f_i)$ on $U_i$.  Compatibility means that $f_i$ and $f_j$ share the same valuation at each point of the intersection $U_i\cap U_j$, for all $i$ and $j$, so that the definition of $z$ is consistent.

\subsection{Linear Equivalence; Rational Equivalence; First Chow Group $\tn{Ch}_X^1$}\label{subsectionlinratfirstchow}

Two zero-cycles $z$ and $z'$ on a smooth projective curve $X$ are called {\bf linearly equivalent} if their difference is a principal divisor.  Linear equivalence generalizes to rational equivalence for algebraic cycles of arbitrary codimension on a smooth algebraic variety, so it is useful to describe linear equivalence as rational equivalence in the context of curves.  Two zero-cycles $z$ and $z'$ on $X$ are {\bf rationally equivalent} if there exists a codimension-one cycle $Z$ on $X\times \PP^1$, flat over $C$, and two points $\lambda$ and $\lambda'$ of $\PP^1$, such that 
\[\pi_*\big(Z\cap X\times\{\lambda\}\big)-\pi_*\big(Z\cap X\times\{\lambda'\}\big)=z-z',\]
where $\pi:X\times \PP^1\rightarrow X$ is projection to $X$, and $\pi_*$ is the associated pushforward on cycles.  
Figure \hyperref[figrationalequivalence]{\ref{figrationalequivalence}} below schematically illustrates rational equivalence of two zero-cycles on a smooth projective curve $X$, represented by the horizontal line at the bottom of the diagram.  One zero-cycle is represented by the two black nodes on $X$, and the other is represented by the three white nodes.  The vertical line at the right of the diagram represents $\PP^1$, with two distinguished points $\lambda$ and $\lambda'$ indicated by the dashes. The large square in the middle of the diagram represents the surface $X\times\PP^1$, and the dark curve represents the codimension-one cycle $Z$ on $X\times\PP^1$.  

%&&&&&&&&&&&&&&&&&&&&&&&&&&&&&&&&&&&&&&&&&&&&&&&&&&&&&&&&&&&&&&&&&&&&&&&&&&&&&&&&&&&
%&&&&&&&&&&&&&&&&&&&&&&&&&&&&&&&&&&&&&&&&&&&&&&&&&&&&&&&&&&&&&&&&&&&&&&&&&&&&&&&&&&&
%&&&&                 &&&&    &&&               &&&                    &&&    &&&&    &&&              &&&&                   &&&&&&&&&&&&&&&&&&&
%&&&&   &&&&   &&&&    &&&   &&&&&&&&&&&   &&&&&&    &&&&    &&&    &&&     &&&    &&&&&&&&&&&&&&&&&&&&&&&&
%&&&&   &&&&   &&&&    &&&   &&&&&&&&&&&   &&&&&&    &&&&    &&&    &&&     &&&    &&&&&&&&&&&&&&&&&&&&&&&&
%&&&&                 &&&&    &&&   &&&&&&&&&&&   &&&&&&    &&&&    &&&            &&&&&             &&&&&&&&&&&&&&&&&&&&&
%&&&&    &&&&&&&&&    &&&   &&&&&&&&&&&   &&&&&&    &&&&    &&&    &&&   &&&&    &&&&&&&&&&&&&&&&&&&&&&&&
%&&&&    &&&&&&&&&    &&&   &&&&&&&&&&&   &&&&&&    &&&&    &&&    &&&&   &&&    &&&&&&&&&&&&&&&&&&&&&&&&
%&&&&    &&&&&&&&&    &&&                &&&&&&   &&&&&&                   &&&    &&&&&   &&                  &&&&&&&&&&&&&&&&&&&
%&&&&&&&&&&&&&&&&&&&&&&&&&&&&&&&&&&&&&&&&&&&&&&&&&&&&&&&&&&&&&&&&&&&&&&&&&&&&&&&&&&&
%&&&&&&&&&&&&&&&&&&&&&&&&&&&&&&&&&&&&&&&&&&&&&&&&&&&&&&&&&&&&&&&&&&&&&&&&&&&&&&&&&&&
\begin{figure}[H]
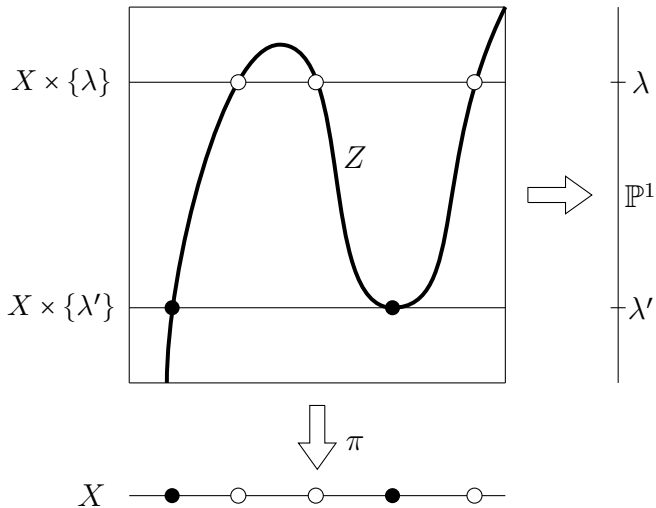

\begin{pgfpicture}{0cm}{0cm}{16cm}{7cm}
\begin{pgftranslate}{\pgfpoint{9cm}{-1cm}}
\begin{pgfrotateby}{\pgfdegree{90}}
\begin{pgfscope}
\pgfxyline(3,1.5)(3,6.5)
\pgfxyline(3,6.5)(8,6.5)
\pgfxyline(8,6.5)(8,1.5)
\pgfxyline(8,1.5)(3,1.5)
\pgfxyline(1.5,1.5)(1.5,6.5)
\pgfxyline(3,0)(8,0)
\pgfxyline(4,1.5)(4,6.5)
\pgfxyline(7,1.5)(7,6.5)
\pgfxyline(4,-.1)(4,.1)
\pgfxyline(7,-.1)(7,.1)
\end{pgfscope}
\begin{pgfscope}
\pgfsetlinewidth{1.5pt}
\pgfmoveto{\pgfxy(3,6)}
\pgfcurveto{\pgfxy(4.5,6)}{\pgfxy(7.5,5.25)}{\pgfxy(7.5,4.5)}
\pgfcurveto{\pgfxy(7.5,3.5)}{\pgfxy(4,4)}{\pgfxy(4,3)}
\pgfcurveto{\pgfxy(4,2)}{\pgfxy(6,2.5)}{\pgfxy(8,1.5)}
\pgfstroke
\end{pgfscope}
\pgfnodecircle{Node0}[fill]{\pgfxy(4,5.93)}{0.1cm}
\pgfnodecircle{Node0}[fill]{\pgfxy(1.5,5.93)}{0.1cm}
\pgfnodecircle{Node0}[fill]{\pgfxy(4,3)}{0.1cm}
\pgfnodecircle{Node0}[fill]{\pgfxy(1.5,3)}{0.1cm}
%\pgfmoveto{\pgfxy(8.3,1.5)}
%\pgfcurveto{\pgfxy(8.6,1.4)}{\pgfxy(8.3,4)}{\pgfxy(8.6,4)}
%\pgfcurveto{\pgfxy(8.3,4)}{\pgfxy(8.6,6.6)}{\pgfxy(8.3,6.5)}
%\pgfstroke
\pgfmoveto{\pgfxy(5.4,1.2)}
\pgflineto{\pgfxy(5.6,1.2)}
\pgflineto{\pgfxy(5.6,.7)}
\pgflineto{\pgfxy(5.75,.7)}
\pgflineto{\pgfxy(5.5,.35)}
\pgflineto{\pgfxy(5.25,.7)}
\pgflineto{\pgfxy(5.4,.7)}
\pgflineto{\pgfxy(5.4,1.2)}
\pgfstroke
\pgfmoveto{\pgfxy(2.7,3.9)}
\pgflineto{\pgfxy(2.7,4.1)}
\pgflineto{\pgfxy(2.2,4.1)}
\pgflineto{\pgfxy(2.2,4.25)}
\pgflineto{\pgfxy(1.85,4)}
\pgflineto{\pgfxy(2.2,3.75)}
\pgflineto{\pgfxy(2.2,3.9)}
\pgflineto{\pgfxy(2.7,3.9)}
\pgfstroke
\begin{pgfscope}
\color{white}
\pgfnodecircle{Node0}[fill]{\pgfxy(7,5.05)}{0.1cm}
\pgfnodecircle{Node0}[fill]{\pgfxy(1.5,5.05)}{0.1cm}
\pgfnodecircle{Node0}[fill]{\pgfxy(7,4.02)}{0.1cm}
\pgfnodecircle{Node0}[fill]{\pgfxy(1.5,4.02)}{0.1cm}
\pgfnodecircle{Node0}[fill]{\pgfxy(7,1.91)}{0.1cm}
\pgfnodecircle{Node0}[fill]{\pgfxy(1.5,1.91)}{0.1cm}
%\pgfnodecircle{Node0}[fill]{\pgfxy(4,4.5)}{0.3cm}
%\pgfnodecircle{Node0}[fill]{\pgfxy(7,3)}{0.3cm}
\end{pgfscope}
\pgfnodecircle{Node0}[stroke]{\pgfxy(7,5.05)}{0.1cm}
\pgfnodecircle{Node0}[stroke]{\pgfxy(1.5,5.05)}{0.1cm}
\pgfnodecircle{Node0}[stroke]{\pgfxy(7,4.02)}{0.1cm}
\pgfnodecircle{Node0}[stroke]{\pgfxy(1.5,4.02)}{0.1cm}
\pgfnodecircle{Node0}[stroke]{\pgfxy(7,1.91)}{0.1cm}
\pgfnodecircle{Node0}[stroke]{\pgfxy(1.5,1.91)}{0.1cm}
\end{pgfrotateby}
\end{pgftranslate}
\pgfputat{\pgfxy(5.5,5)}{\pgfbox[center,center]{$Z$}}
\pgfputat{\pgfxy(2,.5)}{\pgfbox[center,center]{$X$}}
\pgfputat{\pgfxy(5.5,1.2)}{\pgfbox[center,center]{$\pi$}}
\pgfputat{\pgfxy(1.6,3)}{\pgfbox[center,center]{\small{$X\times\{\lambda'\}$}}}
\pgfputat{\pgfxy(1.6,6)}{\pgfbox[center,center]{\small{$X\times\{\lambda\}$}}}
\pgfputat{\pgfxy(9.3,4.5)}{\pgfbox[center,center]{$\PP^1$}}
\pgfputat{\pgfxy(9.3,3)}{\pgfbox[center,center]{$\lambda'$}}
\pgfputat{\pgfxy(9.3,6)}{\pgfbox[center,center]{$\lambda$}}
\end{pgfpicture}
\caption{Rational equivalence for zero-cycles on an algebraic curve.}
\label{figrationalequivalence}
\end{figure}
%&&&&&&&&&&&&&&&&&&&&&&&&&&&&&&&&&&&&&&&&&&&&&&&&&&&&&&&&&&&&&&&&&&&&&&&&&&&&&&&&&&&
%&&&&&&&&&&&&&&&&&&&&&&&&&&&&&&&&&&&&&&&&&&&&&&&&&&&&&&&&&&&&&&&&&&&&&&&&&&&&&&&&&&&
%&&&                    &&&       &&&&&    &&&                &&&&&&&&&&&&&&&&&&&&&&&&&&&&&&&&&&&&&&&&&&&&&&&&&&&
%&&&    &&&&&&&&&    &    &&&&   &&&    &&&&    &&&&&&&&&&&&&&&&&&&&&&&&&&&&&&&&&&&&&&&&&&&&&&&&&&& 
%&&&    &&&&&&&&&    &&    &&&   &&&    &&&&&    &&&&&&&&&&&&&&&&&&&&&&&&&&&&&&&&&&&&&&&&&&&&&&&&&& 
%&&&                 &&&&    &&&    &&   &&&    &&&&&    &&&&&&&&&&&&&&&&&&&&&&&&&&&&&&&&&&&&&&&&&&&&&&&&&& 
%&&&    &&&&&&&&&    &&&&    &   &&&    &&&&&    &&&&&&&&&&&&&&&&&&&&&&&&&&&&&&&&&&&&&&&&&&&&&&&&&& 
%&&&    &&&&&&&&&    &&&&&       &&&    &&&&     &&&&&&&&&&&&&&&&&&&&&&&&&&&&&&&&&&&&&&&&&&&&&&&&&& 
%&&&                    &&&    &&&&&&    &&&                  &&&&&&&&&&&&&&&&&&&&&&&&&&&&&&&&&&&&&&&&&&&&&&&&&&&
%&&&&&&&&&&&&&&&&&&&&&&&&&&&&&&&&&&&&&&&&&&&&&&&&&&&&&&&&&&&&&&&&&&&&&&&&&&&&&&&&&&&
%&&&&&&&&&&&&&&&&&&&&&&&&&&&&&&&&&&&&&&&&&&&&&&&&&&&&&&&&&&&&&&&&&&&&&&&&&&&&&&&&&&&

The {\bf group of zero-cycles rationally equivalent to zero} on $X$ is denoted by $Z_{X,\footnotesize{\tn{rat}}}^1$.  By definition, $Z_{X,\footnotesize{\tn{rat}}}^1$ is equal to the group $\tn{PDiv}_X$ of principal divisors on $X$, although this is only true for curves.  The {\bf first Chow group} $\tn{Ch}_X^1$ of $X$ is the quotient group $Z_X^1/Z_{X,\footnotesize{\tn{rat}}}^1$, consisting of {\bf rational equivalence classes} of zero-cycles on $X$.  

%SSSSSSSSSSSSSSSSSSSSSSSSSSSSSSSSSSSSSSSSSSSSSSSSSSSSSSSSSSSSSSSSSSSSSSSSSSSSSSSSSS
%SSSSSSSSSSSSSSSSSSSSSSSSSSSSSSSSSSSSSSSSSSSSSSSSSSSSSSSSSSSSSSSSSSSSSSSSSSSSSSSSSS
%SSS                      SSS                       SSS                    SS                          SS           SSS                         SSS          SSSSS      SSSS
%SSS      SSSSSSSSS    SSSSSSSSSS     SSSSSSSSSSSS     SSSSSSS     SSSS    SSSSSS     SSS     S     SSSS     SSSSS
%SSS      SSSSSSSSS    SSSSSSSSSS     SSSSSSSSSSSS     SSSSSSS     SSSS    SSSSSS     SSS     SS     SSS     SSSSS
%SSS                      SSS                  SSSSS    SSSSSSSSSSSS     SSSSSSS     SSSS    SSSSSS     SSS     SSS     SS     SSSSS
%SSSSSSSSSS    SSS    SSSSSSSSSS    SSSSSSSSSSSS     SSSSSSS     SSSS    SSSSSS     SSS     SSSS     S     SSSSS
%SSSSSSSSSS    SSS    SSSSSSSSSS    SSSSSSSSSSSS     SSSSSSS     SSSS    SSSSSS     SSS     SSSSS          SSSSS
%SSS                       SSS                      SSS                       SSSSS     SSSSSS          SSS                         SSS     SSSSSS        SSSSS
%SSSSSSSSSSSSSSSSSSSSSSSSSSSSSSSSSSSSSSSSSSSSSSSSSSSSSSSSSSSSSSSSSSSSSSSSSSSSSSSSSS
%SSSSSSSSSSSSSSSSSSSSSSSSSSSSSSSSSSSSSSSSSSSSSSSSSSSSSSSSSSSSSSSSSSSSSSSSSSSSSSSSSS

\subsection{Jacobian Variety and the Picard Group}\label{subsectionJacPic}

There are many interesting ways of viewing the first Chow group $\tn{Ch}_X^1$ of a smooth complex projective curve $X$; it may be described in terms of divisors, sheaf cohomology, line bundles, invertible sheaves, the Jacobian variety, the Albanese variety, and so on.  Some of these overlapping viewpoints are valid only in the special case of curves, while others apply to the codimension-one case more generally.  

The following diagram, with exact upper row, illustrates a few of the most important relationships involving $\tn{Ch}_X^1$.  This diagram is adapted from the notes of James D. Lewis \cite{LewisLecturesonCycles00}.  Section \hyperref[subsectionzerocyclessurfaces]{\ref{subsectionzerocyclessurfaces}} below discusses this picture in the general codimension-one case.  

%&&&&&&&&&&&&&&&&&&&&&&&&&&&&&&&&&&&&&&&&&&&&&&&&&&&&&&&&&&&&&&&&&&&&&&&&&&&&&&&&&&
%&&&&&&&&&&&&&&&&&&&&&&&&&&&&&&&&&&&&&&&&&&&&&&&&&&&&&&&&&&&&&&&&&&&&&&&&&&&&&&&&&&
%&&&&                 &&&&    &&&               &&&                    &&&    &&&&    &&&              &&&&                   &&&&&&&&&&&&&&&&&&
%&&&&   &&&&   &&&&    &&&   &&&&&&&&&&&   &&&&&&    &&&&    &&&    &&&     &&&    &&&&&&&&&&&&&&&&&&&&&&&&
%&&&&   &&&&   &&&&    &&&   &&&&&&&&&&&   &&&&&&    &&&&    &&&    &&&     &&&    &&&&&&&&&&&&&&&&&&&&&&&&
%&&&&                 &&&&    &&&   &&&&&&&&&&&   &&&&&&    &&&&    &&&            &&&&&             &&&&&&&&&&&&&&&&&&&&
%&&&&    &&&&&&&&&    &&&   &&&&&&&&&&&   &&&&&&    &&&&    &&&    &&&   &&&&    &&&&&&&&&&&&&&&&&&&&&&&
%&&&&    &&&&&&&&&    &&&   &&&&&&&&&&&   &&&&&&    &&&&    &&&    &&&&   &&&    &&&&&&&&&&&&&&&&&&&&&&&
%&&&&    &&&&&&&&&    &&&                &&&&&&   &&&&&&                   &&&    &&&&&   &&                  &&&&&&&&&&&&&&&&&&
%&&&&&&&&&&&&&&&&&&&&&&&&&&&&&&&&&&&&&&&&&&&&&&&&&&&&&&&&&&&&&&&&&&&&&&&&&&&&&&&&&&
%&&&&&&&&&&&&&&&&&&&&&&&&&&&&&&&&&&&&&&&&&&&&&&&&&&&&&&&&&&&&&&&&&&&&&&&&&&&&&&&&&&
\begin{figure}[H]
\begin{pgfpicture}{0cm}{0cm}{17cm}{3.25cm}
\begin{pgfmagnify}{1}{1}
\begin{pgftranslate}{\pgfpoint{.5cm}{-2.25cm}}
\pgfputat{\pgfxy(1,5)}{\pgfbox[center,center]{$0$}}
\pgfputat{\pgfxy(5.4,5.3)}{\pgfbox[center,center]{$i$}}
\pgfputat{\pgfxy(8.4,5.3)}{\pgfbox[center,center]{$\delta$}}
\pgfputat{\pgfxy(4,5)}{\pgfbox[center,center]{$\mbox{Pic}_X^0$}}
\pgfputat{\pgfxy(7,5)}{\pgfbox[center,center]{$\mbox{Pic}_X$}}
\pgfputat{\pgfxy(10,5)}{\pgfbox[center,center]{$H_{X,\ZZ}^2$}}
\pgfputat{\pgfxy(13,5)}{\pgfbox[center,center]{$H_X^{0,2}$}}
\pgfputat{\pgfxy(4,3)}{\pgfbox[center,center]{$\mbox{J}_X$}}
\pgfputat{\pgfxy(7,3)}{\pgfbox[center,center]{$\mbox{Ch}_{X}^1$}}
\pgfputat{\pgfxy(10,3)}{\pgfbox[center,center]{$H_{X,\CC}^2$}}
\pgfxyline(3.95,4.5)(3.95,3.5)
\pgfxyline(4.05,4.5)(4.05,3.5)
\pgfxyline(6.95,4.5)(6.95,3.5)
\pgfxyline(7.05,4.5)(7.05,3.5)
\pgfsetendarrow{\pgfarrowpointed{3pt}}
\pgfxyline(1.3,5)(3.4,5)
\pgfxyline(4.5,5)(6.4,5)
\pgfxyline(7.6,5)(9.3,5)
\pgfxyline(10.6,5)(12.4,5)
\pgfxyline(10,4.6)(10,3.5)
\end{pgftranslate}
\end{pgfmagnify}
\end{pgfpicture}
\caption{Relationships among various groups related to a smooth complex projective curve.}
\label{figrationallewisrelationships}
\end{figure}%&&&&&&&&&&&&&&&&&&&&&&&&&&&&&&&&&&&&&&&&&&&&&&&&&&&&&&&&&&&&&&&&&&&&&&&&&&&&&&&&&&
%&&&&&&&&&&&&&&&&&&&&&&&&&&&&&&&&&&&&&&&&&&&&&&&&&&&&&&&&&&&&&&&&&&&&&&&&&&&&&&&&&&
%&&&                    &&&       &&&&&    &&&                &&&&&&&&&&&&&&&&&&&&&&&&&&&&&&&&&&&&&&&&&&&&&&&&&&&
%&&&    &&&&&&&&&    &    &&&&   &&&    &&&&    &&&&&&&&&&&&&&&&&&&&&&&&&&&&&&&&&&&&&&&&&&&&&&&&&& 
%&&&    &&&&&&&&&    &&    &&&   &&&    &&&&&    &&&&&&&&&&&&&&&&&&&&&&&&&&&&&&&&&&&&&&&&&&&&&&&&& 
%&&&                 &&&&    &&&    &&   &&&    &&&&&    &&&&&&&&&&&&&&&&&&&&&&&&&&&&&&&&&&&&&&&&&&&&&&&&& 
%&&&    &&&&&&&&&    &&&&    &   &&&    &&&&&    &&&&&&&&&&&&&&&&&&&&&&&&&&&&&&&&&&&&&&&&&&&&&&&&& 
%&&&    &&&&&&&&&    &&&&&       &&&    &&&&     &&&&&&&&&&&&&&&&&&&&&&&&&&&&&&&&&&&&&&&&&&&&&&&&& 
%&&&                    &&&    &&&&&&    &&&                  &&&&&&&&&&&&&&&&&&&&&&&&&&&&&&&&&&&&&&&&&&&&&&&&&&
%&&&&&&&&&&&&&&&&&&&&&&&&&&&&&&&&&&&&&&&&&&&&&&&&&&&&&&&&&&&&&&&&&&&&&&&&&&&&&&&&&&
%&&&&&&&&&&&&&&&&&&&&&&&&&&&&&&&&&&&&&&&&&&&&&&&&&&&&&&&&&&&&&&&&&&&&&&&&&&&&&&&&&&

Besides $\mbox{Ch}_{X}^1$, the objects and maps appearing in diagram \hyperref[figrationallewisrelationships]{\ref{figrationallewisrelationships}} are identified as follows:
\begin{enumerate}
\item $J_X$ is the {\bf Jacobian variety} of $X$.  It is defined to be the complex torus
\begin{equation}\label{equcurvejacobian}J_X=\frac{H_{X,\CC}^1}{H_{X}^{1,0}+H_{X,\ZZ}^1}\cong\frac{H_{X}^{0,1}}{H_{X,\ZZ}^1},\end{equation}
where $H_{X,\CC}^1$ is the first complex cohomology group of $X$, $H_{X}^{1,0}$ and $H_{X}^{0,1}$ are the $(1,0)$ and $(0,1)$-parts of the {\it Hodge decomposition} $H_{X,\CC}^1=H_{X}^{1,0}\oplus H_{X}^{0,1}$ of $H_{X,\CC}^1$, and $H_{X,\ZZ}^1$ is the first integral cohomology group of $X$.\footnotemark\footnotetext{A good reference for this material is the first chapter of \cite{GriffithsTranscendentalAG84}.}
\item $\tn{Pic}_X$ is the {\bf Picard group} of $X$, defined to be the group of isomorphism classes of invertible sheaves, or line bundles, on $X$.  Alternatively, $\tn{Pic}_X$ may be defined to be the Zariski sheaf cohomology group $H_{\tn{\fsz{Zar}}}^1(X,\ms{O}_{X}^*)$.  Equation \hyperref[equCH1curvesheafcohom]{\ref{equCH1curvesheafcohom}} at the beginning of section \hyperref[SectionInfChow]{\ref{SectionInfChow}} below shows that the latter definition coincides with the definition $\tn{Ch}_X^1=Z_X^1/Z_{X,\footnotesize{\tn{rat}}}^1$ of the first Chow group $\tn{Ch}_X^1=Z_X^1/Z_{X,\footnotesize{\tn{rat}}}^1$ of $X$ in section \hyperref[subsectionlinratfirstchow]{\ref{subsectionlinratfirstchow}} above.  $\tn{Pic}_X$ is an example of an {\it algebraic group,} so the methods of {\it algebraic Lie theory} apply to it.  In particular, it makes sense to consider the tangent group at the identity of $\tn{Pic}_X$.  
\item $\tn{Pic}_X^0$ is the {\bf Picard variety} of $X$.  It is the connected component of the identity in the Picard group $\tn{Pic}_X$.  It is an example of a {\it group variety.}  Hence, the tangent group at the identity of the Picard group $\tn{Pic}_X$ coincides with the tangent group at the identity of the Picard variety $\tn{Pic}_X$.  
\item The map $i$ is inclusion of $\tn{Pic}_X^0$ into $\tn{Pic}_X$.
\item The map $\delta$ is the {\it first Chern class map.} 
\item The groups $H_{X,\ZZ}^2$ and $H_{X,\CC}^2$ are the second integral and complex cohomology groups of $X$, and the group $H_{X}^{0,2}$ is the $(0,2)$-part of the Hodge decomposition of $H_{X,\CC}^2$.  
\end{enumerate}

In section \hyperref[SectionInfChow]{\ref{SectionInfChow}} below, I show how these relationships may be used to identify the tangent group at the identity $T\tn{Ch}_X^1$ of $\tn{Ch}_X^1$ in the context of algebraic Lie theory.  This provides an way of checking that the coniveau machine gives the ``right answer."

\subsection{Divisor Sequence}\label{subsectiondivisorsequence}

Since every divisor on a smooth projective algebraic variety is locally principal, there exists a surjective map of sheaves $\ms{D}\tn{iv}_X:\ms{R}_X^*\longrightarrow\ms{Z}_X^1$, defined locally by sending a nonzero rational function $f$ on an open set $U$ of $X$ to its local principal divisor $\tn{div}_U(f)$ in $Z_U^1$.  This map is called the {\bf sheaf divisor map}, or simply the {\bf divisor map}, on $X$.  The maps on groups of sections induced by $\tn{div}$ are, by definition, the local divisor maps $\tn{div}_U$, and the corresponding maps on stalks are the pointwise divisor maps $\tn{div}_x=\nu_x$.  Since the kernel of the local divisor map $\tn{div}_U$ is $O_U^*$, the kernel of the sheaf divisor map is the sheaf $\ms{O}_X^*$ of multiplicative groups of invertible regular functions on $X$.  Thus, there exists a short exact sequence of sheaves
\begin{equation}\label{equsheafdivisorsequencecurves}1\rightarrow\ms{O}_X^*\overset{i}{\longrightarrow}\ms{R}_X^*\overset{\ms{D}\tn{\footnotesize{iv}}_X}{\longrightarrow}\ms{Z}_X^1\rightarrow0,\end{equation}
where $i$ is inclusion.  This sequence is called the {\bf divisor sequence} on $X$.  The divisor sequence is a {\bf hybrid exact sequence} in the sense that it mixes ``multiplicative" and ``additive" operations.\footnotemark\footnotetext{This is why the initial object in the sequence is ``$1$," which is shorthand for the trivial multiplicative group, and the terminal object is ``$0$,"  which is shorthand for the trivial additive group.} Taking global sections of the divisor sequence gives the following left-exact sequence of groups of global sections:
\begin{equation}\label{equdivisorglobalsectionscurves}0\rightarrow O_X^*\overset{i}{\longrightarrow}R_X^*\overset{\tn{\footnotesize{div}}_X}{\longrightarrow}Z_X^1,\end{equation}
where $i$ is again inclusion.   This sequence is not right-exact because divisors on $X$ are generally not principal.  Since $Z_{X,\footnotesize{\tn{lin}}}^1$ is the image of the global divisor map $\tn{div}_X$, the first Chow group $\tn{Ch}_X^1$ of $X$ may be expressed in terms of the divisor sequence by first applying the global sections functor to $\ms{Z}_X^1$, then dividing out the image of $\tn{div}_X$:
\begin{equation}\label{equCH1div}\tn{Ch}_X^1=\frac{Z_X^1}{\tn{Im \big(div}_X\big)}.\end{equation}

\subsection{$\ms{Z}_X^1$ and $\ms{R}_X^*$ as Direct Sums of Skyscraper Sheaves}\label{SubSectionRZDirectSum}

The sheaf $\ms{Z}_X^1$ of zero-cycles on $X$ is  isomorphic to the quotient sheaf $\ms{R}^*_{X}/\ms{O}^*_{X}$, but the  isomorphism is nontrivial.  In particular, the operation on the quotient sheaf $\ms{R}^*_{X}/\ms{O}^*_{X}$ is induced by multiplication of rational functions, while the operation on $\ms{Z}_X^1$ is induced by pointwise addition of multiplicities.  The ``pointwise structure" of $\ms{Z}_X^1$ may be emphasized by writing it as a direct sum of skyscraper sheaves, in terms of its stalks $Z_x^1=\ZZ^+$ at codimension-one points of $X$:
\[\ms{Z}_X^1=\bigoplus_{x\in \tn{\footnotesize{Zar}}_X^{1}}\underline{Z_x^1}=\bigoplus_{x\in \tn{\footnotesize{Zar}}_X^{1}}\underline{\ZZ}^+.\]
Each summand denotes the skyscraper sheaf at $x$ with the underlined underlying group.   The sheaf $\ms{R}_X^*$ of multiplicative groups of nonzero rational functions on $X$ may be expressed in a similar way, since this sheaf is canonically isomorphic to a skyscraper sheaf at the generic point of $X$ with underlying group $R_X^*$:
\[\ms{R}_X^*=\bigoplus_{x\in \tn{\footnotesize{Zar}}_X^{0}}\underline{R_X^*}.\]
In this case there is only one summand, since the index set $\tn{Zar}_X^{0}$ is a singleton set consisting of the unique generic point of $\tn{Zar}_X$.  

Using these expressions for $\ms{Z}_X^1$ and $\ms{R}_X^*$, the sheaf divisor map $\ms{D}\tn{iv}_X:\ms{R}_X^*\rightarrow \ms{Z}_X^1$ may also be expressed as a direct sum, in terms of the pointwise divisor maps $\tn{div}_x=\nu_x$, as
\[\ms{D}\tn{iv}_X=\bigoplus_{x\in \tn{\footnotesize{Zar}}_X^{1}}\underline{\tn{div}_x},\]
where each summand means the map on skyscraper sheaves induced by the corresponding map of groups.

\subsection{Divisor Sequence as the Cousin Resolution of $\ms{O}_X^*$; \\ Coniveau Filtration}\label{subsectioncousinconiveauOX}

The expressions for $\ms{Z}_X^1$ and $\ms{R}_X^*$ as direct sums of skyscraper sheaves are examples of an important general phenomenon: special exact sequences of sheaves on $X$, involving interesting substructures of $X$ such as algebraic cycles, may be expressed in terms of the codimensions of subvarieties of $X$.  These special exact sequences are called {\bf Cousin resolutions.}  

The divisor sequence may be expressed as the Cousin resolution of the sheaf $\ms{O}_X^*$ of invertible rational functions on $X$ by writing it in the following form:
\[1\rightarrow\ms{O}_X^*\overset{i}{\longrightarrow}\bigoplus_{x\in \tn{\footnotesize{Zar}}_X^{0}}\underline{R_X^*}\overset{\ms{D}\tn{\footnotesize{iv}}_X}{\longrightarrow}\bigoplus_{x\in \tn{\footnotesize{Zar}}_X^{1}}\underline{\ZZ}^+\rightarrow0.\]
Cousin resolutions are constructed by means of the {\bf coniveau filtration} of the underlying topological space $\tn{Zar}_X$ of $X$.  ``Coniveau" is a French word meaning ``codimension" in this context.  The coniveau filtration of $X$ is defined by the pair of subspaces $\tn{Zar}_X^{0}$ and $\tn{Zar}_X^{1}$.  The coniveau filtration provides a method of organizing topological data in terms of the codimensions of subspaces of $X$.   Appropriate generalizations of the divisor sequence give Cousin resolutions of a number of other important sheaves, both on smooth projective varieties and on certain well-behaved singular schemes, such as the thickened curve $X_\ee$ defined in section \hyperref[SectionThickened]{2.5}.

\section{First-Order Infinitesimal Theory: A Brief Look Ahead}\label{SectionNaive}

\subsection{Theme of Linearization}\label{subsectionlinearization}

At the most elementary level, the infinitesimal theory of cycle groups and Chow groups of smooth algebraic varieties may viewed in the same general spirit as the theory of Taylor series in elementary calculus, in which one simplifies the object of interest in the neighborhood of a given element by ignoring ``higher-order information."  The simplest nontrivial version of the object, embodying only its ``first-order infinitesimal structure," is its {\bf linearization}, or {\bf tangent object.}  Other familiar examples of infinitesimal structure appear in differential geometry, algebraic geometry, and Lie theory.\footnotemark\footnotetext{A sophisticated modern example is Jan Stienstra's {\it Cartier-Dieudonn\'e theory for Chow groups} \cite{StienstraCartierDieudonne}, \cite{StienstraCartierDieudonneCorrection}, in which a particular differential graded module (the ``Dieudonn\'e module") plays the role of tangent space or Lie algebra.} 

The principal motivation for studying the infinitesimal theory of cycle groups and Chow groups of smooth algebraic varieties is that the groups themselves are very complicated.  Smooth projective curves are an exception; while nontrivial and of great interest, their cycle groups and Chow groups are well-understood.   For example, using the fact that the first Chow group of a smooth projective curve is an algebraic group, its tangent object may be easily identified as a complex vector space of dimension equal to the genus of the curve.  However, this approach does not generalize to more complicated varieties.  Here I will introduce an approach that does generalize, based on the coniveau filtration of $X$.

\subsection{Coniveau Machine for Zero-Cycles on a Smooth Projective Curve}\label{subsectionconiveausmoothcurve}

The starting point for the new approach, in the case of a smooth projective curve $X$, is the divisor sequence, viewed as the Cousin resolution of $\ms{O}_X^*$.  From this sequence I will construct a diagram of sheaves on $X$ that organizes information about infinitesimal structure in terms of the coniveau filtration of $X$.  I will call this diagram the {\bf coniveau machine} for zero-cycles on a smooth projective curve.  It has the schematic form shown in figure \hyperref[figschematicconiveaucurves]{\ref{figschematicconiveaucurves}} below.

%&&&&&&&&&&&&&&&&&&&&&&&&&&&&&&&&&&&&&&&&&&&&&&&&&&&&&&&&&&&&&&&&&&&&&&&&&&&&&&&&&&
%&&&&&&&&&&&&&&&&&&&&&&&&&&&&&&&&&&&&&&&&&&&&&&&&&&&&&&&&&&&&&&&&&&&&&&&&&&&&&&&&&&
%&&&&                 &&&&    &&&               &&&                    &&&    &&&&    &&&              &&&&                   &&&&&&&&&&&&&&&&&&
%&&&&   &&&&   &&&&    &&&   &&&&&&&&&&&   &&&&&&    &&&&    &&&    &&&     &&&    &&&&&&&&&&&&&&&&&&&&&&&&
%&&&&   &&&&   &&&&    &&&   &&&&&&&&&&&   &&&&&&    &&&&    &&&    &&&     &&&    &&&&&&&&&&&&&&&&&&&&&&&&
%&&&&                 &&&&    &&&   &&&&&&&&&&&   &&&&&&    &&&&    &&&            &&&&&             &&&&&&&&&&&&&&&&&&&&
%&&&&    &&&&&&&&&    &&&   &&&&&&&&&&&   &&&&&&    &&&&    &&&    &&&   &&&&    &&&&&&&&&&&&&&&&&&&&&&&
%&&&&    &&&&&&&&&    &&&   &&&&&&&&&&&   &&&&&&    &&&&    &&&    &&&&   &&&    &&&&&&&&&&&&&&&&&&&&&&&
%&&&&    &&&&&&&&&    &&&                &&&&&&   &&&&&&                   &&&    &&&&&   &&                  &&&&&&&&&&&&&&&&&&
%&&&&&&&&&&&&&&&&&&&&&&&&&&&&&&&&&&&&&&&&&&&&&&&&&&&&&&&&&&&&&&&&&&&&&&&&&&&&&&&&&&
%&&&&&&&&&&&&&&&&&&&&&&&&&&&&&&&&&&&&&&&&&&&&&&&&&&&&&&&&&&&&&&&&&&&&&&&&&&&&&&&&&&
\begin{figure}[H]
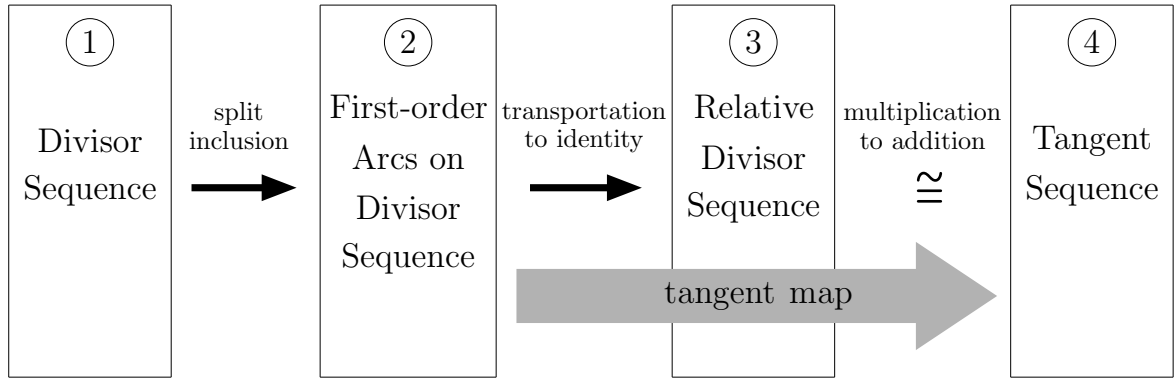

\begin{pgfpicture}{0cm}{0cm}{17cm}{5cm}
\begin{pgfmagnify}{.9}{.9}
\begin{pgftranslate}{\pgfpoint{.2cm}{-1.3cm}}
\begin{pgftranslate}{\pgfpoint{-1cm}{0cm}}
\pgfxyline(1,1.5)(1,7)
\pgfxyline(1,7)(3.4,7)
\pgfxyline(3.4,7)(3.4,1.5)
\pgfxyline(3.4,1.5)(1,1.5)
\pgfputat{\pgfxy(2.2,5)}{\pgfbox[center,center]{\large{Divisor}}}
\pgfputat{\pgfxy(2.2,4.25)}{\pgfbox[center,center]{\large{Sequence}}}
\pgfnodecircle{Node0}[stroke]{\pgfxy(2.2,6.45)}{0.35cm}
\pgfputat{\pgfxy(2.2,6.45)}{\pgfbox[center,center]{\large{$1$}}}
\end{pgftranslate}
\begin{pgftranslate}{\pgfpoint{3.6cm}{0cm}}
\pgfxyline(1,1.5)(1,7)
\pgfxyline(1,7)(3.6,7)
\pgfxyline(3.6,7)(3.6,1.5)
\pgfxyline(3.6,1.5)(1,1.5)
\pgfputat{\pgfxy(2.3,5.5)}{\pgfbox[center,center]{\large{First-order}}}
\pgfputat{\pgfxy(2.3,4.75)}{\pgfbox[center,center]{\large{Arcs on}}}
\pgfputat{\pgfxy(2.3,4)}{\pgfbox[center,center]{\large{Divisor}}}
\pgfputat{\pgfxy(2.3,3.25)}{\pgfbox[center,center]{\large{Sequence}}}
\pgfnodecircle{Node0}[stroke]{\pgfxy(2.3,6.45)}{0.35cm}
\pgfputat{\pgfxy(2.3,6.45)}{\pgfbox[center,center]{\large{$2$}}}
\pgfsetendarrow{\pgfarrowtriangle{6pt}}
\pgfsetlinewidth{3pt}
\pgfxyline(-.9,4.3)(.4,4.3)
\pgfputat{\pgfxy(-.2,5.4)}{\pgfbox[center,center]{\small{split}}}
\pgfputat{\pgfxy(-.2,5)}{\pgfbox[center,center]{\small{inclusion}}}
\end{pgftranslate}
\begin{pgftranslate}{\pgfpoint{8.8cm}{0cm}}
\pgfxyline(1,1.5)(1,7)
\pgfxyline(1,7)(3.4,7)
\pgfxyline(3.4,7)(3.4,1.5)
\pgfxyline(3.4,1.5)(1,1.5)
\pgfputat{\pgfxy(2.2,5.5)}{\pgfbox[center,center]{\large{Relative}}}
\pgfputat{\pgfxy(2.2,4.75)}{\pgfbox[center,center]{\large{Divisor}}}
\pgfputat{\pgfxy(2.2,4)}{\pgfbox[center,center]{\large{Sequence}}}
\pgfnodecircle{Node0}[stroke]{\pgfxy(2.2,6.45)}{0.35cm}
\pgfputat{\pgfxy(2.2,6.45)}{\pgfbox[center,center]{\large{$3$}}}
\pgfsetendarrow{\pgfarrowtriangle{6pt}}
\pgfsetlinewidth{3pt}
\pgfxyline(-1.1,4.3)(.3,4.3)
\pgfputat{\pgfxy(-.3,5.4)}{\pgfbox[center,center]{\small{transportation}}}
\pgfputat{\pgfxy(-.3,5)}{\pgfbox[center,center]{\small{to identity}}}
\end{pgftranslate}
\begin{pgftranslate}{\pgfpoint{13.8cm}{0cm}}
\pgfxyline(1,1.5)(1,7)
\pgfxyline(1,7)(3.4,7)
\pgfxyline(3.4,7)(3.4,1.5)
\pgfxyline(3.4,1.5)(1,1.5)
\pgfputat{\pgfxy(2.2,5)}{\pgfbox[center,center]{\large{Tangent}}}
\pgfputat{\pgfxy(2.2,4.25)}{\pgfbox[center,center]{\large{Sequence}}}
\pgfputat{\pgfxy(-.3,5.4)}{\pgfbox[center,center]{\small{multiplication}}}
\pgfputat{\pgfxy(-.3,5)}{\pgfbox[center,center]{\small{to addition}}}
\pgfputat{\pgfxy(-.2,4.3)}{\pgfbox[center,center]{\huge{$\cong$}}}
\pgfnodecircle{Node0}[stroke]{\pgfxy(2.2,6.45)}{0.35cm}
\pgfputat{\pgfxy(2.2,6.45)}{\pgfbox[center,center]{\large{$4$}}}
\end{pgftranslate}
\begin{colormixin}{30!white}
\color{black}
\pgfmoveto{\pgfxy(7.5,3.1)}
\pgflineto{\pgfxy(13.4,3.1)}
\pgflineto{\pgfxy(13.4,3.5)}
\pgflineto{\pgfxy(14.6,2.7)}
\pgflineto{\pgfxy(13.4,1.9)}
\pgflineto{\pgfxy(13.4,2.3)}
\pgflineto{\pgfxy(7.5,2.3)}
\pgflineto{\pgfxy(7.5,3.1)}
\pgffill
\end{colormixin}
\pgfputat{\pgfxy(10.5,2.7)}{\pgfbox[center,center]{\large{tangent}}}
\pgfputat{\pgfxy(12,2.65)}{\pgfbox[center,center]{\large{map}}}
\end{pgftranslate}
\end{pgfmagnify}
\end{pgfpicture}
\caption{Schematic diagram of the coniveau machine for zero-cycles on a smooth projective curve.}
\label{figschematicconiveaucurves}
\end{figure}%&&&&&&&&&&&&&&&&&&&&&&&&&&&&&&&&&&&&&&&&&&&&&&&&&&&&&&&&&&&&&&&&&&&&&&&&&&&&&&&&&&
%&&&&&&&&&&&&&&&&&&&&&&&&&&&&&&&&&&&&&&&&&&&&&&&&&&&&&&&&&&&&&&&&&&&&&&&&&&&&&&&&&&
%&&&                    &&&       &&&&&    &&&                &&&&&&&&&&&&&&&&&&&&&&&&&&&&&&&&&&&&&&&&&&&&&&&&&&&
%&&&    &&&&&&&&&    &    &&&&   &&&    &&&&    &&&&&&&&&&&&&&&&&&&&&&&&&&&&&&&&&&&&&&&&&&&&&&&&&& 
%&&&    &&&&&&&&&    &&    &&&   &&&    &&&&&    &&&&&&&&&&&&&&&&&&&&&&&&&&&&&&&&&&&&&&&&&&&&&&&&& 
%&&&                 &&&&    &&&    &&   &&&    &&&&&    &&&&&&&&&&&&&&&&&&&&&&&&&&&&&&&&&&&&&&&&&&&&&&&&& 
%&&&    &&&&&&&&&    &&&&    &   &&&    &&&&&    &&&&&&&&&&&&&&&&&&&&&&&&&&&&&&&&&&&&&&&&&&&&&&&&& 
%&&&    &&&&&&&&&    &&&&&       &&&    &&&&     &&&&&&&&&&&&&&&&&&&&&&&&&&&&&&&&&&&&&&&&&&&&&&&&& 
%&&&                    &&&    &&&&&&    &&&                  &&&&&&&&&&&&&&&&&&&&&&&&&&&&&&&&&&&&&&&&&&&&&&&&&&
%&&&&&&&&&&&&&&&&&&&&&&&&&&&&&&&&&&&&&&&&&&&&&&&&&&&&&&&&&&&&&&&&&&&&&&&&&&&&&&&&&&
%&&&&&&&&&&&&&&&&&&&&&&&&&&&&&&&&&&&&&&&&&&&&&&&&&&&&&&&&&&&&&&&&&&&&&&&&&&&&&&&&&&
\vspace*{-.5cm}

The coniveau machine gives precise meaning to the intuitive notion of ``tangents of arcs of zero-cycles on $X$."  In analogy to Taylor series, such tangents encode the ``first-order infinitesimal structure" of the group $Z_X^1$ of zero-cycles on $X$.  The coniveau machine also provides a systematic method of organizing information about linear equivalence, which leads to a complete description of the first-order infinitesimal structure of $\tn{Ch}_X^1$.
 
The columns of the coniveau machine, numbered $1$ through $4$, are exact sequences of sheaves on $\tn{Zar}_X$.  The horizontal maps are chain maps among these sequences, each consisting of three maps of sheaves.  The map of principal interest is the {\bf tangent map}, which is the composition of the chain maps from columns $2$ to $3$ and $3$ to $4$.   The tangent map sends ``first-order arcs on the divisor sequence" to their ``tangent elements" in the tangent sequence.  In figure \hyperref[figdivisorsequenceconiveau]{\ref{figdivisorsequenceconiveau}} below, I have filled in the divisor sequence as the first column of the coniveau machine.   Question marks represent the nine sheaves and fifteen sheaf maps that must still be defined to complete the machine.\footnotemark\footnotetext{Although the divisor sequence should be viewed as the Cousin resolution of $\ms{O}_X^*$ in this context, I have written it in the usual form to avoid clutter.  For the same reason, I have omitted initial and terminal $1$'s and $0$'s from the exact sequences in the columns, and have suppressed the tangent map, since it is defined by composition.}

%&&&&&&&&&&&&&&&&&&&&&&&&&&&&&&&&&&&&&&&&&&&&&&&&&&&&&&&&&&&&&&&&&&&&&&&&&&&&&&&&&&
%&&&&&&&&&&&&&&&&&&&&&&&&&&&&&&&&&&&&&&&&&&&&&&&&&&&&&&&&&&&&&&&&&&&&&&&&&&&&&&&&&&
%&&&&                 &&&&    &&&               &&&                    &&&    &&&&    &&&              &&&&                   &&&&&&&&&&&&&&&&&&
%&&&&   &&&&   &&&&    &&&   &&&&&&&&&&&   &&&&&&    &&&&    &&&    &&&     &&&    &&&&&&&&&&&&&&&&&&&&&&&&
%&&&&   &&&&   &&&&    &&&   &&&&&&&&&&&   &&&&&&    &&&&    &&&    &&&     &&&    &&&&&&&&&&&&&&&&&&&&&&&&
%&&&&                 &&&&    &&&   &&&&&&&&&&&   &&&&&&    &&&&    &&&            &&&&&             &&&&&&&&&&&&&&&&&&&&
%&&&&    &&&&&&&&&    &&&   &&&&&&&&&&&   &&&&&&    &&&&    &&&    &&&   &&&&    &&&&&&&&&&&&&&&&&&&&&&&
%&&&&    &&&&&&&&&    &&&   &&&&&&&&&&&   &&&&&&    &&&&    &&&    &&&&   &&&    &&&&&&&&&&&&&&&&&&&&&&&
%&&&&    &&&&&&&&&    &&&                &&&&&&   &&&&&&                   &&&    &&&&&   &&                  &&&&&&&&&&&&&&&&&&
%&&&&&&&&&&&&&&&&&&&&&&&&&&&&&&&&&&&&&&&&&&&&&&&&&&&&&&&&&&&&&&&&&&&&&&&&&&&&&&&&&&
%&&&&&&&&&&&&&&&&&&&&&&&&&&&&&&&&&&&&&&&&&&&&&&&&&&&&&&&&&&&&&&&&&&&&&&&&&&&&&&&&&&
\begin{figure}[H]
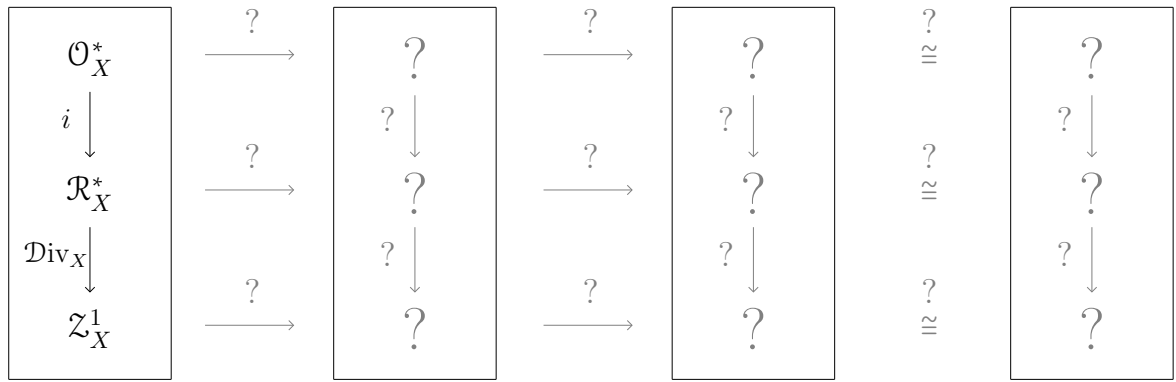

\begin{pgfpicture}{0cm}{0cm}{17cm}{5.2cm}
\begin{pgfmagnify}{.9}{.9}
\begin{pgftranslate}{\pgfpoint{.2cm}{-1.3cm}}
\begin{pgftranslate}{\pgfpoint{-1cm}{0cm}}
\pgfxyline(1,1.5)(1,7)
\pgfxyline(1,7)(3.4,7)
\pgfxyline(3.4,7)(3.4,1.5)
\pgfxyline(3.4,1.5)(1,1.5)
\pgfputat{\pgfxy(2.2,6.25)}{\pgfbox[center,center]{\large{$\ms{O}_X^*$}}}
\pgfputat{\pgfxy(2.2,4.25)}{\pgfbox[center,center]{\large{$\ms{R}^*_X$}}}
\pgfputat{\pgfxy(2.2,2.25)}{\pgfbox[center,center]{\large{$\ms{Z}_X^1$}}}
\pgfputat{\pgfxy(1.85,5.35)}{\pgfbox[center,center]{$i$}}
\pgfputat{\pgfxy(1.7,3.35)}{\pgfbox[center,center]{$\ms{D}\tn{iv}_X$}}
\pgfsetendarrow{\pgfarrowpointed{3pt}}
\pgfxyline(2.2,5.75)(2.2,4.8)
\pgfxyline(2.2,3.75)(2.2,2.8)
\end{pgftranslate}
\begin{pgftranslate}{\pgfpoint{3.8cm}{0cm}}
\pgfxyline(1,1.5)(1,7)
\pgfxyline(1,7)(3.4,7)
\pgfxyline(3.4,7)(3.4,1.5)
\pgfxyline(3.4,1.5)(1,1.5)
\begin{colormixin}{10!white}
%\pgfnodecircle{Node0}[fill]{\pgfxy(2.2,6.25)}{0.5cm}
%\pgfnodecircle{Node0}[fill]{\pgfxy(2.2,2.25)}{0.5cm}
%\pgfnodecircle{Node0}[fill]{\pgfxy(-.2,6.8)}{0.35cm}
%\pgfnodecircle{Node0}[fill]{\pgfxy(1.8,3.35)}{0.35cm}
%\pgfnodecircle{Node0}[fill]{\pgfxy(1.8,5.35)}{0.35cm}
%\pgfnodecircle{Node0}[fill]{\pgfxy(-.2,2.8)}{0.35cm}
\end{colormixin}
\begin{colormixin}{50!white}
\pgfputat{\pgfxy(2.2,6.25)}{\pgfbox[center,center]{\huge{?}}}
\pgfputat{\pgfxy(2.2,4.25)}{\pgfbox[center,center]{\huge{?}}}
\pgfputat{\pgfxy(2.2,2.25)}{\pgfbox[center,center]{\huge{?}}}
\pgfputat{\pgfxy(1.8,5.35)}{\pgfbox[center,center]{\large{?}}}
\pgfputat{\pgfxy(1.8,3.35)}{\pgfbox[center,center]{\large{?}}}
\pgfputat{\pgfxy(-.2,6.8)}{\pgfbox[center,center]{\large{?}}}
\pgfputat{\pgfxy(-.2,4.8)}{\pgfbox[center,center]{\large{?}}}
\pgfputat{\pgfxy(-.2,2.8)}{\pgfbox[center,center]{\large{?}}}
\pgfsetendarrow{\pgfarrowpointed{3pt}}
\pgfxyline(2.2,5.7)(2.2,4.8)
\pgfxyline(2.2,3.75)(2.2,2.8)
\pgfxyline(-.9,6.3)(.4,6.3)
\pgfxyline(-.9,4.3)(.4,4.3)
\pgfxyline(-.9,2.3)(.4,2.3)
\end{colormixin}
\end{pgftranslate}

\begin{pgftranslate}{\pgfpoint{8.8cm}{0cm}}
\pgfxyline(1,1.5)(1,7)
\pgfxyline(1,7)(3.4,7)
\pgfxyline(3.4,7)(3.4,1.5)
\pgfxyline(3.4,1.5)(1,1.5)
\begin{colormixin}{10!white}
%\pgfnodecircle{Node0}[fill]{\pgfxy(2.2,6.25)}{0.5cm}
%\pgfnodecircle{Node0}[fill]{\pgfxy(2.2,2.25)}{0.5cm}
%\pgfnodecircle{Node0}[fill]{\pgfxy(-.2,6.8)}{0.35cm}
%\pgfnodecircle{Node0}[fill]{\pgfxy(1.8,3.35)}{0.35cm}
%\pgfnodecircle{Node0}[fill]{\pgfxy(1.8,5.35)}{0.35cm}
%\pgfnodecircle{Node0}[fill]{\pgfxy(-.2,2.8)}{0.35cm}
\end{colormixin}
\begin{colormixin}{50!white}
\pgfputat{\pgfxy(2.2,6.25)}{\pgfbox[center,center]{\huge{?}}}
\pgfputat{\pgfxy(2.2,4.25)}{\pgfbox[center,center]{\huge{?}}}
\pgfputat{\pgfxy(2.2,2.25)}{\pgfbox[center,center]{\huge{?}}}
\pgfputat{\pgfxy(1.8,5.35)}{\pgfbox[center,center]{\large{?}}}
\pgfputat{\pgfxy(1.8,3.35)}{\pgfbox[center,center]{\large{?}}}
\pgfputat{\pgfxy(-.2,6.8)}{\pgfbox[center,center]{\large{?}}}
\pgfputat{\pgfxy(-.2,4.8)}{\pgfbox[center,center]{\large{?}}}
\pgfputat{\pgfxy(-.2,2.8)}{\pgfbox[center,center]{\large{?}}}
\pgfsetendarrow{\pgfarrowpointed{3pt}}
\pgfxyline(2.2,5.7)(2.2,4.8)
\pgfxyline(2.2,3.75)(2.2,2.8)
\pgfxyline(-.9,6.3)(.4,6.3)
\pgfxyline(-.9,4.3)(.4,4.3)
\pgfxyline(-.9,2.3)(.4,2.3)
\end{colormixin}
\end{pgftranslate}
\begin{pgftranslate}{\pgfpoint{13.8cm}{0cm}}
\pgfxyline(1,1.5)(1,7)
\pgfxyline(1,7)(3.4,7)
\pgfxyline(3.4,7)(3.4,1.5)
\pgfxyline(3.4,1.5)(1,1.5)
\begin{colormixin}{10!white}
%\pgfnodecircle{Node0}[fill]{\pgfxy(2.2,6.25)}{0.5cm}
%\pgfnodecircle{Node0}[fill]{\pgfxy(2.2,2.25)}{0.5cm}
%\pgfnodecircle{Node0}[fill]{\pgfxy(-.2,6.8)}{0.35cm}
%\pgfnodecircle{Node0}[fill]{\pgfxy(1.8,3.35)}{0.35cm}
%\pgfnodecircle{Node0}[fill]{\pgfxy(1.8,5.35)}{0.35cm}
%\pgfnodecircle{Node0}[fill]{\pgfxy(-.2,2.8)}{0.35cm}
\end{colormixin}
\begin{colormixin}{50!white}
\pgfputat{\pgfxy(2.2,6.25)}{\pgfbox[center,center]{\huge{?}}}
\pgfputat{\pgfxy(2.2,4.25)}{\pgfbox[center,center]{\huge{?}}}
\pgfputat{\pgfxy(2.2,2.25)}{\pgfbox[center,center]{\huge{?}}}
\pgfputat{\pgfxy(1.8,5.35)}{\pgfbox[center,center]{\large{?}}}
\pgfputat{\pgfxy(1.8,3.35)}{\pgfbox[center,center]{\large{?}}}
\pgfputat{\pgfxy(-.2,6.8)}{\pgfbox[center,center]{\large{?}}}
\pgfputat{\pgfxy(-.2,4.8)}{\pgfbox[center,center]{\large{?}}}
\pgfputat{\pgfxy(-.2,2.8)}{\pgfbox[center,center]{\large{?}}}
\pgfsetendarrow{\pgfarrowpointed{3pt}}
\pgfxyline(2.2,5.7)(2.2,4.8)
\pgfxyline(2.2,3.75)(2.2,2.8)
\pgfputat{\pgfxy(-.2,6.3)}{\pgfbox[center,center]{\large{$\cong$}}}
\pgfputat{\pgfxy(-.2,2.3)}{\pgfbox[center,center]{\large{$\cong$}}}
\pgfputat{\pgfxy(-.2,4.3)}{\pgfbox[center,center]{\large{$\cong$}}}
\end{colormixin}
\end{pgftranslate}
\end{pgftranslate}
\end{pgfmagnify}
\end{pgfpicture}
\caption{The divisor sequence as the first column of the coniveau machine.}
\label{figdivisorsequenceconiveau}
\end{figure}%&&&&&&&&&&&&&&&&&&&&&&&&&&&&&&&&&&&&&&&&&&&&&&&&&&&&&&&&&&&&&&&&&&&&&&&&&&&&&&&&&&
%&&&&&&&&&&&&&&&&&&&&&&&&&&&&&&&&&&&&&&&&&&&&&&&&&&&&&&&&&&&&&&&&&&&&&&&&&&&&&&&&&&
%&&&                    &&&       &&&&&    &&&                &&&&&&&&&&&&&&&&&&&&&&&&&&&&&&&&&&&&&&&&&&&&&&&&&&&
%&&&    &&&&&&&&&    &    &&&&   &&&    &&&&    &&&&&&&&&&&&&&&&&&&&&&&&&&&&&&&&&&&&&&&&&&&&&&&&&& 
%&&&    &&&&&&&&&    &&    &&&   &&&    &&&&&    &&&&&&&&&&&&&&&&&&&&&&&&&&&&&&&&&&&&&&&&&&&&&&&&& 
%&&&                 &&&&    &&&    &&   &&&    &&&&&    &&&&&&&&&&&&&&&&&&&&&&&&&&&&&&&&&&&&&&&&&&&&&&&&& 
%&&&    &&&&&&&&&    &&&&    &   &&&    &&&&&    &&&&&&&&&&&&&&&&&&&&&&&&&&&&&&&&&&&&&&&&&&&&&&&&& 
%&&&    &&&&&&&&&    &&&&&       &&&    &&&&     &&&&&&&&&&&&&&&&&&&&&&&&&&&&&&&&&&&&&&&&&&&&&&&&& 
%&&&                    &&&    &&&&&&    &&&                  &&&&&&&&&&&&&&&&&&&&&&&&&&&&&&&&&&&&&&&&&&&&&&&&&&
%&&&&&&&&&&&&&&&&&&&&&&&&&&&&&&&&&&&&&&&&&&&&&&&&&&&&&&&&&&&&&&&&&&&&&&&&&&&&&&&&&&
%&&&&&&&&&&&&&&&&&&&&&&&&&&&&&&&&&&&&&&&&&&&&&&&&&&&&&&&&&&&&&&&&&&&&&&&&&&&&&&&&&&
\vspace*{-.5cm}

Suitably completed, the coniveau machine contains a wealth of information about regular functions, rational functions, and algebraic cycles on $X$, of which the infinitesimal structure of the groups $Z_X^1$ and $\tn{Ch}_X^1$ is only a small part.   In particular, the tangent map consists of three sheaf maps, but only the map between the bottom terms of columns $2$ and $4$ directly involves zero-cycles.  This map, in turn, is made up of many maps between groups of sections, but only the map on global sections directly involves $Z_X^1$ and $\tn{Ch}_X^1$.   Discarding further information, this map induces a ``tangent map for $\tn{Ch}_X^1$," which preserves information only modulo linear equivalence.

\section{First-Order Theory of Nonzero Rational Functions \\ on a Smooth Projective Curve}\label{SectionInfRational}

Arcs of nonzero rational functions on a smooth projective curve $X$ provide a natural starting point for studying the infinitesimal theory of zero-cycles on $X$.  Since every zero-cycle on $X$ is a locally principal divisor, ``arcs of zero-cycles on $X$" may be described locally on $X$ as ``divisors of arcs of nonzero rational functions."   In this section, I introduce the theory of first-order arcs of nonzero rational functions on $X$, along with their tangents.   These notions are made precise by three new sheaves $\ms{R}_{X_\ee}^*$, $\ms{R}_{X_\ee,\ee}^*$, and $\ms{R}_{X}^+$, which encode the first-order infinitesimal structure of the sheaf $\ms{R}_{X}^*$ of nonzero rational functions on $X$.   These three sheaves fill in the middle row of the coniveau machine for zero-cycles on a smooth projective curve.

\subsection{First-Order Arcs of Nonzero Rational Functions}

An arc of nonzero rational functions on a smooth projective curve $X$ may be viewed, imprecisely, as a ``function of two variables" $f(w,\ee)$, where $w$ is a local coordinate on $X$, and $\ee$ is an ``infinitesimal quantity."   In the first-order case, $f(w,\ee)$ may be written as a sum $f+ g\ee$, where $f$ and $g$ are rational functions on $X$, expressed in terms of the local variable $w$, and where $f\ne0$.   Heuristically, the arc $f+g\ee$ ``originates at $f$ and moves in the direction of $g$ as $\ee$ varies."  More precisely, $\ee$ is not a variable, but the nilpotent generator of the {\bf ring of dual numbers} $R_{X_\ee}:=R_X[\ee]/\ee^2$ over the field $R_X$.    This ring of dual numbers splits as an additive group into the direct sum $R_X\oplus R_X\ee$, so its elements are sums $f+g\ee$, invertible if and only if $f\ne0$.    

A {\bf first-order arc of nonzero rational functions} at $f$ in $R_X^*$ is defined to be an element $f+g\ee$ of the multiplicative subgroup $R_{X_\ee}^*$ of invertible elements of $R_{X_\ee}$.  Hence, $R_{X_\ee}^*$ is called the {\bf group of first-order arcs} in $R_X^*$.  Although $f\ne0$ by invertibility, $g$ may vanish; the resulting arc is called the {\bf constant arc at $f$}.   An arc $f+g\ee$  at $f$ may be transported to the identity of $R_X^*$ by dividing by $f$.  The resulting {\bf first-order arc at the identity} in $R_X^*$ is the arc $1+(g/f)\ee.$  The set $R_{X_\ee,\ee}^*$ of such arcs is a subgroup of $R_{X_\ee}^*$, called the {\bf relative group} of first-order arcs at the identity in $R_X^*$.  This group will be identified as a ``multiplicative version of the tangent group of $R_X^*$," defined below.

\subsection{Tangent Elements, Sets, Maps, and Groups}\label{subsectionrationaltangent}

The {\bf tangent element} of a first-order arc $f+g\ee$ of nonzero rational functions at $f$ in $R_X^*$ is defined to be the rational function $g$.  This definition captures the idea that a ``tangent at a point" should give the ``direction along an arc through the point."  Tangent elements are rational functions, so the {\bf tangent set} of $R_X^*$ at an element $f$ is defined to be the underlying set $R_X^{\tn{\footnotesize{set}}}$ of the function field $R_X$.  The reason for recognizing only the underlying set $R_X^{\tn{\footnotesize{set}}}$ of $R_X$ in the context of tangents to arcs in $R_X^*$ is that the operations of addition and multiplication that define the field structure on $R_X$ do not define natural pointwise operations at nonidentity elements of $R_X^*$. A different copy of this set is assigned to each element of $R_X^*$, just as a different tangent space of the same is dimension is assigned to each point in a manifold.  The  {\bf tangent map  $T_{f}$ at $f$} in $R_{X}^*$ is the map $R_{X_\ee}^*\rightarrow R_X^{\tn{\footnotesize{set}}}$ taking a first-order arc $f+g\ee$ at $f$ to its tangent element $g$.   

Since $R_{X_\ee}^*$ is a group, first-order arcs of nonzero rational functions may be multiplied.  Applying the tangent map at the identity to the product of two arcs at the identity gives the formula
\[T_1\big((1+f\ee)(1+g\ee)\big)\hspace*{.2cm}=\hspace*{.2cm}T_1(1+f\ee)+T_1(1+g\ee).\]
Thus, multiplication of first-order arcs at the identity in $R_X^*$ corresponds, under $T_1$, to addition of tangent elements in the tangent set of $R_X^*$ at the identity.  It is therefore reasonable to ascribe a group structure to this tangent set.  Hence, the {\bf tangent group} of the group $R_X^*$ of nonzero rational functions is defined to be the underlying additive group $R_X^+$ of the field $R_X$.  The tangent map at the identity $T_{1}$ is an isomorphism from the relative group $R_{X_\ee,\ee}^*$ of first-order arcs at the identity in $R_X^*$ to the tangent group $R_X^+$.  Hence, $R_{X_\ee,\ee}^*$ is a ``multiplicative version of the tangent group" of $R_X^*$.

Since transporting the arc $f+g\ee$ at $f$ to the identity yields the arc $1+(g/f)\ee$, the tangent element $g/f$ of the latter arc is called the {\bf tangent element at the identity} of the original arc $f+g\ee$.  The {\bf tangent map} $T:R_{X_\ee}^*\rightarrow R_X^+$ is defined by extending the tangent map at the identity $T_1$ to the map taking a first-order arc $f+g\ee$ to its tangent element at the identity $g/f$.  The tangent map $T$ factors through the relative multiplicative group $R_{X_\ee,\ee}^*$ via $T_{1}$.  The kernel of $T$ is isomorphic to the multiplicative group $R_X^*$ of the function field $R_X$, since an arc $f+g\ee$ has zero tangent element if and only if $g=0$.  Hence, there exists a four-term hybrid exact sequence, consisting of three multiplicative groups and one additive group:
\[1\rightarrow R_X^*\overset{i}{\longrightarrow}R_{X_\ee}^*\overset{1/f}{\longrightarrow} R_{X_\ee,\ee}^*\overset{T_1}{\rightarrow}R_X^+\rightarrow 0.\]
In the few cases in which it is useful to distinguish among the canonically isomorphic groups $R_X^*$, $R_U^*$, and $R_x^*$, I will denote the corresponding tangent groups by $R_X^+$, $R_U^+$, and $R_x^+$, the relative groups by $R_{X_\ee,\ee}^*$, $R_{U_\ee,\ee}^*$, and $R_{x_\ee,\ee}^*$, the tangent maps by $T_X$, $T_U$, and $T_x$, and the tangent maps at particular nonzero rational functions by $T_{X,f}$, $T_{U,f}$, and $T_{x,f}$.

\subsection{Sheaves of First-Order Arcs and Tangents; Sheaf Tangent Map}\label{SubsectionTangentSheafRational}

Since $R_U$ is canonically isomorphic to $R_X$ for any nonempty open subset $U$ of $X$, a separate copy of the above four-term hybrid exact sequence of groups may be assigned to each $U$, replacing $X$ with $U$ everywhere in the sequence.  This leads immediately to a four-term hybrid exact sequence of sheaves on $X$
\[1\rightarrow \ms{R}_X^*\overset{i}{\longrightarrow}\ms{R}_{X_\ee}^*\overset{1/f}{\longrightarrow}\ms{R}_{X_\ee,\ee}^* \overset{T_1}{\longrightarrow} \ms{R}_X^+\rightarrow 0,\]
where the three new sheaves $\ms{R}_{X_\ee}^*$, $\ms{R}_{X_\ee,\ee}^*$, and $\ms{R}_X^+$ are defined to be the sheaves with groups of sections $R_{U_\ee}^*$, $R_{U_\ee,\ee}^*$, and $R_U^+$, respectively, over an open subset $U$ of $X$.   The three new sheaves are canonically isomorphic to direct sums of skyscraper sheaves with single summands:   
\[\ms{R}_{X_\ee}^*=\bigoplus_{x\in \tn{\footnotesize{Zar}}_X^{0}}\underline{R_{X_\ee}^*},\hspace*{.5cm}\ms{R}_{X_\ee,\ee}^*=\bigoplus_{x\in \tn{\footnotesize{Zar}}_X^{0}}\underline{R_{X_\ee,\ee}^*},\hspace*{.5cm}\tn{and}\hspace*{.5cm}\ms{R}_X^+=\bigoplus_{x\in \tn{\footnotesize{Zar}}_X^{0}}\underline{R_X^+}.\]
The sheaf maps $i$, $1/f,$ and $T_1$ are defined by patching together copies of the corresponding maps of groups $i, 1/f,$ and $T_{U,1}$ for each open subset $U$ of $X$.  This patching is trivial because the groups of sections over any pair of nonempty open subsets of $X$ are canonically isomorphic. 

Mirroring the group-level definitions, the sheaf $\ms{R}_{X_\ee}^*$ is called the {\bf sheaf of first-order arcs} in $\ms{R}_X^*$,  and the sheaf $\ms{R}_X^+$ is called the {\bf tangent sheaf} of the sheaf $\ms{R}_X^*$.  The sheaf $\ms{R}_{X_\ee,\ee}^*$ is called the {\bf relative sheaf} of first-order arcs at the identity in $\ms{R}_X^*$.   It may be viewed as a multiplicative version of the tangent sheaf $\ms{R}_X^+$ of $\ms{R}_X^*$.  The sheaf map $T_1$ is called the {\bf sheaf tangent map at the identity} in $\ms{R}_X^*$.  The composition $T$ of the sheaf maps $T_1$ and $1/f$ is called the {\bf sheaf tangent map}, or simply the {\bf tangent map}.  The four-term hybrid exact sequence of sheaves links together with the divisor sequence to fill in the middle row of the coniveau machine for zero-cycles on a smooth projective curve, as shown in figure \hyperref[figrationalconiveau]{\ref{figrationalconiveau}} below:

%&&&&&&&&&&&&&&&&&&&&&&&&&&&&&&&&&&&&&&&&&&&&&&&&&&&&&&&&&&&&&&&&&&&&&&&&&&&&&&&&&&
%&&&&&&&&&&&&&&&&&&&&&&&&&&&&&&&&&&&&&&&&&&&&&&&&&&&&&&&&&&&&&&&&&&&&&&&&&&&&&&&&&&
%&&&&                 &&&&    &&&               &&&                    &&&    &&&&    &&&              &&&&                   &&&&&&&&&&&&&&&&&&
%&&&&   &&&&   &&&&    &&&   &&&&&&&&&&&   &&&&&&    &&&&    &&&    &&&     &&&    &&&&&&&&&&&&&&&&&&&&&&&&
%&&&&   &&&&   &&&&    &&&   &&&&&&&&&&&   &&&&&&    &&&&    &&&    &&&     &&&    &&&&&&&&&&&&&&&&&&&&&&&&
%&&&&                 &&&&    &&&   &&&&&&&&&&&   &&&&&&    &&&&    &&&            &&&&&             &&&&&&&&&&&&&&&&&&&&
%&&&&    &&&&&&&&&    &&&   &&&&&&&&&&&   &&&&&&    &&&&    &&&    &&&   &&&&    &&&&&&&&&&&&&&&&&&&&&&&
%&&&&    &&&&&&&&&    &&&   &&&&&&&&&&&   &&&&&&    &&&&    &&&    &&&&   &&&    &&&&&&&&&&&&&&&&&&&&&&&
%&&&&    &&&&&&&&&    &&&                &&&&&&   &&&&&&                   &&&    &&&&&   &&                  &&&&&&&&&&&&&&&&&&
%&&&&&&&&&&&&&&&&&&&&&&&&&&&&&&&&&&&&&&&&&&&&&&&&&&&&&&&&&&&&&&&&&&&&&&&&&&&&&&&&&&
%&&&&&&&&&&&&&&&&&&&&&&&&&&&&&&&&&&&&&&&&&&&&&&&&&&&&&&&&&&&&&&&&&&&&&&&&&&&&&&&&&&
\begin{figure}[H]
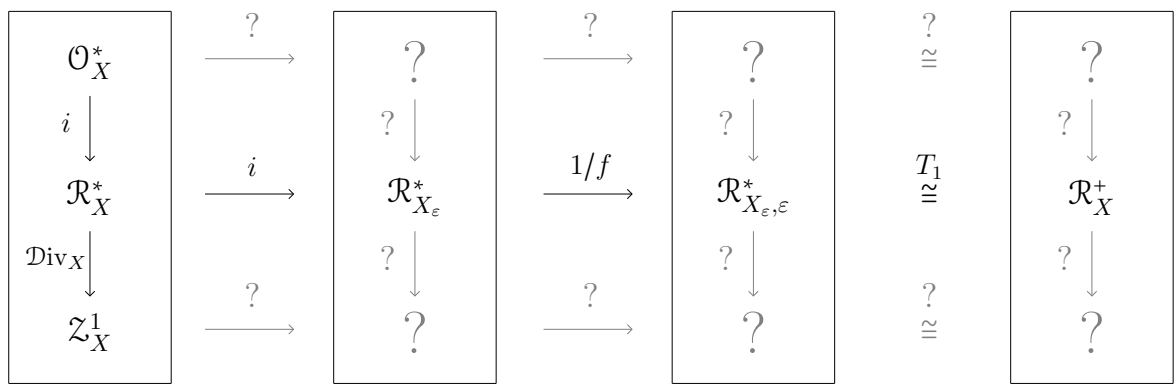

\begin{pgfpicture}{0cm}{0cm}{17cm}{5.2cm}
\begin{pgfmagnify}{.9}{.9}
\begin{pgftranslate}{\pgfpoint{.2cm}{-1.3cm}}
\begin{pgftranslate}{\pgfpoint{-1cm}{0cm}}
\pgfxyline(1,1.5)(1,7)
\pgfxyline(1,7)(3.4,7)
\pgfxyline(3.4,7)(3.4,1.5)
\pgfxyline(3.4,1.5)(1,1.5)
\pgfputat{\pgfxy(2.2,6.25)}{\pgfbox[center,center]{\large{$\ms{O}_X^*$}}}
\pgfputat{\pgfxy(2.2,4.25)}{\pgfbox[center,center]{\large{$\ms{R}^*_X$}}}
\pgfputat{\pgfxy(2.2,2.25)}{\pgfbox[center,center]{\large{$\ms{Z}_X^1$}}}
\pgfputat{\pgfxy(1.85,5.35)}{\pgfbox[center,center]{$i$}}
\pgfputat{\pgfxy(1.7,3.35)}{\pgfbox[center,center]{\small{$\ms{D}\tn{iv}_X$}}}
\pgfsetendarrow{\pgfarrowpointed{3pt}}
\pgfxyline(2.2,5.75)(2.2,4.8)
\pgfxyline(2.2,3.75)(2.2,2.8)
\end{pgftranslate}
\begin{pgftranslate}{\pgfpoint{3.8cm}{0cm}}
\pgfxyline(1,1.5)(1,7)
\pgfxyline(1,7)(3.4,7)
\pgfxyline(3.4,7)(3.4,1.5)
\pgfxyline(3.4,1.5)(1,1.5)
\pgfputat{\pgfxy(2.2,4.25)}{\pgfbox[center,center]{\large{$\ms{R}^*_{X_\ee}$}}}
\begin{colormixin}{10!white}
%\pgfnodecircle{Node0}[fill]{\pgfxy(2.2,6.25)}{0.5cm}
%\pgfnodecircle{Node0}[fill]{\pgfxy(2.2,2.25)}{0.5cm}
%\pgfnodecircle{Node0}[fill]{\pgfxy(-.2,6.8)}{0.35cm}
%\pgfnodecircle{Node0}[fill]{\pgfxy(1.8,3.35)}{0.35cm}
%\pgfnodecircle{Node0}[fill]{\pgfxy(1.8,5.35)}{0.35cm}
%\pgfnodecircle{Node0}[fill]{\pgfxy(-.2,2.8)}{0.35cm}
\end{colormixin}
\begin{colormixin}{50!white}
\pgfputat{\pgfxy(2.2,6.25)}{\pgfbox[center,center]{\huge{?}}}
\pgfputat{\pgfxy(2.2,2.25)}{\pgfbox[center,center]{\huge{?}}}
\pgfputat{\pgfxy(1.8,5.35)}{\pgfbox[center,center]{\large{?}}}
\pgfputat{\pgfxy(1.8,3.35)}{\pgfbox[center,center]{\large{?}}}
\pgfputat{\pgfxy(-.2,6.8)}{\pgfbox[center,center]{\large{?}}}
\pgfputat{\pgfxy(-.2,2.8)}{\pgfbox[center,center]{\large{?}}}
\pgfsetendarrow{\pgfarrowpointed{3pt}}
\pgfxyline(2.2,5.7)(2.2,4.8)
\pgfxyline(2.2,3.75)(2.2,2.8)
\pgfxyline(-.9,6.3)(.4,6.3)
\pgfxyline(-.9,2.3)(.4,2.3)
\end{colormixin}
\pgfsetendarrow{\pgfarrowpointed{3pt}}
\pgfxyline(-.9,4.3)(.4,4.3)
\pgfputat{\pgfxy(-.2,4.7)}{\pgfbox[center,center]{$i$}}
\end{pgftranslate}

\begin{pgftranslate}{\pgfpoint{8.8cm}{0cm}}
\pgfxyline(1,1.5)(1,7)
\pgfxyline(1,7)(3.4,7)
\pgfxyline(3.4,7)(3.4,1.5)
\pgfxyline(3.4,1.5)(1,1.5)
\pgfputat{\pgfxy(2.2,4.25)}{\pgfbox[center,center]{\large{$\ms{R}^*_{X_\ee,\ee}$}}}
\begin{colormixin}{10!white}
%\pgfnodecircle{Node0}[fill]{\pgfxy(2.2,6.25)}{0.5cm}
%\pgfnodecircle{Node0}[fill]{\pgfxy(2.2,2.25)}{0.5cm}
%\pgfnodecircle{Node0}[fill]{\pgfxy(-.2,6.8)}{0.35cm}
%\pgfnodecircle{Node0}[fill]{\pgfxy(1.8,3.35)}{0.35cm}
%\pgfnodecircle{Node0}[fill]{\pgfxy(1.8,5.35)}{0.35cm}
%\pgfnodecircle{Node0}[fill]{\pgfxy(-.2,2.8)}{0.35cm}
\end{colormixin}
\begin{colormixin}{50!white}
\pgfputat{\pgfxy(2.2,6.25)}{\pgfbox[center,center]{\huge{?}}}
\pgfputat{\pgfxy(2.2,2.25)}{\pgfbox[center,center]{\huge{?}}}
\pgfputat{\pgfxy(1.8,5.35)}{\pgfbox[center,center]{\large{?}}}
\pgfputat{\pgfxy(1.8,3.35)}{\pgfbox[center,center]{\large{?}}}
\pgfputat{\pgfxy(-.2,6.8)}{\pgfbox[center,center]{\large{?}}}
\pgfputat{\pgfxy(-.2,2.8)}{\pgfbox[center,center]{\large{?}}}
\pgfsetendarrow{\pgfarrowpointed{3pt}}
\pgfxyline(2.2,5.7)(2.2,4.8)
\pgfxyline(2.2,3.75)(2.2,2.8)
\pgfxyline(-.9,6.3)(.4,6.3)
\pgfxyline(-.9,2.3)(.4,2.3)
\end{colormixin}
\pgfsetendarrow{\pgfarrowpointed{3pt}}
\pgfxyline(-.9,4.3)(.4,4.3)
\pgfputat{\pgfxy(-.2,4.7)}{\pgfbox[center,center]{$1/f$}}
\end{pgftranslate}
\begin{pgftranslate}{\pgfpoint{13.8cm}{0cm}}
\pgfxyline(1,1.5)(1,7)
\pgfxyline(1,7)(3.4,7)
\pgfxyline(3.4,7)(3.4,1.5)
\pgfxyline(3.4,1.5)(1,1.5)
\pgfputat{\pgfxy(2.2,4.25)}{\pgfbox[center,center]{\large{$\ms{R}_X^+$}}}
\begin{colormixin}{10!white}
%\pgfnodecircle{Node0}[fill]{\pgfxy(2.2,6.25)}{0.5cm}
%\pgfnodecircle{Node0}[fill]{\pgfxy(2.2,2.25)}{0.5cm}
%\pgfnodecircle{Node0}[fill]{\pgfxy(-.2,6.8)}{0.35cm}
%\pgfnodecircle{Node0}[fill]{\pgfxy(1.8,3.35)}{0.35cm}
%\pgfnodecircle{Node0}[fill]{\pgfxy(1.8,5.35)}{0.35cm}
%\pgfnodecircle{Node0}[fill]{\pgfxy(-.2,2.8)}{0.35cm}
\end{colormixin}
\begin{colormixin}{50!white}
\pgfputat{\pgfxy(2.2,6.25)}{\pgfbox[center,center]{\huge{?}}}
\pgfputat{\pgfxy(2.2,2.25)}{\pgfbox[center,center]{\huge{?}}}
\pgfputat{\pgfxy(1.8,5.35)}{\pgfbox[center,center]{\large{?}}}
\pgfputat{\pgfxy(1.8,3.35)}{\pgfbox[center,center]{\large{?}}}
\pgfputat{\pgfxy(-.2,6.8)}{\pgfbox[center,center]{\large{?}}}
\pgfputat{\pgfxy(-.2,2.8)}{\pgfbox[center,center]{\large{?}}}
\pgfsetendarrow{\pgfarrowpointed{3pt}}
\pgfxyline(2.2,5.7)(2.2,4.8)
\pgfxyline(2.2,3.75)(2.2,2.8)
\pgfputat{\pgfxy(-.2,6.3)}{\pgfbox[center,center]{\large{$\cong$}}}
\pgfputat{\pgfxy(-.2,2.3)}{\pgfbox[center,center]{\large{$\cong$}}}
\end{colormixin}
\pgfsetendarrow{\pgfarrowpointed{3pt}}
\pgfputat{\pgfxy(-.2,4.3)}{\pgfbox[center,center]{\large{$\cong$}}}
\pgfputat{\pgfxy(-.2,4.7)}{\pgfbox[center,center]{$T_1$}}
\end{pgftranslate}
\end{pgftranslate}
\end{pgfmagnify}
\end{pgfpicture}
\caption{Rational structure as the middle row of the coniveau machine for zero-cycles on a smooth projective curve.}
\label{figrationalconiveau}
\end{figure}%&&&&&&&&&&&&&&&&&&&&&&&&&&&&&&&&&&&&&&&&&&&&&&&&&&&&&&&&&&&&&&&&&&&&&&&&&&&&&&&&&&
%&&&&&&&&&&&&&&&&&&&&&&&&&&&&&&&&&&&&&&&&&&&&&&&&&&&&&&&&&&&&&&&&&&&&&&&&&&&&&&&&&&
%&&&                    &&&       &&&&&    &&&                &&&&&&&&&&&&&&&&&&&&&&&&&&&&&&&&&&&&&&&&&&&&&&&&&&&
%&&&    &&&&&&&&&    &    &&&&   &&&    &&&&    &&&&&&&&&&&&&&&&&&&&&&&&&&&&&&&&&&&&&&&&&&&&&&&&&& 
%&&&    &&&&&&&&&    &&    &&&   &&&    &&&&&    &&&&&&&&&&&&&&&&&&&&&&&&&&&&&&&&&&&&&&&&&&&&&&&&& 
%&&&                 &&&&    &&&    &&   &&&    &&&&&    &&&&&&&&&&&&&&&&&&&&&&&&&&&&&&&&&&&&&&&&&&&&&&&&& 
%&&&    &&&&&&&&&    &&&&    &   &&&    &&&&&    &&&&&&&&&&&&&&&&&&&&&&&&&&&&&&&&&&&&&&&&&&&&&&&&& 
%&&&    &&&&&&&&&    &&&&&       &&&    &&&&     &&&&&&&&&&&&&&&&&&&&&&&&&&&&&&&&&&&&&&&&&&&&&&&&& 
%&&&                    &&&    &&&&&&    &&&                  &&&&&&&&&&&&&&&&&&&&&&&&&&&&&&&&&&&&&&&&&&&&&&&&&&
%&&&&&&&&&&&&&&&&&&&&&&&&&&&&&&&&&&&&&&&&&&&&&&&&&&&&&&&&&&&&&&&&&&&&&&&&&&&&&&&&&&
%&&&&&&&&&&&&&&&&&&&&&&&&&&&&&&&&&&&&&&&&&&&&&&&&&&&&&&&&&&&&&&&&&&&&&&&&&&&&&&&&&&
\vspace*{-.5cm}

Since the four sheaves in the middle row of the coniveau machine are skyscraper sheaves at the generic point of $X$, they contain no more information than their individual underlying groups.  However, the maps in the coniveau machine involving these sheaves contain more information than can be encoded in individual group maps.  For example, the sheaf divisor map $\tn{div}:\ms{R}_X^*\rightarrow\ms{Z}_X^1$ carries all the information contained in the global, local and pointwise divisor maps $\tn{div}_X$, $\tn{div}_U$ and $\tn{div}_x$.  This information is critical in describing zero-cycles in terms of rational functions, since not every zero-cycle is a principal divisor.  As I mentioned in section \hyperref[ZeroCyclesDivisors]{2.2}, an arbitrary zero-cycle $z$ on an open subset $U$ of $X$ is defined, in terms of rational functions, by a family of compatible pairs $\{U_i,f_i\}$, where $\{U_i\}$ is an open cover of $U$, and $z$ equals $\tn{div}_{U_i}(f_i)$ on $U_i$.  Therefore, the local divisor maps $\tn{div}_{U_i}$ are all relevant.  In a similar way, an arbitrary ``first-order arc $z_\ee$ of zero-cycles on $X$," is defined in terms of a family of compatible pairs $\{U_i, f_i+g_i\ee\}$, where $\{U_i\}$ is an open cover of $X$, and where the arc $f_i+g_i\ee$ is taken to define $z_\ee$ only on the open set $U_i$.   Compatibility means that different arcs $f_i+g_i\ee$ and $f_j+g_j\ee$ of nonzero rational functions ``define the same arcs of zero-cycles" on the intersections $U_i\cap U_j$ of their sets of definition.   Compatibility may be described more precisely by considering ``rational structure modulo regular structure," as explained in section \hyperref[SectionInfRegular]{2.5} below.

\section{First-Order Theory of Invertible Regular Functions \\on a Smooth Projective Curve}\label{SectionInfRegular}

The sheaf $\ms{O}_X^*$ of invertible regular functions on a smooth projective curve $X$ is the kernel of the sheaf divisor map $\tn{div}:\ms{R}_X^*\rightarrow\ms{Z}_X^1$, and is therefore the ``trivial part" of the sheaf $\ms{R}_X^*$ of nonzero rational functions with respect to data involving zero-cycles on $X$.   In particular, the infinitesimal structure of $\ms{O}_X^*$ is the ``trivial part" of the infinitesimal structure of $\ms{R}_X^*$.   As in the case of $\ms{R}_X^*$, the first-order infinitesimal structure of $\ms{O}_X^*$ may be expressed in terms of first-order arcs and their tangents, this time involving invertible regular functions.  These notions are made precise by three new sheaves $\ms{O}_{X_\ee}^*$, $\ms{O}_{X_\ee,\ee}^*$, and $\ms{O}_{X}^+$, which complete the top row of the coniveau machine for zero-cycles on a smooth projective curve.   The first-order theory of invertible regular functions is nearly identical, in a formal sense, to the first-order theory of nonzero rational functions, with the important exception that $\ms{O}_X^*$ is not a skyscraper sheaf.  This means that groups of invertible regular functions depend nontrivially on the open sets over which they are defined.

\subsection{First-Order Arcs of Invertible Regular Functions  and \\ Invertible Germs of Regular Functions}\label{subsectionfirstorderregular}

Let $U$ be an open subset of a smooth projective curve $X$.  A {\bf first-order arc of invertible regular functions} at the element $f$ in $O_U^*$ is defined to be an element $f+g\ee$ of the multiplicative subgroup $O_{U_\ee}^*$ of invertible elements of the ring of dual numbers $O_{U_\ee}:=O_U[\ee]/\ee^2$ over the ring of regular functions $O_U$ on $U$.  Hence, $O_{U_\ee}^*$ is called the {\bf group of first-order arcs} on $O_U^*$.   This includes the case $U=X$. 

Since $\ms{O}_X^*$ is not a skyscraper sheaf, there is generally new behavior for arcs in the multiplicative subgroups $O_x^*$ of the local rings $O_x$ of $X$.  A {\bf first-order arc of invertible germs of regular functions} at the element $f$ in $O_x^*$ is defined to be an element $f+g\ee$ of the multiplicative subgroup $O_{x_\ee}^*$ of invertible elements of the ring of dual numbers $O_{x_\ee}:=O_x[\ee]/\ee^2$ over $O_x$.  Hence, $O_{x_\ee}^*$ is called the {\bf group of local first-order arcs} on $O_x^*$.  If $x$ is the generic point of $X$, then a first-order arc of germs of regular functions in $O_x$ is just an arc of nonzero rational functions, since $O_x$ is canonically isomorphic to the rational function field $R_X$ of $X$ in this case.  

A first-order arc $f+g\ee$ at an element $f$ in $O_U^*$ or $O_x^*$ may be transported to the identity of $O_U^*$ or $O_x^*$ by dividing by $f$.  The resulting {\bf first-order arc at the identity} is the arc $1+(g/f)\ee.$  The sets $O_{U_\ee,\ee}^*$ and $O_{x_\ee,\ee}^*$ of such arcs are subgroups of $O_{U_\ee}^*$ and $O_{x_\ee}^*$, called the {\bf relative groups} of arcs at the identity in $O_U^*$ and $O_x^*$.

\subsection{Tangent Elements, Sets, Maps, and Groups}\label{subsectionregulartangents}

The {\bf tangent element} of a first-order arc $f+g\ee$ of invertible regular functions or invertible germs of regular functions at an element $f$ in $O_U^*$ or $O_x^*$ is defined to be the function $g$.  The {\bf tangent sets} of $O_U^*$ and $O_x^*$ at each element are defined to be the underlying sets $O_U^{\tn{\footnotesize{set}}}$ and $O_x^{\tn{\footnotesize{set}}}$.  The  {\bf tangent maps} $T_{U,f}$ and $T_{x,f}$ at $f$ are the maps taking first-order arcs $f+g\ee$ at $f$ to their tangent elements $g$.  The {\bf tangent groups} of $O_U^*$ and $O_x^*$ are defined to be the underlying additive groups $O_U^+$ and $O_x^+$ of the rings $O_U$ and $O_x$.  The tangent maps at the identity $T_{U,1}$ and $T_{x,1}$ are isomorphisms from the relative groups $O_{U_\ee,\ee}^*$ and $O_{x_\ee,\ee}^*$ of first-order arcs at the identity to the tangent groups $O_U^+$ and $O_x^+$.  Hence, the groups $O_{U_\ee,\ee}^*$ and $O_{x_\ee,\ee}^*$ are ``multiplicative versions of the tangent groups" of $O_U^*$ and $O_x^*$.

The {\bf tangent element at the identity} of the arc $f+g\ee$ in $O_U^*$ or $O_x^*$ is defined to be the tangent element $g/f$ of the arc $1+(g/f)\ee$ at the identity given by dividing the original arc by $f$.  The {\bf tangent maps} $T_U:O_{U_\ee}^*\rightarrow O_U^+$ and $T_x:O_{x_\ee}^*\rightarrow O_x^+$ are defined by extending the tangent maps at the identity $T_{U,1}$ and $T_{x,1}$ to the maps taking first-order arcs $f+g\ee$ to their tangent elements at the identity $g/f$.  The tangent maps factor through the relative  groups via the tangent maps at the identity.  The kernels of the tangent maps $T_U$ and $T_x$ are isomorphic to the multiplicative groups $O_U^*$ and $O_x^*$.  Hence, there exist four-term ``hybrid" exact sequences, consisting of three multiplicative groups and one additive group:
\[1\rightarrow O_U^*\overset{i}{\longrightarrow} O_{U_\ee}^*\overset{1/f}{\longrightarrow}O_{U_\ee,\ee}^* \overset{T_{U,1}}{\longrightarrow} O_U^+\rightarrow 0,\]
and
\[1\rightarrow O_x^*\overset{i}{\longrightarrow} O_{x_\ee}^*\overset{1/f}{\longrightarrow}O_{x_\ee,\ee}^* \overset{T_{x,1}}{\longrightarrow} O_x^+\rightarrow 0.\]

\subsection{Sheaves of First-Order Arcs and Tangents; Sheaf Tangent Map}\label{SubsectionTangentSheafRegular}

The above four-term hybrid exact sequences involving $U$ fit together to give a four-term hybrid exact sequence of sheaves on $X$:
\[1\rightarrow \ms{O}_X^*\overset{i}{\longrightarrow}\ms{O}_{X_\ee}^*\overset{1/f}{\longrightarrow}\ms{O}_{X_\ee,\ee}^* \overset{T_1}{\longrightarrow} \ms{O}_X^+\rightarrow 0,\]
where the three new sheaves $\ms{O}_{X_\ee}^*$, $\ms{O}_{X_\ee,\ee}^*$, and $\ms{O}_X^+$ are sheaves with groups of sections $O_{U_\ee}^*$, $O_{U_\ee,\ee}^*$, and $O_U^+$, respectively, over a nonempty open set $U$ of $X$.  The maps $i$, $1/f,$ and $T_1$ at the sheaf level are defined by patching together copies of the corresponding group maps for each open set $U$.   The stalks of these sheaves at a point $x$ in $X$ are $O_{x_\ee}^*$, $O_{x_\ee,\ee}^*$, and $O_x^+$, respectively.

Mirroring the group-level definitions, the sheaf $\ms{O}_{X_\ee}^*$ of multiplicative groups is called the {\bf sheaf of first-order arcs} on $\ms{O}_X^*$,  and the sheaf $\ms{O}_X^+$ of additive groups is called the {\bf tangent sheaf} of the sheaf $\ms{O}_X^*$.  The sheaf $\ms{O}_{X_\ee,\ee}^*$ is called the {\bf relative sheaf} of arcs at the identity in $\ms{O}_X^*$.   It may be viewed as a multiplicative version of the tangent sheaf $\ms{O}_X^+$ of $\ms{O}_X^*$.  The sheaf map $T_1$ is called the {\bf sheaf tangent map at the identity} in $\ms{O}_X^*$. The composition $T$ of the sheaf maps $T_1$ and $1/f$ is called the {\bf sheaf tangent map}, or simply the {\bf tangent map}.  The sheaves $\ms{O}_{X_\ee}^*$, $\ms{O}_{X_\ee,\ee}^*$, and $\ms{O}_X^+$ are natural subsheaves of the sheaves $\ms{R}_{X_\ee}^*$, $\ms{R}_{X_\ee,\ee}^*$, and $\ms{R}_X^+$, respectively.  Calling the inclusion maps $i$, the four-term hybrid exact  sequence of sheaves involving regular functions completes the top row of the coniveau machine for zero-cycles on a smooth projective curve, as shown in the diagram in figure \hyperref[figregularconiveau]{\ref{figregularconiveau}}: 

%&&&&&&&&&&&&&&&&&&&&&&&&&&&&&&&&&&&&&&&&&&&&&&&&&&&&&&&&&&&&&&&&&&&&&&&&&&&&&&&&&&
%&&&&&&&&&&&&&&&&&&&&&&&&&&&&&&&&&&&&&&&&&&&&&&&&&&&&&&&&&&&&&&&&&&&&&&&&&&&&&&&&&&
%&&&&                 &&&&    &&&               &&&                    &&&    &&&&    &&&              &&&&                   &&&&&&&&&&&&&&&&&&
%&&&&   &&&&   &&&&    &&&   &&&&&&&&&&&   &&&&&&    &&&&    &&&    &&&     &&&    &&&&&&&&&&&&&&&&&&&&&&&&
%&&&&   &&&&   &&&&    &&&   &&&&&&&&&&&   &&&&&&    &&&&    &&&    &&&     &&&    &&&&&&&&&&&&&&&&&&&&&&&&
%&&&&                 &&&&    &&&   &&&&&&&&&&&   &&&&&&    &&&&    &&&            &&&&&             &&&&&&&&&&&&&&&&&&&&
%&&&&    &&&&&&&&&    &&&   &&&&&&&&&&&   &&&&&&    &&&&    &&&    &&&   &&&&    &&&&&&&&&&&&&&&&&&&&&&&
%&&&&    &&&&&&&&&    &&&   &&&&&&&&&&&   &&&&&&    &&&&    &&&    &&&&   &&&    &&&&&&&&&&&&&&&&&&&&&&&
%&&&&    &&&&&&&&&    &&&                &&&&&&   &&&&&&                   &&&    &&&&&   &&                  &&&&&&&&&&&&&&&&&&
%&&&&&&&&&&&&&&&&&&&&&&&&&&&&&&&&&&&&&&&&&&&&&&&&&&&&&&&&&&&&&&&&&&&&&&&&&&&&&&&&&&
%&&&&&&&&&&&&&&&&&&&&&&&&&&&&&&&&&&&&&&&&&&&&&&&&&&&&&&&&&&&&&&&&&&&&&&&&&&&&&&&&&&
\begin{figure}[H]
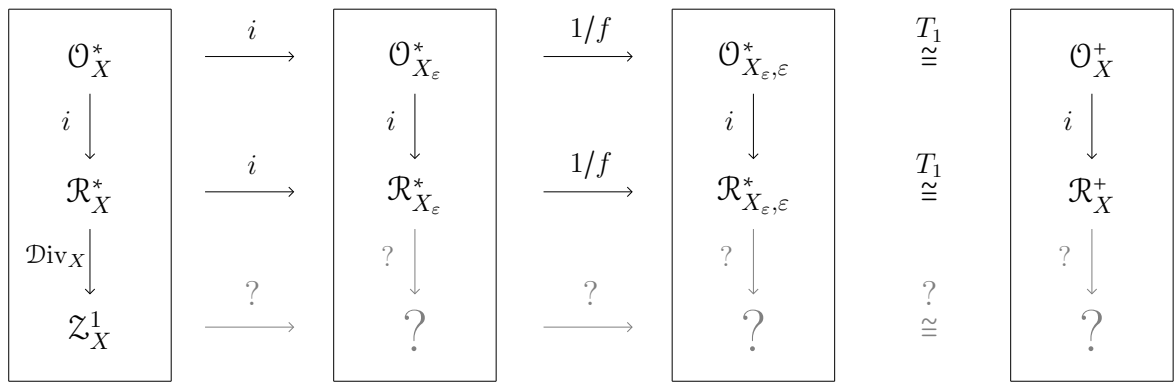

\begin{pgfpicture}{0cm}{0cm}{17cm}{5.2cm}
\begin{pgfmagnify}{.9}{.9}
\begin{pgftranslate}{\pgfpoint{.2cm}{-1.3cm}}
\begin{pgftranslate}{\pgfpoint{-1cm}{0cm}}
\pgfxyline(1,1.5)(1,7)
\pgfxyline(1,7)(3.4,7)
\pgfxyline(3.4,7)(3.4,1.5)
\pgfxyline(3.4,1.5)(1,1.5)
\pgfputat{\pgfxy(2.2,6.25)}{\pgfbox[center,center]{\large{$\ms{O}_X^*$}}}
\pgfputat{\pgfxy(2.2,4.25)}{\pgfbox[center,center]{\large{$\ms{R}^*_X$}}}
\pgfputat{\pgfxy(2.2,2.25)}{\pgfbox[center,center]{\large{$\ms{Z}_X^1$}}}
\pgfputat{\pgfxy(1.85,5.35)}{\pgfbox[center,center]{$i$}}
\pgfputat{\pgfxy(1.7,3.35)}{\pgfbox[center,center]{\small{$\ms{D}\tn{iv}_X$}}}
\pgfsetendarrow{\pgfarrowpointed{3pt}}
\pgfxyline(2.2,5.75)(2.2,4.8)
\pgfxyline(2.2,3.75)(2.2,2.8)
\end{pgftranslate}
\begin{pgftranslate}{\pgfpoint{3.8cm}{0cm}}
\pgfxyline(1,1.5)(1,7)
\pgfxyline(1,7)(3.4,7)
\pgfxyline(3.4,7)(3.4,1.5)
\pgfxyline(3.4,1.5)(1,1.5)
\pgfputat{\pgfxy(2.2,6.25)}{\pgfbox[center,center]{\large{$\ms{O}^*_{X_\ee}$}}}
\pgfputat{\pgfxy(2.2,4.25)}{\pgfbox[center,center]{\large{$\ms{R}^*_{X_\ee}$}}}
\begin{colormixin}{20!white}
%\pgfnodecircle{Node0}[fill]{\pgfxy(2.2,2.25)}{0.5cm}
\end{colormixin}
\pgfputat{\pgfxy(1.85,5.35)}{\pgfbox[center,center]{$i$}}
\begin{colormixin}{20!white}
%\pgfnodecircle{Node0}[fill]{\pgfxy(1.8,3.35)}{0.35cm}
\end{colormixin}
\pgfsetendarrow{\pgfarrowpointed{3pt}}
\pgfxyline(-.9,6.3)(.4,6.3)
\pgfxyline(-.9,4.3)(.4,4.3)
\pgfxyline(2.2,5.75)(2.2,4.8)
\pgfputat{\pgfxy(-.2,6.7)}{\pgfbox[center,center]{$i$}}
\pgfputat{\pgfxy(-.2,4.7)}{\pgfbox[center,center]{$i$}}
\begin{colormixin}{20!white}
%\pgfnodecircle{Node0}[fill]{\pgfxy(-.2,2.8)}{0.35cm}
\end{colormixin}
\begin{colormixin}{50!white}
\pgfputat{\pgfxy(2.2,2.25)}{\pgfbox[center,center]{\huge{?}}}
\pgfputat{\pgfxy(1.8,3.35)}{\pgfbox[center,center]{?}}
\pgfputat{\pgfxy(-.2,2.8)}{\pgfbox[center,center]{\large{?}}}
\pgfsetendarrow{\pgfarrowpointed{3pt}}
\pgfxyline(-.9,2.3)(.4,2.3)
\pgfxyline(2.2,3.75)(2.2,2.8)
\end{colormixin}
\end{pgftranslate}
\begin{pgftranslate}{\pgfpoint{8.8cm}{0cm}}
\pgfxyline(1,1.5)(1,7)
\pgfxyline(1,7)(3.4,7)
\pgfxyline(3.4,7)(3.4,1.5)
\pgfxyline(3.4,1.5)(1,1.5)
\pgfputat{\pgfxy(2.2,6.25)}{\pgfbox[center,center]{\large{$\ms{O}^*_{X_\ee,\ee}$}}}
\pgfputat{\pgfxy(2.2,4.25)}{\pgfbox[center,center]{\large{$\ms{R}^*_{X_\ee,\ee}$}}}
\begin{colormixin}{20!white}
%\pgfnodecircle{Node0}[fill]{\pgfxy(2.2,2.25)}{0.5cm}
\end{colormixin}
\pgfputat{\pgfxy(1.85,5.35)}{\pgfbox[center,center]{$i$}}
\begin{colormixin}{20!white}
%\pgfnodecircle{Node0}[fill]{\pgfxy(-.2,2.8)}{0.35cm}
\end{colormixin}
\begin{colormixin}{20!white}
%\pgfnodecircle{Node0}[fill]{\pgfxy(1.8,3.35)}{0.35cm}
\end{colormixin}
\pgfsetendarrow{\pgfarrowtriangle{6pt}}
\pgfsetendarrow{\pgfarrowpointed{3pt}}
\pgfxyline(-.9,6.3)(.4,6.3)
\pgfxyline(-.9,4.3)(.4,4.3)
\pgfxyline(2.2,5.75)(2.2,4.8)
\pgfputat{\pgfxy(-.2,6.7)}{\pgfbox[center,center]{$1/f$}}
\pgfputat{\pgfxy(-.2,4.7)}{\pgfbox[center,center]{$1/f$}}
\begin{colormixin}{50!white}
\pgfputat{\pgfxy(2.2,2.25)}{\pgfbox[center,center]{\huge{?}}}
\pgfputat{\pgfxy(1.8,3.35)}{\pgfbox[center,center]{?}}
\pgfputat{\pgfxy(-.2,2.8)}{\pgfbox[center,center]{\large{?}}}
\pgfsetendarrow{\pgfarrowpointed{3pt}}
\pgfxyline(-.9,2.3)(.4,2.3)
\pgfxyline(2.2,3.75)(2.2,2.8)
\end{colormixin}
\end{pgftranslate}
\begin{pgftranslate}{\pgfpoint{13.8cm}{0cm}}
\pgfxyline(1,1.5)(1,7)
\pgfxyline(1,7)(3.4,7)
\pgfxyline(3.4,7)(3.4,1.5)
\pgfxyline(3.4,1.5)(1,1.5)
\pgfputat{\pgfxy(2.2,6.25)}{\pgfbox[center,center]{\large{$\ms{O}_X^+$}}}
\pgfputat{\pgfxy(2.2,4.25)}{\pgfbox[center,center]{\large{$\ms{R}_X^+$}}}
\begin{colormixin}{20!white}
%\pgfnodecircle{Node0}[fill]{\pgfxy(2.2,2.25)}{0.5cm}
\end{colormixin}
\pgfputat{\pgfxy(-.2,6.7)}{\pgfbox[center,center]{$T_1$}}
\pgfputat{\pgfxy(1.85,5.35)}{\pgfbox[center,center]{$i$}}
\begin{colormixin}{20!white}
%\pgfnodecircle{Node0}[fill]{\pgfxy(1.8,3.35)}{0.35cm}
\end{colormixin}
\pgfputat{\pgfxy(-.2,4.7)}{\pgfbox[center,center]{$T_1$}}
\begin{colormixin}{20!white}
%\pgfnodecircle{Node0}[fill]{\pgfxy(-.2,2.8)}{0.35cm}
\end{colormixin}
\pgfputat{\pgfxy(-.2,6.3)}{\pgfbox[center,center]{\large{$\cong$}}}
\pgfputat{\pgfxy(-.2,4.3)}{\pgfbox[center,center]{\large{$\cong$}}}
\pgfsetendarrow{\pgfarrowpointed{3pt}}
\pgfxyline(2.2,5.75)(2.2,4.8)
\begin{colormixin}{50!white}
\pgfputat{\pgfxy(2.2,2.25)}{\pgfbox[center,center]{\huge{?}}}
\pgfputat{\pgfxy(1.8,3.35)}{\pgfbox[center,center]{?}}
\pgfputat{\pgfxy(-.2,2.8)}{\pgfbox[center,center]{\large{?}}}
\pgfsetendarrow{\pgfarrowpointed{3pt}}
\pgfxyline(2.2,3.75)(2.2,2.8)
\pgfputat{\pgfxy(-.2,2.3)}{\pgfbox[center,center]{\large{$\cong$}}}
\end{colormixin}
\end{pgftranslate}
%\begin{colormixin}{30!white}
%\color{black}
%\pgfmoveto{\pgfxy(7.5,3.1)}
%\pgflineto{\pgfxy(13.4,3.1)}
%\pgflineto{\pgfxy(13.4,3.5)}
%\pgflineto{\pgfxy(14.6,2.7)}
%\pgflineto{\pgfxy(13.4,1.9)}
%\pgflineto{\pgfxy(13.4,2.3)}
%\pgflineto{\pgfxy(7.5,2.3)}
%\pgflineto{\pgfxy(7.5,3.1)}
%\pgffill
%\end{colormixin}
%\pgfputat{\pgfxy(11,2.7)}{\pgfbox[center,center]{\huge{?}}}
\end{pgftranslate}
\end{pgfmagnify}
\end{pgfpicture}
\caption{Regular structure as the middle row of the coniveau machine for zero-cycles on a smooth projective curve.}
\label{figregularconiveau}
\end{figure}%&&&&&&&&&&&&&&&&&&&&&&&&&&&&&&&&&&&&&&&&&&&&&&&&&&&&&&&&&&&&&&&&&&&&&&&&&&&&&&&&&&
%&&&&&&&&&&&&&&&&&&&&&&&&&&&&&&&&&&&&&&&&&&&&&&&&&&&&&&&&&&&&&&&&&&&&&&&&&&&&&&&&&&
%&&&                    &&&       &&&&&    &&&                &&&&&&&&&&&&&&&&&&&&&&&&&&&&&&&&&&&&&&&&&&&&&&&&&&&
%&&&    &&&&&&&&&    &    &&&&   &&&    &&&&    &&&&&&&&&&&&&&&&&&&&&&&&&&&&&&&&&&&&&&&&&&&&&&&&&& 
%&&&    &&&&&&&&&    &&    &&&   &&&    &&&&&    &&&&&&&&&&&&&&&&&&&&&&&&&&&&&&&&&&&&&&&&&&&&&&&&& 
%&&&                 &&&&    &&&    &&   &&&    &&&&&    &&&&&&&&&&&&&&&&&&&&&&&&&&&&&&&&&&&&&&&&&&&&&&&&& 
%&&&    &&&&&&&&&    &&&&    &   &&&    &&&&&    &&&&&&&&&&&&&&&&&&&&&&&&&&&&&&&&&&&&&&&&&&&&&&&&& 
%&&&    &&&&&&&&&    &&&&&       &&&    &&&&     &&&&&&&&&&&&&&&&&&&&&&&&&&&&&&&&&&&&&&&&&&&&&&&&& 
%&&&                    &&&    &&&&&&    &&&                  &&&&&&&&&&&&&&&&&&&&&&&&&&&&&&&&&&&&&&&&&&&&&&&&&&
%&&&&&&&&&&&&&&&&&&&&&&&&&&&&&&&&&&&&&&&&&&&&&&&&&&&&&&&&&&&&&&&&&&&&&&&&&&&&&&&&&&
%&&&&&&&&&&&&&&&&&&&&&&&&&&&&&&&&&&&&&&&&&&&&&&&&&&&&&&&&&&&&&&&&&&&&&&&&&&&&&&&&&&
\vspace*{-.5cm}

In section \hyperref[SubsectionTangentSheafRational]{2.4}, I described informally how a general first-order arc of zero-cycles $z_\ee$ on a smooth projective curve  $X$ may be defined in terms of a family of compatible pairs $\{U_i, f_i+g_i\ee\}$, where $\{U_i\}$ is an open cover of $X$, and where $f_i+g_i\ee$ is viewed as an arc of nonzero rational functions on $U_i$.  I discussed the meaning of compatibility in this context: that the arcs $f_i+g_i\ee$ and $f_j+g_j\ee$ of nonzero rational functions must ``define the same arcs of zero-cycles" on the intersections $U_i\cap U_j$ of their sets of definition.  Having now introduced infinitesimal regular structure, I can now be more precise about this condition: the family of pairs $\{U_i, f_i+g_i\ee\}$ is {\bf compatible} if the ratios $( f_i+g_i\ee)/( f_j+g_j\ee)$ are arcs of invertible regular functions on the intersections $U_i\cap U_j$ of their sets of definition.  These ratios make sense, because the individual arcs are elements of a multiplicative group.  Exploring the consequences of this definition is an important part of section \hyperref[SectionInfZeroCycles]{2.6} below.

\section{Thickened Curve $X_\ee$}\label{SectionThickened}

\subsection{$X_\ee$ as a Locally Ringed Space}\label{subsectionthickenedlocallyringed}

In this short section, I will pause to define a new locally ringed space $X_\ee$, called the {\bf thickened curve} corresponding to the smooth projective curve $X$.  Much of the infinitesimal theory of function groups on $X$, introduced in the previous two sections, may be described in terms of $X_\ee$.  The thickened curve $X_\ee$ has the same underlying topological space as $X$, but a different {\bf structure sheaf}, denoted by $\ms{O}_{X_\ee}$.  The ring of sections of $\ms{O}_{X_\ee}$ over an open set $U$ of $\tn{Zar}_X$ is just the ring of dual numbers $O_{U_\ee}$ over the ring of regular functions $O_U$ on $U$.  Thus, sections of $\ms{O}_{X_\ee}$ are of the form $f+g\ee$, where $f$ and $g$ are regular functions, not necessarily invertible. 

In addition to the new structure sheaf $\ms{O}_{X_\ee}$, there is another new sheaf $R_{X_\ee}$ associated with the thickened curve $X_\ee$, called the {\bf sheaf of total quotient rings} of $X_\ee$.  This new sheaf replaces the sheaf of rational functions on the smooth curve $X$, just as the new structure sheaf replaces the original algebraic structure sheaf on $X$.  The sheaf of total quotient rings $\ms{R}_{X_\ee}$ on $X_\ee$ is a skyscraper sheaf at the generic point of $\tn{Zar}_X$, whose underlying ring is the ring of dual numbers $R_{X_\ee}$ over the function field $R_X$ of $X$.  Thus, its sections are of the form $f+g\ee$, where $f$ and $g$ are rational functions, not necessarily nonzero. 

The notation I have been using for the sheaves $\ms{O}_{X_\ee}^*$ and $\ms{R}_{X_\ee}^*$ of first-order arcs is compatible with the idea that the curve $X$ itself is modified by introducing the nilpotent element $\ee$.   In particular, the sheaf $\ms{O}_{X_\ee}^*$ is  the sheaf of multiplicative groups of invertible sections of the new structure sheaf $\ms{O}_{X_\ee}$.   Its sections, which are first-order arcs of invertible regular functions, are the analogues on $X_\ee$ of invertible regular functions on $X$. Likewise, the sheaf $\ms{R}_{X_\ee}^*$ is the sheaf of multiplicative groups of invertible sections of the sheaf of total quotient rings $\ms{R}_{X_\ee}$.   Its sections, which are first-order arcs of nonzero rational functions, are the analogues on $X_\ee$ of nonzero rational functions on $X$.

It is instructive to compare the relationships among the sheaves $\ms{O}_X$, $\ms{O}_X^*$, $\ms{R}_X$, and $\ms{R}_X^*$, to the relationships among the corresponding sheaves $\ms{O}_{X_\ee}$, $\ms{O}_{X_\ee}^*$, $\ms{R}_{X_\ee}$, and $\ms{R}_{X_\ee}^*$.  Each of the sheaves $\ms{O}_X^*$, $\ms{R}_X$, and $\ms{R}_X^*$ are defined by performing a specific operation on the structure sheaf $\ms{O}_X$ of $X$.   In particular, $\ms{O}_X^*$ is defined by taking subgroups of invertible elements, $\ms{R}_X$  by taking quotient rings, and $\ms{R}_X^*$ by an appropriate composition of these operations.  Performing the same operations on the sheaf $\ms{O}_{X_\ee}$ yields the sheaves $\ms{O}_{X_\ee}^*$, $\ms{R}_{X_\ee}$, and $\ms{R}_{X_\ee}^*$.   This illustrates the naturally of considering $X_\ee=\big(\tn{Zar}_X,\ms{O}_{X_\ee}\big)$ as a locally ringed space in its own right, distinct from $X$.

\subsection{$X_\ee$ as a Singular Scheme}\label{subsectionthickenedsingularscheme}

The thickened curve $X_\ee$, unlike $X$, is not a smooth curve, because its local rings are not regular.  This means that the minimal number of generators of the maximal ideals of the local rings of $X_\ee$ are not equal to their Krull dimensions.  In particular, the local rings $O_{x_\ee}$ of $X_\ee$ at zero-dimensional points $x$ of $\tn{Zar}_X$ all have Krull dimension one, but the maximal ideals of these local rings each require two generators. 

Since $X_\ee$ is not smooth, it is called {\bf singular}.  However, the singularity of $X_\ee$ is of a rather tame and uniform type: $X_\ee$ is given by ``multiplying a nonsingular scheme by a fixed scheme" in the sense of fibered products.  Indeed, $X_\ee$ may be expressed as the fibered product
\[X_\ee:=X\times_{\tn{\footnotesize{Spec }}\CC}\tn{Spec }\CC_\ee,\]
where $\tn{Spec }\CC$ is the smooth one-point spectrum of the complex numbers, and $\tn{Spec }\CC_\ee$ is the singular one-point spectrum of the ring of dual numbers $\CC_\ee$ over $\CC$.  This expression for the thickened curve $X_\ee$ shows that its singular structure arises from the structure of the simplest possible singular scheme $\tn{Spec }\CC_\ee$, and is of essentially the same nature at every point.  Similarly uniformity in the singular structure of the thickenings of higher-dimensional smooth projective varieties will be very useful in analyzing the infinitesimal structure of Chow groups in general.

\section{First-Order Theory of Zero-Cycles \\ on a Smooth Projective Curve}\label{SectionInfZeroCycles}

In this section, I complete the construction of the coniveau machine for zero-cycles on a smooth projective curve $X$.  Three new sheaves, $\ms{Z}_{X_\ee}^1$,  $\ms{Z}_{X_\ee,\ee}^1$, and $\ms{P}_{X}^1$, fill in the bottom row of the machine. These sheaves encode the first-order infinitesimal structure of the sheaf $\ms{Z}_X^1$ of zero-cycles on $X$.  They are defined in terms of the ``rational" sheaves appearing in the middle row of the machine, by taking the ``regular" sheaves in the top row to be trivial in an appropriate sense.  The sheaf $\ms{Z}_{X_\ee}^1$, appearing at the bottom of the second column of the coniveau machine, is the {\bf sheaf of first-order arcs of zero-cycles} on $X$.  It is isomorphic, but not equal, to the quotient sheaf $\ms{R}_{X_\ee}^*/\ms{O}_{X_\ee}^*$.  The sheaf $\ms{Z}_{X_\ee,\ee}^1$, appearing at the bottom of the third column, is called the {\bf relative sheaf} of first-order arcs at the identity in $\ms{Z}_{X}^1$.  It is isomorphic, but not equal, to the quotient sheaf $\ms{R}_{X_\ee,\ee}^*/\ms{O}_{X_\ee,\ee}^*$.  Finally, the sheaf $\ms{P}_{X}^1$, appearing at the bottom of the fourth column, is the {\bf sheaf of principal parts} of rational functions on $X$. This sheaf is the {\bf tangent sheaf} of the sheaf $\ms{Z}_{X}^1$ of zero-cycles on $X$.   It is isomorphic, but not equal, to the quotient sheaf $\ms{R}_{X}^+/\ms{O}_{X}^+$.

\subsection{First-Order Arcs of Zero-Cycles}\label{subsectionfirstorderzerocycles}

At the beginning of section \hyperref[SectionInfRational]{2.4}, I mentioned that ``arcs of zero-cycles" on a smooth projective curve $X$ may be defined locally on $X$ as ``divisors of arcs of nonzero rational functions."  At the end of section \hyperref[SectionInfRational]{2.4}, I explained why families of compatible pairs $\{U_i,f_i+g_i\ee\}$ are necessary to define an arbitrary first-order arc of zero-cycles on an open subset $U$ of $X$.  At the beginning of section \hyperref[SectionInfRegular]{2.5}, I remarked that ``regular structure" should be viewed as the trivial part of ``rational structure" in the context of zero-cycles on $X$.  Finally, at the end of section \hyperref[SectionInfRegular]{2.5}, I explained the meaning of triviality of regular structure in terms of compatible pairs.   This provides enough background to precisely define first-order arcs of zero-cycles.

Let $U$ be an open subset of $X$, and let $\{U_i,f_i+g_i\ee\}$ be a family of compatible pairs on $U$.  Recall that $\{U_i\}$ is an open cover of $U$, and $f_i+g_i\ee$ is a first-order arc of nonzero rational functions on $U_i$.  Compatibility means that the ratios $(f_i+g_i\ee)/(f_j+g_j\ee)$ are first-order arcs of invertible regular functions on the intersections $U_{ij}:=U_{i}\cap U_{ j}$. The ratio $(f_i+g_i\ee)/(f_j+g_j\ee)$ may be factored as
\[\frac{f_i+g_i\ee}{f_j+g_j\ee}\hspace*{.2cm}=\hspace*{.2cm}\frac{f_i}{f_j}\Bigg(\frac{1+(g_i/f_i)\ee}{1+(g_j/f_j)\ee}\Bigg).\]
This expression defines a first-order arc of invertible regular functions on $U_{ij}$ if and only if the first factor $f_i/f_j$ is an invertible regular function on $U_{ij}$, and the second factor $\big(1+(g_i/f_i)\ee\big)\big/\big(1+(g_i/f_i)\ee\big)$ is a first-order arc of invertible regular functions at the identity in $O_{U_{ij}}^*$.    These conditions, applied to every pair of indices, may be viewed as separate conditions on the two families of pairs $\{U_i,f_i\}$ and $\big\{U_i,1+(g_i/f_i)\ee\big\}$. 

The first family of pairs $\{U_i,f_i\}$ defines a zero-cycle on $U$, whose multiplicities are given by the valuations $\nu_x(f_i)$ at the codimension-one points $x$ of $U$.   The condition that the ratios $f_i/f_j$ are invertible regular functions on the intersections $U_{ij}$ ensures that this zero-cycle is well-defined, since this means that the valuations of $f_i$ and $f_j$ coincide on $U_{ij}$.   In particular, the only information lost by $\nu_x$ is information about regular structure.  The second family of pairs $\big\{U_i,1+(g_i/f_i)\ee\big\}$ may also be interpreted in terms of information associated with codimension-one points of $U$.  For each $x\in \tn{Zar}_U^{1}$, consider the quotient map $\pi_x^*$ from $R_{X_\ee,\ee}^*$ to the quotient group $R_{X_\ee,\ee}^*/O_{x_\ee,\ee}^*$.   This map associates information from the family of pairs $\big\{U_i,1+(g_i/f_i)\ee\big\}$ with $x$.  The condition that the ratios $\big(1+(g_i/f_i)\ee\big)\big/\big(1+(g_i/f_i)\ee\big)$ are arcs of invertible regular functions ensures that this information is well-defined; again, the only information lost is information about regular structure.  

For each codimension-one point $x$ of $U$, the information from the family $\{U_i,f_i+g_i\ee\}$ associated with $x$ is the pair of values $\Big(\big(\nu_x(f_i)\big)x,\pi_x^*\big(1+(g_i/f_i)\ee\big)\Big)$.  This pair of values belongs to the group $Z_{x_\ee}^1:=Z_x^1\oplus \big(R_{X_\ee,\ee}^*/O_{x_\ee,\ee}^*\big)$, which is called the {\bf group of first-order arcs of zero-cycles} at $x$.  A general {\bf first-order arc of zero-cycles} at $x$ is any element $(n_xx,h_x)$ of this group.  Here, $n_x$ is an integer, and $h_x=[1+g_x\ee]$ is an element of the quotient group $R_{X_\ee,\ee}^*/O_{x_\ee,\ee}^*$, where the notation $[1+g_x\ee]$ means the class in the quotient group of a first-order arc of nonzero rational functions $1+g_x\ee$.   It is easy to see that every such pair $(n_xx,h_x)$ may in fact be written in the form $\Big(\big(\nu_x(f)\big)x,\pi_x^*\big(1+(g/f)\ee\big)\Big)$ for an appropriate choice of $f$ and $g$.    The ``initial zero-cycle" of the first-order arc $(n_xx,h_x)$ is the zero-cycle $n_xx$.  Since the group operation in the first summand $Z_x^1$ of $Z_{x_\ee}^1$ is addition, the arc arc $(n_xx,h_x)$ may be moved to the identity $0$ in $Z_x^1$ by subtracting $nx$.  The resulting {\bf arc of zero-cycles at the identity} in $Z_x^1$ may be identified with the element $h_x$ in the quotient group $R_{X_\ee,\ee}^*/O_{x_\ee,\ee}^*$.   I therefore relabel the quotient group as $Z_{x_\ee,\ee}^1$, and call it the {\bf relative group} of first-order arcs at the identity in $Z_x^1$.  

Now let $x$ range over all codimension-one points of $U$.  The group $Z_{U_\ee}^1:=\bigoplus_{x\in\tn{\footnotesize{Zar}}_U^{1}}Z_{x_\ee}^1$ is called the {\bf group of first-order arcs of zero-cycles} on $U$, and its elements are called {\bf first-order arcs of zero-cycles} on $U$.  In particular, the element $\sum_{x\in\tn{\footnotesize{Zar}}_U^{1}}\Big(\nu_x(f_i),\pi_x^*\big(1+(g_i/f_i)\ee\big)\Big)$ is defined to be the first-order arc of zero-cycles on $U$ associated with the family of compatible pairs $\{U_i,f_i+g_i\ee\}$.   A general element of $Z_{U_\ee}^1$ may be expressed as a formal sum $\sum_{x\in\tn{\footnotesize{Zar}}_U^{1}}(n_xx,h_x),$ where each $n_x$ is an integer, only a finite number of which are nonzero, and where each $h_x$ is an element of the relative group $Z_{x_\ee,\ee}^1$.   However, it is easy to see that every element of $Z_{U_\ee}^1$ in fact comes from a family of compatible pairs.   The ``initial zero-cycle" of the first-order arc $z_\ee:=\sum_{x\in\tn{\footnotesize{Zar}}_U^{1}}(n_xx,h_x)$ is the zero-cycle $z:=\sum_{x\in\tn{\footnotesize{Zar}}_U^{1}}n_xx$.  Therefore, the arc $z_\ee$ may be moved to the identity $0$ in $Z_U^1$ by subtracting $z$.  The resulting {\bf arc of zero-cycles at the identity} in $Z_U^1$ may be identified with the element $\sum_{x\in\tn{\footnotesize{Zar}}_U^{1}}h_x$ in the group $=\bigoplus_{x\in\tn{\footnotesize{Zar}}_U^{1}}Z_{x_\ee,\ee}^1$.   I therefore relabel this group as $Z_{U_\ee,\ee}^1$, and call it the {\bf relative group} of first-order arcs at the identity in $Z_U^1$.

\subsection{Tangent Elements, Sets, Maps, and Groups}\label{subsectionzerocyclestangents}

Since the integers $n_x$ appearing in the formula $z_\ee:=\sum_{x\in\tn{\footnotesize{Zar}}_U^{1}}(n_xx,h_x)$ for an arc of zero-cycles on an open subset $U$ of a smooth projective curve $X$ define the initial zero-cycle $z:=\sum_{x\in\tn{\footnotesize{Zar}}_U^{1}}n_xx$ of the arc, it is natural to expect that the elements $h_x$ in the same formula should define the ``tangent element of the arc."  This is the right idea, but it is useful to express tangent elements in a slightly different form.   The cases of nonzero rational functions and invertible regular functions provide guidance here: the tangent element of a first-order arc contains the same information as the corresponding arc at the identity, but expressed additively rather than multiplicatively.   This suggests  expressing the information contained in the first-order arc at the identity $\sum_{x\in\tn{\footnotesize{Zar}}_U^{1}}h_x$ in additive form. 

For this purpose, it is useful to revisit the compatibility conditions for a family of compatible pairs $\{U_i,f_i+g_i\ee\}$ above.  Another way to express the ratio $(f_i+g_i\ee)/(f_j+g_j\ee)$ is as follows:
\[\frac{f_i+g_i\ee}{f_j+g_j\ee}\hspace*{.2cm}=\hspace*{.2cm}\frac{f_i}{f_j}\Bigg(1+\Big(\frac{g_i}{f_i}-\frac{g_j}{f_j}\Big)\ee\Bigg).\]
The compatibility conditions now require that the first factor $f_i/f_j$ must be an invertible regular function on $U_{ij}$, and that the coefficient of $\ee$ in the second factor must be a regular function on $U_{ij}$.   The first condition is the same as before, but the second condition gives a new perspective, leading to the desired additive definition of tangent elements. 

The ratios $g_i/f_i$ and $g_j/f_j$, whose difference appears as the coefficient of $\ee$ in the above ratio, are the tangent vectors at the identity in $R_X$ of the first-order arcs $f_i+g_i\ee$ and $f_j+g_j\ee$ of nonzero rational functions.   Heuristically, the compatibility conditions say that ``an arc of zero-cycles on $X$ is defined by a family of arcs of nonzero rational functions whose initial functions have regular ratios and whose tangent vectors have regular differences."  The second compatibility condition implies that the family of pairs $\{U_i,g_i/f_i\}$ encode ``rational structure modulo regular structure" in an additive sense, since the differences (rather than the ratios) of $g_i/f_i$ and $g_j/f_j$ are regular. 

The family of pairs $\{U_i,g_i/f_i\}$ may now be interpreted in terms of information associated with codimension-one points of $X$.  This information is the same as the information encoded in the family $\big\{U_i,1+(g_i/f_i)\ee\big\}$, but is additive rather than multiplicative. For each $x\in \tn{Zar}_U^{1}$, consider the quotient map $\pi_x^+$ from $R_{X}^+$ to the quotient group $R_{X}^+/O_{x}^+$.   This map associates information from the family of pairs $\{U_i,g_i/f_i\}$ with $x$.   The condition that the differences $g_i/f_i-g_j/f_j$ are regular functions ensures that this information is well-defined, and the only information lost is information about regular structure. For each codimension-one point $x$ of $U$, the information from the family $\{U_i,f_i+g_i\ee\}$ associated with $x$ may be alternatively expressed as the pair of values $\Big(\big(\nu_x(f_i)\big)x,\pi_x^+(g_i/f_i)\Big)$, belonging to the group $Z_x^1\oplus \big(R_{X}^+/O_{x}^+\big)$. This group is isomorphic to $Z_{x_\ee}^1$, but the information in the second summand is expressed additively. 

For each codimension-one point $x$ of the smooth projective curve $X$, I will relabel the group $R_{X}^+/O_{x}^+$ as $P_x^1$.  Here the notation ``$P$" stands for ``principal part," as explained below.  For the pointwise zero-cycle $nx$ in the stalk $Z_x^1$, the underlying set $P_x^{1,\tn{\footnotesize{set}}}$ of $P_x^1$ is the {\bf tangent set} at $nx$ of $Z_x^1$, with one copy of $P_x^{1,\tn{\footnotesize{set}}}$ assigned to each integer multiple of $x$.   At the identity $0$ of $Z_x^1$, the additive operations group of $Z_x^1$ and $P_x^1$ are compatible, so $P_x^1$ is the {\bf tangent group} of $Z_x^1$.  The {\bf tangent map $T_{x,nx}$ at $nx$} in $Z_x^1$ carries the first-order arc of zero-cycles $\big(nx,[1+g\ee]\big)$ to its {\bf tangent element} $\pi_x^+(g)$, where as above the notation $[1+g\ee]$ means the class in the quotient group of a first-order arc of nonzero rational functions $1+g\ee$.  The tangent map transforms multiplication to addition.  In particular, the relative group $Z_{x_\ee,\ee}^1=R_{X_\ee,\ee}^*/O_{x_\ee,\ee}^*$ of first-order arcs of zero-cycles at the identity in $Z_x^1$ is isomorphic to the tangent group $P_x^1=R_{X}^+/O_{x}^+$ via the tangent map $T_{x,0}$ at the identity $0$ in $Z_x^1$.  The tangent maps $T_{x,nx}$ at the multiples $nx$ of $x$ fit together to give a single {\bf tangent map} $T_x:Z_{x_\ee}^1\rightarrow P_x^1$, which is the composition of the tangent map $T_{x,0}$ at the identity with the second projection $\tn{proj}_2:Z_{x_\ee}^1\rightarrow Z_{x_\ee,\ee}^1$.  

Allowing $x$ to range over all codimension-one points of an open subset $U$ of $X$, I will relabel the group $\bigoplus_{x\in\tn{\footnotesize{Zar}}_U^{1}}\big(R_{X}^+/O_{x}^+\big)=\bigoplus_{x\in\tn{\footnotesize{Zar}}_U^{1}}P_x^1$ as $P_U^1$.  For any zero-cycle $z=\sum_in_ix_i$ in $Z_U^1$, the underlying set $P_U^{1,\tn{\footnotesize{set}}}$ of $P_U^1$ is the {\bf tangent set} at $z$ of $Z_U^1$, one copy of $P_U^{1,\tn{\footnotesize{set}}}$ for each $z\in Z_U^1$.  The operations in $Z_U^1$, and $P_U^1$ are compatible at the identity of $Z_U^1$, so $P_U^1$ is the {\bf tangent group} of $Z_U^1$.  The {\bf tangent map} $T_{U,z}$ at $z$ in $Z_U^1$ carries the first-order arc of zero-cycles $\sum_{x\in\tn{\footnotesize{Zar}}_U^{1}}(n_ix_i,[1+g_i\ee])$ to its {\bf tangent element} $\sum_{x\in\tn{\footnotesize{Zar}}_U^{1}}\pi_x^+(g_i)$, transforming multiplication to addition.  The relative group $Z_{U_\ee,\ee}^1$ is isomorphic to the tangent group $P_U^1$ via the tangent map $T_{U,0}$ at the identity in $Z_U^1$.   The case $U=X$ gives a tangent set $P_X^{1,\tn{\footnotesize{set}}}$ and tangent maps $T_{X,z}$ at each zero-cycle $z$ on $X$, and a tangent group  $P_X^{1}$ at the identity in $Z_X^1$.  The relative group $Z_{X_\ee,\ee}^1$ is isomorphic to the tangent group $P_X^1$ via the tangent map $T_{X,0}$ at the identity in $Z_X^1$.  The tangent maps $T_{U,z}$ at the various zero-cycles $z$ in $U$ fit together to give a single {\bf tangent map} $T_U:Z_{U_\ee}^1\rightarrow P_U^1$, which is the composition of the tangent map $T_{U,0}$ at the identity with the second projection $\tn{proj}_2:Z_{U_\ee}^1\rightarrow Z_{U_\ee,\ee}^1$. 

The inclusion maps $i:Z_x^1\rightarrow Z_{x_\ee}^1$ and $i:Z_U^1\rightarrow Z_{U_\ee}^1$ fit together with the projection maps and tangent maps at the identity to give four-term additive exact sequences:
\[0\rightarrow Z_x^1\overset{i}{\longrightarrow} Z_{x_\ee}^1\overset{\tn{\footnotesize{proj}}_2}{\longrightarrow}Z_{x_\ee,\ee}^1 \overset{T_{x,0}}{\longrightarrow} P_x^1\rightarrow 0,\]
and
\[0\rightarrow Z_U^1\overset{i}{\longrightarrow} Z_{U_\ee}^1\overset{\tn{\footnotesize{proj}}_2}{\longrightarrow}Z_{U_\ee,\ee}^1 \overset{T_{x,0}}{\longrightarrow} P_U^1\rightarrow 0.\]
The reason why these are not hybrid exact sequences will have to wait until I recast these sequences in terms of algebraic $K$-theory and negative cyclic homology in Chapters 5 and 7.

\subsection{Sheaves of First-Order Arcs and Tangents; Sheaf Tangent Map}\label{subsectionzerocyclesheaves}

The above four-term additive exact sequences involving $U$ fit together to give a four-term additive exact sequence of sheaves on $X$:
\[0\rightarrow\ms{Z}_X^1\overset{i}{\longrightarrow}\ms{Z}_{X_\ee}^1\overset{\tn{\footnotesize{proj}}_2}{\longrightarrow}\ms{Z}_{X_\ee,\ee}^1\overset{T_0}{\longrightarrow}\ms{P}_X^1\rightarrow 0,\]
where the three new sheaves $\ms{Z}_{X_\ee}^1$, $\ms{Z}_{X_\ee,\ee}^1$, and $\ms{P}_X^1$ are sheaves with groups of sections $Z_{U_\ee}^1$, $Z_{U_\ee,\ee}^1$, and $P_U^1$, respectively, over an open set $U$ of $X$.  The stalks of these sheaves at a point $x$ in $X$ are $Z_{x_\ee}^1$, $Z_{x_\ee,\ee}^1$, and $P_x^1$, respectively.  The three new sheaves are canonically isomorphic to direct sums of skyscraper sheaves over the codimension-one points of $X$:
\[\ms{Z}_{X_\ee}^1=\bigoplus_{x\in \tn{\footnotesize{Zar}}_X^{1}}\underline{Z_{x_\ee}^1},\hspace*{.5cm}\ms{Z}_{X_\ee,\ee}^1=\bigoplus_{x\in \tn{\footnotesize{Zar}}_X^{1}}\underline{Z_{x_\ee,\ee}^1},\hspace*{.5cm}\tn{and}\hspace*{.5cm}\ms{P}_X^1=\bigoplus_{x\in \tn{\footnotesize{Zar}}_X^{1}}\underline{P_x^1}.\]
The maps $i$, $\tn{proj}_2,$ and $T_0$ at the sheaf level are defined by patching together copies of the corresponding group maps for each open set $U$.   Figure \hyperref[figcompletedconiveaucurve]{\ref{figcompletedconiveaucurve}} shows the three new sheaves $\ms{Z}_{X_\ee}^1$,  $\ms{Z}_{X_\ee,\ee}^1$, and $\ms{P}_{X}^1$ in their proper locations in the coniveau machine.  This completes the construction of the coniveau machine for zero-cycles on a smooth projective curve.

%&&&&&&&&&&&&&&&&&&&&&&&&&&&&&&&&&&&&&&&&&&&&&&&&&&&&&&&&&&&&&&&&&&&&&&&&&&&&&&&&&&
%&&&&&&&&&&&&&&&&&&&&&&&&&&&&&&&&&&&&&&&&&&&&&&&&&&&&&&&&&&&&&&&&&&&&&&&&&&&&&&&&&&
%&&&&                 &&&&    &&&               &&&                    &&&    &&&&    &&&              &&&&                   &&&&&&&&&&&&&&&&&&
%&&&&   &&&&   &&&&    &&&   &&&&&&&&&&&   &&&&&&    &&&&    &&&    &&&     &&&    &&&&&&&&&&&&&&&&&&&&&&&&
%&&&&   &&&&   &&&&    &&&   &&&&&&&&&&&   &&&&&&    &&&&    &&&    &&&     &&&    &&&&&&&&&&&&&&&&&&&&&&&&
%&&&&                 &&&&    &&&   &&&&&&&&&&&   &&&&&&    &&&&    &&&            &&&&&             &&&&&&&&&&&&&&&&&&&&
%&&&&    &&&&&&&&&    &&&   &&&&&&&&&&&   &&&&&&    &&&&    &&&    &&&   &&&&    &&&&&&&&&&&&&&&&&&&&&&&
%&&&&    &&&&&&&&&    &&&   &&&&&&&&&&&   &&&&&&    &&&&    &&&    &&&&   &&&    &&&&&&&&&&&&&&&&&&&&&&&
%&&&&    &&&&&&&&&    &&&                &&&&&&   &&&&&&                   &&&    &&&&&   &&                  &&&&&&&&&&&&&&&&&&
%&&&&&&&&&&&&&&&&&&&&&&&&&&&&&&&&&&&&&&&&&&&&&&&&&&&&&&&&&&&&&&&&&&&&&&&&&&&&&&&&&&
%&&&&&&&&&&&&&&&&&&&&&&&&&&&&&&&&&&&&&&&&&&&&&&&&&&&&&&&&&&&&&&&&&&&&&&&&&&&&&&&&&&
\begin{figure}[H]
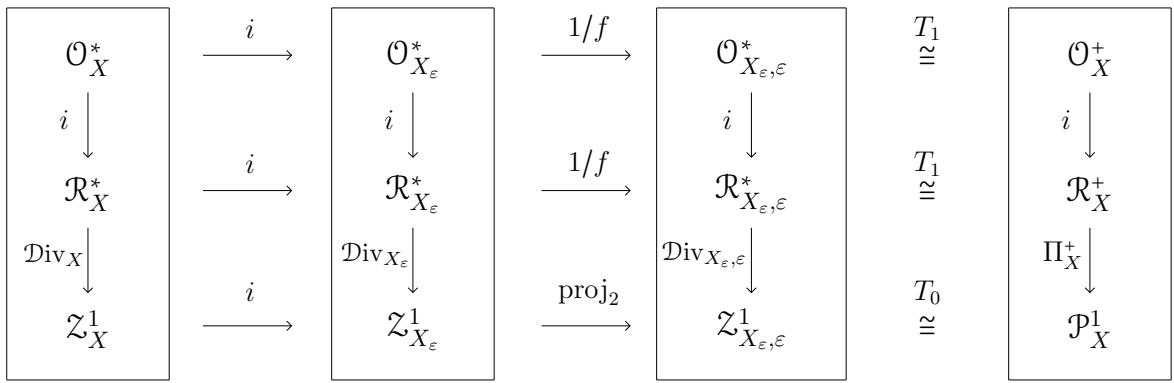

\begin{pgfpicture}{0cm}{0cm}{17cm}{5.2cm}
\begin{pgfmagnify}{.9}{.9}
\begin{pgftranslate}{\pgfpoint{.2cm}{-1.3cm}}
\begin{pgftranslate}{\pgfpoint{-1cm}{0cm}}
\pgfxyline(1,1.5)(1,7)
\pgfxyline(1,7)(3.4,7)
\pgfxyline(3.4,7)(3.4,1.5)
\pgfxyline(3.4,1.5)(1,1.5)
\pgfputat{\pgfxy(2.2,6.25)}{\pgfbox[center,center]{\large{$\ms{O}_X^*$}}}
\pgfputat{\pgfxy(2.2,4.25)}{\pgfbox[center,center]{\large{$\ms{R}^*_X$}}}
\pgfputat{\pgfxy(2.2,2.25)}{\pgfbox[center,center]{\large{$\ms{Z}_X^1$}}}
\pgfputat{\pgfxy(1.85,5.35)}{\pgfbox[center,center]{$i$}}
\pgfputat{\pgfxy(1.7,3.35)}{\pgfbox[center,center]{\small{$\ms{D}\tn{iv}_X$}}}
\pgfsetendarrow{\pgfarrowpointed{3pt}}
\pgfxyline(2.2,5.75)(2.2,4.8)
\pgfxyline(2.2,3.75)(2.2,2.8)
\end{pgftranslate}
\begin{pgftranslate}{\pgfpoint{3.8cm}{0cm}}
\pgfxyline(1,1.5)(1,7)
\pgfxyline(1,7)(3.4,7)
\pgfxyline(3.4,7)(3.4,1.5)
\pgfxyline(3.4,1.5)(1,1.5)
\pgfputat{\pgfxy(2.2,6.25)}{\pgfbox[center,center]{\large{$\ms{O}^*_{X_\ee}$}}}
\pgfputat{\pgfxy(2.2,4.25)}{\pgfbox[center,center]{\large{$\ms{R}^*_{X_\ee}$}}}
\pgfputat{\pgfxy(1.85,5.35)}{\pgfbox[center,center]{$i$}}
\pgfsetendarrow{\pgfarrowpointed{3pt}}
\pgfxyline(-.9,6.3)(.4,6.3)
\pgfxyline(-.9,4.3)(.4,4.3)
\pgfxyline(2.2,5.75)(2.2,4.8)
\pgfputat{\pgfxy(-.2,6.7)}{\pgfbox[center,center]{$i$}}
\pgfputat{\pgfxy(-.2,4.7)}{\pgfbox[center,center]{$i$}}
\pgfputat{\pgfxy(2.2,2.25)}{\pgfbox[center,center]{\large{$\ms{Z}^1_{X_\ee}$}}}
\pgfputat{\pgfxy(1.65,3.35)}{\pgfbox[center,center]{\small{$\ms{D}\tn{iv}_{X_\ee}$}}}
\pgfputat{\pgfxy(-.2,2.8)}{\pgfbox[center,center]{$i$}}
\pgfsetendarrow{\pgfarrowpointed{3pt}}
\pgfxyline(-.9,2.3)(.4,2.3)
\pgfxyline(2.2,3.75)(2.2,2.8)
\end{pgftranslate}
\begin{pgftranslate}{\pgfpoint{8.8cm}{0cm}}
\pgfxyline(.8,1.5)(.8,7)
\pgfxyline(.8,7)(3.6,7)
\pgfxyline(3.6,7)(3.6,1.5)
\pgfxyline(3.6,1.5)(.8,1.5)
\pgfputat{\pgfxy(2.2,6.25)}{\pgfbox[center,center]{\large{$\ms{O}^*_{X_\ee,\ee}$}}}
\pgfputat{\pgfxy(2.2,4.25)}{\pgfbox[center,center]{\large{$\ms{R}^*_{X_\ee,\ee}$}}}
\pgfputat{\pgfxy(1.85,5.35)}{\pgfbox[center,center]{$i$}}
\pgfsetendarrow{\pgfarrowtriangle{6pt}}
\pgfsetendarrow{\pgfarrowpointed{3pt}}
\pgfxyline(-.9,6.3)(.4,6.3)
\pgfxyline(-.9,4.3)(.4,4.3)
\pgfxyline(2.2,5.75)(2.2,4.8)
\pgfputat{\pgfxy(-.2,6.7)}{\pgfbox[center,center]{$1/f$}}
\pgfputat{\pgfxy(-.2,4.7)}{\pgfbox[center,center]{$1/f$}}
\pgfputat{\pgfxy(2.2,2.25)}{\pgfbox[center,center]{\large{$\ms{Z}^1_{X_\ee,\ee}$}}}
\pgfputat{\pgfxy(1.5,3.35)}{\pgfbox[center,center]{\small{$\ms{D}\tn{iv}_{X_\ee,\ee}$}}}
\pgfputat{\pgfxy(-.2,2.8)}{\pgfbox[center,center]{$\tn{proj}_2$}}
\pgfsetendarrow{\pgfarrowpointed{3pt}}
\pgfxyline(-.9,2.3)(.4,2.3)
\pgfxyline(2.2,3.75)(2.2,2.8)
\end{pgftranslate}
\begin{pgftranslate}{\pgfpoint{13.8cm}{0cm}}
\pgfxyline(1,1.5)(1,7)
\pgfxyline(1,7)(3.4,7)
\pgfxyline(3.4,7)(3.4,1.5)
\pgfxyline(3.4,1.5)(1,1.5)
\pgfputat{\pgfxy(2.2,6.25)}{\pgfbox[center,center]{\large{$\ms{O}_X^+$}}}
\pgfputat{\pgfxy(2.2,4.25)}{\pgfbox[center,center]{\large{$\ms{R}_X^+$}}}
\pgfputat{\pgfxy(-.2,6.7)}{\pgfbox[center,center]{$T_1$}}
\pgfputat{\pgfxy(1.85,5.35)}{\pgfbox[center,center]{$i$}}
\pgfputat{\pgfxy(-.2,4.7)}{\pgfbox[center,center]{$T_1$}}
\pgfputat{\pgfxy(-.2,6.3)}{\pgfbox[center,center]{\large{$\cong$}}}
\pgfputat{\pgfxy(-.2,4.3)}{\pgfbox[center,center]{\large{$\cong$}}}
\pgfsetendarrow{\pgfarrowpointed{3pt}}
\pgfxyline(2.2,5.75)(2.2,4.8)
\pgfputat{\pgfxy(2.2,2.25)}{\pgfbox[center,center]{\large{$\ms{P}_X^1$}}}
\pgfputat{\pgfxy(1.8,3.35)}{\pgfbox[center,center]{$\Pi_X^+$}}
\pgfputat{\pgfxy(-.2,2.8)}{\pgfbox[center,center]{$T_0$}}
\pgfsetendarrow{\pgfarrowpointed{3pt}}
\pgfxyline(2.2,3.75)(2.2,2.8)
\pgfputat{\pgfxy(-.2,2.3)}{\pgfbox[center,center]{\large{$\cong$}}}
\end{pgftranslate}
%\begin{colormixin}{30!white}
%\color{black}
%\pgfmoveto{\pgfxy(7.5,3.1)}
%\pgflineto{\pgfxy(13.4,3.1)}
%\pgflineto{\pgfxy(13.4,3.5)}
%\pgflineto{\pgfxy(14.6,2.7)}
%\pgflineto{\pgfxy(13.4,1.9)}
%\pgflineto{\pgfxy(13.4,2.3)}
%\pgflineto{\pgfxy(7.5,2.3)}
%\pgflineto{\pgfxy(7.5,3.1)}
%\pgffill
%\end{colormixin}
%\pgfputat{\pgfxy(11,2.7)}{\pgfbox[center,center]{\huge{?}}}
\end{pgftranslate}
\end{pgfmagnify}
\end{pgfpicture}
\caption{Completed coniveau machine for zero-cycles on a smooth complex projective curve.}
\label{figcompletedconiveaucurve}
\end{figure}%&&&&&&&&&&&&&&&&&&&&&&&&&&&&&&&&&&&&&&&&&&&&&&&&&&&&&&&&&&&&&&&&&&&&&&&&&&&&&&&&&&

\section{First-Order Theory of the First Chow Group $\tn{Ch}_X^1$}\label{SectionInfChow}

In this section, I show how the coniveau machine for zero-cycles on a smooth complex projective curve $X$ may be used to study the first-order infinitesimal theory of the first Chow group $\tn{Ch}_X^1$ of $X$.   This case is atypical in the sense that the objects involved may be approached by well-understood traditional methods; in particular, algebraic Lie theory.  

%SSSSSSSSSSSSSSSSSSSSSSSSSSSSSSSSSSSSSSSSSSSSSSSSSSSSSSSSSSSSSSSSSSSSSSSSSSSSSSSSSS
%SSSSSSSSSSSSSSSSSSSSSSSSSSSSSSSSSSSSSSSSSSSSSSSSSSSSSSSSSSSSSSSSSSSSSSSSSSSSSSSSSS
%SSS                      SSS                       SSS                    SS                          SS           SSS                         SSS          SSSSS      SSSS
%SSS      SSSSSSSSS    SSSSSSSSSS     SSSSSSSSSSSS     SSSSSSS     SSSS    SSSSSS     SSS     S     SSSS     SSSSS
%SSS      SSSSSSSSS    SSSSSSSSSS     SSSSSSSSSSSS     SSSSSSS     SSSS    SSSSSS     SSS     SS     SSS     SSSSS
%SSS                      SSS                  SSSSS    SSSSSSSSSSSS     SSSSSSS     SSSS    SSSSSS     SSS     SSS     SS     SSSSS
%SSSSSSSSSS    SSS    SSSSSSSSSS    SSSSSSSSSSSS     SSSSSSS     SSSS    SSSSSS     SSS     SSSS     S     SSSSS
%SSSSSSSSSS    SSS    SSSSSSSSSS    SSSSSSSSSSSS     SSSSSSS     SSSS    SSSSSS     SSS     SSSSS          SSSSS
%SSS                       SSS                      SSS                       SSSSS     SSSSSS          SSS                         SSS     SSSSSS        SSSSS
%SSSSSSSSSSSSSSSSSSSSSSSSSSSSSSSSSSSSSSSSSSSSSSSSSSSSSSSSSSSSSSSSSSSSSSSSSSSSSSSSSS
%SSSSSSSSSSSSSSSSSSSSSSSSSSSSSSSSSSSSSSSSSSSSSSSSSSSSSSSSSSSSSSSSSSSSSSSSSSSSSSSSSS

\subsection{Tangent Group $\tn{Ch}_X^1$ according to Algebraic Lie Theory}\label{subsectionTCH1Lie}

The relationships among the Jacobian $J_X$, the Picard group $\tn{Pic}_X$, the Picard variety $\tn{Pic}_X^0$, and the first Chow group $\tn{Ch}_X^1$ of a smooth complex projective curve $X$, may be used to identify the tangent group at the identity $T\tn{Ch}_X^1$ of $\tn{Ch}_X^1$ from the perspective of algebraic Lie theory.  The same reasoning applies to the codimension-one case in general; for example, to curves on a surface.  However, it does not extend to higher codimension.  The crucial point in the codimension-one case is that $\tn{Ch}_{X}^1$ coincides with $\tn{Pic}_X$ which is an algebraic group.  Tangent groups at the identity of algebraic groups are well-defined and well-understood, being analogous to Lie algebras of Lie groups.   In higher codimensions, Chow groups are generally not algebraic groups, and even {\it defining} their tangent groups at the identity is a nontrivial issue. 

Using the fact that $\tn{Pic}_X$ is an algebraic group, the computation of the tangent group at the identity $T\tn{Ch}_X^1$ of $\tn{Ch}_X^1$ may be approached in the following way: since $J_X=\tn{Pic}_X^0$ is the connected component of the identity in $\tn{Pic}_X=\tn{Ch}_{X}^1$, the tangent group $T\tn{Ch}_X^1$ coincides with the tangent group at the identity of the torus $J_X$.  The latter tangent group coincides with $H_{X}^{0,1}$ by equation \hyperref[equcurvejacobian]{\ref{equcurvejacobian}}.\footnotemark\footnotetext{In particular, one may ignore the lattice $H_{X,\ZZ}^1$ in the quotient.  The same reasoning applies, for example, in concluding that the tangent group at the identity of the two-dimensional real torus $T=\RR^2/\ZZ^2$ is $\RR^2$.}  Applying the Dobeault isomorphism theorem,\footnotemark\footnotetext{This theorem relates the sheaf cohomology groups $H^q(X,\varOmega_{X,\tn{an}}^p)$ of the sheaves $\varOmega_{X,\tn{an}}^p$ of holomorphic $p$-forms on a complex manifold $X$ to the Dolbeault cohomology groups $H_{\overline{\partial},X}^{p,q}$, which coincide with the Hodge factors $H_X^{p,q}$ when $X$ is a K\"{a}hler manifold.  See Griffiths and Harris \cite{GriffithsHarris} chapter $0$ for details.} together with Serre's GAGA results,\footnotemark\footnotetext{``GAGA" is short for the title of Serre's paper {\it G\'{e}om\'{e}trie alg\'{e}brique et g\'{e}om\'{e}trie analytique} \cite{SerreGAGA}.  This paper demonstrates remarkable similarities, and in important special cases equivalences, between the algebraic category and the {\it a priori} much less rigid analytic category.}
\begin{equation}\label{equTCh1Dolbeault}T\tn{Ch}_X^1=H_{X}^{0,1}\overset{\tn{Dolbeault}}{\cong} H_{\tn{\fsz{an}}}^1(X,\varOmega_{X,\tn{an}}^0)=H_{\tn{\fsz{an}}}^1(X,\ms{O}_{X,\tn{an}})\overset{\tn{GAGA}}{\cong} H_{\tn{\fsz{Zar}}}^1(X,\ms{O}_{X}),\end{equation}
where the subscript ``an" refers to the analytic category.  

%SSSSSSSSSSSSSSSSSSSSSSSSSSSSSSSSSSSSSSSSSSSSSSSSSSSSSSSSSSSSSSSSSSSSSSSSSSSSSSSSSS
%SSSSSSSSSSSSSSSSSSSSSSSSSSSSSSSSSSSSSSSSSSSSSSSSSSSSSSSSSSSSSSSSSSSSSSSSSSSSSSSSSS
%SSS                      SSS                       SSS                    SS                          SS           SSS                         SSS          SSSSS      SSSS
%SSS      SSSSSSSSS    SSSSSSSSSS     SSSSSSSSSSSS     SSSSSSS     SSSS    SSSSSS     SSS     S     SSSS     SSSSS
%SSS      SSSSSSSSS    SSSSSSSSSS     SSSSSSSSSSSS     SSSSSSS     SSSS    SSSSSS     SSS     SS     SSS     SSSSS
%SSS                      SSS                  SSSSS    SSSSSSSSSSSS     SSSSSSS     SSSS    SSSSSS     SSS     SSS     SS     SSSSS
%SSSSSSSSSS    SSS    SSSSSSSSSS    SSSSSSSSSSSS     SSSSSSS     SSSS    SSSSSS     SSS     SSSS     S     SSSSS
%SSSSSSSSSS    SSS    SSSSSSSSSS    SSSSSSSSSSSS     SSSSSSS     SSSS    SSSSSS     SSS     SSSSS          SSSSS
%SSS                       SSS                      SSS                       SSSSS     SSSSSS          SSS                         SSS     SSSSSS        SSSSS
%SSSSSSSSSSSSSSSSSSSSSSSSSSSSSSSSSSSSSSSSSSSSSSSSSSSSSSSSSSSSSSSSSSSSSSSSSSSSSSSSSS
%SSSSSSSSSSSSSSSSSSSSSSSSSSSSSSSSSSSSSSSSSSSSSSSSSSSSSSSSSSSSSSSSSSSSSSSSSSSSSSSSSS

\subsection{Tangent Group $\tn{Ch}_X^1$ according to the Coniveau Machine}\label{subsectionTCH1Coniveau}

In section \hyperref[subsectionlinratfirstchow]{\ref{subsectionlinratfirstchow}} above, I defined the first Chow group $\tn{Ch}_X^1$ of $X$ as the quotient group $Z_X^1/Z_{X,\footnotesize{\tn{rat}}}^1$ of the group $Z_X^1$ of global zero-cycles on $X$ by the subgroup $Z_{X,\footnotesize{\tn{rat}}}^1$ of cycles rationally equivalent to zero.   In equation \hyperref[equCH1div]{\ref{equCH1div}}, I re-expressed the subgroup $Z_{X,\footnotesize{\tn{rat}}}^1$ as the image of the global divisor map $\tn{div}_X$ sending the group of rational functions $R_X^*$ on $X$ to $Z_X^1$.  This map arises when one applies the global sections functor $\Gamma$ to the sheaf divisor sequence in equation \hyperref[equsheafdivisorsequencecurves]{\ref{equsheafdivisorsequencecurves}}:
\[1\rightarrow\ms{O}_X^*\overset{i}{\longrightarrow}\ms{R}_X^*\overset{\ms{D}\tn{\footnotesize{iv}}_X}{\longrightarrow}\ms{Z}_X^1\rightarrow0,\]
to obtain the left-exact global divisor sequence of equation \hyperref[equdivisorglobalsectionscurves]{\ref{equdivisorglobalsectionscurves}}:
\[0\rightarrow O_X^*\overset{i}{\longrightarrow}R_X^*\overset{\tn{\footnotesize{div}}_X}{\longrightarrow}Z_X^1.\]
The cohomology groups of this sequence are the Zariski sheaf cohomology groups of the sheaf $\ms{O}_X^*$ of invertible regular functions on $X$ by the definition of sheaf cohomology.  In particular, $\tn{Ch}_X^1$ is the first Zariski sheaf cohomology group of $\ms{O}_X^*$:
\begin{equation}\label{equCH1curvesheafcohom}\tn{Ch}_X^1=\frac{Z_X^1}{\tn{Im \big(div}_X\big)}=\frac{\Gamma(\ms{Z}_X^1)}{\tn{Im} \big(\Gamma(\ms{D}\tn{iv}_X)\big)}=H_{\tn{\fsz{Zar}}}^1(X,\ms{O}_X^*).\end{equation}
This computation proves the statement $\tn{Ch}_X^1=\tn{Pic}_X$ stated in section \hyperref[subsectionJacPic]{\ref{subsectionJacPic}} above, using the sheaf-cohomology definition of $\tn{Pic}_X$.  Applying similar reasoning to the tangent sequence  
\[0\rightarrow\ms{O}_X^+\overset{i}{\longrightarrow}\ms{R}_X^+\overset{\ms{D}\tn{\footnotesize{iv}}_X}{\longrightarrow}\ms{P}_X^1\rightarrow0,\]
appearing in the fourth column of the coniveau machine in figure \hyperref[figcompletedconiveaucurve]{\ref{figcompletedconiveaucurve}} above, yields the computation
\begin{equation}\label{equTCH1curvesheafcohom}T\tn{Ch}_X^1=\frac{\Gamma(\ms{P}_X^1)}{\tn{Im} \big(\Gamma(\Pi_X^+)\big)}=H_{\tn{\fsz{Zar}}}^1(X,\ms{O}_X).\end{equation}
This is the same result obtained from the algebraic Lie perspective in section \hyperref[subsectionTCH1Lie]{\ref{subsectionTCH1Lie}}.   Hence, the coniveau machine gives the ``right answer" in the case of smooth complex projective curves. 

%SSSSSSSSSSSSSSSSSSSSSSSSSSSSSSSSSSSSSSSSSSSSSSSSSSSSSSSSSSSSSSSSSSSSSSSSSSSSSSSSSS
%SSSSSSSSSSSSSSSSSSSSSSSSSSSSSSSSSSSSSSSSSSSSSSSSSSSSSSSSSSSSSSSSSSSSSSSSSSSSSSSSSS
%SSS                      SSS                       SSS                    SS                          SS           SSS                         SSS          SSSSS      SSSS
%SSS      SSSSSSSSS    SSSSSSSSSS     SSSSSSSSSSSS     SSSSSSS     SSSS    SSSSSS     SSS     S     SSSS     SSSSS
%SSS      SSSSSSSSS    SSSSSSSSSS     SSSSSSSSSSSS     SSSSSSS     SSSS    SSSSSS     SSS     SS     SSS     SSSSS
%SSS                      SSS                  SSSSS    SSSSSSSSSSSS     SSSSSSS     SSSS    SSSSSS     SSS     SSS     SS     SSSSS
%SSSSSSSSSS    SSS    SSSSSSSSSS    SSSSSSSSSSSS     SSSSSSS     SSSS    SSSSSS     SSS     SSSS     S     SSSSS
%SSSSSSSSSS    SSS    SSSSSSSSSS    SSSSSSSSSSSS     SSSSSSS     SSSS    SSSSSS     SSS     SSSSS          SSSSS
%SSS                       SSS                      SSS                       SSSSS     SSSSSS          SSS                         SSS     SSSSSS        SSSSS
%SSSSSSSSSSSSSSSSSSSSSSSSSSSSSSSSSSSSSSSSSSSSSSSSSSSSSSSSSSSSSSSSSSSSSSSSSSSSSSSSSS
%SSSSSSSSSSSSSSSSSSSSSSSSSSSSSSSSSSSSSSSSSSSSSSSSSSSSSSSSSSSSSSSSSSSSSSSSSSSSSSSSSS

%\subsection{First-Order Arcs of Rational Equivalence Classes \\ of zero-cycles}\label{subsectionfirstorderarcsrationalequ}

\subsection{Looking Ahead: Abstract Form of the Coniveau Machine}\label{subsectionabstractconiveauCH1}

While the computations of $\tn{Ch}_X^1$ and $T\tn{Ch}_X^1$ in terms of the Jacobian $J_X$ and the Picard group $\tn{Pic}_X$ do not generalize well to the general infinitesimal theory of Chow groups, the coniveau machine does generalize, provided it is expressed in a suitable way.  The rather complicated and unfamiliar-looking diagram in figure \hyperref[figabstractconiveaucurve]{\ref{figabstractconiveaucurve}} below gives the abstract form of the coniveau machine that generalizes to describe the infinitesimal structure of cycle groups and Chow groups of arbitrary codimension on arbitrary smooth complex varieties.  

%&&&&&&&&&&&&&&&&&&&&&&&&&&&&&&&&&&&&&&&&&&&&&&&&&&&&&&&&&&&&&&&&&&&&&&&&&&&&&&&&&&
%&&&&&&&&&&&&&&&&&&&&&&&&&&&&&&&&&&&&&&&&&&&&&&&&&&&&&&&&&&&&&&&&&&&&&&&&&&&&&&&&&&
%&&&&                 &&&&    &&&               &&&                    &&&    &&&&    &&&              &&&&                   &&&&&&&&&&&&&&&&&&
%&&&&   &&&&   &&&&    &&&   &&&&&&&&&&&   &&&&&&    &&&&    &&&    &&&     &&&    &&&&&&&&&&&&&&&&&&&&&&&&
%&&&&   &&&&   &&&&    &&&   &&&&&&&&&&&   &&&&&&    &&&&    &&&    &&&     &&&    &&&&&&&&&&&&&&&&&&&&&&&&
%&&&&                 &&&&    &&&   &&&&&&&&&&&   &&&&&&    &&&&    &&&            &&&&&             &&&&&&&&&&&&&&&&&&&&
%&&&&    &&&&&&&&&    &&&   &&&&&&&&&&&   &&&&&&    &&&&    &&&    &&&   &&&&    &&&&&&&&&&&&&&&&&&&&&&&
%&&&&    &&&&&&&&&    &&&   &&&&&&&&&&&   &&&&&&    &&&&    &&&    &&&&   &&&    &&&&&&&&&&&&&&&&&&&&&&&
%&&&&    &&&&&&&&&    &&&                &&&&&&   &&&&&&                   &&&    &&&&&   &&                  &&&&&&&&&&&&&&&&&&
%&&&&&&&&&&&&&&&&&&&&&&&&&&&&&&&&&&&&&&&&&&&&&&&&&&&&&&&&&&&&&&&&&&&&&&&&&&&&&&&&&&
%&&&&&&&&&&&&&&&&&&&&&&&&&&&&&&&&&&&&&&&&&&&&&&&&&&&&&&&&&&&&&&&&&&&&&&&&&&&&&&&&&&
\begin{figure}[H]
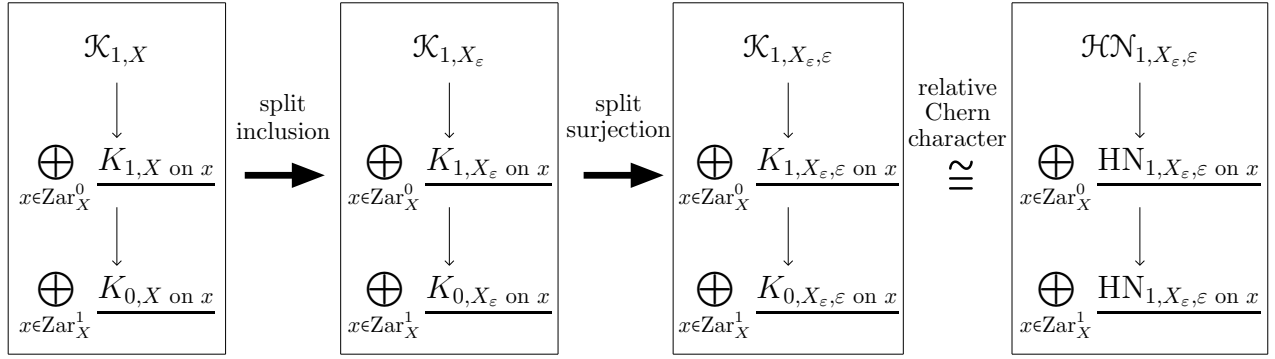

\begin{pgfpicture}{0cm}{0cm}{17cm}{5.3cm}
\begin{pgfmagnify}{.85}{.85}
\begin{pgftranslate}{\pgfpoint{.5cm}{-1.3cm}}
\begin{pgftranslate}{\pgfpoint{-1cm}{0cm}}
\pgfxyline(.5,1.5)(.5,7)
\pgfxyline(.5,7)(3.9,7)
\pgfxyline(3.9,7)(3.9,1.5)
\pgfxyline(3.9,1.5)(.5,1.5)
\pgfputat{\pgfxy(2.2,6.25)}{\pgfbox[center,center]{\large{$\ms{K}_{1,X}$}}}
\pgfputat{\pgfxy(2.2,4.25)}{\pgfbox[center,center]{\large{$\displaystyle\bigoplus_{x\in \tn{Zar}_X^0}\underline{K_{1,X\tn{ on } x}}$}}}
\pgfputat{\pgfxy(2.2,2.25)}{\pgfbox[center,center]{\large{$\displaystyle\bigoplus_{x\in \tn{Zar}_X^1}\underline{K_{0,X\tn{ on } x}}$}}}
\pgfsetendarrow{\pgfarrowpointed{3pt}}
\pgfxyline(2.2,5.75)(2.2,4.9)
\pgfxyline(2.2,3.8)(2.2,2.9)
\end{pgftranslate}
\begin{pgftranslate}{\pgfpoint{4.2cm}{0cm}}
\begin{pgfscope}
\pgfsetendarrow{\pgfarrowtriangle{6pt}}
\pgfsetlinewidth{3pt}
\pgfxyline(-1,4.3)(0,4.3)
\pgfputat{\pgfxy(-.4,5.4)}{\pgfbox[center,center]{\small{split}}}
\pgfputat{\pgfxy(-.4,5)}{\pgfbox[center,center]{\small{inclusion}}}
\end{pgfscope}
\pgfxyline(.5,1.5)(.5,7)
\pgfxyline(.5,7)(3.9,7)
\pgfxyline(3.9,7)(3.9,1.5)
\pgfxyline(3.9,1.5)(.5,1.5)
\pgfputat{\pgfxy(2.2,6.25)}{\pgfbox[center,center]{\large{$\ms{K}_{1,X_\ee}$}}}
\pgfputat{\pgfxy(2.2,4.25)}{\pgfbox[center,center]{\large{$\displaystyle\bigoplus_{x\in \tn{Zar}_X^0}\underline{K_{1,X_\ee \tn{ on } x}}$}}}
\pgfputat{\pgfxy(2.2,2.25)}{\pgfbox[center,center]{\large{$\displaystyle\bigoplus_{x\in \tn{Zar}_X^1}\underline{K_{0,X_\ee \tn{ on } x}}$}}}
\pgfsetendarrow{\pgfarrowpointed{3pt}}
\pgfxyline(2.2,5.75)(2.2,4.9)
\pgfxyline(2.2,3.8)(2.2,2.9)
\end{pgftranslate}
\begin{pgftranslate}{\pgfpoint{9.5cm}{0cm}}
\pgfxyline(.4,1.5)(.4,7)
\pgfxyline(.4,7)(4,7)
\pgfxyline(4,7)(4,1.5)
\pgfxyline(4,1.5)(.4,1.5)
\pgfputat{\pgfxy(2.2,6.25)}{\pgfbox[center,center]{\large{$\ms{K}_{1,X_\ee,\ee}$}}}
\pgfputat{\pgfxy(2.2,4.25)}{\pgfbox[center,center]{\large{$\displaystyle\bigoplus_{x\in \tn{Zar}_X^0}\underline{K_{1,X_\ee,\ee \tn{ on } x}}$}}}
\pgfputat{\pgfxy(2.2,2.25)}{\pgfbox[center,center]{\large{$\displaystyle\bigoplus_{x\in \tn{Zar}_X^1}\underline{K_{0,X_\ee,\ee \tn{ on } x}}$}}}
\pgfsetendarrow{\pgfarrowtriangle{6pt}}
\pgfsetendarrow{\pgfarrowpointed{3pt}}
\begin{pgfscope}
\pgfsetendarrow{\pgfarrowtriangle{6pt}}
\pgfsetlinewidth{3pt}
\pgfxyline(-1,4.3)(0,4.3)
\pgfputat{\pgfxy(-.45,5.4)}{\pgfbox[center,center]{\small{split}}}
\pgfputat{\pgfxy(-.45,5)}{\pgfbox[center,center]{\small{surjection}}}
\end{pgfscope}
\pgfxyline(2.2,5.75)(2.2,4.9)
\pgfxyline(2.2,3.8)(2.2,2.9)
\end{pgftranslate}
\begin{pgftranslate}{\pgfpoint{15cm}{0cm}}
\pgfxyline(.2,1.5)(.2,7)
\pgfxyline(.2,7)(4.2,7)
\pgfxyline(4.2,7)(4.2,1.5)
\pgfxyline(4.2,1.5)(.2,1.5)
\pgfputat{\pgfxy(2.2,6.25)}{\pgfbox[center,center]{\large{$\ms{HN}_{1,X_\ee,\ee}$}}}
\pgfputat{\pgfxy(2.2,4.25)}{\pgfbox[center,center]{\large{$\displaystyle\bigoplus_{x\in \tn{Zar}_X^0}\underline{\tn{HN}_{1,X_\ee,\ee \tn{ on } x}}$}}}
\pgfputat{\pgfxy(2.2,2.25)}{\pgfbox[center,center]{\large{$\displaystyle\bigoplus_{x\in \tn{Zar}_X^1}\underline{\tn{HN}_{1,X_\ee,\ee \tn{ on } x}}$}}}
\pgfputat{\pgfxy(-.65,5.7)}{\pgfbox[center,center]{\small{relative}}}
\pgfputat{\pgfxy(-.65,5.3)}{\pgfbox[center,center]{\small{Chern}}}
\pgfputat{\pgfxy(-.65,4.9)}{\pgfbox[center,center]{\small{character}}}
\pgfputat{\pgfxy(-.6,4.3)}{\pgfbox[center,center]{\huge{$\cong$}}}
\pgfsetendarrow{\pgfarrowpointed{3pt}}
\pgfxyline(2.2,5.75)(2.2,4.9)
\pgfsetendarrow{\pgfarrowpointed{3pt}}
\pgfxyline(2.2,3.8)(2.2,2.9)
\end{pgftranslate}
\end{pgftranslate}
\end{pgfmagnify}
\end{pgfpicture}
\caption{Abstract form of the coniveau machine for zero-cycles on a smooth complex projective curve.}
\label{figabstractconiveaucurve}
\end{figure}%&&&&&&&&&&&&&&&&&&&&&&&&&&&&&&&&&&&&&&&&&&&&&&&&&&&&&&&&&&&&&&&&&&&&&&&&&&&&&&&&&&
%&&&&&&&&&&&&&&&&&&&&&&&&&&&&&&&&&&&&&&&&&&&&&&&&&&&&&&&&&&&&&&&&&&&&&&&&&&&&&&&&&&
%&&&                    &&&       &&&&&    &&&                &&&&&&&&&&&&&&&&&&&&&&&&&&&&&&&&&&&&&&&&&&&&&&&&&&&
%&&&    &&&&&&&&&    &    &&&&   &&&    &&&&    &&&&&&&&&&&&&&&&&&&&&&&&&&&&&&&&&&&&&&&&&&&&&&&&&& 
%&&&    &&&&&&&&&    &&    &&&   &&&    &&&&&    &&&&&&&&&&&&&&&&&&&&&&&&&&&&&&&&&&&&&&&&&&&&&&&&& 
%&&&                 &&&&    &&&    &&   &&&    &&&&&    &&&&&&&&&&&&&&&&&&&&&&&&&&&&&&&&&&&&&&&&&&&&&&&&& 
%&&&    &&&&&&&&&    &&&&    &   &&&    &&&&&    &&&&&&&&&&&&&&&&&&&&&&&&&&&&&&&&&&&&&&&&&&&&&&&&& 
%&&&    &&&&&&&&&    &&&&&       &&&    &&&&     &&&&&&&&&&&&&&&&&&&&&&&&&&&&&&&&&&&&&&&&&&&&&&&&& 
%&&&                    &&&    &&&&&&    &&&                  &&&&&&&&&&&&&&&&&&&&&&&&&&&&&&&&&&&&&&&&&&&&&&&&&&
%&&&&&&&&&&&&&&&&&&&&&&&&&&&&&&&&&&&&&&&&&&&&&&&&&&&&&&&&&&&&&&&&&&&&&&&&&&&&&&&&&&
%&&&&&&&&&&&&&&&&&&&&&&&&&&&&&&&&&&&&&&&&&&&&&&&&&&&&&&&&&&&&&&&&&&&&&&&&&&&&&&&&&&

For the simple case of curves, this expression of the coniveau machine represents vast notational and theoretical overkill, but every facet of the structure becomes functional in more general situations.  The objects appearing in this diagram are identified as follows:

\begin{enumerate}
\item $\ms{K}_{1,X}$ is the first Bass-Thomason $K$-theory sheaf on $X$.   Bass-Thomason $K$-theory is discussed in section \hyperref[subsubsectionbassthomason]{\ref{subsubsectionbassthomason}}.  In this simple case, the sheaf $\ms{K}_{1,X}$ is isomorphic to $\ms{O}_X^*$.  It is also isomorphic to the first Quillen $K$-theory sheaf on $X$, and the first Milnor $K$-theory sheaf on $X$.  However, the Bass-Thomason definition is the one that generalizes properly.  
\item The groups $K_{p,X \tn{ on } x}$ for $p=0,1$ are direct limits of Bass-Thomason groups with supports containing the point $x$.  In this simple case, the only point $x$ belonging to $\tn{Zar}_X^0$ is the generic point of $X$, and the group $K_{1,X \tn{ on } x}$ is isomorphic to the multiplicative group of nonzero rational functions $R_X^*$ on $X$.  It is also isomorphic to the first Quillen $K$-group of the function field $R_X$ of $X$, and the first Milnor $K$-theory sheaf on $X$.   For a point $x\in \tn{Zar}_X^0$, the group $K_{0,X \tn{ on } x}$ is isomorphic to the additive group $\ZZ^+$ of integers, viewed as the stalk $Z_x^1$ at $x$ of the sheaf $\ms{Z}_X^1$ of zero-cycles on $X$.  
\item $\ms{K}_{1,X_\ee}$ is the first Bass-Thomason $K$-theory sheaf on the thickened curve $X_\ee$.  In this simple case, it is isomorphic to $\ms{O}_{X_\ee}^*$; its group of sections over an open set $U\subset X$ is the group of first-order arcs $O_{U_\ee}=\big(O_U[\ee]/\ee^2\big)^*$ in $O_U^*$.   It is also isomorphic to the first Quillen $K$-theory sheaf on $X_\ee$, and the first Milnor $K$-theory sheaf on $X_\ee$. 
\item The groups $K_{p,X_\ee \tn{ on } x}$ for $p=0,1$ on the thickened curve $X_\ee$ are Bass-Thomason groups defined via direct limits in a similar manner as the corresponding groups on $X$.   The group $K_{1,X \tn{ on } x}$ at the generic point $x$ of $X$ is isomorphic to the group  $R_{X_\ee}^*=\big(R_X[\ee]/\ee^2\big)^*$ of first-order arcs on $R_X^*$.  The corresponding Quillen and Milnor $K$-groups are the same.  For a point $x\in \tn{Zar}_X^0$, the group $K_{0,X_\ee \tn{ on } x}$ is isomorphic to the group $Z_{x_\ee}^1=Z_x^1\oplus\big(R_{X_\ee,\ee}^*/O_{X_\ee,\ee}^*\big)$ of first-order arcs of zero-cycles at $x$.  Here the Bass-Thomason group differs from the Quillen and Milnor groups, both of which are isomorphic to $\ZZ^+$.  
\item $\ms{K}_{1,X_\ee,\ee}$ is the first {\it relative} Bass-Thomason $K$-theory sheaf on the thickened curve $X_\ee$ with respect to the ideal $(\ee)$.  In this simple case, it is isomorphic to the relative sheaf $\ms{O}_{X_\ee,\ee}^*$, whose group of sections over an open set $U\subset X$ is the group of first-order arcs $O_{U_\ee,\ee}=\{1+f\ee|f\in O_U\}$ at the identity in $O_U^*$.   It is also isomorphic to the corresponding relative Quillen and Milnor $K$-theory sheaves. 
\item The groups $K_{p,X_\ee,\ee \tn{ on } x}$ for $p=0,1$ on the thickened curve $X_\ee$ are punctual relative Bass-Thomason groups; i.e., groups ``at a point," defined via direct limits of relative groups with supports containing $X$.   The group $K_{1,X_\ee,\ee \tn{ on } x}$ at the generic point $x$ of $X$ is isomorphic to the group  $R_{X_\ee,\ee}^*=\{1+f\ee|f\in R_X\}$ of first-order arcs at the identity in $R_X^*$.   By contrast, Quillen $K$-theory and Milnor $K$-theory do not extend to suitably-behaved generalized cohomology theories with supports.  For a point $x\in \tn{Zar}_X^0$, the group $K_{0,X_\ee,\ee \tn{ on } x}$ is isomorphic to the relative group $Z_{x_\ee,\ee}^1=R_{X_\ee,\ee}^*/O_{X_\ee,\ee}^*$ of first-order arcs at the identity in $Z_x^1$.  
\item $\ms{HN}_{1,X_\ee,\ee}$ is the first relative negative cyclic homology sheaf on the thickened curve $X_\ee$ with respect to the ideal $(\ee)$.  In this simple case, it is isomorphic to the sheaf $\ms{O}_X^+$ of additive groups.  Referring back to section \hyperref[SubsectionTangentSheafRegular]{\ref{SubsectionTangentSheafRegular}}, $\ms{HN}_{1,X_\ee,\ee}$ is the tangent sheaf of the $K$-theory sheaf $\ms{K}_{1,X}$.  
\item The groups $\tn{HN}_{p,X_\ee,\ee \tn{ on } x}$ for $p=0,1$ on the thickened curve $X_\ee$ are relative negative cyclic homology groups defined via direct limits of relative groups with supports containing $X$.   The group $\tn{HN}_{1,X_\ee,\ee \tn{ on } x}$ at the generic point $x$ of $X$ is isomorphic to the additive group  $R_X^+$.   For a point $x\in \tn{Zar}_X^0$, the group $\tn{HN}_{0,X_\ee,\ee \tn{ on } x}$ is isomorphic to the group $P_x^1$ of principal parts of rational functions at $x$. 
\end{enumerate}

In terms of these identifications, the sheaf cohomology formula $\tn{Ch}_X^1=H_{\tn{\fsz{Zar}}}^1(X,\ms{O}_X^*)$
for the first Chow group, appearing in equation \hyperref[equCH1curvesheafcohom]{\ref{equCH1curvesheafcohom}} above, may be re-expressed as 
\begin{equation}\label{equfirstchowcurvebloch}\tn{Ch}_X^1=H_{\tn{\fsz{Zar}}}^1(X,\ms{K}_{1,X}),\end{equation}
which is the first nontrivial case of Bloch's isomorphism $\tn{Ch}_X^p=H_{\tn{\fsz{Zar}}}^1(X,\ms{K}_{p,X})$, first cited in equation \hyperref[blochstheoremintro]{\ref{blochstheoremintro}} at the beginning of Chapter \hyperref[ChapterIntro]{\ref{ChapterIntro}}.  Similarly, the formula $T\tn{Ch}_X^1=H_{\tn{\fsz{Zar}}}^1(X,\ms{O}_X)$ may be re-expressed as 
\begin{equation}\label{equfirstchowtancurvebloch}T\tn{Ch}_X^1=H_{\tn{\fsz{Zar}}}^1(X,T\ms{K}_{1,X}),\end{equation}
using the fact that $\ms{O}_X^+=\ms{HN}_{1,X_\ee,\ee}$ is the tangent sheaf $T\ms{K}_{1,X}$ of the $K$-theory sheaf $T\ms{K}_{1,X}$.   It is essentially this expression, albeit using Milnor $K$-theory, that Green and Griffiths chose to generalize to define ``formal tangent groups" to higher Chow groups of arbitrary smooth complex projective varieties.  I introduce a similar definition in section \hyperref[sectiondefitangroup]{\ref{sectiondefitangroup}}, using Bass-Thomason $K$-theory.

\chapter{Technical Background}\label{ChapterTechnical}

\section{Algebraic Geometry}\label{SectionBasicAG}

\subsection{Introduction}

In this section, I review the basic properties of smooth algebraic varieties, from which arise cycle groups and Chow groups.  Much of the terminology and theory familiar from the study of smooth complex projective curves still applies in this more general context. In particular, a smooth algebraic variety $X$ is a locally ringed space with structure sheaf denoted by $\ms{O}_X$; its underlying topological space has the Zariski topology, and is denoted by $\mbox{Zar}_X$.  Rational functions, local rings, and algebraic cycles are defined in the expected ways.   The formal definition of {\it algebraic variety} I am using here is {\it noetherian integral separated algebraic scheme of finite type over an algebraically closed field.}  For the convenience of the reader, I briefly review some elementary scheme theory, and discuss the special properties of a smooth algebraic varieties.  The material in this section is drawn principally from Hartshorne \cite{HartshorneAlgebraicGeometry77}, though with some differences in notation.  Afterwards, I discuss dimension and codimension of subsets of a smooth algebraic variety, and introduce the coniveau filtration.  This prepares the way for the discussion of algebraic cycles in section \hyperref[sectioncyclegroups]{\ref{sectioncyclegroups}}.

\subsection{Algebraic Schemes; Regular Functions}

An {\bf algebraic scheme}, or simply {\bf scheme}, is a locally ringed space $X=\big(\mbox{Zar}_X,\ms{O}_X\big)$ admitting an open covering by affine schemes.  An {\bf affine scheme} is a locally ringed space isomorphic to the {\bf prime spectrum} $\mbox{Spec}\hspace*{.05cm}A=\big(\mbox{Zar}_{\mbox{\footnotesize{Spec }}A},\ms{O}_{\mbox{\footnotesize{Spec }}A}\big)$ of a ring $A$.   The underlying topological space $\mbox{Zar}_{\mbox{\footnotesize{Spec }}A}$ of $\mbox{Spec } A$ is the set of prime ideals of $A$, equipped with the {\bf Zariski topology}, which is defined by taking a set to be closed if and only if it is the set of all prime ideals containing some ideal of $A$.   The {\bf structure sheaf} $\ms{O}_{\mbox{\footnotesize{Spec }}A}$ of $\mbox{Spec } A$ is the sheaf on $\mbox{Zar}_{\mbox{\footnotesize{Spec }A}}$ whose sections are given locally by quotients of elements of $A$, with values at each prime ideal $p$ of $A$ belonging to the local ring $A_p$.   The ring $A$ is called the {\bf affine coordinate ring} of its prime spectrum $\mbox{Spec }A$.  Returning to $X$, the topology on $X$ is called the {\bf Zariski topology}, and the structure sheaf $\ms{O}_X$ of $X$ is called the {\bf algebraic structure sheaf} of $X$, or the {\bf sheaf of regular functions} opn $X$.  The statement that $X$ admits an open covering by affine schemes means that every point $x$ of $\mbox{Zar}_X$ has an open neighborhood $U$, such that the locally ringed space given by restricting $\ms{O}_X$ to $U$ is an affine scheme. 

A {\bf regular function} on an open subset $U$ of a scheme $X$ is defined to be an element of the ring $O_U$ of sections of $\ms{O}_X$ over $U$.   Since regular functions are locally defined as quotients in the coordinate ring of an affine open subset of $X$, the sheaf $\ms{O}_X$ is the appropriate generalization of the sheaf of regular functions on a smooth projective curve. The multiplicative subgroup $O_U^*$ of invertible elements of $O_U$ is called the {\bf group of invertible regular functions} on $U$.  The {\bf sheaf of invertible regular functions} $\ms{O}_X^*$ on $X$ is the sheaf of multiplicative groups on $X$ whose group of sections over an open subset $U$ of $X$ is $O_U^*$.  The {\bf local ring} $O_x$ of $X$ at a point $x\in X$ is defined to be the stalk of the algebraic structure sheaf $\ms{O}_X$ at $x$.  It is the direct limit of the rings $O_U$ over all open subsets $U$ of $X$ containing $x$, ordered by inclusion.   This definition is equivalent to the elementary definition of the local ring at a point in a smooth projective curve, given in terms of equivalence classes of regular functions.  Since $O_x$ is a local ring, it has a unique maximal ideal, denoted by $\mfr{m}_x$.  The quotient $r_x:=O_x/\mfr{m}_x$ is a field called the {\bf residue field at $x$} of $X$.

\subsection{Noetherian Integral Schemes; Function Fields}

The condition that a smooth algebraic variety $X$ is a {\bf noetherian} scheme means that it is quasicompact and admits a covering by prime spectra of noetherian rings.  The condition that $X$ is an {\bf integral} scheme means that the rings $O_U$ of regular functions over its open subsets $U$ are integral domains.  An equivalent condition is that $X$ is both {\bf reduced}, meaning that the rings $O_U$ contain no nilpotent elements, and {\bf irreducible}, meaning that $\mbox{Zar}_X$ cannot be expressed as the union of two closed proper subsets.  Integral schemes possess direct analogues of the fields and sheaves of rational functions on smooth projective complex curves.  For non-integral schemes, these fields and sheaves  must be replaced by total quotient rings and sheaves of total quotient rings.

The {\bf rational function field} $R_X$ of an integral scheme $X$ is defined to be the local ring of $X$ at its generic point.  The elements of $R_X$ are equivalence classes of regular functions, defined on open subsets of $X$, where two regular functions belong to the same equivalence class if and only if they coincide on the intersection of their sets of definition.  Hence, $R_X$ is the appropriate generalization of the rational function field of a smooth projective curve.   Every nonempty open subset $U$ of $X$ also has a rational function field $R_U$, again defined as the local ring at its generic point, and these fields are all canonically isomorphic to $R_X$, since every nonempty open subset of $X$ contains its generic point.   It is usually harmless to view these isomorphisms as identifications.  The {\bf sheaf of rational functions} $\ms{R}_X$ on $X$ is the sheaf of fields on $X$ whose field of sections over $U$ is $R_U$.  This sheaf is canonically isomorphic to a skyscraper sheaf at the generic point of $\mbox{Zar}_X$ with underlying field $R_X$.  Its stalks $R_x$ at every point $x$ are canonically isomorphic to $R_X$.

\subsection{Separated Schemes; Smooth Algebraic Varieties}

{\bf Properties of Morphisms.}  The remaining conditions distinguishing smooth algebraic varieties depend on special properties of morphisms of schemes.   These morphisms are more general than morphisms between smooth algebraic varieties.  This level of generality is necessary in the present context, since it is sometimes important to consider morphisms between a smooth variety $X$ and singular schemes related to $X$.   A {\bf morphism} between two schemes $X$ and $X'$ consists of a continuous map $j:\mbox{Zar}_X\rightarrow\mbox{Zar}_{X'}$ together with a map of sheaves $\ms{O}_{X'}\rightarrow j_*\ms{O}_{X}$ on $X'$,  where $j_*\ms{O}_{X}$ is the direct image on $X'$ of the structure sheaf $\ms{O}_X$ of $X$.   Later, e.g., in section \hyperref[xsubsectiondistinguishedcatschemes]{\ref{subsectiondistinguishedcatschemes}}, it will be necessary to consider {\bf morphisms of pairs} $(X,Z)\rightarrow(X',Z')$, where $X$ and $X'$ are schemes and $Z$ and $Z'$ are closed subsets.   Such a morphism is defined to be a morphism $X\rightarrow X'$ such that the inverse image of $Z'$ is contained in $Z$.  

{\bf Separated Schemes.}  Separatedness is a technical condition that rules out certain ``pathological ways of gluing together affine schemes."  The special importance of separated schemes in this book is twofold. First, the category $\mbf{Sep}_k$ of separated schemes over a field $k$ satisfies the conditions at the beginning of section \hyperref[subsectioncohomsupportssubstrata]{\ref{subsectioncohomsupportssubstrata}} identifying a distinguished category of schemes suitable for defining a cohomology theory with supports in the sense of Colliot-Th\'el\`ene, Hoobler, and Kahn \cite{CHKBloch-Ogus-Gabber97}.  Second, given such a cohomology theory with supports $H$, a fixed separated scheme $Y$ may be used to define an ``augmented theory"  $H^Y$, and this construction preserves important properties of $H$.  

Given any morphism of schemes $X\rightarrow X'$, there exists an associated {\bf diagonal morphism} $\Delta$ from $X$ to the fiber product $X\times_{X'}X$, whose composition with both projection maps $X\times_{X'}X\rightarrow X$ is the identity morphism on $X$.   The original morphism is called {\bf separated} if this diagonal morphism is a {\bf closed immersion}, meaning that the map of topological spaces $\mbox{Zar}_X\rightarrow \mbox{Zar}_{X\times_{X'}X}$ induced by $\Delta$ is a homeomorphism onto its image, and the map of sheaves $\ms{O}_{X\times_{X'}X}\rightarrow\Delta_*\ms{O}_X$ induced by $\Delta$ is surjective. In this case, $X$ is called {\bf separated} over $X'$.  When $X$ is a smooth algebraic variety and $X'$ is the prime spectrum $\mbox{Spec } k$ of the ground field $k$, $X$ is called {\bf separated over $k$}.   If $R$ is a $k$-algebra, the affine scheme $X=\tn{Spec } R$ is separated over $k$, since the diagonal morphism $X\rightarrow X\times_{\tn{Spec }k}X$ corresponds to the surjective map of rings $R\otimes_kR\rightarrow R$ taking $r\otimes r'$ to the product $r$.   By essentially the same argument, any morphisms of affine schemes is separated; see Hartshorne \cite{HartshorneAlgebraicGeometry77}, Proposition 4.1, page 96. 

\begin{example}\label{examplenilpotentthickening} (Nilpotent Thickening.) \tn{Let $X$ be a smooth scheme over a field $k$, and let $Y$ be the prime spectrum of a $k$-algebra $R$ generated over $k$ by nilpotent elements; e.g., a local artinian $k$-algebra.  Then the {\bf nipotent thickening} of $X$ by $Y$ is the fiber product $X\times_kY$.   Nilpotent thickenings are generalizations of the thickened curve $X_\ee$ discussed in section \hyperref[SectionThickened]{\ref{SectionThickened}} above.  The nilpotent thickening $X\times_kY$ has the same topological space as $X$, but a different structure sheaf; namely, the ``nilpotent-augmented sheaf" $\ms{O}_X\otimes_kR$.}
\end{example}

{\bf Finite Type; Smooth Algebraic Varieties.} The condition that a smooth algebraic variety $X$ is  {\bf of finite type over $k$} reduces to the simple statement that $X$ admits a finite open covering by prime spectra of $k$-algebras.  More generally, a morphism of schemes $X\rightarrow X'$ is called {\bf of finite type} if there exists an open affine covering of $X'$ such that the inverse image of each set in the covering itself admits a finite open affine covering.   In this context, the statement that $X$ is of finite type over $k$ means that the canonical morphism $X\rightarrow\mbox{Spec }k$ is of finite type.  The condition that a smooth algebraic variety $X$ is {\bf smooth} means that all its local rings are regular.   In this context, the terms smooth, {\bf nonsingular}, and {\bf regular} are all synonymous.  Finally, an advantage of taking the ground field $k$ to be algebraically closed is that in this case there is a well-behaved functor from the ``na\"{\i}ve category of varieties over $k$," defined in terms of algebraic subsets of affine or projective spaces over $k$, to the category of schemes over $k$.

\subsection{Dimension; Codimension; Coniveau Filtration}\label{subsectiondimension}

The {\bf dimension} of an irreducible noetherian scheme $X$ is defined to be the dimension of the underlying topological space $\mbox{Zar}_X$ of $X$, which is the supremum of all integers $n$ such that there exists a chain $Z^n\subset Z^{n-1}\subset...\subset Z^0$ of distinct irreducible closed subsets of $\mbox{Zar}_X$.   Since $X$ is irreducible, the subset $Z^0$ must be $\mbox{Zar}_X$.  If $A$ is a $k$-algebra such that $X$ is locally isomorphic to $\mbox{Spec } A$, then the dimension of $X$ is equal to the {\bf Krull dimension} of $A$, which is the supremum of the heights of the prime ideals of $A$, where the {\bf height }of a prime ideal $p$ of $A$ is the supremum of all integers $n$ such that here exists a chain $p_0\subset p_1\subset...\subset p_n=p$ of distinct prime ideals of $A$. 

If $Z$ is an irreducible closed subset of $X$, then the {\bf codimension} of $Z$ in $X$ is defined to be the supremum of all integers $m$ such that there exists a chain $Z=Z^m\subset Z^{m-1}\subset...\subset Z^0$ of distinct irreducible closed subsets of $X$ containing $Z$.   Again, the subset $Z^0$ must be $\mbox{Zar}_X$.  If $x$ is a point of $X$, then the {\bf codimension} of $x$ in $X$ is defined to be the codimension of the closed subset $\overline{\{x\}}$ in $X$.  The codimension of $x$ is equal to the Krull dimension of the local ring $O_x$ at $x$ in $X$.  If $A$ is a $k$-algebra such that $X$ is locally isomorphic to $\mbox{Spec } A$, and if $x$ corresponds to a prime ideal $p$ in $X$, then the codimension of $x$ is equal to the height of $p$ in $A$.  The set of all points of codimension $p$ in a noetherian scheme $X$ is denoted by $\mbox{Zar}_X^p$.  If $X$ is $n$-dimensional, the topological space $\mbox{Zar}_X$ may be partitioned into $n+1$ subsets: the sets $\mbox{Zar}_X^{p}$ for all $p$ between $0$ and $n$.   If $X$ is irreducible, $\mbox{Zar}_X^{0}$ is a singleton set consisting of the unique generic point of $\mbox{Zar}_X$.   $\mbox{Zar}_X^{n}$ is the set of zero-dimensional points of $X$, which were the only points recognized in the classical theory of algebraic varieties.  The reason for using codimension is that it is more natural than dimension in the context of algebraic cycles. 

A {\bf decreasing filtration} on $\mbox{Zar}_X$ is a chain of inclusions $...\subset Z^p\subset Z^{p-1}\subset...$ of subsets of $\mbox{Zar}_X$.  Such a filtration is called {\bf exhaustive} if $\bigcup_pZ^p=X$, and is called {\bf separated} if $\bigcap_pZ^p=\oslash.$  The {\bf coniveau filtration} on an $n$-dimensional algebraic variety $X$ is the exhaustive, separated, decreasing filtration
\begin{equation}\label{equdefconiveau}\oslash\subset \mbox{Zar}_X^{\ge n}\subset \mbox{Zar}_X^{\ge n-1}\subset...\subset \mbox{Zar}_X^{\ge 1}\subset \mbox{Zar}_X^{\ge 0}=\mbox{Zar}_X,\end{equation}
where $\mbox{Zar}_X^{\ge p}$ is defined to be the set of all point of $X$ of codimension at least $p$.  The $p$th {\bf graded part} of the coniveau filtration on $X$ is defined to be the quotient space $\mbox{Zar}_X^{\ge p}/\mbox{Zar}_X^{\ge p+1}$, which may be identified with the set $\mbox{Zar}_X^p$ of all points of codimension $p$ in $X$.

\section{Algebraic Cycles; Cycle Groups; Chow Groups}\label{sectioncyclegroups}

\subsection{Cycles; Total Cycle Group}\label{subsectiontotalcyclegroup}

An {\bf algebraic cycle} on a smooth algebraic variety $X$ is a finite formal sum 
\begin{equation}\label{equdefcycle} z=\sum_i n_{i}z_i,\end{equation}
where each $z_i$ is a subvariety of $X$.  The coefficients $n_i$ are elements of a ring, which I will always take to be the integers unless stated otherwise.  The {\bf support} of $z$ is the union $\bigcup_iz_i$ of the subvarieties of $X$ with nonzero multiplicities in $z$.   An algebraic cycle is called {\bf effective} if all its multiplicities are nonnegative.  Every algebraic cycle is a difference of effective cycles.   The set $Z_X$ of algebraic cycles on $X$ is an abelian group, called the {\bf total cycle group} of $X$, with group operation defined by addition of multiplicities at each subvariety.   The identity in $Z_X$ is the {\bf empty cycle,} in which all subvarieties of $X$ are assigned zero multiplicity.   The empty cycle is sometimes denoted by $0$ because the group operation on $Z_X$ is additive.  The subset $Z_{X,\mbox{\footnotesize{eff}}}$ of effective cycles on $X$ is a submonoid of $Z_X$ called the {\bf monoid of effective cycles} on $X$.  The group completion of $Z_{X,\mbox{\footnotesize{eff}}}$ is just the group $Z_X$. 

Given an open subset $U$ of $X$, the group $Z_U$ of algebraic cycles and the submonoid $Z_{U,\mbox{\footnotesize{eff}}}$ of effective cycles on $U$ may be defined in a similar way.  The group completion of $Z_{U,\mbox{\footnotesize{eff}}}$ is $Z_U$.  The {\bf sheaf of algebraic cycles} on $X$ is the sheaf $\ms{Z}_X$ of additive groups on $X$ whose group of sections over $U$ is $Z_U$.   The {\bf sheaf of effective zero-cycles} on $X$ is the sheaf $\ms{Z}_{X,\mbox{\footnotesize{eff}}}$ of additive monoids on $X$ whose monoid of sections over $U$ is $Z_{U,\mbox{\footnotesize{eff}}}$.

\subsection{Codimension-$p$ Cycles; Cycle Group $Z_X^p$}\label{subsectioncodimpcycles}

The total cycle group $Z_X$ of a smooth algebraic variety $X$ has a nonnegative integer grading, induced by the codimensions of the subvarieties of $X$.  The $p$th graded piece $Z_X^p$ is a subgroup of $Z_X$ called the {\bf group of codimension-$p$ cycles} on $X$.   The total cycle group therefore decomposes as a direct sum
\begin{equation}\label{equtotalcycledirsum}Z_X=\bigoplus_{p\ge 0}Z_X^p,\end{equation}
of the groups of codimension-$p$ cycles, for all nonnegative integers $p$.    Each cycle $z=\sum_in_iz_i$ belonging to $Z_X^p$ is defined by assigning multiplicities $n_i$ to a finite number of codimension-$p$ subvarieties $z_i$ of $X$.  Each of these subvarieties has a unique generic point $x_i$, which is an element of $\mbox{Zar}_X^p$, and whose closure $\overline{x}_i$ is $z_i$.   Hence, the sum defining $z$ may be regarded as a sum over the entire set $\mbox{Zar}_X^{p}$ of codimension-$p$ points of $X$, with the understanding that only a finite number of the multiplicities are nonzero.  

The {\bf degree} of a codimension-$p$ cycle $z=\sum_in_iz_i$ is the sum $\sum_in_i$ of its multiplicities.  The subset $Z_{X,0}^p$ of codimension-$p$ cycles of degree zero on $X$ is a subgroup of $Z_X^p$. The subset $Z_{X,\mbox{\footnotesize{eff}}}^p$ of effective codimension-$p$ cycles on $X$ is a submonoid of $Z_X^p$ called the {\bf monoid of effective codimension-$p$ cycles} on $X$.  The group completion of $Z_{X,\mbox{\footnotesize{eff}}}^p$ is just the group $Z_X^p$.  Given an open subset $U$ of $X$, the group $Z_U^P$ of codimension-$p$ cycles cycles, the subgroup $Z_{U,0}^P$ of cycles of degree zero, and the submonoid $Z_{U,\mbox{\footnotesize{eff}}}^p$ of effective cycles on $U$ may be defined in a similar way.  The group completion of $Z_{U,\mbox{\footnotesize{eff}}}^p$ is $Z_U$.  The {\bf sheaf of codimension-$p$ cycles} on $X$ is the sheaf $\ms{Z}_X^p$ of additive groups on $X$ whose group of sections over $U$ is $Z_U^p$.  The {\bf sheaf of effective zero-cycles} on $X$ is the sheaf $\ms{Z}_{X,\mbox{\footnotesize{eff}}}^p$ of additive monoids on $X$ whose monoid of sections over $U$ is $Z_{U,\mbox{\footnotesize{eff}}}^p$.

\subsection{Equivalence Relations for Algebraic Cycles}\label{subsectionEquivalence}

Equivalence classes of algebraic cycles on a smooth algebraic variety $X$ are often more convenient to work with than individual cycles.  The cycle groups $Z_X^p$ are large, cumbersome, and badly behaved with respect to natural operations such as intersection.  Although the most useful and well-studied groups of cycle equivalence classes remain large and mysterious, their geometric and algebraic behavior is more reasonable than that of the cycle groups.  Suitable equivalence relations yield graded ring structures on their corresponding groups of cycle equivalence classes, with multiplication given by appropriate versions of intersection theory.   The most interesting equivalence relations induce functors from the category of smooth algebraic varieties to the category of graded rings, so that the corresponding groups of cycle equivalence classes interact in useful ways under morphisms of varieties.

\subsection{Intersection Theory; Adequate Equivalence Relations}

Since the early days of algebraic geometry, it has been recognized that the set-theoretic intersection of a codimension-$p$ subvariety  and a codimension-$q$ subvariety of a smooth algebraic variety $X$ is of codimension $p+q$ ``in the generic case."  For example, distinct plane curves meet in a finite collection of points. This suggests defining a ``multiplication map" $Z_X^p\times Z_X^q\rightarrow Z_X^{p+q}$, but this na\"{\i}ve approach fails for a number of reasons.  Codimension is not always additive under intersection, for example, when one of the two intersecting subvarieties contains the other.  Also, set-theoretic intersection does not assign the proper intersection multiplicities when intersections involve tangencies or singular points.  Two subvarieties of a smooth algebraic variety $X$, of codimensions $p$ and $q$, respectively, are said to {\bf intersect properly} if their set-theoretic intersection has dimension $p+q$.  Given a proper intersection, an appropriate intersection multiplicity may be defined in terms of the algebraic notion of length of a module.   The resulting definitions may then be extended by linearity to yield notions of proper intersection and local intersection multiplicity for pairs of algebraic cycles.  This provides a {\it partially defined} intersection product on $Z_X$, but not a ring structure. 

Pierre Samuel introduced the notion of adequate equivalence relations, in his 1960 paper {\it Relations d'\'Equivalence en G\'eom\'etrie Algebrique} \cite{SamuelAdequate58}, with the stated goal to ``remedy this state of affairs."  An equivalence relation $\sim_{equ}$ is called {\bf adequate} if it is additive, if every pair of equivalence classes of cycles possesses a corresponding pair of representatives that intersect properly, and if appropriate functorial properties involving morphisms between ambient varieties are satisfied.  An adequate equivalence relation endows the additive groups $Z_{X,\mbox{\footnotesize{equ}}}$ of equivalence classes under $\sim_{equ}$ with graded ring structures, where multiplication corresponds to addition of codimension, and generalizes the partially-defined intersection product introduced above.    

For illustrative purposes, I will discuss a few of the most familiar equivalence relations on cycles, focusing on the case of smooth complex projective varieties.  The standard reference for intersection theory is Fulton \cite{FultonIntersection}.  In this context, the coarsest adequate equivalence relation for which integer multiples of a single point remain distinct is {\bf numerical equivalence,}  which is defined in terms of the degree map $\mbox{deg}:Z_X\rightarrow\ZZ$, which counts the number of points in a zero-cycle, including multiplicities.  It is extended to all of $Z_X$ by mapping positive-dimensional subvarieties to zero.  Two cycles $\gamma,\gamma'\in Z_X$ are called numerically equivalent if $\mbox{deg}(\gamma\cdot\sigma)=\mbox{deg}(\gamma'\cdot\sigma)$ whenever both products are defined.  If $\gamma$ and $\gamma'$ are subvarieties, this means that $\gamma$ and $\gamma'$ must be of the same dimension and that the proper intersections of $\gamma$ and $\gamma'$ with any subvariety of complementary dimension must contain the same number of points, counting multiplicities.  I will denote the group of codimension-$p$ cycles on $X$ numerically equivalent to zero by $Z_{X,\tn{num}}^p$. 

For any Weil cohomology theory $H$ with cohomology groups $H_X^p$ on $X$, there exists a cycle class map $\tn{cl}_{H,p}:Z_X^p\rightarrow H_X^{2p}$.  For example, integration and Poincar\`{e}-Serre duality define a cycle class map with values in de Rham cohomology. Two cycles on $X$ are {\bf homologically equivalent} with respect to $H$ if their images in $H_X^{2p}$ coincide.  It is not known if homological equivalence depends on the choice of Weil cohomology theory, or if homological equivalence is really different than numerical equivalence.  Homological equivalence is the subject of some of the most famous open conjectures in algebraic geometry.  I will denote the group of codimension-$p$ cycles on $X$ homologically equivalent to zero by $Z_{X,\tn{hom}}^p$.  This is imprecise, since it does not specify the choice of cohomology theory, but will suffice for the limited purposes needed here. 

A third adequate equivalence relation is algebraic equivalence, which is closely related to rational equivalence, discussed in more detail in section \hyperref[subsectionratchow]{\ref{subsectionratchow}} below.  Two algebraic cycles $z$ and $z'$ on $X$ are {\bf algebraically equivalent} if there exists an algebraic curve $C$, a cycle $Z$ on $X\times C$, flat over $C$, and two points $\lambda$ and $\lambda'$ of $C$, such that 
\[\pi_*\big(Z\cap X\times\{\lambda\}\big)-\pi_*\big(Z\cap X\times\{\lambda'\}\big)=z-z',\]
where $\pi:X\times C\rightarrow X$ is projection to $X$, and $\pi_*$ is the associated pushforward on cycles.\footnotemark\footnotetext{An oft quoted, and generally incorrect, definition is that $\pi_*\big(Z\cap X\times\{\lambda\}\big)=z$ and  $\pi_*\big(Z'\cap X\times\{\lambda'\}\big)=z'$.}  As discussed in section \hyperref[subsectionzerocyclessurfaces]{\ref{subsectionzerocyclessurfaces}} below, it is known that algebraic equivalence is much finer than homological equivalence.  I will denote the group of codimension-$p$ cycles on $X$ algebraically equivalent to zero by $Z_{X,\tn{alg}}^p$.

%SSSSSSSSSSSSSSSSSSSSSSSSSSSSSSSSSSSSSSSSSSSSSSSSSSSSSSSSSSSSSSSSSSSSSSSSSSSSSSSSSS
%SSSSSSSSSSSSSSSSSSSSSSSSSSSSSSSSSSSSSSSSSSSSSSSSSSSSSSSSSSSSSSSSSSSSSSSSSSSSSSSSSS
%SSS                      SSS                       SSS                    SS                          SS           SSS                         SSS          SSSSS      SSSS
%SSS      SSSSSSSSS    SSSSSSSSSS     SSSSSSSSSSSS     SSSSSSS     SSSS    SSSSSS     SSS     S     SSSS     SSSSS
%SSS      SSSSSSSSS    SSSSSSSSSS     SSSSSSSSSSSS     SSSSSSS     SSSS    SSSSSS     SSS     SS     SSS     SSSSS
%SSS                      SSS                  SSSSS    SSSSSSSSSSSS     SSSSSSS     SSSS    SSSSSS     SSS     SSS     SS     SSSSS
%SSSSSSSSSS    SSS    SSSSSSSSSS    SSSSSSSSSSSS     SSSSSSS     SSSS    SSSSSS     SSS     SSSS     S     SSSSS
%SSSSSSSSSS    SSS    SSSSSSSSSS    SSSSSSSSSSSS     SSSSSSS     SSSS    SSSSSS     SSS     SSSSS          SSSSS
%SSS                       SSS                      SSS                       SSSSS     SSSSSS          SSS                         SSS     SSSSSS        SSSSS
%SSSSSSSSSSSSSSSSSSSSSSSSSSSSSSSSSSSSSSSSSSSSSSSSSSSSSSSSSSSSSSSSSSSSSSSSSSSSSSSSSS
%SSSSSSSSSSSSSSSSSSSSSSSSSSSSSSSSSSSSSSSSSSSSSSSSSSSSSSSSSSSSSSSSSSSSSSSSSSSSSSSSSS

\subsection{Rational Equivalence and Chow Groups}\label{subsectionratchow}

A fourth adequate equivalence relation, of central interest in the context of Chow groups, is rational equivalence.  Rational equivalence generalizes linear equivalence of divisors to arbitrary codimension.  It is less general than algebraic equivalence.  
Two algebraic cycles $z$ and $z'$ on $X$ are {\bf rationally equivalent} if there a cycle $Z$ on $X\times \PP^1$, flat over $C$, and two points $\lambda$ and $\lambda'$ of $\PP^1$, such that 
\[\pi_*\big(Z\cap X\times\{\lambda\}\big)-\pi_*\big(Z\cap X\times\{\lambda'\}\big)=z-z',\]
where $\pi:X\times \PP^1\rightarrow X$ is projection to $X$, and $\pi_*$ is the associated pushforward on cycles.  Hence, rational equivalence restricts the definition of algebraic equivalence to the case in which the curve $C$ is the projective line $\PP^1$.   As in the case of zero-cycles oncurves, I will denote the group of codimension-$p$ cycles on $X$ rationally equivalent to zero by $Z_{X,\tn{rat}}^p$. 

Figure \hyperref[figrationalequivalencehigher]{\ref{figrationalequivalencehigher}} illustrates the definition of rational equivalence for codimension-one cycles in the case where $X$ is an algebraic surface.  This is a higher-dimensional analogue of figure \hyperref[figrationalequivalence]{\ref{figrationalequivalence}} of section \hyperref[subsectionlinratfirstchow]{\ref{subsectionlinratfirstchow}}, which illustrates the corresponding case when $X$ is an algebraic curve.

%\comment{
%&&&&&&&&&&&&&&&&&&&&&&&&&&&&&&&&&&&&&&&&&&&&&&&&&&&&&&&&&&&&&&&&&&&&&&&&&&&&&&&&&&&
%&&&&&&&&&&&&&&&&&&&&&&&&&&&&&&&&&&&&&&&&&&&&&&&&&&&&&&&&&&&&&&&&&&&&&&&&&&&&&&&&&&&
%&&&&                 &&&&    &&&               &&&                    &&&    &&&&    &&&              &&&&                   &&&&&&&&&&&&&&&&&&&
%&&&&   &&&&   &&&&    &&&   &&&&&&&&&&&   &&&&&&    &&&&    &&&    &&&     &&&    &&&&&&&&&&&&&&&&&&&&&&&&
%&&&&   &&&&   &&&&    &&&   &&&&&&&&&&&   &&&&&&    &&&&    &&&    &&&     &&&    &&&&&&&&&&&&&&&&&&&&&&&&
%&&&&                 &&&&    &&&   &&&&&&&&&&&   &&&&&&    &&&&    &&&            &&&&&             &&&&&&&&&&&&&&&&&&&&&
%&&&&    &&&&&&&&&    &&&   &&&&&&&&&&&   &&&&&&    &&&&    &&&    &&&   &&&&    &&&&&&&&&&&&&&&&&&&&&&&&
%&&&&    &&&&&&&&&    &&&   &&&&&&&&&&&   &&&&&&    &&&&    &&&    &&&&   &&&    &&&&&&&&&&&&&&&&&&&&&&&&
%&&&&    &&&&&&&&&    &&&                &&&&&&   &&&&&&                   &&&    &&&&&   &&                  &&&&&&&&&&&&&&&&&&&
%&&&&&&&&&&&&&&&&&&&&&&&&&&&&&&&&&&&&&&&&&&&&&&&&&&&&&&&&&&&&&&&&&&&&&&&&&&&&&&&&&&&
%&&&&&&&&&&&&&&&&&&&&&&&&&&&&&&&&&&&&&&&&&&&&&&&&&&&&&&&&&&&&&&&&&&&&&&&&&&&&&&&&&&&
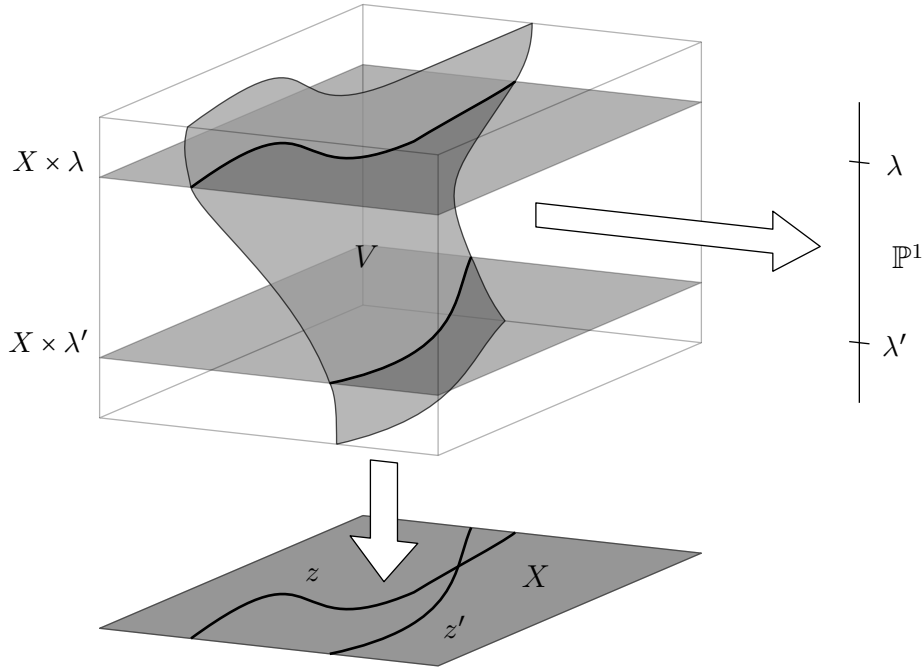
\begin{figure}[H]
\begin{tikzpicture} [scale=1, isometric2XYZ, line join=round,
        opacity=1, text opacity=1.0,
        >=latex,
        inner sep=0pt,
        outer sep=2pt,
    ]
\draw [line width=.5, opacity=.5, fill=black!100] (-2.5,-2.5,-6) -- (-2.5,2.5,-6)
--(2.5,2.5,-6)
--(2.5,-2.5,-6)
--(-2.5,-2.5,-6);
\draw [line width=.5, opacity=.3, fill=black!20] (-2.5,-2.5,-6) -- (-2.5,2.5,-6)
--(2.5,2.5,-6)
--(2.5,-2.5,-6)
--(-2.5,-2.5,-6);
\draw[line width=1pt, opacity=1, color=black] 
(2.5,.9,-6) .. controls (1,1.7,-6)  and (-1,0,-6) .. (-2.5,-.9,-6);
\draw[line width=1pt, opacity=1, color=black] 
(2.5,-1.15,-6) .. controls (-1.6,-2.3,-6)  and (2.5,.2,-6) .. (0,0.2,-6)
--(0,.2,-6) .. controls (-1,0,-6)  and (-2,-.1,-6) .. (-2.5,-.25,-6);
\draw [line width=.5, opacity=.3] (-2.5,-2.5,2.5) -- (-2.5,-2.5,-2.5);
\draw [line width=.5, opacity=.3] (-2.5,-2.5,-2.5) -- (-2.5,2.5,-2.5)
--(2.5,2.5,-2.5)
--(2.5,-2.5,-2.5)
--(-2.5,-2.5,-2.5);
\draw[line width=.5pt, opacity=.7, fill=black!40] 
(2.5,1,-2.5) .. controls (2.5,1,-2.1)  and (2.5,1,-1.7) .. (2.5,.9,-1.5)
--(2.5,.9,-1.5) .. controls (1,1.7,-1.5)  and (-1,0,-1.5) .. (-2.5,-.9,-1.5)
--(-2.5,-.9,-1.5) .. controls (-2.5,-.8,-1.7)  and (-2.5,-.7,-2.1) .. (-2.5,-.4,-2.5)
--(-2.5,-.4,-2.5).. controls (-1,.2,-2.5)  and (1,1.7,-2.5) .. (2.5,1,-2.5);
\draw [line width=.5, opacity=.3, fill=black!100] (-2.5,-2.5,-1.5) -- (-2.5,2.5,-1.5)
--(2.5,2.5,-1.5)
--(2.5,-2.5,-1.5)
--(-2.5,-2.5,-1.5);
\draw[line width=.5pt, opacity=.7, fill=black!40] 
(2.5,.9,-1.5) .. controls (2.5,.4,0)  and (2.5,.-1,1) .. (2.5,-1.15,1.5)
--(2.5,-1.15,1.5) .. controls (-1.6,-2.3,1.5)  and (2.5,.2,1.5) .. (0,0.2,1.5)
--(0,.2,1.5) .. controls (-1,0,1.5)  and (-2,-.1,1.5) .. (-2.5,-.25,1.5)
--(-2.5,-.25,1.5) .. controls (-1,0,.5)  and (-2,-1,0) .. (-2.5,-.9,-1.5)
--(-2.5,-.9,-1.5) .. controls (-1,0,-1.5)  and (1,1.7,-1.5) .. (2.5,.9,-1.5);
\draw[line width=1pt, opacity=1, color=black] 
(2.5,.9,-1.5) .. controls (1,1.7,-1.5)  and (-1,0,-1.5) .. (-2.5,-.9,-1.5);
\draw [line width=.5, opacity=.3, fill=black!100] (-2.5,-2.5,1.5) -- (-2.5,2.5,1.5)
--(2.5,2.5,1.5)
--(2.5,-2.5,1.5)
--(-2.5,-2.5,1.5);
\draw[line width=.5pt, opacity=.7, fill=black!40] 
(2.5,-1.15,1.5) .. controls (2.5,-1.2,1.8)  and (2.5,-1.3,2.1) .. (2.5,-1.2,2.5)
--(2.5,-1.2,2.5) .. controls (-2,-2.5,2.5)  and (2.5,0,2.5) .. (0,0,2.5)
--(0,0,2.5) .. controls (-1,0,2.5)  and (-2,0,2.5) .. (-2.5,0,2.5)
--(-2.5,0,2.5).. controls (-2.5,0,2.1)  and (-2.5,-.1,1.8) .. (-2.5,-.25,1.5)
--(-2.5,-.25,1.5) .. controls (-2,-.1,1.5)  and (-1,0,1.5) .. (0,.2,1.5)
--(0,.2,1.5) .. controls (2.5,.2,1.5)  and (-1.6,-2.3,1.5) .. (2.5,-1.15,1.5);
\draw[line width=1pt, opacity=1, color=black] 
(2.5,-1.15,1.5) .. controls (-1.6,-2.3,1.5)  and (2.5,.2,1.5) .. (0,0.2,1.5)
--(0,.2,1.5) .. controls (-1,0,1.5)  and (-2,-.1,1.5) .. (-2.5,-.25,1.5);
\draw [line width=.5, opacity=.3] (-2.5,-2.5,2.5) -- (-2.5,2.5,2.5)
--(2.5,2.5,2.5)
--(2.5,-2.5,2.5)
--(-2.5,-2.5,2.5);
\draw [line width=.5, opacity=.3] (2.5,2.5,2.5) -- (2.5,2.5,-2.5);
\draw [line width=.5, opacity=.3] (2.5,-2.5,2.5) -- (2.5,-2.5,-2.5);
\draw [line width=.5, opacity=.3] (-2.5,2.5,2.5) -- (-2.5,2.5,-2.5);
\draw [line width=.5, opacity=1] (-1,6,-2.5) -- (-1,6,2.5);
\draw [line width=.5, opacity=1] (-1,5.85,1.5) -- (-1,6.15,1.5);
\draw [line width=.5, opacity=1] (-1,5.85,-1.5) -- (-1,6.15,-1.5);
\draw[line width=.5pt, opacity=1, fill=white]  (2.5,1.5,-2.7) --(2.5,1.5,-4)
--(2.5,1.2,-4)  
--(2.5,1.7,-4.7)  
--(2.5,2.2,-4)  
--(2.5,1.9,-4) 
--(2.5,1.9,-2.7)   
--cycle;
\draw[line width=.5pt, opacity=1, fill=white]  (0,2,.7) --(0,5.5,.7)
--(0,5.5,1)
--(0,6.2,.5)  
--(0,5.5,0)  
--(0,5.5,.3)  
--(0,2,.3) 
--cycle;
\draw (0,2,-5.5) node {\color{black}$X$};
\draw (0,-1.3,-5.9) node {\color{black}\small{$z$}};
\draw (0,.8,-6.5) node {\color{black}\small{$z'$}};
\draw (0,7.5,.5) node {\color{black}$\PP^1$};
\draw (0,7.3,2) node {\color{black}\small{$\lambda$}};
\draw (0,7.3,-1) node {\color{black}\small{$\lambda'$}};
\draw (0,-.5,-.5) node {\color{black}$V$};
\draw (0,-5.2,.5) node {\color{black}\small{$X\times\lambda$}};
\draw (0,-5.2,-2.5) node {\color{black}\small{$X\times\lambda'$}};
\end{tikzpicture}
\caption{Rational equivalence for codimension-one cycles on an algebraic surface.}
\label{figrationalequivalencehigher}
\end{figure}
%&&&&&&&&&&&&&&&&&&&&&&&&&&&&&&&&&&&&&&&&&&&&&&&&&&&&&&&&&&&&&&&&&&&&&&&&&&&&&&&&&&&
%&&&&&&&&&&&&&&&&&&&&&&&&&&&&&&&&&&&&&&&&&&&&&&&&&&&&&&&&&&&&&&&&&&&&&&&&&&&&&&&&&&&
%&&&                    &&&       &&&&&    &&&                &&&&&&&&&&&&&&&&&&&&&&&&&&&&&&&&&&&&&&&&&&&&&&&&&&&
%&&&    &&&&&&&&&    &    &&&&   &&&    &&&&    &&&&&&&&&&&&&&&&&&&&&&&&&&&&&&&&&&&&&&&&&&&&&&&&&&& 
%&&&    &&&&&&&&&    &&    &&&   &&&    &&&&&    &&&&&&&&&&&&&&&&&&&&&&&&&&&&&&&&&&&&&&&&&&&&&&&&&& 
%&&&                 &&&&    &&&    &&   &&&    &&&&&    &&&&&&&&&&&&&&&&&&&&&&&&&&&&&&&&&&&&&&&&&&&&&&&&&& 
%&&&    &&&&&&&&&    &&&&    &   &&&    &&&&&    &&&&&&&&&&&&&&&&&&&&&&&&&&&&&&&&&&&&&&&&&&&&&&&&&& 
%&&&    &&&&&&&&&    &&&&&       &&&    &&&&     &&&&&&&&&&&&&&&&&&&&&&&&&&&&&&&&&&&&&&&&&&&&&&&&&& 
%&&&                    &&&    &&&&&&    &&&                  &&&&&&&&&&&&&&&&&&&&&&&&&&&&&&&&&&&&&&&&&&&&&&&&&&&
%&&&&&&&&&&&&&&&&&&&&&&&&&&&&&&&&&&&&&&&&&&&&&&&&&&&&&&&&&&&&&&&&&&&&&&&&&&&&&&&&&&&
%&&&&&&&&&&&&&&&&&&&&&&&&&&&&&&&&&&&&&&&&&&&&&&&&&&&&&&&&&&&&&&&&&&&&&&&&&&&&&&&&&&&
%}

The {\bf $p$th Chow group} $\tn{Ch}_X^p$ of $X$ is the group $Z_X^p/Z_{X,\tn{rat}}^p$ of codimension-$p$ cycles on $X$ modulo rational equivalence.   Certain subgroups of $\tn{Ch}_X^p$, defined in terms of other adequate equivalence relations, play a role in the discussion below.  In particular, I will denote the group $Z_{X,\tn{num}}^p/Z_{X,\tn{rat}}^p$ of codimension-$p$ cycles on $X$ numerically equivalent to zero modulo rational equivalence by $\tn{Ch}_{X,\tn{num}}^p$.  Similarly, the groups $\tn{Ch}_{X,\tn{hom}}^p$ and $\tn{Ch}_{X,\tn{alg}}^p$ are defined by taking the quotients of $Z_{X,\tn{hom}}^p$ and $Z_{X,\tn{alg}}^p$ by $Z_{X,\tn{rat}}^p$. 

%SSSSSSSSSSSSSSSSSSSSSSSSSSSSSSSSSSSSSSSSSSSSSSSSSSSSSSSSSSSSSSSSSSSSSSSSSSSSSSSSSS
%SSSSSSSSSSSSSSSSSSSSSSSSSSSSSSSSSSSSSSSSSSSSSSSSSSSSSSSSSSSSSSSSSSSSSSSSSSSSSSSSSS
%SSS                      SSS                       SSS                    SS                          SS           SSS                         SSS          SSSSS      SSSS
%SSS      SSSSSSSSS    SSSSSSSSSS     SSSSSSSSSSSS     SSSSSSS     SSSS    SSSSSS     SSS     S     SSSS     SSSSS
%SSS      SSSSSSSSS    SSSSSSSSSS     SSSSSSSSSSSS     SSSSSSS     SSSS    SSSSSS     SSS     SS     SSS     SSSSS
%SSS                      SSS                  SSSSS    SSSSSSSSSSSS     SSSSSSS     SSSS    SSSSSS     SSS     SSS     SS     SSSSS
%SSSSSSSSSS    SSS    SSSSSSSSSS    SSSSSSSSSSSS     SSSSSSS     SSSS    SSSSSS     SSS     SSSS     S     SSSSS
%SSSSSSSSSS    SSS    SSSSSSSSSS    SSSSSSSSSSSS     SSSSSSS     SSSS    SSSSSS     SSS     SSSSS          SSSSS
%SSS                       SSS                      SSS                       SSSSS     SSSSSS          SSS                         SSS     SSSSSS        SSSSS
%SSSSSSSSSSSSSSSSSSSSSSSSSSSSSSSSSSSSSSSSSSSSSSSSSSSSSSSSSSSSSSSSSSSSSSSSSSSSSSSSSS
%SSSSSSSSSSSSSSSSSSSSSSSSSSSSSSSSSSSSSSSSSSSSSSSSSSSSSSSSSSSSSSSSSSSSSSSSSSSSSSSSSS

\subsection{$\tn{Ch}_X^1$: the Easy Case}\label{subsectionzerocyclessurfaces}

As mentioned in section \hyperref[subsectionzerocyclesdivisors]{\ref{subsectionzerocyclesdivisors}}, the theory of codimension-one cycles; i.e., divisors, on an algebraic variety $X$, is generally much simpler than the theory of higher-codimensional cycles on $X$.  The relationships among divisors, invertible sheaves, rational functions, etc., make up a standard part of the classical algebraic geometry curriculum.  Since the first Chow group $\tn{Ch}_X^1$ consists of rational equivalence classes of divisors, it inherits this simplicity.  For the purposes of this book, it will suffice to briefly note some of the special properties of divisors.  The main object of this exercise is to provide context for the general case, where these properties fail.   

From the viewpoint of infinitesimal structure, the most important property of $\tn{Ch}_X^1$ is that it is an algebraic group; namely, the Picard group $\tn{Pic}_X$ of $X$.  This relationship generalizes from the case of curves mentioned in section \hyperref[subsectionJacPic]{\ref{subsectionJacPic}} above.  This means that one ``already knows the answer" regarding what the infinitesimal structure of $\tn{Ch}_X^1$ ``ought to be,"  via algebraic Lie theory.  In section \hyperref[SectionInfChow]{\ref{SectionInfChow}} above, I used this viewpoint to check the answer arising from the coniveau machine for zero-cycles on curves.  In general, this means that the methods developed here cannot supply any new information regarding the infinitesimal structure of $\tn{Ch}_X^1$; standard methods are already satisfactory.  Indeed, as demonstrated by famous results of Mumford, Griffiths, and others, discussed in section \hyperref[subsectionzerocyclessurfaces]{\ref{subsectionzerocyclessurfaces}} below, $\tn{Ch}_X^1$ is generally a wholly different type of mathematical object than $\tn{Ch}_X^p$ for $p>1$.  

The following commutative diagram, with exact upper row, illustrates some of the relationships among $\tn{Ch}_X^1$ and related objects.  This diagram is adapted from the notes of James D. Lewis \cite{LewisLecturesonCycles00}.  A similar diagram appears in Fulton \cite{FultonIntersection}, chapter 19, page 385.  This diagram extends the diagram for zero-cycles on curves appearing in figure \hyperref[figrationallewisrelationships]{\ref{figrationallewisrelationships}} of section \hyperref[subsectionJacPic]{\ref{subsectionJacPic}} above. 

%&&&&&&&&&&&&&&&&&&&&&&&&&&&&&&&&&&&&&&&&&&&&&&&&&&&&&&&&&&&&&&&&&&&&&&&&&&&&&&&&&&
%&&&&&&&&&&&&&&&&&&&&&&&&&&&&&&&&&&&&&&&&&&&&&&&&&&&&&&&&&&&&&&&&&&&&&&&&&&&&&&&&&&
%&&&&                 &&&&    &&&               &&&                    &&&    &&&&    &&&              &&&&                   &&&&&&&&&&&&&&&&&&
%&&&&   &&&&   &&&&    &&&   &&&&&&&&&&&   &&&&&&    &&&&    &&&    &&&     &&&    &&&&&&&&&&&&&&&&&&&&&&&&
%&&&&   &&&&   &&&&    &&&   &&&&&&&&&&&   &&&&&&    &&&&    &&&    &&&     &&&    &&&&&&&&&&&&&&&&&&&&&&&&
%&&&&                 &&&&    &&&   &&&&&&&&&&&   &&&&&&    &&&&    &&&            &&&&&             &&&&&&&&&&&&&&&&&&&&
%&&&&    &&&&&&&&&    &&&   &&&&&&&&&&&   &&&&&&    &&&&    &&&    &&&   &&&&    &&&&&&&&&&&&&&&&&&&&&&&
%&&&&    &&&&&&&&&    &&&   &&&&&&&&&&&   &&&&&&    &&&&    &&&    &&&&   &&&    &&&&&&&&&&&&&&&&&&&&&&&
%&&&&    &&&&&&&&&    &&&                &&&&&&   &&&&&&                   &&&    &&&&&   &&                  &&&&&&&&&&&&&&&&&&
%&&&&&&&&&&&&&&&&&&&&&&&&&&&&&&&&&&&&&&&&&&&&&&&&&&&&&&&&&&&&&&&&&&&&&&&&&&&&&&&&&&
%&&&&&&&&&&&&&&&&&&&&&&&&&&&&&&&&&&&&&&&&&&&&&&&&&&&&&&&&&&&&&&&&&&&&&&&&&&&&&&&&&&
\begin{figure}[H]
\begin{pgfpicture}{0cm}{0cm}{17cm}{4.5cm}
\begin{pgfmagnify}{1}{1}
\begin{pgftranslate}{\pgfpoint{.5cm}{-.55cm}}
\pgfputat{\pgfxy(5.5,5.3)}{\pgfbox[center,center]{$i$}}
\pgfputat{\pgfxy(8.5,5.3)}{\pgfbox[center,center]{$\delta$}}
\pgfputat{\pgfxy(1,5)}{\pgfbox[center,center]{$0$}}
\pgfputat{\pgfxy(4,5)}{\pgfbox[center,center]{$\mbox{Pic}_X^0$}}
\pgfputat{\pgfxy(7,5)}{\pgfbox[center,center]{$\mbox{Pic}_X$}}
\pgfputat{\pgfxy(10,5)}{\pgfbox[center,center]{$H_{X,\ZZ}^2$}}
\pgfputat{\pgfxy(13,5)}{\pgfbox[center,center]{$H_X^{0,2}$}}
\pgfputat{\pgfxy(1,3)}{\pgfbox[center,center]{$\mbox{Ch}_{X,\mbox{\footnotesize{hom}}}^1$}}
\pgfputat{\pgfxy(4,3)}{\pgfbox[center,center]{$\mbox{J}_X$}}
\pgfputat{\pgfxy(7,3)}{\pgfbox[center,center]{$\mbox{Ch}_{X}^1$}}
\pgfputat{\pgfxy(10,3)}{\pgfbox[center,center]{$H_{X,\CC}^2$}}
\pgfputat{\pgfxy(1,1)}{\pgfbox[center,center]{$\mbox{Ch}_{X,\mbox{\footnotesize{alg}}}^1$}}
\pgfxyline(.95,2.5)(.95,1.5)
\pgfxyline(1.05,2.5)(1.05,1.5)
\pgfxyline(3.95,4.5)(3.95,3.5)
\pgfxyline(4.05,4.5)(4.05,3.5)
\pgfxyline(6.95,4.5)(6.95,3.5)
\pgfxyline(7.05,4.5)(7.05,3.5)
\pgfsetendarrow{\pgfarrowpointed{3pt}}
\pgfxyline(1.3,5)(3.4,5)
\pgfxyline(4.5,5)(6.4,5)
\pgfxyline(7.6,5)(9.3,5)
\pgfxyline(10.6,5)(12.4,5)
\pgfxyline(1.3,3.4)(3.5,4.6)
\pgfxyline(2,3)(3.3,3)
\pgfxyline(10,4.6)(10,3.5)
\end{pgftranslate}
\end{pgfmagnify}
\end{pgfpicture}
\caption{$\tn{Ch}_X^1$ and related objects.}
\label{figCh1relatedobjects}
\end{figure}%&&&&&&&&&&&&&&&&&&&&&&&&&&&&&&&&&&&&&&&&&&&&&&&&&&&&&&&&&&&&&&&&&&&&&&&&&&&&&&&&&&
%&&&&&&&&&&&&&&&&&&&&&&&&&&&&&&&&&&&&&&&&&&&&&&&&&&&&&&&&&&&&&&&&&&&&&&&&&&&&&&&&&&
%&&&                    &&&       &&&&&    &&&                &&&&&&&&&&&&&&&&&&&&&&&&&&&&&&&&&&&&&&&&&&&&&&&&&&&
%&&&    &&&&&&&&&    &    &&&&   &&&    &&&&    &&&&&&&&&&&&&&&&&&&&&&&&&&&&&&&&&&&&&&&&&&&&&&&&&& 
%&&&    &&&&&&&&&    &&    &&&   &&&    &&&&&    &&&&&&&&&&&&&&&&&&&&&&&&&&&&&&&&&&&&&&&&&&&&&&&&& 
%&&&                 &&&&    &&&    &&   &&&    &&&&&    &&&&&&&&&&&&&&&&&&&&&&&&&&&&&&&&&&&&&&&&&&&&&&&&& 
%&&&    &&&&&&&&&    &&&&    &   &&&    &&&&&    &&&&&&&&&&&&&&&&&&&&&&&&&&&&&&&&&&&&&&&&&&&&&&&&& 
%&&&    &&&&&&&&&    &&&&&       &&&    &&&&     &&&&&&&&&&&&&&&&&&&&&&&&&&&&&&&&&&&&&&&&&&&&&&&&& 
%&&&                    &&&    &&&&&&    &&&                  &&&&&&&&&&&&&&&&&&&&&&&&&&&&&&&&&&&&&&&&&&&&&&&&&&
%&&&&&&&&&&&&&&&&&&&&&&&&&&&&&&&&&&&&&&&&&&&&&&&&&&&&&&&&&&&&&&&&&&&&&&&&&&&&&&&&&&
%&&&&&&&&&&&&&&&&&&&&&&&&&&&&&&&&&&&&&&&&&&&&&&&&&&&&&&&&&&&&&&&&&&&&&&&&&&&&&&&&&&
\vspace*{-.5cm}

The top row of this diagram arises by taking analytic sheaf cohomology of the {\it exponential sheaf sequence} and using Serre's GAGA results; see Fulton \cite{FultonIntersection}, chapter 19, for details.  The groups $\tn{Ch}_{X,\tn{hom}}^p$ and $\tn{Ch}_{X,\tn{alg}}^p$ are included in the diagram to highlight the fact that homological and algebraic equivalence coincide in codimension one.  This relationship fails spectacularly in higher codimension, as discussed in section \hyperref[subsectionzerocyclessurfaces]{\ref{subsectionzerocyclessurfaces}} below.

\subsection{First Hard Cases: $\tn{Ch}_X^2$ of Low-Dimensional Varieties}\label{subsectionzerocyclessurfaces}

The simplest case of algebraic cycles of codimension greater than one is the case of codimension-two cycles; i.e., zero-cycles, on  low-dimensional varieties, such as smooth algebraic surfaces, three-folds, and four-folds.   In the case of surfaces, since the subvarieties generating codimension-two cycles are zero-dimensional points, with trivial internal structure, one might expect this case to be reasonably simple.   One might even expect this case to be no worse than the case of curves on a surface, which is the second-simplest case of divisors.  However, the theory of zero-cycles on algebraic surfaces is vastly more complicated than the theory of divisors in any dimension, exhibiting qualitative differences that require entirely new techniques to analyze.  These difficulties only become worse if one considers the codimension-two case for three-folds or four-folds, or, more generally, the cases of higher dimensions and codimensions. 

In 1968, David Mumford \cite{MumfordZeroCycles69} proved a result on the rational equivalence of zero-cycles on algebraic surfaces that began to reveal the general intractability of algebraic cycles of higher codimensions.   What Mumford proved is that the second Chow group $\mbox{Ch}_X^2$ of an algebraic surface $X$ of positive geometric genus is ``infinite-dimensional."  The precise meaning of this statement may be understood in terms of the following three equivalent ``definitions" of finite-dimensionality of $\tn{Ch}_X^2$, paraphrasing Mumford:

\begin{enumerate}
\item Every zero-cycle of sufficiently large degree on $X$ is rationally equivalent to some effective zero-cycle. 
\item Every zero-cycle on $X$ is rationally equivalent to a difference of effective zero-cycles of some sufficiently large fixed degree. 
\item The subvariety (in an appropriate symmetric product of $X$) of zero-cycles rationally equivalent to any given effective zero-cycle on $X$ has codimension less than some sufficiently large fixed positive integer. 
\end{enumerate}

The equivalence of these three statements is a preliminary result in Mumford's paper; he then proceeds to show that these statements fail to hold in the general case.  Mumford's result was considered surprising at the time it was proven.   The Italian school of algebraic geometry, including Castlenuovo, Enriques, and Severi, had extensively studied algebraic surfaces since the 1880's, and had already considered rational equivalence in this context.  Severi, in particular, had reached a number of erroneous conclusions by assuming the opposite of Mumford's result, as detailed in Mumford's paper. Although Grothendieck's scheme-theoretic approach to algebraic geometry was already far advanced by this time, Mumford's result helped to cement the collapse of the ``intuitition-based approach" to algebraic geometry championed by the Italian school. 

In his  1969 paper {\it On the Periods of Certain Rational Integrals} \cite{GriffithsRationalIntegralsIandII69}, Phillip Griffiths demonstrated another way in which the case of codimension-two is more complicated than codimension-one, this time involving planes in the Fermat quintic fourfold.  Griffiths showed that from such planes, one may construct a cycle which is algebraically equivalent to zero, but {\it not} homologically equivalent to zero.  Thus, in general, algebraic and homological equivalence fail to coincide.  Due to Griffiths' result, the {\it Griffiths groups} $\tn{Griff}_X^p=Z_{X,\tn{hom}}^p/Z_{X,\tn{alg}}^p$ are named in his honor.  The Griffiths groups represent only part of the structure of the corresponding Chow groups. Clemens \cite{ClemensHommodalg83} subsequently showed that even the Griffiths groups can be ``huge," with the Chow groups even ``larger." 

Taken together, these examples illustrate that any reasonable effort to simplify or decompose the structure of the Chow groups $\tn{Ch}_X^p$ for $p>1$ is potentially interesting.  One famous approach to this problem is represented by the conjectured {\it filtrations} of the Chow groups by Bloch, Beilinson and others, which attempt to resolve the information in $\tn{Ch}_X^p$ via sequences of {\it higher cycle class maps.}  The linearization approach of Green and Griffiths is another potential way of gaining insight.

\section{Tangent Groups at the Identity of Chow Groups}\label{sectiontangentgroupsatidentity}

In this section, following Green and Griffiths, I begin to develop this linearization approach, defining the {\it tangent groups at the identity} $T\tn{Ch}_X^p$ of the Chow groups $T\tn{Ch}_X^p$.  These groups are analogous to the tangent group at the identity $T\tn{Ch}_X^p$ of the first Chow group $\tn{Ch}_X^1$ of a smooth complex projective curve, discussed in section \hyperref[SectionInfChow]{\ref{SectionInfChow}} above.  In the general case, the version of the tangent groups I introduce, spelled out in definition \hyperref[defitangentgroupchow]{\ref{defitangentgroupchow}} below, differs from that of Green and Griffiths, essentially because I  use a more modern version of algebraic $K$-theory.  The approach I use in this section  does not begin with elementary algebraic and geometric considerations, like those I used in  the context of algebraic curves.  Instead, it uses the general machinery of {\it tangent functors,} together with Bloch's theorem  \hyperref[blochstheoremintro]{\ref{blochstheoremintro}}, which expresses the Chow groups of $X$ as Zariski sheaf cohomology groups of the algebraic $K$-theory sheaves on $X$.  At the end of section \hyperref[subsectionabstractconiveauCH1]{\ref{subsectionabstractconiveauCH1}} above, I showed how this sheaf-cohomology perspective may be recognized {\it a posteriori} as arising from the coniveau machine.  

To motivate the somewhat abstract definition of $T\tn{Ch}_X^p$, I take a brief detour in section \hyperref[subsectionlieanalogue]{\ref{subsectionlieanalogue}} to present a well-known but illustrative analogue from a different field of mathematics; namely the relationship between Lie groups and Lie algebras.  In particular, I work out the elementary result that the tangent functor of the general linear group functor $R\mapsto \tn{GL}_{n,R}$ is the functor $R\mapsto \mfr{gl}_{n,R}$.  Afterwards, in section \hyperref[sectiondefitangroup]{\ref{sectiondefitangroup}}, I define $T\tn{Ch}_X^p$, and compare the definition to that of Green and Griffiths. 

%SSSSSSSSSSSSSSSSSSSSSSSSSSSSSSSSSSSSSSSSSSSSSSSSSSSSSSSSSSSSSSSSSSSSSSSSSSSSSSSSSS
%SSSSSSSSSSSSSSSSSSSSSSSSSSSSSSSSSSSSSSSSSSSSSSSSSSSSSSSSSSSSSSSSSSSSSSSSSSSSSSSSSS
%SSS                      SSS                       SSS                    SS                          SS           SSS                         SSS          SSSSS      SSSS
%SSS      SSSSSSSSS    SSSSSSSSSS     SSSSSSSSSSSS     SSSSSSS     SSSS    SSSSSS     SSS     S     SSSS     SSSSS
%SSS      SSSSSSSSS    SSSSSSSSSS     SSSSSSSSSSSS     SSSSSSS     SSSS    SSSSSS     SSS     SS     SSS     SSSSS
%SSS                      SSS                  SSSSS    SSSSSSSSSSSS     SSSSSSS     SSSS    SSSSSS     SSS     SSS     SS     SSSSS
%SSSSSSSSSS    SSS    SSSSSSSSSS    SSSSSSSSSSSS     SSSSSSS     SSSS    SSSSSS     SSS     SSSS     S     SSSSS
%SSSSSSSSSS    SSS    SSSSSSSSSS    SSSSSSSSSSSS     SSSSSSS     SSSS    SSSSSS     SSS     SSSSS          SSSSS
%SSS                       SSS                      SSS                       SSSSS     SSSSSS          SSS                         SSS     SSSSSS        SSSSS
%SSSSSSSSSSSSSSSSSSSSSSSSSSSSSSSSSSSSSSSSSSSSSSSSSSSSSSSSSSSSSSSSSSSSSSSSSSSSSSSSSS
%SSSSSSSSSSSSSSSSSSSSSSSSSSSSSSSSSSSSSSSSSSSSSSSSSSSSSSSSSSSSSSSSSSSSSSSSSSSSSSSSSS

\subsection{Lie Theory: an Illustrative Analogue}\label{subsectionlieanalogue}

{\bf Real Lie Theory.}  In real differential geometry, the objects of study are real differentiable manifolds.  The {\bf tangent space} $T_xM$ at an element $x$ of a smooth manifold $M$ is usually defined to be the real vector space of {\bf $\RR$-derivations} from the algebra $C_x^\infty$ of germs of smooth functions on $M$ at $x$ into $\RR$.  Elements of $T_xM$ are called {\bf tangent vectors} at $x$ in $M$.   Alternatively, the tangent space $T_xM$ may be defined in terms of {\bf arcs through $x$ in $M$.}  These arcs are morphisms of smooth manifolds, so the notion of arcs is native to the category.   An arc through $x$ in $M$ induces an $\RR$-derivation on $C_x^\infty$, defined by sending the germ of a smooth function to its directional derivative along the arc.     Conversely, every $\RR$-derivation on  $C_x^\infty$ arises in this way.   Thus, tangent vectors may be viewed as equivalence classes of arcs through $x$ in $M$, where two arcs are equivalent if and only if they induce the same derivation.   

In real differential Lie theory, the objects of study are real Lie groups, which possess both the structure of a differentiable manifold and an independent group structure, generally noncommutative.  The group operation on a real Lie group $G$ is required to be a smooth map $G\times G\rightarrow G$, so that all operations on $G$ are native to the category of smooth manifolds.    The {\bf Lie algebra} $\mfr{g}$ of $G$ is the tangent space $T_{e}G$ of $G$ at the identity $e$ of $G$, endowed with a generally nonassociative and noncommutative operation called the {\bf Lie bracket}, induced by the group structure of $G$.  Data such as arcs and tangent vectors may be transported between different elements of $G$ by means of the group operation.  For example, if $g(t)$ is an arc through some element $g=g(0)$ of $G$, then any arc of the form $hg(t)k$, where $hgk=e$, is an arc through the identity in $G$.   These arcs are generally distinct for different choices of $h$ and $k$, and have distinct tangent vectors.   Figure \hyperref[figCh1relatedobjects]{\ref{figCh1relatedobjects}} below depicts transportation of an arc, along with its tangent vector, from an arbitrary element $g$ of $G$ to the identity $e$ of $G$:

\vspace*{4cm}

%\comment{
%&&&&&&&&&&&&&&&&&&&&&&&&&&&&&&&&&&&&&&&&&&&&&&&&&&&&&&&&&&&&&&&&&&&&&&&&&&&&&&&&&&
%&&&&&&&&&&&&&&&&&&&&&&&&&&&&&&&&&&&&&&&&&&&&&&&&&&&&&&&&&&&&&&&&&&&&&&&&&&&&&&&&&&
%&&&&                 &&&&    &&&               &&&                    &&&    &&&&    &&&              &&&&                   &&&&&&&&&&&&&&&&&&
%&&&&   &&&&   &&&&    &&&   &&&&&&&&&&&   &&&&&&    &&&&    &&&    &&&     &&&    &&&&&&&&&&&&&&&&&&&&&&&&
%&&&&   &&&&   &&&&    &&&   &&&&&&&&&&&   &&&&&&    &&&&    &&&    &&&     &&&    &&&&&&&&&&&&&&&&&&&&&&&&
%&&&&                 &&&&    &&&   &&&&&&&&&&&   &&&&&&    &&&&    &&&            &&&&&             &&&&&&&&&&&&&&&&&&&&
%&&&&    &&&&&&&&&    &&&   &&&&&&&&&&&   &&&&&&    &&&&    &&&    &&&   &&&&    &&&&&&&&&&&&&&&&&&&&&&&
%&&&&    &&&&&&&&&    &&&   &&&&&&&&&&&   &&&&&&    &&&&    &&&    &&&&   &&&    &&&&&&&&&&&&&&&&&&&&&&&
%&&&&    &&&&&&&&&    &&&                &&&&&&   &&&&&&                   &&&    &&&&&   &&                  &&&&&&&&&&&&&&&&&&
%&&&&&&&&&&&&&&&&&&&&&&&&&&&&&&&&&&&&&&&&&&&&&&&&&&&&&&&&&&&&&&&&&&&&&&&&&&&&&&&&&&
%&&&&&&&&&&&&&&&&&&&&&&&&&&&&&&&&&&&&&&&&&&&&&&&&&&&&&&&&&&&&&&&&&&&&&&&&&&&&&&&&&&
\begin{figure}[H]
\begin{tikzpicture} [scale=1.5, isometricXYZ, line join=round,
        opacity=1, text opacity=1.0,
        >=latex,
        inner sep=0pt,
        outer sep=2pt,
    ]
    \def\h{5}
\newcommand{\horcircle}[3]{
        \foreach \t in {#1} 
            \draw [color=#2]
                  ({2*sin(#3 - \h)*cos(\t - \h)}, {2*sin(#3 - \h)*sin(\t - \h)}, {2*cos(#3 - \h)})
               -- ({2*sin(#3 - \h)*cos(\t + \h)}, {2*sin(#3 - \h)*sin(\t + \h)}, {2*cos(#3 - \h)});
    }
\newcommand{\quandrantup}[4]{
        \foreach \t in {#1} \foreach \f in {#2}
            \draw [color=#3, fill=#4]
                  ({2*sin(\f - \h)*cos(\t - \h)}, {2*sin(\f - \h)*sin(\t - \h)}, {2*cos(\f - \h)})
               -- ({2*sin(\f - \h)*cos(\t + \h)}, {2*sin(\f - \h)*sin(\t + \h)}, {2*cos(\f - \h)})
               -- ({2*sin(\f + \h)*cos(\t + \h)}, {2*sin(\f + \h)*sin(\t + \h)}, {2*cos(\f + \h)})
               -- ({2*sin(\f + \h)*cos(\t - \h)}, {2*sin(\f + \h)*sin(\t - \h)}, {2*cos(\f + \h)})
               -- cycle;
    }
\begin{pgftranslate}{\pgfpoint{6cm}{.6cm}}
\quandrantup{300}{65}{black!100}{black!0}
\quandrantup{290}{65}{black!100}{black!0}
\quandrantup{280,270}{65,55,...,45}{black!100}{black!0}
\quandrantup{260,250}{65,55,...,35}{black!100}{black!0}
\quandrantup{240,230}{65,55,...,25}{black!100}{black!0}
\quandrantup{220,210}{65,55,...,25}{black!100}{black!0}
\quandrantup{200,190}{65,55,...,35}{black!100}{black!0}
\quandrantup{180,170}{65,55,...,45}{black!100}{black!0}
\quandrantup{160,150}{65}{black!100}{black!0}
\quandrantup{140,130}{65}{black!70}{black!0}
\quandrantup{120,110}{65}{black!50}{black!0}
\quandrantup{100,90,...,-10}{65}{black!40}{black!0}
\quandrantup{-20,-30}{65}{black!50}{black!0}
\quandrantup{-40,-50}{65}{black!70}{black!0}
\quandrantup{160,150,140}{55}{black!70}{black!0}
\quandrantup{130}{55}{black!50}{black!0}
\quandrantup{120,110}{55}{black!40}{black!0}
\quandrantup{100,90,...,-10}{55}{black!30}{black!0}
\quandrantup{-20,-30}{55}{black!40}{black!0}
\quandrantup{-40}{55}{black!50}{black!0}
\quandrantup{-50,-60,-70}{55}{black!70}{black!0}
\quandrantup{160,150}{45}{black!60}{black!0}
\quandrantup{140,130}{45}{black!50}{black!0}
\quandrantup{120,110}{45}{black!40}{black!0}
\quandrantup{100,90}{45}{black!30}{black!0}
\quandrantup{80,70,...,10}{45}{black!20}{black!0}
\quandrantup{0,-10}{45}{black!30}{black!0}
\quandrantup{-20,-30}{45}{black!40}{black!0}
\quandrantup{-40,-50}{45}{black!50}{black!0}
\quandrantup{-60,-70}{45}{black!60}{black!0}
\quandrantup{180,170}{35}{black!70}{black!0}
\quandrantup{160,150}{35}{black!50}{black!0}
\quandrantup{140,130}{35}{black!40}{black!0}
\quandrantup{120,110}{35}{black!30}{black!0}
\quandrantup{100,90,...,-10}{35}{black!20}{black!0}
\quandrantup{-20,-30}{35}{black!30}{black!0}
\quandrantup{-40,-50}{35}{black!40}{black!0}
\quandrantup{-60,-70}{35}{black!50}{black!0}
\quandrantup{-80,-90}{35}{black!70}{black!0}
\quandrantup{200,190}{25}{black!70}{black!0}
\quandrantup{180,170}{25}{black!50}{black!0}
\quandrantup{160,150}{25}{black!40}{black!0}
\quandrantup{140,130}{25}{black!30}{black!0}
\quandrantup{120,110}{25}{black!20}{black!0}
\quandrantup{100,90,...,-10}{25}{black!20}{black!0}
\quandrantup{-20,-30}{25}{black!20}{black!0}
\quandrantup{-40,-50}{25}{black!30}{black!0}
\quandrantup{-60,-70}{25}{black!40}{black!0}
\quandrantup{-80,-90}{25}{black!50}{black!0}
\quandrantup{-100,-110}{25}{black!70}{black!0}
\quandrantup{220,210}{15}{black!50}{black!0}
\quandrantup{200,190}{15}{black!40}{black!0}
\quandrantup{180,170}{15}{black!40}{black!0}
\quandrantup{160,150}{15}{black!30}{black!0}
\quandrantup{140,130}{15}{black!30}{black!0}
\quandrantup{120,110}{15}{black!20}{black!0}
\quandrantup{100,90,...,-10}{15}{black!20}{black!0}
\quandrantup{-20,-30}{15}{black!20}{black!0}
\quandrantup{-40,-50}{15}{black!30}{black!0}
\quandrantup{-60,-70}{15}{black!30}{black!0}
\quandrantup{-80,-90}{15}{black!40}{black!0}
\quandrantup{-100,-110}{15}{black!40}{black!0}
\quandrantup{-120,-130}{15}{black!50}{black!0}
\quandrantup{220,210}{5}{black!40}{black!0}
\quandrantup{200,190}{5}{black!30}{black!0}
\quandrantup{180,170}{5}{black!30}{black!0}
\quandrantup{160,150}{5}{black!30}{black!0}
\quandrantup{140,130}{5}{black!20}{black!0}
\quandrantup{120,110}{5}{black!20}{black!0}
\quandrantup{100,90,...,-10}{5}{black!20}{black!0}
\quandrantup{-20,-30}{5}{black!20}{black!0}
\quandrantup{-40,-50}{5}{black!20}{black!0}
\quandrantup{-60,-70}{5}{black!30}{black!0}
\quandrantup{-80,-90}{5}{black!30}{black!0}
\quandrantup{-100,-110}{5}{black!30}{black!0}
\quandrantup{-120,-130}{5}{black!40}{black!0}
\horcircle{-50,-40,...,140}{black!100}{75}
\draw [line width=1, opacity=1] (0,-4,-1.3) -- (0,-4,.7);
\node at (0,-4,-.3) [circle,opacity=1,draw=black,fill=black,minimum size=6pt,label=left:$0$] {};
\draw[line width=1pt, opacity=1] (0,-2,-.2) .. controls (0,-1.7,.45)  and (0,-1.1,1) .. (0,-.7,1.07);
\draw[line width=1pt, opacity=1] (0,-.7,1.07) .. controls (0,-.3,1.2)  and (0,0,.85) .. (0,.2,.4);
\node at (0,-.7,1.07) [circle,opacity=1,draw=black,fill=black,minimum size=6pt, label=below:$g$] {};
\node at (0,0,2) [circle,opacity=1,draw=black,fill=black,minimum size=6pt,label=below:$e$] {};
\draw[line width=.5pt, opacity=1, fill=white] 
(0,-3.8,.1) .. controls (0,-3.1,.4)  and (0,-2.6,.7) .. (0,-1.8,.8)
--(0,-1.77,.92)
--(0,-1.45,.75)
--(0,-1.91,.45)
--(0,-1.88,.57)
--(0,-1.88,.57).. controls (0,-2.6,0.4)  and (0,-3.1,0) .. (0,-3.8,-.6);
--cycle;
\draw (0,1,1) node {\large{$G$}};
\end{pgftranslate}
\begin{pgftranslate}{\pgfpoint{13cm}{.6cm}}
\quandrantup{300}{65}{black!100}{black!0}
\quandrantup{290}{65}{black!100}{black!0}
\quandrantup{280,270}{65,55,...,45}{black!100}{black!0}
\quandrantup{260,250}{65,55,...,35}{black!100}{black!0}
\quandrantup{240,230}{65,55,...,25}{black!100}{black!0}
\quandrantup{220,210}{65,55,...,25}{black!100}{black!0}
\quandrantup{200,190}{65,55,...,35}{black!100}{black!0}
\quandrantup{180,170}{65,55,...,45}{black!100}{black!0}
\quandrantup{160,150}{65}{black!100}{black!0}
\quandrantup{140,130}{65}{black!70}{black!0}
\quandrantup{120,110}{65}{black!50}{black!0}
\quandrantup{100,90,...,-10}{65}{black!40}{black!0}
\quandrantup{-20,-30}{65}{black!50}{black!0}
\quandrantup{-40,-50}{65}{black!70}{black!0}
\quandrantup{160,150,140}{55}{black!70}{black!0}
\quandrantup{130}{55}{black!50}{black!0}
\quandrantup{120,110}{55}{black!40}{black!0}
\quandrantup{100,90,...,-10}{55}{black!30}{black!0}
\quandrantup{-20,-30}{55}{black!40}{black!0}
\quandrantup{-40}{55}{black!50}{black!0}
\quandrantup{-50,-60,-70}{55}{black!70}{black!0}
\quandrantup{160,150}{45}{black!60}{black!0}
\quandrantup{140,130}{45}{black!50}{black!0}
\quandrantup{120,110}{45}{black!40}{black!0}
\quandrantup{100,90}{45}{black!30}{black!0}
\quandrantup{80,70,...,10}{45}{black!20}{black!0}
\quandrantup{0,-10}{45}{black!30}{black!0}
\quandrantup{-20,-30}{45}{black!40}{black!0}
\quandrantup{-40,-50}{45}{black!50}{black!0}
\quandrantup{-60,-70}{45}{black!60}{black!0}
\quandrantup{180,170}{35}{black!70}{black!0}
\quandrantup{160,150}{35}{black!50}{black!0}
\quandrantup{140,130}{35}{black!40}{black!0}
\quandrantup{120,110}{35}{black!30}{black!0}
\quandrantup{100,90,...,-10}{35}{black!20}{black!0}
\quandrantup{-20,-30}{35}{black!30}{black!0}
\quandrantup{-40,-50}{35}{black!40}{black!0}
\quandrantup{-60,-70}{35}{black!50}{black!0}
\quandrantup{-80,-90}{35}{black!70}{black!0}
\quandrantup{200,190}{25}{black!70}{black!0}
\quandrantup{180,170}{25}{black!50}{black!0}
\quandrantup{160,150}{25}{black!40}{black!0}
\quandrantup{140,130}{25}{black!30}{black!0}
\quandrantup{120,110}{25}{black!20}{black!0}
\quandrantup{100,90,...,-10}{25}{black!20}{black!0}
\quandrantup{-20,-30}{25}{black!20}{black!0}
\quandrantup{-40,-50}{25}{black!30}{black!0}
\quandrantup{-60,-70}{25}{black!40}{black!0}
\quandrantup{-80,-90}{25}{black!50}{black!0}
\quandrantup{-100,-110}{25}{black!70}{black!0}
\quandrantup{220,210}{15}{black!50}{black!0}
\quandrantup{200,190}{15}{black!40}{black!0}
\quandrantup{180,170}{15}{black!40}{black!0}
\quandrantup{160,150}{15}{black!30}{black!0}
\quandrantup{140,130}{15}{black!30}{black!0}
\quandrantup{120,110}{15}{black!20}{black!0}
\quandrantup{100,90,...,-10}{15}{black!20}{black!0}
\quandrantup{-20,-30}{15}{black!20}{black!0}
\quandrantup{-40,-50}{15}{black!30}{black!0}
\quandrantup{-60,-70}{15}{black!30}{black!0}
\quandrantup{-80,-90}{15}{black!40}{black!0}
\quandrantup{-100,-110}{15}{black!40}{black!0}
\quandrantup{-120,-130}{15}{black!50}{black!0}
\quandrantup{220,210}{5}{black!40}{black!0}
\quandrantup{200,190}{5}{black!30}{black!0}
\quandrantup{180,170}{5}{black!30}{black!0}
\quandrantup{160,150}{5}{black!30}{black!0}
\quandrantup{140,130}{5}{black!20}{black!0}
\quandrantup{120,110}{5}{black!20}{black!0}
\quandrantup{100,90,...,-10}{5}{black!20}{black!0}
\quandrantup{-20,-30}{5}{black!20}{black!0}
\quandrantup{-40,-50}{5}{black!20}{black!0}
\quandrantup{-60,-70}{5}{black!30}{black!0}
\quandrantup{-80,-90}{5}{black!30}{black!0}
\quandrantup{-100,-110}{5}{black!30}{black!0}
\quandrantup{-120,-130}{5}{black!40}{black!0}
\horcircle{-50,-40,...,140}{black!100}{75}
\draw[line width=1pt, opacity=1] (0,-2,-.2) .. controls (0,-1.7,.45)  and (0,-1.1,1) .. (0,-.7,1.07);
\draw[line width=1pt, opacity=1] (0,-.7,1.07) .. controls (0,-.3,1.2)  and (0,0,.85) .. (0,.2,.4);
\node at (0,-.7,1.07) [circle,opacity=1,draw=black,fill=black,minimum size=6pt, label=below:$g$] {};
\draw[line width=1pt, opacity=1] (.5,-1.2,1.35) .. controls (-.2,-1.5,1.25)  and (-.4,-1,1.55) .. (0,0,2);
\draw[line width=1pt, opacity=1] (0,0,2) .. controls (-.4,0,1.75)  and (-.5,.2,1.6) .. (-.6,.5,1.2);
\node at (0,0,2) [circle,opacity=1,draw=black,fill=black,minimum size=6pt,label=below:$e$] {};
\draw (0,1,1) node {\large{$G$}};
\pgfsetendarrow{\pgfarrowtriangle{3pt}}
\draw [line width=1.5, opacity=1] (0,-.7,1.07) -- (0,.1,1.25);
\draw [line width=1.5, opacity=1] (0,0,2) -- (-.6,.25,1.75);
\end{pgftranslate}
\end{tikzpicture}
\caption{Transportation of arcs and tangents in a Lie group.}
\label{figCh1relatedobjects}
\end{figure}
%&&&&&&&&&&&&&&&&&&&&&&&&&&&&&&&&&&&&&&&&&&&&&&&&&&&&&&&&&&&&&&&&&&&&&&&&&&&&&&&&&&
%&&&&&&&&&&&&&&&&&&&&&&&&&&&&&&&&&&&&&&&&&&&&&&&&&&&&&&&&&&&&&&&&&&&&&&&&&&&&&&&&&&
%&&&                    &&&       &&&&&    &&&                &&&&&&&&&&&&&&&&&&&&&&&&&&&&&&&&&&&&&&&&&&&&&&&&&&&
%&&&    &&&&&&&&&    &    &&&&   &&&    &&&&    &&&&&&&&&&&&&&&&&&&&&&&&&&&&&&&&&&&&&&&&&&&&&&&&&& 
%&&&    &&&&&&&&&    &&    &&&   &&&    &&&&&    &&&&&&&&&&&&&&&&&&&&&&&&&&&&&&&&&&&&&&&&&&&&&&&&& 
%&&&                 &&&&    &&&    &&   &&&    &&&&&    &&&&&&&&&&&&&&&&&&&&&&&&&&&&&&&&&&&&&&&&&&&&&&&&& 
%&&&    &&&&&&&&&    &&&&    &   &&&    &&&&&    &&&&&&&&&&&&&&&&&&&&&&&&&&&&&&&&&&&&&&&&&&&&&&&&& 
%&&&    &&&&&&&&&    &&&&&       &&&    &&&&     &&&&&&&&&&&&&&&&&&&&&&&&&&&&&&&&&&&&&&&&&&&&&&&&& 
%&&&                    &&&    &&&&&&    &&&                  &&&&&&&&&&&&&&&&&&&&&&&&&&&&&&&&&&&&&&&&&&&&&&&&&&
%&&&&&&&&&&&&&&&&&&&&&&&&&&&&&&&&&&&&&&&&&&&&&&&&&&&&&&&&&&&&&&&&&&&&&&&&&&&&&&&&&&
%&&&&&&&&&&&&&&&&&&&&&&&&&&&&&&&&&&&&&&&&&&&&&&&&&&&&&&&&&&&&&&&&&&&&&&&&&&&&&&&&&&
%}
\vspace*{-.5cm}

The prototypical example of a real Lie group is the {\bf real general linear group} $GL_{n,\RR}$ of invertible $n\times n$ matrices over $\RR$, viewed as a submanifold of real Euclidean space $\RR^{n^2}$, with group operation given by matrix multiplication.  The Lie algebra $\mfr{gl}_{n,\RR}$ of $GL_{n,\RR}$ may be identified with the vector space of all $n\times n$ matrices over $\RR$, where the Lie bracket $[A,B]$ of two matrices $A$ and $B$ in $\mfr{gl}_{n,\RR}$ is defined to be their {\bf commutator} $AB-BA$.  

{\bf Generalization: General Linear Group Functor.}  Now for any integer $n\ge1$, and any commutative unital ring $R$, the {\bf general linear group} $\tn{GL}_{n,R}$ of $R$ is the group of invertible $n\times n$ matrices with entries in $R$.   If $\phi:R\rightarrow R'$ is a homomorphism of commutative unital rings, then there exists a group homorphism $\tn{GL}_{n,\phi}:\tn{GL}_{n,R}\rightarrow \tn{GL}_{n,R'}$ sending each matrix $\{r_{ij}\}$ to the matrix $\{\phi(r_{ij})\}$.  The assignment $\tn{GL}_n$ sending $R$ to $\tn{GL}_{n,R}$ and $\phi$ to $\tn{GL}_{n,\phi}$ is a functor from the category $\mc{R}$ of commutative unital rings to the category $\mc{G}$ of groups, called the {\bf general linear group functor}.   The case $n=1$ is the familiar {\bf multiplicative group functor}, which sends a ring $R$ to its multiplicative group $R^*$ of invertible elements. 

Consider the ring $R_\ee:=R[\ee]/\ee^2$ of dual numbers over $R$, along with the canonical morphism $R_\ee\rightarrow R$ in the category $\mc{R}$ sending $\ee$ to zero.   The functor $\tn{GL}_n$ may be applied to this homomorphism, and the kernel of the resulting group homomorphism is canonically isomorphic to the underlying additive group of the Lie algebra $\mfr{gl}_{n,R}$:
\begin{equation}\label{equliealgebratangent}\mfr{gl}_{n,R}^+\cong\mbox{Ker}\big[\tn{GL}_{n,R_\ee}\overset{\ee\to0}{\rightarrow}\tn{GL}_{n,R}\big].\end{equation}
To see this, consider more carefully the group homomorphism $\tn{GL}_{n,R_\ee}\overset{\ee\to0}{\rightarrow} \tn{GL}_{n,R}$.  An element of the source is an $n\times n$ invertible matrix over $R_\ee$.  Any such matrix may be split into a sum of the form $M+N\ee$, where $M$ and $N$ are $n\times n$ matrices over $R$.   Invertibility requires $M$ itself to be invertible, in which case the inverse exists and is of the form $M^{-1}-M^{-1}NM^{-1}\ee$.   The matrix $N$ may be any $n\times n$ matrix over $R$.  The set of such matrices is the underlying set of the Lie algebra $\mfr{gl}_{n,R}$.  The image of the matrix $M+N\ee$ is $M$, so this matrix belongs to the kernel if and only if $M$ is the $n\times n$ identity matrix $I_n$.  Therefore, the kernel is the multiplicative subgroup of matrices $\{I_n+N\ee\hspace*{.05cm}\}$ which is isomorphic to the underlying additive group of $\mfr{gl}_{n,R}$ via the map $I_n+N\ee\mapsto N$, which carries multiplication to addition. 

The assignment sending a ring $R$ to the additive group $\mfr{gl}_{n,R}^+$, and a morphism $\phi:R\rightarrow R'$ to the additive group homomorphism $\{r_{ij}\}\mapsto\{\phi(r_{ij})\}$, is itself a functor from $\mc{R}$ to $\mc{G}$, which I will denote by $\mfr{gl}_{n}^+$.  Since the relationship between the functors $\tn{GL}_{n,R}$ and $\mfr{gl}_{n,R}^+$, generalizes the relationship between a Lie group and its tangent space at the identity, it is reasonable to regard $\mfr{gl}_{n}^+$ as the tangent functor of $\tn{GL}_{n}$.  At a formal level, the tangent groups at the identity $T\tn{Ch}_X^p$ of the Chow groups $\tn{Ch}_X^p$ arise by application of an analogous tangent functor. 

%SSSSSSSSSSSSSSSSSSSSSSSSSSSSSSSSSSSSSSSSSSSSSSSSSSSSSSSSSSSSSSSSSSSSSSSSSSSSSSSSSS
%SSSSSSSSSSSSSSSSSSSSSSSSSSSSSSSSSSSSSSSSSSSSSSSSSSSSSSSSSSSSSSSSSSSSSSSSSSSSSSSSSS
%SSS                      SSS                       SSS                    SS                          SS           SSS                         SSS          SSSSS      SSSS
%SSS      SSSSSSSSS    SSSSSSSSSS     SSSSSSSSSSSS     SSSSSSS     SSSS    SSSSSS     SSS     S     SSSS     SSSSS
%SSS      SSSSSSSSS    SSSSSSSSSS     SSSSSSSSSSSS     SSSSSSS     SSSS    SSSSSS     SSS     SS     SSS     SSSSS
%SSS                      SSS                  SSSSS    SSSSSSSSSSSS     SSSSSSS     SSSS    SSSSSS     SSS     SSS     SS     SSSSS
%SSSSSSSSSS    SSS    SSSSSSSSSS    SSSSSSSSSSSS     SSSSSSS     SSSS    SSSSSS     SSS     SSSS     S     SSSSS
%SSSSSSSSSS    SSS    SSSSSSSSSS    SSSSSSSSSSSS     SSSSSSS     SSSS    SSSSSS     SSS     SSSSS          SSSSS
%SSS                       SSS                      SSS                       SSSSS     SSSSSS          SSS                         SSS     SSSSSS        SSSSS
%SSSSSSSSSSSSSSSSSSSSSSSSSSSSSSSSSSSSSSSSSSSSSSSSSSSSSSSSSSSSSSSSSSSSSSSSSSSSSSSSSS
%SSSSSSSSSSSSSSSSSSSSSSSSSSSSSSSSSSSSSSSSSSSSSSSSSSSSSSSSSSSSSSSSSSSSSSSSSSSSSSSSSS

\subsection{Definition of the Tangent Group $T\tn{Ch}_X^p$}\label{sectiondefitangroup}

The definition of the tangent group at the identity $T\tn{Ch}_X^p$ of the Chow group $\tn{Ch}_X^p$ is actually somewhat subtle, since it requires a choice of {\it extension of the Chow functor} $\tn{Ch}^p$.  The definition also involves algebraic $K$-theory, which I have thus far only mentioned parenthetically.   $K$-theory is discussed in more detail in section \hyperref[sectionKtheory]{\ref{sectionKtheory}}.  It is useful, however, to give the definition of $T\tn{Ch}_X^p$ here as a motivating guide to the subsequent $K$-theoretic material.  In this section, I also discuss why the choice of extension is not straightforward, and compares the definition used here to the definition of Green and Griffiths \cite{GreenGriffithsTangentSpaces05}. 

%SSSSSSSSSSSSSSSSSSSSSSSSSSSSSSSSSSSSSSSSSSSSSSSSSSSSSSSSSSSSSSSSSSSSSSSSSSSSSSSSSS
%SSSSSSSSSSSSSSSSSSSSSSSSSSSSSSSSSSSSSSSSSSSSSSSSSSSSSSSSSSSSSSSSSSSSSSSSSSSSSSSSSS
%SSS                      SSS                       SSS                    SS                          SS           SSS                         SSS          SSSSS      SSSS
%SSS      SSSSSSSSS    SSSSSSSSSS     SSSSSSSSSSSS     SSSSSSS     SSSS    SSSSSS     SSS     S     SSSS     SSSSS
%SSS      SSSSSSSSS    SSSSSSSSSS     SSSSSSSSSSSS     SSSSSSS     SSSS    SSSSSS     SSS     SS     SSS     SSSSS
%SSS                      SSS                  SSSSS    SSSSSSSSSSSS     SSSSSSS     SSSS    SSSSSS     SSS     SSS     SS     SSSSS
%SSSSSSSSSS    SSS    SSSSSSSSSS    SSSSSSSSSSSS     SSSSSSS     SSSS    SSSSSS     SSS     SSSS     S     SSSSS
%SSSSSSSSSS    SSS    SSSSSSSSSS    SSSSSSSSSSSS     SSSSSSS     SSSS    SSSSSS     SSS     SSSSS          SSSSS
%SSS                       SSS                      SSS                       SSSSS     SSSSSS          SSS                         SSS     SSSSSS        SSSSS
%SSSSSSSSSSSSSSSSSSSSSSSSSSSSSSSSSSSSSSSSSSSSSSSSSSSSSSSSSSSSSSSSSSSSSSSSSSSSSSSSSS
%SSSSSSSSSSSSSSSSSSSSSSSSSSSSSSSSSSSSSSSSSSSSSSSSSSSSSSSSSSSSSSSSSSSSSSSSSSSSSSSSSS

{\bf Definition of $T\tn{Ch}_X^p$.}  Bloch's theorem, first mentioned in equation \hyperref[blochstheoremintro]{\ref{blochstheoremintro}} in the introduction to chapter \hyperref[ChapterIntro]{\ref{ChapterIntro}}, expresses the Chow groups of $X$ as Zariski sheaf cohomology groups of the algebraic $K$-theory sheaves $\ms{K}_{p,X}$ on $X$:
\begin{equation}\label{equblochchowextension}\tn{Ch}_X^p=H_{\tn{\fsz{Zar}}}^p(X,\ms{K}_{p,X}).\end{equation}
As discussed below, the right-hand-side of Bloch's theorem \hyperref[equblochchowextension]{\ref{equblochchowextension}} is defined for more general choices of $X$ than the left-hand-side.  Hence, the right-hand-side of  \hyperref[equblochchowextension]{\ref{equblochchowextension}} represents an {\it extension of the Chow functor.}  This enables the following definition:

\begin{defi}\label{defitangentgroupchow} The tangent group at the identity $T\tn{Ch}_X^p$ of the $p$th Chow group $\tn{Ch}_X^p$ of a smooth algebraic variety $X$ is defined to be the image group of the tangent functor of Bloch's extension of the Chow functor \hyperref[equblochchowextension]{\ref{equblochchowextension}}, applied to $X$. This group may be expressed as the $p$th Zariski sheaf cohomology group of the corresponding tangent sheaf $T\ms{K}_p$ to algebraic $K$-theory:
\begin{equation}\label{equtangentgroupchow}T\tn{Ch}_X^p:=TH_{\tn{\fsz{Zar}}}^p(X,\ms{K}_{p,X})=H_{\tn{\fsz{Zar}}}^p(X,T\ms{K}_{p,X}).\end{equation}
Here, $\ms{K}_{p,X}$ is the sheaf of Bass-Thomason $K$-theory on $X$.\footnotemark\footnotetext{Quillen $K$-theory and Bass-Thomason $K$-theory are equivalent in this setting because $X$ is smooth.  Looking ahead, adding nilpotent elements requires a choice between the two theories, with Bass-Thomason $K$-theory  being the ``correct choice."}  The tangent sheaf $T\ms{K}_{p,X}$ is defined to be the kernel
\begin{equation}\label{equusualdeftangentfunctor}T\ms{K}_{p,X}:=\tn{Ker}\big[\ms{K}_{p,X_\ee}\rightarrow\ms{K}_{p,X}\big],\end{equation}
where $X_\ee$ means $X\times_{\tn{Spec}(k)}\tn{Spec } \big(k[\ee]/\ee^2\big)$.  
\end{defi}

Equation \hyperref[equusualdeftangentfunctor]{\ref{equusualdeftangentfunctor}} follows the ``usual definition of the tangent functor;" see for instance, \cite{BlochTangentSpace72}, page 205.  Moving the ``tangent operator" $T$ inside $H_{\tn{\fsz{Zar}}}^p$ in equation \hyperref[equtangentgroupchow]{\ref{equtangentgroupchow}} is justified because $H^p$ is a middle-exact functor.

%SSSSSSSSSSSSSSSSSSSSSSSSSSSSSSSSSSSSSSSSSSSSSSSSSSSSSSSSSSSSSSSSSSSSSSSSSSSSSSSSSS
%SSSSSSSSSSSSSSSSSSSSSSSSSSSSSSSSSSSSSSSSSSSSSSSSSSSSSSSSSSSSSSSSSSSSSSSSSSSSSSSSSS
%SSS                      SSS                       SSS                    SS                          SS           SSS                         SSS          SSSSS      SSSS
%SSS      SSSSSSSSS    SSSSSSSSSS     SSSSSSSSSSSS     SSSSSSS     SSSS    SSSSSS     SSS     S     SSSS     SSSSS
%SSS      SSSSSSSSS    SSSSSSSSSS     SSSSSSSSSSSS     SSSSSSS     SSSS    SSSSSS     SSS     SS     SSS     SSSSS
%SSS                      SSS                  SSSSS    SSSSSSSSSSSS     SSSSSSS     SSSS    SSSSSS     SSS     SSS     SS     SSSSS
%SSSSSSSSSS    SSS    SSSSSSSSSS    SSSSSSSSSSSS     SSSSSSS     SSSS    SSSSSS     SSS     SSSS     S     SSSSS
%SSSSSSSSSS    SSS    SSSSSSSSSS    SSSSSSSSSSSS     SSSSSSS     SSSS    SSSSSS     SSS     SSSSS          SSSSS
%SSS                       SSS                      SSS                       SSSSS     SSSSSS          SSS                         SSS     SSSSSS        SSSSS
%SSSSSSSSSSSSSSSSSSSSSSSSSSSSSSSSSSSSSSSSSSSSSSSSSSSSSSSSSSSSSSSSSSSSSSSSSSSSSSSSSS
%SSSSSSSSSSSSSSSSSSSSSSSSSSSSSSSSSSSSSSSSSSSSSSSSSSSSSSSSSSSSSSSSSSSSSSSSSSSSSSSSSS

{\bf Why not a direct definition in terms of $\tn{Ch}^p$?}\label{nonuniquechowextension}  Na\"{i}ve application of the ``usual definition of the tangent functor" suggests the following definition for $T\tn{Ch}^p$:
\begin{equation}\label{equnaivetangentchow}T\tn{Ch}_X^p\overset{?}{:=}\tn{Ker}\big[\tn{Ch}_{X_\ee}^p\rightarrow \tn{Ch}_X^p\big].\end{equation}
The problem with this definition is the {\it ambiguous meaning of $\tn{Ch}_{X_\ee}^p$} on the right-hand-side of the equation, since $X_\ee$ is a singular scheme.  For this definition to make sense, one must extend the definition of the Chow functor to a category including schemes like $X_\ee$.  The obvious choice, motivated again by Bloch's theorem, is to choose the extension
\begin{equation}\label{equnaiveextensionchow}\tn{Ch}_{X_{\ee}}^p\overset{?}{:=}H_{\tn{\fsz{Zar}}}^p(X,\ms{K}_{p,X_{\ee}}),\end{equation}
since the sheaf cohomology groups on the right-hand-side are well-defined.  However, there remains a uniqueness problem: there exist {\it other} functors besides $Y\mapsto H_{\tn{\fsz{Zar}}}^p(X,\mc{K}_{p,Y})$ extending $\tn{Ch}^p$ and defined on a category including $X_\ee$.  An important example is the functor $X\mapsto H_{\tn{\fsz{Zar}}}^p(X,\ms{K}_{p,X}^{\tn{M}})$, where $\mc{K}_p^{\tn{M}}$ is the $p$th {\it Milnor $K$-theory sheaf} on $X$, discussed in more detail in section \hyperref[sectionKtheory]{\ref{sectionKtheory}} below.  This is the extension used by Green and Griffiths.  

%SSSSSSSSSSSSSSSSSSSSSSSSSSSSSSSSSSSSSSSSSSSSSSSSSSSSSSSSSSSSSSSSSSSSSSSSSSSSSSSSSS
%SSSSSSSSSSSSSSSSSSSSSSSSSSSSSSSSSSSSSSSSSSSSSSSSSSSSSSSSSSSSSSSSSSSSSSSSSSSSSSSSSS
%SSS                      SSS                       SSS                    SS                          SS           SSS                         SSS          SSSSS      SSSS
%SSS      SSSSSSSSS    SSSSSSSSSS     SSSSSSSSSSSS     SSSSSSS     SSSS    SSSSSS     SSS     S     SSSS     SSSSS
%SSS      SSSSSSSSS    SSSSSSSSSS     SSSSSSSSSSSS     SSSSSSS     SSSS    SSSSSS     SSS     SS     SSS     SSSSS
%SSS                      SSS                  SSSSS    SSSSSSSSSSSS     SSSSSSS     SSSS    SSSSSS     SSS     SSS     SS     SSSSS
%SSSSSSSSSS    SSS    SSSSSSSSSS    SSSSSSSSSSSS     SSSSSSS     SSSS    SSSSSS     SSS     SSSS     S     SSSSS
%SSSSSSSSSS    SSS    SSSSSSSSSS    SSSSSSSSSSSS     SSSSSSS     SSSS    SSSSSS     SSS     SSSSS          SSSSS
%SSS                       SSS                      SSS                       SSSSS     SSSSSS          SSS                         SSS     SSSSSS        SSSSS
%SSSSSSSSSSSSSSSSSSSSSSSSSSSSSSSSSSSSSSSSSSSSSSSSSSSSSSSSSSSSSSSSSSSSSSSSSSSSSSSSSS
%SSSSSSSSSSSSSSSSSSSSSSSSSSSSSSSSSSSSSSSSSSSSSSSSSSSSSSSSSSSSSSSSSSSSSSSSSSSSSSSSSS

\label{comparegg}

{\bf Comparison to Green and Griffiths' definition using Milnor $K$-theory.}  I will now discuss the approach of Green and Griffiths in more detail, with further elaboration to come in section \hyperref[sectionKtheory]{\ref{sectionKtheory}}.  Green and Griffiths focus on the case of $\tn{Ch}_X^2$, where $X$ is a {\it smooth projective algebraic surface}, using the definition
\begin{equation}\label{equGGdef}T\tn{Ch}_{X,\tn{GG}}^2:=TH_{\tn{\fsz{Zar}}}^2(X,\ms{K}_{2,X}^{\tn{M}})=H_{\tn{\fsz{Zar}}}^2(X,T\ms{K}_2^{\tn{M}})=H_{\tn{\fsz{Zar}}}^2(X,\varOmega^1_{X/\QQ}),\end{equation}
where $\varOmega^1_{X/\QQ}$ is the sheaf of {\it absolute K\"{a}hler differentials on $X$}, discussed in more detail in section \hyperref[sectionKtheory]{\ref{sectionKtheory}}.  In this context, there is no difference between $K$-theory and Milnor $K$-theory, since the functors $K_2$ and $K_2^{\tn{M}}$ coincide for regular local rings.  However, in a more general context, the functors $X\mapsto H_{\tn{\fsz{Zar}}}^p(X,\ms{K}_{p,X}^{\tn{M}})$ {\it  contain less information} than the functors $X\mapsto H_{\tn{\fsz{Zar}}}^p(X,\ms{K}_{p,X})$, because Milnor $K$-theory only contributes to the highest-weight Adams eigenspace of Bass-Thomason $K$-theory.\footnotemark\footnotetext{In many cases, Milnor $K$-theory $\ms{K}_{p,X}^{\tn{M}}$ actually {\it coincides with} the highest-weight Adams eigenspace of $\ms{K}_{p,X}$, but this is not true in general.} For example,
\begin{equation}\label{equtangentk3}T\ms{K}_{3,X}^{\tn{M}}\cong\varOmega^2_{X/\QQ}\mbox{\hspace*{.5cm} but \hspace*{.5cm}} T\ms{K}_{3,X}\cong\varOmega^2_{X/\QQ}\oplus\ms{O}_X,\end{equation}
and the ``extra factor" $\ms{O}_X$ may lead to interesting invariants inaccessible to the approach of Green and Griffiths.  

%SSSSSSSSSSSSSSSSSSSSSSSSSSSSSSSSSSSSSSSSSSSSSSSSSSSSSSSSSSSSSSSSSSSSSSSSSSSSSSSSSS
%SSSSSSSSSSSSSSSSSSSSSSSSSSSSSSSSSSSSSSSSSSSSSSSSSSSSSSSSSSSSSSSSSSSSSSSSSSSSSSSSSS
%SSS                      SSS                       SSS                    SS                          SS           SSS                         SSS          SSSSS      SSSS
%SSS      SSSSSSSSS    SSSSSSSSSS     SSSSSSSSSSSS     SSSSSSS     SSSS    SSSSSS     SSS     S     SSSS     SSSSS
%SSS      SSSSSSSSS    SSSSSSSSSS     SSSSSSSSSSSS     SSSSSSS     SSSS    SSSSSS     SSS     SS     SSS     SSSSS
%SSS                      SSS                  SSSSS    SSSSSSSSSSSS     SSSSSSS     SSSS    SSSSSS     SSS     SSS     SS     SSSSS
%SSSSSSSSSS    SSS    SSSSSSSSSS    SSSSSSSSSSSS     SSSSSSS     SSSS    SSSSSS     SSS     SSSS     S     SSSSS
%SSSSSSSSSS    SSS    SSSSSSSSSS    SSSSSSSSSSSS     SSSSSSS     SSSS    SSSSSS     SSS     SSSSS          SSSSS
%SSS                       SSS                      SSS                       SSSSS     SSSSSS          SSS                         SSS     SSSSSS        SSSSS
%SSSSSSSSSSSSSSSSSSSSSSSSSSSSSSSSSSSSSSSSSSSSSSSSSSSSSSSSSSSSSSSSSSSSSSSSSSSSSSSSSS
%SSSSSSSSSSSSSSSSSSSSSSSSSSSSSSSSSSSSSSSSSSSSSSSSSSSSSSSSSSSSSSSSSSSSSSSSSSSSSSSSSS

\label{universal}

{\bf Desirability of a universal extension of $CH^p$.} In general, there are injections
\begin{equation}\label{injGGbassthomason}T\tn{Ch}_{X,\tn{GG}}^p\rightarrow T\tn{Ch}_X^p,\end{equation}
for every $X$ and $p$.  Hence, the functor $T\tn{Ch}^p$, as I have chosen to define it, ``captures at least as much information" as $T\tn{Ch}_{X,GG}^p$.  However, it is worth pointing out that I have not proven that my definition gives the {\it best possible} extension of $\tn{Ch}^p$ in this regard.  Ideally, one would like to have {\it universal extension functors} of $\tn{Ch}^p$ admitting injective maps from every other extension.   I am not sure how to define such functors; in particular, I see no {\it a priori} reason why they should come from sheaf cohomology.

\section{Algebraic $K$-Theory}\label{sectionKtheory}

The first three columns of the coniveau machine for codimension-$p$ cycles on a smooth $n$-dimensional algebraic variety $X$ are given by the Cousin resolutions of the algebraic $K$-theory sheaves $\ms{K}_{p,X}$ on $\tn{Zar}_X$.  For the first column, Daniel Quillen's version of $K$-theory suffices to describe the Cousin resolutions, since the local terms may be replaced by the $K$-groups of residue fields, using Quillen's {\it devissage} theorem.  For the second and third columns, which involve a singular ``thickenings" of $X$, {\it devissage} no longer applies, and it becomes necessary to use the nonconnective $K$-theory of Bass and Thomason.  One characteristic of this theory is the appearance of nontrivial $K$-groups in negative degrees.

\subsection{Preliminaries}\label{subsectionKtheoryprelim}

{\bf $K$-theory} is the study of certain integer-indexed families of abelian group-valued functors, called {\bf $K$-functors}, whose image objects are called the {\bf $K$-groups} of the objects in the source category.  These objects may be topological spaces, manifolds, rings, schemes, or special types of categories, such as exact categories or Waldhausen categories.   
$K$-theory organizes topological, geometric, or algebraic information about the objects in the source category, and provides useful invariants, which unfortunately are often difficult to compute.  Modern formulations of $K$-theory are often expressed in terms of a single functor into an appropriate category of topological spectra, with the integer-indexed, group-valued $K$-functors defined in a secondary fashion by taking homotopy groups.  In {\bf algebraic $K$-theory}, the objects of the source category are rings or schemes.  Important variants of algebraic $K$-theory include the symbolic $K$-theories introduced by Milnor, Dennis, Stein, Loday, and Beilinson; Quillen's $K$-theory and $G$-theory, defined in terms of topological classifying spaces; and the nonconnective $K$-theory of Bass and Thomason, based on Waldhausen's general construction.  Modern algebraic $K$-theory involves the intermediate step of associating a special category to each ring or scheme, such as the category of finitely-generated projective modules over a ring, or an appropriate category of perfect complexes of sheaves on a scheme.  General category-theoretic versions of $K$-theory are then applied, and the resulting $K$-groups are defined to be the $K$-groups of the ring or scheme. 

The foundations of $K$-theory were laid by Alexander Grothendieck in the mid 1950's, in the process of formulating his generalization of the Riemann-Roch theorem.  Characteristically, Grothendieck was thinking in very general terms, but being unsatisfied at the time with this particular facet of his work, he allowed some of his principal ideas about $K$-theory and associated category-theoretic notions to be published by others in more specific forms.   In particular, the {\bf Grothendieck group}, later recognized as the zeroth $K$-group, was introduced by Borel and Serre in their 1958 paper {\it Le th\'{e}or\`{e}me de Riemann-Roch} \cite{BorelSerreRR58}, which details Grothendieck's generalization of the Riemann-Roch theorem in the context of algebraic geometry.  This paper presents the Grothendieck group  as a particular quotient of the free abelian group generated by isomorphism classes of coherent algebraic sheaves on an algebraic variety.  Despite its algebraic beginnings, $K$-theory found many of its earliest applications in algebraic topology and differential geometry.  Atiyah, Hirzebruch,  Singer, and others developed the theory during the 1960's in the context of vector bundles on topological spaces.   The Atiyah-Singer index theorem, a further generalization of the Riemann-Roch theorem, relating analytic and topological data on a compact smooth manifold in differential geometry, is one of the major results related to this work.\footnotemark\footnotetext{On a side note, this theorem has become so prominent in superstring theory in physics that at least one string theorist has proposed renaming it the ``Atiyah-Singer string index theorem."}    

The early algebraic $K$-theory of rings, also developed in the 1960's, involves only the functors $K_0$ and $K_1$.  The theory of $K_0$ and $K_1$ is often called {\bf lower $K$-theory}.  $K_0$ sends a ring to the Grothendieck group of the set of isomorphism classes of its finitely generated projective modules.  $K_1$, introduced by Hyman Bass, sends a ring to the abelianization of its infinite general linear group.   The $K_1$-group of a ring is a generalization of its multiplicative group of invertible elements, and in many important cases the two groups are identical.  In 1970, John Milnor introduced a new functor $K_2$.  This functor sends a ring to the center of its Steinberg group.  In certain special cases, such as the case of local rings, $K_2$ admits a simple symbolic description.  {\bf Milnor $K$-theory} generalizes this special case, yielding Milnor $K$-functors $K_p^{\tn{M}}$, for all $p\ge0$.   Using the most na\"{i}ve definition,\footnotemark\footnotetext{Unfortunately, there is no consensus in the literature about how these functors should be defined for general rings.  I discuss this further in section \hyperref[subsectionsymbolic]{\ref{subsectionsymbolic}} below.} these functors assign to each ring the graded parts of a particular quotient of the tensor algebra over its multiplicative group of invertible elements.   Milnor $K$-theory is perhaps the most prominent version of {\bf symbolic $K$-theory}, which is a loose term I use here to refer to versions of $K$-theory whose $K$-groups may be conveniently expressed in terms of generators and relations, rather than requiring a homotopy-theoretic definition.  Milnor $K$-theory has far-reaching, and indeed surprising, importance, given its elementary definition.  For example, Milnor $K$-theory has deep connections to motivic cohomology.  However, Milnor $K$-theory suffers from some technical inadequacies as an extension of the lower $K$-functors $K_0$ and $K_1$.  

Daniel Quillen's 1972 paper {\it Higher algebraic $K$-theory} \cite{QuillenHigherKTheoryI72} is generally acknowledged as the beginning of modern algebraic $K$-theory.   In this paper, Quillen defined the Quillen $K$-functors $K_p^{\tn{\fsz{Q}}}$, for all $p\ge0$, which assign to each ring or scheme the homotopy groups of a particular infinite-dimensional topological classifying space.\footnotemark\footnotetext{I use the nonstandard notation $K_p^{\tn{\fsz{Q}}}$ for the Quillen functors, which are usually by just $K_p$, because the $K$-theory used in this book is usually {\it not} Quillen $K$-theory.}  In fact, Quillen gave two different $K$-theoretic constructions, called the {\it plus-construction} and the {\it $Q$-construction,} which give the same results for rings.   The plus-construction is defined directly in terms of the infinite general linear group of a ring.  The $Q$-construction is defined more generally for exact categories, and may therefore be applied to either rings or schemes, using appropriate exact categories as intermediary constructions.   For a ring, the exact category used is the category of finitely generated projective modules over the ring, and for a scheme, it is the category of locally free coherent sheaves on the scheme.   Quillen's approach produces a theory that is structurally superior to the earlier symbolic $K$-theories, but deep and difficult to analyze.  A variant of Quillen $K$-theory, called $G$-theory, may be defined for noetherian rings and schemes, by applying the $Q$-construction to different exact categories.   For a noetherian ring, the exact category used is the abelian category of all finitely-generated modules over the ring.   For a scheme, it is the abelian category of all coherent sheaves on the scheme.   For regular noetherian rings and schemes, the Quillen $K$-theory and $G$-theory functors produce the same groups, but this is not true in general.  Powerful theorems, which Quillen called the {\it devissage} and {\it localization} theorems, apply to Quillen $G$-theory, but not to Quillen $K$-theory.

Friedhelm Walhausen introduced a new version of $K$-theory in his 1985 paper {\it Algebraic $K$-Theory of Spaces} \cite{WaldhausenKTheoryofSpaces85}, based on a construction called the {\it $S$-construction.}  The $S$-construction defines a functor from a general type of category, called a {\it Waldhausen category}, into an appropriate category of topological spectra.   Waldhausen categories include exact categories as a special case, so the $S$-construction may be applied to rings and schemes as an alternative to the $Q$-construction.  In 1990, Robert Thomason defined an improved version of algebraic $K$-theory for algebraic schemes in his paper {\it Higher Algebraic $K$-Theory of Schemes and of Derived Categories} \cite{Thomason-Trobaugh90},\footnotemark\footnotetext{Thomason listed his deceased colleague Thomas Trobaugh as co-author, due to his appearance in a dream of Thomason's.} by applying Waldhausen's version of $K$-theory to a particular category of perfect complexes of sheaves on a scheme.  One characteristic of Thomason's approach is that it admits nontrivial $K$-groups in negative degrees; i.e., the associated spectrum is {\it nonconnective.}  Thomason draws heavily on similar earlier ideas of Hyman Bass \cite{BassKTheory68}.  The Bass-Thomason version of $K$-theory is the structurally ``correct" version for constructing the coniveau machine.  It is described in more detail in section \hyperref[subsubsectionbassthomason]{\ref{subsubsectionbassthomason}} below.

\subsection{Symbolic $K$-Theory}\label{subsectionsymbolic}

Milnor $K$-theory and Dennis-Stein $K$-theory are {\it symbolic $K$-theories;} an informal term which means, in this context, $K$-theories whose $K$-groups admit simple presentations via generators, called {\it symbols,} and relations.  More sophisticated ``modern" $K$-theories, such as Quillen's $K$-theory, Waldhausen's $K$-theory, and the amplification of Bass and Thomason in the context of algebraic schemes, have homotopy-theoretic definitions.  Symbolic $K$-theories have the advantage of being relatively elementary, but tend to lack certain desirable formal properties.  In this sense, they represent one extreme of the seemingly unavoidable tradeoff between accessibility and formal integrity in algebraic $K$-theory.   

In this section, I discuss Milnor $K$-theory, and mention a theorem expressing relative Milnor $K$-theory in terms of absolute K\"{a}hler differentials for a large class of commutative rings.  The primary purpose of this section is to make close contact with the approach of Green and Griffiths, who work exclusively in terms of Milnor $K$-theory.  Dennis-Stein theory plays an auxiliary role in the proof of some results about Milnor $K$-theory.  The material in this section closely follows my paper \cite{DribusMilnorK14}.

%SSSSSSSSSSSSSSSSSSSSSSSSSSSSSSSSSSSSSSSSSSSSSSSSSSSSSSSSSSSSSSSSSSSSSSSSSSSSSSSSSS
%SSSSSSSSSSSSSSSSSSSSSSSSSSSSSSSSSSSSSSSSSSSSSSSSSSSSSSSSSSSSSSSSSSSSSSSSSSSSSSSSSS
%SSS                      SSS                       SSS                    SS                          SS           SSS                         SSS          SSSSS      SSSS
%SSS      SSSSSSSSS    SSSSSSSSSS     SSSSSSSSSSSS     SSSSSSS     SSSS    SSSSSS     SSS     S     SSSS     SSSSS
%SSS      SSSSSSSSS    SSSSSSSSSS     SSSSSSSSSSSS     SSSSSSS     SSSS    SSSSSS     SSS     SS     SSS     SSSSS
%SSS                      SSS                  SSSSS    SSSSSSSSSSSS     SSSSSSS     SSSS    SSSSSS     SSS     SSS     SS     SSSSS
%SSSSSSSSSS    SSS    SSSSSSSSSS    SSSSSSSSSSSS     SSSSSSS     SSSS    SSSSSS     SSS     SSSS     S     SSSSS
%SSSSSSSSSS    SSS    SSSSSSSSSS    SSSSSSSSSSSS     SSSSSSS     SSSS    SSSSSS     SSS     SSSSS          SSSSS
%SSS                       SSS                      SSS                       SSSSS     SSSSSS          SSS                         SSS     SSSSSS        SSSSS
%SSSSSSSSSSSSSSSSSSSSSSSSSSSSSSSSSSSSSSSSSSSSSSSSSSSSSSSSSSSSSSSSSSSSSSSSSSSSSSSSSS
%SSSSSSSSSSSSSSSSSSSSSSSSSSSSSSSSSSSSSSSSSSSSSSSSSSSSSSSSSSSSSSSSSSSSSSSSSSSSSSSSSS

\label{subsubsectionMilnorK}

{\bf Milnor $K$-Theory in Terms of Tensor Algebras.} Milnor $K$-theory first appeared in John Milnor's 1970 paper {\it Algebraic $K$-Theory and Quadratic Forms} \cite{MilnorAlgebraicKTheoryQforms70}, in the context of fields.   Around the same time, Milnor, Steinberg, Matsumoto, Dennis, Stein, and others were studying the second $K$-group $K_{2,R}$ of a general ring $R$, defined by Milnor in 1967 as the center of the Steinberg group of $R$.  $K_{2,R}$ is often called ``Milnor's $K_2$" in honor of its discoverer, but is in fact the ``full $K_2$-group."  In particular, it is much more complicated in general than the ``second Milnor $K$-group" $K_{2,R}^{\tn{M}}$, defined below in terms of tensor algebras.  Adding further to the confusion of terminology, the two groups $K_{2,R}$ and $K_{2,R}^{\tn{M}}$ {\it are} equal in many important cases; in particular, when $R$ is a field, division ring, a local ring, or even a semilocal ring.\footnotemark\footnotetext{See \cite{WeibelKBook}, Chapter III, Theorem 5.10.5, page 43, for details.}  This result is usually called Matsumoto's theorem, since its original version was proved by Matsumoto, for fields, in an arithmetic setting.  Matsumoto's theorem was subsequently extended to division rings by Milnor, and finally to semilocal rings by Dennis and Stein. 

The following definition introduces the ``na\"{i}vest" version of Milnor $K$-theory:

\begin{defi}\label{defiMilnorK} Let $R$ be a commutative ring with identity, and let $R^*$ be its multiplicative group of invertible elements, viewed as a $\ZZ$-module. 
\begin{enumerate}
\item The {\bf Milnor $K$-ring} $K_R^{\tn{M}}$ of $R$ is the quotient
\[K_R^{\tn{M}}:=\frac{T_{R^*/\mathbb{Z}}}{I_{\tn{\fsz{St}}}}\]
of the tensor algebra $T_{R^*/\mathbb{Z}}$ by the ideal $I_{\tn{\fsz{St}}}$ generated by elements of the form $r\otimes(1-r)$.  
\item The $n$th {\bf Milnor $K$-group} $K_{R,n} ^{\tn{M}}$ of $R$, defined for $n\ge0$, is the $n$th graded piece of $K_R^{\tn{M}}$. 
\end{enumerate}
\end{defi}

For a general ring $k$, the tensor algebra $T_{R^*/\mathbb{Z}}$ of $R$ over $k$ is by definition the graded $k$-algebra whose zeroth graded piece is $k$, whose $n$th graded piece is the $n$-fold tensor product $R\otimes_k...\otimes_kR$ for $n\ge1$, and whose multiplicative operation is induced by the tensor product.  The subscript ``$\tn{St}$" assigned to the ideal $I_{\tn{\fsz{St}}}$ stands for ``Steinberg," since the defining relations $r\otimes(1-r)\sim0$ of $K_R^{\tn{M}}$ are called {\it Steinberg relations}.  The ring $K_R^{\tn{M}}$ is noncommutative, since concatenation of tensor products is noncommutative; more specifically, it is {\it anticommutative} if $R$ has ``enough units," in a sense made precise below.  The $n$th Milnor $K$-group $K_{R,n} ^{\tn{M}}$ is generated, under {\it addition} in $K_R^{\tn{M}}$, by equivalence classes of $n$-fold tensors $r_1\otimes...\otimes r_n$.   Such equivalence classes are denoted by symbols $\{r_1,...,r_n\}$, called {\it Steinberg symbols}.   When working with individual Milnor $K$-groups, the operation is usually viewed {\it multiplicatively,} and the identity element is usually denoted by $1$.   

There is no consensus in the literature about how the Milnor $K$-groups $K_n^{\tn{M}}(R)$ should be defined for general $n$ and $R$.   The definition I use here is the most na\"{i}ve one.  Its claim to relevance relies on foundational work by Steinberg, Milnor, Matsumoto, and others.  Historically, the Steinberg symbol arose as a map $R^*\times R^*\rightarrow K_{2,R}$, defined in terms of special matrices.   The properties of this map, including the relations satisfied by its images, may be analyzed concretely in terms of matrix properties.\footnotemark\footnotetext{See Weibel \cite{WeibelKBook} Chapter III, or Rosenberg \cite{RosenbergK94} Chapter 4 for details.}  In the case where $R$ is a field, Matsumoto's theorem states that the image of the Steinberg symbol map generates $K_{2,R}$, and that all the relations satisfied by elements of the image follow from the relations of the tensor product and the Steinberg relations.   This allows a simple re-definition of $K_{2,R}$ in terms of a tensor algebras when $R$ is a field, with the generators {\it renamed} Steinberg symbols.  Abstracting this result to general $n$ and $R$ leads to definition \hyperref[defiMilnorK]{\ref{defiMilnorK}} above.   However, it has been understood from the beginning that the resulting ``Milnor $K$-theory" is seriously deficient in many respects.   Quillen, Waldhausen, Bass, Thomason, and others have since addressed many of these deficiencies by defining more elaborate versions of $K$-theory, but there still remain many reasons why symbolic $K$-theories are of interest.  For example, they are closely connected to motivic cohomology, provide interesting approaches to the study of Chow groups and higher Chow groups, and arise in physical settings in superstring theory and elsewhere.  The viewpoint of the present paper, involving $\lambda$-decompositions, cyclic homology, and differential forms, is partly motivated by these considerations, particularly the theory of Chow groups. 

It is instructive to briefly examine a few different treatments of Milnor $K$-theory in the literature.  Weibel \cite{WeibelKBook} chooses to confine his definition of Milnor $K$-theory to the original context of fields (Chapter III, section 7), while defining Steinberg symbols more generally (Chapter IV, example 1.10.1, page 8), and also discussing many other types of symbols, including Dennis-Stein symbols (Chapter III, defnition 5.11, page 43), and Loday symbols (Chapter IV, exercise 1.22, page 122).\footnotemark\footnotetext{Interestingly, the Loday symbols project nontrivially into a range of different pieces of the $\lambda$-decomposition of Quillen $K$-theory.  See Weibel \cite{WeibelKBook} Chapter IV, example 5.11.1, page 52, for details.} Elbaz-Vincent and M\"{u}ller-Stach \cite{ElbazVincentMilnor02} define Milnor $K$-theory for general rings (Definition 1.1, page 180) in terms of generators and relations, but take the additive inverse relation of lemma \hyperref[lemrelationsstable]{\ref{lemrelationsstable}} as part of the definition.   The result is a generally nontrivial quotient of the Milnor $K$-theory of definition \hyperref[defiMilnorK]{\ref{defiMilnorK}} above.\\

\begin{example}\tn{Let $R=\ZZ_2[x]/x^2$.   The multiplicative group $R^*$ is isomorphic to $\ZZ_2$, generated by the element $r:=1+x$.  The Steinberg ideal is empty since $1-r$ is not a unit.  Hence, $K_{2,R}^{\tn{M}}$ is just $R^*\otimes_\ZZ R^*\cong\ZZ_2$, generated by the symbol $\{r,r\}=\{r,-r\}$, while Elbaz-Vincent and M\"{u}ller-Stach's corresponding group is trivial.   By contrast, the additive inverse relation $\{r,-r\}=1$ always holds if one uses the original definition of Steinberg symbols in terms of matrices; see Weibel \cite{WeibelKBook} Chapter III, remark 5.10.4, page 43.  This may be interpreted as an indication that this relation is a desirable property for ``enhanced" versions of  Milnor $K$-theory.}
\end{example}
\hspace{16.3cm} $\oblong$

Moritz Kerz \cite{KerzMilnorLocal} has suggested an ``improved version of Milnor $K$-theory," motivated by a desire to correct certain formal shortcomings of the ``na\"{i}ve" version defined in terms of the tensor product.  For example, this na\"{i}ve version fails to satisfy the {\it Gersten conjecture}.   Thomason \cite{ThomasonNoMilnor92} has shown that Milnor $K$-theory does not extend to a theory of smooth algebraic varieties with desirable properties such as $\mbb{A}^1$-homotopy invariance and functorial homomorphisms to more complete version of $K$-theory. Hence, the proper choice of definition depends on what properties and applications one wishes to study.

%SSSSSSSSSSSSSSSSSSSSSSSSSSSSSSSSSSSSSSSSSSSSSSSSSSSSSSSSSSSSSSSSSSSSSSSSSSSSSSSSSS
%SSSSSSSSSSSSSSSSSSSSSSSSSSSSSSSSSSSSSSSSSSSSSSSSSSSSSSSSSSSSSSSSSSSSSSSSSSSSSSSSSS
%SSS                      SSS                       SSS                    SS                          SS           SSS                         SSS          SSSSS      SSSS
%SSS      SSSSSSSSS    SSSSSSSSSS     SSSSSSSSSSSS     SSSSSSS     SSSS    SSSSSS     SSS     S     SSSS     SSSSS
%SSS      SSSSSSSSS    SSSSSSSSSS     SSSSSSSSSSSS     SSSSSSS     SSSS    SSSSSS     SSS     SS     SSS     SSSSS
%SSS                      SSS                  SSSSS    SSSSSSSSSSSS     SSSSSSS     SSSS    SSSSSS     SSS     SSS     SS     SSSSS
%SSSSSSSSSS    SSS    SSSSSSSSSS    SSSSSSSSSSSS     SSSSSSS     SSSS    SSSSSS     SSS     SSSS     S     SSSSS
%SSSSSSSSSS    SSS    SSSSSSSSSS    SSSSSSSSSSSS     SSSSSSS     SSSS    SSSSSS     SSS     SSSSS          SSSSS
%SSS                       SSS                      SSS                       SSSSS     SSSSSS          SSS                         SSS     SSSSSS        SSSSS
%SSSSSSSSSSSSSSSSSSSSSSSSSSSSSSSSSSSSSSSSSSSSSSSSSSSSSSSSSSSSSSSSSSSSSSSSSSSSSSSSSS

\label{subsubsectionMilnorKgenerators}

{\bf Milnor $K$-Theory in Terms Generators and Relations.}  It is sometimes convenient to forget about the tensor algebra definition \hyperref[defiMilnorK]{\ref{defiMilnorK}} of Milnor $K$-theory, and describe the groups $K_{n,R} ^{\tn{M}}$ abstractly in terms of generators and relations.  The generators are the Steinberg symbols discussed in section \hyperref[subsubsectionMilnorK]{\ref{subsubsectionMilnorK}} above, and the relations are those arising from the tensor algebra and the Steinberg ideal $I_{\tn{\fsz{St}}}$.  

\begin{lem}\label{lemMilnorrelations}As an abstract multiplicative group, $K_{n,R} ^{\tn{M}}$ is generated by the Steinberg symbols $\{r_1,...,r_n\}$, where $r_j\in R^*$ for all $j$, subject to the relations
\begin{enumerate}
\addtocounter{enumi}{-1}
\item $K_{n,R} ^{\tn{M}}$ is abelian.
\item Multiplicative relation: $\{...,r_jr_j',...\}\{...,r_j,...\}^{-1}\{...,r_j',...\}^{-1}=1$.
\item Steinberg relation: $\{...,r,1-r,...\}=1$.
\end{enumerate}
\end{lem}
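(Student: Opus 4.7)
The plan is to unpack the definition of $K_{n,R}^{\tn{M}}$ as the $n$-th graded piece of $T_{R^*/\ZZ}/I_{\tn{St}}$ and check that the universal properties of the tensor product together with the Steinberg ideal give exactly the stated presentation. First I would observe that the $n$-th graded piece of $T_{R^*/\ZZ}$ is, by definition, the $n$-fold tensor product $R^*\otimes_\ZZ\cdots\otimes_\ZZ R^*$, which as an abelian group is generated by the simple tensors $r_1\otimes\cdots\otimes r_n$, renamed $\{r_1,\ldots,r_n\}$. The underlying additive group structure of the tensor algebra becomes, in the multiplicative notation for individual Milnor $K$-groups adopted just before the lemma, the multiplicative group operation on $K_{n,R}^{\tn{M}}$; this immediately yields relation $0$, since the tensor product of $\ZZ$-modules is a $\ZZ$-module and in particular abelian.

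Next I would verify relation $1$. The universal $\ZZ$-multilinear property of the tensor product gives, for each slot $j$, the identity $r_1\otimes\cdots\otimes(r_jr_j')\otimes\cdots\otimes r_n = r_1\otimes\cdots\otimes r_j\otimes\cdots\otimes r_n + r_1\otimes\cdots\otimes r_j'\otimes\cdots\otimes r_n$, since the group operation in $R^*$ is multiplication and tensor products are $\ZZ$-bilinear. Rewritten multiplicatively, this is precisely the stated relation. For relation $2$, any element $\{\ldots,r,1-r,\ldots\}$ can be factored (using the associativity of the tensor product on $T_{R^*/\ZZ}$) as a product $a\otimes(r\otimes(1-r))\otimes b$ with $a,b\in T_{R^*/\ZZ}$; since $r\otimes(1-r)$ generates the two-sided ideal $I_{\tn{St}}$ by definition, this element lies in $I_{\tn{St}}$ and therefore represents the identity in the quotient.

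For completeness, I would argue that no further relations are needed by giving the universal mapping property. Let $F$ be the abelian group presented by symbols $\{r_1,\ldots,r_n\}$ subject to relations $0$ and $1$; the multilinearity embodied in relation $1$ is precisely the definition of the tensor product, so there is a canonical isomorphism $F\cong R^*\otimes_\ZZ\cdots\otimes_\ZZ R^*$, which is the $n$-th graded piece of $T_{R^*/\ZZ}$. Imposing relation $2$ further quotients by the subgroup generated by all symbols of the form $\{\ldots,r,1-r,\ldots\}$ with $r,1-r\in R^*$. I would then check that this subgroup agrees with the $n$-th graded piece of the two-sided ideal $I_{\tn{St}}$: any homogeneous element of $I_{\tn{St}}$ of degree $n$ is a $\ZZ$-linear combination of expressions $s\otimes(r\otimes(1-r))\otimes t$ with $s$ and $t$ homogeneous of total degree $n-2$, and modulo the multilinearity already built into $F$, each such expression is a product of symbols of the form $\{\ldots,r,1-r,\ldots\}$.

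The main obstacle, and essentially the only nontrivial step, is this last identification of the degree-$n$ part of the two-sided ideal $I_{\tn{St}}$ with the subgroup of $(R^*)^{\otimes n}$ generated by symbols with consecutive Steinberg pairs. The issue is bookkeeping: one must verify that expanding products $s\otimes r\otimes(1-r)\otimes t$ for tensors $s,t$ that are themselves $\ZZ$-linear combinations of simple tensors still produces only the claimed subgroup, which reduces (via the multilinear relation already verified) to the assertion that the Steinberg pair can be placed in any two adjacent slots, with arbitrary units filling the remaining slots. Since the tensor product is distributive and each $s$ (respectively $t$) decomposes into simple tensors, this reduction is routine, and it shows that relation $2$ as stated exhausts all consequences of $I_{\tn{St}}$ in degree $n$, finishing the proof.
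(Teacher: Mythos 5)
Your proposal is correct and follows exactly the route the paper takes: the paper's proof is the one-line observation that the lemma follows directly from Definition \ref{defiMilnorK} and the properties of the tensor algebra, and your argument is simply the careful expansion of that observation (abelianness from the $\ZZ$-module structure, relation 1 from multilinearity, relation 2 from the Steinberg ideal, and the identification of the degree-$n$ part of $I_{\tn{\fsz{St}}}$ with the subgroup generated by symbols containing a consecutive Steinberg pair). No gaps; your final reduction of arbitrary homogeneous elements of the ideal to simple tensors via distributivity is precisely the "properties of the tensor algebra" the paper invokes.
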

\begin{proof} This follows directly from definition \hyperref[defiMilnorK]{\ref{defiMilnorK}} and the properties of the tensor algebra. 
\end{proof}
In the Steinberg relation, the elements $r$ and $1-r$ may appear in any pair of {\it consecutive} entries of the Steinberg symbol $\{r_1,...,r_n\}$.

\label{subsubsectionMilnorKfirstfew}

\begin{example}\label{examplefirstfewmilnor} (The First Few Milnor $K$-Groups.) \tn{For future reference, I explicitly describe the first few Milnor $K$-groups of a commutative ring:}

\tn{The {\bf zeroth Milnor $K$-group} $K_{0,R}^{\tn{M}}$ of $R$ is equal to the underlying additive group $\ZZ^+$ of the integers, since $\ZZ$ is the ring over which the tensor algebra $T_{R^*/\mathbb{Z}}$ is defined.   If $R$ is a local ring, the zeroth Milnor $K$-group of $R$ is isomorphic to the Grothendieck group $K_{0,R}$ of $R$.   This is because every finitely generated projective module over a local ring is free, and is therefore entirely specified by its rank.  Therefore, the rank map $K_{0,R}\rightarrow\ZZ=K_{0,R}^{\tn{M}}$ is an isomorphism.   This relationship breaks down for nonlocal rings.  For instance, suppose $R$ is the ring of algebraic integers in a number field $F$.   This means that $F$ is a finite algebraic extension of the rational numbers, and $R$ is the integral closure of $\ZZ$ in $F$.  Then the Grothendieck group $K_{0,R}$ of $R$ is the direct sum $\ZZ\oplus C_R$, where $C_R$ is the ideal class group of $R$.   Since it is the zeroth graded piece of the graded ring $K_{R}^{\tn{M}}$, the zeroth Milnor $K$-group $K_{0}^{\tn{M}}$ is itself a ring, with multiplication inherited from $K_{R}^{\tn{M}}$.  This multiplication is the usual multiplication in $\ZZ$.}

\tn{The {\bf first Milnor $K$-group} $K_{1,R}^{\tn{M}}$ of $R$ is the multiplicative group $R^*$ of invertible elements of $R$, with group operation given by the usual multiplication in $R^*$.  This means that {\it addition} in the Milnor $K$-ring $K_{R}^{\tn{M}}$, restricted to the first graded piece $K_{1,R}^{\tn{M}}$, corresponds to {\it multiplication} in the original ring $R$; i.e., $\{r\}+\{r'\}=\{rr'\}$.   Multiplication in the Milnor $K$-ring arises from the tensor product, and has nothing to do with multiplication in $R$; for example, $\{r\}\times\{r'\}=\{r,r'\}$.   If $R$ is a commutative local ring, the first Milnor $K$-group of $R$ is isomorphic to the first $K$-group $K_{1,R}$ of $R$, as defined by Bass.   The isomorphism comes from the determinant map $GL_R\rightarrow R^*$, which descends to the abelianization $K_{1,R}$ of $GL_R$.   This relationship breaks down for nonlocal rings.   In general, $K_{1,R}$ is the direct sum $R^*\oplus SK_{1,R}$, where $SK_{1,R}$ is the abelianization of the special linear group $SL_R$.   For instance, if $R$ is the ring of algebraic integers in a number field $F$, the elements of $SK_{1,R}$ are given by {\it Mennicke} symbols, which are classes of $2\times 2$ matrices in $SK_{1,R}$.} 

\tn{The {\bf second Milnor $K$-group} $K_{2,R}^{\tn{M}}$ of $R$ was discussed in section \hyperref[subsubsectionMilnorK]{\ref{subsubsectionMilnorK}} above.  It is generated by Steinberg symbols of the form $\{r,r'\}$, where $r$ and $r'$ are invertible elements of $R$.   Since the ideal $I_{\tn{\footnotesize{St}}}$, appearing in the definition $K_R ^{\tn{M}}=T_{R^*/\mathbb{Z}}/{I_{\tn{\footnotesize{St}}}}$ of the Milnor $K$-ring, is generated by $2$-tensors of the form $r\otimes(1-r)$, the ``lowest-degree effects" of taking the quotient of the tensor algebra by $I_{\tn{\footnotesize{St}}}$ appear in the second graded piece of $K_{2,R}^{\tn{M}}$ of $K_R ^{\tn{M}}$.}

\end{example}

\label{subsubsectionDSBL}

{\bf Dennis-Stein-Beilinson-Loday $K$-Theory.} An alternative version of symbolic $K$-theory, which I will call Dennis-Stein-Beilinson-Loday $K$-theory, plays a minor role in section \hyperref[subsectionGoodwillieMilnor]{\ref{subsectionGoodwillieMilnor}} below.  It will suffice to define only the second Dennis-Stein-Beilinson-Loday $K$-group, and the corresponding relative groups.  Dennis and Stein initially considered a symbolic version of $K_2$ in their 1973 paper {\it $K_2$ of radical ideals and semi-local rings revisited} \cite{DennisStein}.  Their definition was later generalized to higher $K$-theory by Loday and Beilinson.\footnotemark\footnotetext{Loday \cite{LodayCyclicHomology98}, section 11.2, exercise E.11.1.1 and section 11.2 exercise E.11.2.3.}
\begin{defi}\label{defidennisstein} Let $R$ be a commutative ring with identity, and let $R^*$ be its multiplicative group of invertible elements.  The second {\bf Dennis-Stein-Beilinson-Loday $K$-group} $D_{2,R}$ of $R$ is the multiplicative abelian group whose generators are symbols $\langle a,b\rangle$ for each pair of elements $a$ and $b$ in $R$ such that $1+ab\in R^*$, subject to the additional relations
\begin{enumerate}
\item $\langle a,b\rangle\langle -b,-a\rangle=1.$
\item $\langle a,b\rangle\langle a,c\rangle=\langle a,b+c+abc\rangle.$
\item $\langle a,bc\rangle=\langle ab,c\rangle\langle ac,b\rangle.$
\end{enumerate}
\end{defi}

This definition appears in Maazen and Stienstra \cite{MaazenStienstra77} definition 2.2, page 275, and Van der Kallen \cite{VanderKallenRingswithManyUnits77}, page 488.

\subsection{Nonconnective $K$-Theory of Bass and Thomason}\label{subsectionconnectivenonconnective}

{\bf Topological Spectra.}  Modern $K$-theory is often described in terms of {\it topological spectra,} which are sequences of pointed spaces, equipped with {\it structure maps} sending the reduced suspension of one space to the next space.  In this context, the familiar $K$-theory groups are the homotopy groups of $K$-theory spectra, computed via {\it stable homotopy theory.}  Section \hyperref[subsecspectra]{\ref{subsecspectra}} of the appendix presents the necessary background on spectra. 

%SSSSSSSSSSSSSSSSSSSSSSSSSSSSSSSSSSSSSSSSSSSSSSSSSSSSSSSSSSSSSSSSSSSSSSSSSSSSSSSSSS
%SSSSSSSSSSSSSSSSSSSSSSSSSSSSSSSSSSSSSSSSSSSSSSSSSSSSSSSSSSSSSSSSSSSSSSSSSSSSSSSSSS
%SSS                      SSS                       SSS                    SS                          SS           SSS                         SSS          SSSSS      SSSS
%SSS      SSSSSSSSS    SSSSSSSSSS     SSSSSSSSSSSS     SSSSSSS     SSSS    SSSSSS     SSS     S     SSSS     SSSSS
%SSS      SSSSSSSSS    SSSSSSSSSS     SSSSSSSSSSSS     SSSSSSS     SSSS    SSSSSS     SSS     SS     SSS     SSSSS
%SSS                      SSS                  SSSSS    SSSSSSSSSSSS     SSSSSSS     SSSS    SSSSSS     SSS     SSS     SS     SSSSS
%SSSSSSSSSS    SSS    SSSSSSSSSS    SSSSSSSSSSSS     SSSSSSS     SSSS    SSSSSS     SSS     SSSS     S     SSSSS
%SSSSSSSSSS    SSS    SSSSSSSSSS    SSSSSSSSSSSS     SSSSSSS     SSSS    SSSSSS     SSS     SSSSS          SSSSS
%SSS                       SSS                      SSS                       SSSSS     SSSSSS          SSS                         SSS     SSSSSS        SSSSS
%SSSSSSSSSSSSSSSSSSSSSSSSSSSSSSSSSSSSSSSSSSSSSSSSSSSSSSSSSSSSSSSSSSSSSSSSSSSSSSSSSS
%SSSSSSSSSSSSSSSSSSSSSSSSSSSSSSSSSSSSSSSSSSSSSSSSSSSSSSSSSSSSSSSSSSSSSSSSSSSSSSSSSS

{\bf Waldhausen's Version of $K$-Theory.}  The details of Waldhausen's construction of $K$-theory appear in Waldhausen and also in the first section of Thomason \cite{Thomason-Trobaugh90}.  I include here a thumbnail sketch for the purposes of comparison to Keller's {\it mixed negative cyclic homology}, defined in terms of {\it localization pairs}.   One begins with a complicial biWaldhausen category $\mbf{A}$, which is a special subcategory of the category of chain complexes of some abelian category $\mc{A}$.  The category $\mbf{A}$ is equipped with {\it cofibrations}, which are morphisms analogous to injections or inflations, and {\it weak equivalences}, which are morphisms analogous to quasi-isomorphisms.  See Thomason \cite{Thomason-Trobaugh90} pages 253-254 for precise statements.  

Next, one performs the ``$S$-dot construction" to obtain a simplicial category $\mbf{w}S_\bullet \mbf{A}$, whose objects in degree $n$ are functors from special partially ordered sets to $\mbf{A}$, ``roughly equivalent'' to chains $A_1\rightarrow A_2\rightarrow...\rightarrow A_n$ of length $n$ of cofibrations in $\mbf{A}$, and whose morphisms are special natural transformations.  See Thomason \cite{Thomason-Trobaugh90} pages 259-260 for precise statements.  Taking the nerve of $\mbf{w}S_\bullet \mbf{A}$ in each degree; i.e. considering separately the subcategories $\mbf{w}S_n \mbf{A}$ of $\mbf{w}S_\bullet \mbf{A}$ for each $n$ and forming the simplicial set whose $m$-simplices are chains of length $m$ of functors and natural transformations as described above yields a bisimplicial set $N_\bullet\mbf{w}S_\bullet \mbf{A}$, where the ``N" stands for ``nerve."   The loop space $\Omega|N_\bullet\mbf{w}S_\bullet \mbf{A}|$ on the geometric realization $|N_\bullet\mbf{w}S_\bullet \mbf{A}|$ of this bisimplicial set is the zeroth space of a spectrum $\mbf{K}_{\mbf{A}}$ called the $K$-theory spectrum of $\mbf{A}$.  

%SSSSSSSSSSSSSSSSSSSSSSSSSSSSSSSSSSSSSSSSSSSSSSSSSSSSSSSSSSSSSSSSSSSSSSSSSSSSSSSSSS
%SSSSSSSSSSSSSSSSSSSSSSSSSSSSSSSSSSSSSSSSSSSSSSSSSSSSSSSSSSSSSSSSSSSSSSSSSSSSSSSSSS
%SSS                      SSS                       SSS                    SS                          SS           SSS                         SSS          SSSSS      SSSS
%SSS      SSSSSSSSS    SSSSSSSSSS     SSSSSSSSSSSS     SSSSSSS     SSSS    SSSSSS     SSS     S     SSSS     SSSSS
%SSS      SSSSSSSSS    SSSSSSSSSS     SSSSSSSSSSSS     SSSSSSS     SSSS    SSSSSS     SSS     SS     SSS     SSSSS
%SSS                      SSS                  SSSSS    SSSSSSSSSSSS     SSSSSSS     SSSS    SSSSSS     SSS     SSS     SS     SSSSS
%SSSSSSSSSS    SSS    SSSSSSSSSS    SSSSSSSSSSSS     SSSSSSS     SSSS    SSSSSS     SSS     SSSS     S     SSSSS
%SSSSSSSSSS    SSS    SSSSSSSSSS    SSSSSSSSSSSS     SSSSSSS     SSSS    SSSSSS     SSS     SSSSS          SSSSS
%SSS                       SSS                      SSS                       SSSSS     SSSSSS          SSS                         SSS     SSSSSS        SSSSS
%SSSSSSSSSSSSSSSSSSSSSSSSSSSSSSSSSSSSSSSSSSSSSSSSSSSSSSSSSSSSSSSSSSSSSSSSSSSSSSSSSS
%SSSSSSSSSSSSSSSSSSSSSSSSSSSSSSSSSSSSSSSSSSSSSSSSSSSSSSSSSSSSSSSSSSSSSSSSSSSSSSSSSS

\label{subsubsectionperfect}

{\bf Perfect Complexes.}  The category of perfect complexes on a quasi-compact separated scheme is central both to Bass and Thomason's version of $K$-theory, and to Keller's version of negative cyclic homology for schemes.  Here I briefly introduce the definitions, along with a few guiding remarks about some of the major references, for the convenience of the reader.  In this section, $X$ is a scheme over a field $k$ of characteristic zero, with structure sheaf $\ms{O}_X$.   Most of the theory applies much more generally, but it is convenient to follow some of the references in adopting a rather specific view at present. 

 A {\bf strictly perfect complex} of $\ms{O}_X$-modules on $X$ is a bounded complex of algebraic vector bundles on $X$.  An {\bf algebraic vector bundle} is a locally free sheaf of $\ms{O}_X$-modules, of finite rank.  The definition of strictly perfect complexes I have given here is the one used by Thomason \cite{Thomason-Trobaugh90}, 2.2.2, page 285, which is drawn directly from SGA, \cite{SGA671} I, 2.1.  A {\bf bounded} complex, in this context, is simply a complex with a finite number of nonzero terms.  Thomason calls this condition ``strict boundedness," to distinguish it from homological boundedness.  A {\bf perfect complex} of $\ms{O}_X$-modules on $X$ is any complex of $\ms{O}_X$-modules locally quasi-isomorphic to a strictly perfect complex.  Here, I am using the definition used by Thomason \cite{Thomason-Trobaugh90}, 2.2.10, page 290, which is drawn, with a slight simplification, from SGA \cite{SGA671}, I, 4.7.\footnotemark\footnotetext{Note that Thomason, \cite{Thomason-Trobaugh90}, has the numbering wrong here, listing the source as \cite{SGA671} I, 4.2.} Schlichting \cite{SchlichtingKTheory11}, page 202, and Keller \cite{KellerCycHomofDGAlgebras96}, page 5, also use the same definition.    I will denote the category of perfect complexes of $\ms{O}_X$-modules on $X$ by $\mbf{Per}_X$, and the corresponding category of strictly perfect complexes by $\mbf{SPer}_X$.   

Due to certain cardinality issues involving the category $\mbf{Per}_X$, Weibel et al., \cite{WeibelCycliccdh-CohomNegativeK06}, example 2.7, page 8, work with a distinguished subcategory, which I will denote by $\mbf{Par}_X$.   This subcategory is chosen to be an exact differential graded category over the ground field $k$.  Hence, it admits Keller's machinery of localization pairs.\footnotemark\footnotetext{See Keller's papers \cite{KellerCycHomofDGAlgebras96}, \cite{KellerCyclicHomologyofExactCat96}, and \cite{KellerCyclicHomologyofSchemes98}.}  More precisely, $\mbf{Par}_X$ is defined to be the category of perfect, bounded-above complexes of flat $\ms{O}_X$-modules, whose stalks have cardinality at most equal to the cardinality of $F$, where $F$ is a distinguished extension of the ground field $k$, and where $X$ belongs to the category $\mbf{Fin}_F$ of schemes essentially of finite type over $F$.\footnotemark\footnotetext{Weibel et al., \cite{WeibelCycliccdh-CohomNegativeK06}, denote this category by $\mbf{Ch}_{\tn{parf}}(X)$,  where ``parf" stands for ``parfait;" i.e., ``perfect," a la SGA \cite{SGA671}.} Thomason \cite{Thomason-Trobaugh90}, 1.4, page 259, by contrast, makes a blanket assumption of smallness for every Waldhausen category he uses, without specifying details.  He justifies this by the fact, cited from SGA 4, that the associated $K$-theory spectra are independent of the choice of Grothedieck universe.  Thomason also restricts the category of perfect complexes to those of finite Tor amplitude\footnotemark\footnotetext{See Thomason \cite{Thomason-Trobaugh90}, definition 3.1 page 312} in his definition of the $K$-theory of schemes.  

%SSSSSSSSSSSSSSSSSSSSSSSSSSSSSSSSSSSSSSSSSSSSSSSSSSSSSSSSSSSSSSSSSSSSSSSSSSSSSSSSSS
%SSSSSSSSSSSSSSSSSSSSSSSSSSSSSSSSSSSSSSSSSSSSSSSSSSSSSSSSSSSSSSSSSSSSSSSSSSSSSSSSSS
%SSS                      SSS                       SSS                    SS                          SS           SSS                         SSS          SSSSS      SSSS
%SSS      SSSSSSSSS    SSSSSSSSSS     SSSSSSSSSSSS     SSSSSSS     SSSS    SSSSSS     SSS     S     SSSS     SSSSS
%SSS      SSSSSSSSS    SSSSSSSSSS     SSSSSSSSSSSS     SSSSSSS     SSSS    SSSSSS     SSS     SS     SSS     SSSSS
%SSS                      SSS                  SSSSS    SSSSSSSSSSSS     SSSSSSS     SSSS    SSSSSS     SSS     SSS     SS     SSSSS
%SSSSSSSSSS    SSS    SSSSSSSSSS    SSSSSSSSSSSS     SSSSSSS     SSSS    SSSSSS     SSS     SSSS     S     SSSSS
%SSSSSSSSSS    SSS    SSSSSSSSSS    SSSSSSSSSSSS     SSSSSSS     SSSS    SSSSSS     SSS     SSSSS          SSSSS
%SSS                       SSS                      SSS                       SSSSS     SSSSSS          SSS                         SSS     SSSSSS        SSSSS
%SSSSSSSSSSSSSSSSSSSSSSSSSSSSSSSSSSSSSSSSSSSSSSSSSSSSSSSSSSSSSSSSSSSSSSSSSSSSSSSSSS
%SSSSSSSSSSSSSSSSSSSSSSSSSSSSSSSSSSSSSSSSSSSSSSSSSSSSSSSSSSSSSSSSSSSSSSSSSSSSSSSSSS

\label{subsubsectionbassthomason}

{\bf The Nonconnective $K$-Theory of Bass and Thomason.}  For a pair $(X,Z)$, where $X$ is a quasi-compact, quasi-separated scheme of finite Krull dimension and $Z$ is a closed subspace of $X$ such that $X-Z$ is also quasi-compact, Thomason's nonconnective $K$-theory spectrum $\mathbf{K}_{X \tn{ \footnotesize{on} } Z}$ is defined (\cite{Thomason-Trobaugh90}, definition 6.4, page 360) as the homotopy colimit of a diagram 
\[F^0\rightarrow F^{-1}\rightarrow F^{-2}\rightarrow...,\]
where $F^0$ is the connective $K$-theory spectrum $\mathbf{K}^{\tn{con}}_{X \tn{ \footnotesize{on} } Z}$, which in turn is defined (\cite{Thomason-Trobaugh90}, definition 3.1, page 313) to be the $K$-theory spectrum of the complicial biWaldhausen category of those perfect complexes on $X$ which are acyclic on $X-Y$.  Several different notations for these spectra appear in the literature.\footnotemark\footnotetext{$\mathbf{K}^{con}(X \tn{ on } Z)$ is my nonstandard notation.  Thomason denotes $\mathbf{K}^{\tn{con}}(X \tn{ on } Z)$ by $K(X \tn{ on } Z)$ (\cite{Thomason-Trobaugh90} definition 3.1 page 313) and $\mathbf{K}(X \tn{ on } Z)$ by $K^{B}(X \tn{ on } Z)$ (\cite{Thomason-Trobaugh90} definition 6.4 page 360), where the ``$B$'' stands for ``Bass."  Colliot-Th\'el\`ene, Hoobler, and Kahn \cite{CHKBloch-Ogus-Gabber97}, also uses the notation $K^B$ (\cite{Thomason-Trobaugh90}, 7.4 (6), page 43), but only in the context of the cohomology (i.e. group) axioms {\bf COH1} and {\bf COH5}, not the substratum (i.e. spectrum) axioms {\bf SUB1} and {\bf SUB5}.}  In my notation, the ``con" in the superscript of $\mathbf{K}^{\tn{con}}_{X \tn{ \footnotesize{on} } Z}$ stands for ``connective." 

The nonconnective $K$-theory spectrum $\mathbf{K}_{X \tn{ \footnotesize{on} } Z}$ satisfies {\bf Thomason's localization theorem} (\cite{Thomason-Trobaugh90},Theorem 7.4, page 365), which states that there is a homotopy fiber sequence
\[\mathbf{K}_{X \tn{ \footnotesize{on} } Z}\rightarrow\mathbf{K}_{X}\rightarrow \mathbf{K}_{X-Z}.\]
This sequence makes the nonconnective $K$-theory spectrum into a {\bf substratum functor}, in the sense of definition \hyperref[defisubstratum]{\ref{defisubstratum}}, for the nonconnective $K$-theory groups $K_{p,\hspace*{.05cm}X\tn{ \footnotesize{on} } Z}$, viewed collectively as an abstract cohomology theory with supports.  The corresponding statement for the connective $K$-theory spectrum $\mathbf{K}^{\tn{con}}_{X \tn{ \footnotesize{on} } Z}$ is false because the corresponding localization theorem fails to hold.   Colliot-Th\'el\`ene, Hoobler, and Kahn remark on this in \cite{CHKBloch-Ogus-Gabber97}, 7.4 (6), page 43. 

%SSSSSSSSSSSSSSSSSSSSSSSSSSSSSSSSSSSSSSSSSSSSSSSSSSSSSSSSSSSSSSSSSSSSSSSSSSSSSSSSSS
%SSSSSSSSSSSSSSSSSSSSSSSSSSSSSSSSSSSSSSSSSSSSSSSSSSSSSSSSSSSSSSSSSSSSSSSSSSSSSSSSSS
%SSS                      SSS                       SSS                    SS                          SS           SSS                         SSS          SSSSS      SSSS
%SSS      SSSSSSSSS    SSSSSSSSSS     SSSSSSSSSSSS     SSSSSSS     SSSS    SSSSSS     SSS     S     SSSS     SSSSS
%SSS      SSSSSSSSS    SSSSSSSSSS     SSSSSSSSSSSS     SSSSSSS     SSSS    SSSSSS     SSS     SS     SSS     SSSSS
%SSS                      SSS                  SSSSS    SSSSSSSSSSSS     SSSSSSS     SSSS    SSSSSS     SSS     SSS     SS     SSSSS
%SSSSSSSSSS    SSS    SSSSSSSSSS    SSSSSSSSSSSS     SSSSSSS     SSSS    SSSSSS     SSS     SSSS     S     SSSSS
%SSSSSSSSSS    SSS    SSSSSSSSSS    SSSSSSSSSSSS     SSSSSSS     SSSS    SSSSSS     SSS     SSSSS          SSSSS
%SSS                       SSS                      SSS                       SSSSS     SSSSSS          SSS                         SSS     SSSSSS        SSSSS
%SSSSSSSSSSSSSSSSSSSSSSSSSSSSSSSSSSSSSSSSSSSSSSSSSSSSSSSSSSSSSSSSSSSSSSSSSSSSSSSSSS
%SSSSSSSSSSSSSSSSSSSSSSSSSSSSSSSSSSSSSSSSSSSSSSSSSSSSSSSSSSSSSSSSSSSSSSSSSSSSSSSSSS

{\bf ``Augmented $K$-Theory;" Multiplying by a Fixed Separated Scheme.} There exist a number of interesting ways of modifying algebraic $K$-theory to produce useful related functors.  One method of particular importance is to multiply each scheme by a fixed scheme before forming $K$-theory spectra.  This is made precise in the following definition:

\begin{defi}\label{augmentedKspectrum} Let $Y$ be a fixed scheme over a field $k$, and let $X$ vary over the category of $k$-schemes.  The {\bf augmented $K$-theory spectrum of $X$ with respect to $Y$} is the spectrum $\mbf{K}_{X\times_kY}$, where $X\times_kY$ is shorthand for the fiber product $X\times_{\tn{Spec }k} Y$.  The {\bf augmented $K$-theory groups of $X$} with respect to $Y$ are the homotopy groups of $\mbf{K}_{X\times_kY}$.  
\end{defi}

Of principal interest in this book is the case in which $X$ is a smooth algebraic variety and $Y$ is a separated scheme, not necessarily smooth.  In this case, multiplying by $Y$ yields the augmented version of $K$-theory used to define generalized deformation groups and generalized tangent groups of the Chow groups $\tn{Ch}_X^p$ of $X$ in definition \hyperref[defigendefgroupChow]{\ref{defigendefgroupChow}} below.

%SSSSSSSSSSSSSSSSSSSSSSSSSSSSSSSSSSSSSSSSSSSSSSSSSSSSSSSSSSSSSSSSSSSSSSSSSSSSSSSSSS
%SSSSSSSSSSSSSSSSSSSSSSSSSSSSSSSSSSSSSSSSSSSSSSSSSSSSSSSSSSSSSSSSSSSSSSSSSSSSSSSSSS
%SSS                      SSS                       SSS                    SS                          SS           SSS                         SSS          SSSSS      SSSS
%SSS      SSSSSSSSS    SSSSSSSSSS     SSSSSSSSSSSS     SSSSSSS     SSSS    SSSSSS     SSS     S     SSSS     SSSSS
%SSS      SSSSSSSSS    SSSSSSSSSS     SSSSSSSSSSSS     SSSSSSS     SSSS    SSSSSS     SSS     SS     SSS     SSSSS
%SSS                      SSS                  SSSSS    SSSSSSSSSSSS     SSSSSSS     SSSS    SSSSSS     SSS     SSS     SS     SSSSS
%SSSSSSSSSS    SSS    SSSSSSSSSS    SSSSSSSSSSSS     SSSSSSS     SSSS    SSSSSS     SSS     SSSS     S     SSSSS
%SSSSSSSSSS    SSS    SSSSSSSSSS    SSSSSSSSSSSS     SSSSSSS     SSSS    SSSSSS     SSS     SSSSS          SSSSS
%SSS                       SSS                      SSS                       SSSSS     SSSSSS          SSS                         SSS     SSSSSS        SSSSS
%SSSSSSSSSSSSSSSSSSSSSSSSSSSSSSSSSSSSSSSSSSSSSSSSSSSSSSSSSSSSSSSSSSSSSSSSSSSSSSSSSS
%SSSSSSSSSSSSSSSSSSSSSSSSSSSSSSSSSSSSSSSSSSSSSSSSSSSSSSSSSSSSSSSSSSSSSSSSSSSSSSSSSS

\section{Cyclic Homology}\label{sectioncyclichomology}

The fourth and final column of the coniveau machine for algebraic $K$-theory on a smooth algebraic variety $X$ over a field $k$ containing $\QQ$ is the Cousin resolution of the relative negative cyclic homology sheaf $\ms{HN}_{p,X\times_k Y,Y}$ on $X$, where $Y$ is the prime spectrum of a $k$-algebra $A$ generated over $k$ by nilpotent elements.  When $A$ is the algebra of dual numbers $k[\ee]/\ee^2$, a theorem of Hesselholt  \cite{HesselholtTruncated05} demonstrates that this relative sheaf may be expressed in terms of sheaves of absolute K\"{a}hler differentials, as follows:
\[\ms{HN}_{p,X_\ee,\ee}\cong \varOmega^{p-1}_{X/\QQ}\oplus \varOmega^{p-3}_{X/\QQ}\oplus...\]
However, I will retain the more general expression because of its conceptual advantages.  In particular, negative cyclic homology is the proper receptacle for the relative Chern character, as described in section \hyperref[sectionChern]{\ref{sectionChern}} below.    

Cyclic homology may be viewed as a linearization of $K$-theory, and is sometimes called ``additive $K$-theory" in the literature.  This view has considerable heuristic utility in the context of commutative algebraic geometry.  Another useful perspective is to view cyclic homology as a generalization of differential forms.  The idea that cyclic homology is a good place to look for infinitesimal invariants of Chow groups is partly motivated by the fact that the cyclic homology of an algebra corresponds to its algebraic $K$-theory in approximately the same way that its Lie algebra of matrices corresponds to its general linear group.  For details, see Loday \cite{LodayCyclicHomology98}, 10.2.19 and 11.2.12, and Weibel \cite{WeibelInfCohomChernNegCyclic08}, theorem A.15. 

Most actual calculations of negative cyclic homology pertaining to the infinitesimal structure of Chow groups can be performed locally, and for this, it suffices to develop the theory for local $k$-algebras.   Sections \hyperref[subsectionkahlerderham]{\ref{subsectionkahlerderham}} through \hyperref[subsectionnegcyccomring]{\ref{subsectionnegcyccomring}} below provide the standard tools for such computations.  However, the formal construction of the coniveau machine requires a more general scheme-theoretic definition.   Perhaps the most convenient way of computing negative cyclic homology of schemes is via Weibel's definition \cite{WeibelCyclicSchemes91} in terms of Cartan-Eilenberg hypercohomology.  This approach is described in section \hyperref[sectionweibelcychomschemes]{\ref{sectionweibelcychomschemes}} of the appendix.  For theoretical purposes, it is better to use Bernhard Keller's machinery of localization pairs, using the modified category $\mbf{Par}_X$ of perfect complexes of $\ms{O}_X$-modules, described in section \hyperref[subsectionconnectivenonconnective]{\ref{subsectionconnectivenonconnective}} above, together with its acyclic subcategory.  This approach is briefly described in section \hyperref[subsectioncycschemes]{\ref{subsectioncycschemes}} below.  

%SSSSSSSSSSSSSSSSSSSSSSSSSSSSSSSSSSSSSSSSSSSSSSSSSSSSSSSSSSSSSSSSSSSSSSSSSSSSSSSSSS
%SSSSSSSSSSSSSSSSSSSSSSSSSSSSSSSSSSSSSSSSSSSSSSSSSSSSSSSSSSSSSSSSSSSSSSSSSSSSSSSSSS
%SSS                      SSS                       SSS                    SS                          SS           SSS                         SSS          SSSSS      SSSS
%SSS      SSSSSSSSS    SSSSSSSSSS     SSSSSSSSSSSS     SSSSSSS     SSSS    SSSSSS     SSS     S     SSSS     SSSSS
%SSS      SSSSSSSSS    SSSSSSSSSS     SSSSSSSSSSSS     SSSSSSS     SSSS    SSSSSS     SSS     SS     SSS     SSSSS
%SSS                      SSS                  SSSSS    SSSSSSSSSSSS     SSSSSSS     SSSS    SSSSSS     SSS     SSS     SS     SSSSS
%SSSSSSSSSS    SSS    SSSSSSSSSS    SSSSSSSSSSSS     SSSSSSS     SSSS    SSSSSS     SSS     SSSS     S     SSSSS
%SSSSSSSSSS    SSS    SSSSSSSSSS    SSSSSSSSSSSS     SSSSSSS     SSSS    SSSSSS     SSS     SSSSS          SSSSS
%SSS                       SSS                      SSS                       SSSSS     SSSSSS          SSS                         SSS     SSSSSS        SSSSS
%SSSSSSSSSSSSSSSSSSSSSSSSSSSSSSSSSSSSSSSSSSSSSSSSSSSSSSSSSSSSSSSSSSSSSSSSSSSSSSSSSS
%SSSSSSSSSSSSSSSSSSSSSSSSSSSSSSSSSSSSSSSSSSSSSSSSSSSSSSSSSSSSSSSSSSSSSSSSSSSSSSSSSS

\subsection{Absolute K\"{a}hler Differentials; Algebraic de Rham Complex}\label{subsectionkahlerderham}

A major source of motivation for the development of cyclic homology was a need to generalize differential forms to the context of noncommutative algebra and geometry.  {\it A posteriori}, differential forms may be viewed as primitive versions of generalized cohomology theories such as Hochschild and cyclic homology.   In particular, a number of important results about cyclic homology may be expressed in terms of differential forms.  In the algebraic setting, the appropriate notion of differential forms is given by K\"{a}hler differentials. 
\begin{defi}\label{defikahler}Let $R$ be a commutative algebra over a commutative ring $k$ with identity. The $k$-module of {\bf K\"{a}hler differentials} $\Omega^1_{R/k}$ of $R$ with respect to $k$ is the module generated over $k$ by symbols of the form $rdr'$, subject to the relations 
\begin{enumerate}
\item $rd(\alpha r'+\beta r'')=\alpha rdr'+\beta rdr''$ for $\alpha,\beta\in k$ and  $r,r',r''\in M$ ($k$-linearity).
\item $rd(r'r'')=rr'dr''+rr''dr'$ (Leibniz rule).
\end{enumerate}
\end{defi}

Higher-degree modules of K\"{a}hler differentials $\Omega_{R/k}^n$ are defined by taking exterior powers of $\Omega^1_{R/k}$.  The modules $\Omega_{R/k}^n$ are the graded pieces of the exterior algebra $\Omega_{R/k}^\bullet:=\bigwedge\Omega^1_{R/k}$ over $\Omega^1_{R/k}$.  By definition, it is the quotient of the tensor algebra by the ideal generated by elements of the form $dr\otimes dr$.   The ring multiplication in $\Omega_{R/k}^\bullet$ is called the {\bf wedge product}, and is denoted by $\wedge$.   The map 
\[d:\Omega^n_{R/k}\rightarrow \Omega^{n+1}_{R/k}\]
\[r_0dr_1\wedge...\wedge dr_n\mapsto dr_0\wedge dr_1\wedge...\wedge dr_n\]
makes $\Omega_{R/k}^\bullet$ into a {\it differential graded ring.}  It may be viewed as a complex, called the {\bf algebraic de Rham complex.}  If the ground ring $k$ is the ring of integers $\ZZ$, then the modules $\Omega^n_{R/\ZZ}$ are abelian groups; i.e., $\ZZ$-modules, called the groups of {\bf absolute K\"{a}hler differentials.}  

%SSSSSSSSSSSSSSSSSSSSSSSSSSSSSSSSSSSSSSSSSSSSSSSSSSSSSSSSSSSSSSSSSSSSSSSSSSSSSSSSSS
%SSSSSSSSSSSSSSSSSSSSSSSSSSSSSSSSSSSSSSSSSSSSSSSSSSSSSSSSSSSSSSSSSSSSSSSSSSSSSSSSSS
%SSS                      SSS                       SSS                    SS                          SS           SSS                         SSS          SSSSS      SSSS
%SSS      SSSSSSSSS    SSSSSSSSSS     SSSSSSSSSSSS     SSSSSSS     SSSS    SSSSSS     SSS     S     SSSS     SSSSS
%SSS      SSSSSSSSS    SSSSSSSSSS     SSSSSSSSSSSS     SSSSSSS     SSSS    SSSSSS     SSS     SS     SSS     SSSSS
%SSS                      SSS                  SSSSS    SSSSSSSSSSSS     SSSSSSS     SSSS    SSSSSS     SSS     SSS     SS     SSSSS
%SSSSSSSSSS    SSS    SSSSSSSSSS    SSSSSSSSSSSS     SSSSSSS     SSSS    SSSSSS     SSS     SSSS     S     SSSSS
%SSSSSSSSSS    SSS    SSSSSSSSSS    SSSSSSSSSSSS     SSSSSSS     SSSS    SSSSSS     SSS     SSSSS          SSSSS
%SSS                       SSS                      SSS                       SSSSS     SSSSSS          SSS                         SSS     SSSSSS        SSSSS
%SSSSSSSSSSSSSSSSSSSSSSSSSSSSSSSSSSSSSSSSSSSSSSSSSSSSSSSSSSSSSSSSSSSSSSSSSSSSSSSSSS
%SSSSSSSSSSSSSSSSSSSSSSSSSSSSSSSSSSSSSSSSSSSSSSSSSSSSSSSSSSSSSSSSSSSSSSSSSSSSSSSSSS

\subsection{Cyclic Homology of Algebras over Commutative Rings}\label{subsectioncychomalgcommring}

Let $k$ be a commutative ring.   For a $k$-algebra $R$, not necessarily commutative, there several approaches to defining cyclic homology.  For example, Loday \cite{LodayCyclicHomology98}, 2.1.11, page 59 gives a summary of four different definitions that coincide when the ground ring $k$ contains $\QQ$. The two definitions that are most useful for the subject of this book are those expressed via the {\it cyclic bicomplex} $\tn{CC}_R$, and the {\it $\tn{B}$-bicomplex} $\tn{B}_R$.   The latter definition makes sense only if $R$ is unital.  The first step in introducing these definitions is to discuss the Hochschild complex of $R$.  It is also useful to define Hochschild homology, since this theory is often useful in computing cyclic homology.  

{\bf Hochschild Complex and Hochschild Homology.}  The {\bf Hochschild complex} $\tn{C}_R$ of $R$ is the complex
\begin{equation}\label{equhochschildcomplex}\xymatrix{...\ar[r]^b  &R^{\otimes 3}\ar[r]^b&R^{\otimes 2}\ar[r]^b&R,}\end{equation}
where $R^{\otimes n}$ means $R\otimes_k...\otimes_k R$ ($n$ factors).   Here, $R$ is taken to occupy degree zero, so the $n$th term $C_{n,R}$ in the complex $\tn{C}_R$ is $R^{\otimes n+1}$.  Each boundary map $b=b_n:R^{\otimes n+1}\rightarrow R^{\otimes n}$ is given by a sum of {\it face operators} $d_n^i$:
\begin{equation}\label{equhochschildsum}b=\sum_{i=0}^n(-1)^i d_n^i,\end{equation}
defined by the formulas
\begin{equation}\label{equfaceoperators}d_n^i(r_0,..., r_n) = \left\{ \begin{array}{ll}
         (r_0,..., r_ir_{i+1},..., r_n) & \mbox{ if } i<n,\\
       (r_nr_0, r_1, ..., r_{n-1}) &  \mbox{ if } i=n.
        \end{array} \right.\end{equation}
The notation $(r_0,..., r_n)$ is shorthand for $r_0\otimes...\otimes r_n$, and is used to avoid proliferation of tensor symbols. The face operators $d_n^i$ make $\tn{C}_R$ into a {\it presimplicial module}; i.e., a {\it presimplicial object} in the category of $k$-modules.  If $R$ is unital, the complex $\tn{C}_R$ also has {\it degeneracy operators} $s_n^i:R^{\otimes n+1}\rightarrow R^{\otimes n+2}$ for $0\le i\le n$ defined by the formula
\begin{equation}\label{equdegeneracy}s_n^i(r_0,...,r_n)=(r_0,...,r_i,1,r_{i+1},...,r_n),\end{equation}
which make $\tn{C}_R$ into a {\it simplicial module.} 

\begin{defi} The homology groups $\tn{HH}_{n,R}$ of the Hochschild complex $\tn{C}_R$ are called the {\bf Hochschild homology groups} of $R$. 
\end{defi}

%SSSSSSSSSSSSSSSSSSSSSSSSSSSSSSSSSSSSSSSSSSSSSSSSSSSSSSSSSSSSSSSSSSSSSSSSSSSSSSSSSS
%SSSSSSSSSSSSSSSSSSSSSSSSSSSSSSSSSSSSSSSSSSSSSSSSSSSSSSSSSSSSSSSSSSSSSSSSSSSSSSSSSS
%SSS                      SSS                       SSS                    SS                          SS           SSS                         SSS          SSSSS      SSSS
%SSS      SSSSSSSSS    SSSSSSSSSS     SSSSSSSSSSSS     SSSSSSS     SSSS    SSSSSS     SSS     S     SSSS     SSSSS
%SSS      SSSSSSSSS    SSSSSSSSSS     SSSSSSSSSSSS     SSSSSSS     SSSS    SSSSSS     SSS     SS     SSS     SSSSS
%SSS                      SSS                  SSSSS    SSSSSSSSSSSS     SSSSSSS     SSSS    SSSSSS     SSS     SSS     SS     SSSSS
%SSSSSSSSSS    SSS    SSSSSSSSSS    SSSSSSSSSSSS     SSSSSSS     SSSS    SSSSSS     SSS     SSSS     S     SSSSS
%SSSSSSSSSS    SSS    SSSSSSSSSS    SSSSSSSSSSSS     SSSSSSS     SSSS    SSSSSS     SSS     SSSSS          SSSSS
%SSS                       SSS                      SSS                       SSSSS     SSSSSS          SSS                         SSS     SSSSSS        SSSSS
%SSSSSSSSSSSSSSSSSSSSSSSSSSSSSSSSSSSSSSSSSSSSSSSSSSSSSSSSSSSSSSSSSSSSSSSSSSSSSSSSSS
%SSSSSSSSSSSSSSSSSSSSSSSSSSSSSSSSSSSSSSSSSSSSSSSSSSSSSSSSSSSSSSSSSSSSSSSSSSSSSSSSSS

\label{hochschildkonstantrosenberg}

{\bf Smooth Algebras and the Hochschild-Konstant-Rosenberg Theorem.}  An algebra $R$ over a commutative ring $k$ is {\bf smooth} in the sense of Loday\footnotemark\footnotetext{Loday \cite{LodayCyclicHomology98}, appendix E, compares five different notions of smoothness and proves equivalences among them under appropriate assumptions.  The criterion used by Weibel \cite{WeibelHomologicalAlgebra94}, chapter 9, page 314, is the fifth of these, though the notation is different.  Andr'{e}-Quillen cohomology is the ``proper" cohomology theory for studying smoothness.  See Loday \cite{LodayCyclicHomology98}, section 3.4 and appendix E, and Iyengar \cite{IyengarAndreQuillen99}.} if it is flat over $k$ and if, for any maximal ideal $\mfr{m}$ of $R$, the kernel $J$ of the localized multiplication map
\[\mu_\mfr{m}:\big(R\otimes_k R\big)_{\mu^{-1}(\mfr{m})}\rightarrow R_\mfr{m}\]
is generated by a regular sequence in $\big(R\otimes_k R\big)_{\mu^{-1}(\mfr{m})}$.  In particular, the ring of regular functions on a smooth algebraic variety over an algebraically closed field is smooth, and localizations of smooth algebras are smooth.\footnotemark\footnotetext{See Weibel \cite{WeibelHomologicalAlgebra94}, Exercise 9.3.2, page 314, Loday \cite{LodayCyclicHomology98}, example 3.4.3, page 103, and also Zainoulline \cite{ZainoullineSmooth99}.}  

When $R$ is a smooth algebra over a commutative ring $k$ containing $\QQ$, the {\bf Hochschild-Konstant-Rosenberg Theorem}  states that there is an isomorphism of graded algebras between the algebra of K\"{a}hler differentials $\Omega_{R/k}^\bullet$ and Hochschild homology:
\begin{equation}\label{equhochschildkonstantrosen}\Omega_{R/k} ^\bullet \cong\tn{HH}_{\bullet,R}.
\end{equation}
See Loday \cite{LodayCyclicHomology98} theorem 3.4.4, page 103, for details.

%SSSSSSSSSSSSSSSSSSSSSSSSSSSSSSSSSSSSSSSSSSSSSSSSSSSSSSSSSSSSSSSSSSSSSSSSSSSSSSSSSS
%SSSSSSSSSSSSSSSSSSSSSSSSSSSSSSSSSSSSSSSSSSSSSSSSSSSSSSSSSSSSSSSSSSSSSSSSSSSSSSSSSS
%SSS                      SSS                       SSS                    SS                          SS           SSS                         SSS          SSSSS      SSSS
%SSS      SSSSSSSSS    SSSSSSSSSS     SSSSSSSSSSSS     SSSSSSS     SSSS    SSSSSS     SSS     S     SSSS     SSSSS
%SSS      SSSSSSSSS    SSSSSSSSSS     SSSSSSSSSSSS     SSSSSSS     SSSS    SSSSSS     SSS     SS     SSS     SSSSS
%SSS                      SSS                  SSSSS    SSSSSSSSSSSS     SSSSSSS     SSSS    SSSSSS     SSS     SSS     SS     SSSSS
%SSSSSSSSSS    SSS    SSSSSSSSSS    SSSSSSSSSSSS     SSSSSSS     SSSS    SSSSSS     SSS     SSSS     S     SSSSS
%SSSSSSSSSS    SSS    SSSSSSSSSS    SSSSSSSSSSSS     SSSSSSS     SSSS    SSSSSS     SSS     SSSSS          SSSSS
%SSS                       SSS                      SSS                       SSSSS     SSSSSS          SSS                         SSS     SSSSSS        SSSSS
%SSSSSSSSSSSSSSSSSSSSSSSSSSSSSSSSSSSSSSSSSSSSSSSSSSSSSSSSSSSSSSSSSSSSSSSSSSSSSSSSSS
%SSSSSSSSSSSSSSSSSSSSSSSSSSSSSSSSSSSSSSSSSSSSSSSSSSSSSSSSSSSSSSSSSSSSSSSSSSSSSSSSSS

\label{CHviacyclicbcomplex}

{\bf Cyclic Homology via the Cyclic Bicomplex.}  The {\bf cyclic bicomplex} $\tn{CC}_R$ of a $k$-algebra $R$ is constructed from copies of the Hochschild complex; figure \ref{figcyclicbicomplex} below illustrates this construction.  $\tn{CC}_R$ is a {\it first-quadrant bicomplex} with a $2$-periodic structure on its columns.  The $R$ in the lower left corner occupies bidegree $(0,0)$ by convention.  Each term in the $n$th row is $(n+1)$-fold tensor product $R^{\otimes n+1}$ of $R$.  The vertical maps in even-degree columns are the Hochschild boundary maps $b$, and the vertical maps $-b'$ in odd-degree columns are given by deleting the last face map $d_n^n$ from each Hochschild boundary map.   The horizontal maps from odd to even degrees are $1-t$, where for the $n$th row, $t=t_n$ is an endomorphism on $R^{\otimes n+1}$ called the {\bf cyclic operator,} given by moving the final factor to the front and adding a sign:
\begin{equation}\label{equcyclicoperator}t(r_0,...,r_n)=(-1)^n(r_n,r_0,...,r_{n-1}).\end{equation}
The sign $(-1)^n$ is the sign of the elementary cyclic permutation on $(n+1)$ symbols; it is included to streamline certain formulas.  The horizontal maps from even to odd degrees are given by the {\bf norm operators} $N:=1+t+...+t^n$, where $n$ again denotes the row.   

%&&&&&&&&&&&&&&&&&&&&&&&&&&&&&&&&&&&&&&&&&&&&&&&&&&&&&&&&&&&&&&&&&&&&&&&&&&&&&&&&&&
%&&&&&&&&&&&&&&&&&&&&&&&&&&&&&&&&&&&&&&&&&&&&&&&&&&&&&&&&&&&&&&&&&&&&&&&&&&&&&&&&&&
%&&&&                 &&&&    &&&               &&&                    &&&    &&&&    &&&              &&&&                   &&&&&&&&&&&&&&&&&&
%&&&&   &&&&   &&&&    &&&   &&&&&&&&&&&   &&&&&&    &&&&    &&&    &&&     &&&    &&&&&&&&&&&&&&&&&&&&&&&&
%&&&&   &&&&   &&&&    &&&   &&&&&&&&&&&   &&&&&&    &&&&    &&&    &&&     &&&    &&&&&&&&&&&&&&&&&&&&&&&&
%&&&&                 &&&&    &&&   &&&&&&&&&&&   &&&&&&    &&&&    &&&            &&&&&             &&&&&&&&&&&&&&&&&&&&
%&&&&    &&&&&&&&&    &&&   &&&&&&&&&&&   &&&&&&    &&&&    &&&    &&&   &&&&    &&&&&&&&&&&&&&&&&&&&&&&
%&&&&    &&&&&&&&&    &&&   &&&&&&&&&&&   &&&&&&    &&&&    &&&    &&&&   &&&    &&&&&&&&&&&&&&&&&&&&&&&
%&&&&    &&&&&&&&&    &&&                &&&&&&   &&&&&&                   &&&    &&&&&   &&                  &&&&&&&&&&&&&&&&&&
%&&&&&&&&&&&&&&&&&&&&&&&&&&&&&&&&&&&&&&&&&&&&&&&&&&&&&&&&&&&&&&&&&&&&&&&&&&&&&&&&&&
%&&&&&&&&&&&&&&&&&&&&&&&&&&&&&&&&&&&&&&&&&&&&&&&&&&&&&&&&&&&&&&&&&&&&&&&&&&&&&&&&&&
\begin{figure}[H]
\begin{pgfpicture}{0cm}{0cm}{17cm}{6.5cm}
%\pgfxyline(0,0)(14.5,0)
%\pgfxyline(0,7.4)(14.5,7.4)
%\pgfxyline(0,0)(0,7.4)
%\pgfxyline(14.5,0)(14.5,7.4)
\begin{pgftranslate}{\pgfpoint{.5cm}{0cm}}
\pgfputat{\pgfxy(2,.5)}{\pgfbox[center,center]{$R$}}
\pgfputat{\pgfxy(2,2.5)}{\pgfbox[center,center]{$R^{\otimes 2}$}}
\pgfputat{\pgfxy(2,4.5)}{\pgfbox[center,center]{$R^{\otimes 3}$}}
\pgfnodecircle{Node1}[fill]{\pgfxy(2,6.4)}{0.025cm}
\pgfnodecircle{Node1}[fill]{\pgfxy(2,6.55)}{0.025cm}
\pgfnodecircle{Node1}[fill]{\pgfxy(2,6.7)}{0.025cm}
\pgfputat{\pgfxy(4.5,.5)}{\pgfbox[center,center]{$R$}}
\pgfputat{\pgfxy(4.5,2.5)}{\pgfbox[center,center]{$R^{\otimes 2}$}}
\pgfputat{\pgfxy(4.5,4.5)}{\pgfbox[center,center]{$R^{\otimes 3}$}}
\pgfnodecircle{Node1}[fill]{\pgfxy(4.5,6.4)}{0.025cm}
\pgfnodecircle{Node1}[fill]{\pgfxy(4.5,6.55)}{0.025cm}
\pgfnodecircle{Node1}[fill]{\pgfxy(4.5,6.7)}{0.025cm}
\pgfputat{\pgfxy(1.75,1.5)}{\pgfbox[center,center]{\footnotesize{$b$}}}
\pgfputat{\pgfxy(1.75,3.5)}{\pgfbox[center,center]{\footnotesize{$b$}}}
\pgfputat{\pgfxy(1.75,5.5)}{\pgfbox[center,center]{\footnotesize{$b$}}}
\pgfputat{\pgfxy(4.15,1.5)}{\pgfbox[center,center]{\footnotesize{$-b'$}}}
\pgfputat{\pgfxy(4.15,3.5)}{\pgfbox[center,center]{\footnotesize{$-b'$}}}
\pgfputat{\pgfxy(4.15,5.55)}{\pgfbox[center,center]{\footnotesize{$-b'$}}}
\pgfputat{\pgfxy(3.25,.8)}{\pgfbox[center,center]{\footnotesize{$1-t$}}}
\pgfputat{\pgfxy(3.25,2.8)}{\pgfbox[center,center]{\footnotesize{$1-t$}}}
\pgfputat{\pgfxy(3.25,4.8)}{\pgfbox[center,center]{\footnotesize{$1-t$}}}
\pgfputat{\pgfxy(5.75,.8)}{\pgfbox[center,center]{\footnotesize{$N$}}}
\pgfputat{\pgfxy(5.75,2.8)}{\pgfbox[center,center]{\footnotesize{$N$}}}
\pgfputat{\pgfxy(5.75,4.8)}{\pgfbox[center,center]{\footnotesize{$N$}}}
\pgfsetendarrow{\pgfarrowlargepointed{3pt}}
\pgfxyline(2,2.2)(2,0.8)
\pgfxyline(2,4.2)(2,2.8)
\pgfxyline(2,6.2)(2,4.8)
\pgfxyline(4.5,2.2)(4.5,0.8)
\pgfxyline(4.5,4.2)(4.5,2.8)
\pgfxyline(4.5,6.2)(4.5,4.8)
\pgfxyline(4,.5)(2.5,.5)
\pgfxyline(4,2.5)(2.5,2.5)
\pgfxyline(4,4.5)(2.5,4.5)
\pgfxyline(6.5,.5)(5,.5)
\pgfxyline(6.5,2.5)(5,2.5)
\pgfxyline(6.5,4.5)(5,4.5)
\end{pgftranslate}
\begin{pgftranslate}{\pgfpoint{5.5cm}{0cm}}
\pgfputat{\pgfxy(2,.5)}{\pgfbox[center,center]{$R$}}
\pgfputat{\pgfxy(2,2.5)}{\pgfbox[center,center]{$R^{\otimes 2}$}}
\pgfputat{\pgfxy(2,4.5)}{\pgfbox[center,center]{$R^{\otimes 3}$}}
\pgfnodecircle{Node1}[fill]{\pgfxy(2,6.4)}{0.025cm}
\pgfnodecircle{Node1}[fill]{\pgfxy(2,6.55)}{0.025cm}
\pgfnodecircle{Node1}[fill]{\pgfxy(2,6.7)}{0.025cm}
\pgfputat{\pgfxy(4.5,.5)}{\pgfbox[center,center]{$R$}}
\pgfputat{\pgfxy(4.5,2.5)}{\pgfbox[center,center]{$R^{\otimes 2}$}}
\pgfputat{\pgfxy(4.5,4.5)}{\pgfbox[center,center]{$R^{\otimes 3}$}}
\pgfnodecircle{Node1}[fill]{\pgfxy(4.5,6.4)}{0.025cm}
\pgfnodecircle{Node1}[fill]{\pgfxy(4.5,6.55)}{0.025cm}
\pgfnodecircle{Node1}[fill]{\pgfxy(4.5,6.7)}{0.025cm}
\pgfputat{\pgfxy(1.75,1.5)}{\pgfbox[center,center]{\footnotesize{$b$}}}
\pgfputat{\pgfxy(1.75,3.5)}{\pgfbox[center,center]{\footnotesize{$b$}}}
\pgfputat{\pgfxy(1.75,5.5)}{\pgfbox[center,center]{\footnotesize{$b$}}}
\pgfputat{\pgfxy(4.15,1.5)}{\pgfbox[center,center]{\footnotesize{$-b'$}}}
\pgfputat{\pgfxy(4.15,3.5)}{\pgfbox[center,center]{\footnotesize{$-b'$}}}
\pgfputat{\pgfxy(4.15,5.55)}{\pgfbox[center,center]{\footnotesize{$-b'$}}}
\pgfputat{\pgfxy(3.25,.8)}{\pgfbox[center,center]{\footnotesize{$1-t$}}}
\pgfputat{\pgfxy(3.25,2.8)}{\pgfbox[center,center]{\footnotesize{$1-t$}}}
\pgfputat{\pgfxy(3.25,4.8)}{\pgfbox[center,center]{\footnotesize{$1-t$}}}
\pgfputat{\pgfxy(5.75,.8)}{\pgfbox[center,center]{\footnotesize{$N$}}}
\pgfputat{\pgfxy(5.75,2.8)}{\pgfbox[center,center]{\footnotesize{$N$}}}
\pgfputat{\pgfxy(5.75,4.8)}{\pgfbox[center,center]{\footnotesize{$N$}}}
\pgfnodecircle{Node1}[fill]{\pgfxy(6.7,.5)}{0.025cm}
\pgfnodecircle{Node1}[fill]{\pgfxy(6.85,.5)}{0.025cm}
\pgfnodecircle{Node1}[fill]{\pgfxy(7,.5)}{0.025cm}
\pgfnodecircle{Node1}[fill]{\pgfxy(6.7,2.5)}{0.025cm}
\pgfnodecircle{Node1}[fill]{\pgfxy(6.85,2.5)}{0.025cm}
\pgfnodecircle{Node1}[fill]{\pgfxy(7,2.5)}{0.025cm}
\pgfnodecircle{Node1}[fill]{\pgfxy(6.7,4.5)}{0.025cm}
\pgfnodecircle{Node1}[fill]{\pgfxy(6.85,4.5)}{0.025cm}
\pgfnodecircle{Node1}[fill]{\pgfxy(7,4.5)}{0.025cm}
\pgfsetendarrow{\pgfarrowlargepointed{3pt}}
\pgfxyline(2,2.2)(2,0.8)
\pgfxyline(2,4.2)(2,2.8)
\pgfxyline(2,6.2)(2,4.8)
\pgfxyline(4.5,2.2)(4.5,0.8)
\pgfxyline(4.5,4.2)(4.5,2.8)
\pgfxyline(4.5,6.2)(4.5,4.8)
\pgfxyline(4,.5)(2.5,.5)
\pgfxyline(4,2.5)(2.5,2.5)
\pgfxyline(4,4.5)(2.5,4.5)
\pgfxyline(6.5,.5)(5,.5)
\pgfxyline(6.5,2.5)(5,2.5)
\pgfxyline(6.5,4.5)(5,4.5)
\end{pgftranslate}
\end{pgfpicture}
\caption{The Cyclic Bicomplex $\tn{CC}_R$.}
\label{figcyclicbicomplex}
\end{figure}
%&&&&&&&&&&&&&&&&&&&&&&&&&&&&&&&&&&&&&&&&&&&&&&&&&&&&&&&&&&&&&&&&&&&&&&&&&&&&&&&&&&
%&&&&&&&&&&&&&&&&&&&&&&&&&&&&&&&&&&&&&&&&&&&&&&&&&&&&&&&&&&&&&&&&&&&&&&&&&&&&&&&&&&
%&&&                    &&&       &&&&&    &&&                &&&&&&&&&&&&&&&&&&&&&&&&&&&&&&&&&&&&&&&&&&&&&&&&&&&
%&&&    &&&&&&&&&    &    &&&&   &&&    &&&&    &&&&&&&&&&&&&&&&&&&&&&&&&&&&&&&&&&&&&&&&&&&&&&&&&& 
%&&&    &&&&&&&&&    &&    &&&   &&&    &&&&&    &&&&&&&&&&&&&&&&&&&&&&&&&&&&&&&&&&&&&&&&&&&&&&&&& 
%&&&                 &&&&    &&&    &&   &&&    &&&&&    &&&&&&&&&&&&&&&&&&&&&&&&&&&&&&&&&&&&&&&&&&&&&&&&& 
%&&&    &&&&&&&&&    &&&&    &   &&&    &&&&&    &&&&&&&&&&&&&&&&&&&&&&&&&&&&&&&&&&&&&&&&&&&&&&&&& 
%&&&    &&&&&&&&&    &&&&&       &&&    &&&&     &&&&&&&&&&&&&&&&&&&&&&&&&&&&&&&&&&&&&&&&&&&&&&&&& 
%&&&                    &&&    &&&&&&    &&&                  &&&&&&&&&&&&&&&&&&&&&&&&&&&&&&&&&&&&&&&&&&&&&&&&&&
%&&&&&&&&&&&&&&&&&&&&&&&&&&&&&&&&&&&&&&&&&&&&&&&&&&&&&&&&&&&&&&&&&&&&&&&&&&&&&&&&&&
%&&&&&&&&&&&&&&&&&&&&&&&&&&&&&&&&&&&&&&&&&&&&&&&&&&&&&&&&&&&&&&&&&&&&&&&&&&&&&&&&&&
\vspace*{-.5cm}

\begin{defi}\label{defiHCofR} Let $R$ be an algebra over a commutative ring $k$.  The {\bf cyclic homology} groups $\tn{HC}_{n,R}$ of $R$ are the homology groups of the total complex $\tn{Tot } \tn{CC}_R$ of the cyclic bicomplex $\tn{CC}_R$.
\end{defi}

The groups $\tn{HC}_{n,R}$ depend on the underlying ring $k$, so it would more precise to write $\tn{HC}_{n,R/k}$.  However, I will follow tradition and suppress $k$.  Note also that there is no need to choose between sums and products in forming the total complex in this case, because the number of terms in each total degree is finite.  This is true because $\tn{CC}_R$ is concentrated in the first quadrant.  Also observe that the cyclic homology groups in negative degrees automatically vanish.   Finally, Loday  \cite{LodayCyclicHomology98}, 2.1.16, page 60 has results about the behavior of cyclic homology under change of ground ring and localization
 that are particularly useful in the context of algebraic varieties and schemes, where one often wishes to work locally. 
 
\begin{example} \tn{For any $k$-algebra $R$, 
\begin{equation}\label{equHC0loday}\tn{HC}_{0,R}=R/[R,R],\end{equation}
where $[R,R]$ is the center of $R$.  In particular, if $R$ is commutative, then $\tn{HC}_{0,R}=R$.  See Loday  \cite{LodayCyclicHomology98}, 2.1.12, page 59, for details.}
\end{example}
\hspace{16.3cm} $\oblong$

\begin{example}\label{exHC1Omega/dOmega} \tn{If $R$ is a commutative unital $k$-algebra, then
\begin{equation}\label{equHC1loday}\tn{HC}_{1,R}\cong\frac{\Omega_{R/k}^1}{dR}.\end{equation}
See Loday  \cite{LodayCyclicHomology98}, proposition 2.1.14, page 59, for details.}
\end{example}
\hspace{16.3cm} $\oblong$

\begin{example} \tn{If $R$ is a smooth algebra over a commutative ring $k$ containing $\QQ$, the cyclic homology $HC_{n,R}$ may be expressed in terms of the de Rham cohomology groups $H_{dR} ^m(R)$, which are the cohomology groups of the algebraic de Rham complex:\footnotemark\footnotetext{See Loday \cite{LodayCyclicHomology98}, page 69.  As with the cyclic homology groups themselves, the de Rham cohomology groups depend on the underlying ring $k$.} 
\begin{equation}\label{equlambdacyclic}
\tn{HC}_{n,R}\cong \frac{\Omega_{R/k} ^n}{d\Omega_{R/k} ^{n-1}}\oplus H_{dR} ^{n-2}(R)\oplus H_{dR} ^{n-4}(R)\oplus...,
\end{equation}
This direct sum decomposition in fact coincides with the {\it lambda decomposition} of cyclic homology.  See \cite{LodayCyclicHomology98} theorems 4.6.7 and 4.6.10, pages 151-153, for details.} 
\end{example}
\hspace{16.3cm} $\oblong$

%SSSSSSSSSSSSSSSSSSSSSSSSSSSSSSSSSSSSSSSSSSSSSSSSSSSSSSSSSSSSSSSSSSSSSSSSSSSSSSSSSS
%SSSSSSSSSSSSSSSSSSSSSSSSSSSSSSSSSSSSSSSSSSSSSSSSSSSSSSSSSSSSSSSSSSSSSSSSSSSSSSSSSS
%SSS                      SSS                       SSS                    SS                          SS           SSS                         SSS          SSSSS      SSSS
%SSS      SSSSSSSSS    SSSSSSSSSS     SSSSSSSSSSSS     SSSSSSS     SSSS    SSSSSS     SSS     S     SSSS     SSSSS
%SSS      SSSSSSSSS    SSSSSSSSSS     SSSSSSSSSSSS     SSSSSSS     SSSS    SSSSSS     SSS     SS     SSS     SSSSS
%SSS                      SSS                  SSSSS    SSSSSSSSSSSS     SSSSSSS     SSSS    SSSSSS     SSS     SSS     SS     SSSSS
%SSSSSSSSSS    SSS    SSSSSSSSSS    SSSSSSSSSSSS     SSSSSSS     SSSS    SSSSSS     SSS     SSSS     S     SSSSS
%SSSSSSSSSS    SSS    SSSSSSSSSS    SSSSSSSSSSSS     SSSSSSS     SSSS    SSSSSS     SSS     SSSSS          SSSSS
%SSS                       SSS                      SSS                       SSSSS     SSSSSS          SSS                         SSS     SSSSSS        SSSSS
%SSSSSSSSSSSSSSSSSSSSSSSSSSSSSSSSSSSSSSSSSSSSSSSSSSSSSSSSSSSSSSSSSSSSSSSSSSSSSSSSSS
%SSSSSSSSSSSSSSSSSSSSSSSSSSSSSSSSSSSSSSSSSSSSSSSSSSSSSSSSSSSSSSSSSSSSSSSSSSSSSSSSSS

\label{HCviaBBicomplex}

{\bf Cyclic Homology via the $\tn{B}$-Bicomplex.}  The {\bf $\tn{B}$-bicomplex} $\tn{B}_R$ of a {\it unital} $k$-algebra $R$ is constructed by removing redundant information from the cyclic bicomplex $\tn{CC}_R$ of $R$.  The construction is illustrated in figure \ref{figBcomplex} below. The reason for the unital hypothesis is because in this case there exists a {\it contracting homotopy} of the $b'$-complex defining the odd-degree columns of $\tn{CC}_R$.  This allows for the removal of these columns and the rearrangement of the even-degree columns in a manner described in detail by Loday \cite{LodayCyclicHomology98} .\footnotemark\footnotetext{See Loday \cite{LodayCyclicHomology98}, page 12, for a description of the contracting homotopy.  See pages 56-57 of the same reference for a description of how the bicomplex $\tn{B}_R$ is derived from the cyclic bicomplex.}  Note that the columns are rearranged so that the $n$th term $\big(\tn{Tot } \tn{B}_R\big)_n$ of the total complex $\tn{Tot } \tn{B}_R$ is the direct sum of the terms $R^{\otimes n}, R^{\otimes(n-1)},...,R^{\otimes2}, R$ going {\it across the $n$th row} of $\tn{B}_R$. 

%&&&&&&&&&&&&&&&&&&&&&&&&&&&&&&&&&&&&&&&&&&&&&&&&&&&&&&&&&&&&&&&&&&&&&&&&&&&&&&&&&&
%&&&&&&&&&&&&&&&&&&&&&&&&&&&&&&&&&&&&&&&&&&&&&&&&&&&&&&&&&&&&&&&&&&&&&&&&&&&&&&&&&&
%&&&&                 &&&&    &&&               &&&                    &&&    &&&&    &&&              &&&&                   &&&&&&&&&&&&&&&&&&
%&&&&   &&&&   &&&&    &&&   &&&&&&&&&&&   &&&&&&    &&&&    &&&    &&&     &&&    &&&&&&&&&&&&&&&&&&&&&&&&
%&&&&   &&&&   &&&&    &&&   &&&&&&&&&&&   &&&&&&    &&&&    &&&    &&&     &&&    &&&&&&&&&&&&&&&&&&&&&&&&
%&&&&                 &&&&    &&&   &&&&&&&&&&&   &&&&&&    &&&&    &&&            &&&&&             &&&&&&&&&&&&&&&&&&&&
%&&&&    &&&&&&&&&    &&&   &&&&&&&&&&&   &&&&&&    &&&&    &&&    &&&   &&&&    &&&&&&&&&&&&&&&&&&&&&&&
%&&&&    &&&&&&&&&    &&&   &&&&&&&&&&&   &&&&&&    &&&&    &&&    &&&&   &&&    &&&&&&&&&&&&&&&&&&&&&&&
%&&&&    &&&&&&&&&    &&&                &&&&&&   &&&&&&                   &&&    &&&&&   &&                  &&&&&&&&&&&&&&&&&&
%&&&&&&&&&&&&&&&&&&&&&&&&&&&&&&&&&&&&&&&&&&&&&&&&&&&&&&&&&&&&&&&&&&&&&&&&&&&&&&&&&&
%&&&&&&&&&&&&&&&&&&&&&&&&&&&&&&&&&&&&&&&&&&&&&&&&&&&&&&&&&&&&&&&&&&&&&&&&&&&&&&&&&&
\begin{figure}[H]
\begin{pgfpicture}{0cm}{0cm}{17cm}{6.25cm}
%\pgfxyline(0,0)(14.5,0)
%\pgfxyline(0,7.4)(14.5,7.4)
%\pgfxyline(0,0)(0,7.4)
%\pgfxyline(14.5,0)(14.5,7.4)
\begin{pgftranslate}{\pgfpoint{2.5cm}{-.25cm}}
\pgfputat{\pgfxy(2,.5)}{\pgfbox[center,center]{$R$}}
\pgfputat{\pgfxy(2,2.5)}{\pgfbox[center,center]{$R^{\otimes 2}$}}
\pgfputat{\pgfxy(2,4.5)}{\pgfbox[center,center]{$R^{\otimes 3}$}}
\pgfnodecircle{Node1}[fill]{\pgfxy(2,6.4)}{0.025cm}
\pgfnodecircle{Node1}[fill]{\pgfxy(2,6.55)}{0.025cm}
\pgfnodecircle{Node1}[fill]{\pgfxy(2,6.7)}{0.025cm}
\pgfputat{\pgfxy(4.5,2.5)}{\pgfbox[center,center]{$R$}}
\pgfputat{\pgfxy(4.5,4.5)}{\pgfbox[center,center]{$R^{\otimes 2}$}}
\pgfnodecircle{Node1}[fill]{\pgfxy(4.5,6.4)}{0.025cm}
\pgfnodecircle{Node1}[fill]{\pgfxy(4.5,6.55)}{0.025cm}
\pgfnodecircle{Node1}[fill]{\pgfxy(4.5,6.7)}{0.025cm}
\pgfputat{\pgfxy(7,4.5)}{\pgfbox[center,center]{$R$}}
\pgfnodecircle{Node1}[fill]{\pgfxy(7,6.4)}{0.025cm}
\pgfnodecircle{Node1}[fill]{\pgfxy(7,6.55)}{0.025cm}
\pgfnodecircle{Node1}[fill]{\pgfxy(7,6.7)}{0.025cm}
\pgfputat{\pgfxy(1.75,1.5)}{\pgfbox[center,center]{\footnotesize{$b$}}}
\pgfputat{\pgfxy(1.75,3.5)}{\pgfbox[center,center]{\footnotesize{$b$}}}
\pgfputat{\pgfxy(1.75,5.5)}{\pgfbox[center,center]{\footnotesize{$b$}}}
\pgfputat{\pgfxy(4.25,3.5)}{\pgfbox[center,center]{\footnotesize{$b$}}}
\pgfputat{\pgfxy(4.25,5.55)}{\pgfbox[center,center]{\footnotesize{$b$}}}
\pgfputat{\pgfxy(6.75,5.5)}{\pgfbox[center,center]{\footnotesize{$b$}}}
\pgfputat{\pgfxy(3.25,2.8)}{\pgfbox[center,center]{\footnotesize{$B$}}}
\pgfputat{\pgfxy(3.25,4.8)}{\pgfbox[center,center]{\footnotesize{$B$}}}
\pgfputat{\pgfxy(5.75,4.8)}{\pgfbox[center,center]{\footnotesize{$B$}}}
\pgfsetendarrow{\pgfarrowlargepointed{3pt}}
\pgfxyline(2,2.2)(2,0.8)
\pgfxyline(2,4.2)(2,2.8)
\pgfxyline(2,6.2)(2,4.8)
\pgfxyline(4.5,4.2)(4.5,2.8)
\pgfxyline(4.5,6.2)(4.5,4.8)
\pgfxyline(7,6.2)(7,4.8)
\pgfxyline(4,2.5)(2.5,2.5)
\pgfxyline(4,4.5)(2.5,4.5)
\pgfxyline(6.5,4.5)(5,4.5)
\end{pgftranslate}
\end{pgfpicture}
\caption{The bicomplex $\tn{B}_R$.}
\label{figBcomplex}
\end{figure}
%&&&&&&&&&&&&&&&&&&&&&&&&&&&&&&&&&&&&&&&&&&&&&&&&&&&&&&&&&&&&&&&&&&&&&&&&&&&&&&&&&&
%&&&&&&&&&&&&&&&&&&&&&&&&&&&&&&&&&&&&&&&&&&&&&&&&&&&&&&&&&&&&&&&&&&&&&&&&&&&&&&&&&&
%&&&                    &&&       &&&&&    &&&                &&&&&&&&&&&&&&&&&&&&&&&&&&&&&&&&&&&&&&&&&&&&&&&&&&&
%&&&    &&&&&&&&&    &    &&&&   &&&    &&&&    &&&&&&&&&&&&&&&&&&&&&&&&&&&&&&&&&&&&&&&&&&&&&&&&&& 
%&&&    &&&&&&&&&    &&    &&&   &&&    &&&&&    &&&&&&&&&&&&&&&&&&&&&&&&&&&&&&&&&&&&&&&&&&&&&&&&& 
%&&&                 &&&&    &&&    &&   &&&    &&&&&    &&&&&&&&&&&&&&&&&&&&&&&&&&&&&&&&&&&&&&&&&&&&&&&&& 
%&&&    &&&&&&&&&    &&&&    &   &&&    &&&&&    &&&&&&&&&&&&&&&&&&&&&&&&&&&&&&&&&&&&&&&&&&&&&&&&& 
%&&&    &&&&&&&&&    &&&&&       &&&    &&&&     &&&&&&&&&&&&&&&&&&&&&&&&&&&&&&&&&&&&&&&&&&&&&&&&& 
%&&&                    &&&    &&&&&&    &&&                  &&&&&&&&&&&&&&&&&&&&&&&&&&&&&&&&&&&&&&&&&&&&&&&&&&
%&&&&&&&&&&&&&&&&&&&&&&&&&&&&&&&&&&&&&&&&&&&&&&&&&&&&&&&&&&&&&&&&&&&&&&&&&&&&&&&&&&
%&&&&&&&&&&&&&&&&&&&&&&&&&&&&&&&&&&&&&&&&&&&&&&&&&&&&&&&&&&&&&&&&&&&&&&&&&&&&&&&&&&
\vspace*{-.5cm}

The maps $B$ in the $\tn{B}$-bicomplex $B_R$ are called the {\bf Connes boundary maps}.  It is useful to give an explicit description of these maps.  They are defined by the formula
\begin{equation}\label{equconnesboundary}B_n:=(1-t_{n+1})\circ s_{n}\circ N_n,\end{equation}
where 
\begin{enumerate}
\item $1$ is the identity map on $R^{\otimes n+2}$. 
\item $t_{n+1}:R^{\otimes n+2}\rightarrow R^{\otimes n+2}$ is the cyclic operator in degree $n+1$.
\item $s_n=s_n^{n+1}:R^{\otimes n+1}\rightarrow R^{\otimes n+2}$ is the {\bf extra degeneracy}\footnotemark\footnotetext{Note that the subscript $n$ denotes the {\it source degree}, while the superscript $n+1$ denotes the {\it extra} in ``extra degeneracy," since there exist are $n+1$ ``ordinary degeneracy maps" $s_n^0,...,s_n^n$ between degrees $n$ and $n+1$.  The extra degeneracy may also be defined in terms of the last ``ordinary" degeneracy $s_n^n$ and the cyclic operator $t_{n+1}$ as $s_n^{n+1}=(-1)^{n+1}t_{n+1}s_n^n$.  The sign is added here to cancel the sign accompanying the cyclic operator $t_{n+1}$.} between degrees $n$ and $n+1$, defined by the formula
\begin{equation}\label{equextradegeneracy}s_{n}^{n+1}(r_0,...,r_{n})=(1,r_0,...,r_{n}).\end{equation}
\item $N_n$ is the norm operator in degree $n$.
\end{enumerate}
With these conventions, the Hochschild boundary map $b$ and the Connes boundary map $B$ satisfy the equations $B^2=bB+Bb=0$.\footnotemark\footnotetext{The definition of $B$ is sometimes given with a sign: $B_n=(-1)^{n+1}(1-t_{n+1})s_{n}N_n$ (note the discrepancy between Loday \cite{LodayCyclicHomology98} pages 57 and 78, for instance).  This difference corresponds to whether or not the cyclic operator $t_{n+1}$ is defined with a sign.}

\begin{example}\tn{The zeroth Connes boundary map $B_0$ is given by the formula:
\begin{equation}\label{equzerothconnesboundary}B_0(r)= (1,r)+(r,1),\end{equation}
where the plus sign comes from the sign on the cyclic operator $t_1$.  The first  Connes boundary map $B_1$ is given by the formula:
\begin{equation}\label{equfirstthconnesboundary}B_1(r_1,r_2)=(1,r_1,r_2)-(1,r_2,r_1)-(r_2,1,r_1)+(r_1,1,r_2).\end{equation}}
\end{example}
\hspace{16.3cm} $\oblong$

\begin{defi}\label{defiBHC} Let $R$ be an algebra over a commutative ring.  The {\bf $\tn{B}$-groups} $H_{n,R}^{\tn{B}}$ of $R$ are the homology groups of the total complex $\tn{Tot } \tn{B}_R$.  
\end{defi}

If $R$ is unital, there is a natural inclusion of complexes $\mbox{Tot } \tn{B}_R\rightarrow \tn{Tot } \tn{CC}_R$, which induces an isomorphism in homology:

\begin{lem}\label{lemHCBgroupsisom} Let $R$ be a unital algebra over a commutative ring.  Then for all $n$, there are canonical isomorphisms:
\begin{equation}\label{equHCisomorphisms}\tn{HC}_{n,R}\cong H_{n,R} ^{\tn{B}}.\end{equation}
\end{lem}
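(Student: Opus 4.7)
The plan is to compare $\tn{Tot } \tn{CC}_R$ and $\tn{Tot } \tn{B}_R$ by exploiting the fact that, in the unital case, the odd columns of $\tn{CC}_R$ are acyclic, so that all the ``real information'' of $\tn{CC}_R$ is already concentrated in an appropriately rearranged piece that reproduces $\tn{B}_R$.

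First I would recall that each odd column of $\tn{CC}_R$ is a copy of the $b'$-complex
\[
\cdots \overset{-b'}{\longrightarrow} R^{\otimes 3} \overset{-b'}{\longrightarrow} R^{\otimes 2} \overset{-b'}{\longrightarrow} R ,
\]
and that the unit of $R$ provides an extra degeneracy $s_n^{n+1}(r_0,\ldots,r_n) = (1,r_0,\ldots,r_n)$ which gives a contracting homotopy: $b's + sb' = \mathrm{id}$ (up to signs). Hence every odd column of $\tn{CC}_R$ is acyclic. Next, I would package this acyclicity into a short exact sequence of bicomplexes
\[
0 \longrightarrow \tn{CC}_R^{\{2\}} \longrightarrow \tn{CC}_R \longrightarrow \tn{CC}_R / \tn{CC}_R^{\{2\}} \longrightarrow 0 ,
\]
where $\tn{CC}_R^{\{2\}}$ is the subcomplex consisting of columns of index $\ge 2$. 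Iterating, or equivalently filtering $\tn{CC}_R$ by columns, the associated spectral sequence collapses on the even columns because the odd columns contribute zero to $E^1$, giving an isomorphism in homology between $\tn{Tot } \tn{CC}_R$ and the total complex of the bicomplex obtained by discarding the odd columns and transferring their ``connecting data'' to horizontal differentials between the surviving even columns.

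The central calculation — and I expect this to be the main obstacle — is to identify this connecting differential explicitly with the Connes operator $B_n = (1 - t_{n+1}) \circ s_n \circ N_n$. Concretely, one traces through the zig-zag: start in an even column with an element $\alpha$, apply the horizontal $N$ to land in the odd column on the right, use the contracting homotopy $s$ to lift, then apply $1 - t$ to land in the next even column. The composite is visibly $(1-t)\,s\,N = B$, and one must also check that the resulting map anticommutes with $b$ correctly so that $(b+B)^2 = 0$, matching the differential of $\tn{Tot } \tn{B}_R$. This identification is essentially the derivation of $\tn{B}_R$ from $\tn{CC}_R$ as outlined by Loday.

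Having established this, the natural inclusion $\tn{Tot } \tn{B}_R \hookrightarrow \tn{Tot } \tn{CC}_R$ (which sends the column $R^{\otimes(n+1)}$ in position $(p,q)$ of $\tn{B}_R$ to the even column of $\tn{CC}_R$ in bidegree $(2p,q)$) becomes, on the $E^1$-page of the column filtration, an isomorphism onto the surviving even columns, while the complementary odd columns have vanishing $E^1$. A standard comparison theorem for spectral sequences then implies that the inclusion is a quasi-isomorphism, yielding canonical isomorphisms
\[
\tn{HC}_{n,R} = H_n(\tn{Tot } \tn{CC}_R) \cong H_n(\tn{Tot } \tn{B}_R) = H^{\tn{B}}_{n,R}
\]
for every $n$, and naturality in $R$ is automatic from the functoriality of all the constructions involved.
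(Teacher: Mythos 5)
Your proposal is correct in substance, but it is worth noting that the paper does not actually prove this lemma at all: it simply cites Loday, Theorem 2.1.8. What you have written is essentially a reconstruction of the standard argument behind that citation (Loday's ``killing contractible complexes'' lemma): the extra degeneracy furnished by the unit contracts the odd ($b'$) columns of $\tn{CC}_R$, the column filtration kills those columns on the $E^1$-page, and the differential transferred to the surviving even columns along the zig-zag ``apply $N$, lift through the contraction $s$, apply $1-t$'' is exactly Connes' operator $B=(1-t)sN$. You have correctly isolated this identification as the crux, and the convergence needed for the spectral-sequence comparison is unproblematic since both bicomplexes are first-quadrant, so the filtration is finite in each total degree. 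What your route buys over the paper's is an actual proof; what it costs is the bookkeeping that the paper avoids by deferring to the reference.

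One imprecision to repair: the map $\tn{Tot }\tn{B}_R\rightarrow\tn{Tot }\tn{CC}_R$ is \emph{not} the naive inclusion that places each $R^{\otimes n+1}$ of $\tn{B}_R$ into the corresponding even column of $\tn{CC}_R$ and nothing else. That map fails to be a chain map, because the differential of $\tn{Tot }\tn{CC}_R$ applied to an even-column element has an $N$-component landing in the adjacent odd column which the naive inclusion does not see. The genuine quasi-isomorphism carries correction terms in the odd columns built from the extra degeneracy $s$ (this is precisely the output of the homological perturbation you describe), and one must also reindex the column filtration on $\tn{B}_R$ (column $p$ of $\tn{B}_R$ sits in filtration degree $2p$ of $\tn{CC}_R$) before invoking the comparison theorem. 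Neither point changes the outcome, but both are needed for the argument to close.
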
 
\begin{proof}
See Loday \cite{LodayCyclicHomology98}, theorem 2.1.8. 
\end{proof}

\begin{example} \tn{Via the $\tn{B}$-bicomplex $\tn{B}_R$, it is clear that the group $\tn{HC}_{1,R}$ is the quotient of the group $R\otimes R$ by the relations $r\otimes r'r''=rr'\otimes r''+rr''\otimes r'$ (Leibniz rule) and $1\otimes r=r\otimes 1$.   The isomorphism $\tn{HC}_{1,R}\cong \Omega_{R/k}^1/dR$ cited in example \hyperref[exHC1Omega/dOmega]{\ref{exHC1Omega/dOmega}} above is then induced by the map $r\otimes r'\mapsto rdr'$. See Loday \cite{LodayCyclicHomology98}, proposition 2.1.14, page 59, for details.}
\end{example}
\hspace{16.3cm} $\oblong$

%SSSSSSSSSSSSSSSSSSSSSSSSSSSSSSSSSSSSSSSSSSSSSSSSSSSSSSSSSSSSSSSSSSSSSSSSSSSSSSSSSS
%SSSSSSSSSSSSSSSSSSSSSSSSSSSSSSSSSSSSSSSSSSSSSSSSSSSSSSSSSSSSSSSSSSSSSSSSSSSSSSSSSS
%SSS                      SSS                       SSS                    SS                          SS           SSS                         SSS          SSSSS      SSSS
%SSS      SSSSSSSSS    SSSSSSSSSS     SSSSSSSSSSSS     SSSSSSS     SSSS    SSSSSS     SSS     S     SSSS     SSSSS
%SSS      SSSSSSSSS    SSSSSSSSSS     SSSSSSSSSSSS     SSSSSSS     SSSS    SSSSSS     SSS     SS     SSS     SSSSS
%SSS                      SSS                  SSSSS    SSSSSSSSSSSS     SSSSSSS     SSSS    SSSSSS     SSS     SSS     SS     SSSSS
%SSSSSSSSSS    SSS    SSSSSSSSSS    SSSSSSSSSSSS     SSSSSSS     SSSS    SSSSSS     SSS     SSSS     S     SSSSS
%SSSSSSSSSS    SSS    SSSSSSSSSS    SSSSSSSSSSSS     SSSSSSS     SSSS    SSSSSS     SSS     SSSSS          SSSSS
%SSS                       SSS                      SSS                       SSSSS     SSSSSS          SSS                         SSS     SSSSSS        SSSSS
%SSSSSSSSSSSSSSSSSSSSSSSSSSSSSSSSSSSSSSSSSSSSSSSSSSSSSSSSSSSSSSSSSSSSSSSSSSSSSSSSSS
%SSSSSSSSSSSSSSSSSSSSSSSSSSSSSSSSSSSSSSSSSSSSSSSSSSSSSSSSSSSSSSSSSSSSSSSSSSSSSSSSSS

{\bf Connes' Periodicity Exact Sequence.}  Let $R$ be a unital $k$-algebra, with $\tn{B}$-bicomplex $\tn{B}_R$.  
Peeling off the factors $R^{\otimes n+1}$ from the terms $\big(\tn{Tot } \tn{B}_R\big)_n=R^{\otimes n+1}\oplus R^{\otimes n}\oplus...\oplus R$ yields a short exact sequence of complexes:
\begin{equation}\label{equSESConnesperiodicity}\xymatrix{0\ar[r]&\tn{C}_R\ar[r]^I&\tn{Tot } \tn{B}_R\ar[r]^S&\tn{Tot } \tn{B}_R[2]\ar[r]&0,}\end{equation}
where $I$ is the inclusion map and the notation $\mbox{Tot } \tn{B}_R[2]$ means the total complex shifted down by two degrees.\footnotemark\footnotetext{Note that due to the convention regarding ``rearrangement of the columns," a two-degree shift is accomplished by shifting over {\it one column} in figure \hyperref[figBcomplexmixed]{\ref{figBcomplexmixed}}.}  The map $S$ in equation \hyperref[equSESConnesperiodicity]{\ref{equSESConnesperiodicity}} is called the {\it periodicity map.}  It is analogous to the Bott periodicity map in topological $K$-theory.  The  short exact sequence appearing in equation \hyperref[equSESConnesperiodicity]{\ref{equSESConnesperiodicity}} induces a long exact sequence in homology:
\begin{equation}\label{equLEShochcyc}\xymatrix{...\ar[r]&\tn{HC}_{n+1,R}\ar[r]^S&\tn{HC}_{n-1,R}\ar[r]^B&\tn{HH}_{n,R}\ar[r]^I&\tn{HC}_{n,R}\ar[r]^S&\tn{HC}_{n-2,R}\ar[r]&...,}\end{equation}
where $I$ and $S$ are induced by the eponymous maps at the level of complexes, and $B$ is induced by Connes' boundary map.  This sequence is called Connes' periodicity exact sequence.  As mentioned by Loday, it is analogous to the Gysin sequence for the homology of an $S^1$-space in the context of algebraic topology.  Connes' periodicity exact sequence is an important tool in practical calculations of cyclic homology.  In particular, it often allows the computation of cyclic homology from Hochschild homology via induction arguments.  A basic application is the fact that a $k$-algebra map induces an isomorphism in cyclic homology if and only if it induces an isomorphism in Hochschild homology (Loday \cite{LodayCyclicHomology98}, Corollary 2.2.3, page 62). 

\begin{example} \tn{Let $R$ be a smooth unital algebra over a commutative ring $k$.  The ``tail" of Connes periodicity exact sequence 
\begin{equation}\label{equLESconneslowdegrees}\xymatrix{...\ar[r]&\tn{HC}_{2,R}\ar[r]^S&\tn{HC}_{0,R}\ar[r]^B&\tn{HH}_{1,R}\ar[r]^I&\tn{HC}_{1,R}\ar[r]^S&0,}\end{equation}
has the form
\begin{equation}\label{equLESconneslowdegrees}\xymatrix{...\ar[r]&\displaystyle\frac{\Omega_{R/k}^2}{d\Omega_{R/k}^1}\times k\ar[r]^-S&R\ar[r]^-B&\Omega_{R/k}^1\ar[r]^-I&\displaystyle\frac{\Omega_{R/k}^1}{dR}\ar[r]^S&0,}\end{equation}
where $S$ is inclusion of the second factor, $B$ is the differential map $d$, and $I$ is the quotient map. 
}
\end{example}
\hspace{16.3cm} $\oblong$

\subsection{Negative Cyclic Homology of Algebras over Commutative Rings}\label{subsectionnegcyccomring}

In section \hyperref[subsectioncychomalgcommring]{\ref{subsectioncychomalgcommring}}, I recounted how modifying the cyclic bicomplex $\tn{CC}_R$ to yield the $\tn{B}$-bicomplex $\tn{B}_R$ for a $k$-algebra $R$, gives alternative cyclic homology groups $H_{n,R}^{\tn{B}}$, which are isomorphic to the ordinary cyclic homology groups $\tn{HC}_{n,R}$ contains $\mathbb{Q}$ and $R$ is unital.  In this section, I discuss a different modification of $\tn{CC}_R$ that produces the variant of cyclic homology most appropriate for the coniveau machine.  This variant is called negative cyclic homology.  

%SSSSSSSSSSSSSSSSSSSSSSSSSSSSSSSSSSSSSSSSSSSSSSSSSSSSSSSSSSSSSSSSSSSSSSSSSSSSSSSSSS
%SSSSSSSSSSSSSSSSSSSSSSSSSSSSSSSSSSSSSSSSSSSSSSSSSSSSSSSSSSSSSSSSSSSSSSSSSSSSSSSSSS
%SSS                      SSS                       SSS                    SS                          SS           SSS                         SSS          SSSSS      SSSS
%SSS      SSSSSSSSS    SSSSSSSSSS     SSSSSSSSSSSS     SSSSSSS     SSSS    SSSSSS     SSS     S     SSSS     SSSSS
%SSS      SSSSSSSSS    SSSSSSSSSS     SSSSSSSSSSSS     SSSSSSS     SSSS    SSSSSS     SSS     SS     SSS     SSSSS
%SSS                      SSS                  SSSSS    SSSSSSSSSSSS     SSSSSSS     SSSS    SSSSSS     SSS     SSS     SS     SSSSS
%SSSSSSSSSS    SSS    SSSSSSSSSS    SSSSSSSSSSSS     SSSSSSS     SSSS    SSSSSS     SSS     SSSS     S     SSSSS
%SSSSSSSSSS    SSS    SSSSSSSSSS    SSSSSSSSSSSS     SSSSSSS     SSSS    SSSSSS     SSS     SSSSS          SSSSS
%SSS                       SSS                      SSS                       SSSSS     SSSSSS          SSS                         SSS     SSSSSS        SSSSS
%SSSSSSSSSSSSSSSSSSSSSSSSSSSSSSSSSSSSSSSSSSSSSSSSSSSSSSSSSSSSSSSSSSSSSSSSSSSSSSSSSS
%SSSSSSSSSSSSSSSSSSSSSSSSSSSSSSSSSSSSSSSSSSSSSSSSSSSSSSSSSSSSSSSSSSSSSSSSSSSSSSSSSS

\label{subsectionnegcycbicomplex}

{\bf Negative Cyclic Homology via the Negative Cyclic Complex $\tn{CN}_R$.}  The periodicity of the cyclic bicomplex $\tn{CC}_R$ allows this complex to be extended to an {\it upper-half-plane complex} called the {\bf periodic cyclic bicomplex}, denoted by $\tn{CP}_R$, with nontrivial columns in all integer degrees.  The cyclic bicomplex $\tn{CC}_R$ may be regarded as the sub-bicomplex of $\tn{CP}_R$ consisting of all columns of non-negative degree.  The {\bf negative cyclic bicomplex} $\tn{CN}_R$ is defined by truncating from $\tn{CP}_R$ all columns of degree greater than $1$.  $\tn{CN}_R$ is a second-quadrant bicomplex up to a shift; the column in degree $1$ is the right-most nonzero column.   The bicomplexes $\tn{CC}_R$ and $\tn{CN}_R$ share their zeroth and first columns, but are otherwise disjoint.   The three complexes $\tn{CC}_R$, $\tn{CP}_R$, and $\tn{CN}_R$ are illustrated in figure \ref{figcyclicbicomplexes} below.

%&&&&&&&&&&&&&&&&&&&&&&&&&&&&&&&&&&&&&&&&&&&&&&&&&&&&&&&&&&&&&&&&&&&&&&&&&&&&&&&&&&
%&&&&&&&&&&&&&&&&&&&&&&&&&&&&&&&&&&&&&&&&&&&&&&&&&&&&&&&&&&&&&&&&&&&&&&&&&&&&&&&&&&
%&&&&                 &&&&    &&&               &&&                    &&&    &&&&    &&&              &&&&                   &&&&&&&&&&&&&&&&&&
%&&&&   &&&&   &&&&    &&&   &&&&&&&&&&&   &&&&&&    &&&&    &&&    &&&     &&&    &&&&&&&&&&&&&&&&&&&&&&&&
%&&&&   &&&&   &&&&    &&&   &&&&&&&&&&&   &&&&&&    &&&&    &&&    &&&     &&&    &&&&&&&&&&&&&&&&&&&&&&&&
%&&&&                 &&&&    &&&   &&&&&&&&&&&   &&&&&&    &&&&    &&&            &&&&&             &&&&&&&&&&&&&&&&&&&&
%&&&&    &&&&&&&&&    &&&   &&&&&&&&&&&   &&&&&&    &&&&    &&&    &&&   &&&&    &&&&&&&&&&&&&&&&&&&&&&&
%&&&&    &&&&&&&&&    &&&   &&&&&&&&&&&   &&&&&&    &&&&    &&&    &&&&   &&&    &&&&&&&&&&&&&&&&&&&&&&&
%&&&&    &&&&&&&&&    &&&                &&&&&&   &&&&&&                   &&&    &&&&&   &&                  &&&&&&&&&&&&&&&&&&
%&&&&&&&&&&&&&&&&&&&&&&&&&&&&&&&&&&&&&&&&&&&&&&&&&&&&&&&&&&&&&&&&&&&&&&&&&&&&&&&&&&
%&&&&&&&&&&&&&&&&&&&&&&&&&&&&&&&&&&&&&&&&&&&&&&&&&&&&&&&&&&&&&&&&&&&&&&&&&&&&&&&&&&
\begin{figure}[H]
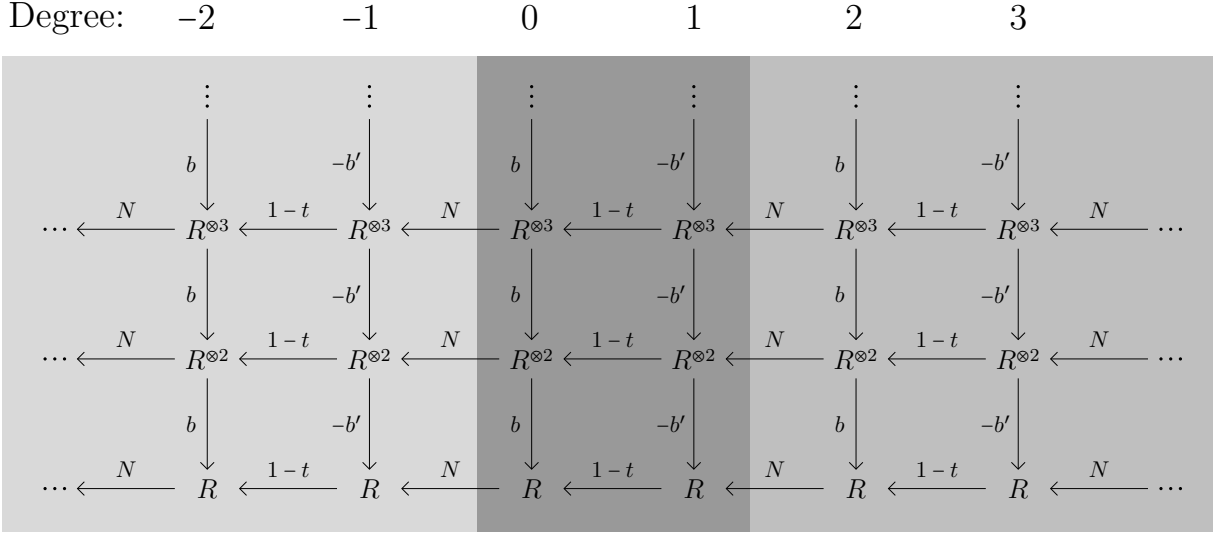

\begin{pgfpicture}{0cm}{0cm}{17cm}{7.5cm}
\begin{pgfmagnify}{1.15}{1.15}
%\pgfxyline(0,0)(14.5,0)
%\pgfxyline(0,6.9)(14.5,6.9)
%\pgfxyline(0,0)(0,6.9)
%\pgfxyline(14.5,0)(14.5,6.9)
\pgfputat{\pgfxy(1,6.2)}{\pgfbox[center,center]{Degree:}}
\pgfputat{\pgfxy(2.5,6.2)}{\pgfbox[center,center]{$-2$}}
\pgfputat{\pgfxy(4.4,6.2)}{\pgfbox[center,center]{$-1$}}
\pgfputat{\pgfxy(6.35,6.2)}{\pgfbox[center,center]{$0$}}
\pgfputat{\pgfxy(8.25,6.2)}{\pgfbox[center,center]{$1$}}
\pgfputat{\pgfxy(10.1,6.2)}{\pgfbox[center,center]{$2$}}
\pgfputat{\pgfxy(12,6.2)}{\pgfbox[center,center]{$3$}}
\begin{colormixin}{15!white}
\color{black}
\pgfmoveto{\pgfxy(.25,.25)}
\pgflineto{\pgfxy(5.75,.25)}
\pgflineto{\pgfxy(5.75,5.75)}
\pgflineto{\pgfxy(.25,5.75)}
\pgflineto{\pgfxy(.25,.25)}
\pgffill
\end{colormixin}
\begin{colormixin}{40!white}
\color{black}
\pgfmoveto{\pgfxy(5.75,.25)}
\pgflineto{\pgfxy(8.9,.25)}
\pgflineto{\pgfxy(8.9,5.75)}
\pgflineto{\pgfxy(5.75,5.75)}
\pgflineto{\pgfxy(5.75,.25)}
\pgffill
\end{colormixin}
\begin{colormixin}{25!white}
\color{black}
\pgfmoveto{\pgfxy(8.9,.25)}
\pgflineto{\pgfxy(14.25,.25)}
\pgflineto{\pgfxy(14.25,5.75)}
\pgflineto{\pgfxy(8.9,5.75)}
\pgflineto{\pgfxy(8.9,.25)}
\pgffill
\end{colormixin}
\begin{pgfmagnify}{.75}{.75}
\begin{pgftranslate}{\pgfpoint{1.5cm}{.5cm}}
\pgfnodecircle{Node1}[fill]{\pgfxy(-.2,.5)}{0.025cm}
\pgfnodecircle{Node1}[fill]{\pgfxy(-.35,.5)}{0.025cm}
\pgfnodecircle{Node1}[fill]{\pgfxy(-.5,.5)}{0.025cm}
\pgfnodecircle{Node1}[fill]{\pgfxy(-.2,2.5)}{0.025cm}
\pgfnodecircle{Node1}[fill]{\pgfxy(-.35,2.5)}{0.025cm}
\pgfnodecircle{Node1}[fill]{\pgfxy(-.5,2.5)}{0.025cm}
\pgfnodecircle{Node1}[fill]{\pgfxy(-.2,4.5)}{0.025cm}
\pgfnodecircle{Node1}[fill]{\pgfxy(-.35,4.5)}{0.025cm}
\pgfnodecircle{Node1}[fill]{\pgfxy(-.5,4.5)}{0.025cm}
\pgfputat{\pgfxy(2,.5)}{\pgfbox[center,center]{$R$}}
\pgfputat{\pgfxy(2,2.5)}{\pgfbox[center,center]{$R^{\otimes 2}$}}
\pgfputat{\pgfxy(2,4.5)}{\pgfbox[center,center]{$R^{\otimes 3}$}}
\pgfnodecircle{Node1}[fill]{\pgfxy(2,6.4)}{0.025cm}
\pgfnodecircle{Node1}[fill]{\pgfxy(2,6.55)}{0.025cm}
\pgfnodecircle{Node1}[fill]{\pgfxy(2,6.7)}{0.025cm}
\pgfputat{\pgfxy(4.5,.5)}{\pgfbox[center,center]{$R$}}
\pgfputat{\pgfxy(4.5,2.5)}{\pgfbox[center,center]{$R^{\otimes 2}$}}
\pgfputat{\pgfxy(4.5,4.5)}{\pgfbox[center,center]{$R^{\otimes 3}$}}
\pgfnodecircle{Node1}[fill]{\pgfxy(4.5,6.4)}{0.025cm}
\pgfnodecircle{Node1}[fill]{\pgfxy(4.5,6.55)}{0.025cm}
\pgfnodecircle{Node1}[fill]{\pgfxy(4.5,6.7)}{0.025cm}
\pgfputat{\pgfxy(.75,.8)}{\pgfbox[center,center]{\footnotesize{$N$}}}
\pgfputat{\pgfxy(.75,2.8)}{\pgfbox[center,center]{\footnotesize{$N$}}}
\pgfputat{\pgfxy(.75,4.8)}{\pgfbox[center,center]{\footnotesize{$N$}}}
\pgfputat{\pgfxy(1.75,1.5)}{\pgfbox[center,center]{\footnotesize{$b$}}}
\pgfputat{\pgfxy(1.75,3.5)}{\pgfbox[center,center]{\footnotesize{$b$}}}
\pgfputat{\pgfxy(1.75,5.5)}{\pgfbox[center,center]{\footnotesize{$b$}}}
\pgfputat{\pgfxy(4.15,1.5)}{\pgfbox[center,center]{\footnotesize{$-b'$}}}
\pgfputat{\pgfxy(4.15,3.5)}{\pgfbox[center,center]{\footnotesize{$-b'$}}}
\pgfputat{\pgfxy(4.15,5.55)}{\pgfbox[center,center]{\footnotesize{$-b'$}}}
\pgfputat{\pgfxy(3.25,.8)}{\pgfbox[center,center]{\footnotesize{$1-t$}}}
\pgfputat{\pgfxy(3.25,2.8)}{\pgfbox[center,center]{\footnotesize{$1-t$}}}
\pgfputat{\pgfxy(3.25,4.8)}{\pgfbox[center,center]{\footnotesize{$1-t$}}}
\pgfputat{\pgfxy(5.75,.8)}{\pgfbox[center,center]{\footnotesize{$N$}}}
\pgfputat{\pgfxy(5.75,2.8)}{\pgfbox[center,center]{\footnotesize{$N$}}}
\pgfputat{\pgfxy(5.75,4.8)}{\pgfbox[center,center]{\footnotesize{$N$}}}
\pgfsetendarrow{\pgfarrowlargepointed{3pt}}
\pgfxyline(1.5,.5)(0,.5)
\pgfxyline(1.5,2.5)(0,2.5)
\pgfxyline(1.5,4.5)(0,4.5)
\pgfxyline(2,2.2)(2,0.8)
\pgfxyline(2,4.2)(2,2.8)
\pgfxyline(2,6.2)(2,4.8)
\pgfxyline(4.5,2.2)(4.5,0.8)
\pgfxyline(4.5,4.2)(4.5,2.8)
\pgfxyline(4.5,6.2)(4.5,4.8)
\pgfxyline(4,.5)(2.5,.5)
\pgfxyline(4,2.5)(2.5,2.5)
\pgfxyline(4,4.5)(2.5,4.5)
\pgfxyline(6.5,.5)(5,.5)
\pgfxyline(6.5,2.5)(5,2.5)
\pgfxyline(6.5,4.5)(5,4.5)
\end{pgftranslate}
\begin{pgftranslate}{\pgfpoint{6.5cm}{.5cm}}
\pgfputat{\pgfxy(2,.5)}{\pgfbox[center,center]{$R$}}
\pgfputat{\pgfxy(2,2.5)}{\pgfbox[center,center]{$R^{\otimes 2}$}}
\pgfputat{\pgfxy(2,4.5)}{\pgfbox[center,center]{$R^{\otimes 3}$}}
\pgfnodecircle{Node1}[fill]{\pgfxy(2,6.4)}{0.025cm}
\pgfnodecircle{Node1}[fill]{\pgfxy(2,6.55)}{0.025cm}
\pgfnodecircle{Node1}[fill]{\pgfxy(2,6.7)}{0.025cm}
\pgfputat{\pgfxy(4.5,.5)}{\pgfbox[center,center]{$R$}}
\pgfputat{\pgfxy(4.5,2.5)}{\pgfbox[center,center]{$R^{\otimes 2}$}}
\pgfputat{\pgfxy(4.5,4.5)}{\pgfbox[center,center]{$R^{\otimes 3}$}}
\pgfnodecircle{Node1}[fill]{\pgfxy(4.5,6.4)}{0.025cm}
\pgfnodecircle{Node1}[fill]{\pgfxy(4.5,6.55)}{0.025cm}
\pgfnodecircle{Node1}[fill]{\pgfxy(4.5,6.7)}{0.025cm}
\pgfputat{\pgfxy(1.75,1.5)}{\pgfbox[center,center]{\footnotesize{$b$}}}
\pgfputat{\pgfxy(1.75,3.5)}{\pgfbox[center,center]{\footnotesize{$b$}}}
\pgfputat{\pgfxy(1.75,5.5)}{\pgfbox[center,center]{\footnotesize{$b$}}}
\pgfputat{\pgfxy(4.15,1.5)}{\pgfbox[center,center]{\footnotesize{$-b'$}}}
\pgfputat{\pgfxy(4.15,3.5)}{\pgfbox[center,center]{\footnotesize{$-b'$}}}
\pgfputat{\pgfxy(4.15,5.55)}{\pgfbox[center,center]{\footnotesize{$-b'$}}}
\pgfputat{\pgfxy(3.25,.8)}{\pgfbox[center,center]{\footnotesize{$1-t$}}}
\pgfputat{\pgfxy(3.25,2.8)}{\pgfbox[center,center]{\footnotesize{$1-t$}}}
\pgfputat{\pgfxy(3.25,4.8)}{\pgfbox[center,center]{\footnotesize{$1-t$}}}
\pgfputat{\pgfxy(5.75,.8)}{\pgfbox[center,center]{\footnotesize{$N$}}}
\pgfputat{\pgfxy(5.75,2.8)}{\pgfbox[center,center]{\footnotesize{$N$}}}
\pgfputat{\pgfxy(5.75,4.8)}{\pgfbox[center,center]{\footnotesize{$N$}}}
\pgfsetendarrow{\pgfarrowlargepointed{3pt}}
\pgfxyline(2,2.2)(2,0.8)
\pgfxyline(2,4.2)(2,2.8)
\pgfxyline(2,6.2)(2,4.8)
\pgfxyline(4.5,2.2)(4.5,0.8)
\pgfxyline(4.5,4.2)(4.5,2.8)
\pgfxyline(4.5,6.2)(4.5,4.8)
\pgfxyline(4,.5)(2.5,.5)
\pgfxyline(4,2.5)(2.5,2.5)
\pgfxyline(4,4.5)(2.5,4.5)
\pgfxyline(6.5,.5)(5,.5)
\pgfxyline(6.5,2.5)(5,2.5)
\pgfxyline(6.5,4.5)(5,4.5)
\end{pgftranslate}
\begin{pgftranslate}{\pgfpoint{11.5cm}{.5cm}}
\pgfputat{\pgfxy(2,.5)}{\pgfbox[center,center]{$R$}}
\pgfputat{\pgfxy(2,2.5)}{\pgfbox[center,center]{$R^{\otimes 2}$}}
\pgfputat{\pgfxy(2,4.5)}{\pgfbox[center,center]{$R^{\otimes 3}$}}
\pgfnodecircle{Node1}[fill]{\pgfxy(2,6.4)}{0.025cm}
\pgfnodecircle{Node1}[fill]{\pgfxy(2,6.55)}{0.025cm}
\pgfnodecircle{Node1}[fill]{\pgfxy(2,6.7)}{0.025cm}
\pgfputat{\pgfxy(4.5,.5)}{\pgfbox[center,center]{$R$}}
\pgfputat{\pgfxy(4.5,2.5)}{\pgfbox[center,center]{$R^{\otimes 2}$}}
\pgfputat{\pgfxy(4.5,4.5)}{\pgfbox[center,center]{$R^{\otimes 3}$}}
\pgfnodecircle{Node1}[fill]{\pgfxy(4.5,6.4)}{0.025cm}
\pgfnodecircle{Node1}[fill]{\pgfxy(4.5,6.55)}{0.025cm}
\pgfnodecircle{Node1}[fill]{\pgfxy(4.5,6.7)}{0.025cm}
\pgfputat{\pgfxy(1.75,1.5)}{\pgfbox[center,center]{\footnotesize{$b$}}}
\pgfputat{\pgfxy(1.75,3.5)}{\pgfbox[center,center]{\footnotesize{$b$}}}
\pgfputat{\pgfxy(1.75,5.5)}{\pgfbox[center,center]{\footnotesize{$b$}}}
\pgfputat{\pgfxy(4.15,1.5)}{\pgfbox[center,center]{\footnotesize{$-b'$}}}
\pgfputat{\pgfxy(4.15,3.5)}{\pgfbox[center,center]{\footnotesize{$-b'$}}}
\pgfputat{\pgfxy(4.15,5.55)}{\pgfbox[center,center]{\footnotesize{$-b'$}}}
\pgfputat{\pgfxy(3.25,.8)}{\pgfbox[center,center]{\footnotesize{$1-t$}}}
\pgfputat{\pgfxy(3.25,2.8)}{\pgfbox[center,center]{\footnotesize{$1-t$}}}
\pgfputat{\pgfxy(3.25,4.8)}{\pgfbox[center,center]{\footnotesize{$1-t$}}}
\pgfputat{\pgfxy(5.75,.8)}{\pgfbox[center,center]{\footnotesize{$N$}}}
\pgfputat{\pgfxy(5.75,2.8)}{\pgfbox[center,center]{\footnotesize{$N$}}}
\pgfputat{\pgfxy(5.75,4.8)}{\pgfbox[center,center]{\footnotesize{$N$}}}
\pgfnodecircle{Node1}[fill]{\pgfxy(6.7,.5)}{0.025cm}
\pgfnodecircle{Node1}[fill]{\pgfxy(6.85,.5)}{0.025cm}
\pgfnodecircle{Node1}[fill]{\pgfxy(7,.5)}{0.025cm}
\pgfnodecircle{Node1}[fill]{\pgfxy(6.7,2.5)}{0.025cm}
\pgfnodecircle{Node1}[fill]{\pgfxy(6.85,2.5)}{0.025cm}
\pgfnodecircle{Node1}[fill]{\pgfxy(7,2.5)}{0.025cm}
\pgfnodecircle{Node1}[fill]{\pgfxy(6.7,4.5)}{0.025cm}
\pgfnodecircle{Node1}[fill]{\pgfxy(6.85,4.5)}{0.025cm}
\pgfnodecircle{Node1}[fill]{\pgfxy(7,4.5)}{0.025cm}
\pgfsetendarrow{\pgfarrowlargepointed{3pt}}
\pgfxyline(2,2.2)(2,0.8)
\pgfxyline(2,4.2)(2,2.8)
\pgfxyline(2,6.2)(2,4.8)
\pgfxyline(4.5,2.2)(4.5,0.8)
\pgfxyline(4.5,4.2)(4.5,2.8)
\pgfxyline(4.5,6.2)(4.5,4.8)
\pgfxyline(4,.5)(2.5,.5)
\pgfxyline(4,2.5)(2.5,2.5)
\pgfxyline(4,4.5)(2.5,4.5)
\pgfxyline(6.5,.5)(5,.5)
\pgfxyline(6.5,2.5)(5,2.5)
\pgfxyline(6.5,4.5)(5,4.5)
\end{pgftranslate}
\end{pgfmagnify}
\end{pgfmagnify}
\end{pgfpicture}
\caption{Cyclic, Periodic Cyclic, and  Negative Cyclic Bicomplexes.}
\label{figcyclicbicomplexes}
\end{figure}
%&&&&&&&&&&&&&&&&&&&&&&&&&&&&&&&&&&&&&&&&&&&&&&&&&&&&&&&&&&&&&&&&&&&&&&&&&&&&&&&&&&
%&&&&&&&&&&&&&&&&&&&&&&&&&&&&&&&&&&&&&&&&&&&&&&&&&&&&&&&&&&&&&&&&&&&&&&&&&&&&&&&&&&
%&&&                    &&&       &&&&&    &&&                &&&&&&&&&&&&&&&&&&&&&&&&&&&&&&&&&&&&&&&&&&&&&&&&&&&
%&&&    &&&&&&&&&    &    &&&&   &&&    &&&&    &&&&&&&&&&&&&&&&&&&&&&&&&&&&&&&&&&&&&&&&&&&&&&&&&& 
%&&&    &&&&&&&&&    &&    &&&   &&&    &&&&&    &&&&&&&&&&&&&&&&&&&&&&&&&&&&&&&&&&&&&&&&&&&&&&&&& 
%&&&                 &&&&    &&&    &&   &&&    &&&&&    &&&&&&&&&&&&&&&&&&&&&&&&&&&&&&&&&&&&&&&&&&&&&&&&& 
%&&&    &&&&&&&&&    &&&&    &   &&&    &&&&&    &&&&&&&&&&&&&&&&&&&&&&&&&&&&&&&&&&&&&&&&&&&&&&&&& 
%&&&    &&&&&&&&&    &&&&&       &&&    &&&&     &&&&&&&&&&&&&&&&&&&&&&&&&&&&&&&&&&&&&&&&&&&&&&&&& 
%&&&                    &&&    &&&&&&    &&&                  &&&&&&&&&&&&&&&&&&&&&&&&&&&&&&&&&&&&&&&&&&&&&&&&&&
%&&&&&&&&&&&&&&&&&&&&&&&&&&&&&&&&&&&&&&&&&&&&&&&&&&&&&&&&&&&&&&&&&&&&&&&&&&&&&&&&&&
%&&&&&&&&&&&&&&&&&&&&&&&&&&&&&&&&&&&&&&&&&&&&&&&&&&&&&&&&&&&&&&&&&&&&&&&&&&&&&&&&&&

The bicomplexes $\tn{CP}_R$ and $\tn{CN}_R$ may be used to define periodic and negative cyclic homology, respectively.  In the context of this book, periodic cyclic homology plays a specific and limited role; namely, the vanishing of {\it relative} periodic cyclic homology in the relative SBI sequence for a nilpotent ideal, defined below, gives a degree-shifting isomorphism between relative cyclic homology and relative negative cyclic homology in this case. 

The following definition follows Loday  \cite{LodayCyclicHomology98}:

\begin{defi}\label{definegativecycalg} Let $R$ be an algebra over a commutative ring.  
\begin{enumerate}
\item The {\bf periodic cyclic homology} groups $\tn{HP}_{n,R}$ of $R$ are the homology groups of the {\it product} total complex $\tn{ToT } \tn{CP}_R$ of the periodic cyclic bicomplex $\tn{CP}_R$. 
\item The {\bf negative cyclic homology} groups $\tn{HN}_{n,R}$ of $R$ are the homology groups of the {\it product} total complex $\tn{ToT } \tn{CN}_R$ of the negative cyclic bicomplex $\tn{CN}_R$. 
\end{enumerate}
\end{defi}

Note that $\tn{HN}_{n,R}$ is generally nontrivial in negative degrees. 

Comparing definition \hyperref[definegativecycalg]{\ref{definegativecycalg}} with definitions \hyperref[defiHCofR]{\ref{defiHCofR}} and \hyperref[defiBHC]{\ref{defiBHC}} above, one may observe that in the present case one must choose between sums and products in forming the total complex, since each total degree has an infinite number of second-quadrant terms.  Weibel et al. choose to use modified total complexes $\tn{ToT}' \tn{B}_C^{\tn{per}}$ and $\tn{ToT}' \tn{B}_C^-$ to define periodic and negative cyclic homology,\footnotemark\footnotetext{The complexes $\tn{ToT}' \tn{B}_C^{\tn{per}}$ and $\tn{ToT}' \tn{B}_C^-$ are precisely the complexes $\tn{HP}(C,b,B)$ and $\tn{HN}(C,b,B)$ appearing in Weibel et al. \cite{WeibelCycliccdh-CohomNegativeK06}, page 6.} where, for a general bicomplex $\tn{C}=\{\tn{C}_{p,q}\}$,
\begin{equation}\label{equweibelmodifiedprodcomplex} \big(\tn{ToT}' \tn{C}\big)^n:=(x_{p,q})\in\prod_{p+q=n}\tn{C}_{p,q}.
\end{equation}
 This difference of definitions does not affect the {\it relative} groups of principal interest in this book.\footnotemark\footnotetext{In particular, in the smooth case, it affects only de Rham cohomology groups of high degree appearing as factors of periodic cyclic and negative cyclic homology.}

%SSSSSSSSSSSSSSSSSSSSSSSSSSSSSSSSSSSSSSSSSSSSSSSSSSSSSSSSSSSSSSSSSSSSSSSSSSSSSSSSSS
%SSSSSSSSSSSSSSSSSSSSSSSSSSSSSSSSSSSSSSSSSSSSSSSSSSSSSSSSSSSSSSSSSSSSSSSSSSSSSSSSSS
%SSS                      SSS                       SSS                    SS                          SS           SSS                         SSS          SSSSS      SSSS
%SSS      SSSSSSSSS    SSSSSSSSSS     SSSSSSSSSSSS     SSSSSSS     SSSS    SSSSSS     SSS     S     SSSS     SSSSS
%SSS      SSSSSSSSS    SSSSSSSSSS     SSSSSSSSSSSS     SSSSSSS     SSSS    SSSSSS     SSS     SS     SSS     SSSSS
%SSS                      SSS                  SSSSS    SSSSSSSSSSSS     SSSSSSS     SSSS    SSSSSS     SSS     SSS     SS     SSSSS
%SSSSSSSSSS    SSS    SSSSSSSSSS    SSSSSSSSSSSS     SSSSSSS     SSSS    SSSSSS     SSS     SSSS     S     SSSSS
%SSSSSSSSSS    SSS    SSSSSSSSSS    SSSSSSSSSSSS     SSSSSSS     SSSS    SSSSSS     SSS     SSSSS          SSSSS
%SSS                       SSS                      SSS                       SSSSS     SSSSSS          SSS                         SSS     SSSSSS        SSSSS
%SSSSSSSSSSSSSSSSSSSSSSSSSSSSSSSSSSSSSSSSSSSSSSSSSSSSSSSSSSSSSSSSSSSSSSSSSSSSSSSSSS
%SSSSSSSSSSSSSSSSSSSSSSSSSSSSSSSSSSSSSSSSSSSSSSSSSSSSSSSSSSSSSSSSSSSSSSSSSSSSSSSSSS

\label{subsectionnegcycBcomplex}

{\bf Negative Cyclic Homology via the Negative $\tn{B}$-bicomplex $\tn{B}_R^-$.} If the $k$-algebra $R$ is unital, then the $\tn{B}$-bicomplex $\tn{B}_R$ described in section \hyperref[HCviaBBicomplex]{\ref{HCviaBBicomplex}} above is defined, and this bicomplex may be extended leftward to include nontrivial columns in negative degrees, in a manner similar to the construction of the periodic cyclic bicomplex.   The result is the {\bf periodic $\tn{B}$-bicomplex $\tn{B}_R^{\tn{per}}$}, an {\it upper triangular bicomplex} whose terms are trivial whenever the column degree exceeds the row degree.   In the general case, $\tn{B}_R^{\tn{per}}$ has nontrivial rows and columns in all integer degrees.   The $\tn{B}$-bicomplex $\tn{B}_R$ may be regarded as the sub-bicomplex of $\tn{B}_R^{\tn{per}}$ consisting of all columns of non-negative degree.  The corresponding {\bf negative $\tn{B}$-bicomplex} $\tn{B}_R^-$ is obtained from by truncating all columns of positive degree from the periodic $\tn{B}$-bicomplex $\tn{B}_R^{\tn{per}}$.   The bicomplexes $\tn{B}_R$ and $\tn{B}_R^-$ share the single column in degree zero.\footnotemark\footnotetext{Note the difference from the case of $\tn{CC}_R$ and $\tn{CN}_R$, which share two columns.}  The three complexes $\tn{B}_R$, $\tn{B}_R^{\tn{per}}$, and $\tn{B}_R^-$ are illustrated in figure \ref{figBcomplexes} below.

The following definition follows Loday  \cite{LodayCyclicHomology98}:

\begin{defi}\label{definegativecycalg} Let $R$ be an algebra over a commutative ring.  
\begin{enumerate}
\item The {\bf periodic cyclic $\tn{B}$-homology groups} $\tn{HN}_{n,R}^{\tn{B}}$ of $R$ are the homology groups of the product total complex $\tn{ToT } \tn{B}_R^{\tn{per}}$ of the periodic cyclic $\tn{B}$-bicomplex $\tn{B}_R^{\tn{per}}$. 
\item The {\bf negative cyclic $\tn{B}$-homology groups} $\tn{HN}_{n,R}^{\tn{B}}$ of $R$ are the homology groups of the product total complex $\tn{ToT } \tn{B}_R^-$ of the negative cyclic $\tn{B}$-bicomplex $\tn{B}_R^-$. 
\end{enumerate}
\end{defi}

\begin{lem}\label{lemnegcycisomB} Let $R$ be a unital algebra over a commutative ring.
Then for all $n$, there exist canonical isomorphisms:
\begin{equation}\label{equnegcycisomB}\tn{HN}_{n,R}^{\tn{B}}\cong \tn{HN}_{n,R}.\end{equation} 
\end{lem}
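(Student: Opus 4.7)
The plan is to adapt the argument underlying Lemma \hyperref[lemHCBgroupsisom]{3.6.2.6} to the negative cyclic setting, following the parallel treatment in Loday \cite{LodayCyclicHomology98}. The essential input is the standard contracting homotopy $s(r_0,\ldots,r_n) = (1, r_0, \ldots, r_n)$, which depends on the existence of a unit $1 \in R$ and shows that each $-b'$-column appearing in the cyclic, periodic, and negative cyclic bicomplexes is acyclic. Conceptually, the isomorphism \hyperref[equnegcycisomB]{\ref{equnegcycisomB}} amounts to the statement that passing from $\tn{CN}_R$ to $\tn{B}_R^-$ is precisely the operation of ``deleting the acyclic odd columns and re-expressing the resulting connecting differentials via Connes' formula $B = (1-t)\circ s \circ N$."

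First, I would construct an explicit chain map between the product total complexes $\tn{ToT}\,\tn{CN}_R$ and $\tn{ToT}\,\tn{B}_R^-$. Writing an arbitrary element of $\tn{ToT}\,\tn{CN}_R$ in a fixed total degree and applying the homotopy $s$ in each $-b'$-column to kill the odd-column components produces a canonical representative supported on the even columns only; the induced horizontal differential on these representatives is precisely the Connes map $B$, essentially by the very definition \hyperref[equconnesboundary]{\ref{equconnesboundary}}. After the standard degree-shift identifying the even columns of $\tn{CN}_R$ with the columns of $\tn{B}_R^-$, this assignment gives the desired chain map in one direction, while the inclusion of even columns $\tn{B}_R^- \hookrightarrow \tn{CN}_R$ gives the other.

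Second, I would verify that these maps induce isomorphisms on homology by a spectral sequence comparison. Filtering both total complexes by columns produces spectral sequences whose $E_1$-pages compute the vertical ($b$- and $-b'$-) cohomologies column by column. Since the $-b'$-columns are acyclic by the contracting homotopy, only even columns contribute, and the $d_1$-differentials are identified with $B$ on both sides via the explicit homotopy above. Hence the $E_1$-pages and the $d_1$-differentials agree, so the $E_2$-pages coincide.

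The main obstacle, and the only place where the negative cyclic case genuinely differs from the cyclic case \hyperref[lemHCBgroupsisom]{3.6.2.6}, is convergence of these spectral sequences: both $\tn{CN}_R$ and $\tn{B}_R^-$ extend infinitely into negative column degrees, which is why definition \hyperref[definegativecycalg]{3.6.3.1} uses the product total complex rather than the direct-sum total complex. The column filtration on $\tn{ToT}\,\tn{CN}_R$ is bounded above (the bicomplex vanishes for column degree $>1$) and complete and Hausdorff with respect to the product topology in each total degree, so the associated spectral sequence converges strongly to $\tn{HN}_{\bullet,R}$, and similarly for $\tn{B}_R^-$ and $\tn{HN}_{\bullet,R}^{\tn{B}}$. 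The isomorphism of $E_2$-pages combined with strong convergence of both sides yields the canonical isomorphisms $\tn{HN}_{n,R}^{\tn{B}} \cong \tn{HN}_{n,R}$ for all $n$, as required.
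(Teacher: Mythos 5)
Your proposal is correct and is essentially the argument the paper's proof defers to (Loday 5.1.7): kill the contractible $-b'$-columns via the extra degeneracy and identify the resulting horizontal differential with Connes' $B$. The only step worth tightening is the claim of strong convergence---completeness plus boundedness above of the column filtration gives only conditional convergence a priori---and the cleanest fix is to note that the explicit quasi-inverse chain maps and homotopies of Loday's ``killing contractible complexes'' lemma are defined column-by-column and therefore pass directly to the product total complexes, so no spectral-sequence convergence argument is actually needed.
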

\begin{proof}
See Loday \cite{LodayCyclicHomology98}, 5.1.7, page 161. 
\end{proof}

%&&&&&&&&&&&&&&&&&&&&&&&&&&&&&&&&&&&&&&&&&&&&&&&&&&&&&&&&&&&&&&&&&&&&&&&&&&&&&&&&&&
%&&&&&&&&&&&&&&&&&&&&&&&&&&&&&&&&&&&&&&&&&&&&&&&&&&&&&&&&&&&&&&&&&&&&&&&&&&&&&&&&&&
%&&&&                 &&&&    &&&               &&&                    &&&    &&&&    &&&              &&&&                   &&&&&&&&&&&&&&&&&&
%&&&&   &&&&   &&&&    &&&   &&&&&&&&&&&   &&&&&&    &&&&    &&&    &&&     &&&    &&&&&&&&&&&&&&&&&&&&&&&&
%&&&&   &&&&   &&&&    &&&   &&&&&&&&&&&   &&&&&&    &&&&    &&&    &&&     &&&    &&&&&&&&&&&&&&&&&&&&&&&&
%&&&&                 &&&&    &&&   &&&&&&&&&&&   &&&&&&    &&&&    &&&            &&&&&             &&&&&&&&&&&&&&&&&&&&
%&&&&    &&&&&&&&&    &&&   &&&&&&&&&&&   &&&&&&    &&&&    &&&    &&&   &&&&    &&&&&&&&&&&&&&&&&&&&&&&
%&&&&    &&&&&&&&&    &&&   &&&&&&&&&&&   &&&&&&    &&&&    &&&    &&&&   &&&    &&&&&&&&&&&&&&&&&&&&&&&
%&&&&    &&&&&&&&&    &&&                &&&&&&   &&&&&&                   &&&    &&&&&   &&                  &&&&&&&&&&&&&&&&&&
%&&&&&&&&&&&&&&&&&&&&&&&&&&&&&&&&&&&&&&&&&&&&&&&&&&&&&&&&&&&&&&&&&&&&&&&&&&&&&&&&&&
%&&&&&&&&&&&&&&&&&&&&&&&&&&&&&&&&&&&&&&&&&&&&&&&&&&&&&&&&&&&&&&&&&&&&&&&&&&&&&&&&&&
\begin{figure}[H]
\begin{pgfpicture}{0cm}{0cm}{17cm}{11.5cm}
\begin{pgfmagnify}{1.15}{1.15}
%\pgfxyline(0,0)(14.5,0)
%\pgfxyline(0,10.9)(14.5,10.9)
%\pgfxyline(0,0)(0,10.9)
%\pgfxyline(14.5,0)(14.5,10.9)
\pgfputat{\pgfxy(1,10.2)}{\pgfbox[center,center]{\small{Degree:}}}
\pgfputat{\pgfxy(3.625,10.2)}{\pgfbox[center,center]{\small{$-2$}}}
\pgfputat{\pgfxy(5.475,10.2)}{\pgfbox[center,center]{\small{$-1$}}}
\pgfputat{\pgfxy(7.475,10.2)}{\pgfbox[center,center]{\small{$0$}}}
\pgfputat{\pgfxy(9.375,10.2)}{\pgfbox[center,center]{\small{$1$}}}
\pgfputat{\pgfxy(11.225,10.2)}{\pgfbox[center,center]{\small{$2$}}}
\pgfputat{\pgfxy(1,8.2)}{\pgfbox[center,center]{\small{$2$}}}
\pgfputat{\pgfxy(1,6.7)}{\pgfbox[center,center]{\small{$1$}}}
\pgfputat{\pgfxy(1,5.2)}{\pgfbox[center,center]{\small{$0$}}}
\pgfputat{\pgfxy(1,3.7)}{\pgfbox[center,center]{\small{$-1$}}}
\pgfputat{\pgfxy(1,2.2)}{\pgfbox[center,center]{\small{$-2$}}}
\begin{pgftranslate}{\pgfpoint{.375cm}{1.5cm}}
\begin{colormixin}{15!white}
\color{black}
\pgfmoveto{\pgfxy(1.5,-1.35)}
\pgflineto{\pgfxy(6,2.25)}
\pgflineto{\pgfxy(6,8.3)}
\pgflineto{\pgfxy(1.5,8.3)}
\pgflineto{\pgfxy(1.5,-1.35)}
\pgffill
\end{colormixin}
\begin{colormixin}{40!white}
\color{black}
\pgfmoveto{\pgfxy(6,2.25)}
\pgflineto{\pgfxy(8.25,4.05)}
\pgflineto{\pgfxy(8.25,8.3)}
\pgflineto{\pgfxy(6,8.3)}
\pgflineto{\pgfxy(6,2.25)}
\pgffill
\end{colormixin}
\begin{colormixin}{25!white}
\color{black}
\pgfmoveto{\pgfxy(8.25,4.05)}
\pgflineto{\pgfxy(13.5625,8.3)}
\pgflineto{\pgfxy(8.25,8.3)}
\pgflineto{\pgfxy(8.25,4.05)}
\pgffill
\end{colormixin}
\end{pgftranslate}
\begin{pgfmagnify}{.75}{.75}
\begin{pgftranslate}{\pgfpoint{3cm}{2.5cm}}
\pgfputat{\pgfxy(2,.5)}{\pgfbox[center,center]{$R$}}
\pgfputat{\pgfxy(2,2.5)}{\pgfbox[center,center]{$R^{\otimes 2}$}}
\pgfputat{\pgfxy(2,4.5)}{\pgfbox[center,center]{$R^{\otimes 3}$}}
\pgfputat{\pgfxy(2,6.5)}{\pgfbox[center,center]{$R^{\otimes 4}$}}
\pgfputat{\pgfxy(2,8.5)}{\pgfbox[center,center]{$R^{\otimes 5}$}}
\pgfnodecircle{Node1}[fill]{\pgfxy(2,9.9)}{0.025cm}
\pgfnodecircle{Node1}[fill]{\pgfxy(2,10.05)}{0.025cm}
\pgfnodecircle{Node1}[fill]{\pgfxy(2,10.2)}{0.025cm}
\pgfputat{\pgfxy(4.5,2.5)}{\pgfbox[center,center]{$R$}}
\pgfputat{\pgfxy(4.5,4.5)}{\pgfbox[center,center]{$R^{\otimes 2}$}}
\pgfputat{\pgfxy(4.5,6.5)}{\pgfbox[center,center]{$R^{\otimes 3}$}}
\pgfputat{\pgfxy(4.5,8.5)}{\pgfbox[center,center]{$R^{\otimes 4}$}}
\pgfnodecircle{Node1}[fill]{\pgfxy(4.5,9.9)}{0.025cm}
\pgfnodecircle{Node1}[fill]{\pgfxy(4.5,10.05)}{0.025cm}
\pgfnodecircle{Node1}[fill]{\pgfxy(4.5,10.2)}{0.025cm}
\pgfputat{\pgfxy(7,4.5)}{\pgfbox[center,center]{$R$}}
\pgfputat{\pgfxy(7,6.5)}{\pgfbox[center,center]{$R^{\otimes 2}$}}
\pgfputat{\pgfxy(7,8.5)}{\pgfbox[center,center]{$R^{\otimes 3}$}}
\pgfnodecircle{Node1}[fill]{\pgfxy(7,9.9)}{0.025cm}
\pgfnodecircle{Node1}[fill]{\pgfxy(7,10.05)}{0.025cm}
\pgfnodecircle{Node1}[fill]{\pgfxy(7,10.2)}{0.025cm}
\pgfputat{\pgfxy(9.5,6.5)}{\pgfbox[center,center]{$R$}}
\pgfputat{\pgfxy(9.5,8.5)}{\pgfbox[center,center]{$R^{\otimes 2}$}}
\pgfnodecircle{Node1}[fill]{\pgfxy(9.5,9.9)}{0.025cm}
\pgfnodecircle{Node1}[fill]{\pgfxy(9.5,10.05)}{0.025cm}
\pgfnodecircle{Node1}[fill]{\pgfxy(9.5,10.2)}{0.025cm}
\pgfputat{\pgfxy(12,8.5)}{\pgfbox[center,center]{$R$}}
\pgfnodecircle{Node1}[fill]{\pgfxy(12,9.9)}{0.025cm}
\pgfnodecircle{Node1}[fill]{\pgfxy(12,10.05)}{0.025cm}
\pgfnodecircle{Node1}[fill]{\pgfxy(12,10.2)}{0.025cm}
\pgfputat{\pgfxy(1.75,1.5)}{\pgfbox[center,center]{\footnotesize{$b$}}}
\pgfputat{\pgfxy(1.75,3.5)}{\pgfbox[center,center]{\footnotesize{$b$}}}
\pgfputat{\pgfxy(1.75,5.5)}{\pgfbox[center,center]{\footnotesize{$b$}}}
\pgfputat{\pgfxy(4.25,3.5)}{\pgfbox[center,center]{\footnotesize{$b$}}}
\pgfputat{\pgfxy(4.25,5.5)}{\pgfbox[center,center]{\footnotesize{$b$}}}
\pgfputat{\pgfxy(6.75,5.5)}{\pgfbox[center,center]{\footnotesize{$b$}}}
\pgfputat{\pgfxy(1.75,7.5)}{\pgfbox[center,center]{\footnotesize{$b$}}}
\pgfputat{\pgfxy(4.25,7.5)}{\pgfbox[center,center]{\footnotesize{$b$}}}
\pgfputat{\pgfxy(6.75,7.5)}{\pgfbox[center,center]{\footnotesize{$b$}}}
\pgfputat{\pgfxy(9.25,7.5)}{\pgfbox[center,center]{\footnotesize{$b$}}}
%\pgfputat{\pgfxy(1.75,9.5)}{\pgfbox[center,center]{\footnotesize{$b$}}}
%\pgfputat{\pgfxy(4.25,9.5)}{\pgfbox[center,center]{\footnotesize{$b$}}}
%\pgfputat{\pgfxy(6.75,9.5)}{\pgfbox[center,center]{\footnotesize{$b$}}}
%\pgfputat{\pgfxy(9.25,9.5)}{\pgfbox[center,center]{\footnotesize{$b$}}}
%\pgfputat{\pgfxy(11.75,9.5)}{\pgfbox[center,center]{\footnotesize{$b$}}}
\pgfputat{\pgfxy(3.25,2.8)}{\pgfbox[center,center]{\footnotesize{$B$}}}
\pgfputat{\pgfxy(3.25,4.8)}{\pgfbox[center,center]{\footnotesize{$B$}}}
\pgfputat{\pgfxy(5.75,4.8)}{\pgfbox[center,center]{\footnotesize{$B$}}}
\pgfputat{\pgfxy(3.25,6.8)}{\pgfbox[center,center]{\footnotesize{$B$}}}
\pgfputat{\pgfxy(3.25,8.8)}{\pgfbox[center,center]{\footnotesize{$B$}}}
\pgfputat{\pgfxy(5.75,6.8)}{\pgfbox[center,center]{\footnotesize{$B$}}}
\pgfputat{\pgfxy(5.75,8.8)}{\pgfbox[center,center]{\footnotesize{$B$}}}
\pgfputat{\pgfxy(8.25,6.8)}{\pgfbox[center,center]{\footnotesize{$B$}}}
\pgfputat{\pgfxy(8.25,8.8)}{\pgfbox[center,center]{\footnotesize{$B$}}}
\pgfputat{\pgfxy(10.75,8.8)}{\pgfbox[center,center]{\footnotesize{$B$}}}
\begin{pgftranslate}{\pgfpoint{.5cm}{0cm}}
\pgfnodecircle{Node1}[fill]{\pgfxy(-.2,.5)}{0.025cm}
\pgfnodecircle{Node1}[fill]{\pgfxy(-.35,.5)}{0.025cm}
\pgfnodecircle{Node1}[fill]{\pgfxy(-.5,.5)}{0.025cm}
\pgfnodecircle{Node1}[fill]{\pgfxy(-.2,2.5)}{0.025cm}
\pgfnodecircle{Node1}[fill]{\pgfxy(-.35,2.5)}{0.025cm}
\pgfnodecircle{Node1}[fill]{\pgfxy(-.5,2.5)}{0.025cm}
\pgfnodecircle{Node1}[fill]{\pgfxy(-.2,4.5)}{0.025cm}
\pgfnodecircle{Node1}[fill]{\pgfxy(-.35,4.5)}{0.025cm}
\pgfnodecircle{Node1}[fill]{\pgfxy(-.5,4.5)}{0.025cm}
\pgfnodecircle{Node1}[fill]{\pgfxy(-.2,6.5)}{0.025cm}
\pgfnodecircle{Node1}[fill]{\pgfxy(-.35,6.5)}{0.025cm}
\pgfnodecircle{Node1}[fill]{\pgfxy(-.5,6.5)}{0.025cm}
\pgfnodecircle{Node1}[fill]{\pgfxy(-.2,8.5)}{0.025cm}
\pgfnodecircle{Node1}[fill]{\pgfxy(-.35,8.5)}{0.025cm}
\pgfnodecircle{Node1}[fill]{\pgfxy(-.5,8.5)}{0.025cm}
\end{pgftranslate}
\pgfsetendarrow{\pgfarrowlargepointed{3pt}}
\pgfxyline(2,2.2)(2,0.8)
\pgfxyline(2,4.2)(2,2.8)
\pgfxyline(2,6.2)(2,4.8)
\pgfxyline(2,8.2)(2,6.8)
\pgfxyline(2,9.7)(2,8.8)
\pgfxyline(4.5,4.2)(4.5,2.8)
\pgfxyline(4.5,6.2)(4.5,4.8)
\pgfxyline(4.5,8.2)(4.5,6.8)
\pgfxyline(4.5,9.7)(4.5,8.8)
\pgfxyline(7,6.2)(7,4.8)
\pgfxyline(7,8.2)(7,6.8)
\pgfxyline(7,9.7)(7,8.8)
\pgfxyline(9.5,8.2)(9.5,6.8)
\pgfxyline(9.5,9.7)(9.5,8.8)
\pgfxyline(12,9.7)(12,8.8)
\pgfxyline(1.5,.5)(.5,.5)
\pgfxyline(1.5,2.5)(.5,2.5)
\pgfxyline(1.5,4.5)(.5,4.5)
\pgfxyline(1.5,6.5)(.5,6.5)
\pgfxyline(1.5,8.5)(.5,8.5)
\pgfxyline(4,6.5)(2.5,6.5)
\pgfxyline(4,8.5)(2.5,8.5)
\pgfxyline(6.5,6.5)(5,6.5)
\pgfxyline(6.5,8.5)(5,8.5)
\pgfxyline(9,6.5)(7.5,6.5)
\pgfxyline(9,8.5)(7.5,8.5)
\pgfxyline(11.5,8.5)(10,8.5)
\pgfxyline(4,2.5)(2.5,2.5)
\pgfxyline(4,4.5)(2.5,4.5)
\pgfxyline(6.5,4.5)(5,4.5)
\end{pgftranslate}
\end{pgfmagnify}
\end{pgfmagnify}
\end{pgfpicture}
\caption{The bicomplexes $\tn{B}_R$, $\tn{B}^{\tn{per}}_R$, and $\tn{B}_R^-$.}
\label{figBcomplexes}
\end{figure}
%&&&&&&&&&&&&&&&&&&&&&&&&&&&&&&&&&&&&&&&&&&&&&&&&&&&&&&&&&&&&&&&&&&&&&&&&&&&&&&&&&&
%&&&&&&&&&&&&&&&&&&&&&&&&&&&&&&&&&&&&&&&&&&&&&&&&&&&&&&&&&&&&&&&&&&&&&&&&&&&&&&&&&&
%&&&                    &&&       &&&&&    &&&                &&&&&&&&&&&&&&&&&&&&&&&&&&&&&&&&&&&&&&&&&&&&&&&&&&&
%&&&    &&&&&&&&&    &    &&&&   &&&    &&&&    &&&&&&&&&&&&&&&&&&&&&&&&&&&&&&&&&&&&&&&&&&&&&&&&&& 
%&&&    &&&&&&&&&    &&    &&&   &&&    &&&&&    &&&&&&&&&&&&&&&&&&&&&&&&&&&&&&&&&&&&&&&&&&&&&&&&& 
%&&&                 &&&&    &&&    &&   &&&    &&&&&    &&&&&&&&&&&&&&&&&&&&&&&&&&&&&&&&&&&&&&&&&&&&&&&&& 
%&&&    &&&&&&&&&    &&&&    &   &&&    &&&&&    &&&&&&&&&&&&&&&&&&&&&&&&&&&&&&&&&&&&&&&&&&&&&&&&& 
%&&&    &&&&&&&&&    &&&&&       &&&    &&&&     &&&&&&&&&&&&&&&&&&&&&&&&&&&&&&&&&&&&&&&&&&&&&&&&& 
%&&&                    &&&    &&&&&&    &&&                  &&&&&&&&&&&&&&&&&&&&&&&&&&&&&&&&&&&&&&&&&&&&&&&&&&
%&&&&&&&&&&&&&&&&&&&&&&&&&&&&&&&&&&&&&&&&&&&&&&&&&&&&&&&&&&&&&&&&&&&&&&&&&&&&&&&&&&
%&&&&&&&&&&&&&&&&&&&&&&&&&&&&&&&&&&&&&&&&&&&&&&&&&&&&&&&&&&&&&&&&&&&&&&&&&&&&&&&&&&

\begin{example}\label{exnegcycsmoothchar0}\tn{If $R$ is a smooth algebra over a commutative ring containing $\QQ$, then
\[\tn{HN}_{n,R}\cong Z_R^n\times\prod_{i>0}H_{\tn{dR}}^{n+2i}(R),\]
where $Z_R^n:=\tn{Ker}[d:\Omega_{R/k}^n\rightarrow\Omega_{R/k}^{n+1}]$.  
See Loday \cite{LodayCyclicHomology98}, 5.1.12, page 163, for details.}
\end{example}
\hspace{16.3cm} $\oblong$ 

\begin{example}\label{exnegcycsmoothchar0}\tn{Let $R$ be a commutative unital $k$-algebra. Using the isomorphism $\tn{HC}_{0,R}\cong R$, there is a map $\tn{HC}_{0,R}\rightarrow \tn{HN}_{1,R}$ sending $r$ to $dr$ in the first factor $Z_R^1$ of $\tn{HN}_{1,R}$, since exact forms are closed.  Similarly, there is a map $\tn{HC}_{1,R}\rightarrow \tn{HN}_{2,R}$ sending $rdr'$ to $dr\wedge dr'$.} 
\end{example}
\hspace{16.3cm} $\oblong$ 

{\bf The $SBI$ Sequence.} From figure \hyperref[figcyclicbicomplexes]{\ref{figcyclicbicomplexes}} above, it is clear that there exists a short exact sequence of bicomplexes
\begin{equation}\label{equSEScycperneg}\xymatrix{0\ar[r]&\tn{CN}_R\ar[r]^I&\tn{CP}_R\ar[r]^-S&\tn{CC}_R[0,2]\ar[r]&0,}\end{equation}
where $I$ is inclusion of the negative cyclic bicomplex $\tn{CN}_R$ into the periodic cyclic bicomplex $\tn{CP}_R$, and $S$ is the projection of $\tn{CP}_R$ onto its subbicomplex in column degrees at least two, which is equal to the shifted cyclic bicomplex $\tn{CC}_R[0,2]$.  

This short exact sequence leads to a long exact sequence in homology:
\begin{equation}\label{equSBI}\xymatrix{...\ar[r]&\tn{HP}_{n+1,R}\ar[r]^S&\tn{HC}_{n-1,R}\ar[r]^B&\tn{HN}_{n,R}\ar[r]^I&\tn{HP}_{n,R}\ar[r]^S&\tn{HC}_{n-2,R}\ar[r]&...}\end{equation}

\begin{example} \tn{Let $R$ be a smooth unital algebra over a commutative ring $k$ containing $\QQ$.  The ``tail" of the SBI sequence 
\begin{equation}\label{equLESconneslowdegrees}\xymatrix{...\ar[r]&\tn{HC}_{2,R}\ar[r]^S&\tn{HC}_{0,R}\ar[r]^B&\tn{HN}_{1,R}\ar[r]^I&\tn{HC}_{1,R}\ar[r]&0,}\end{equation}
has the form
\begin{equation}\label{equLESconneslowdegrees}\xymatrix{...\ar[r]&\displaystyle\prod_{i\ge0}H_{\tn{dR}}^{2i}(R)\ar[r]^-S&R\ar[r]^-B&\displaystyle Z_R^1\times\prod_{i>0}H_{\tn{dR}}^{2i+1}(R)\ar[r]^-I&\displaystyle\prod_{i\ge0}H_{\tn{dR}}^{2i+1}(R)\ar[r]&0,}\end{equation}
where $S$ is inclusion of the ``bottom" factor $H_{\tn{dR}}^0(R)=k$, $B$ is the differential map $d$ into the first factor $Z_R^1=\tn{Ker}[d:\Omega_{R/k}^1\rightarrow\Omega_{R/k}^2]$, and $I$ is the quotient map. 
}
\end{example}
\hspace{16.3cm} $\oblong$

\subsection{Cyclic Homology and Negative Cyclic Homology of Schemes}\label{subsectioncycschemes}

As mentioned at the beginning of section \hyperref[sectioncyclichomology]{\ref{sectioncyclichomology}} above, the formal definition of the coniveau machine requires a suitable theory of negative cyclic homology for schemes.  Weibel showed how to construct such a theory in 1991, but Keller's approach involving localization pairs provides some theoretical advantages.  This approach involves a very general intermediate construct called a {\it mixed complex.}   Cyclic homology and negative cyclic homology of $k$-algebras may be defined using mixed complexes; section \hyperref[sectioncyclichomologymixedkeller]{\ref{sectioncyclichomologymixedkeller}} of the appendix describes this approach in detail.  In the present section, I describe the generalities of mixed complexes, before introducing Keller's approach to negative cyclic homology for schemes. 

A {\bf mixed complex} $(C,b,B)$ is a complex $(C,b)$ consisting of $k$-modules $C_n$ and boundary maps $b_n$, together with a family of maps $B:C_n\rightarrow C_{n+1}$ such that $b^2=B^2=bB+Bb=0$.  See Loday \cite{LodayCyclicHomology98}, page 79, or Keller \cite{KellerCyclicHomologyofExactCat96}, page 4, for more details.\footnotemark\footnotetext{Note that Keller does not explicitly include the condition that $B^2=0$.}   The notation $(C,b,B)$ is often abbreviated to $C$.  A mixed complex $C$ determines a first-quadrant bicomplex $\tn{B}_C$, illustrated in figure \ref{figBcomplexmixed} below.

The prototypical mixed complex is the complex $(\tn{C}_R,b,B)$ where $(\tn{C}_R,b)$ is the Hochschild complex of a unital $k$-algebra $R$, introduced in section \hyperref[subsectioncychomalgcommring]{\ref{subsectioncychomalgcommring}} above, and $B$ is Connes' boundary map $B$.   The corresponding bicomplex is the familiar $\tn{B}$-bicomplex $\tn{B}_R$.   Note that Bernhard Keller  \cite{KellerCyclicHomologyofExactCat96} has defined a more general mixed complex associated with $R$, that makes sense in the nonunital case.  The construction of this complex is described in section \hyperref[sectioncyclichomologymixedkeller]{\ref{sectioncyclichomologymixedkeller}} of the appendix.

%&&&&&&&&&&&&&&&&&&&&&&&&&&&&&&&&&&&&&&&&&&&&&&&&&&&&&&&&&&&&&&&&&&&&&&&&&&&&&&&&&&
%&&&&&&&&&&&&&&&&&&&&&&&&&&&&&&&&&&&&&&&&&&&&&&&&&&&&&&&&&&&&&&&&&&&&&&&&&&&&&&&&&&
%&&&&                 &&&&    &&&               &&&                    &&&    &&&&    &&&              &&&&                   &&&&&&&&&&&&&&&&&&
%&&&&   &&&&   &&&&    &&&   &&&&&&&&&&&   &&&&&&    &&&&    &&&    &&&     &&&    &&&&&&&&&&&&&&&&&&&&&&&&
%&&&&   &&&&   &&&&    &&&   &&&&&&&&&&&   &&&&&&    &&&&    &&&    &&&     &&&    &&&&&&&&&&&&&&&&&&&&&&&&
%&&&&                 &&&&    &&&   &&&&&&&&&&&   &&&&&&    &&&&    &&&            &&&&&             &&&&&&&&&&&&&&&&&&&&
%&&&&    &&&&&&&&&    &&&   &&&&&&&&&&&   &&&&&&    &&&&    &&&    &&&   &&&&    &&&&&&&&&&&&&&&&&&&&&&&
%&&&&    &&&&&&&&&    &&&   &&&&&&&&&&&   &&&&&&    &&&&    &&&    &&&&   &&&    &&&&&&&&&&&&&&&&&&&&&&&
%&&&&    &&&&&&&&&    &&&                &&&&&&   &&&&&&                   &&&    &&&&&   &&                  &&&&&&&&&&&&&&&&&&
%&&&&&&&&&&&&&&&&&&&&&&&&&&&&&&&&&&&&&&&&&&&&&&&&&&&&&&&&&&&&&&&&&&&&&&&&&&&&&&&&&&
%&&&&&&&&&&&&&&&&&&&&&&&&&&&&&&&&&&&&&&&&&&&&&&&&&&&&&&&&&&&&&&&&&&&&&&&&&&&&&&&&&&
\begin{figure}[H]
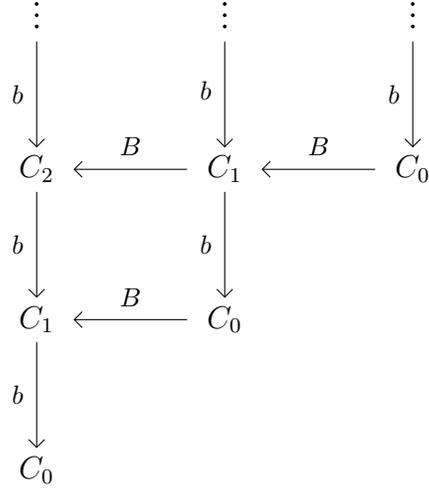

\begin{pgfpicture}{0cm}{0cm}{17cm}{6cm}
%\pgfxyline(0,0)(14.5,0)
%\pgfxyline(0,7.4)(14.5,7.4)
%\pgfxyline(0,0)(0,7.4)
%\pgfxyline(14.5,0)(14.5,7.4)
\begin{pgftranslate}{\pgfpoint{2.5cm}{-.25cm}}
\pgfputat{\pgfxy(2,.5)}{\pgfbox[center,center]{$C_0$}}
\pgfputat{\pgfxy(2,2.5)}{\pgfbox[center,center]{$C_1$}}
\pgfputat{\pgfxy(2,4.5)}{\pgfbox[center,center]{$C_2$}}
\pgfnodecircle{Node1}[fill]{\pgfxy(2,6.4)}{0.025cm}
\pgfnodecircle{Node1}[fill]{\pgfxy(2,6.55)}{0.025cm}
\pgfnodecircle{Node1}[fill]{\pgfxy(2,6.7)}{0.025cm}
\pgfputat{\pgfxy(4.5,2.5)}{\pgfbox[center,center]{$C_0$}}
\pgfputat{\pgfxy(4.5,4.5)}{\pgfbox[center,center]{$C_1$}}
\pgfnodecircle{Node1}[fill]{\pgfxy(4.5,6.4)}{0.025cm}
\pgfnodecircle{Node1}[fill]{\pgfxy(4.5,6.55)}{0.025cm}
\pgfnodecircle{Node1}[fill]{\pgfxy(4.5,6.7)}{0.025cm}
\pgfputat{\pgfxy(7,4.5)}{\pgfbox[center,center]{$C_0$}}
\pgfnodecircle{Node1}[fill]{\pgfxy(7,6.4)}{0.025cm}
\pgfnodecircle{Node1}[fill]{\pgfxy(7,6.55)}{0.025cm}
\pgfnodecircle{Node1}[fill]{\pgfxy(7,6.7)}{0.025cm}
\pgfputat{\pgfxy(1.75,1.5)}{\pgfbox[center,center]{\footnotesize{$b$}}}
\pgfputat{\pgfxy(1.75,3.5)}{\pgfbox[center,center]{\footnotesize{$b$}}}
\pgfputat{\pgfxy(1.75,5.5)}{\pgfbox[center,center]{\footnotesize{$b$}}}
\pgfputat{\pgfxy(4.25,3.5)}{\pgfbox[center,center]{\footnotesize{$b$}}}
\pgfputat{\pgfxy(4.25,5.55)}{\pgfbox[center,center]{\footnotesize{$b$}}}
\pgfputat{\pgfxy(6.75,5.5)}{\pgfbox[center,center]{\footnotesize{$b$}}}
\pgfputat{\pgfxy(3.25,2.8)}{\pgfbox[center,center]{\footnotesize{$B$}}}
\pgfputat{\pgfxy(3.25,4.8)}{\pgfbox[center,center]{\footnotesize{$B$}}}
\pgfputat{\pgfxy(5.75,4.8)}{\pgfbox[center,center]{\footnotesize{$B$}}}
\pgfsetendarrow{\pgfarrowlargepointed{3pt}}
\pgfxyline(2,2.2)(2,0.8)
\pgfxyline(2,4.2)(2,2.8)
\pgfxyline(2,6.2)(2,4.8)
\pgfxyline(4.5,4.2)(4.5,2.8)
\pgfxyline(4.5,6.2)(4.5,4.8)
\pgfxyline(7,6.2)(7,4.8)
\pgfxyline(4,2.5)(2.5,2.5)
\pgfxyline(4,4.5)(2.5,4.5)
\pgfxyline(6.5,4.5)(5,4.5)
\end{pgftranslate}
\end{pgfpicture}
\caption{Bicomplex $\tn{B}_C$ associated with a mixed complex $C$.}
\label{figBcomplexmixed}
\end{figure}
%&&&&&&&&&&&&&&&&&&&&&&&&&&&&&&&&&&&&&&&&&&&&&&&&&&&&&&&&&&&&&&&&&&&&&&&&&&&&&&&&&&
%&&&&&&&&&&&&&&&&&&&&&&&&&&&&&&&&&&&&&&&&&&&&&&&&&&&&&&&&&&&&&&&&&&&&&&&&&&&&&&&&&&
%&&&                    &&&       &&&&&    &&&                &&&&&&&&&&&&&&&&&&&&&&&&&&&&&&&&&&&&&&&&&&&&&&&&&&&
%&&&    &&&&&&&&&    &    &&&&   &&&    &&&&    &&&&&&&&&&&&&&&&&&&&&&&&&&&&&&&&&&&&&&&&&&&&&&&&&& 
%&&&    &&&&&&&&&    &&    &&&   &&&    &&&&&    &&&&&&&&&&&&&&&&&&&&&&&&&&&&&&&&&&&&&&&&&&&&&&&&& 
%&&&                 &&&&    &&&    &&   &&&    &&&&&    &&&&&&&&&&&&&&&&&&&&&&&&&&&&&&&&&&&&&&&&&&&&&&&&& 
%&&&    &&&&&&&&&    &&&&    &   &&&    &&&&&    &&&&&&&&&&&&&&&&&&&&&&&&&&&&&&&&&&&&&&&&&&&&&&&&& 
%&&&    &&&&&&&&&    &&&&&       &&&    &&&&     &&&&&&&&&&&&&&&&&&&&&&&&&&&&&&&&&&&&&&&&&&&&&&&&& 
%&&&                    &&&    &&&&&&    &&&                  &&&&&&&&&&&&&&&&&&&&&&&&&&&&&&&&&&&&&&&&&&&&&&&&&&
%&&&&&&&&&&&&&&&&&&&&&&&&&&&&&&&&&&&&&&&&&&&&&&&&&&&&&&&&&&&&&&&&&&&&&&&&&&&&&&&&&&
%&&&&&&&&&&&&&&&&&&&&&&&&&&&&&&&&&&&&&&&&&&&&&&&&&&&&&&&&&&&&&&&&&&&&&&&&&&&&&&&&&&
\vspace*{-.5cm}
 
{\bf Hochschild Homology of a Mixed Complex.} The {\bf Hochschild homology groups} $\tn{HH}_{n,C}$ of a mixed complex $C$ are the homology groups of the first column of $C$; i.e., the homology groups of the complex $(C,b)$.   For the mixed complex $(\tn{C}_R,b,B)$, the groups $\tn{HH}_{n,C}$ are just the Hochschild homology groups $\tn{HH}_{n,R}$ of $R$, defined in section \hyperref[subsectioncychomalgcommring]{\ref{subsectioncychomalgcommring}} above.  See Loday \cite{LodayCyclicHomology98}, page 79, for more details. 

%SSSSSSSSSSSSSSSSSSSSSSSSSSSSSSSSSSSSSSSSSSSSSSSSSSSSSSSSSSSSSSSSSSSSSSSSSSSSSSSSSS
%SSSSSSSSSSSSSSSSSSSSSSSSSSSSSSSSSSSSSSSSSSSSSSSSSSSSSSSSSSSSSSSSSSSSSSSSSSSSSSSSSS
%SSS                      SSS                       SSS                    SS                          SS           SSS                         SSS          SSSSS      SSSS
%SSS      SSSSSSSSS    SSSSSSSSSS     SSSSSSSSSSSS     SSSSSSS     SSSS    SSSSSS     SSS     S     SSSS     SSSSS
%SSS      SSSSSSSSS    SSSSSSSSSS     SSSSSSSSSSSS     SSSSSSS     SSSS    SSSSSS     SSS     SS     SSS     SSSSS
%SSS                      SSS                  SSSSS    SSSSSSSSSSSS     SSSSSSS     SSSS    SSSSSS     SSS     SSS     SS     SSSSS
%SSSSSSSSSS    SSS    SSSSSSSSSS    SSSSSSSSSSSS     SSSSSSS     SSSS    SSSSSS     SSS     SSSS     S     SSSSS
%SSSSSSSSSS    SSS    SSSSSSSSSS    SSSSSSSSSSSS     SSSSSSS     SSSS    SSSSSS     SSS     SSSSS          SSSSS
%SSS                       SSS                      SSS                       SSSSS     SSSSSS          SSS                         SSS     SSSSSS        SSSSS
%SSSSSSSSSSSSSSSSSSSSSSSSSSSSSSSSSSSSSSSSSSSSSSSSSSSSSSSSSSSSSSSSSSSSSSSSSSSSSSSSSS
%SSSSSSSSSSSSSSSSSSSSSSSSSSSSSSSSSSSSSSSSSSSSSSSSSSSSSSSSSSSSSSSSSSSSSSSSSSSSSSSSSS

\label{cyclichomologymixed}

{\bf Cyclic Homology of a Mixed Complex.} Since the bicomplex $\tn{B}_C$ associated with a mixed complex $C=(C,b,B)$ is an abstraction of the familiar $\tn{B}$-bicomplex $\tn{B}_R$, it is no surprise that the associated cyclic homology groups are defined in an analogous way.

\begin{defi}\label{deficycmixed} Let $C=(C,b,B)$ be a mixed complex.  The {\bf cyclic homology groups} $\tn{HC}_{n,C}$ of $C$ are the homology groups of the total complex $\tn{Tot } \tn{B}_C$ of $C$.
\end{defi}
There is a Connes periodicity exact sequence for a mixed complex, generalizing the sequence appearing in equation \hyperref[equLEShochcyc]{\ref{equLEShochcyc}} above.  

%SSSSSSSSSSSSSSSSSSSSSSSSSSSSSSSSSSSSSSSSSSSSSSSSSSSSSSSSSSSSSSSSSSSSSSSSSSSSSSSSSS
%SSSSSSSSSSSSSSSSSSSSSSSSSSSSSSSSSSSSSSSSSSSSSSSSSSSSSSSSSSSSSSSSSSSSSSSSSSSSSSSSSS
%SSS                      SSS                       SSS                    SS                          SS           SSS                         SSS          SSSSS      SSSS
%SSS      SSSSSSSSS    SSSSSSSSSS     SSSSSSSSSSSS     SSSSSSS     SSSS    SSSSSS     SSS     S     SSSS     SSSSS
%SSS      SSSSSSSSS    SSSSSSSSSS     SSSSSSSSSSSS     SSSSSSS     SSSS    SSSSSS     SSS     SS     SSS     SSSSS
%SSS                      SSS                  SSSSS    SSSSSSSSSSSS     SSSSSSS     SSSS    SSSSSS     SSS     SSS     SS     SSSSS
%SSSSSSSSSS    SSS    SSSSSSSSSS    SSSSSSSSSSSS     SSSSSSS     SSSS    SSSSSS     SSS     SSSS     S     SSSSS
%SSSSSSSSSS    SSS    SSSSSSSSSS    SSSSSSSSSSSS     SSSSSSS     SSSS    SSSSSS     SSS     SSSSS          SSSSS
%SSS                       SSS                      SSS                       SSSSS     SSSSSS          SSS                         SSS     SSSSSS        SSSSS
%SSSSSSSSSSSSSSSSSSSSSSSSSSSSSSSSSSSSSSSSSSSSSSSSSSSSSSSSSSSSSSSSSSSSSSSSSSSSSSSSSS
%SSSSSSSSSSSSSSSSSSSSSSSSSSSSSSSSSSSSSSSSSSSSSSSSSSSSSSSSSSSSSSSSSSSSSSSSSSSSSSSSSS

\label{negcychommixed}

{\bf Negative Cyclic Homology of a Mixed Complex.}  Like the familiar $\tn{B}$-bicomplex $\tn{B}_R$ associated with a unital $k$-algebra $R$, the bicomplex $\tn{B}_C$ associated with a mixed complex $C=(C,b,B)$ may be extended to the left to yield a periodic bicomplex $\tn{B}_C^{\tn{per}}$ and a negative bicomplex $\tn{B}_C^-$.  The bicomplexes $\tn{B}_C$ and $\tn{B}_C^-$ are natural sub-bicomplexes of $\tn{B}_C^{\tn{per}}$, which share the column in degree zero and are otherwise disjoint.   These bicomplexes fit into a diagram analogous to the one shown in figure \ref{figBcomplexes} above, where each term $R^{\otimes n+1}$ is generalized to the abstract term $C_n$.   

As in the case of algebras, the product total complexes $\tn{ToT } \tn{B}_C^-$ and $\tn{ToT } \tn{B}_C^{\tn{per}}$ generally contain infinite numbers of factors.  I follow Weibel et al. \cite{WeibelCycliccdh-CohomNegativeK06} and use the modified total complexes and $\tn{ToT}' \tn{B}_C^{\tn{per}}$ and $\tn{ToT}' \tn{B}_C^-$ to define periodic cyclic and negative cyclic homology. 

\begin{defi}\label{definegcycmixed} Let $C=(C,b,B)$ be a mixed complex.  
\begin{enumerate}
\item The {\bf periodic cyclic homology groups} $\tn{HP}_{n,C}$ of $C$ are the homology groups of the modified total complex $\tn{ToT}' \tn{B}_C^{\tn{per}}$.
\item The {\bf negative cyclic homology groups} $\tn{HN}_{n,C}$ of $C$ are the homology groups of the modified total complex $\tn{ToT}' \tn{B}_C^-$.
\end{enumerate}
\end{defi}

Note that as in the special case of $k$-algebras, the negative cyclic homology groups $\tn{HN}_{n,C}$ are generally nontrivial in negative degrees. 

%SSSSSSSSSSSSSSSSSSSSSSSSSSSSSSSSSSSSSSSSSSSSSSSSSSSSSSSSSSSSSSSSSSSSSSSSSSSSSSSSSS
%SSSSSSSSSSSSSSSSSSSSSSSSSSSSSSSSSSSSSSSSSSSSSSSSSSSSSSSSSSSSSSSSSSSSSSSSSSSSSSSSSS
%SSS                      SSS                       SSS                    SS                          SS           SSS                         SSS          SSSSS      SSSS
%SSS      SSSSSSSSS    SSSSSSSSSS     SSSSSSSSSSSS     SSSSSSS     SSSS    SSSSSS     SSS     S     SSSS     SSSSS
%SSS      SSSSSSSSS    SSSSSSSSSS     SSSSSSSSSSSS     SSSSSSS     SSSS    SSSSSS     SSS     SS     SSS     SSSSS
%SSS                      SSS                  SSSSS    SSSSSSSSSSSS     SSSSSSS     SSSS    SSSSSS     SSS     SSS     SS     SSSSS
%SSSSSSSSSS    SSS    SSSSSSSSSS    SSSSSSSSSSSS     SSSSSSS     SSSS    SSSSSS     SSS     SSSS     S     SSSSS
%SSSSSSSSSS    SSS    SSSSSSSSSS    SSSSSSSSSSSS     SSSSSSS     SSSS    SSSSSS     SSS     SSSSS          SSSSS
%SSS                       SSS                      SSS                       SSSSS     SSSSSS          SSS                         SSS     SSSSSS        SSSSS
%SSSSSSSSSSSSSSSSSSSSSSSSSSSSSSSSSSSSSSSSSSSSSSSSSSSSSSSSSSSSSSSSSSSSSSSSSSSSSSSSSS
%SSSSSSSSSSSSSSSSSSSSSSSSSSSSSSSSSSSSSSSSSSSSSSSSSSSSSSSSSSSSSSSSSSSSSSSSSSSSSSSSSS

{\bf Keller's Localization Pairs.}  Bernhard Keller \cite{KellerCyclicHomologyofExactCat96} defines Hochschild, cyclic, and negative cyclic homology theories for special pairs of categories called {\it localization pairs.}\footnotemark\footnotetext{{\it ``An exact category gives actually rise to a pair of exact DG categories: the category of complexes and its full subcategory of acyclic subcomplexes. This pair is an example of a ``localization pair."}  Keller \cite{KellerCyclicHomologyofExactCat96}.}   This is accomplished by first associating a mixed complex with a localization pair, then proceeding to define homology as described above.  The importance of Keller's approach in the context of the coniveau machine is that it facilitates flexible and general definitions of cyclic homology and negative cyclic homology for an algebraic scheme $X$, using the localization pair consisting of the modified category $\mbf{Par}_X$ of perfect complexes of $\ms{O}_X$-modules on a scheme $X$, described in section \hyperref[subsectionconnectivenonconnective]{\ref{subsectionconnectivenonconnective}} above, and its acyclic subcategory.   Keller's approach allows cyclic homology and negative cyclic homology to be treated on a similar footing as Bass-Thomason $K$-theory, since all three theories rely on the same underlying category of perfect complexes. 

Using Keller's approach, one may construct a functor $\mbf{HN}$ from an appropriate category of schemes to an appropriate category of spectra via the following general steps:

\begin{enumerate}
\item Begin with the localization pair $(\mbf{Par}_X,\mbf{Ac})$, where $\mbf{Par}_X$ is the modified category of perfect complexes of $\ms{O}_X$-modules on $X$ described in section \hyperref[subsectionconnectivenonconnective]{\ref{subsectionconnectivenonconnective}} above, and $\mbf{Ac}$ is its full subcategory of acyclic complexes.
\item Keller's machinery of localization pairs (\cite{KellerCyclicHomologyofExactCat96}) produces a mixed complex $C_X=(C,b,B)$.  
\item The complex $\tn{HN}_X$ is then defined to be the complex   
\[\tn{HN}_X:=\tn{ToT}'(...\rightarrow0\rightarrow C_X\overset{B}{\rightarrow} C_X[-1]\overset{B}{\rightarrow} C_X[-2]\overset{B}{\rightarrow} ...),\]
where $\tn{ToT}'$ is the modified total complex defined in equation \hyperref[equweibelmodifiedprodcomplex]{\ref{equweibelmodifiedprodcomplex}} above, and the bicomplex appearing the parentheses is constructed by sewing together copies of the mixed complex $C_X$ by means of the ``Connes boundary map" $B$.  
\item There exists an Eilenberg-MacLane functor from complexes to spectra; see \cite{WeibelHomologicalAlgebra94} for details.  This functor converts the complex $\tn{HN}_X$ to a spectrum $\mbf{HN}_X$.  
\end{enumerate}

From this viewpoint, the assignment $X\mapsto \mbf{HN}_X$ defines a presheaf of spectra on the Zariski site.  In section \hyperref[sectioncohomtheoriessupports]{\ref{sectioncohomtheoriessupports}} below, I show that this presheaf is a {\it substratum} of spectra satisfying the \'etale Mayer-Vietoris and projective bundle properties, and may thus be treated on the same footing as Bass-Thomason $K$-theory.

\section{Relative $K$-Theory and Relative Cyclic Homology}\label{subsectionrelativeKcyclic}

The third and fourth columns of the coniveau machine relate the relative versions of two different generalized cohomology theories.  A good general heuristic is the think of the theory appearing in the third column as a relative multiplicative theory, and the theory in the fourth column as a relative additive theory.  In the simple case of codimension-one cycles on an algebraic curve, considered in chapter \hyperref[chaptercurves]{\ref{chaptercurves}} above, this interpretation may be taken literally, since the first object in the third column is the sheaf of multiplicative groups $\ms{O}_{X_\ee,\ee}^*$, while the first object in the fourth column is the sheaf of additive groups $\ms{O}_X^+$.  In the more general case of codimension-$p$ cycles on an $n$-dimensional algebraic variety or scheme, the theory in the third column is the relative version of Bass-Thomason $K$-theory, and the theory in the fourth column is the relative version of negative cyclic homology.  In this section, I will discuss appropriate cases of these theories.  

In general, relative versions of generalized cohomology theories are defined so as to possess convenient functorial properties, and this does not always lead to simple descriptions in terms of absolute objects.  For example, in modern algebraic $K$-theory, relative $K$-groups are usually defined via homotopy fibers, which guarantees the existence of long exact sequences relating absolute and relative groups.\footnotemark\footnotetext{For example, see Weibel \cite{WeibelKBook} Chapter IV, page 8.}  Even for Milnor $K$-theory, ``sophisticated" definitions are sometimes appropriate.\footnotemark\footnotetext{For example, Marc Levine defines relative Milnor $K$-theory of a commutative semilocal ring $R$ with respect to an entire list of ideals $(I_1,...,I_s)$ of $R$, in terms of hypercubes of elements of $R$.}   Similarly, for cyclic homology, relative constructions are generally carried out at the level of bicomplexes, leading to long exact sequences in the total homology.\footnotemark\footnotetext{For example, see Loday \cite{LodayCyclicHomology98}, 2.1.15, page 60.} Here, however, the relative groups of principal interest arise from split nilpotent extensions of commutative rings, and may therefore be defined as kernels of maps between absolute groups.  The prototypical example of a split nilpotent extension of a commutative ring $S$ is the ring $R=S[\ee]/\ee^2\rightarrow S$ of dual numbers over $S$.  This is the extension used repeated in Chapter \hyperref[chaptercurves]{\ref{chaptercurves}}, with $S$ usually being a ring of regular or rational functions associated to an algebraic curve.  Prototypical examples of relative objects in this context are the sheaves $\ms{O}_{X_\ee,\ee}^*$ and $\ms{O}_X^+$.  The first sheaf is the kernel of the map $\ms{O}_{X_\ee}^*\rightarrow\ms{O}_X^*$, given by sending $\ee$ to zero, and the second sheaf is {\it isomorphic}, as an additive group, to the kernel of the corresponding map $\ms{O}_{X_\ee}\rightarrow\ms{O}_X$. 

%SSSSSSSSSSSSSSSSSSSSSSSSSSSSSSSSSSSSSSSSSSSSSSSSSSSSSSSSSSSSSSSSSSSSSSSSSSSSSSSSSS
%SSSSSSSSSSSSSSSSSSSSSSSSSSSSSSSSSSSSSSSSSSSSSSSSSSSSSSSSSSSSSSSSSSSSSSSSSSSSSSSSSS
%SSS                      SSS                       SSS                    SS                          SS           SSS                         SSS          SSSSS      SSSS
%SSS      SSSSSSSSS    SSSSSSSSSS     SSSSSSSSSSSS     SSSSSSS     SSSS    SSSSSS     SSS     S     SSSS     SSSSS
%SSS      SSSSSSSSS    SSSSSSSSSS     SSSSSSSSSSSS     SSSSSSS     SSSS    SSSSSS     SSS     SS     SSS     SSSSS
%SSS                      SSS                  SSSSS    SSSSSSSSSSSS     SSSSSSS     SSSS    SSSSSS     SSS     SSS     SS     SSSSS
%SSSSSSSSSS    SSS    SSSSSSSSSS    SSSSSSSSSSSS     SSSSSSS     SSSS    SSSSSS     SSS     SSSS     S     SSSSS
%SSSSSSSSSS    SSS    SSSSSSSSSS    SSSSSSSSSSSS     SSSSSSS     SSSS    SSSSSS     SSS     SSSSS          SSSSS
%SSS                       SSS                      SSS                       SSSSS     SSSSSS          SSS                         SSS     SSSSSS        SSSSS
%SSSSSSSSSSSSSSSSSSSSSSSSSSSSSSSSSSSSSSSSSSSSSSSSSSSSSSSSSSSSSSSSSSSSSSSSSSSSSSSSSS
%SSSSSSSSSSSSSSSSSSSSSSSSSSSSSSSSSSSSSSSSSSSSSSSSSSSSSSSSSSSSSSSSSSSSSSSSSSSSSSSSSS

\subsection{Split Nilpotent Extensions; Split Nilpotent Pairs}\label{subsectionnilpotent}

{\bf Split Nilpotent Extensions.}  A split nilpotent extension of a ring $S$ is the algebraic counterpart of an infinitesimal thickening of the affine scheme $\tn{Spec}(S)$.   Heuristically, a split nilpotent extension augments $S$ by the addition of elements ``sufficiently small" that products of sufficiently many such elements vanish.  

\begin{defi}\label{defisplitnilppairs} Let $S$ be a commutative ring with identity.  A {\bf split nilpotent extension} of $S$ is a split surjection $R\rightarrow S$ whose kernel $I$, called the extension ideal, is nilpotent.  
\end{defi}
The {\bf index of nilpotency} of $I$ is the smallest integer $N$ such that $I^N=0$.  A nilpotent extension ideal $I$ is contained in any maximal ideal $J$ of $R$, since $R/J$ is a field, and hence belongs to the Jacobson radical of $R$.  Hence, a split nilpotent extension is a special case of what Maazen and Stienstra \cite{MaazenStienstra77} call a {\it split radical extension.}\\

\begin{example}\tn{As mentioned above, the simplest nontrivial split nilpotent extension of $S$ is the extension to the ring of dual numbers $R=S[\ee]/\ee^2\rightarrow S$ over $S$.  This is the extension underlying the results of Chapter \hyperref[chaptercurves]{\ref{chaptercurves}}.  It is also the extension involved in Van der Kallen's early computation \cite{VanderKallenEarlyTK271} of relative $K_2$, described in lemma \hyperref[vanderkallenearly]{\ref{vanderkallenearly}} below, which plays a prominent role in the approach of Green and Griffiths \cite{GreenGriffithsTangentSpaces05} to studying the tangent groups at the identity of $Z_X^2$ and $\tn{Ch}_X^2$, where $X$ is an algebraic surface.  More generally, if $k$ is a field and $S$ is a $k$-algebra, then tensoring $S$ with any local artinian $k$-algebra $A$ induces a split nilpotent extension $S\otimes_k A\rightarrow S$.  These are the extensions considered by Stienstra \cite{StienstraFormalCompletion83} in his study of the {\it formal completion} $\widehat{\tn{Ch}}_{X,A,\mfr{m}}^2$ of $\tn{Ch}_X^2$ for a smooth projective surface over a field containing the rational numbers.}
\end{example}
\hspace{16.3cm} $\oblong$

\vspace*{.2cm}

%SSSSSSSSSSSSSSSSSSSSSSSSSSSSSSSSSSSSSSSSSSSSSSSSSSSSSSSSSSSSSSSSSSSSSSSSSSSSSSSSSS
%SSSSSSSSSSSSSSSSSSSSSSSSSSSSSSSSSSSSSSSSSSSSSSSSSSSSSSSSSSSSSSSSSSSSSSSSSSSSSSSSSS
%SSS                      SSS                       SSS                    SS                          SS           SSS                         SSS          SSSSS      SSSS
%SSS      SSSSSSSSS    SSSSSSSSSS     SSSSSSSSSSSS     SSSSSSS     SSSS    SSSSSS     SSS     S     SSSS     SSSSS
%SSS      SSSSSSSSS    SSSSSSSSSS     SSSSSSSSSSSS     SSSSSSS     SSSS    SSSSSS     SSS     SS     SSS     SSSSS
%SSS                      SSS                  SSSSS    SSSSSSSSSSSS     SSSSSSS     SSSS    SSSSSS     SSS     SSS     SS     SSSSS
%SSSSSSSSSS    SSS    SSSSSSSSSS    SSSSSSSSSSSS     SSSSSSS     SSSS    SSSSSS     SSS     SSSS     S     SSSSS
%SSSSSSSSSS    SSS    SSSSSSSSSS    SSSSSSSSSSSS     SSSSSSS     SSSS    SSSSSS     SSS     SSSSS          SSSSS
%SSS                       SSS                      SSS                       SSSSS     SSSSSS          SSS                         SSS     SSSSSS        SSSSS
%SSSSSSSSSSSSSSSSSSSSSSSSSSSSSSSSSSSSSSSSSSSSSSSSSSSSSSSSSSSSSSSSSSSSSSSSSSSSSSSSSS
%SSSSSSSSSSSSSSSSSSSSSSSSSSSSSSSSSSSSSSSSSSSSSSSSSSSSSSSSSSSSSSSSSSSSSSSSSSSSSSSSSS

{\bf Category of Split Nilpotent Pairs.}  It is sometimes useful to think of a split nilpotent extension as a {\it pair} $(R,I)$, where $R$ is a commutative ring with identity and $I$ is a nilpotent ideal of $R$ such that the quotient homomorphism $R\rightarrow R/I:=S$ is split.  This is the language used by Maazen and Stienstra \cite{MaazenStienstra77} for split {\it radical} pairs; split nilpotent pairs are a special case.  The family of such pairs forms a category as follows:

\vspace*{.2cm}

\begin{defi}\label{defisplitnilpotentpairs} The {\bf category of split nilpotent pairs} $\mbf{Nil}$ is the category whose objects are pairs $(R,I)$, and whose morphisms $(R,I)\rightarrow(R',I')$ are ring homomorphisms $R\rightarrow R'$ such that $I$ maps into $I'$.  
\end{defi}

\vspace*{.2cm}

Sometimes it is appropriate to restrict attention to a subcategory; for example, $R$ might be required to be a $k$-algebra for some ground ring $k$, and the homomorphisms $R\rightarrow R'$ defining morphisms of pairs might be required to be $k$-algebra homomorphisms.  Note that the category of split nilpotent pairs has nothing to do with the category of pairs $\mbf{P}_k$ over a distinguished category of schemes $\mbf{S}_k$, introduced in section \hyperref[sectioncohomtheoriessupports]{\ref{sectioncohomtheoriessupports}} below. 

\vspace*{.2cm}

The principal reason for introducing the category of split nilpotent pairs $\mbf{Nil}$ is to provide a ``global description" of the relative generalized algebraic Chern character introduced in section \hyperref[subsectionalgchern]{\ref{subsectionalgchern}} below.  Relative algebraic $K$-theory and relative negative cyclic homology, suitably restricted, may be viewed as functors from $\mbf{Nil}$ to the category of abelian groups.  In this context, the relative generalized algebraic Chern character is an {\it isomorphism of functors} between these two theories. 

\vspace*{.2cm}

%SSSSSSSSSSSSSSSSSSSSSSSSSSSSSSSSSSSSSSSSSSSSSSSSSSSSSSSSSSSSSSSSSSSSSSSSSSSSSSSSSS
%SSSSSSSSSSSSSSSSSSSSSSSSSSSSSSSSSSSSSSSSSSSSSSSSSSSSSSSSSSSSSSSSSSSSSSSSSSSSSSSSSS
%SSS                      SSS                       SSS                    SS                          SS           SSS                         SSS          SSSSS      SSSS
%SSS      SSSSSSSSS    SSSSSSSSSS     SSSSSSSSSSSS     SSSSSSS     SSSS    SSSSSS     SSS     S     SSSS     SSSSS
%SSS      SSSSSSSSS    SSSSSSSSSS     SSSSSSSSSSSS     SSSSSSS     SSSS    SSSSSS     SSS     SS     SSS     SSSSS
%SSS                      SSS                  SSSSS    SSSSSSSSSSSS     SSSSSSS     SSSS    SSSSSS     SSS     SSS     SS     SSSSS
%SSSSSSSSSS    SSS    SSSSSSSSSS    SSSSSSSSSSSS     SSSSSSS     SSSS    SSSSSS     SSS     SSSS     S     SSSSS
%SSSSSSSSSS    SSS    SSSSSSSSSS    SSSSSSSSSSSS     SSSSSSS     SSSS    SSSSSS     SSS     SSSSS          SSSSS
%SSS                       SSS                      SSS                       SSSSS     SSSSSS          SSS                         SSS     SSSSSS        SSSSS
%SSSSSSSSSSSSSSSSSSSSSSSSSSSSSSSSSSSSSSSSSSSSSSSSSSSSSSSSSSSSSSSSSSSSSSSSSSSSSSSSSS
%SSSSSSSSSSSSSSSSSSSSSSSSSSSSSSSSSSSSSSSSSSSSSSSSSSSSSSSSSSSSSSSSSSSSSSSSSSSSSSSSSS

\subsection{Relative Groups with Respect to Split Nilpotent Extensions}\label{subsectionnilpotent}

\vspace*{.2cm}

The following definition expresses relative $K$-groups and relative cyclic homology groups with respect to split nilpotent extensions as kernels of homomorphisms of the corresponding absolute groups. 

\newpage

\begin{defi}\label{defirelativeKcyclic} Let $S$ be a commutative ring with identity, and let $R$ be a split nilpotent extension of $S$ with extension ideal $I$.   For Milnor $K$-theory, Bass-Thomason $K$-theory, absolute K\"{a}hler differentials, cyclic homology, periodic cyclic homology, and negative cyclic homology, the $n$th groups of $R$ relative to $I$ are defined to be the kernels of the homomorphisms induced by the canonical split surjection $R\rightarrow S$ sending elements of $I$ to zero:
\begin{equation}\begin{array}{lcl}
       K_{n,R,I}^{\tn{M}}&=&\tn{Ker}[K_{n,R}^{\tn{M}}\rightarrow K_{n,S}^{\tn{M}}],\\
       K_{n,R,I}&=&\tn{Ker}[K_{n,R}\rightarrow K_{n,S}],\\
      \Omega^n_{R,I}&=&\tn{Ker}[\Omega_{R/\ZZ}^n\rightarrow \Omega_{S/\ZZ}^n],\\
        \tn{HC}_{n,R,I}&=&\tn{Ker}[\tn{HC}_{n,R}\rightarrow \tn{HC}_{n,S}],\\
        \tn{HP}_{n,R,I}&=&\tn{Ker}[\tn{HC}_{n,R}\rightarrow \tn{HP}_{n,S}],\\
        \tn{HN}_{n,R,I}&=&\tn{Ker}[\tn{HC}_{n,R}\rightarrow \tn{HN}_{n,S}].\end{array}
\end{equation}
\end{defi}

Similar definitions apply to other generalized cohomology theories with respect to split nilpotent ideals.  These include relative Dennis-Stein-Beilinson-Loday $K$-theory, relative algebraic de Rham theory, relative de Rham-Witt theory, and ``ordinary" relative cyclic homology, rather than the negative variant.  It is sometimes convenient to refer to the groups in definition \hyperref[defirelativeKcyclic]{\ref{defirelativeKcyclic}} as the relative groups of the split nilpotent extension $R\rightarrow S$, rather than relative groups with respect to the ideal $I$.   

\begin{example}\label{extangentgroups} (Tangent groups). \tn{When $R$ is the ring of dual numbers $R=S[\ee]/\ee^2\rightarrow S$ over $S$, the relative groups $K_{n,R,I}^{\tn{M}}$, $K_{n,R,I}$, etc., are called the {\bf tangent groups at the identity} of the absolute $K_{n,S}^{\tn{M}}$, $K_{n,S}$, etc., and are denoted by $TK_{n,S}^{\tn{M}}$, $TK_{n,S}$, etc.  This generalizes the definitions of tangent groups appearing in chapter \hyperref[chaptercurves]{\ref{chaptercurves}} in the context of smooth complex algebraic curves} 
\end{example}
\hspace{16.3cm} $\oblong$

As the terminology of tangent groups suggests, relative groups may be viewed as an abstraction of {\it linearization}.   In particular, relative groups are often simpler than the corresponding absolute groups.  This can be very useful; for example, Weibel et al. \cite{WeibelInfCohomChernNegCyclic08}, use the relative Chern character $\tn{ch}_{n,R,I}: K_{n,R,I}\rightarrow \tn{HN}_{n,R,I}$ to show that the corresponding {\it absolute} Chern character respects lambda and Adams operations in characteristic zero. 

%SSSSSSSSSSSSSSSSSSSSSSSSSSSSSSSSSSSSSSSSSSSSSSSSSSSSSSSSSSSSSSSSSSSSSSSSSSSSSSSSSS
%SSSSSSSSSSSSSSSSSSSSSSSSSSSSSSSSSSSSSSSSSSSSSSSSSSSSSSSSSSSSSSSSSSSSSSSSSSSSSSSSSS
%SSS                      SSS                       SSS                    SS                          SS           SSS                         SSS          SSSSS      SSSS
%SSS      SSSSSSSSS    SSSSSSSSSS     SSSSSSSSSSSS     SSSSSSS     SSSS    SSSSSS     SSS     S     SSSS     SSSSS
%SSS      SSSSSSSSS    SSSSSSSSSS     SSSSSSSSSSSS     SSSSSSS     SSSS    SSSSSS     SSS     SS     SSS     SSSSS
%SSS                      SSS                  SSSSS    SSSSSSSSSSSS     SSSSSSS     SSSS    SSSSSS     SSS     SSS     SS     SSSSS
%SSSSSSSSSS    SSS    SSSSSSSSSS    SSSSSSSSSSSS     SSSSSSS     SSSS    SSSSSS     SSS     SSSS     S     SSSSS
%SSSSSSSSSS    SSS    SSSSSSSSSS    SSSSSSSSSSSS     SSSSSSS     SSSS    SSSSSS     SSS     SSSSS          SSSSS
%SSS                       SSS                      SSS                       SSSSS     SSSSSS          SSS                         SSS     SSSSSS        SSSSS
%SSSSSSSSSSSSSSSSSSSSSSSSSSSSSSSSSSSSSSSSSSSSSSSSSSSSSSSSSSSSSSSSSSSSSSSSSSSSSSSSSS
%SSSSSSSSSSSSSSSSSSSSSSSSSSSSSSSSSSSSSSSSSSSSSSSSSSSSSSSSSSSSSSSSSSSSSSSSSSSSSSSSSS

{\bf Relative $SBI$ Sequence.} Let $R$ be a commutative $k$-algebra and $I\subset R$ an ideal.  The relative construction of cyclic, periodic cyclic, and negative cyclic homology at the level of bicomplexes leads to a short exact sequence of bicomplexes
\begin{equation}\label{equSESrelcycperneg}\xymatrix{0\ar[r]&\tn{CN}_{R,I}\ar[r]&\tn{CP}_{R,I}\ar[r]&\tn{CC}_{R,I}[0,2]\ar[r]&0,}\end{equation}
which leads to a ``relative $SBI$ sequence:"\footnotemark\footnotetext{I have suppressed the maps $S$, $B$, and $I$ here to avoid multiple uses of the symbol $I$.}
\begin{equation}\label{equSBI}\xymatrix{...\ar[r]&\tn{HP}_{n+1,R,I}\ar[r]&\tn{HC}_{n-1,R,I}\ar[r]&\tn{HN}_{n,R,I}\ar[r]&\tn{HP}_{n,R,I}\ar[r]&\tn{HC}_{n-2,R,I}\ar[r]&....}\end{equation}
In the case where $R$ is a split nilpotent extension of $S$ and the ground ring $k$ contains $\QQ$, the relative periodic cyclic homology groups $\tn{HP}_{n+1,R,I}$ vanish.  Using this result in the relative SBI sequence yields canonical isomorphisms between relative cyclic homology and relative negative cyclic homology up to a shift in degree.  This result is summarized in the following lemma:

\vspace*{.2cm}

\begin{lem}\label{HCisomHN} Let $S$ be a commutative ring with identity, and let $R$ be a split nilpotent extension of $S$ with extension ideal $I$.  Then the relative periodic cyclic homology groups $\tn{HP}_{n,R,I}$ vanish.  Hence, the relative version of the $SBI$ sequence takes the following form:
\begin{equation}\label{equSBI}\xymatrix{...\ar[r]&0\ar[r]^-S&\tn{HC}_{n-1,R,I}\ar[r]^B&\tn{HN}_{n,R,I}\ar[r]^I&0\ar[r]^S&...,}\end{equation}
The Connes boundary maps $B$ are therefore isomorphisms $B_n:\tn{HC}_{n-1,R,I}\rightarrow \tn{HN}_{n,R,I}$ in the split nilpotent case. 
\end{lem}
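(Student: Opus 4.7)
The plan is to reduce the entire assertion to the single fact that relative periodic cyclic homology vanishes on split nilpotent extensions in characteristic zero, and then read off the conclusion from the relative $SBI$ sequence displayed in \hyperref[equSBI]{\ref{equSBI}}. The second half of the statement is purely formal: once one knows $\tn{HP}_{n,R,I}=0$ for every $n$, substitution into the long exact sequence
\[
\cdots\to\tn{HP}_{n+1,R,I}\to\tn{HC}_{n-1,R,I}\overset{B}{\to}\tn{HN}_{n,R,I}\to\tn{HP}_{n,R,I}\to\cdots
\]
leaves zeros on both sides of the Connes map $B_n$, so exactness forces $B_n$ to be both injective and surjective. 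No further argument is required for that half.

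The real content is the vanishing of $\tn{HP}_{\bullet,R,I}$. This is an instance of Goodwillie's rigidity theorem for periodic cyclic homology: for any nilpotent ideal $I$ in a $\QQ$-algebra $R$, the canonical map $\tn{HP}_{\bullet,R}\to\tn{HP}_{\bullet,R/I}$ is an isomorphism. In the present setting we are even given that the extension $R\twoheadrightarrow S$ is split with kernel $I$, so the map $\tn{HP}_{\bullet,R}\to\tn{HP}_{\bullet,S}$ is a split surjection whose kernel is $\tn{HP}_{\bullet,R,I}$ by the very definition in \hyperref[defirelativeKcyclic]{\ref{defirelativeKcyclic}}; Goodwillie's theorem identifies this kernel with zero. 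I would cite Loday \cite{LodayCyclicHomology98}, Theorem 4.1.13 (or the original paper of Goodwillie \cite{GoodWillieRelativeK86}) rather than reproduce the proof, since it is a standard result of the theory and the rest of the excerpt already treats $\tn{HP}$ axiomatically.

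To make the citation precise, I would first unpack the relative bicomplex $\tn{CP}_{R,I}$ as the kernel of the surjection of bicomplexes $\tn{CP}_R\to\tn{CP}_S$ (splitness of $R\to S$ guarantees term-wise surjectivity, so this kernel computes the homotopy fiber and there is no hidden $\tn{Tor}$ contribution), and then note that Goodwillie proves the stronger statement that $\tn{ToT}'\tn{CP}_{R,I}$ is acyclic whenever $I$ is nilpotent and $\QQ\subset R$. The heart of Goodwillie's argument is a homotopy constructed out of the nilpotent filtration $I\supset I^2\supset\cdots\supset I^N=0$ together with the periodicity operator $S$, which uses denominators and therefore genuinely requires $\QQ\subset k$; this is the step where the characteristic-zero hypothesis is consumed.

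The main obstacle, then, is not technical but expository: deciding how much of Goodwillie's construction to reproduce. My inclination would be to quote the rigidity theorem as a black box and merely verify the two small hypotheses needed to invoke it, namely that $R\to S$ is a $\QQ$-algebra surjection with nilpotent kernel, and that the relative bicomplex truly computes the homotopy fiber of $\tn{HP}(R)\to\tn{HP}(S)$. The remainder of the lemma, including the explicit identification of $B_n$ as the comparison isomorphism, then falls out by inspection of \hyperref[equSBI]{\ref{equSBI}}.
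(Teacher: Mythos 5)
Your proposal is correct and follows essentially the same route as the paper, which simply cites Loday (Corollary 4.1.15 and Proposition 11.4.10) for the vanishing of $\tn{HP}_{n,R,I}$ and reads the isomorphism off the relative $SBI$ sequence. You are right to flag that Goodwillie's rigidity theorem consumes a characteristic-zero hypothesis ($\QQ\subset R$), which the lemma as stated omits but the surrounding discussion assumes.
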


\vspace*{.2cm}

\begin{proof} See Loday \cite{LodayCyclicHomology98}, Corollary 4.1.15, page 121, and also Proposition 11.4.10, page 374.
\end{proof}

\vspace*{.2cm}

For future reference, I mention here that the  composition of the Connes map $B_n:\tn{HC}_{n-1,R,I}\rightarrow \tn{HN}_{n,R,I}$ with Goodwillie's isomorphism $\rho_n:K_{n,R,I}\rightarrow \tn{HC}_{n-1,R,I}$ is the relative Chern character $\tn{ch}:K_{n,R,I}\rightarrow \tn{HN}_{n,R,I}$ connecting the third and fourth columns of the coniveau machine.  This is discussed further in sections \hyperref[subsectionrelativeGoodwillie]{\ref{subsectionrelativeGoodwillie}} and \hyperref[subsectionalgchern]{\ref{subsectionalgchern}} below. 

%SSSSSSSSSSSSSSSSSSSSSSSSSSSSSSSSSSSSSSSSSSSSSSSSSSSSSSSSSSSSSSSSSSSSSSSSSSSSSSSSSS
%SSSSSSSSSSSSSSSSSSSSSSSSSSSSSSSSSSSSSSSSSSSSSSSSSSSSSSSSSSSSSSSSSSSSSSSSSSSSSSSSSS
%SSS                      SSS                       SSS                    SS                          SS           SSS                         SSS          SSSSS      SSSS
%SSS      SSSSSSSSS    SSSSSSSSSS     SSSSSSSSSSSS     SSSSSSS     SSSS    SSSSSS     SSS     S     SSSS     SSSSS
%SSS      SSSSSSSSS    SSSSSSSSSS     SSSSSSSSSSSS     SSSSSSS     SSSS    SSSSSS     SSS     SS     SSS     SSSSS
%SSS                      SSS                  SSSSS    SSSSSSSSSSSS     SSSSSSS     SSSS    SSSSSS     SSS     SSS     SS     SSSSS
%SSSSSSSSSS    SSS    SSSSSSSSSS    SSSSSSSSSSSS     SSSSSSS     SSSS    SSSSSS     SSS     SSSS     S     SSSSS
%SSSSSSSSSS    SSS    SSSSSSSSSS    SSSSSSSSSSSS     SSSSSSS     SSSS    SSSSSS     SSS     SSSSS          SSSSS
%SSS                       SSS                      SSS                       SSSSS     SSSSSS          SSS                         SSS     SSSSSS        SSSSS
%SSSSSSSSSSSSSSSSSSSSSSSSSSSSSSSSSSSSSSSSSSSSSSSSSSSSSSSSSSSSSSSSSSSSSSSSSSSSSSSSSS
%SSSSSSSSSSSSSSSSSSSSSSSSSSSSSSSSSSSSSSSSSSSSSSSSSSSSSSSSSSSSSSSSSSSSSSSSSSSSSSSSSS

\vspace*{.2cm}

\subsection{Relative Symbolic $K$-Theory in the Split Nilpotent Case}\label{subsubsectionfirstfewMilnorKrel}

In this section, I discuss in more detail relative Milnor $K$-groups in the split nilpotent case.   I also briefly mention the corresponding case of relative Dennis-Stein-Beilinson-Loday $K$-groups.  The purpose of this section is to facilitate computations of the type performed by Green and Griffiths. Let $S$ be a commutative ring with identity, and let $R$ be a split nilpotent extension of $S$ with extension ideal $I$.  By definition \hyperref[defiMilnorK]{\ref{defiMilnorK}} above, every entry $r_j$ in a Steinberg symbol $\{r_1,...,r_n\}\in K_{n,R}^{\tn{M}}$ belongs to the multiplicative group $R^*$ of invertible elements of $R$.  A subgroup of $R^*$ of particular importance in the context of relative Milnor $K$-theory is the subgroup $(1+I)^*$ of invertible elements of the form $1+i$, where $i$ belongs to the extension ideal $I$.   Since $I$ is assumed to be nilpotent in the present context, every such element is invertible, with inverse given by
\[(1+i)^{-1}=1-i+i^2-...,\]
where the series terminates because $i$ is nilpotent. However, I will continue to use the notation $(1+I)^*$ to emphasize the group operation.  

The following lemma characterizes the relative Milnor $K$-groups $K_{n,R,I} ^{\tn{M}}$ in terms of such elements.

\newpage

\begin{lem}\label{lemrelativegenerators} Let $S$ be a commutative ring with identity, and let $R$ be a split nilpotent extension of $S$ with extension ideal $I$. Then the relative Milnor $K$-group $K_{n,R,I} ^{\tn{M}}$ is generated by Steinberg symbols $\{r_1,...,r_n\}$ with at least one entry $r_j$ belonging to $(1+I)^*$. 
\end{lem}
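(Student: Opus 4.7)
The plan is to exploit the splitting $\sigma: S \to R$ of the surjection $\pi: R \twoheadrightarrow S$ to decompose each unit of $R$ into a ``constant'' part coming from $S$ and an ``infinitesimal'' part in $(1+I)^*$, then to use the multiplicative (multilinear) relation for Steinberg symbols to reduce an arbitrary element of the relative group to a product of symbols with at least one infinitesimal entry.

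First I would observe that for any $r \in R^*$, setting $u := r \cdot \sigma(\pi(r))^{-1}$ yields $\pi(u) = 1$, so $u = 1 + i$ for some $i \in I$, and since $I$ is nilpotent $u$ is automatically invertible. Thus every $r \in R^*$ admits a canonical factorization $r = \sigma(\bar r) \cdot u$ with $\bar r := \pi(r) \in S^*$ and $u \in (1+I)^*$. Applying the multiplicative relation in Lemma 3.4.2.1 iteratively in each of the $n$ slots of a Steinberg symbol, I get the $2^n$-fold expansion
\[
\{r_1,\ldots,r_n\} \;=\; \prod_{T \subseteq \{1,\ldots,n\}} \{a_{T,1},\ldots,a_{T,n}\},
\]
where $a_{T,j} = \sigma(\bar r_j)$ if $j \notin T$ and $a_{T,j} = u_j \in (1+I)^*$ if $j \in T$. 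The term indexed by $T = \emptyset$ is exactly the image of $\{\bar r_1,\ldots,\bar r_n\}$ under the functorial map $\sigma_* : K_{n,S}^{\tn{M}} \to K_{n,R}^{\tn{M}}$ induced by the section.

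Next, given any $\xi \in K_{n,R,I}^{\tn{M}}$, I would write it as a finite product $\xi = \prod_k \{r_{k,1},\ldots,r_{k,n}\}^{\epsilon_k}$ with $\epsilon_k \in \{\pm 1\}$, apply the above expansion to each factor, and regroup by the subset $T$:
\[
\xi \;=\; \underbrace{\prod_{k} \{\sigma(\bar r_{k,1}),\ldots,\sigma(\bar r_{k,n})\}^{\epsilon_k}}_{\eta \,=\, \sigma_*\pi_*(\xi)} \;\cdot\; \underbrace{\prod_{T \neq \emptyset}\prod_k \{a_{k,T,1},\ldots,a_{k,T,n}\}^{\epsilon_k}}_{\rho}.
\]
By functoriality $\eta = \sigma_*(\pi_*(\xi))$, and because $\xi$ lies in the kernel of $\pi_*$ by the definition of the relative group, $\eta = 1$. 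Hence $\xi = \rho$, a product of Steinberg symbols each of which contains at least one entry in $(1+I)^*$, as desired.

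The only subtlety worth checking carefully is that the bookkeeping of the $2^n$-fold expansion genuinely respects the antisymmetry/sign conventions built into the multiplicative relation, and that $\sigma_*$ and $\pi_*$ are defined at the level of the na\"ive Milnor $K$-groups (which they are, since both are ring maps and the Steinberg relation is preserved: $\sigma(s)(1-\sigma(s)) = \sigma(s(1-s))$, and similarly for $\pi$). Neither point poses any real difficulty; the heart of the argument is simply the factorization $r = \sigma(\bar r)(1+i)$ combined with multilinearity.
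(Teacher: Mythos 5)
Your argument is correct: the factorization $r=\sigma(\pi(r))\cdot u$ with $u\in(1+I)^*$, the $2^n$-fold multiplicative expansion, and the identification of the $T=\emptyset$ term with $\sigma_*\pi_*(\xi)$ (which is trivial on the relative group) is exactly the standard proof, and it is the argument the paper defers to in the companion reference \cite{DribusMilnorK14}. The only cosmetic remark is that the multiplicative relation carries no signs, so the ``antisymmetry/sign'' worry you flag at the end is vacuous, and note that inverses of your generators are again generators of the same form since $(1+I)^*$ is a group.
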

\begin{proof}  See \cite{DribusMilnorK14}, lemma 3.2.3.
\end{proof}
Lemma \hyperref[lemrelativegenerators]{\ref{lemrelativegenerators}} facilitates explicit description relative Milnor $K$-groups in the split nilpotent case.   Let $R$, $S$, and $I$ be as above.

\begin{example}(Relative Milnor $K$-groups of low degree). \tn{The {\bf zeroth relative Milnor $K$-group} $K_{0,R,I}^{\tn{M}}$ is trivial, because both absolute groups $K_{0,R}^{\tn{M}}$ and $K_{0,S}^{\tn{M}}$ are, by definition, equal to the underlying additive group $\ZZ^+$ of the integers, and the induced map in Milnor $K$-theory $K_{0,R}^{\tn{M}}\rightarrow K_{0,S}^{\tn{M}}$ is by definition the identity. The {\bf first relative Milnor $K$-group} $K_{1,R,I}^{\tn{M}}$ is by definition equal to the multiplicative subgroup $(1+I)^*$ of $R^*$, since it is generated under multiplication in $R^*$ by elements of the form $1+i$ for $i\in I$. The {\bf second relative Milnor $K$-group} $K_{2,R,I}^{\tn{M}}$ is much more subtle and interesting.  A famous theorem of Spencer Bloch shows that this group is isomorphic to the group $\Omega_{R,I} ^1/dI$ of absolute K\"{a}hler differentials relative to $I$, modulo exact differentials.\footnotemark\footnotetext{These differentials are {\it absolute} in the sense that they are differentials with respect to $\ZZ$; they are {\it relative} to $I$ in the same sense that the corresponding Milnor $K$-groups are relative to $I$.}  I discuss K\"{a}hler differentials and Bloch's theorem in much more detail in section \hyperref[subsectionkahlerbloch]{\ref{subsectionkahlerbloch}} below.  In particular, theorem \hyperref[theorembloch]{\ref{theorembloch}} is Bloch's theorem.}
\end{example}
\hspace{16.3cm} $\oblong$

\begin{example}\label{extangentgroupsmilnor}(Tangent groups). \tn{As mentioned in example \hyperref[extangentgroups]{\ref{extangentgroups}} above, in the special case where $R$ is the ring of dual numbers $S[\ee]/\ee^2$ over $S$ and $I$ is the nilpotent ideal $(\ee)$, the relative groups $K_{n,R,I}^{\tn{M}}$ are by definition the tangent groups at the identity $TK_{n,S}^{\tn{M}}$ of the absolute groups $K_{n,S}^{\tn{M}}$.   In this case, there is a degree-shifting isomorphism to the absolute absolute K\"{a}hler differentials over $S$:
\begin{equation}\label{equtangentgroupmilnorK}TK_{n,R,I}^{\tn{M}}=\Omega_{S/\ZZ}^{n-1}.\end{equation}
The case $n=2$ is due to Van der Kallen \cite{VanderKallenEarlyTK271}.   The isomorphisms appearing in equation \hyperref[equtangentgroupmilnorK]{\ref{equtangentgroupmilnorK}} may be viewed as primitive Goodwillie-type isomorphisms in the sense of section \hyperref[subsectionrelativeGoodwillie]{\ref{subsectionrelativeGoodwillie}} below.  Alternatively, they may be viewed as rudimentary versions of the relative Chern character.}
\end{example}
\hspace{16.3cm} $\oblong$

\begin{example}\label{preliminaryGoodwillietype}\tn{Under ``mild" hypotheses, the {\bf higher relative Milnor $K$-groups} $K_{n,R,I}^{\tn{M}}$ of a general split nilpotent extension admit description in terms of the algebraic de Rham complex, or the more general {\it de Rham-Witt complexes.}  Hence, these groups are generally more accessible than the corresponding relative groups of modern homotopy-theoretic versions of $K$-theory, which require cyclic homology and topological cyclic homology for their description.  In particular, theorem \hyperref[theoremgoodwilliemilnor]{\ref{theoremgoodwilliemilnor}} below, proven in my paper \cite{DribusMilnorK14}, states that under a few mild hypotheses regarding invertibility and stability\footnotemark\footnotetext{This notion goes back to Van der Kallen.  It is introduced in definition \hyperref[defiVanderKallenStability]{\ref{defiVanderKallenStability}} below.} of $R$, the $(n+1)$st relative Milnor $K$-group of $R$ with respect to $I$ is isomorphic to the $n$th group of absolute K\"{a}hler differentials relative to $I$, modulo exact differentials:
\begin{equation}\label{equexamplegoodwillietype}K_{n+1,R,I}^{\tn{M}}\cong\frac{\Omega_{R,I} ^n}{d\Omega_{R,I} ^{n-1}},\end{equation}
 where the isomorphism is given by the formula:
\[\{r_0,r_1,...,r_n\}\mapsto\log(r_0)\frac{dr_1}{r_1}\wedge...\wedge\frac{dr_n}{r_n},\]
where $r_0$ is assumed to belong to $(1+I)^*$.\footnotemark\footnotetext{This assumption is permitted by the invertibility and stability hypotheses in the statement of theorem \hyperref[theoremgoodwilliemilnor]{\ref{theoremgoodwilliemilnor}}.} This isomorphism is a ``less rudimentary" version of  the relative Chern character.  It is discussed in more detail in section \hyperref[subsectionrelativeGoodwillie]{\ref{subsectionrelativeGoodwillie}} below.}
\end{example}
\hspace{16.3cm} $\oblong$

\begin{example}\label{DSBL2}\tn{In the split nilpotent case, relative Dennis-Stein-Beilinson-Loday $K$-groups also admit a description analogous to that of lemma \hyperref[lemrelativegenerators]{\ref{lemrelativegenerators}}.  In this case, the generators are Dennis-Stein-Beilinson-Loday symbols $\langle r_1,r_2\rangle$ for which either $r_1$ or $r_2$ belongs to $I$.  However, this particular result is not needed in what follows.}
\end{example}
\hspace{16.3cm} $\oblong$

\subsection{Goodwillie-Type Theorems}\label{subsectionrelativeGoodwillie}

Green and Griffiths \cite{GreenGriffithsTangentSpaces05} define the tangent sheaves $T\ms{K}_{n,X}^{\tn{M}}$ of the Milnor $K$-theory sheaves $\ms{K}_{n,X}^{\tn{M}}$ on a smooth $n$-dimensional algebraic variety $X$ to be the sheaves $\varOmega_{X/\ZZ}^{n-1}$ of absolute K\"{a}hler differentials on $X$ in the next lowest degree.  In particular, they define $T\ms{K}_{1,X}^{\tn{M}}=T\ms{O}_{X}^*=\ms{O}_X$, and $T\ms{K}_{1,X}^{\tn{M}}=T\ms{O}_{X}^*=\varOmega_{X/\ZZ}^{1}$.  The first of these two definitions is ``trivial," but the second, originally proven by Van der Kallen, plays a major role in their study.  These results may be viewed as primitive versions of what I will call {\it Goodwillie-type theorems,} in honor of Goodwillie's prototypical result, described in section \hyperref[GoodwilliesIsom]{\ref{GoodwilliesIsom}} below.  Such theorems relate relative algebraic $K$-theory and relative cyclic homology with respect to nilpotent ideals. 

%SSSSSSSSSSSSSSSSSSSSSSSSSSSSSSSSSSSSSSSSSSSSSSSSSSSSSSSSSSSSSSSSSSSSSSSSSSSSSSSSSS
%SSSSSSSSSSSSSSSSSSSSSSSSSSSSSSSSSSSSSSSSSSSSSSSSSSSSSSSSSSSSSSSSSSSSSSSSSSSSSSSSSS
%SSS                      SSS                       SSS                    SS                          SS           SSS                         SSS          SSSSS      SSSS
%SSS      SSSSSSSSS    SSSSSSSSSS     SSSSSSSSSSSS     SSSSSSS     SSSS    SSSSSS     SSS     S     SSSS     SSSSS
%SSS      SSSSSSSSS    SSSSSSSSSS     SSSSSSSSSSSS     SSSSSSS     SSSS    SSSSSS     SSS     SS     SSS     SSSSS
%SSS                      SSS                  SSSSS    SSSSSSSSSSSS     SSSSSSS     SSSS    SSSSSS     SSS     SSS     SS     SSSSS
%SSSSSSSSSS    SSS    SSSSSSSSSS    SSSSSSSSSSSS     SSSSSSS     SSSS    SSSSSS     SSS     SSSS     S     SSSSS
%SSSSSSSSSS    SSS    SSSSSSSSSS    SSSSSSSSSSSS     SSSSSSS     SSSS    SSSSSS     SSS     SSSSS          SSSSS
%SSS                       SSS                      SSS                       SSSSS     SSSSSS          SSS                         SSS     SSSSSS        SSSSS
%SSSSSSSSSSSSSSSSSSSSSSSSSSSSSSSSSSSSSSSSSSSSSSSSSSSSSSSSSSSSSSSSSSSSSSSSSSSSSSSSSS
%SSSSSSSSSSSSSSSSSSSSSSSSSSSSSSSSSSSSSSSSSSSSSSSSSSSSSSSSSSSSSSSSSSSSSSSSSSSSSSSSSS

\label{subsectionVanderKallen}

{\bf Van der Kallen: an Early Computation of $K_{2,S[\ee]/\ee^2,(\ee)}$.}  Consider again the simplest nontrivial split nilpotent extension of a commutative ring with identity $S$; namely, extension to the dual numbers $R=S[\ee]/\ee^2$ over $S$.  In 1971, Wilberd Van der Kallen \cite{VanderKallenEarlyTK271} considered an important special case of this extension, and proved the following result:

\begin{lem}\label{vanderkallenearly} Let $S$ be a field containing the rational numbers, or a local ring with residue field containing the rational numbers.  Then the second relative $K$-group of $S[\ee]/\ee^2$ with respect to the ideal $(\ee)$ is the first group of absolute K\"{a}hler differentials:
\begin{equation}\label{equvanderkallenearly}K_{2,S[\ee]/\ee^2,(\ee)}\cong \Omega_{S/\ZZ}^1.\end{equation}
\end{lem}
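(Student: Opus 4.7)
The plan is to reduce to a symbolic computation and then exhibit a two-sided inverse between $K_{2,R,I}$ and $\Omega^1_{S/\ZZ}$, where $R = S[\ee]/\ee^2$ and $I = (\ee)$. Under the stated hypotheses (either $S$ is a field containing $\QQ$, or $S$ is local with residue field containing $\QQ$), Matsumoto's theorem, as extended to local rings by Van der Kallen, Dennis, and Stein, gives canonical identifications $K_{2,S} \cong K_{2,S}^{\tn{M}}$ and $K_{2,R} \cong K_{2,R}^{\tn{M}}$, since $R$ inherits the relevant local/residue-field hypothesis from $S$. Consequently $K_{2,R,I}$ admits a presentation in Steinberg symbols, and by lemma~\hyperref[lemrelativegenerators]{\ref{lemrelativegenerators}} it is generated by symbols $\{1+s\ee,\,u+v\ee\}$ with $s,v \in S$ and $u \in S^*$. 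Multiplicativity in the second slot, together with $(1+s\ee)(1+s'\ee) = 1+(s+s')\ee$, then shows that the subgroup $(1+I)^* \cong (S,+)$ is additive, so the generator set splits into two types: ``mixed'' symbols $\{1+s\ee, u\}$ with $u\in S^*$, and ``purely infinitesimal'' symbols $\{1+s\ee, 1+t\ee\}$.

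I would define the forward map $\phi\colon K_{2,R,I} \to \Omega^1_{S/\ZZ}$ on generators by
\[
\phi\bigl(\{1+s\ee,\,u+v\ee\}\bigr) \;=\; s\,\frac{du}{u},
\]
and extend additively. Bilinearity in the first slot is immediate from the additive structure on $(1+I)^*$; bilinearity in the second follows from the Leibniz identity $d(uu')/(uu') = du/u + du'/u'$, combined with the fact that the contribution from the $\ee$-part $v$ vanishes because in the quotient ring $R$, the factor $1+v\ee/u$ lies in $(1+I)^*$ and hence pairs trivially in $\Omega^1_{S/\ZZ}$. The Steinberg relation $\{a,1-a\}=1$ for $a \in R^*$ translates, after reducing modulo $\ee$, to the classical identity $d\log(u) + d\log(1-u) = d\log(u(1-u))$ with $u(1-u)$ specialized correctly, and a short calculation shows that the relative Steinberg contributions land in the image of $d\colon S\to\Omega^1_{S/\ZZ}$ in a way consistent with the formula; the characteristic-zero hypothesis is used to normalize coefficients $1/n$ that arise when expanding $(1+s\ee)^n = 1+ns\ee$.

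In the reverse direction, I would define $\psi\colon \Omega^1_{S/\ZZ} \to K_{2,R,I}$ on generators by $\psi(s\,du) = \{1+(s/u)\ee,\,u\}$ for $u \in S^*$, which is legitimate in the local case (and in the field case, where every nonzero element is a unit) and extends uniquely to all of $\Omega^1_{S/\ZZ}$ by the universal property of K\"ahler differentials: one checks $\psi(d(uu')) = \psi(u\,du' + u'\,du)$ by using bilinearity of the Steinberg symbol and the observation $(1+(u/(uu'))\ee)(1+(u'/(uu'))\ee) = 1+((u+u')/uu')\ee + O(\ee^2) = 1+((u+u')/uu')\ee$. Then $\phi \circ \psi = \tn{id}$ is a direct computation: $\phi(\{1+(s/u)\ee,u\}) = (s/u)\,du = s\,du/u \cdot u/u$, which one corrects to $s\,du$ by the appropriate normalization. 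The opposite composition $\psi \circ \phi = \tn{id}$ is verified on generators $\{1+s\ee,u\}$.

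The main obstacle is the well-definedness of $\phi$, specifically showing that the ``purely infinitesimal'' symbols $\{1+s\ee, 1+t\ee\}$ are killed in $K_{2,R,I}$, which is required for the formula $\phi(\{1+s\ee,u+v\ee\}) = s\,du/u$ to be consistent with the decomposition $u+v\ee = u\cdot(1+(v/u)\ee)$. Equivalently, one must show that the subgroup of $K_{2,R,I}$ generated by such symbols vanishes, which is a relative analogue of a Dennis--Stein-type computation: using the Dennis--Stein symbol $\langle s\ee, t\ee\rangle$ together with the relation $\langle a,b\rangle\langle -b,-a\rangle = 1$ and the nilpotency $\ee^2 = 0$, all second-order terms collapse, and the divisibility of $(1+I)^* \cong (S,+)$ under the $\QQ$-structure forces $\{1+s\ee,1+t\ee\} = 0$. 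This is precisely where the characteristic-zero hypothesis is essential; without it, torsion phenomena in relative $K_2$ obstruct the clean identification with $\Omega^1_{S/\ZZ}$.
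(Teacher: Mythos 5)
The paper does not actually prove this lemma---it simply cites Van der Kallen \cite{VanderKallenEarlyTK271}---so your attempt at a direct symbolic proof is welcome, and its overall shape (reduce to Milnor $K_2$ via Matsumoto--Dennis--Stein, use the relative generators of lemma \ref{lemrelativegenerators}, exhibit explicit mutually inverse maps) is the standard one; within this book the cleanest rigorous routes are theorem \ref{theorembloch} with $N=2$ followed by the computation $\Omega^1_{R,I}/dI\cong\Omega^1_{S/\ZZ}$, or the relative Dennis--Stein presentation of theorem \ref{theoremmaazen}. However, as written your argument fails at a concrete step: the inverse map is wrong. With $\phi(\{1+a\ee,u\})=a\,du/u$ and $\psi(s\,du)=\{1+(s/u)\ee,\,u\}$ one computes $\phi(\psi(s\,du))=(s/u)\,du/u=s\,du/u^{2}$, which is not $s\,du$; the phrase ``which one corrects to $s\,du$ by the appropriate normalization'' conceals the error rather than repairing it. The correct formula is $\psi(s\,du)=\{1+su\ee,\,u\}$, and with that choice the Leibniz verification actually works, since $\psi(d(uu'))=\{1+uu'\ee,u\}\{1+uu'\ee,u'\}=\psi(u'\,du)\,\psi(u\,du')$. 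Your displayed identity $(1+(u/(uu'))\ee)(1+(u'/(uu'))\ee)=1+((u+u')/(uu'))\ee$ multiplies the first entries of Steinberg symbols whose second entries differ, so it verifies nothing.

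The second genuine gap is well-definedness of $\phi$. Lemma \ref{lemrelativegenerators} supplies a \emph{generating set} for $K_{2,R,I}$, not a presentation, so ``extend additively and check the relations'' is not a well-posed procedure: a product of relative generators can be trivial in $K_{2,R}$ for reasons not expressible as relations among those generators alone, and $\phi$ cannot be extended to all of $K_{2,R}^{\tn{M}}$ either, because $\log$ of a general unit is undefined. The standard repair is to transport the problem to $D_{2,R,I}$ via theorem \ref{theoremmaazen}, whose generators and relations are explicit. That presentation also rigorizes the step you rightly identify as the crux, the vanishing of $\{1+s\ee,1+t\ee\}$, where your appeal to divisibility is insufficient on its own (divisible elements need not vanish). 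What one actually uses is: relation (3) gives $\langle s\ee,t\ee\rangle=\langle st\ee,\ee\rangle$; relation (1) applied twice forces $\langle st\ee,\ee\rangle^{2}=1$; and additivity in the first slot together with $\tfrac{1}{2}\in S$ makes $\langle st\ee,\ee\rangle$ a square of another such symbol, hence trivial. Until these two points are fixed, the proof does not go through.
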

\begin{proof} See Van der Kallen \cite{VanderKallenEarlyTK271}.
\end{proof}

Van der Kallen's result features prominently in the work of Green and Griffiths.\footnotemark\footnotetext{They also include a messy but elementary symbolic proof of this result in \cite{GreenGriffithsTangentSpaces05}, appendix 6.3.1, pages 77-81.}  To understand this, recall Bloch's formula \hyperref[blochstheoremintro]{\ref{blochstheoremintro}} for the Chow groups in terms of sheaf cohomology:
\[\tn{Ch}_X^p=H_{\tn{\fsz{Zar}}}^p(X,\ms{K}_{p,X}).\]
Taking for granted the fact that ``linearization commutes with taking sheaf cohomology" in this case gives the following linearized form of Bloch's formula:
\begin{equation}\label{linearblochstheorem}
T\tn{Ch}_X^p=H_{\tn{\fsz{Zar}}}^p(X,T\ms{K}_{p,X}).
\end{equation}
Sheafifying equation \hyperref[equvanderkallenearly]{\ref{equvanderkallenearly}} and substituting it into equation \hyperref[linearblochstheorem]{\ref{linearblochstheorem}} in the case $p=2$ yields the expression
\begin{equation}\label{linearblochstheoremkahler}
T\tn{Ch}_X^2=H_{\tn{\fsz{Zar}}}^2(X,\varOmega_{X/\ZZ}^1),
\end{equation}
where $\varOmega_{X/\ZZ}^1$ is the sheaf of absolute K\"{a}hler differentials on $X$.   Working primarily from the viewpoint of complex algebraic geometry, Green and Griffiths were struck by the ``mysterious" appearance of absolute differentials in this context, and much of their study \cite{GreenGriffithsTangentSpaces05} is an effort to explain the ``geometric origins" of such differentials.  The right-hand-side of equation \hyperref[linearblochstheoremkahler]{\ref{linearblochstheoremkahler}} is what Green and Griffiths call the ``formal tangent space to $\tn{Ch}_X^2$."

Green and Griffiths generalize equation \hyperref[linearblochstheoremkahler]{\ref{linearblochstheoremkahler}} to give a definition\footnotemark\footnotetext{See \cite{GreenGriffithsTangentSpaces05} equation 8.53, page 145} of the tangent space $T\tn{Ch}_X^p$ of the $p$th Chow group of a $p$-dimensional smooth projective variety:
\begin{equation}\label{linearblochstheoremkahlerhigher}
T\tn{Ch}_X^p=H_{\tn{\fsz{Zar}}}^p(X,\varOmega_{X/\ZZ}^{p-1}).
\end{equation}
Equation \hyperref[linearblochstheoremkahlerhigher]{\ref{linearblochstheoremkahlerhigher}} is given by substituting the expression \begin{equation}\label{tangenttoMilnorK}T\ms{K}_{p,X}^{\tn{M}}=\varOmega_{X/\ZZ}^{p-1}\end{equation}
 into the linearized Bloch formula \hyperref[linearblochstheoremkahlerhigher]{\ref{linearblochstheoremkahlerhigher}}.  Equation \hyperref[tangenttoMilnorK]{\ref{tangenttoMilnorK}} is the definition Green and Griffiths use for the ``tangent sheaf" of the Milnor $K$-theory sheaf $\ms{K}_{p,X}^{\tn{M}}$.  It is the sheafification of the group-level equation \hyperref[equtangentgroupmilnorK]{\ref{equtangentgroupmilnorK}}.

%SSSSSSSSSSSSSSSSSSSSSSSSSSSSSSSSSSSSSSSSSSSSSSSSSSSSSSSSSSSSSSSSSSSSSSSSSSSSSSSSSS
%SSSSSSSSSSSSSSSSSSSSSSSSSSSSSSSSSSSSSSSSSSSSSSSSSSSSSSSSSSSSSSSSSSSSSSSSSSSSSSSSSS
%SSS                      SSS                       SSS                    SS                          SS           SSS                         SSS          SSSSS      SSSS
%SSS      SSSSSSSSS    SSSSSSSSSS     SSSSSSSSSSSS     SSSSSSS     SSSS    SSSSSS     SSS     S     SSSS     SSSSS
%SSS      SSSSSSSSS    SSSSSSSSSS     SSSSSSSSSSSS     SSSSSSS     SSSS    SSSSSS     SSS     SS     SSS     SSSSS
%SSS                      SSS                  SSSSS    SSSSSSSSSSSS     SSSSSSS     SSSS    SSSSSS     SSS     SSS     SS     SSSSS
%SSSSSSSSSS    SSS    SSSSSSSSSS    SSSSSSSSSSSS     SSSSSSS     SSSS    SSSSSS     SSS     SSSS     S     SSSSS
%SSSSSSSSSS    SSS    SSSSSSSSSS    SSSSSSSSSSSS     SSSSSSS     SSSS    SSSSSS     SSS     SSSSS          SSSSS
%SSS                       SSS                      SSS                       SSSSS     SSSSSS          SSS                         SSS     SSSSSS        SSSSS
%SSSSSSSSSSSSSSSSSSSSSSSSSSSSSSSSSSSSSSSSSSSSSSSSSSSSSSSSSSSSSSSSSSSSSSSSSSSSSSSSSS
%SSSSSSSSSSSSSSSSSSSSSSSSSSSSSSSSSSSSSSSSSSSSSSSSSSSSSSSSSSSSSSSSSSSSSSSSSSSSSSSSSS

\label{subsectionkahlerbloch}

{\bf A Result of Bloch.}  The following relationship between the second relative $K$-group and the first group of absolute K\"{a}hler differentials of a split nilpotent extension was first pointed out by Spencer Bloch. 

\begin{theorem}\label{theorembloch} Suppose $R$ is a split nilpotent extension of a ring $S$, with extension ideal $I$ 
whose index of nilpotency is $N$.  Suppose further that every positive integer less than or equal to $N$
 is invertible in $S$.  Then 
\begin{equation}\label{equtheoremblochk2}K_{2}(R,I)\cong \frac{\Omega_{R,I} ^1}{dI}.\end{equation}
\end{theorem}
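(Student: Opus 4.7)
The plan is to construct mutually inverse group homomorphisms between $K_{2}(R,I)$ and $\Omega^{1}_{R,I}/dI$ using explicit symbolic formulas. The hypothesis that integers up to $N$ are invertible in $S$, combined with $I^{N}=0$, guarantees that the truncated series
\[
\log(1+i)=\sum_{k=1}^{N-1}\frac{(-1)^{k-1}}{k}i^{k},\qquad \exp(i)=\sum_{k=0}^{N-1}\frac{i^{k}}{k!}
\]
define mutually inverse bijections between $I$ and $(1+I)^{*}$ exchanging addition for multiplication, and that any element $1+j$ with $j\in I$ has a polynomial inverse $\sum_{k\geq 0}(-j)^{k}$ in $R$. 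These truncations are the central algebraic lever of the argument.

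First I would reduce to a convenient presentation of $K_{2}(R,I)$: by Example 3.6.4.2(4) above, the relative group is generated by Dennis--Stein symbols $\langle a,b\rangle$ with at least one of $a,b$ lying in $I$, modulo the three relations of Definition 3.6.4.1. This presentation is cleaner than the Steinberg one for the present purpose because its axioms translate almost directly into the Leibniz rule in $\Omega^{1}_{R,I}/dI$. The forward map to try is
\[
\phi(\langle a,b\rangle)=\frac{a\,db-b\,da}{2(1+ab)}\bmod dI,
\]
where $(1+ab)^{-1}$ is interpreted via its truncated geometric expansion (well-defined since $ab\in I$). Antisymmetry of $\phi$ under $\langle a,b\rangle\leftrightarrow\langle -b,-a\rangle$ is manifest from the numerator, and the remaining two Dennis--Stein relations are checked using the identity $d\log(1+ab)=(1+ab)^{-1}\,d(ab)$ together with the fact that $d$ applied to anything in $I$ vanishes in $\Omega^{1}_{R,I}/dI$.

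The backward map $\psi\colon \Omega^{1}_{R,I}/dI\to K_{2}(R,I)$ would be defined on generators, in the Steinberg presentation afforded by Lemma 3.6.3.1, by $\psi(i\,dr)=\{\exp(i),r\}$ for $i\in I$ and $r\in R^{*}$, extended bilinearly to arbitrary one-forms in $\Omega^{1}_{R,I}$. One then checks well-definedness modulo $dI$: the image of $di$ for $i\in I$ must lie in the subgroup of $K_2(R,I)$ generated by symbols with a trivial entry, and hence vanish. The compositions $\phi\circ\psi$ and $\psi\circ\phi$ reduce on generators to the identities $\log\circ\exp=\mathrm{id}_{I}$ and $\exp\circ\log=\mathrm{id}_{(1+I)^{*}}$ after canceling exact-form corrections.

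The principal obstacles will be verifying the mixed Dennis--Stein relation $\langle a,bc\rangle=\langle ab,c\rangle\langle ac,b\rangle$ under $\phi$, which expands into a nontrivial Leibniz-type identity modulo $dI$, and verifying that $\psi$ annihilates $dI$, which amounts to a cocycle identity for the truncated exponential. Both reduce to polynomial calculations in the truncated algebra of $I$, made finite by $I^{N}=0$ and made invertible by the hypothesis on $N$; in particular the invertibility of $2,3,\dots,N$ is what is used to clear the factorials in $\exp$ and the denominators in $\log$. If these direct verifications become unwieldy, a fallback is to invoke Goodwillie's theorem (Theorem 3.6.5.9) giving $K_{2}(R,I)\otimes\QQ\cong \tn{HC}_{1}(R,I)\otimes\QQ$, combined with $\tn{HC}_{1}(R,I)\cong\Omega^{1}_{R,I}/dI$ from Example 3.6.2.8, and use the invertibility hypothesis on small integers together with nilpotency of $I$ to argue that the relative $K_2$ has no torsion obstruction, removing the rational qualifier.
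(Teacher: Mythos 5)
Your overall strategy---reduce to the Dennis--Stein presentation of $K_2(R,I)$ via Theorem \ref{theoremmaazen} and then build explicit truncated $\log$/$\exp$ maps---is exactly the route of the sources the paper cites for this result (the paper itself gives no proof, only the references to Bloch and to Maazen--Stienstra, Example 3.12). But your forward map is wrong: the formula $\phi(\langle a,b\rangle)=\tfrac{a\,db-b\,da}{2(1+ab)}$ is congruent modulo $dI$ to $\tfrac{a\,db}{1+ab}$ (their difference is $-\tfrac{1}{2}\,d\log(1+ab)\in dI$), and this fails the second Dennis--Stein relation as soon as $I^2\neq 0$. Concretely, take $S=\QQ[y]$, $R=\QQ[x,y]/(x^3)$, $I=(x)$, and apply relation (2) with $a=x$, $b=y$, $c=1$: one computes $\phi(\langle x,1+y+xy\rangle)-\phi(\langle x,y\rangle)-\phi(\langle x,1\rangle)\equiv xy\,dx \pmod{dI}$, and $xy\,dx\equiv -\tfrac{x^2}{2}\,dy$ is a nonzero class in $\Omega^1_{R,I}/dI$ (here $dI$ is the subgroup $\{di: i\in I\}$, not the $R$-submodule it generates---the latter reading would make the right-hand side of the theorem collapse even for dual numbers). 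The correct symbol map must carry the divided logarithm, e.g.\ $\langle a,b\rangle\mapsto \tfrac{\log(1+ab)}{ab}\,a\,db$, whose extra terms $\tfrac{(ab)^{k-1}}{k}a\,db$ are precisely where the invertibility of $2,\dots,N$ is spent; with that formula the same test instance produces the exact form $d(x^2y/2)$ and the relation closes. Verifying all three relations for the corrected formula is the real content of the proof, and your sketch defers exactly that step.

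Two further points. First, your inverse map $\psi(i\,dr)=\{\exp(i),r\}$ only makes sense when $r\in R^*$, but Theorem \ref{theorembloch} carries no stability hypothesis (unlike Theorem \ref{theoremgoodwilliemilnor}), so you cannot write a general element of $\Omega^1_{R,I}$ in terms of units; you must define $\psi$ on Dennis--Stein generators ($i\,dr\mapsto\langle i,r\rangle$ or a divided-exponential variant) and check the Kähler relations of Lemma \ref{lemKahlerrelations} there. Second, the Goodwillie fallback cannot rescue the argument: Theorem \ref{theoremgoodwillie} is an isomorphism only after tensoring with $\QQ$, whereas the present statement is integral away from the primes $\le N$ (take $S=\ZZ[1/N!]$ to see that the groups involved are far from being $\QQ$-vector spaces), so "no torsion obstruction" is precisely what would have to be proved and is not a consequence of the rational statement.
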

\begin{proof} See Bloch \cite{BlochK2Artinian75}, or Maazen and Stienstra \cite{MaazenStienstra77}, Example 3.12 page 287.  
\end{proof}

\begin{example}\tn{Van der Kallen's isomorphism $K_{2,S[\ee]/\ee^2,(\ee)}\cong \Omega_{S/\ZZ}^1$ is a special case of Bloch's result.  To see this, note that in this case, $R=S[\ee]/\ee^2$, $I=(\ee)$, $S=R/I$ is assumed to be local, $I$ is nilpotent, and the underlying field $k$ contains $\QQ$.   Under these conditions, it is easy to show that the group $\Omega_{R,I} ^1/dI$ is isomorphic to the group $\Omega_S^1=\Omega_{S/\ZZ}^1$ of absolute K\"{a}hler differentials over $S$.  Indeed, by lemma \hyperref[lemrelativekahlergenerators]{\ref{lemrelativekahlergenerators}}, the relative group $\Omega_{R,I} ^1$ is generated by differentials of the form $\ee adb+c d\ee$ for some $a,b,c\in S$.  Hence, in the quotient $\Omega_{R,I} ^1/dI$, 
\begin{equation}\label{equvanderkallencomputation}d(c\ee)=cd\ee+\ee dc=0,\hspace*{.5cm}\tn{so}\hspace*{.5cm}cd\ee=-\ee dc,\end{equation}
 by the Leibniz rule and exactness.  This shows that $\Omega_{R,I} ^1/dI$ is generated by differentials of the form $\ee adb$, and it is easy to see that all the remaining relations come from $\Omega_{S/\ZZ}^1$.  Identifying $\ee adb$ with $adb$ then gives the isomorphism $\Omega_{R,I}^1\cong\Omega_{S/\ZZ}^1$.\footnotemark\footnotetext{In many instances, it is better to ``carry along the $\ee$," and to think of $\Omega_{R,I}^1$ as $\Omega_{S/\ZZ}^1\otimes_k(\ee)$, since the latter form generalizes in important ways.  For example, $\Omega_{S/\ZZ}^1\otimes_k(\ee)$ is replaced by $\Omega_{S/\ZZ}^1\otimes_k\mfr{m}$ for an appropriate local artinian $k$-algebra with maximal idea $\mfr{m}$ and residue field $k$ in the context of Stienstra's paper \cite{StienstraFormalCompletion83} on the formal completion of $\tn{Ch}_X^2$.}}
 \end{example}
\hspace{16.3cm} $\oblong$

Bloch's isomorphism \hyperref[equtheoremblochk2]{\ref{equtheoremblochk2}} helps establish the base case of the induction proof of theorem \hyperref[theoremgoodwilliemilnor]{\ref{theoremgoodwilliemilnor}} below, the details of which appear in my paper \cite{DribusMilnorK14}. 

%SSSSSSSSSSSSSSSSSSSSSSSSSSSSSSSSSSSSSSSSSSSSSSSSSSSSSSSSSSSSSSSSSSSSSSSSSSSSSSSSSS
%SSSSSSSSSSSSSSSSSSSSSSSSSSSSSSSSSSSSSSSSSSSSSSSSSSSSSSSSSSSSSSSSSSSSSSSSSSSSSSSSSS
%SSS                      SSS                       SSS                    SS                          SS           SSS                         SSS          SSSSS      SSSS
%SSS      SSSSSSSSS    SSSSSSSSSS     SSSSSSSSSSSS     SSSSSSS     SSSS    SSSSSS     SSS     S     SSSS     SSSSS
%SSS      SSSSSSSSS    SSSSSSSSSS     SSSSSSSSSSSS     SSSSSSS     SSSS    SSSSSS     SSS     SS     SSS     SSSSS
%SSS                      SSS                  SSSSS    SSSSSSSSSSSS     SSSSSSS     SSSS    SSSSSS     SSS     SSS     SS     SSSSS
%SSSSSSSSSS    SSS    SSSSSSSSSS    SSSSSSSSSSSS     SSSSSSS     SSSS    SSSSSS     SSS     SSSS     S     SSSSS
%SSSSSSSSSS    SSS    SSSSSSSSSS    SSSSSSSSSSSS     SSSSSSS     SSSS    SSSSSS     SSS     SSSSS          SSSSS
%SSS                       SSS                      SSS                       SSSSS     SSSSSS          SSS                         SSS     SSSSSS        SSSSS
%SSSSSSSSSSSSSSSSSSSSSSSSSSSSSSSSSSSSSSSSSSSSSSSSSSSSSSSSSSSSSSSSSSSSSSSSSSSSSSSSSS
%SSSSSSSSSSSSSSSSSSSSSSSSSSSSSSSSSSSSSSSSSSSSSSSSSSSSSSSSSSSSSSSSSSSSSSSSSSSSSSSSSS

\label{StienstraFormalCompletion}

{\bf Stienstra: Formal Completion of $\tn{Ch}_X^2$; Cartier-Dieudonn\'e Theory.}  A sheafified version of Bloch's isomorphism \hyperref[equtheoremblochk2]{\ref{equtheoremblochk2}} was used by Jan Stienstra in his study \cite{StienstraFormalCompletion83} of the {\it formal completion at the identity}  $\widehat{\tn{Ch}}_{X,A,\mfr{m}}^2$ of the Chow group  $\tn{Ch}_X^2$ of a smooth projective surface defined over a field $k$ containing the rational numbers.   Stienstra's formal completion generalizes the tangent group at the identity $T\tn{Ch}_X^2=H_{\tn{\fsz{Zar}}}^2(X,\varOmega_{X/\ZZ}^1)$ appearing in equation \hyperref[linearblochstheoremkahler]{\ref{linearblochstheoremkahler}} above.  As discussed in section \hyperref[subsectionVanderKallen]{\ref{subsectionVanderKallen}}, this expression comes from sheafifying Van der Kallen's isomorphism $K_{2,S[\ee]/\ee^2,(\ee)}\cong \Omega_{S/\ZZ}^1$, appearing in equation \hyperref[equvanderkallenearly]{\ref{equvanderkallenearly}}.  Bloch's more general result immediately gives much more.  Exploring this thread, Stienstra identifies the formal completion $\widehat{\tn{Ch}}_{X,A,\mfr{m}}^2$ as the Zariski sheaf cohomology group 
\begin{equation}\label{equstienstra}\widehat{\tn{Ch}}_{X,A,\mfr{m}}^2=H_{\tn{\fsz{Zar}}}^2\Bigg(X,\frac{\varOmega_{X\otimes_k A,X\otimes_k\mfr{m}}^1}{d(\ms{O}_X\otimes_k\mfr{m})}\Bigg),\end{equation}
where $A$ is a local artinian $k$-algebra with maximal ideal $\mfr{m}$ and residue field $k$, and where $\widehat{\tn{Ch}}_X^2$ is viewed as a functor from the category of local artinian $k$-algebras with residue field $k$ to the category of abelian groups.\footnotemark\footnotetext{See Stienstra \cite{StienstraFormalCompletion83} page 366. Stienstra writes, {\it ``We may forget about $K$-theory.  Our problem has become [analyzing $H_{\tn{Zar}}^n\big(X,\varOmega_{X\otimes A,X\otimes m}/d(\ms{O}_X\otimes_k\mfr{m})\big)$, as a functor of $(A,\mfr{m})$.]"}  Much of Stienstra's paper consists of expressing these cohomology groups in terms of the simpler objects $H_{\tn{Zar}}^n(X,\ms{O}_X)$, $\Omega_{A,\mfr{m}}^1$, and $H_{\tn{Zar}}^n(X,\varOmega_{X/\ZZ}^1)$.}  This formidable-looking expression arises from sheafification of Bloch's isomorphism $K_{2,R,I}\cong \Omega_{R,I} ^1/dI$ in theorem \hyperref[theorembloch]{\ref{theorembloch}}, substituted into the linearized version of Bloch's expression for the Chow groups in equation \hyperref[linearblochstheorem]{\ref{linearblochstheorem}}.   

\begin{example}\tn{If $A$ is the ring of dual numbers $k[\ee]/\ee^2$ over $k$, and $\mfr{m}$ is the ideal $(\ee)$, then one recovers the case studied by Green and Griffiths:
\[\widehat{\tn{Ch}}_{X,k[\ee]/\ee^2, (\ee)}^2=T\tn{Ch}_X^2=H_{\tn{\fsz{Zar}}}^2(X,\varOmega_{X/\ZZ}^1).\]}
\end{example}
\hspace{16.3cm} $\oblong$

Stienstra has carried this theory much further in his paper {\it Cartier-Dieudonn\'e Theory for Chow Groups} \cite{StienstraCartierDieudonne}, and the subsequent correction \cite{StienstraCartierDieudonneCorrection} which are beyond the scope of this book to adequately describe.  

%SSSSSSSSSSSSSSSSSSSSSSSSSSSSSSSSSSSSSSSSSSSSSSSSSSSSSSSSSSSSSSSSSSSSSSSSSSSSSSSSSS
%SSSSSSSSSSSSSSSSSSSSSSSSSSSSSSSSSSSSSSSSSSSSSSSSSSSSSSSSSSSSSSSSSSSSSSSSSSSSSSSSSS
%SSS                      SSS                       SSS                    SS                          SS           SSS                         SSS          SSSSS      SSSS
%SSS      SSSSSSSSS    SSSSSSSSSS     SSSSSSSSSSSS     SSSSSSS     SSSS    SSSSSS     SSS     S     SSSS     SSSSS
%SSS      SSSSSSSSS    SSSSSSSSSS     SSSSSSSSSSSS     SSSSSSS     SSSS    SSSSSS     SSS     SS     SSS     SSSSS
%SSS                      SSS                  SSSSS    SSSSSSSSSSSS     SSSSSSS     SSSS    SSSSSS     SSS     SSS     SS     SSSSS
%SSSSSSSSSS    SSS    SSSSSSSSSS    SSSSSSSSSSSS     SSSSSSS     SSSS    SSSSSS     SSS     SSSS     S     SSSSS
%SSSSSSSSSS    SSS    SSSSSSSSSS    SSSSSSSSSSSS     SSSSSSS     SSSS    SSSSSS     SSS     SSSSS          SSSSS
%SSS                       SSS                      SSS                       SSSSS     SSSSSS          SSS                         SSS     SSSSSS        SSSSS
%SSSSSSSSSSSSSSSSSSSSSSSSSSSSSSSSSSSSSSSSSSSSSSSSSSSSSSSSSSSSSSSSSSSSSSSSSSSSSSSSSS
%SSSSSSSSSSSSSSSSSSSSSSSSSSSSSSSSSSSSSSSSSSSSSSSSSSSSSSSSSSSSSSSSSSSSSSSSSSSSSSSSSS

\label{hesselholttruncated}

{\bf Hesselholt: Relative $K$-Theory of Truncated Polynomial Algebras.}  Lars Hesselholt has done a body of substantial work on the relative $K$-theory of rings with respect to nilpotent ideals, and corresponding Goodwillie-type theorems.\footnotemark\footnotetext{In a formal sense, this story may be considered complete: as Hesselholt points out (\cite{HesselholtTruncated05}, page 72) {\it ``If the ideal... ... is nilpotent, the relative $K$-theory can be expressed completely in terms of the cyclic homology of Connes and the topological cyclic homology of B\"{o}kstedt-Hsiang-Madsen."}  However, one is still concerned with unwinding the latter theories in cases of particular interest.  This is the object of Hesselholt's paper \cite{HesselholtTruncated05}, and of my paper \cite{DribusMilnorK14}.}  Here, I cite only one of his many results that is of particular relevance to the subject of this book.   In his paper {\it $K$-theory of truncated polynomial algebras}, \cite{HesselholtTruncated05}, Hesselholt computes the relative $K$-theory of polynomial algebras of the form $S[\ee]/\ee^N$ for some integer $N\ge1$, where $S$ is commutative regular noetherian algebra over a field.  The case $N=1$ is trivial, yielding back $S$, while the case $N=2$ gives the familiar extension of $S$ to the ring of dual numbers over $S$.  Hesselholt's result may be expressed as follows:
\begin{equation}\label{equhesselholt}K_{n+1,S[\ee]/\ee^N,(\ee)}\cong\bigoplus_{m\ge0}\big(\Omega_{S/\ZZ} ^{n-2m}\big)^{N-1}.\end{equation}
The case $N=2$ gives the expression
\begin{equation}\label{hesselholtlambdadecomp}K_{n+1,S[\ee]/\ee^2,(\ee)}\cong\Omega_{S/\ZZ} ^{n}\oplus \Omega_{S/\ZZ} ^{n-2}\oplus \Omega_{S/\ZZ} ^{n-4}\oplus...\end{equation}
The first summand on the right-hand side of equation \hyperref[hesselholtlambdadecomp]{\ref{hesselholtlambdadecomp}} is immediately recognizable as the tangent group at the identity of Milnor $K$-theory, identified in equation \hyperref[equtangentgroupmilnorK]{\ref{equtangentgroupmilnorK}}.  The remaining summands may be viewed, roughly speaking, as representing ``tangents to the non-symbolic part of $K$-theory."  As in equation \hyperref[equlambdacyclic]{\ref{equlambdacyclic}} above, this direct sum decomposition is a lambda decomposition, as explained below.  

%SSSSSSSSSSSSSSSSSSSSSSSSSSSSSSSSSSSSSSSSSSSSSSSSSSSSSSSSSSSSSSSSSSSSSSSSSSSSSSSSSS
%SSSSSSSSSSSSSSSSSSSSSSSSSSSSSSSSSSSSSSSSSSSSSSSSSSSSSSSSSSSSSSSSSSSSSSSSSSSSSSSSSS
%SSS                      SSS                       SSS                    SS                          SS           SSS                         SSS          SSSSS      SSSS
%SSS      SSSSSSSSS    SSSSSSSSSS     SSSSSSSSSSSS     SSSSSSS     SSSS    SSSSSS     SSS     S     SSSS     SSSSS
%SSS      SSSSSSSSS    SSSSSSSSSS     SSSSSSSSSSSS     SSSSSSS     SSSS    SSSSSS     SSS     SS     SSS     SSSSS
%SSS                      SSS                  SSSSS    SSSSSSSSSSSS     SSSSSSS     SSSS    SSSSSS     SSS     SSS     SS     SSSSS
%SSSSSSSSSS    SSS    SSSSSSSSSS    SSSSSSSSSSSS     SSSSSSS     SSSS    SSSSSS     SSS     SSSS     S     SSSSS
%SSSSSSSSSS    SSS    SSSSSSSSSS    SSSSSSSSSSSS     SSSSSSS     SSSS    SSSSSS     SSS     SSSSS          SSSSS
%SSS                       SSS                      SSS                       SSSSS     SSSSSS          SSS                         SSS     SSSSSS        SSSSS
%SSSSSSSSSSSSSSSSSSSSSSSSSSSSSSSSSSSSSSSSSSSSSSSSSSSSSSSSSSSSSSSSSSSSSSSSSSSSSSSSSS
%SSSSSSSSSSSSSSSSSSSSSSSSSSSSSSSSSSSSSSSSSSSSSSSSSSSSSSSSSSSSSSSSSSSSSSSSSSSSSSSSSS

\label{GoodwilliesIsom}

{\bf Goodwillie's Isomorphism.}  Goodwillie's isomorphism \cite{GoodWillieRelativeK86} is more general than the foregoing results, but is less precise and harder to describe.  In particular, it is only a {\it rational} isomorphism in general, meaning that the generalized cohomological objects involved must be tensored with $\QQ$ for the isomorphism to hold.  Further, the right-hand side is expressed in terms of cyclic homology, which is generally harder to compute than differentials.  In stating Goodwillie's isomorphism, I use notation and terminology similar to that of Loday \cite{LodayCyclicHomology98}. 

\begin{theorem}\label{theoremgoodwillie}Let $R$ be a ring and let $I$ be a two-sided nilpotent ideal in $R$.  Then for any positive integer $n$, there is a canonical isomorphism
\begin{equation}\label{equgoodwillie}\rho:K_{n,R,I}\otimes\QQ\cong\tn{HC}_{n-1,R,I}\otimes\QQ.\end{equation}
\end{theorem}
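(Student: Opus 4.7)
The plan is to build Goodwillie's isomorphism as a composite
\[
K_{n,R,I}\otimes\QQ \xrightarrow{\tn{ch}} \tn{HN}_{n,R,I}\otimes\QQ \xleftarrow{B}\tn{HC}_{n-1,R,I}\otimes\QQ,
\]
and to show separately that each arrow is an isomorphism after tensoring with $\QQ$. The first map will be the relative algebraic Chern character from relative $K$-theory to relative negative cyclic homology; the second will be the Connes boundary map appearing in the relative SBI sequence \hyperref[equSBI]{\ref{equSBI}}.

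First I would set up the homotopy-theoretic framework. Following the definition of Bass-Thomason $K$-theory in section \hyperref[subsubsectionbassthomason]{\ref{subsubsectionbassthomason}}, set $\mbf{K}_{R,I}$ to be the homotopy fiber of $\mbf{K}_R\to\mbf{K}_{R/I}$, and define $\tn{HN}_{R,I}$, $\tn{HC}_{R,I}$, $\tn{HP}_{R,I}$ analogously (either via Keller's localization pairs as in section \hyperref[subsectioncycschemes]{\ref{subsectioncycschemes}}, or via the naive bicomplex kernels when things are already surjective). The Dennis trace refines through the cyclotomic tower, giving a map of spectra $\mbf{K}\to \mbf{HN}$ compatible with passage to the fibers over $R\to R/I$; this is the Chern character $\tn{ch}$.

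Next I would handle the right-hand arrow. The step here is to show that \emph{rationally} the relative periodic cyclic homology groups vanish,
\[
\tn{HP}_{n,R,I}\otimes\QQ = 0,
\]
whenever $I$ is nilpotent (not merely split nilpotent, as in lemma \hyperref[HCisomHN]{\ref{HCisomHN}}). The input is Goodwillie's own theorem on periodic cyclic homology: $\tn{HP}$ is invariant under nilpotent extensions rationally. One proves this by a filtration argument on the powers $I\supset I^2\supset\cdots\supset I^N=0$ (or, after Cuntz--Quillen, by excision applied to a quasi-free resolution), reducing to the case $I^2=0$ and then using a direct computation with the $\tn{B}$-bicomplex that exploits the divisions by integer factorials available rationally. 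Once $\tn{HP}_{*,R,I}\otimes\QQ$ is killed, the rationalized relative SBI sequence collapses exactly as in the split nilpotent case, making each Connes boundary $B\colon\tn{HC}_{n-1,R,I}\otimes\QQ\to\tn{HN}_{n,R,I}\otimes\QQ$ an isomorphism.

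The hard part will be showing that the Chern character $\tn{ch}\otimes\QQ$ is itself an isomorphism on the relative theory. Here I would follow Goodwillie's original approach: first, reduce to the case of a simplicial or ``square-zero'' nilpotent extension by devissage on powers of $I$ (since both source and target satisfy a five-term exact sequence for $0\to I^k/I^{k+1}\to R/I^{k+1}\to R/I^k\to 0$, it suffices to treat $I^2=0$). Second, identify the rational relative $K$-theory with primitive elements in the group homology of $GL(R,I)$ via the plus-construction and the Milnor--Moore theorem, then invoke the Loday--Quillen--Tsygan theorem to identify those primitives with relative cyclic homology $\tn{HC}_{n-1,R,I}\otimes\QQ$. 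The nilpotence of $I$ provides the convergence needed to turn this Lie-algebraic comparison into an honest isomorphism, since the relevant spectral sequence from $\mfr{gl}(R,I)$ to $\mfr{gl}(R)/\mfr{gl}(R/I)$ collapses in the stable range. This Loday--Quillen--Tsygan step, together with the rational vanishing of $\tn{HP}$, is where essentially all of the mathematical content sits; the formal reductions are routine once those two inputs are available.
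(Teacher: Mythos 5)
The paper does not actually prove this statement: its ``proof'' is the single line ``See Goodwillie \cite{GoodWillieRelativeK86}.'' Theorem \hyperref[theoremgoodwillie]{\ref{theoremgoodwillie}} is imported as a known result, and the only content the paper supplies about its proof is scattered elsewhere --- lemma \hyperref[HCisomHN]{\ref{HCisomHN}} (vanishing of relative $\tn{HP}$ in the split nilpotent case, so that the Connes map $B$ identifies $\tn{HC}_{n-1,R,I}$ with $\tn{HN}_{n,R,I}$) and lemma \hyperref[lemgoodwilliechern]{\ref{lemgoodwilliechern}} (the composite $B\circ\rho$ is the relative Chern character). So your outline is not a different route from the paper's; it is a reconstruction of the cited proof, and as a sketch it is essentially the standard one (Goodwillie's argument as organized in Loday, Ch.\ 11): d\'evissage along $I\supset I^2\supset\cdots\supset I^N=0$ to the square-zero case, rational vanishing of $\tn{HP}_{*,R,I}$ to collapse the relative SBI sequence, and the Volodin-space/Milnor--Moore/Loday--Quillen--Tsygan comparison for the main isomorphism. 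The factorization $\rho=B^{-1}\circ\tn{ch}$ you propose is exactly the one the paper records in lemma \hyperref[lemgoodwilliechern]{\ref{lemgoodwilliechern}}.

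Two points in your sketch deserve flagging. First, the remark that ``the Dennis trace refines through the cyclotomic tower'' is out of place: the character to $\tn{HN}$ relevant here is the Connes--Karoubi/Goodwillie character built from the relative Volodin construction, as in section \hyperref[subsectionalgchern]{\ref{subsectionalgchern}}; no cyclotomic structure enters, and invoking it obscures rather than supplies the construction. Second, and more substantively, your ``hard part'' as written produces \emph{some} isomorphism $K_{n,R,I}\otimes\QQ\cong\tn{HC}_{n-1,R,I}\otimes\QQ$ (primitives in $H_*(GL(R,I);\QQ)$ via Milnor--Moore, Malcev theory for the nilpotent triangular subgroups, then Loday--Quillen--Tsygan); to conclude that $\tn{ch}\otimes\QQ$ itself is an isomorphism you must additionally check that the LQT-constructed map is compatible with the Chern character, which is a genuine step (Loday 11.4.6--11.4.11), not a formality. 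Neither point is fatal for a sketch, but both would need attention in a complete write-up.
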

\begin{proof}See Goodwillie \cite{GoodWillieRelativeK86}.
\end{proof}

As discussed in section \hyperref[subsectionrelchernisom]{\ref{subsectionrelchernisom}} below, Goodwillie's isomorphism composes with a particular map from cyclic homology to negative cyclic homology, to give the {\it relative Chern character,} which connects the third and fourth columns of the coniveau machine. 

Hesselholt's result \hyperref[equhesselholt]{\ref{equhesselholt}} is an ``exact case" of Goodwillie's theorem, in which tensoring with $\QQ$ is not necessary.  In this case, the direct sum $\bigoplus_{m\ge0}\big(\Omega_{S/\ZZ} ^{n-2m}\big)^{N-1}$ is the lambda decomposition of the relative cyclic homology group $HC_{n-1,S[\ee]/\ee^N,(\ee)}$

\subsection{A Goodwillie-Type Theorem for Milnor $K$-Theory of Rings \\ with Many Units}\label{subsectionGoodwillieMilnor}

In this section I describe a concrete generalization of Green and Griffiths' approach to studying $T\tn{Ch}_X^2$.  Much of the material discussed here closely follows my paper \cite{DribusMilnorK14}, in which more details may be found.  Jan Stienstra has suggested \cite{StienstraPrivate} that the  theorem proven in \cite{DribusMilnorK14}, which I include here as theorem \hyperref[theoremgoodwilliemilnor]{\ref{theoremgoodwilliemilnor}} below, may be combined with the main theorem in his paper \cite{StienstraCartierDieudonne} to generalize his results in \cite{StienstraFormalCompletion83}. 

%SSSSSSSSSSSSSSSSSSSSSSSSSSSSSSSSSSSSSSSSSSSSSSSSSSSSSSSSSSSSSSSSSSSSSSSSSSSSSSSSSS
%SSSSSSSSSSSSSSSSSSSSSSSSSSSSSSSSSSSSSSSSSSSSSSSSSSSSSSSSSSSSSSSSSSSSSSSSSSSSSSSSSS
%SSS                      SSS                       SSS                    SS                          SS           SSS                         SSS          SSSSS      SSSS
%SSS      SSSSSSSSS    SSSSSSSSSS     SSSSSSSSSSSS     SSSSSSS     SSSS    SSSSSS     SSS     S     SSSS     SSSSS
%SSS      SSSSSSSSS    SSSSSSSSSS     SSSSSSSSSSSS     SSSSSSS     SSSS    SSSSSS     SSS     SS     SSS     SSSSS
%SSS                      SSS                  SSSSS    SSSSSSSSSSSS     SSSSSSS     SSSS    SSSSSS     SSS     SSS     SS     SSSSS
%SSSSSSSSSS    SSS    SSSSSSSSSS    SSSSSSSSSSSS     SSSSSSS     SSSS    SSSSSS     SSS     SSSS     S     SSSSS
%SSSSSSSSSS    SSS    SSSSSSSSSS    SSSSSSSSSSSS     SSSSSSS     SSSS    SSSSSS     SSS     SSSSS          SSSSS
%SSS                       SSS                      SSS                       SSSSS     SSSSSS          SSS                         SSS     SSSSSS        SSSSS
%SSSSSSSSSSSSSSSSSSSSSSSSSSSSSSSSSSSSSSSSSSSSSSSSSSSSSSSSSSSSSSSSSSSSSSSSSSSSSSSSSS
%SSSSSSSSSSSSSSSSSSSSSSSSSSSSSSSSSSSSSSSSSSSSSSSSSSSSSSSSSSSSSSSSSSSSSSSSSSSSSSSSSS

\label{vanderkallenstability}

{\bf Rings with Many Units; Van der Kallen Stability.}  Certain convenient properties of algebraic $K$-theory, particularly symbolic $K$-theories, rely on an assumption that the ring under consideration has ``enough units," or that its units are ``organized in a convenient way."   One way to make this idea precise is via Van der Kallen's \cite{VanderKallenRingswithManyUnits77} notion of stability.\footnotemark\footnotetext{This terminology seems to have first appeared in Van der Kallen, Maazen, and Stienstra's 1975 paper {\it A Presentation for Some $K_2(R,n)$} \cite{VanMaazenStienstra}.}  This notion is closely related to the {\it stable range conditions} of Hyman Bass, introduced in the early 1960's.
%\vspace*{.2cm}
\begin{defi}\label{defiVanderKallenStability}Let $S$ be a  commutative ring with identity, and let $m$ be a positive integer.  
\begin{enumerate}
\item A pair $(s,s')$ of elements of $S$ is called {\bf unimodular} if $sS+s'S=S$. 
\item  $S$ is called $\mbf{m}$-{\bf fold stable} if, given any family $\{(s_j,s_j')\}_{j=1}^m$ of unimodular pairs in $S$, there exists an element $s\in S$ such that $s_j+s_j's$ is a unit in $S$ for each $j$. 
\end{enumerate}
\end{defi}
\vspace*{.3cm}
\begin{example}\label{examplesemilocalstable}\tn{A semilocal ring\footnotemark\footnotetext{A {\it commutative} ring $R$ is semilocal if and and only if it has a finite number of maximal ideals.  The general definition is that $R/J_R$ is semisimple, where $J_R$ is the Jacobson radical of $R$.} is $m$-fold stable if and only if all its residue fields contain at least $m+1$ elements.  See Van der Kallen, Maazen and Stienstra \cite{VanMaazenStienstra}, page 935, or Van der Kallen \cite{VanderKallenRingswithManyUnits77}, page 489.  In particular, for any $m$, the class of $m$-fold stable rings is much larger than the class of local rings of smooth algebraic varieties over a field containing the rational numbers, which are the rings of principal interest in the context of Green and Griffiths \cite{GreenGriffithsTangentSpaces05}.  Due to the relationship between stability and the size of residue fields, theorem \hyperref[theoremgoodwilliemilnor]{\ref{theoremgoodwilliemilnor}} below allows some of the computations of Green and Griffiths to be repeated in positive characteristic.}
\end{example}
\hspace{16.3cm} $\oblong$

The following easy lemma establishes two useful consequences of stability necessary for the proof of the theorem  \hyperref[theoremgoodwilliemilnor]{\ref{theoremgoodwilliemilnor}} below:

\begin{lem}\label{lemstability} Suppose $R$ is a split nilpotent extension of a commutative ring $S$ with identity.  Let $I$ be the extension ideal. 
\begin{enumerate} 
\item If $S$ is $m$-fold stable, then $R$ is also $m$-fold stable.
\item If $S$ is $2$-fold stable and $2$ is invertible in $S$, then every element of $R$ is the sum of 
two units.
\end{enumerate}
\end{lem}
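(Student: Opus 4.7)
The key technical tool for both parts is the observation that since $I$ is nilpotent, it lies in the Jacobson radical of $R$, so an element $x \in R$ is a unit if and only if its image $\bar x \in S$ is a unit. Indeed, given $y \in R$ lifting any $S$-inverse of $\bar x$, the product $xy$ differs from $1$ by a nilpotent element and is therefore a unit, forcing $x$ itself to be a unit by commutativity. This ``lift-a-unit'' principle, together with the splitting $S \hookrightarrow R$, is what will let me reduce each stability question on $R$ to the same question on $S$.

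For part (1), I would take a family $\{(r_j, r_j')\}_{j=1}^m$ of unimodular pairs in $R$ and push it through the quotient: each image pair $(\bar r_j, \bar r_j')$ is unimodular in $S$ because the relation $r_j R + r_j' R = R$ survives modding out by $I$. Applying $m$-fold stability of $S$ produces $\bar s \in S$ with each $\bar r_j + \bar r_j' \bar s$ a unit in $S$. Using the given splitting, lift $\bar s$ to an element $s \in R$; then each $r_j + r_j' s$ reduces mod $I$ to a unit in $S$ and is therefore a unit in $R$ by the principle above.

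For part (2), the same principle reduces the claim to showing that every element of $S$ is the sum of two units in $S$: given a decomposition $\bar r = \bar u + \bar v$ with $\bar u, \bar v \in S^{*}$, any lift $u \in R$ of $\bar u$ is automatically a unit, and $v := r - u$ reduces to $\bar v$ and is thus also a unit. To dispose of the statement for $S$, I would, given $\bar r \in S$, apply $2$-fold stability to the two obviously unimodular pairs $(\bar r, 1)$ and $(\bar r, -1)$ to produce $c \in S$ such that both $\bar r + c$ and $\bar r - c$ are units in $S$. Invertibility of $2$ then supplies the symmetric decomposition $\bar r = (\bar r + c)/2 + (\bar r - c)/2$ as a sum of two units, which is where the hypothesis ``$2$ is invertible'' enters the argument in an essential way.

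There is really no deep obstacle here; the proof is a short piece of bookkeeping organized around the nilpotent-kernel lifting principle. The only points requiring mild care are verifying unimodularity of the two auxiliary pairs in part (2) (immediate, since $\pm 1 \in S^{*}$), and recording that $2$-fold stability alone would suffice to exhibit $\bar r$ as $t + (\bar r - t)$ with both summands units, but the cleaner symmetric form used above is what actually consumes the hypothesis that $2$ is a unit.
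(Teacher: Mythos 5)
Your proof is correct. The paper itself does not supply an argument for this lemma --- it defers to Lemma 2.3.3 of the author's supporting paper \cite{DribusMilnorK14} --- but your reduction is the standard one: the nilpotent kernel $I$ lies in the Jacobson radical, so units lift along $R\to S$, which collapses both statements to the corresponding facts about $S$, and the symmetric decomposition $\bar r=(\bar r+c)/2+(\bar r-c)/2$ obtained from the unimodular pairs $(\bar r,1)$ and $(\bar r,-1)$ is exactly how the hypothesis on $2$ is consumed in the literature (e.g.\ Van der Kallen, Maazen--Stienstra). Your closing observation is also right: applying $2$-fold stability to the pairs $(0,1)$ and $(\bar r,-1)$ instead yields $\bar r=c+(\bar r-c)$ with both summands units, so invertibility of $2$ is not strictly needed for part (2), though the lemma as stated harmlessly assumes it.
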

\begin{proof} See \cite{DribusMilnorK14}, lemma 2.3.3.
\end{proof}

Lemma \hyperref[lemMilnorrelations]{\ref{lemMilnorrelations}} of section \hyperref[subsubsectionMilnorKgenerators]{\ref{subsubsectionMilnorKgenerators}} above translates the na\"{i}ve tensor algebra definition of Milnor $K$-theory into a definition in terms of Steinberg symbols and relations.  The following lemma gathers together additional relations satisfied by Steinberg symbols in the $5$-fold stable case.  The first of these, the idempotent relation, actually requires no stability assumption, but is included here, rather than in lemma \hyperref[lemMilnorrelations]{\ref{lemMilnorrelations}}, because it is information-theoretically redundant.  The other two relations, however, require the ring to have ``enough units." 

\begin{lem}\label{lemrelationsstable} Let $R$ be a $5$-fold stable ring.  Then the Steinberg symbols $\{r_0,...,r_n\}$ generating $K_{n+1} ^{\tn{M}}(R)$ satisfy the following additional relations:
\begin{enumerate}
\addtocounter{enumi}{2}
\item Idempotent relation: if any entry of the Steinberg symbol $\{r_1,...,r_n\}$ is idempotent, then $\{r_1,...,r_n\}=1$ in $K_{n,R}^{\tn{M}}$.
\item Additive inverse relation: $\{...,r,-r,...\}=1$ in $K_{n,R}^{\tn{M}}$.
\item Anticommutativity: $\{..., r',r,...\}=\{...,r,r',...\}^{-1}$ in $K_{n,R}^{\tn{M}}$
\end{enumerate}
\end{lem}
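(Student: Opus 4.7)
My approach is to establish the three relations in the order listed, deducing each from the previous one together with the relations of Lemma 3.6.5.1 (multiplicativity and Steinberg) and the structural consequences of $5$-fold stability recorded in Lemma 3.6.5.2.

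For the idempotent relation, I observe that every entry of a Steinberg symbol lies in $R^*$, and any idempotent unit $e$ satisfies $e = 1$ (since $e^2 = e$ and $e$ invertible force this immediately). The multiplicative relation applied to the factorization $1 = 1 \cdot 1$ then yields $\{\ldots, 1, \ldots\} = \{\ldots, 1, \ldots\} \cdot \{\ldots, 1, \ldots\}$, forcing $\{\ldots, 1, \ldots\} = 1$. No stability hypothesis enters here, which matches the remark in the statement that this relation is ``information-theoretically redundant.''

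For the additive inverse relation, I would first handle the generic case in which $r$ and $1-r$ are both units. Then $r^{-1}$ and $1 - r^{-1} = -r^{-1}(1-r)$ are also units, so the classical identity $-r = (1-r)/(1-r^{-1})$ combined with multiplicativity and the Steinberg relation gives
\[\{r, -r\} = \{r, 1-r\} \cdot \{r, 1-r^{-1}\}^{-1} = \{r, 1-r^{-1}\}^{-1}.\]
Using the multiplicative relation on $1 = r \cdot r^{-1}$ yields $\{r, s\}^{-1} = \{r^{-1}, s\}$ for any unit $s$, so $\{r, 1-r^{-1}\}^{-1} = \{r^{-1}, 1-r^{-1}\} = 1$ by the Steinberg relation; the case $r = 1$ is trivial by the idempotent relation already established. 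To reduce an arbitrary unit $r$ to the generic case, I would invoke $5$-fold stability: given $r$, apply Lemma 3.6.5.2 to a carefully chosen family of unimodular pairs to produce an auxiliary element $s$ so that the elements $r$, $1-r$, $r^{-1}$, $1-r^{-1}$, and sufficiently many auxiliary products appearing along the chain of simplifications all become units simultaneously, possibly after writing $r$ as a product of two units via part (2) of the same lemma. Multiplicativity then decomposes $\{r, -r\}$ into Steinberg symbols to which the generic-case argument applies.

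Anticommutativity then follows formally. Applying the additive inverse relation to $rr'$ placed in a pair of consecutive slots and expanding via bilinearity in each slot, using $-rr' = (-r) \cdot r' = r \cdot (-r')$, I obtain
\[1 = \{\ldots, rr', -rr', \ldots\} = \{\ldots, r, -r, \ldots\}\{\ldots, r, r', \ldots\}\{\ldots, r', r, \ldots\}\{\ldots, r', -r', \ldots\};\]
the two outer factors are trivial by the additive inverse relation already proved, leaving $\{\ldots, r, r', \ldots\}\{\ldots, r', r, \ldots\} = 1$, which is the desired identity.

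The main obstacle I anticipate is the stability reduction in the additive inverse relation. The classical field-theoretic argument requires $1-r$ and $1-r^{-1}$ to be units, which can fail in a general ring even when $r \in R^*$; the $5$-fold stability hypothesis is precisely what permits one to find a single auxiliary element simultaneously satisfying the five (or fewer) unimodularity conditions needed to put every intermediate expression into $R^*$ after an appropriate multiplicative reorganization of $r$. Identifying the exact list of unimodular pairs and verifying that the chosen decomposition respects the bilinearity expansions without circularity is where I expect the real technical work to lie; the other two relations fall out essentially by bookkeeping once the additive inverse relation is in hand.
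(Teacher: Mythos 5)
The paper gives no internal proof of this lemma --- it defers entirely to \cite{DribusMilnorK14}, Lemma 3.2.2 --- so your attempt can only be judged on its own terms, and on those terms it has a genuine gap. Your idempotent argument is correct (an idempotent unit is $1$, and $\{\ldots,1,\ldots\}=\{\ldots,1,\ldots\}^2$ forces triviality), your generic-case Milnor argument for $\{r,-r\}=1$ when $1-r$ and $1-r^{-1}$ are units is correct, and your final derivation of anticommutativity from the additive inverse relation is the standard one and is also correct. The problem is the bridge between the generic and general cases of the additive inverse relation, and it is not merely that the stability bookkeeping is ``where the real technical work lies'' --- the order of deduction you propose is circular. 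To reduce an arbitrary unit $r$ to the generic case you must factor $r=uv$ with $u,v$ generic (stability does provide this: choose $s$ with $s$, $1-s$, $1-rs$ all units and take $u=rs$, $v=s^{-1}$), but then bilinearity gives
\[\{uv,-uv\}=\{u,-u\}\,\{v,-v\}\,\{u,v\}\,\{v,u\},\]
so even after the generic case kills $\{u,-u\}$ and $\{v,-v\}$ you are left needing $\{u,v\}\{v,u\}=1$ --- precisely the anticommutativity you intend to deduce only afterwards. No rearrangement of the bilinear expansion of $\{r,-r\}$ avoids these cross terms, so ``additive inverse for all units first, anticommutativity second'' cannot be executed as stated.

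The standard repair is to interleave the two relations: prove the generic additive inverse relation; deduce anticommutativity for pairs with $u$, $v$, \emph{and} $uv$ all generic (here the displayed identity works in your favor, since all three of $\{uv,-uv\}$, $\{u,-u\}$, $\{v,-v\}$ are trivial); use stability to remove the genericity hypotheses from anticommutativity; and only then obtain the general additive inverse relation from the factorization $r=uv$. The middle extension step is where the count of five unimodular pairs is actually consumed, and it is the nontrivial content of the cited lemma. Two smaller points: part (2) of lemma \hyperref[lemstability]{\ref{lemstability}} produces \emph{sums} of two units, not products, so it is not the tool for the factorization $r=uv$ (that comes directly from the stability definition as above); and note that your identity $\{r,s\}^{-1}=\{r^{-1},s\}$ quietly uses $\{1,s\}=1$, which is fine but should be flagged as an instance of the idempotent relation you proved first.
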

\begin{proof}  See \cite{DribusMilnorK14}, lemma 3.2.2.
\end{proof}

%SSSSSSSSSSSSSSSSSSSSSSSSSSSSSSSSSSSSSSSSSSSSSSSSSSSSSSSSSSSSSSSSSSSSSSSSSSSSSSSSSS
%SSSSSSSSSSSSSSSSSSSSSSSSSSSSSSSSSSSSSSSSSSSSSSSSSSSSSSSSSSSSSSSSSSSSSSSSSSSSSSSSSS
%SSS                      SSS                       SSS                    SS                          SS           SSS                         SSS          SSSSS      SSSS
%SSS      SSSSSSSSS    SSSSSSSSSS     SSSSSSSSSSSS     SSSSSSS     SSSS    SSSSSS     SSS     S     SSSS     SSSSS
%SSS      SSSSSSSSS    SSSSSSSSSS     SSSSSSSSSSSS     SSSSSSS     SSSS    SSSSSS     SSS     SS     SSS     SSSSS
%SSS                      SSS                  SSSSS    SSSSSSSSSSSS     SSSSSSS     SSSS    SSSSSS     SSS     SSS     SS     SSSSS
%SSSSSSSSSS    SSS    SSSSSSSSSS    SSSSSSSSSSSS     SSSSSSS     SSSS    SSSSSS     SSS     SSSS     S     SSSSS
%SSSSSSSSSS    SSS    SSSSSSSSSS    SSSSSSSSSSSS     SSSSSSS     SSSS    SSSSSS     SSS     SSSSS          SSSSS
%SSS                       SSS                      SSS                       SSSSS     SSSSSS          SSS                         SSS     SSSSSS        SSSSS
%SSSSSSSSSSSSSSSSSSSSSSSSSSSSSSSSSSSSSSSSSSSSSSSSSSSSSSSSSSSSSSSSSSSSSSSSSSSSSSSSSS
%SSSSSSSSSSSSSSSSSSSSSSSSSSSSSSSSSSSSSSSSSSSSSSSSSSSSSSSSSSSSSSSSSSSSSSSSSSSSSSSSSS

\label{subsectionsymbolicstability}

{\bf Symbolic $K$-Theory and Stability.}  If a ring $R$ is sufficiently stable in the sense of definition \hyperref[defiVanderKallenStability]{\ref{defiVanderKallenStability}} above, different versions of symbolic $K$-theory tend to produce isomorphic $K$-groups. An important example of such an isomorphism involves the second Dennis-Stein $K$-group and the second Milnor $K$-group.
%\vspace*{.2cm}
\begin{theorem}\label{theoremvanderkallen} (Van der Kallen) Let $S$ be a commutative ring with identity, and suppose that $S$ is $5$-fold stable.  Then 
\begin{equation}K_{2,S} ^{\tn{M}}\cong D_{2,S}.\end{equation}
\end{theorem}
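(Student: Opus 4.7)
The plan is to construct explicit mutually inverse homomorphisms
\[\phi : K_{2,S}^{\tn{M}} \longrightarrow D_{2,S}, \qquad \psi : D_{2,S} \longrightarrow K_{2,S}^{\tn{M}}.\]
On Steinberg symbols (for which both entries are units), I define
\[\phi(\{u,v\}) = \langle -u,\ u^{-1}(1-v)\rangle,\]
noting that $1+(-u)\cdot u^{-1}(1-v)=v\in S^*$, so the right-hand side is a valid Dennis-Stein symbol. On Dennis-Stein symbols with $a\in S^*$ I would set
\[\psi(\langle a,b\rangle) = \{-a,\ 1+ab\},\]
and then extend $\psi$ to arbitrary Dennis-Stein symbols by first using the relation (iii), $\langle a,bc\rangle=\langle ab,c\rangle\langle ac,b\rangle$, together with $5$-fold stability, to rewrite any $\langle a,b\rangle$ as a product of Dennis-Stein symbols whose first entry is a unit. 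Concretely, if $a$ is not a unit, $5$-fold stability supplies $s\in S$ such that both $a+s$ and $1+s(\ldots)$ become units, allowing $a$ to be split into a sum of units in an auxiliary expression; one then decomposes $\langle a,b\rangle$ via (ii) and (iii) into symbols to which the unit-case formula applies.

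The first block of verifications is that $\phi$ respects the Milnor relations. Bimultiplicativity of $\{-,-\}$ translates, via the definition of $\phi$, into a combination of Dennis-Stein relations (i), (ii), (iii); here (ii) handles multiplicativity in the second slot and (iii) handles multiplicativity in the first, with (i) giving the symmetry needed to swap arguments. The Steinberg relation $\{r,1-r\}=1$ becomes $\langle -r, r^{-1}r\rangle = \langle -r,1\rangle$, and one checks directly from (ii) with $c=0$ and (iii) with $b=1$ that every symbol with a $0$ or $1$ entry is trivial in $D_{2,S}$. The second block is that $\psi$ respects the Dennis-Stein relations: (i) follows from anticommutativity of Milnor symbols (Lemma 3.6.5.4(5)); (ii) expands as $\{-a,1+ab\}\{-a,1+ac\}=\{-a,(1+ab)(1+ac)\}=\{-a,1+a(b+c+abc)\}$ by bimultiplicativity; and (iii) reduces after a short manipulation to an instance of bimultiplicativity in the first slot plus the additive inverse relation $\{x,-x\}=1$ (again Lemma 3.6.5.4(4)), which is precisely where the $5$-fold stability of $S$ is used in an essential way.

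The final step is to check $\phi\psi=\mathrm{id}$ and $\psi\phi=\mathrm{id}$ on generators, which is immediate from the formulas in the unit case: $\phi(\psi(\langle a,b\rangle))=\phi(\{-a,1+ab\})=\langle a, (-a)^{-1}(1-(1+ab))\rangle=\langle a,b\rangle$, and similarly for the composite in the other order. The non-unit case of $\psi\phi$ is handled by applying $\phi$ to the decomposition used to define $\psi$ and recombining via the same Dennis-Stein relations.

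The main obstacle is the well-definedness of $\psi$ on Dennis-Stein symbols $\langle a,b\rangle$ for which $a$ fails to be a unit: the image formula $\{-a,1+ab\}$ is not even defined in Milnor $K$-theory, so one must show that the decomposition-and-extension procedure is independent of the auxiliary element $s\in S$ chosen via stability. This is where Lemmas 3.6.5.3 and 3.6.5.4 are deployed most heavily, and where the proof's combinatorial core lies; conceptually, stability ensures enough flexibility that any two unit-producing choices differ by a product of trivial symbols, but writing this out rigorously is the delicate part of the argument.
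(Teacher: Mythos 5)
The paper does not actually prove this statement; its ``proof'' is a citation to Van der Kallen's original article (Theorem 8.4 there), so there is no in-paper argument to compare yours against. Judged on its own terms, your proposal correctly identifies the standard strategy and the standard symbol dictionary: the formulas $\phi(\{u,v\})=\langle -u,\,u^{-1}(1-v)\rangle$ and $\psi(\langle a,b\rangle)=\{-a,\,1+ab\}$ (for $a\in S^*$) are the right ones, the composites work out on generators exactly as you compute, and the verifications that $\phi$ kills the tensor and Steinberg relations, and that $\psi$ respects the Dennis--Stein relations when the first entries are units, are indeed finite symbol manipulations using relations (i)--(iii) together with anticommutativity and the additive-inverse relation from the paper's Lemma \hyperref[lemrelationsstable]{\ref{lemrelationsstable}}.

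The genuine gap is the one you yourself flag in the last paragraph: extending $\psi$ to Dennis--Stein symbols $\langle a,b\rangle$ in which neither entry is a unit, and proving that the extension is independent of the auxiliary choices supplied by stability. This is not a ``delicate final step'' of the argument --- it \emph{is} the theorem. Everything else in your proposal holds for any commutative ring in which the relevant symbols are defined (and is already classical for local and semilocal rings, where Dennis and Stein proved the result without any stability hypothesis); the entire force of the $5$-fold stability assumption is concentrated in showing that the decomposition of $\langle a,b\rangle$ into unit-entry symbols via relations (ii) and (iii) yields a well-defined value, and that $\phi\psi=\mathrm{id}$ continues to hold on these non-unit generators. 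In Van der Kallen's paper this occupies the bulk of the technical development (and is where the count ``$5$'' actually enters --- one must juggle several simultaneous unimodular pairs to compare two different unit-producing choices, not merely invoke the additive-inverse relation as your sketch of relation (iii) suggests). As written, your argument establishes a well-defined homomorphism $\phi:K_{2,S}^{\tn{M}}\rightarrow D_{2,S}$ and a candidate inverse on a generating set, but does not establish the isomorphism. To repair it you would either have to carry out the choice-independence argument in full, or restructure the proof so that $\psi$ is only ever defined via a presentation of $D_{2,S}$ by unit-entry symbols, which itself requires proving that such a presentation exists under $5$-fold stability.
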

\begin{proof} See Van der Kallen \cite{VanderKallenRingswithManyUnits77}, theorem 8.4, page 509. 
\end{proof}
Whether or not theorem \hyperref[theoremvanderkallen]{\ref{theoremvanderkallen}} remains true if one weakens the stability hypothesis to $4$-fold stability apparently remains unknown.\footnotemark\footnotetext{Van der Kallen  \cite{VanderKallenRingswithManyUnits77} writes {\it ``We do not know if $4$-fold stability suffices for theorem 8.4."} More recently, Van der Kallen tells me \cite{VanderKallenprivate14} that the answer to this question is still apparently unknown.}

The following result, involving the ``full" relative $K$-groups, rather than merely the relative Milnor $K$-groups, does not require any stability hypothesis:

\begin{theorem}\label{theoremmaazen} (Maazen and Stienstra) If $R$ is a split radical extension of $S$ with extension ideal $I$, then 
\begin{equation}K_{2,R,I}\cong D_{2,R,I}\end{equation}
\end{theorem}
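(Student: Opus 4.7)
The plan is to establish the isomorphism by exhibiting mutually inverse homomorphisms between $K_{2,R,I}$ and $D_{2,R,I}$, exploiting the fact that for a radical ideal $I$ every element of $1+I$ is automatically a unit, so Dennis-Stein symbols $\langle a,b\rangle$ are well-defined whenever either $a$ or $b$ lies in $I$. The starting point is a presentation of the relative Steinberg group $\tn{St}(R,I)$ in the style of Stein and Keune, which identifies $K_{2,R,I}$ with the kernel of the canonical surjection $\tn{St}(R,I)\to E(R,I)$, together with the presentation of $D_{2,R,I}$ afforded by Example \ref{DSBL2}.

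For the forward direction, I would construct $\Phi: D_{2,R,I}\to K_{2,R,I}$ on generators by sending a symbol $\langle a,b\rangle$ to the classical Dennis-Stein element inside $\tn{St}(R,I)$, namely the specific word in the relative Steinberg generators $\tilde{x}_{ij}(r)$ originally written down by Dennis and Stein. Because one of $a$, $b$ lies in $I$, this word actually lies in the relative Steinberg group, and one checks, using the standard Steinberg commutator identities, that it is central and hence lies in $K_{2,R,I}$. The three Dennis-Stein relations (1)--(3) of Definition \ref{defidennisstein} then translate into direct verifications inside $\tn{St}(R,I)$; no stability hypothesis is required, since $1+ab$ is automatically a unit whenever $ab\in I$.

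For the inverse direction, I would construct $\Psi: K_{2,R,I}\to D_{2,R,I}$ via a presentation of $K_{2,R,I}$ by relative symbols $\{r_1,r_2\}$ in which at least one entry lies in $1+I$, analogous to Lemma \ref{lemrelativegenerators}. On such a symbol I would set
\[\Psi(\{1+i,\,s\}) \;=\; \langle i,\,(1+i)^{-1}s\rangle,\]
and analogously in the other slot, then verify that each Steinberg defining relation for $K_{2,R,I}$ becomes a consequence of the Dennis-Stein relations in $D_{2,R,I}$. The radical hypothesis enters twice here: to guarantee invertibility of every $1+i$ that appears, and to let one rewrite products $(1+i)(1+i')$ as $1+i''$ with $i''\in I$, keeping all symbols inside the relative group.

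The main obstacle will be verifying well-definedness of $\Psi$ without any stability assumption. In the absolute case, the corresponding isomorphism $K_{2,S}^{\tn{M}}\cong D_{2,S}$ (Theorem \ref{theoremvanderkallen}) uses $5$-fold stability of $S$ to manipulate Steinberg symbols into Dennis-Stein form, because certain unimodular pairs must be produced on demand. In the relative radical setting, the nilpotent or radical structure substitutes for stability: the unimodular pairs needed in each manipulation can be produced by adding elements of $1+I$, which are units for free. Pushing this substitution through every Steinberg relation cleanly, and keeping careful track of the relative context so that everything remains inside $K_{2,R,I}$ and $D_{2,R,I}$ rather than drifting into their absolute counterparts, is the technical heart of the Maazen-Stienstra argument. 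Once $\Psi$ is shown to be well-defined, mutual inverseness of $\Phi$ and $\Psi$ reduces to checking agreement on generators, which follows directly from the defining formulas.
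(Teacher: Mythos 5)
The paper does not reprove this result---it simply cites Maazen and Stienstra \cite{MaazenStienstra77}, Theorem 3.1---so your sketch has to be measured against their actual argument, and there it has a genuine gap. Your forward map $\Phi$ is fine: sending $\langle a,b\rangle$ to the Dennis--Stein element of the relative Steinberg group and checking the three relations is exactly how the comparison map is built, and no stability is needed there. The problem is $\Psi$. You propose to define it on ``a presentation of $K_{2,R,I}$ by relative symbols $\{r_1,r_2\}$ with at least one entry in $1+I$, analogous to Lemma \ref{lemrelativegenerators}.'' No such presentation exists a priori: Lemma \ref{lemrelativegenerators} concerns relative \emph{Milnor} $K$-theory, which is symbolic by definition, whereas $K_{2,R,I}$ here is the kernel of $K_{2,R}\to K_{2,S}$ with $K_{2,R}$ the center of the Steinberg group. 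For a general (non-semilocal) ring $R$, $K_{2,R}$ is not generated by Steinberg symbols at all, and proving that the relative group \emph{is} generated by (Dennis--Stein) symbols, with only the Dennis--Stein relations, is precisely the content of the theorem. Assuming a symbolic presentation of $K_{2,R,I}$ in order to define the inverse is therefore circular. A secondary symptom of the same issue: the generators $\langle a,b\rangle$ of $D_{2,R,I}$ allow the non-$I$ entry to be an arbitrary, non-invertible element of $R$, so $\Phi(\langle a,b\rangle)$ is generally not a Steinberg symbol and your formula for $\Psi$ cannot even be evaluated on it to check $\Psi\circ\Phi=\mathrm{id}$.

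The missing ideas are the two halves of the Maazen--Stienstra argument that replace your $\Psi$. Surjectivity of $\Phi$ is proved by a direct computation in the relative Steinberg group: because $I$ lies in the Jacobson radical, the relative elementary group admits a triangular-type normal form, and one shows that every element of $\ker(\mathrm{St}(R,I)\to E(R,I))$ is a product of Dennis--Stein elements. Injectivity is proved by constructing a central extension of $E(R,I)$ with kernel $D_{2,R,I}$ (equivalently, an action of $\mathrm{St}(R,I)$ on a group built out of $D_{2,R,I}$) and invoking the universal property of the relative Steinberg group; it is here that the \emph{split} hypothesis $R\cong S\ltimes I$ is used essentially. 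Your sketch uses only the radical property (to manufacture units) and never the splitting, which is a further sign that the argument as proposed cannot close: a correct proof must use both hypotheses in the statement.
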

\begin{proof}Maazen and Stienstra \cite{MaazenStienstra77}, theorem 3.1, page 279.
\end{proof}

%SSSSSSSSSSSSSSSSSSSSSSSSSSSSSSSSSSSSSSSSSSSSSSSSSSSSSSSSSSSSSSSSSSSSSSSSSSSSSSSSSS
%SSSSSSSSSSSSSSSSSSSSSSSSSSSSSSSSSSSSSSSSSSSSSSSSSSSSSSSSSSSSSSSSSSSSSSSSSSSSSSSSSS
%SSS                      SSS                       SSS                    SS                          SS           SSS                         SSS          SSSSS      SSSS
%SSS      SSSSSSSSS    SSSSSSSSSS     SSSSSSSSSSSS     SSSSSSS     SSSS    SSSSSS     SSS     S     SSSS     SSSSS
%SSS      SSSSSSSSS    SSSSSSSSSS     SSSSSSSSSSSS     SSSSSSS     SSSS    SSSSSS     SSS     SS     SSS     SSSSS
%SSS                      SSS                  SSSSS    SSSSSSSSSSSS     SSSSSSS     SSSS    SSSSSS     SSS     SSS     SS     SSSSS
%SSSSSSSSSS    SSS    SSSSSSSSSS    SSSSSSSSSSSS     SSSSSSS     SSSS    SSSSSS     SSS     SSSS     S     SSSSS
%SSSSSSSSSS    SSS    SSSSSSSSSS    SSSSSSSSSSSS     SSSSSSS     SSSS    SSSSSS     SSS     SSSSS          SSSSS
%SSS                       SSS                      SSS                       SSSSS     SSSSSS          SSS                         SSS     SSSSSS        SSSSS
%SSSSSSSSSSSSSSSSSSSSSSSSSSSSSSSSSSSSSSSSSSSSSSSSSSSSSSSSSSSSSSSSSSSSSSSSSSSSSSSSSS
%SSSSSSSSSSSSSSSSSSSSSSSSSSSSSSSSSSSSSSSSSSSSSSSSSSSSSSSSSSSSSSSSSSSSSSSSSSSSSSSSSS

\label{subsectionkahlerbloch}

{\bf Algebraic de Rham Complex; Relative Version.}  The ``higher-degree analogues" of the group $\Omega_{R,I} ^1/dI$ appearing on the right-hand side of Bloch's isomorphism in theorem \hyperref[theorembloch]{\ref{theorembloch}} are the groups $\Omega_{R,I}^{n}/d\Omega_{R,I}^{n-1}$ for $n\ge 2$.  To understand these groups, it is useful to first examine the corresponding ``absolute" groups $\Omega_{R/\ZZ}^{n}/d\Omega_{R/\ZZ}^{n}$.  The following lemma describes these groups in terms of generators and relations:

\begin{lem}\label{lemKahlerrelations}As an abstract additive group, $\Omega_{R/\ZZ}^{n}/d\Omega_{R/\ZZ}^{n-1}$ is generated by  differentials $r_0dr_1\wedge...\wedge dr_n$, where $r_j\in R$ for all $j$, subject to the relations
\begin{enumerate}
\addtocounter{enumi}{-1}
\item $\Omega_{R/\ZZ}^{n+1}/d\Omega_{R/\ZZ}^{n}$ is abelian.
\item Additive relation: $(r_0+r_0')dr_1\wedge...\wedge dr_n=r_0dr_1\wedge...\wedge dr_n+r_0'dr_1\wedge...\wedge dr_n$.  
\item Leibniz rule: $r_0d(r_1r_1')\wedge dr_2\wedge...\wedge dr_n=r_0r_1dr_1'\wedge dr_2\wedge...\wedge dr_n+r_0r_1'dr_1\wedge dr_2\wedge...\wedge dr_n$.  
\item Alternating relation: $r_0dr_1\wedge...\wedge dr_j\wedge dr_{j+1}\wedge...\wedge dr_n=-r_0dr_1\wedge...\wedge dr_{j+1}\wedge dr_j\wedge...\wedge dr_n$.
\item Exactness: $dr_1\wedge...\wedge dr_n=0$. 
\end{enumerate}
\end{lem}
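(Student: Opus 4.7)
The plan is to verify both directions: first that relations (0)--(4) hold in the quotient $\Omega_{R/\ZZ}^{n}/d\Omega_{R/\ZZ}^{n-1}$, and second that any abelian group together with a map from the free group on the generators $r_0 dr_1 \wedge \ldots \wedge dr_n$ that satisfies these relations factors uniquely through the quotient. The proof is essentially an unwinding of the definitions of K\"ahler differentials and the exterior algebra, packaged as a presentation.

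For the forward direction, relation (0) is automatic since $\Omega_{R/\ZZ}^n$ is a $\ZZ$-module. Relations (1) and (2) are the defining relations of $\Omega_{R/\ZZ}^1$ from Definition 3.6.5.1 (with $k=\ZZ$), extended via the ``first-slot'' action of $R$ on $\Omega_{R/\ZZ}^n$. Relation (3) follows from the fact that $\Omega_{R/\ZZ}^\bullet$ is the exterior algebra over $\Omega_{R/\ZZ}^1$, so consecutive factors anticommute; an alternation in arbitrary positions then follows by iterated swaps. Relation (4) holds because $d(r_1 dr_2 \wedge \ldots \wedge dr_n) = dr_1 \wedge dr_2 \wedge \ldots \wedge dr_n$ lies in $d\Omega_{R/\ZZ}^{n-1}$.

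For the converse, I would form the free abelian group $F$ on symbols $[r_0 | r_1 | \ldots | r_n]$ for $r_0,\ldots,r_n \in R$, quotient by the subgroup $N$ generated by the four relations (1)--(4), and show the canonical map $F/N \to \Omega_{R/\ZZ}^n / d\Omega_{R/\ZZ}^{n-1}$ sending $[r_0 | \ldots | r_n]$ to $r_0 dr_1 \wedge \ldots \wedge dr_n$ is an isomorphism. Surjectivity is clear since the target is generated by such differentials. For injectivity, one constructs an inverse: relations (1) and (2) show that the assignment $(r_1,\ldots,r_n) \mapsto [1|r_1|\ldots|r_n]$ is $\ZZ$-multilinear and satisfies the Leibniz rule in each slot after using (4) to absorb derivative terms, hence descends to a map $\Omega_{R/\ZZ}^1 \otimes_\ZZ \ldots \otimes_\ZZ \Omega_{R/\ZZ}^1 \to F/N$; relation (3) then makes this factor through $\Omega_{R/\ZZ}^n = \bigwedge^n \Omega_{R/\ZZ}^1$, and relation (4) kills the image of $d\Omega_{R/\ZZ}^{n-1}$. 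One finally multiplies by $r_0$ in the first slot using relation (1) to extend to all generators.

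The main obstacle I expect is the careful bookkeeping in verifying that relations (1)--(4) suffice to reproduce \emph{all} the relations coming from the exterior algebra structure on $\bigwedge^n \Omega_{R/\ZZ}^1$ and from the exact subgroup $d\Omega_{R/\ZZ}^{n-1}$. In particular, one must show that the Leibniz rule stated only for the first two slots (relation (2)) implies the Leibniz rule in each of the later slots; this uses relation (3) to swap entries, relation (2) in the new ``first two'' positions, and (3) again to swap back. Similarly, one needs to verify that the exactness relation (4), stated only for differentials with leading coefficient $1$, combined with (1) and (2), generates every element of $d\Omega_{R/\ZZ}^{n-1}$: any exact form $d(r_0 dr_1 \wedge \ldots \wedge dr_{n-1}) = dr_0 \wedge dr_1 \wedge \ldots \wedge dr_{n-1}$ is captured directly by (4), and general exact forms are $\ZZ$-linear combinations of these.
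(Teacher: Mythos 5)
Your proposal is correct and is essentially a fleshed-out version of the paper's own (one-line) proof, which simply asserts that the lemma "follows directly from the definition of K\"ahler differentials and the properties of the exterior algebra." Your identification of the derived relations one must check (Leibniz in later slots via the alternating relation, and the sufficiency of exactness with leading coefficient $1$) matches the paper's own remarks immediately following the lemma.
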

\begin{proof} This follows directly from definition \hyperref[defikahler]{\ref{defikahler}} and the properties of the exterior algebra.   
\end{proof}
These relations, of course, imply other familiar relations.  For example, the Leibniz rule and exactness together imply that
\[r_1dr_0\wedge dr_2\wedge...\wedge dr_n= -r_0dr_1\wedge dr_2\wedge...\wedge dr_n,\]
 so the alternating property ``extends to coefficients."   This, in turn, implies that additivity is not ``confined to coefficients:" 
\[r_0dr_1\wedge...\wedge d(r_j+r_j')\wedge...\wedge dr_n=r_0dr_1\wedge...\wedge dr_j\wedge...\wedge dr_n+r_0dr_1\wedge...\wedge dr_j'\wedge...\wedge dr_n.\]
Similarly, repeated use of the alternating relation implies that applying a permutation to the elements $r_0,...,r_n$ appearing in the differential $r_0dr_1\wedge...\wedge dr_n$ yields the same differential, multiplied by the sign of the permutation. 

The following lemma establishes properties of K\"{a}hler differentials analogous to the properties of Milnor $K$-groups established in lemma \hyperref[lemrelativegenerators]{\ref{lemrelativegenerators}}.  Minor stability and invertibility assumptions are necessary to yield the desired results.

\begin{lem}\label{lemrelativekahlergenerators} Let $R$ be split nilpotent extension of a $2$-fold stable ring $S$, in which $2$ is invertible.  Let $I$ be the extension ideal.  
\begin{enumerate}
\item The group $\Omega_{R,I}^n$ of absolute K\"{a}hler differentials of degree $n$ relative to $I$ is generated by differentials of the form $rd\bar{r}\wedge d\bar{r}'$, where $r$ is either $1$ or belongs to $I$, where $\bar{r}\in I$, and where $\bar{r}'\in R^*$.  
\item The group $\Omega_{R,I}^{n+1}/d\Omega_{R,I}^{n}$ is a subgroup of the group $\Omega_{R}^{n+1}/d\Omega_{R}^{n}$.  In particular, $d\Omega_{R,I}^{n}=d\Omega_{R}^{n}\cap \Omega_{R,I}^{n+1}$.  
\end{enumerate}
\end{lem}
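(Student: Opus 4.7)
I would handle the two parts separately, leveraging the splitting $R\cong S\oplus I$ as abelian groups (in fact as $S$-modules) to set up a clean direct-sum decomposition of K\"ahler differentials, and then using the $2$-fold stability hypothesis, via Lemma \hyperref[lemstability]{\ref{lemstability}(2)}, to rewrite arbitrary elements of $R$ as sums of units.

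For part (2), the strategy is to observe that the split surjection $\pi\colon R\to S$ with section $\sigma\colon S\to R$ induces split maps $\pi_*\colon \Omega^n_{R/\ZZ}\to \Omega^n_{S/\ZZ}$ and $\sigma_*\colon \Omega^n_{S/\ZZ}\to \Omega^n_{R/\ZZ}$ with $\pi_*\sigma_*=\tn{id}$. This produces a functorial direct-sum decomposition $\Omega^n_{R/\ZZ}\cong \sigma_*\Omega^n_{S/\ZZ}\oplus \Omega^n_{R,I}$ as abelian groups, compatible with the exterior differential $d$. Given $\omega\in\Omega^n_R$ with $d\omega\in\Omega^{n+1}_{R,I}$, decompose $\omega=\sigma_*\omega_S+\omega_I$. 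Applying $d$ and using functoriality gives $d\omega=\sigma_*(d\omega_S)+d\omega_I$, and the $\sigma_*\Omega^{n+1}_{S/\ZZ}$-component of the left-hand side vanishes by hypothesis, so $\sigma_*(d\omega_S)=0$ and hence $d\omega=d\omega_I\in d\Omega^n_{R,I}$. This is the only nontrivial inclusion.

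For part (1), I would proceed in two passes. First, write a general generator $r_0\,dr_1\wedge\cdots\wedge dr_n$ of $\Omega^n_{R/\ZZ}$ and split each $r_j=\sigma(s_j)+i_j$ with $s_j\in S$, $i_j\in I$; applying additivity of $d$ and multilinearity expands this into monomials in which each slot is labelled either by an element of $\sigma(S)$ or by an element of $I$ (respectively a $d$ of such). The ``all-$\sigma(S)$'' terms lie in $\sigma_*\Omega^n_{S/\ZZ}$, so in the relative group they contribute nothing. Every remaining monomial has at least one factor ``in the $I$-direction,'' i.e.\ either a coefficient in $I$ or some $d\bar r$ with $\bar r\in I$. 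Second, I would use anticommutativity to move a designated $d\bar r$ with $\bar r\in I$ to the front, then eliminate non-unit entries in the remaining slots: by Lemma \hyperref[lemstability]{\ref{lemstability}(2)}, any $r_j\in R$ can be written as $u+u'$ with $u,u'\in R^*$, and additivity in the $j$th slot lets me replace $dr_j$ by $du+du'$; iterating across all slots except the designated one puts every remaining $d\bar r'$ differential on a unit. Finally, to force the coefficient to be either $1$ or in $I$, I would again apply Lemma \hyperref[lemstability]{\ref{lemstability}(2)} to write the coefficient as a sum of two units $u+u'$, then use the Leibniz identity $u\,d\bar r=d(u\bar r)-\bar r\,du$: the first summand is exact in $\bar r$ (since $u\bar r\in I$ because $\bar r\in I$) and absorbed as a new $d\overline{(u\bar r)}$ with coefficient $1$, while $\bar r\,du$ has coefficient $\bar r\in I$.

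The main obstacle will be the bookkeeping in the second pass of part (1): one must verify that the various substitutions (sum-of-two-units expansions, applications of Leibniz, anticommutativity shuffles) do not create new terms outside the claimed generating set, and that the ``designated $I$-slot'' can always be singled out without disturbing the unit condition on the other slots. The first pass and part (2) are essentially formal consequences of the split direct-sum decomposition; it is the interaction between the stability-driven rewriting and the relations of the exterior algebra in part (1) that requires care.
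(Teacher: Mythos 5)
The paper offers no internal proof of this lemma to compare against: it simply cites Lemma 3.3.2 of \cite{DribusMilnorK14}. Your argument for part (2) is correct and is the expected one; it uses only the splitting of de Rham complexes induced by $\pi\colon R\to S$ and its section (note that it requires neither $2$-fold stability nor invertibility of $2$).

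Part (1), however, has a genuine gap, sitting exactly where you flagged the risk. Your first pass only guarantees that each surviving monomial has \emph{some} factor in the $I$-direction --- possibly only the coefficient --- yet your second pass opens by ``moving a designated $d\bar r$ with $\bar r\in I$ to the front,'' which presupposes such a factor exists; monomials $i\,d\sigma(s_1)\wedge\cdots\wedge d\sigma(s_n)$ with $i\in I$ are never treated. Worse, your own Leibniz step manufactures terms of precisely this untreated shape: $\bar r\,du\wedge dr_1'\wedge\cdots$ has coefficient in $I$ but every differential on a unit, so it is \emph{not} of the form $r\,d\bar r\wedge d\bar r'$ with $\bar r\in I$, and observing that its coefficient lies in $I$ does not complete the reduction. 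This is not repairable bookkeeping, because the statement read literally is too strong. Take $R=S[\ee]/\ee^2$, $I=(\ee)$: since $2$ is invertible, $\ee\,d\ee=\tfrac12\,d(\ee^2)=0$, so every putative generator $r\,d(b\ee)\wedge\cdots$ with $r\in I$ vanishes outright, while every coefficient-$1$ generator $d(b\ee)\wedge du_1\wedge\cdots\wedge du_{n-1}=d\big((b\ee)\,du_1\wedge\cdots\wedge du_{n-1}\big)$ is exact and lies in $d\Omega_{R,I}^{n-1}$. The literal generating set therefore spans only exact forms, which cannot exhaust $\Omega_{R,I}^{n}$ whenever $\Omega_{R,I}^{n}/d\Omega_{R,I}^{n-1}\ne 0$ (e.g.\ $n=2$, $S=\QQ(x,y)$, where this quotient is $\Omega^2_{S/\ZZ}$ by equation \ref{equtangentgroupmilnorK} and Theorem \ref{theoremgoodwilliemilnor}; concretely $\ee\,dx\wedge dy$ is not in the span). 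The generating set must be enlarged to admit $i\,du_1\wedge\cdots\wedge du_n$ with $i\in I$ and all $u_j\in R^*$ --- i.e.\ the number of $d(\text{element of }I)$ factors may be zero when the coefficient lies in $I$. With that corrected target your two-pass reduction does terminate, but you need to say so and handle the coefficient-only case explicitly rather than letting it fall through the case analysis.
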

\begin{proof}  See \cite{DribusMilnorK14}, lemma 3.3.2.
\end{proof}

%SSSSSSSSSSSSSSSSSSSSSSSSSSSSSSSSSSSSSSSSSSSSSSSSSSSSSSSSSSSSSSSSSSSSSSSSSSSSSSSSSS
%SSSSSSSSSSSSSSSSSSSSSSSSSSSSSSSSSSSSSSSSSSSSSSSSSSSSSSSSSSSSSSSSSSSSSSSSSSSSSSSSSS
%SSS                      SSS                       SSS                    SS                          SS           SSS                         SSS          SSSSS      SSSS
%SSS      SSSSSSSSS    SSSSSSSSSS     SSSSSSSSSSSS     SSSSSSS     SSSS    SSSSSS     SSS     S     SSSS     SSSSS
%SSS      SSSSSSSSS    SSSSSSSSSS     SSSSSSSSSSSS     SSSSSSS     SSSS    SSSSSS     SSS     SS     SSS     SSSSS
%SSS                      SSS                  SSSSS    SSSSSSSSSSSS     SSSSSSS     SSSS    SSSSSS     SSS     SSS     SS     SSSSS
%SSSSSSSSSS    SSS    SSSSSSSSSS    SSSSSSSSSSSS     SSSSSSS     SSSS    SSSSSS     SSS     SSSS     S     SSSSS
%SSSSSSSSSS    SSS    SSSSSSSSSS    SSSSSSSSSSSS     SSSSSSS     SSSS    SSSSSS     SSS     SSSSS          SSSSS
%SSS                       SSS                      SSS                       SSSSS     SSSSSS          SSS                         SSS     SSSSSS        SSSSS
%SSSSSSSSSSSSSSSSSSSSSSSSSSSSSSSSSSSSSSSSSSSSSSSSSSSSSSSSSSSSSSSSSSSSSSSSSSSSSSSSSS
%SSSSSSSSSSSSSSSSSSSSSSSSSSSSSSSSSSSSSSSSSSSSSSSSSSSSSSSSSSSSSSSSSSSSSSSSSSSSSSSSSS

\label{dlog}

{\bf The $d\log$ Map; the de Rham-Witt Viewpoint.}  The following lemma establishes the existence of the ``canonical $d\log$ map" from Milnor $K$-theory to the absolute K\"{a}hler differentials.\\

\begin{lem}\label{lemdlog} Let $R$ be a commutative ring.  The map $R^*\rightarrow\Omega_{R/\ZZ}^1$ sending $r$ to $d\log(r)=dr/r$ extends to a homomorphism $d\log:T_{R^*/\mathbb{Z}}\rightarrow \Omega_{R/\ZZ}^\bullet$ of graded rings, by sending sums to sums and tensor products to exterior products.   This homomorphism induces a homomorphism of graded rings:
\[d\log:K_R^{\tn{M}}\longrightarrow \Omega_{R/\ZZ}^\bullet\]
\begin{equation}\label{equdlog}\{r_0,...,r_n\}\mapsto\displaystyle\frac{dr_0}{r_0}\wedge...\wedge\frac{dr_n}{r_n}.\end{equation}
\end{lem}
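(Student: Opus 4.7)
The plan is to verify the three claims in sequence: first that $r \mapsto dr/r$ is a group homomorphism from the multiplicative group $R^*$ to the additive group $\Omega_{R/\ZZ}^1$, then that this extends canonically to a map of graded rings out of the tensor algebra $T_{R^*/\ZZ}$ via the universal property, and finally that the resulting map kills the Steinberg ideal $I_{\tn{St}}$ so that it descends to a well-defined map on $K_R^{\tn{M}}$.

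First I would check the base case. Given $r, s \in R^*$, the Leibniz rule in the algebraic de Rham complex (relation (2) of Lemma 3.6.5.5, suitably interpreted) gives $d(rs) = r\,ds + s\,dr$, so
\[
d\log(rs) \;=\; \frac{d(rs)}{rs} \;=\; \frac{r\,ds + s\,dr}{rs} \;=\; \frac{ds}{s} + \frac{dr}{r} \;=\; d\log(r) + d\log(s).
\]
Thus $d\log: R^* \to \Omega_{R/\ZZ}^1$ is a homomorphism from the multiplicative group $R^*$ to the underlying additive group of $\Omega_{R/\ZZ}^1$, i.e., a map of $\ZZ$-modules when $R^*$ is viewed additively as in Definition 3.4.2.1.

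Next, by the universal property of the tensor algebra $T_{R^*/\ZZ}$ over $\ZZ$ of the $\ZZ$-module $R^*$, together with the fact that the exterior algebra $\Omega_{R/\ZZ}^\bullet = \bigwedge \Omega_{R/\ZZ}^1$ is itself an associative graded $\ZZ$-algebra, the $\ZZ$-linear map $d\log$ extends uniquely to a homomorphism of graded $\ZZ$-algebras
\[
d\log : T_{R^*/\ZZ} \;\longrightarrow\; \Omega_{R/\ZZ}^\bullet, \qquad r_0 \otimes \cdots \otimes r_n \;\longmapsto\; \frac{dr_0}{r_0} \wedge \cdots \wedge \frac{dr_n}{r_n},
\]
sending $\otimes$ to $\wedge$ as required.

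The main (and only substantive) step is to verify that this extended map annihilates the Steinberg ideal $I_{\tn{St}}$, which by Definition 3.4.2.1 is the two-sided ideal generated by tensors of the form $r \otimes (1-r)$ with both $r$ and $1-r$ in $R^*$. For any such $r$, since $d(1-r) = -dr$ by the additivity and trivial-differential properties of $d$ (relations (1) and (4) of Lemma 3.6.5.5),
\[
d\log(r) \wedge d\log(1-r) \;=\; \frac{dr}{r} \wedge \frac{-\,dr}{1-r} \;=\; \frac{-1}{r(1-r)}\,dr \wedge dr \;=\; 0,
\]
because $dr \wedge dr = 0$ by the alternating property (relation (3) of Lemma 3.6.5.5). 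Since $I_{\tn{St}}$ is generated as a two-sided ideal by such elements and $d\log$ is a graded ring homomorphism, $d\log(I_{\tn{St}}) = 0$. The universal property of the quotient therefore yields a well-defined homomorphism of graded rings $d\log : K_R^{\tn{M}} = T_{R^*/\ZZ}/I_{\tn{St}} \to \Omega_{R/\ZZ}^\bullet$, and by construction it sends the Steinberg symbol $\{r_0,\dots,r_n\}$ to $\frac{dr_0}{r_0} \wedge \cdots \wedge \frac{dr_n}{r_n}$, completing the proof. I do not anticipate a real obstacle here; the only delicate point is keeping straight which structure on $R^*$ is being used where (multiplicative source, additive target), but this is automatic once one fixes the convention that $R^*$ is a $\ZZ$-module written multiplicatively.
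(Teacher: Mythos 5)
Your proposal is correct and follows essentially the same route as the paper's proof: extend $d\log$ to the tensor algebra as a graded ring map and then kill the Steinberg generators via the computation $\frac{dr}{r}\wedge\frac{d(1-r)}{1-r}=-\frac{1}{r(1-r)}\,dr\wedge dr=0$. Your version merely spells out the preliminary steps (the Leibniz-rule verification that $d\log$ is a homomorphism $R^*\to\Omega^1_{R/\ZZ}$ and the universal property of the tensor algebra) that the paper asserts "by construction."
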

\begin{proof} The map $d\log:T_{R^*/\mathbb{Z}}\rightarrow \Omega_{R/\ZZ}^*$ is a graded ring homomorphism by construction, since its definition stipulates that sums are sent to sums and tensor products to exterior products.  Elements of the form $r\otimes(1-r)$ in $R*\otimes_\ZZ R^*$ map to zero in $\Omega_{R/\ZZ}^2$ by the alternating property of the exterior product:
\[d\log\big(r\otimes(1-r)\big)=\frac{dr}{r}\wedge\frac{d(1-r)}{1-r}=-\frac{1}{r(1-r)}dr\wedge dr=0,\]
so $d\log$ descends to a homomorphism $K_R^{\tn{M}}\longrightarrow \Omega_{R/\ZZ}^\bullet$. 
\end{proof}

Lars Hesselholt \cite{HesselholtBigdeRhamWitt} provides a more sophisticated viewpoint regarding the $d\log$ map and the closely related map $\phi_{n+1}$ in the main theorem, expressed in equation \hyperref[equationphi]{\ref{equationphi}} above.  This viewpoint is expressed in terms of pro-abelian groups, de Rham Witt complexes, and Frobenius endomorphisms.   In describing it, I closely paraphrase a private communication \cite{Hesselholtprivate14} from Hesselholt.  For every commutative ring $R$, there exists a map of pro-abelian groups
\[d\log: K_{n,R}^{\tn{M}}\rightarrow W\Omega_{R/\ZZ}^n\]
from Milnor $K$-theory to an appropriate de Rham-Witt theory $W\Omega_{R/\ZZ}^n$, taking the Steinberg symbol $\{r_1,...,r_n\}$ to the element $d\log[r_1]...d\log[r_n]$, where $[r]$ is the Teichm\"{u}ller representative of $r$ in an appropriate ring of Witt vectors of $R$.   Here, $W\Omega_{R/\ZZ}^n$ may represent either the $p$-typical de Rham-Witt groups or the big Rham-Witt groups.  On the $p$-typical de Rham-Witt complex, there is a (divided) Frobenius endomorphism $F=F_p$; on the big de Rham-Witt complex, there is a (divided) Frobenius endomorphism $F_n$ for every positive integer $n$.  The map $d\log$ maps into the sub-pro-abelian group 
\[(W\Omega_{R/\ZZ}^n)^{F=\tn{Id}}\subset W\Omega_{R/\ZZ}^n,\]
fixed by the appropriate Frobenius endomorphism or endomorphisms.  Using the big de Rham complex, one may conjecture\footnotemark\footnotetext{This is Hesselholt's idea.} that for every commutative ring $R$ and every nilpotent ideal $I\subset R$, the induced map of relative groups
\[K_{n,R,I}^{\tn{M}}\rightarrow(W\Omega_{R,I}^n)^{F=\tn{Id}},\]
is an isomorphism of pro-abelian groups.  Expressing the right-hand-side in terms of differentials, as in the main theorem \hyperref[equmaintheorem]{\ref{equmaintheorem}}, likely requires some additional hypotheses.\footnotemark\footnotetext{Hesselholt  \cite{Hesselholtprivate14} writes, {\it ``If every prime number $l$ different from a fixed prime number $p$ is invertible in $R$, then one should be able to use the $p$-typical de Rham-Witt groups instead of the big de Rham-Witt groups. In this context, [the main theorem] can be seen as a calculation of this Frobenius fixed set... ... In order to be able to express the Frobenius fixed set in terms of differentials (as opposed to de Rham-Witt differentials), I would think that it is necessary to invert $N$... ...I do not think that inverting 2 is enough."}}

%SSSSSSSSSSSSSSSSSSSSSSSSSSSSSSSSSSSSSSSSSSSSSSSSSSSSSSSSSSSSSSSSSSSSSSSSSSSSSSSSSS
%SSSSSSSSSSSSSSSSSSSSSSSSSSSSSSSSSSSSSSSSSSSSSSSSSSSSSSSSSSSSSSSSSSSSSSSSSSSSSSSSSS
%SSS                      SSS                       SSS                    SS                          SS           SSS                         SSS          SSSSS      SSSS
%SSS      SSSSSSSSS    SSSSSSSSSS     SSSSSSSSSSSS     SSSSSSS     SSSS    SSSSSS     SSS     S     SSSS     SSSSS
%SSS      SSSSSSSSS    SSSSSSSSSS     SSSSSSSSSSSS     SSSSSSS     SSSS    SSSSSS     SSS     SS     SSS     SSSSS
%SSS                      SSS                  SSSSS    SSSSSSSSSSSS     SSSSSSS     SSSS    SSSSSS     SSS     SSS     SS     SSSSS
%SSSSSSSSSS    SSS    SSSSSSSSSS    SSSSSSSSSSSS     SSSSSSS     SSSS    SSSSSS     SSS     SSSS     S     SSSSS
%SSSSSSSSSS    SSS    SSSSSSSSSS    SSSSSSSSSSSS     SSSSSSS     SSSS    SSSSSS     SSS     SSSSS          SSSSS
%SSS                       SSS                      SSS                       SSSSS     SSSSSS          SSS                         SSS     SSSSSS        SSSSS
%SSSSSSSSSSSSSSSSSSSSSSSSSSSSSSSSSSSSSSSSSSSSSSSSSSSSSSSSSSSSSSSSSSSSSSSSSSSSSSSSSS
%SSSSSSSSSSSSSSSSSSSSSSSSSSSSSSSSSSSSSSSSSSSSSSSSSSSSSSSSSSSSSSSSSSSSSSSSSSSSSSSSSS

\label{goodwillietheorem}

{\bf The Theorem.}  The following ``main theorem" from my paper \cite{DribusMilnorK14} establishes the existence of a rather general Goodwillie-type isomorphism for Milnor $K$-theory:

\begin{theorem}\label{theoremgoodwilliemilnor} Suppose that $R$ is a split nilpotent extension of a $5$-fold stable ring $S$, with extension ideal $I$, 
whose index of nilpotency is $N$.  Suppose further that every positive integer less than or equal to $N$
 is invertible in $S$.  Then for every positive integer $n$,
\begin{equation}\label{equmaintheorem}K_{n+1,R,I} ^{\tn{M}}\cong \frac{\Omega_{R,I} ^n}{d\Omega_{R,I} ^{n-1}}.\end{equation}
\end{theorem}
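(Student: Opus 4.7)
My plan is to prove the theorem by constructing mutually inverse group homomorphisms
\[
\phi_{n+1}\colon K_{n+1,R,I}^{\tn{M}}\longrightarrow\frac{\Omega_{R,I}^n}{d\Omega_{R,I}^{n-1}}
\qquad\text{and}\qquad
\psi_{n+1}\colon\frac{\Omega_{R,I}^n}{d\Omega_{R,I}^{n-1}}\longrightarrow K_{n+1,R,I}^{\tn{M}},
\]
implemented by the truncated logarithm $\log\colon(1+I)^*\to I$ and exponential $\exp\colon I\to(1+I)^*$. Both series terminate since $I^N=0$ and are defined since $2,\ldots,N$ are invertible in $R$, and they are classically mutual inverses. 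As a sanity check, the base case $n=1$ drops out by combining Bloch's theorem (Theorem 3.6.5.3), giving $K_{2,R,I}\cong\Omega_{R,I}^1/dI$, with Van der Kallen's stability theorem (Theorem 3.6.5.6), giving $K_{2,R}^{\tn{M}}\cong K_{2,R}$ applied to both $R$ and $S$ (both $5$-fold stable by Lemma 3.6.5.7); matching kernels yields $K_{2,R,I}^{\tn{M}}\cong\Omega_{R,I}^1/dI$.

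\textbf{The forward map.} By Lemma 3.6.5.2, $K_{n+1,R,I}^{\tn{M}}$ is generated by Steinberg symbols with at least one entry in $(1+I)^*$, and by anticommutativity (Lemma 3.6.5.8, using $5$-fold stability) I may place this entry in the first slot. I define
\[
\phi_{n+1}\bigl(\{r_0,r_1,\ldots,r_n\}\bigr)=\log(r_0)\,\frac{dr_1}{r_1}\wedge\cdots\wedge\frac{dr_n}{r_n},\qquad r_0\in(1+I)^*.
\]
Since $\log(r_0)\in I$, the image lies in $\Omega_{R,I}^n$. Well-definedness reduces to checking the defining relations of $K_{n+1,R,I}^{\tn{M}}$: multiplicativity in $r_0$ uses $\log(r_0r_0')=\log r_0+\log r_0'$; multiplicativity in the other entries is the Leibniz rule $d\log(ab)=d\log a+d\log b$; the Steinberg relation $\{\ldots,r,1-r,\ldots\}$ on a non-$r_0$ pair produces $d\log r\wedge d\log(1-r)=-dr\wedge dr/\bigl(r(1-r)\bigr)=0$; the alternating, additive-inverse, and idempotent relations of Lemma 3.6.5.8 hold formally in $\Omega^\bullet$.

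\textbf{The candidate inverse.} By Lemma 3.6.5.9, $\Omega_{R,I}^n$ is generated by $\bar{r}_0\,d\log r_1\wedge\cdots\wedge d\log r_n$ with $\bar{r}_0\in I$ and $r_i\in R^*$. I set
\[
\psi_{n+1}\!\left(\bar{r}_0\,\frac{dr_1}{r_1}\wedge\cdots\wedge\frac{dr_n}{r_n}\right)=\{\exp(\bar{r}_0),r_1,\ldots,r_n\},
\]
which lies in $K_{n+1,R,I}^{\tn{M}}$ since $\exp(\bar{r}_0)\in(1+I)^*$. Additivity in $\bar{r}_0$ matches the homomorphism property of $\exp$; the Leibniz rule on each $r_i$ matches the Milnor multiplicative relation; the alternating property matches anticommutativity. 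Granting that $\psi_{n+1}$ descends to the quotient by exact forms, the compositions $\phi_{n+1}\circ\psi_{n+1}$ and $\psi_{n+1}\circ\phi_{n+1}$ are the identity on generators because $\log$ and $\exp$ invert one another on $I$ and $(1+I)^*$.

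\textbf{The main obstacle.} The hard step is to verify that $\psi_{n+1}$ kills $d\Omega_{R,I}^{n-1}$. Given $\omega=\bar{s}_0\,d\log s_1\wedge\cdots\wedge d\log s_{n-1}$ with $\bar{s}_0\in I$, the identity $d\bar{s}_0=(1+\bar{s}_0)\,d\log(1+\bar{s}_0)$ splits $d\omega$ into a generator piece $\bar{s}_0\,d\log(1+\bar{s}_0)\wedge d\log s_1\wedge\cdots$ and a coefficient-one piece $d\log(1+\bar{s}_0)\wedge d\log s_1\wedge\cdots$; the latter is itself exact, equal to $d[\log(1+\bar{s}_0)\cdot d\log s_1\wedge\cdots]$ with $\log(1+\bar{s}_0)\in I$, so only the generator piece matters under $\psi_{n+1}$. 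It maps to $\{\exp(\bar{s}_0),1+\bar{s}_0,s_1,\ldots,s_{n-1}\}$, and I must show this is trivial. Using the factorization $\exp(\bar{s}_0)=(1+\bar{s}_0)\cdot\exp(\bar{s}_0^2/2-\bar{s}_0^3/3+\cdots)$, the symbol splits as $\{1+\bar{s}_0,1+\bar{s}_0,s_1,\ldots\}\cdot\{u,1+\bar{s}_0,s_1,\ldots\}$ with $u\in(1+I^2)^*$; the first factor collapses via $\{a,a\}=\{a,-1\}$ (from the additive-inverse and multiplicative relations) together with the base-case vanishing of $\{1+\bar{s}_0,-1\}$, while the second factor is handled by induction on the nilpotency index $N$, with base $N=2$ forcing $u=1$ outright. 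The detailed bookkeeping, potentially leveraging the Maazen-Stienstra isomorphism $K_{2,R,I}\cong D_{2,R,I}$ (Theorem 3.6.5.5) and the Dennis-Stein-Beilinson-Loday symbolic calculus, is carried out in my supporting paper \cite{DribusMilnorK14}.
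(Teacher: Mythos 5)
Your proposal matches the paper's approach: the paper proves this theorem by exhibiting exactly the maps $\phi_{n+1}$ (the $\log$--$d\log$ map on symbols with first entry in $(1+I)^*$) and $\psi_{n+1}$ (the exponential on generators of $\Omega_{R,I}^n$) that you construct, and then cites section 4 of the supporting paper \cite{DribusMilnorK14} for the verification that these are well-defined mutually inverse homomorphisms. Since you defer the same decisive step (that $\psi_{n+1}$ annihilates $d\Omega_{R,I}^{n-1}$, where your sketch as written is not yet a complete argument) to that same reference, your proof is at the same level of completeness as, and follows the same route as, the one in the paper.
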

\begin{proof} See \cite{DribusMilnorK14}, section 4. 
\end{proof}

The isomorphism \hyperref[equmaintheorem]{\ref{equmaintheorem}} is given by the map $\displaystyle \phi_{n+1}:K_{n+1,R,I} ^{\tn{M}}\longrightarrow\frac{\Omega_{R,I} ^n}{d\Omega_{R,I} ^{n-1}}$ sending
\begin{equation}\label{equationphi}
\{r_0,r_1,...,r_n\}\mapsto\log(r_0)\frac{dr_1}{r_1}\wedge...\wedge\frac{dr_n}{r_n},
\end{equation}
where $r_0$ belongs to the subgroup $(1+I)^*$ of the multiplicative group $R^*$ of $R$, and $r_1,...,r_n$ belong to $R^*$.  The inverse isomorphism is the map
\[\psi_{n+1}:\frac{\Omega_{R,I} ^n}{d\Omega_{R,I} ^{n-1}}\longrightarrow K_{n+1,R,I} ^{\tn{M}}\]
\begin{equation}\label{equationpsi}
r_0dr_1\wedge...\wedge dr_m\wedge dr_{m+1}\wedge...\wedge dr_n\mapsto\{e^{r_0r_{m+1}...r_{n}},e^{r_1},...,e^{r_m},r_{m+1},...,r_{n}\},
\end{equation}
where the elements $r_0,r_1,...,r_m$ belong to the ideal $I$, and the elements $r_{m+1},...,r_{n}$ belong to the multiplicative group $R^*$ of $R$.   Part of the proof of the theorem involves verifying that the formulae \hyperref[equationphi]{\ref{equationphi}} and \hyperref[equationpsi]{\ref{equationpsi}}, defined in terms of special group elements, extend to homomorphisms. 

In particular, Bloch's isomorphism takes the Steinberg symbol $\{1+i,r\}$ to 
\[\log(1+i)\frac{dr}{r}=\Big(i-\frac{i^2}{2}+\frac{i^3}{3}-...\Big)\frac{dr}{r},\]
where the series in the parentheses terminates because $i$ is nilpotent.

\section{Algebraic Chern Character}\label{sectionChern}

The algebraic Chern character is a natural transformation of functors from algebraic $K$-theory to negative cyclic homology.    The relative version of the algebraic Chern character, which is closely related to Goodwillie's isomorphism \hyperref[equgoodwillie]{\ref{equgoodwillie}}, induces an isomorphism of complexes between the Cousin resolution of relative $K$-theory and the Cousin resolution of relative negative cyclic homology.  Loday \cite{LodayCyclicHomology98} gives a good treatment of the Chern character in the modern algebraic setting, and compares this to the more familiar classical version.   Classically, the Chern character computes an invariant of topological $K$-theory with values in de Rham cohomology.  In the modern algebraic setting, an appropriate variant of cyclic homology replaces de Rham cohomology.  This replacement is particularly essential in noncommutative geometry, where the classical definitions of differential forms, and hence de Rham cohomology, do not apply.  In the commutative case, both theories are available, but fortunately there is no need to choose between them: the Chern character to cyclic homology lifts the classical Chern character, so nothing is lost by changing the target to the more general theory of cyclic homology. 

%SSSSSSSSSSSSSSSSSSSSSSSSSSSSSSSSSSSSSSSSSSSSSSSSSSSSSSSSSSSSSSSSSSSSSSSSSSSSSSSSSS
%SSSSSSSSSSSSSSSSSSSSSSSSSSSSSSSSSSSSSSSSSSSSSSSSSSSSSSSSSSSSSSSSSSSSSSSSSSSSSSSSSS
%SSS                      SSS                       SSS                    SS                          SS           SSS                         SSS          SSSSS      SSSS
%SSS      SSSSSSSSS    SSSSSSSSSS     SSSSSSSSSSSS     SSSSSSS     SSSS    SSSSSS     SSS     S     SSSS     SSSSS
%SSS      SSSSSSSSS    SSSSSSSSSS     SSSSSSSSSSSS     SSSSSSS     SSSS    SSSSSS     SSS     SS     SSS     SSSSS
%SSS                      SSS                  SSSSS    SSSSSSSSSSSS     SSSSSSS     SSSS    SSSSSS     SSS     SSS     SS     SSSSS
%SSSSSSSSSS    SSS    SSSSSSSSSS    SSSSSSSSSSSS     SSSSSSS     SSSS    SSSSSS     SSS     SSSS     S     SSSSS
%SSSSSSSSSS    SSS    SSSSSSSSSS    SSSSSSSSSSSS     SSSSSSS     SSSS    SSSSSS     SSS     SSSSS          SSSSS
%SSS                       SSS                      SSS                       SSSSS     SSSSSS          SSS                         SSS     SSSSSS        SSSSS
%SSSSSSSSSSSSSSSSSSSSSSSSSSSSSSSSSSSSSSSSSSSSSSSSSSSSSSSSSSSSSSSSSSSSSSSSSSSSSSSSSS
%SSSSSSSSSSSSSSSSSSSSSSSSSSSSSSSSSSSSSSSSSSSSSSSSSSSSSSSSSSSSSSSSSSSSSSSSSSSSSSSSSS

\subsection{Preliminaries}\label{subsectionchernprelim}

{\bf Geometric Background.}  The version of the Chern character that appears in the coniveau machine represents the last of a sequence of generalizations and abstractions arising from the theory of characteristic classes in differential and algebraic geometry, beginning in the 1940's and 1950's.  The Chern character first appeared in the theory of complex vector bundles on manifolds.   Let $\ms{E}$ be a complex vector bundle on a manifold $X$.  Important invariants of $\ms{E}$ are its Chern classes $c_i(\ms{E})$, which are elements of the de Rham cohomology of $X$.\footnotemark\footnotetext{If one takes $X$ to be a smooth projective variety, the Chern classes $c_i(\ms{E})$ may be viewed as elements of the Chow ring $\tn{Ch}_X$, defined in terms of their intersection properties.  These descend to cohomology via a cycle class map.}  These may be expressed in terms of the curvature form $\Omega$ associated with a connection $\nabla$ on $X$.  In this context, the Chern character $\tn{ch}(\ms{E})$ may be defined as a certain universal polynomial in the Chern classes $c_i(\ms{E})$.\footnotemark\footnotetext{Explicitly $\tn{ch}(\ms{E})=r+c_1+\frac{1}{2}(c_1^2-2c_2)+\frac{1}{6}(c_1^3-3c_1c_2+3c_3)+...$, where $r$ is the rank of $\ms{E}$ and where I have abbreviated $c_i(\ms{E})$ to $c_i$.  See Hartshorne \cite{HartshorneAlgebraicGeometry77}, appendix $A$, sections 3 and 4 for the case in which $X$ is a smooth projective variety.}  It may be expressed by exponentiating the curvature form $\Omega$ in an appropriate sense.  

{\bf Algebraic Version for $K_0$.}  Moving to the algebraic setting, finitely generated projective modules over a ring $R$ assume the role of complex vector bundles.  In this context also, the Chern character may be defined via a connection.  In particular, if $M$ is a finitely generated projective module over a $k$-algebra $R$,  and $\nabla:M\otimes_R\Omega_{R/k}^n\rightarrow M\otimes_R\Omega_{R/k}^{n+1}$ is a connection on $R$ with curvature form $\Omega$, then the Chern character $\tn{ch}(M)$ is the (algebraic) de Rham cohomology class of $\tn{tr}(e^{\Omega})$.  The connection $\nabla$ is omitted in the expression $\tn{ch}(M)$ due to a theorem stating that the cohomology class of $\tn{tr}(e^{\Omega})$ is independent of the choice of connection.\footnotemark\footnotetext{See Loday \cite{LodayCyclicHomology98} Theorem-Definition 8.1.7, page 260.} Thus defined, the Chern character descends to a map on the zeroth-degree part of algebraic $K$-theory:
\[\tn{ch}_0:K_{0,R}\rightarrow H_{\tn{dR}}(R;k)\]
\vspace*{-.2cm}
\begin{equation}\label{equchernzeroth}[M]\mapsto \tn{ch}(M),\end{equation}
where $[M]$ is the class of $M$ in the Grothendieck group $K_{0,R}$.\footnotemark\footnotetext{See Loday \cite{LodayCyclicHomology98} Theorem 8.2.4, page 262.}

%SSSSSSSSSSSSSSSSSSSSSSSSSSSSSSSSSSSSSSSSSSSSSSSSSSSSSSSSSSSSSSSSSSSSSSSSSSSSSSSSSS
%SSSSSSSSSSSSSSSSSSSSSSSSSSSSSSSSSSSSSSSSSSSSSSSSSSSSSSSSSSSSSSSSSSSSSSSSSSSSSSSSSS
%SSS                      SSS                       SSS                    SS                          SS           SSS                         SSS          SSSSS      SSSS
%SSS      SSSSSSSSS    SSSSSSSSSS     SSSSSSSSSSSS     SSSSSSS     SSSS    SSSSSS     SSS     S     SSSS     SSSSS
%SSS      SSSSSSSSS    SSSSSSSSSS     SSSSSSSSSSSS     SSSSSSS     SSSS    SSSSSS     SSS     SS     SSS     SSSSS
%SSS                      SSS                  SSSSS    SSSSSSSSSSSS     SSSSSSS     SSSS    SSSSSS     SSS     SSS     SS     SSSSS
%SSSSSSSSSS    SSS    SSSSSSSSSS    SSSSSSSSSSSS     SSSSSSS     SSSS    SSSSSS     SSS     SSSS     S     SSSSS
%SSSSSSSSSS    SSS    SSSSSSSSSS    SSSSSSSSSSSS     SSSSSSS     SSSS    SSSSSS     SSS     SSSSS          SSSSS
%SSS                       SSS                      SSS                       SSSSS     SSSSSS          SSS                         SSS     SSSSSS        SSSSS
%SSSSSSSSSSSSSSSSSSSSSSSSSSSSSSSSSSSSSSSSSSSSSSSSSSSSSSSSSSSSSSSSSSSSSSSSSSSSSSSSSS
%SSSSSSSSSSSSSSSSSSSSSSSSSSSSSSSSSSSSSSSSSSSSSSSSSSSSSSSSSSSSSSSSSSSSSSSSSSSSSSSSSS

\subsection{Relative Generalized Algebraic Chern Character}\label{subsectionalgchern}

%Algebraic theories often come later than their geometric or topological analogues; ($\AA^1$-homotopy; think of other examples)

{\bf Outline of the General Construction.}  Loday \cite{LodayCyclicHomology98} describes how the Chern character introduced in section \hyperref[subsectionchernprelim]{\ref{subsectionchernprelim}} may be progressively generalized, first to a map from $K_{0,R}$ to cyclic homology (chaper 8, section 8.3), then to a map from $K_{n,R}$ to negative cyclic homology (chapter 8, section 8.4), and finally to a relative version
\begin{equation}\label{equrelChern}\tn{ch}_n^{-}:K_{n,R,I}\rightarrow \tn{HN}_{n,R,I},\end{equation}
(chapter 11, section 11.4).  The following description, paraphrasing Loday, is intended to apprise the reader of where the details may be found and how they are used, not to provide a self-contained exposition.  A very good recent discussion of this topic appears in Corti\~nas and Weibel's 2009 paper {\it Relative Chern Characters for Nilpotent Ideals} \cite{WeibelRelativeChernNilp}.

Loday uses the {\it relative Volodin construction} to describe $\tn{ch}_n^{-}$.   There is a map of complexes
\begin{equation}\label{equloday11.4.6}C_\bullet\big(X_{R,I}\big)\rightarrow\tn{Ker}\big[\tn{ToT }\tn{CN}_R\rightarrow \tn{ToT }\tn{CN}_{R,I}\big],\end{equation}
(\cite{LodayCyclicHomology98} lemma 11.4.6, page 372), where $X_{R,I}$ is a relative Volodin-type space (11.3.2), $C_\bullet$ is  the Eilenberg-MacLane complex (11.4.5; also section \hyperref[sectioncartaneilenberg]{\ref{sectioncartaneilenberg}} of the appendix), $\tn{CN}_R$ is the negative cyclic bicomplex of $R$ (5.1), $\tn{CN}_{R,I}$ is the relative negative cyclic bicomplex defined as the kernel of the map $\tn{CN}_R\rightarrow \tn{CN}_{R/I}$ (2.1.15), and $\tn{ToT}$ is the ``total complex" whose degree-$n$ term is $\prod_{p+q=n}\tn{CN}_{p,q}$ (5.1.2).   Taking homology gives a map
\begin{equation}\label{equloday11.4.7}H_\bullet\big(X_{R,I}\big)\rightarrow \tn{HN}_{R,I},\end{equation}
(\cite{LodayCyclicHomology98} 11.4.7, page 373).  Composing on the left with the Hurewicz map 
\begin{equation}\label{equlodayhurewicz}K_{n,R,I}=\pi_n\big(X_{R,I}^+\big)\rightarrow H_n\big(X_{R,I}\big),\end{equation}
yields the desired relative Chern character. 

{\bf Ladder Diagram.}  The absolute and relative Chern characters fit into the following commutative ``ladder diagram" with exact rows.

%&&&&&&&&&&&&&&&&&&&&&&&&&&&&&&&&&&&&&&&&&&&&&&&&&&&&&&&&&&&&&&&&&&&&&&&&&&&&&&&&&&
%&&&&&&&&&&&&&&&&&&&&&&&&&&&&&&&&&&&&&&&&&&&&&&&&&&&&&&&&&&&&&&&&&&&&&&&&&&&&&&&&&&
%&&&&                 &&&&    &&&               &&&                    &&&    &&&&    &&&              &&&&                   &&&&&&&&&&&&&&&&&&
%&&&&   &&&&   &&&&    &&&   &&&&&&&&&&&   &&&&&&    &&&&    &&&    &&&     &&&    &&&&&&&&&&&&&&&&&&&&&&&&
%&&&&   &&&&   &&&&    &&&   &&&&&&&&&&&   &&&&&&    &&&&    &&&    &&&     &&&    &&&&&&&&&&&&&&&&&&&&&&&&
%&&&&                 &&&&    &&&   &&&&&&&&&&&   &&&&&&    &&&&    &&&            &&&&&             &&&&&&&&&&&&&&&&&&&&
%&&&&    &&&&&&&&&    &&&   &&&&&&&&&&&   &&&&&&    &&&&    &&&    &&&   &&&&    &&&&&&&&&&&&&&&&&&&&&&&
%&&&&    &&&&&&&&&    &&&   &&&&&&&&&&&   &&&&&&    &&&&    &&&    &&&&   &&&    &&&&&&&&&&&&&&&&&&&&&&&
%&&&&    &&&&&&&&&    &&&                &&&&&&   &&&&&&                   &&&    &&&&&   &&                  &&&&&&&&&&&&&&&&&&
%&&&&&&&&&&&&&&&&&&&&&&&&&&&&&&&&&&&&&&&&&&&&&&&&&&&&&&&&&&&&&&&&&&&&&&&&&&&&&&&&&&
%&&&&&&&&&&&&&&&&&&&&&&&&&&&&&&&&&&&&&&&&&&&&&&&&&&&&&&&&&&&&&&&&&&&&&&&&&&&&&&&&&&
\begin{pgfpicture}{0cm}{0cm}{17cm}{3cm}
\begin{pgftranslate}{\pgfpoint{1cm}{0cm}}
\pgfputat{\pgfxy(0,2.5)}{\pgfbox[center,center]{$...$}}
\pgfputat{\pgfxy(2,2.5)}{\pgfbox[center,center]{$K_{n+1,R}$}}
\pgfputat{\pgfxy(4.5,2.5)}{\pgfbox[center,center]{$K_{n+1,R/I}$}}
\pgfputat{\pgfxy(7,2.5)}{\pgfbox[center,center]{$K_{n,R,I}$}}
\pgfputat{\pgfxy(9.5,2.5)}{\pgfbox[center,center]{$K_{n,R}$}}
\pgfputat{\pgfxy(12,2.5)}{\pgfbox[center,center]{$K_{n,R/I}$}}
\pgfputat{\pgfxy(14,2.5)}{\pgfbox[center,center]{$...$}}
\pgfputat{\pgfxy(0,.5)}{\pgfbox[center,center]{$...$}}
\pgfputat{\pgfxy(2,.5)}{\pgfbox[center,center]{$\tn{HN}_{n+1,R}$}}
\pgfputat{\pgfxy(4.5,.5)}{\pgfbox[center,center]{$\tn{HN}_{n+1,R/I}$}}
\pgfputat{\pgfxy(7,.5)}{\pgfbox[center,center]{$\tn{HN}_{n,R,I}$}}
\pgfputat{\pgfxy(9.5,.5)}{\pgfbox[center,center]{$\tn{HN}_{n,R}$}}
\pgfputat{\pgfxy(12,.5)}{\pgfbox[center,center]{$\tn{HN}_{n,R/I}$}}
\pgfputat{\pgfxy(14,.5)}{\pgfbox[center,center]{$...$}}
\pgfputat{\pgfxy(14,2.5)}{\pgfbox[center,center]{$...$}}
\pgfputat{\pgfxy(2,1.6)}{\pgfbox[center,center]{\small{$\tn{ch}_{n+1,R}$}}}
\pgfputat{\pgfxy(4.5,1.6)}{\pgfbox[center,center]{\small{$\tn{ch}_{n+1,R/I}$}}}
\pgfputat{\pgfxy(7,1.6)}{\pgfbox[center,center]{\small{$\tn{ch}_{n,R,I}$}}}
\pgfputat{\pgfxy(9.5,1.6)}{\pgfbox[center,center]{\small{$\tn{ch}_{n,R}$}}}
\pgfputat{\pgfxy(12,1.6)}{\pgfbox[center,center]{\small{$\tn{ch}_{n,R/I}$}}}
\pgfxyline(2,2.2)(2,1.7)
\pgfxyline(4.5,2.2)(4.5,1.7)
\pgfxyline(7,2.2)(7,1.7)
\pgfxyline(9.5,2.2)(9.5,1.7)
\pgfxyline(12,2.2)(12,1.7)
\pgfsetendarrow{\pgfarrowlargepointed{3pt}}
\pgfxyline(.3,2.5)(1.3,2.5)
\pgfxyline(2.8,2.5)(3.6,2.5)
\pgfxyline(5.4,2.5)(6.3,2.5)
\pgfxyline(7.7,2.5)(8.9,2.5)
\pgfxyline(10.1,2.5)(11.3,2.5)
\pgfxyline(12.7,2.5)(13.6,2.5)
\pgfxyline(.3,.5)(1.2,.5)
\pgfxyline(2.9,.5)(3.5,.5)
\pgfxyline(5.5,.5)(6.2,.5)
\pgfxyline(7.8,.5)(8.8,.5)
\pgfxyline(10.2,.5)(11.2,.5)
\pgfxyline(12.8,.5)(13.5,.5)
\pgfxyline(2,1.3)(2,.8)
\pgfxyline(4.5,1.3)(4.5,.8)
\pgfxyline(7,1.3)(7,.8)
\pgfxyline(9.5,1.3)(9.5,.8)
\pgfxyline(12,1.3)(12,.8)
\end{pgftranslate}
\end{pgfpicture}
%&&&&&&&&&&&&&&&&&&&&&&&&&&&&&&&&&&&&&&&&&&&&&&&&&&&&&&&&&&&&&&&&&&&&&&&&&&&&&&&&&&
%&&&&&&&&&&&&&&&&&&&&&&&&&&&&&&&&&&&&&&&&&&&&&&&&&&&&&&&&&&&&&&&&&&&&&&&&&&&&&&&&&&
%&&&                    &&&       &&&&&    &&&                &&&&&&&&&&&&&&&&&&&&&&&&&&&&&&&&&&&&&&&&&&&&&&&&&&&
%&&&    &&&&&&&&&    &    &&&&   &&&    &&&&    &&&&&&&&&&&&&&&&&&&&&&&&&&&&&&&&&&&&&&&&&&&&&&&&&& 
%&&&    &&&&&&&&&    &&    &&&   &&&    &&&&&    &&&&&&&&&&&&&&&&&&&&&&&&&&&&&&&&&&&&&&&&&&&&&&&&& 
%&&&                 &&&&    &&&    &&   &&&    &&&&&    &&&&&&&&&&&&&&&&&&&&&&&&&&&&&&&&&&&&&&&&&&&&&&&&& 
%&&&    &&&&&&&&&    &&&&    &   &&&    &&&&&    &&&&&&&&&&&&&&&&&&&&&&&&&&&&&&&&&&&&&&&&&&&&&&&&& 
%&&&    &&&&&&&&&    &&&&&       &&&    &&&&     &&&&&&&&&&&&&&&&&&&&&&&&&&&&&&&&&&&&&&&&&&&&&&&&& 
%&&&                    &&&    &&&&&&    &&&                  &&&&&&&&&&&&&&&&&&&&&&&&&&&&&&&&&&&&&&&&&&&&&&&&&&
%&&&&&&&&&&&&&&&&&&&&&&&&&&&&&&&&&&&&&&&&&&&&&&&&&&&&&&&&&&&&&&&&&&&&&&&&&&&&&&&&&&
%&&&&&&&&&&&&&&&&&&&&&&&&&&&&&&&&&&&&&&&&&&&&&&&&&&&&&&&&&&&&&&&&&&&&&&&&&&&&&&&&&&

Loday gives a proof of this result in his book (\cite{LodayCyclicHomology98} Proposition 11.4.8, page 373).\footnotemark\footnotetext{Loday does not say that the rows are exact in the statement of the proposition, but this is what he means.}

\begin{example}\tn{Recall from example \hyperref[exnegcycsmoothchar0]{\ref{exnegcycsmoothchar0}} above that if $S$ is a smooth algebra over a commutative ring $k$ containing $\QQ$, then 
\[\tn{HN}_{2,S}\cong Z_S^2\times \prod_{i>0}H_{\tn{dR}}^{2i+2}(R).\]
The quotient of $Z_R^2$ by exact forms is the second de Rham cohomology group $H_{\tn{dR}}^{2}(R;k)$, so $\tn{HN}_{2,R}$ projects into $H_{\tn{dR}}^{2}(R;k)$.  The composite map
\[K_{2,R}^{\tn{M}}\rightarrow K_{2,R}\overset{\tn{ch}_2^-}{\longrightarrow}\tn{HN}_{2,R}\rightarrow H_{\tn{dR}}^{2}(R)\]
is the ``canonical  $d\tn{log}$ map" sending the Steinberg symbol $\{r,r'\}$ to $(dr/r)\wedge (dr'/r)$ in $H_{\tn{dR}}^{2}(R)$.  See Loday \cite{LodayCyclicHomology98}, page 275, for details.} 
\end{example}
\hspace{16.3cm} $\oblong$

\newpage

\begin{example}\tn{Suppose $S$ is a local ring at a point on a smooth complex algebraic variety, and let $R=S_\ee$ be the ring of dual numbers over $S$.  The above ladder diagram splits into short exact ladder diagrams:}
\end{example}

%&&&&&&&&&&&&&&&&&&&&&&&&&&&&&&&&&&&&&&&&&&&&&&&&&&&&&&&&&&&&&&&&&&&&&&&&&&&&&&&&&&
%&&&&&&&&&&&&&&&&&&&&&&&&&&&&&&&&&&&&&&&&&&&&&&&&&&&&&&&&&&&&&&&&&&&&&&&&&&&&&&&&&&
%&&&&                 &&&&    &&&               &&&                    &&&    &&&&    &&&              &&&&                   &&&&&&&&&&&&&&&&&&
%&&&&   &&&&   &&&&    &&&   &&&&&&&&&&&   &&&&&&    &&&&    &&&    &&&     &&&    &&&&&&&&&&&&&&&&&&&&&&&&
%&&&&   &&&&   &&&&    &&&   &&&&&&&&&&&   &&&&&&    &&&&    &&&    &&&     &&&    &&&&&&&&&&&&&&&&&&&&&&&&
%&&&&                 &&&&    &&&   &&&&&&&&&&&   &&&&&&    &&&&    &&&            &&&&&             &&&&&&&&&&&&&&&&&&&&
%&&&&    &&&&&&&&&    &&&   &&&&&&&&&&&   &&&&&&    &&&&    &&&    &&&   &&&&    &&&&&&&&&&&&&&&&&&&&&&&
%&&&&    &&&&&&&&&    &&&   &&&&&&&&&&&   &&&&&&    &&&&    &&&    &&&&   &&&    &&&&&&&&&&&&&&&&&&&&&&&
%&&&&    &&&&&&&&&    &&&                &&&&&&   &&&&&&                   &&&    &&&&&   &&                  &&&&&&&&&&&&&&&&&&
%&&&&&&&&&&&&&&&&&&&&&&&&&&&&&&&&&&&&&&&&&&&&&&&&&&&&&&&&&&&&&&&&&&&&&&&&&&&&&&&&&&
%&&&&&&&&&&&&&&&&&&&&&&&&&&&&&&&&&&&&&&&&&&&&&&&&&&&&&&&&&&&&&&&&&&&&&&&&&&&&&&&&&&
\begin{pgfpicture}{0cm}{0cm}{17cm}{3cm}
\begin{pgftranslate}{\pgfpoint{.5cm}{0cm}}
\pgfputat{\pgfxy(2,2.5)}{\pgfbox[center,center]{$0$}}
\pgfputat{\pgfxy(5,2.5)}{\pgfbox[center,center]{$K_{n,S_\ee,(\ee)}$}}
\pgfputat{\pgfxy(8,2.5)}{\pgfbox[center,center]{$K_{n,S_\ee}$}}
\pgfputat{\pgfxy(11,2.5)}{\pgfbox[center,center]{$K_{n,S}$}}
\pgfputat{\pgfxy(14,2.5)}{\pgfbox[center,center]{$0$}}
\pgfputat{\pgfxy(2,.5)}{\pgfbox[center,center]{$0$}}
\pgfputat{\pgfxy(5,.5)}{\pgfbox[center,center]{$\tn{HN}_{n,S_\ee,(\ee)}$}}
\pgfputat{\pgfxy(8,.5)}{\pgfbox[center,center]{$\tn{HN}_{n,S_\ee}$}}
\pgfputat{\pgfxy(11,.5)}{\pgfbox[center,center]{$\tn{HN}_{n,S}$}}
\pgfputat{\pgfxy(14,.5)}{\pgfbox[center,center]{$0$}}
\pgfputat{\pgfxy(5,1.6)}{\pgfbox[center,center]{\small{$\tn{ch}_{n,S_\ee,(\ee)}$}}}
\pgfputat{\pgfxy(8,1.6)}{\pgfbox[center,center]{\small{$\tn{ch}_{n,S_\ee}$}}}
\pgfputat{\pgfxy(11,1.6)}{\pgfbox[center,center]{\small{$\tn{ch}_{n,S}$}}}
\pgfxyline(5,2.2)(5,1.7)
\pgfxyline(8,2.2)(8,1.7)
\pgfxyline(11,2.2)(11,1.7)
\pgfsetendarrow{\pgfarrowlargepointed{3pt}}
\pgfxyline(5,1.3)(5,.8)
\pgfxyline(8,1.3)(8,.8)
\pgfxyline(11,1.3)(11,.8)
\pgfxyline(2.4,2.5)(3.9,2.5)
\pgfxyline(5.9,2.5)(7.1,2.5)
\pgfxyline(8.9,2.5)(10.1,2.5)
\pgfxyline(11.9,2.5)(13.1,2.5)
\pgfxyline(2.4,.5)(3.9,.5)
\pgfxyline(5.9,.5)(7.1,.5)
\pgfxyline(8.9,.5)(10.1,.5)
\pgfxyline(11.9,.5)(13.1,.5)
\end{pgftranslate}
\end{pgfpicture}
%&&&&&&&&&&&&&&&&&&&&&&&&&&&&&&&&&&&&&&&&&&&&&&&&&&&&&&&&&&&&&&&&&&&&&&&&&&&&&&&&&&
%&&&&&&&&&&&&&&&&&&&&&&&&&&&&&&&&&&&&&&&&&&&&&&&&&&&&&&&&&&&&&&&&&&&&&&&&&&&&&&&&&&
%&&&                    &&&       &&&&&    &&&                &&&&&&&&&&&&&&&&&&&&&&&&&&&&&&&&&&&&&&&&&&&&&&&&&&&
%&&&    &&&&&&&&&    &    &&&&   &&&    &&&&    &&&&&&&&&&&&&&&&&&&&&&&&&&&&&&&&&&&&&&&&&&&&&&&&&& 
%&&&    &&&&&&&&&    &&    &&&   &&&    &&&&&    &&&&&&&&&&&&&&&&&&&&&&&&&&&&&&&&&&&&&&&&&&&&&&&&& 
%&&&                 &&&&    &&&    &&   &&&    &&&&&    &&&&&&&&&&&&&&&&&&&&&&&&&&&&&&&&&&&&&&&&&&&&&&&&& 
%&&&    &&&&&&&&&    &&&&    &   &&&    &&&&&    &&&&&&&&&&&&&&&&&&&&&&&&&&&&&&&&&&&&&&&&&&&&&&&&& 
%&&&    &&&&&&&&&    &&&&&       &&&    &&&&     &&&&&&&&&&&&&&&&&&&&&&&&&&&&&&&&&&&&&&&&&&&&&&&&& 
%&&&                    &&&    &&&&&&    &&&                  &&&&&&&&&&&&&&&&&&&&&&&&&&&&&&&&&&&&&&&&&&&&&&&&&&
%&&&&&&&&&&&&&&&&&&&&&&&&&&&&&&&&&&&&&&&&&&&&&&&&&&&&&&&&&&&&&&&&&&&&&&&&&&&&&&&&&&
%&&&&&&&&&&&&&&&&&&&&&&&&&&&&&&&&&&&&&&&&&&&&&&&&&&&&&&&&&&&&&&&&&&&&&&&&&&&&&&&&&&

For $n=0$, the relative groups vanish, and the diagram is not very interesting.  For $n=1$, the diagram reduces to 

%&&&&&&&&&&&&&&&&&&&&&&&&&&&&&&&&&&&&&&&&&&&&&&&&&&&&&&&&&&&&&&&&&&&&&&&&&&&&&&&&&&
%&&&&&&&&&&&&&&&&&&&&&&&&&&&&&&&&&&&&&&&&&&&&&&&&&&&&&&&&&&&&&&&&&&&&&&&&&&&&&&&&&&
%&&&&                 &&&&    &&&               &&&                    &&&    &&&&    &&&              &&&&                   &&&&&&&&&&&&&&&&&&
%&&&&   &&&&   &&&&    &&&   &&&&&&&&&&&   &&&&&&    &&&&    &&&    &&&     &&&    &&&&&&&&&&&&&&&&&&&&&&&&
%&&&&   &&&&   &&&&    &&&   &&&&&&&&&&&   &&&&&&    &&&&    &&&    &&&     &&&    &&&&&&&&&&&&&&&&&&&&&&&&
%&&&&                 &&&&    &&&   &&&&&&&&&&&   &&&&&&    &&&&    &&&            &&&&&             &&&&&&&&&&&&&&&&&&&&
%&&&&    &&&&&&&&&    &&&   &&&&&&&&&&&   &&&&&&    &&&&    &&&    &&&   &&&&    &&&&&&&&&&&&&&&&&&&&&&&
%&&&&    &&&&&&&&&    &&&   &&&&&&&&&&&   &&&&&&    &&&&    &&&    &&&&   &&&    &&&&&&&&&&&&&&&&&&&&&&&
%&&&&    &&&&&&&&&    &&&                &&&&&&   &&&&&&                   &&&    &&&&&   &&                  &&&&&&&&&&&&&&&&&&
%&&&&&&&&&&&&&&&&&&&&&&&&&&&&&&&&&&&&&&&&&&&&&&&&&&&&&&&&&&&&&&&&&&&&&&&&&&&&&&&&&&
%&&&&&&&&&&&&&&&&&&&&&&&&&&&&&&&&&&&&&&&&&&&&&&&&&&&&&&&&&&&&&&&&&&&&&&&&&&&&&&&&&&
\begin{pgfpicture}{0cm}{0cm}{17cm}{3cm}
\begin{pgftranslate}{\pgfpoint{.5cm}{0cm}}
\pgfputat{\pgfxy(6.5,2.8)}{\pgfbox[center,center]{$\tn{incl}$}}
\pgfputat{\pgfxy(9.5,2.8)}{\pgfbox[center,center]{$\ee\mapsto0$}}
\pgfputat{\pgfxy(6.3,.8)}{\pgfbox[center,center]{$d$}}
\pgfputat{\pgfxy(9.5,.8)}{\pgfbox[center,center]{$\ee\mapsto0$}}

\pgfputat{\pgfxy(5.6,1.5)}{\pgfbox[center,center]{$\tn{log}$}}
\pgfputat{\pgfxy(8.6,1.5)}{\pgfbox[center,center]{$d\tn{log}$}}
\pgfputat{\pgfxy(11.6,1.5)}{\pgfbox[center,center]{$d\tn{log}$}}

\pgfputat{\pgfxy(2,2.5)}{\pgfbox[center,center]{$0$}}
\pgfputat{\pgfxy(5,2.5)}{\pgfbox[center,center]{$1+\ee S$}}
\pgfputat{\pgfxy(8,2.5)}{\pgfbox[center,center]{$S_\ee^*$}}
\pgfputat{\pgfxy(11,2.5)}{\pgfbox[center,center]{$S^*$}}
\pgfputat{\pgfxy(14,2.5)}{\pgfbox[center,center]{$0$}}
\pgfputat{\pgfxy(2,.5)}{\pgfbox[center,center]{$0$}}
\pgfputat{\pgfxy(5,.5)}{\pgfbox[center,center]{$\ee S$}}
\pgfputat{\pgfxy(8,.5)}{\pgfbox[center,center]{$Z_{S_\ee}^1\times H_{\tn{dR}}$}}
\pgfputat{\pgfxy(11,.5)}{\pgfbox[center,center]{$Z_S^1\times H_{\tn{dR}}$}}
\pgfputat{\pgfxy(14,.5)}{\pgfbox[center,center]{$0$}}

\pgfsetendarrow{\pgfarrowlargepointed{3pt}}
\pgfxyline(5,2.2)(5,.8)
\pgfxyline(8,2.2)(8,.8)
\pgfxyline(11,2.2)(11,.8)
\pgfxyline(2.4,2.5)(3.9,2.5)
\pgfxyline(5.9,2.5)(7.1,2.5)
\pgfxyline(8.9,2.5)(10.1,2.5)
\pgfxyline(11.9,2.5)(13.1,2.5)
\pgfxyline(2.4,.5)(3.9,.5)
\pgfxyline(5.6,.5)(6.9,.5)
\pgfxyline(9.1,.5)(10,.5)
\pgfxyline(11.9,.5)(13.1,.5)
\end{pgftranslate}
\end{pgfpicture}
%&&&&&&&&&&&&&&&&&&&&&&&&&&&&&&&&&&&&&&&&&&&&&&&&&&&&&&&&&&&&&&&&&&&&&&&&&&&&&&&&&&
%&&&&&&&&&&&&&&&&&&&&&&&&&&&&&&&&&&&&&&&&&&&&&&&&&&&&&&&&&&&&&&&&&&&&&&&&&&&&&&&&&&
%&&&                    &&&       &&&&&    &&&                &&&&&&&&&&&&&&&&&&&&&&&&&&&&&&&&&&&&&&&&&&&&&&&&&&&
%&&&    &&&&&&&&&    &    &&&&   &&&    &&&&    &&&&&&&&&&&&&&&&&&&&&&&&&&&&&&&&&&&&&&&&&&&&&&&&&& 
%&&&    &&&&&&&&&    &&    &&&   &&&    &&&&&    &&&&&&&&&&&&&&&&&&&&&&&&&&&&&&&&&&&&&&&&&&&&&&&&& 
%&&&                 &&&&    &&&    &&   &&&    &&&&&    &&&&&&&&&&&&&&&&&&&&&&&&&&&&&&&&&&&&&&&&&&&&&&&&& 
%&&&    &&&&&&&&&    &&&&    &   &&&    &&&&&    &&&&&&&&&&&&&&&&&&&&&&&&&&&&&&&&&&&&&&&&&&&&&&&&& 
%&&&    &&&&&&&&&    &&&&&       &&&    &&&&     &&&&&&&&&&&&&&&&&&&&&&&&&&&&&&&&&&&&&&&&&&&&&&&&& 
%&&&                    &&&    &&&&&&    &&&                  &&&&&&&&&&&&&&&&&&&&&&&&&&&&&&&&&&&&&&&&&&&&&&&&&&
%&&&&&&&&&&&&&&&&&&&&&&&&&&&&&&&&&&&&&&&&&&&&&&&&&&&&&&&&&&&&&&&&&&&&&&&&&&&&&&&&&&
%&&&&&&&&&&&&&&&&&&&&&&&&&&&&&&&&&&&&&&&&&&&&&&&&&&&&&&&&&&&&&&&&&&&&&&&&&&&&&&&&&&

For $n=2$, the diagram is 

%&&&&&&&&&&&&&&&&&&&&&&&&&&&&&&&&&&&&&&&&&&&&&&&&&&&&&&&&&&&&&&&&&&&&&&&&&&&&&&&&&&
%&&&&&&&&&&&&&&&&&&&&&&&&&&&&&&&&&&&&&&&&&&&&&&&&&&&&&&&&&&&&&&&&&&&&&&&&&&&&&&&&&&
%&&&&                 &&&&    &&&               &&&                    &&&    &&&&    &&&              &&&&                   &&&&&&&&&&&&&&&&&&
%&&&&   &&&&   &&&&    &&&   &&&&&&&&&&&   &&&&&&    &&&&    &&&    &&&     &&&    &&&&&&&&&&&&&&&&&&&&&&&&
%&&&&   &&&&   &&&&    &&&   &&&&&&&&&&&   &&&&&&    &&&&    &&&    &&&     &&&    &&&&&&&&&&&&&&&&&&&&&&&&
%&&&&                 &&&&    &&&   &&&&&&&&&&&   &&&&&&    &&&&    &&&            &&&&&             &&&&&&&&&&&&&&&&&&&&
%&&&&    &&&&&&&&&    &&&   &&&&&&&&&&&   &&&&&&    &&&&    &&&    &&&   &&&&    &&&&&&&&&&&&&&&&&&&&&&&
%&&&&    &&&&&&&&&    &&&   &&&&&&&&&&&   &&&&&&    &&&&    &&&    &&&&   &&&    &&&&&&&&&&&&&&&&&&&&&&&
%&&&&    &&&&&&&&&    &&&                &&&&&&   &&&&&&                   &&&    &&&&&   &&                  &&&&&&&&&&&&&&&&&&
%&&&&&&&&&&&&&&&&&&&&&&&&&&&&&&&&&&&&&&&&&&&&&&&&&&&&&&&&&&&&&&&&&&&&&&&&&&&&&&&&&&
%&&&&&&&&&&&&&&&&&&&&&&&&&&&&&&&&&&&&&&&&&&&&&&&&&&&&&&&&&&&&&&&&&&&&&&&&&&&&&&&&&&
\begin{pgfpicture}{0cm}{0cm}{17cm}{3cm}
\begin{pgftranslate}{\pgfpoint{.5cm}{0cm}}
\pgfputat{\pgfxy(6.5,2.8)}{\pgfbox[center,center]{$\tn{incl}$}}
\pgfputat{\pgfxy(9.5,2.8)}{\pgfbox[center,center]{$\ee\mapsto0$}}
\pgfputat{\pgfxy(6.3,.8)}{\pgfbox[center,center]{$d$}}
\pgfputat{\pgfxy(9.5,.8)}{\pgfbox[center,center]{$\ee\mapsto0$}}

\pgfputat{\pgfxy(5.9,1.5)}{\pgfbox[center,center]{$\tn{log}\times d\tn{log}$}}
\pgfputat{\pgfxy(8.6,1.5)}{\pgfbox[center,center]{$d\tn{log}$}}
\pgfputat{\pgfxy(11.6,1.5)}{\pgfbox[center,center]{$d\tn{log}$}}

\pgfputat{\pgfxy(2,2.5)}{\pgfbox[center,center]{$0$}}
\pgfputat{\pgfxy(5,2.5)}{\pgfbox[center,center]{$K_{2,S_\ee,(\ee)}^{\tn{M}}$}}
\pgfputat{\pgfxy(8,2.5)}{\pgfbox[center,center]{$K_{2,S_\ee}^{\tn{M}}$}}
\pgfputat{\pgfxy(11,2.5)}{\pgfbox[center,center]{$K_{2,S}^{\tn{M}}$}}
\pgfputat{\pgfxy(14,2.5)}{\pgfbox[center,center]{$0$}}
\pgfputat{\pgfxy(2,.5)}{\pgfbox[center,center]{$0$}}
\pgfputat{\pgfxy(5,.5)}{\pgfbox[center,center]{$\ee\Omega^1_{S}$}}
\pgfputat{\pgfxy(8,.5)}{\pgfbox[center,center]{$Z_{S_\ee}^2\times H_{\tn{dR}}$}}
\pgfputat{\pgfxy(11,.5)}{\pgfbox[center,center]{$Z_S^2\times H_{\tn{dR}}$}}
\pgfputat{\pgfxy(14,.5)}{\pgfbox[center,center]{$0$}}

\pgfsetendarrow{\pgfarrowlargepointed{3pt}}
\pgfxyline(5,2.2)(5,.8)
\pgfxyline(8,2.2)(8,.8)
\pgfxyline(11,2.2)(11,.8)
\pgfxyline(2.4,2.5)(3.9,2.5)
\pgfxyline(5.9,2.5)(7.1,2.5)
\pgfxyline(8.9,2.5)(10.1,2.5)
\pgfxyline(11.9,2.5)(13.1,2.5)
\pgfxyline(2.4,.5)(3.9,.5)
\pgfxyline(5.6,.5)(6.9,.5)
\pgfxyline(9.1,.5)(10,.5)
\pgfxyline(11.9,.5)(13.1,.5)
\end{pgftranslate}
\end{pgfpicture}
%&&&&&&&&&&&&&&&&&&&&&&&&&&&&&&&&&&&&&&&&&&&&&&&&&&&&&&&&&&&&&&&&&&&&&&&&&&&&&&&&&&
%&&&&&&&&&&&&&&&&&&&&&&&&&&&&&&&&&&&&&&&&&&&&&&&&&&&&&&&&&&&&&&&&&&&&&&&&&&&&&&&&&&
%&&&                    &&&       &&&&&    &&&                &&&&&&&&&&&&&&&&&&&&&&&&&&&&&&&&&&&&&&&&&&&&&&&&&&&
%&&&    &&&&&&&&&    &    &&&&   &&&    &&&&    &&&&&&&&&&&&&&&&&&&&&&&&&&&&&&&&&&&&&&&&&&&&&&&&&& 
%&&&    &&&&&&&&&    &&    &&&   &&&    &&&&&    &&&&&&&&&&&&&&&&&&&&&&&&&&&&&&&&&&&&&&&&&&&&&&&&& 
%&&&                 &&&&    &&&    &&   &&&    &&&&&    &&&&&&&&&&&&&&&&&&&&&&&&&&&&&&&&&&&&&&&&&&&&&&&&& 
%&&&    &&&&&&&&&    &&&&    &   &&&    &&&&&    &&&&&&&&&&&&&&&&&&&&&&&&&&&&&&&&&&&&&&&&&&&&&&&&& 
%&&&    &&&&&&&&&    &&&&&       &&&    &&&&     &&&&&&&&&&&&&&&&&&&&&&&&&&&&&&&&&&&&&&&&&&&&&&&&& 
%&&&                    &&&    &&&&&&    &&&                  &&&&&&&&&&&&&&&&&&&&&&&&&&&&&&&&&&&&&&&&&&&&&&&&&&
%&&&&&&&&&&&&&&&&&&&&&&&&&&&&&&&&&&&&&&&&&&&&&&&&&&&&&&&&&&&&&&&&&&&&&&&&&&&&&&&&&&
%&&&&&&&&&&&&&&&&&&&&&&&&&&&&&&&&&&&&&&&&&&&&&&&&&&&&&&&&&&&&&&&&&&&&&&&&&&&&&&&&&&

\hspace{16.3cm} $\oblong$

%SSSSSSSSSSSSSSSSSSSSSSSSSSSSSSSSSSSSSSSSSSSSSSSSSSSSSSSSSSSSSSSSSSSSSSSSSSSSSSSSSS
%SSSSSSSSSSSSSSSSSSSSSSSSSSSSSSSSSSSSSSSSSSSSSSSSSSSSSSSSSSSSSSSSSSSSSSSSSSSSSSSSSS
%SSS                      SSS                       SSS                    SS                          SS           SSS                         SSS          SSSSS      SSSS
%SSS      SSSSSSSSS    SSSSSSSSSS     SSSSSSSSSSSS     SSSSSSS     SSSS    SSSSSS     SSS     S     SSSS     SSSSS
%SSS      SSSSSSSSS    SSSSSSSSSS     SSSSSSSSSSSS     SSSSSSS     SSSS    SSSSSS     SSS     SS     SSS     SSSSS
%SSS                      SSS                  SSSSS    SSSSSSSSSSSS     SSSSSSS     SSSS    SSSSSS     SSS     SSS     SS     SSSSS
%SSSSSSSSSS    SSS    SSSSSSSSSS    SSSSSSSSSSSS     SSSSSSS     SSSS    SSSSSS     SSS     SSSS     S     SSSSS
%SSSSSSSSSS    SSS    SSSSSSSSSS    SSSSSSSSSSSS     SSSSSSS     SSSS    SSSSSS     SSS     SSSSS          SSSSS
%SSS                       SSS                      SSS                       SSSSS     SSSSSS          SSS                         SSS     SSSSSS        SSSSS
%SSSSSSSSSSSSSSSSSSSSSSSSSSSSSSSSSSSSSSSSSSSSSSSSSSSSSSSSSSSSSSSSSSSSSSSSSSSSSSSSSS
%SSSSSSSSSSSSSSSSSSSSSSSSSSSSSSSSSSSSSSSSSSSSSSSSSSSSSSSSSSSSSSSSSSSSSSSSSSSSSSSSSS

\subsection{Relative Chern Character as an Isomorphism of Functors}\label{subsectionrelchernisom}

The importance of the relative (generalized algebraic) Chern character $\tn{ch}$ in the context of the coniveau machine is that it induces the isomorphism between the third and fourth columns.  This construction depends on the functorial properties of $\tn{ch}$ and of the coniveau spectral sequence, introduced in section \hyperref[sectionconiveauSS]{\ref{sectionconiveauSS}} below.  For $\tn{ch}$, the necessary properties are captured by the following lemma:

\begin{lem}\label{lemrelchernisomfunctors} The (generalized algebraic) Chern character $\tn{ch}_{n,R}:K_{n,R}\rightarrow \tn{HN}_{n,R}$ extends to a natural transformation of functors from algebraic $K$-theory to negative cyclic homology.  Its relative version $\tn{ch}_{n,R,I}:K_{n,R,I}\rightarrow \tn{HN}_{n,R,I}$ extends to a natural transformation of functors from relative algebraic $K$-theory to relative negative cyclic homology, viewed as functors from an appropriate category of pairs $(R,I)$ to the category of abelian groups.  When the chosen category of pairs is the category $\mbf{Nil}$ of split nilpotent pairs defined in definition \hyperref[defisplitnilppairs]{\ref{defisplitnilppairs}} above, $\tn{ch}$ is an isomorphism of functors.
\end{lem}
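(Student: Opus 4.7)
The plan is to verify the three assertions in sequence, leveraging the Volodin-type construction sketched in section \ref{subsectionalgchern} for functoriality and then reducing the isomorphism statement to Goodwillie's theorem via the relative $SBI$ sequence.

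First I would verify naturality of the absolute Chern character $\tn{ch}_{n,R}$ by direct inspection of its construction. Every ingredient assembling $\tn{ch}_{n,R}$ is manifestly functorial in $R$: the Volodin space $X_R$ and its plus construction $X_R^+$, the Hurewicz map $\pi_n X_R^+ \to H_n X_R$, the Eilenberg--MacLane chain functor $C_\bullet$, and the total complex $\tn{ToT}\,\tn{CN}_R$ of the negative cyclic bicomplex are each defined by constructions that commute with ring homomorphisms. Stringing these together shows $\tn{ch}_{n,R}$ is a natural transformation from algebraic $K$-theory to negative cyclic homology, viewed as functors on the category of (unital, commutative) rings.

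Second, to upgrade to the relative case, I would observe that a morphism of pairs $(R,I)\to(R',I')$ is by definition a ring map $R\to R'$ carrying $I$ into $I'$; such a map induces compatible morphisms on the relative Volodin space $X_{R,I}$, on the kernel complex $\tn{CN}_{R,I}=\tn{Ker}[\tn{CN}_R\to\tn{CN}_{R/I}]$, and on the Hurewicz--Eilenberg--MacLane composite of \eqref{equloday11.4.6}--\eqref{equlodayhurewicz}, because each sits in a functorial fiber-sequence-style construction. The naturality of $\tn{ch}_{n,R,I}$ then follows by applying the five lemma to the ladder diagram of section \ref{subsectionalgchern}, using the absolute case established above.

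Third, to show $\tn{ch}$ is an isomorphism of functors on $\mbf{Nil}$, I would factor the relative Chern character through the Connes map. By Lemma \ref{HCisomHN}, for any split nilpotent pair the relative periodic cyclic homology $\tn{HP}_{n,R,I}$ vanishes, so the relative $SBI$ sequence degenerates into canonical isomorphisms $B_n\colon \tn{HC}_{n-1,R,I}\overset{\cong}{\longrightarrow} \tn{HN}_{n,R,I}$. Goodwillie's theorem (Theorem \ref{theoremgoodwillie}), in its integral form for nilpotent ideals in $\QQ$-algebras, furnishes canonical isomorphisms $\rho_n\colon K_{n,R,I}\overset{\cong}{\longrightarrow} \tn{HC}_{n-1,R,I}$. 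Both $\rho_n$ and $B_n$ are natural with respect to morphisms of split nilpotent pairs, so their composition $B_n\circ\rho_n$ gives a natural isomorphism of functors $K_{n,-,-}\overset{\cong}{\longrightarrow} \tn{HN}_{n,-,-}$ on $\mbf{Nil}$. The main obstacle will be the final identification $B_n\circ\rho_n=\tn{ch}_{n,R,I}$, since Goodwillie's $\rho_n$ is built from a Volodin-type argument while $\tn{ch}$ arises from its own Volodin construction via negative cyclic bicomplexes; the two \emph{a priori} distinct recipes must be reconciled. The cleanest route is to invoke the explicit comparison in Corti\~nas--Weibel \cite{WeibelRelativeChernNilp}, who organize $\rho_n$, $B_n$, and $\tn{ch}_{n,R,I}$ within a single homotopy-commutative diagram; failing that, one verifies agreement in low degrees (where Bloch's formula $K_{2,R,I}\cong\Omega_{R,I}^1/dI$ and the explicit $d\log$ character make the computation transparent) and propagates to higher degrees by naturality of $B$ together with the periodicity in the $SBI$ sequence.
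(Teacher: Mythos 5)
Your proposal is correct, and it is worth noting how it relates to the paper's own treatment: the paper disposes of this lemma by citing Corti\~nas--Weibel \cite{WeibelRelativeChernNilp}, section 6, outright, whereas you reassemble the argument from ingredients that are in fact already present elsewhere in the text. Your first two steps (naturality of the absolute and relative Chern characters from the functoriality of the Volodin space, the Eilenberg--MacLane functor, and the kernel bicomplex $\tn{CN}_{R,I}$) are exactly the content being outsourced, and your third step is the composite $B_n\circ\rho_n$ with $\tn{HP}_{*,R,I}=0$ forcing $B_n$ to be an isomorphism, which is the standard route. The one place where you overestimate the difficulty is the "main obstacle" of identifying $B_n\circ\rho_n$ with $\tn{ch}_{n,R,I}$: this identification is not something you need to re-derive in low degrees and propagate, since it is recorded as Lemma \ref{lemgoodwilliechern} (Loday, Theorem 11.4.11), so the fallback computation via Bloch's formula is unnecessary. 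One caveat you handle correctly but should make explicit: Goodwillie's theorem as stated in Theorem \ref{theoremgoodwillie} is only a rational isomorphism, so the isomorphism claim on $\mbf{Nil}$ requires the objects to be $\QQ$-algebras (relative $K$-theory and relative cyclic homology of a nilpotent ideal in a $\QQ$-algebra are uniquely divisible, so tensoring with $\QQ$ is harmless there); without that standing hypothesis, which the paper leaves implicit in its definition of $\mbf{Nil}$, the final assertion of the lemma would fail.
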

\begin{proof} See Corti\~nas and Weibel \cite{WeibelRelativeChernNilp}, section 6.
\end{proof}

%Loday page 374:

Goodwillie's isomorphism, cited in theorem \hyperref[theoremgoodwillie]{\ref{theoremgoodwillie}} above, may be written in the form $\rho: K_{n,R,I}\rightarrow\tn{HC}_{n-1,R,I}$, when $R$ is a $\QQ$ algebra.   In this case, $\rho$ ``coincides with" $\tn{ch}$ in the following sense:

\begin{lem}\label{lemgoodwilliechern} Let $R$ be a $\QQ$-algebra, and $I\subset R$ a nilpotent ideal.  Then the composite map
\[K_{n,R,I}\overset{\rho}{\longrightarrow}\tn{HC}_{n-1,R,I}\overset{B}{\longrightarrow}\tn{HN}_{n,R,I}\]
is the relative Chern character, where $B$ is the map from Connes' cyclic bicomplex.
\end{lem}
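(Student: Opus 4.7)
The plan is to identify $B \circ \rho$ and $\tn{ch}_{n,R,I}$ as two natural transformations of functors on the category $\mbf{Nil}$ of split nilpotent pairs that arise from the same underlying chain-level construction, and then to invoke Lemma \ref{HCisomHN} to conclude that both are isomorphisms. The key observation is that Goodwillie's isomorphism $\rho$ and Loday's relative Chern character $\tn{ch}_{n,R,I}$ are both built from the relative Volodin construction $X_{R,I}$, differing only in which of the three bicomplexes $\tn{CC}_{R,I}$, $\tn{CP}_{R,I}$, $\tn{CN}_{R,I}$ is chosen as the target.

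First, I would unpack $\tn{ch}_{n,R,I}$ following Loday \cite{LodayCyclicHomology98}, Section 11.4: it is obtained by composing the Hurewicz map $K_{n,R,I} = \pi_n(X_{R,I}^+) \to H_n(X_{R,I})$ with the map from the Eilenberg-MacLane complex $C_\bullet(X_{R,I})$ into $\tn{Ker}[\tn{ToT}\,\tn{CN}_R \to \tn{ToT}\,\tn{CN}_{R/I}]$ from equation \ref{equloday11.4.6}. Exactly the same construction, with $\tn{CN}$ replaced by $\tn{CC}$, produces Goodwillie's map $\rho: K_{n,R,I} \to \tn{HC}_{n-1,R,I}$ (the degree shift comes from the position of $\tn{CC}_{R,I}[0,2]$ in the short exact sequence of bicomplexes \ref{equSESrelcycperneg}).

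Next, I would assemble the relevant ladder of chain maps. The short exact sequence of bicomplexes
\[
0 \to \tn{CN}_{R,I} \to \tn{CP}_{R,I} \to \tn{CC}_{R,I}[0,2] \to 0
\]
of equation \ref{equSESrelcycperneg} induces the relative SBI long exact sequence \ref{equSBI}, in which the Connes map $B: \tn{HC}_{n-1,R,I} \to \tn{HN}_{n,R,I}$ is precisely the connecting homomorphism. The Volodin-level chain map of \ref{equloday11.4.6} is compatible with this short exact sequence (it is induced by a map of presheaves of spectra on $X_{R,I}$), so the image of a cycle in $C_n(X_{R,I})$ under the chain map to $\tn{CC}_{R,I}[0,2]$ is connected, via the snake lemma applied at the chain level, to its image in $\tn{CN}_{R,I}$ in degree $n$. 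Taking homology and composing with the Hurewicz map gives the identity $B \circ \rho = \tn{ch}_{n,R,I}$.

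The main obstacle will be the chain-level bookkeeping in the penultimate step, namely verifying that the Volodin-level construction really does commute with the snake lemma connecting $B$ to the SBI sequence; this is essentially the content of Loday's proof of Proposition 11.4.8 in \cite{LodayCyclicHomology98} and also of the naturality calculations in Corti\~nas-Weibel \cite{WeibelRelativeChernNilp}, Section 6. Once that is in hand, Lemma \ref{HCisomHN} implies that $B$ is an isomorphism in the split nilpotent case over $\QQ$, so the identity $\tn{ch}_{n,R,I} = B \circ \rho$ is simultaneously a proof of the integral form of Goodwillie's theorem in this setting and an identification of the Chern character with the composite $B \circ \rho$, consistent with the isomorphism-of-functors statement of Lemma \ref{lemrelchernisomfunctors}.
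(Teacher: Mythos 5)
Your proposal is sound in outline, but be aware that the paper does not actually prove this lemma: its entire ``proof'' is the citation to Loday \cite{LodayCyclicHomology98}, Theorem 11.4.11. What you have written is essentially a reconstruction of Loday's argument for that theorem --- both $\rho$ and $\tn{ch}_{n,R,I}$ are realized as chain maps out of the Eilenberg--MacLane complex of the relative Volodin space, composed with the Hurewicz map, and the identity $B\circ\rho=\tn{ch}_{n,R,I}$ is then the compatibility of these chain maps with the short exact sequence $0\to\tn{CN}_{R,I}\to\tn{CP}_{R,I}\to\tn{CC}_{R,I}[0,2]\to 0$, with $B$ read off as the connecting homomorphism. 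That is the right mechanism, and your identification of where the real work lies (the chain-level compatibility, i.e.\ Loday's Proposition 11.4.8 and the naturality computations in Corti\~nas--Weibel) is accurate; since you defer exactly that step to the literature, your proposal is no less complete than the paper's.

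Two small corrections before you rely on this. First, the degree shift in Goodwillie's map $\rho:K_{n,R,I}\to\tn{HC}_{n-1,R,I}$ does \emph{not} come from the position of $\tn{CC}_{R,I}[0,2]$ in the short exact sequence; that shift only accounts for the drop $\tn{HP}_n\to\tn{HC}_{n-2}$ and for $B$ raising degree by one as a connecting map. The shift in $\rho$ itself comes from the Volodin/Loday--Quillen--Tsygan side, where the homology of $X_{R,I}$ computes cyclic homology with a degree shift of one. Second, the phrase ``induced by a map of presheaves of spectra on $X_{R,I}$'' is not what is happening: $X_{R,I}$ is a space, and the relevant structure is a single chain map $C_\bullet(X_{R,I})\to\tn{ToT}\,\tn{CP}_{R,I}$ lifting the map into $\tn{ToT}\,\tn{CN}_{R,I}$ of equation \hyperref[equloday11.4.6]{\ref{equloday11.4.6}}; it is the existence of this lift, together with the vanishing of $\tn{HP}_{*,R,I}$ from Lemma \hyperref[HCisomHN]{\ref{HCisomHN}}, that lets the connecting-homomorphism argument go through.
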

\begin{proof} See Loday \cite{LodayCyclicHomology98} Theorem 11.4.11, page 374.
\end{proof}

\section{Cohomology Theories with Supports}\label{sectioncohomtheoriessupports}

The top row of the coniveau machine for codimension-$p$ cycles on a smooth algebraic variety $X$ is defined in terms of the sheaves of algebraic $K$-theory $\ms{K}_{p,X}$, ``augmented $K$-theory" $\ms{K}_{p,X_\ee}$, relative $K$-theory $\ms{K}_{p,X_\ee,\ee}$, and relative negative cyclic homology $\ms{HN}_{p,X_\ee,\ee}$.   The remaining rows may be constructed in a functorial manner from the top row, but this requires extending algebraic $K$-theory and negative cyclic homology to {\it cohomology theories with supports.}  This is essentially because the terms appearing in the lower rows depend on information associated with closed subsets $Z$ of $X$.  From the perspective of studying Chow groups, this is not very surprising, since the Chow groups are intimately related to closed subsets of $X$; namely the supports of algebraic cycles.  

In this section, I introduce the theory of cohomology theories with supports, following closely the development of Colliot-Th\'el\`ene, Hoobler, and Kahn \cite{CHKBloch-Ogus-Gabber97}.  A cohomology theory with supports is a special family of contravariant functors from a category of pairs $(X,Z)$, where $X$ is a scheme and $Z$ is a closed subset,  to an abelian category $\mbf{A}$.  In many important cases, a cohomology theory with supports may be defined in terms of a {\it substratum}, which is a special contravariant functor from an appropriate category of pairs to the category $\mbf{Ch}_{\mbf{A}}$ of chain complexes of objects of an abelian category $\mbf{A}$, or to an appropriate category $\mbf{E}$ of topological spectra.  The substrata of principal interest for studying the infinitesimal theory of Chow groups are Bass and Thomason's nonconnective $K$-theory spectrum $\mathbf{K}$, and the negative cyclic homology spectrum $\mathbf{HN}$.  The corresponding cohomology theories with supports yield group-valued nonconnective $K$-theory functors $K_p$, and negative cyclic homology functors $\tn{HN}_p$.  

%SSSSSSSSSSSSSSSSSSSSSSSSSSSSSSSSSSSSSSSSSSSSSSSSSSSSSSSSSSSSSSSSSSSSSSSSSSSSSSSSSS
%SSSSSSSSSSSSSSSSSSSSSSSSSSSSSSSSSSSSSSSSSSSSSSSSSSSSSSSSSSSSSSSSSSSSSSSSSSSSSSSSSS
%SSS                      SSS                       SSS                    SS                          SS           SSS                         SSS          SSSSS      SSSS
%SSS      SSSSSSSSS    SSSSSSSSSS     SSSSSSSSSSSS     SSSSSSS     SSSS    SSSSSS     SSS     S     SSSS     SSSSS
%SSS      SSSSSSSSS    SSSSSSSSSS     SSSSSSSSSSSS     SSSSSSS     SSSS    SSSSSS     SSS     SS     SSS     SSSSS
%SSS                      SSS                  SSSSS    SSSSSSSSSSSS     SSSSSSS     SSSS    SSSSSS     SSS     SSS     SS     SSSSS
%SSSSSSSSSS    SSS    SSSSSSSSSS    SSSSSSSSSSSS     SSSSSSS     SSSS    SSSSSS     SSS     SSSS     S     SSSSS
%SSSSSSSSSS    SSS    SSSSSSSSSS    SSSSSSSSSSSS     SSSSSSS     SSSS    SSSSSS     SSS     SSSSS          SSSSS
%SSS                       SSS                      SSS                       SSSSS     SSSSSS          SSS                         SSS     SSSSSS        SSSSS
%SSSSSSSSSSSSSSSSSSSSSSSSSSSSSSSSSSSSSSSSSSSSSSSSSSSSSSSSSSSSSSSSSSSSSSSSSSSSSSSSSS
%SSSSSSSSSSSSSSSSSSSSSSSSSSSSSSSSSSSSSSSSSSSSSSSSSSSSSSSSSSSSSSSSSSSSSSSSSSSSSSSSSS

\subsection{Cohomology Theories with Supports}\label{subsectioncohomsupportssubstrata}

\label{subsectiondistinguishedcatschemes}

{\bf Some Distinguished Categories of Schemes.}  Let $\mbf{S}_k$ be a full subcategory of the category of all schemes over a field $k$, stable under \'etale extensions.    Assume that $\mbf{S}_k$ includes the prime spectrum $\tn{Spec }k$ of $k$, and that whenever a scheme $X$ belongs to  $\mbf{S}_k$, the projective space $\mbb{P}^1 _X:=\mbb{P}^1 _{\ZZ}\times_{\tn{Spec } \ZZ}X$ also belongs to $\mbf{S}_k$.  Examples of categories satisfying these assumptions include the category $\mbf{Sep}_k$ of separated schemes over $k$, and the category $\mbf{Sm}_k$ of smooth schemes over $k$.  Given the category $\mbf{S}_k$, define $\mbf{P}_k$ to be the category of pairs $(X, Z)$, where $X$ belongs to $\mbf{S}_k$, and where $Z$ is a closed subset of $X$.   A morphism of pairs $f: (X', Z') \to (X, Z)$ is a morphism $f: X' \to X$ in $\mbf{S}_k$ such that $f^{-1}(Z) \subset Z'$.    

%SSSSSSSSSSSSSSSSSSSSSSSSSSSSSSSSSSSSSSSSSSSSSSSSSSSSSSSSSSSSSSSSSSSSSSSSSSSSSSSSSS
%SSSSSSSSSSSSSSSSSSSSSSSSSSSSSSSSSSSSSSSSSSSSSSSSSSSSSSSSSSSSSSSSSSSSSSSSSSSSSSSSSS
%SSSSSSSSSSSSSSSSSSSSSSSSSSSSSSSSSSSSSSSSSSSSSSSSSSSSSSSSSSSSSSSSSSSSSSSSSSSSSSSSSS
%SSS                      SSS                       SSS                    SS                          SS           SSS                         SSS          SSSSS      SSSS
%SSS      SSSSSSSSS    SSSSSSSSSS     SSSSSSSSSSSS     SSSSSSS     SSSS    SSSSSS     SSS     S     SSSS     SSSSS
%SSS      SSSSSSSSS    SSSSSSSSSS     SSSSSSSSSSSS     SSSSSSS     SSSS    SSSSSS     SSS     SS     SSS     SSSSS
%SSS                      SSS                  SSSSS    SSSSSSSSSSSS     SSSSSSS     SSSS    SSSSSS     SSS     SSS     SS     SSSSS
%SSSSSSSSSS    SSS    SSSSSSSSSS    SSSSSSSSSSSS     SSSSSSS     SSSS    SSSSSS     SSS     SSSS     S     SSSSS
%SSSSSSSSSS    SSS    SSSSSSSSSS    SSSSSSSSSSSS     SSSSSSS     SSSS    SSSSSS     SSS     SSSSS          SSSSS
%SSS                       SSS                      SSS                       SSSSS     SSSSSS          SSS                         SSS     SSSSSS        SSSSS
%SSSSSSSSSSSSSSSSSSSSSSSSSSSSSSSSSSSSSSSSSSSSSSSSSSSSSSSSSSSSSSSSSSSSSSSSSSSSSSSSSS
%SSSSSSSSSSSSSSSSSSSSSSSSSSSSSSSSSSSSSSSSSSSSSSSSSSSSSSSSSSSSSSSSSSSSSSSSSSSSSSSSSS

{\bf Cohomology Theories with Supports.}  The defining property of a cohomology theory with supports over a category of pairs $\mbf{P}_k$ is the existence of a certain long exact sequence of cohomology groups with supports for every {\it triple} $(X,Y,Z)$, where $X$ belongs to the distinguished category of schemes $\mbf{S}_k$, and $Y$ and $Z$ are closed subsets of $X$ such that $Z \subseteq Y \subseteq X$.  The following definition makes this precise:

\begin{defi}\label{deficohomsupports} A {\bf cohomology theory with supports} on the category of pairs $\mbf{P}_k$ over a distinguished category of schemes $\mbf{S}_k$ is a family $H:=\{H^n\}_{n\in\mathbb{Z}}$ of contravariant functors
 \[H^n:(X, Z) \mapsto H^n _{X\tn{ \footnotesize{on} } Z}\]
 from $\mbf{P}_k$ to an abelian category $\mbf{A}$, satisfying the following condition: for any triple $Z \subseteq Y \subseteq X$, where $Y, Z$ are closed in $X$, there exists a long exact sequence:
 \[...\longrightarrow H^n_{X\tn{ \footnotesize{on} } Z}\overset{i^n}{\longrightarrow}H^n_{X\tn{ \footnotesize{on} } Y}\overset{j^n}{\longrightarrow}H^n_{X-Z\tn{ \footnotesize{on} } Y-Z}\overset{d^n}{\longrightarrow}H^{n+1}_{X\tn{ \footnotesize{on}  } Z}\longrightarrow...\]
where the maps $i^n$ and $j^n$ are induced by the morphisms of pairs $(X,Z)\leftarrow(X,Y)$ and $(X,Y)\leftarrow(X-Z,Y-Z)$, and where $d^n$ is the $n$th {\bf connecting morphism}.
\end{defi}

The most important examples of cohomology theories with supports for the purposes of this book are Bass-Thomason $K$-theory and negative cyclic homology.  

The family $\mbf{Co}_{\mbf{P}_k}$ of all cohomology theories with supports on $\mbf{P}_k$, together with their natural transformations, is a contravariant {\bf functor category} on $\mbf{P}_k$, where natural transformations between cohomology theories with supports are the morphisms in $\mbf{Co}_{\mbf{P}_k}$.  An important example of such a morphism is the Chern character between Bass-Thomason $K$-theory and negative cyclic homology.  

%remark: I think the relative K-theory spectrum with respect to a nilpotent ideal is connected; see \cite{WeibelInfCohomChernNegCyclic08}  page 918. However, I don't know if the ``relative K-theory with supports" has a connected spectrum.

%SSSSSSSSSSSSSSSSSSSSSSSSSSSSSSSSSSSSSSSSSSSSSSSSSSSSSSSSSSSSSSSSSSSSSSSSSSSSSSSSSS
%SSSSSSSSSSSSSSSSSSSSSSSSSSSSSSSSSSSSSSSSSSSSSSSSSSSSSSSSSSSSSSSSSSSSSSSSSSSSSSSSSS
%SSSSSSSSSSSSSSSSSSSSSSSSSSSSSSSSSSSSSSSSSSSSSSSSSSSSSSSSSSSSSSSSSSSSSSSSSSSSSSSSSS
%SSS                      SSS                       SSS                    SS                          SS           SSS                         SSS          SSSSS      SSSS
%SSS      SSSSSSSSS    SSSSSSSSSS     SSSSSSSSSSSS     SSSSSSS     SSSS    SSSSSS     SSS     S     SSSS     SSSSS
%SSS      SSSSSSSSS    SSSSSSSSSS     SSSSSSSSSSSS     SSSSSSS     SSSS    SSSSSS     SSS     SS     SSS     SSSSS
%SSS                      SSS                  SSSSS    SSSSSSSSSSSS     SSSSSSS     SSSS    SSSSSS     SSS     SSS     SS     SSSSS
%SSSSSSSSSS    SSS    SSSSSSSSSS    SSSSSSSSSSSS     SSSSSSS     SSSS    SSSSSS     SSS     SSSS     S     SSSSS
%SSSSSSSSSS    SSS    SSSSSSSSSS    SSSSSSSSSSSS     SSSSSSS     SSSS    SSSSSS     SSS     SSSSS          SSSSS
%SSS                       SSS                      SSS                       SSSSS     SSSSSS          SSS                         SSS     SSSSSS        SSSSS
%SSSSSSSSSSSSSSSSSSSSSSSSSSSSSSSSSSSSSSSSSSSSSSSSSSSSSSSSSSSSSSSSSSSSSSSSSSSSSSSSSS
%SSSSSSSSSSSSSSSSSSSSSSSSSSSSSSSSSSSSSSSSSSSSSSSSSSSSSSSSSSSSSSSSSSSSSSSSSSSSSSSSSS

{\bf ``New Theories out of Old;" Multiplying by a Fixed Separated Scheme.} Colliot-Th\'el\`ene, Hoobler, and Kahn \cite{CHKBloch-Ogus-Gabber97} devote considerable attention to the notion of defining a new cohomology theory with supports $H'$ by modifying the definition of an existing theory $H$ in some useful way.  The point of such an exercise is that important properties of $H$ can sometimes be easily established for $H'$ by modifying their proofs for $H$.  The example of cardinal importance in this book is the property of {\it effaceability,} discussed in section \hyperref[effaceability]{\ref{effaceability}} below.  

In the context of this book, the ``new" cohomology theory with supports $H'$ will usually be an ``augmented" or ``relative" version of the ``old" theory $H$.  A case of particular interest is where $H^Y$ is given by ``multiplying by a fixed separated scheme."  This notion is made precise in the following definition:

\newpage

\begin{defi}\label{lemnewoutofoldcohom} Let $\mbf{S}_k$ a category of schemes over a field $k$ satisfying the conditions given at the beginning of section \hyperref[subsectioncohomsupportssubstrata]{\ref{subsectioncohomsupportssubstrata}}.   Let $Y$ be a separated scheme over $k$.  Let $H$ be a cohomology theory with supports on the category of pairs over $\mbf{S}_k$, with values in an abelian category $\mbf{A}$.   Define a family of functors $H^{Y}=\{H_n^Y\}_{n\in\ZZ}$ on the category of pairs over $\mbf{S}_k$ as follows:
\begin{equation}\label{equnewoutofoldcohom}H_{n,X\tn{ on } Z}^Y:=H_{n,X\times_k Y\tn{ on } Z\times_k Y }.\end{equation}
As usual, the notation $\times_k$ for the fiber product means $\times_{\tn{Spec }k}$.  I will call the family $H^{Y}$ the {\bf augmented version of $H$ with respect to $Y$.} 
\end{defi}

It is an easy exercise to show that $H^Y$ is indeed a cohomology theory with supports; see Colliot-Th\'el\`ene, Hoobler, and Kahn \cite{CHKBloch-Ogus-Gabber97}, 5.5(1). 

%SSSSSSSSSSSSSSSSSSSSSSSSSSSSSSSSSSSSSSSSSSSSSSSSSSSSSSSSSSSSSSSSSSSSSSSSSSSSSSSSSS
%SSSSSSSSSSSSSSSSSSSSSSSSSSSSSSSSSSSSSSSSSSSSSSSSSSSSSSSSSSSSSSSSSSSSSSSSSSSSSSSSSS
%SSSSSSSSSSSSSSSSSSSSSSSSSSSSSSSSSSSSSSSSSSSSSSSSSSSSSSSSSSSSSSSSSSSSSSSSSSSSSSSSSS
%SSS                      SSS                       SSS                    SS                          SS           SSS                         SSS          SSSSS      SSSS
%SSS      SSSSSSSSS    SSSSSSSSSS     SSSSSSSSSSSS     SSSSSSS     SSSS    SSSSSS     SSS     S     SSSS     SSSSS
%SSS      SSSSSSSSS    SSSSSSSSSS     SSSSSSSSSSSS     SSSSSSS     SSSS    SSSSSS     SSS     SS     SSS     SSSSS
%SSS                      SSS                  SSSSS    SSSSSSSSSSSS     SSSSSSS     SSSS    SSSSSS     SSS     SSS     SS     SSSSS
%SSSSSSSSSS    SSS    SSSSSSSSSS    SSSSSSSSSSSS     SSSSSSS     SSSS    SSSSSS     SSS     SSSS     S     SSSSS
%SSSSSSSSSS    SSS    SSSSSSSSSS    SSSSSSSSSSSS     SSSSSSS     SSSS    SSSSSS     SSS     SSSSS          SSSSS
%SSS                       SSS                      SSS                       SSSSS     SSSSSS          SSS                         SSS     SSSSSS        SSSSS
%SSSSSSSSSSSSSSSSSSSSSSSSSSSSSSSSSSSSSSSSSSSSSSSSSSSSSSSSSSSSSSSSSSSSSSSSSSSSSSSSSS
%SSSSSSSSSSSSSSSSSSSSSSSSSSSSSSSSSSSSSSSSSSSSSSSSSSSSSSSSSSSSSSSSSSSSSSSSSSSSSSSSSS

\subsection{Substrata}\label{subsectionsubstrata}

{\bf Substrata.}  A substratum is a complex-valued or spectrum-valued functor that serves as a ``precursor" to a cohomology theory with supports.  The corresponding cohomology functors are given by taking cohomology groups of complexes or homotopy groups of spectra, as explained in definition \hyperref[defisubstratumtocohom]{\ref{defisubstratumtocohom}} below. 

\begin{defi}\label{defisubstratum}  A {\bf substratum} on a distinguished category of schemes $\mbf{S}_k$ is a contravariant functor $C:X \to C_X$ from $\mbf{S}_k$ to the category $\mbf{Ch}_{\mbf{A}}$ of chain complexes of objects of an abelian category $\mbf{A}$, or to an appropriate category $\mbf{E}$ of topological spectra.   
\end{defi}

For each pair $(X,Z)$ in $\mbf{P}_k$, one may then define a ``complex or spectrum with supports" $C_{X\tn{ \footnotesize{on} } Z}$ by taking the homotopy fiber of the map of complexes or spectra $C_X \to C_{X-Z}$.   These fit together to give a short exact sequence of complexes or spectra: 
\begin{lem}\label{lemsubstrattocohom} For any triple $(X, Y, Z)$ as above, there exists a sequence of complexes, or spectra, exact up to homotopy:
\begin{equation}\label{SEScomplexorspectra}
 0 \longrightarrow C_{X\tn{ \footnotesize{on} } Z} \longrightarrow C_{X\tn{ \footnotesize{on}  } Y} \longrightarrow
C_{X-Z\tn{ \footnotesize{on}  } Y-Z}\longrightarrow 0.
\end{equation}
\end{lem}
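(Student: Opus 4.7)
The plan is to derive the sequence from the octahedral axiom (or, equivalently, the standard fiber/cofiber sequence of a composition) in the stable triangulated setting of $\mathbf{Ch}_{\mathbf{A}}$ or $\mathbf{E}$. The key observation is that the three ``complexes/spectra with supports'' appearing in \eqref{SEScomplexorspectra} are all homotopy fibers of structure maps arising from a \emph{single} composable pair. Specifically, from the triple $Z \subseteq Y \subseteq X$ one obtains open inclusions $X - Y \hookrightarrow X - Z \hookrightarrow X$, and hence, by contravariant functoriality of $C$, a composable pair of morphisms
\[
C_X \;\longrightarrow\; C_{X-Z} \;\longrightarrow\; C_{X-Y}
\]
in $\mathbf{Ch}_{\mathbf{A}}$ (resp.\ $\mathbf{E}$). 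By the definition of the ``on'' notation as a homotopy fiber, the homotopy fibers of the first map, the second map, and the composite are respectively $C_{X\,\mathrm{on}\, Z}$, $C_{X-Z\,\mathrm{on}\, Y-Z}$, and $C_{X\,\mathrm{on}\, Y}$.

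First I would make precise the target category: since $\mathbf{A}$ is abelian, $\mathbf{Ch}_{\mathbf{A}}$ can be localized at quasi-isomorphisms to yield a triangulated derived category, and a suitable category $\mathbf{E}$ of spectra is stable (indeed, is the motivating example of a stable $\infty$-category). In either setting, every morphism has a homotopy fiber and the collection of fiber sequences satisfies the octahedral axiom. Next I would apply the octahedral axiom to the composable pair above: given $f\colon C_X \to C_{X-Z}$ and $g\colon C_{X-Z} \to C_{X-Y}$, the axiom produces a fiber sequence
\[
\operatorname{fib}(f) \;\longrightarrow\; \operatorname{fib}(g \circ f) \;\longrightarrow\; \operatorname{fib}(g),
\]
which, after identifying the three terms by the definitions above, reads
\[
C_{X\,\mathrm{on}\, Z} \;\longrightarrow\; C_{X\,\mathrm{on}\, Y} \;\longrightarrow\; C_{X-Z\,\mathrm{on}\, Y-Z}.
\]
This is precisely \eqref{SEScomplexorspectra}, interpreted as a sequence exact up to homotopy (equivalently, a distinguished triangle, the initial and terminal zeros being cosmetic).

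The only nontrivial step is verifying the identification of the middle term: this requires checking that the canonical map $\operatorname{fib}(C_X \to C_{X-Y})$, as produced by the octahedron, agrees (up to canonical homotopy) with the two-out-of-three composite arising from $Z \subset Y$. This reduces to the naturality of homotopy fibers applied to the commutative triangle $X \to X-Z \to X-Y$, together with the universal property characterizing $C_{X\,\mathrm{on}\,Y}$ as the fiber of the composite.

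The main obstacle, such as it is, is precisely this bookkeeping: one must make sure that the structure maps induced by the open immersions compose strictly (or coherently, in the $\infty$-categorical setting) so that the octahedron can be applied without ambiguity. In the spectrum-valued case, this requires choosing a model (e.g.\ symmetric spectra, or an $\infty$-categorical presentation) in which the presheaf $X \mapsto C_X$ is functorial up to coherent homotopy; for the standard examples of principal interest in this book, Bass--Thomason $K$-theory and Keller's negative cyclic homology, such functoriality is already built into the localization-pair constructions of sections \hyperref[subsectionconnectivenonconnective]{\ref{subsectionconnectivenonconnective}} and \hyperref[subsectioncycschemes]{\ref{subsectioncycschemes}}, so no additional rigidification is needed. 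Once functoriality is in hand, the octahedral axiom supplies the conclusion without further work.
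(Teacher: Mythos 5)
Your argument is correct, and it is the intended one: the paper states this lemma without proof (following Colliot-Th\'el\`ene--Hoobler--Kahn, who treat it as immediate from the definition of $C_{X\tn{ on } Z}$ as a homotopy fiber), and your identification of the three terms as $\tn{fib}(f)$, $\tn{fib}(g\circ f)$, $\tn{fib}(g)$ for the composable pair $C_X\rightarrow C_{X-Z}\rightarrow C_{X-Y}$ induced by $X-Y\subseteq X-Z\subseteq X$, followed by the octahedral axiom, is exactly the standard justification. Your worry about coherence is harmless here, since $C$ is a strict functor on $\mbf{S}_k$, so the structure maps compose on the nose.
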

A cohomology theory with supports may then be defined by taking cohomology groups of complexes or homotopy groups of spectra:
\begin{defi}\label{defisubstratumtocohom} Let $C$ be a substratum on the category of pairs $\mbf{P}_k$.  If the target category of $C$ is a category of complexes $\mbf{Ch}_{\mbf{A}}$, define functors $H^n$ from $\mbf{P}_k$ to $\mbf{A}$ by taking cohomology of complexes:
\[H^n_{X\tn{ on } Z} := H^q\big(C_{X\tn{ \footnotesize{on} } Z}\big).\]
If the target category of $C$ is a category of spectra $\mbf{E}$, define functors functors $H^n$ from $\mc{P}_k$ by taking homotopy groups of spectra:
\[H^n_{X\tn{ \footnotesize{on} } Z} := \pi_{-q}\big(C_{X\tn{ \footnotesize{on} } Z}\big).\]
\end{defi}

\newpage

By basic homological algebra, the short exact sequence of complexes or spectra in equation \hyperref[SEScomplexorspectra]{\ref{SEScomplexorspectra}} induces a long exact sequence of the groups $H^n_{X\tn{ on } Z}$: 
\[...\longrightarrow H^n_{X\tn{ \footnotesize{on} } Z}\longrightarrow H^n_{X\tn{ \footnotesize{on} } Y}\longrightarrow H^n_{X-Z\tn{ \footnotesize{on} } Y-Z}\longrightarrow H^{n+1}_{X\tn{ \footnotesize{on}  } Z}\longrightarrow...\]
Hence, the functors $H^n$ define a cohomology theory with supports as defined in definition \hyperref[deficohomsupports]{\ref{deficohomsupports}} above.  

\vspace*{.2cm}

The most important examples of substrata for the purposes of this book are Bass-Thomason $K$-theory $\mbf{K}$ and negative cyclic homology $\mbf{HN}$. 

\vspace*{.2cm}

The family $\mbf{Sub}_{\mbf{P}_k}$ of all substrata on $\mbf{P}_k$, together with their natural transformations, is a contravariant functor category on the category of pairs $\mbf{P}_k$.   The Chern character between Bass-Thomason $K$-theory and negative cyclic homology may be understood at the level of substrata by working with the spectra $\mbf{K}$ and $\mbf{HN}$. 

\vspace*{.2cm}

%SSSSSSSSSSSSSSSSSSSSSSSSSSSSSSSSSSSSSSSSSSSSSSSSSSSSSSSSSSSSSSSSSSSSSSSSSSSSSSSSSS
%SSSSSSSSSSSSSSSSSSSSSSSSSSSSSSSSSSSSSSSSSSSSSSSSSSSSSSSSSSSSSSSSSSSSSSSSSSSSSSSSSS
%SSSSSSSSSSSSSSSSSSSSSSSSSSSSSSSSSSSSSSSSSSSSSSSSSSSSSSSSSSSSSSSSSSSSSSSSSSSSSSSSSS
%SSS                      SSS                       SSS                    SS                          SS           SSS                         SSS          SSSSS      SSSS
%SSS      SSSSSSSSS    SSSSSSSSSS     SSSSSSSSSSSS     SSSSSSS     SSSS    SSSSSS     SSS     S     SSSS     SSSSS
%SSS      SSSSSSSSS    SSSSSSSSSS     SSSSSSSSSSSS     SSSSSSS     SSSS    SSSSSS     SSS     SS     SSS     SSSSS
%SSS                      SSS                  SSSSS    SSSSSSSSSSSS     SSSSSSS     SSSS    SSSSSS     SSS     SSS     SS     SSSSS
%SSSSSSSSSS    SSS    SSSSSSSSSS    SSSSSSSSSSSS     SSSSSSS     SSSS    SSSSSS     SSS     SSSS     S     SSSSS
%SSSSSSSSSS    SSS    SSSSSSSSSS    SSSSSSSSSSSS     SSSSSSS     SSSS    SSSSSS     SSS     SSSSS          SSSSS
%SSS                       SSS                      SSS                       SSSSS     SSSSSS          SSS                         SSS     SSSSSS        SSSSS
%SSSSSSSSSSSSSSSSSSSSSSSSSSSSSSSSSSSSSSSSSSSSSSSSSSSSSSSSSSSSSSSSSSSSSSSSSSSSSSSSSS
%SSSSSSSSSSSSSSSSSSSSSSSSSSSSSSSSSSSSSSSSSSSSSSSSSSSSSSSSSSSSSSSSSSSSSSSSSSSSSSSSSS

{\bf ``New Theories out of Old" at the Substratum Level.}  As in the case of cohomology theories with supports, it is often useful to modify a substratum $C$ to obtain a new substratum $C'$.  The case of principal interest here is again given by multiplying by a fixed separated scheme.

\vspace*{.2cm}

\begin{defi}\label{lemnewoutofoldsubstrat} Let $\mbf{S}_k$ a category of schemes over a field $k$ satisfying the conditions given at the beginning of section \hyperref[subsectioncohomsupportssubstrata]{\ref{subsectioncohomsupportssubstrata}}.   Let $Y$ be a separated scheme over $k$.  Let $C$ be a substratum on $\mbf{S}_k$, with values in an abelian category $\mbf{A}$.   Define a functor $C^{Y}$ on $\mbf{S}_k$ as follows:
\begin{equation}\label{equnewoutofoldsubstrat}C_X^Y:=C_{Y\times_k T}.\end{equation}
\end{defi}

It is an easy exercise to show that $C^Y$ is indeed a substratum; see Colliot-Th\'el\`ene, Hoobler, and Kahn \cite{CHKBloch-Ogus-Gabber97}, 5.5(1). 

%SSSSSSSSSSSSSSSSSSSSSSSSSSSSSSSSSSSSSSSSSSSSSSSSSSSSSSSSSSSSSSSSSSSSSSSSSSSSSSSSSS
%SSSSSSSSSSSSSSSSSSSSSSSSSSSSSSSSSSSSSSSSSSSSSSSSSSSSSSSSSSSSSSSSSSSSSSSSSSSSSSSSSS
%SSS                      SSS                       SSS                    SS                          SS           SSS                         SSS          SSSSS      SSSS
%SSS      SSSSSSSSS    SSSSSSSSSS     SSSSSSSSSSSS     SSSSSSS     SSSS    SSSSSS     SSS     S     SSSS     SSSSS
%SSS      SSSSSSSSS    SSSSSSSSSS     SSSSSSSSSSSS     SSSSSSS     SSSS    SSSSSS     SSS     SS     SSS     SSSSS
%SSS                      SSS                  SSSSS    SSSSSSSSSSSS     SSSSSSS     SSSS    SSSSSS     SSS     SSS     SS     SSSSS
%SSSSSSSSSS    SSS    SSSSSSSSSS    SSSSSSSSSSSS     SSSSSSS     SSSS    SSSSSS     SSS     SSSS     S     SSSSS
%SSSSSSSSSS    SSS    SSSSSSSSSS    SSSSSSSSSSSS     SSSSSSS     SSSS    SSSSSS     SSS     SSSSS          SSSSS
%SSS                       SSS                      SSS                       SSSSS     SSSSSS          SSS                         SSS     SSSSSS        SSSSS
%SSSSSSSSSSSSSSSSSSSSSSSSSSSSSSSSSSSSSSSSSSSSSSSSSSSSSSSSSSSSSSSSSSSSSSSSSSSSSSSSSS
%SSSSSSSSSSSSSSSSSSSSSSSSSSSSSSSSSSSSSSSSSSSSSSSSSSSSSSSSSSSSSSSSSSSSSSSSSSSSSSSSSS

\subsection{Generalized Deformation Groups and Generalized Tangent Groups of Chow Groups}\label{subsectiongeneralizeddeftan}

\vspace*{.2cm}

Now let $X$ be a smooth algebraic variety over a field $k$, and let $Y$ be a fixed separated scheme over $k$.  Let $C$ be the substratum $\mbf{K}$ of Bass-Thomason $K$-theory, and let $C^Y$ be the modified substratum whose value on $X$ is the augmented $K$-theory spectrum $\mbf{K}_{X\times_kY}$ defined in section \hyperref[subsectionconnectivenonconnective]{\ref{subsectionconnectivenonconnective}} above.  Recall that the relative $K$-theory spectrum $\mbf{K}_{X\times_kY,Y}$ is the homotopy fiber of the morphism of spectra $\mbf{K}_{X\times_kY}\rightarrow\mbf{K}_X$. Define $K$-theory groups and sheaves on $X$ from these spectra in the usual way.  Via Bloch's formula for the Chow groups, the augmented and relative $K$-theory spectra $\mbf{K}_{X\times_kY}$ and $\mbf{K}_{X\times_kY,Y}$ enable definition of generalized deformation groups and generalized tangent groups of the Chow groups.   The following definition makes this precise: 

\newpage

\begin{defi}\label{defigendefgroupChow} Let $X$ be a smooth algebraic variety over a field $k$, and let $Y$ be a separated $k$-scheme, not necessarily smooth.  
\begin{enumerate}
\item The {\bf generalized deformation group} $D_Y\tn{Ch}_X^p$ of the $p$th Chow group $\tn{Ch}_X^p$ of $X$ with respect to $Y$ is the $p$th Zariski sheaf cohomology group of the augmented $K$-theory sheaf $\ms{K}_{p,X\times_kY}$ on $X$:
\begin{equation}\label{equgendefchow}D_Y\tn{Ch}_X^p:=H_{\tn{\fsz{Zar}}}^p(X,\ms{K}_{p,X\times_kY}).\end{equation}
\item The {\bf generalized tangent group at the identity} $T_Y\tn{Ch}_X^p$ of the $p$th Chow group $\tn{Ch}_X^p$ of $X$ with respect to $Y$ is the $p$th Zariski sheaf cohomology group of the relative $K$-theory sheaf $\ms{K}_{p,X\times_kY,Y}$ on $X$:
\begin{equation}\label{equgentanchow}T_Y\tn{Ch}_X^p:=H_{\tn{\fsz{Zar}}}^p(X,\ms{K}_{p,X\times_kY,Y}).\end{equation}
\end{enumerate}
\end{defi}

%SSSSSSSSSSSSSSSSSSSSSSSSSSSSSSSSSSSSSSSSSSSSSSSSSSSSSSSSSSSSSSSSSSSSSSSSSSSSSSSSSS
%SSSSSSSSSSSSSSSSSSSSSSSSSSSSSSSSSSSSSSSSSSSSSSSSSSSSSSSSSSSSSSSSSSSSSSSSSSSSSSSSSS
%SSS                      SSS                       SSS                    SS                          SS           SSS                         SSS          SSSSS      SSSS
%SSS      SSSSSSSSS    SSSSSSSSSS     SSSSSSSSSSSS     SSSSSSS     SSSS    SSSSSS     SSS     S     SSSS     SSSSS
%SSS      SSSSSSSSS    SSSSSSSSSS     SSSSSSSSSSSS     SSSSSSS     SSSS    SSSSSS     SSS     SS     SSS     SSSSS
%SSS                      SSS                  SSSSS    SSSSSSSSSSSS     SSSSSSS     SSSS    SSSSSS     SSS     SSS     SS     SSSSS
%SSSSSSSSSS    SSS    SSSSSSSSSS    SSSSSSSSSSSS     SSSSSSS     SSSS    SSSSSS     SSS     SSSS     S     SSSSS
%SSSSSSSSSS    SSS    SSSSSSSSSS    SSSSSSSSSSSS     SSSSSSS     SSSS    SSSSSS     SSS     SSSSS          SSSSS
%SSS                       SSS                      SSS                       SSSSS     SSSSSS          SSS                         SSS     SSSSSS        SSSSS
%SSSSSSSSSSSSSSSSSSSSSSSSSSSSSSSSSSSSSSSSSSSSSSSSSSSSSSSSSSSSSSSSSSSSSSSSSSSSSSSSSS
%SSSSSSSSSSSSSSSSSSSSSSSSSSSSSSSSSSSSSSSSSSSSSSSSSSSSSSSSSSSSSSSSSSSSSSSSSSSSSSSSSS

\subsection{Effaceability}\label{effaceability}

Effaceability is a technical condition involving the behavior of a cohomology theory with supports, or a substratum, with respect to open neighborhoods of finite collections of points in smooth affine schemes in $\mbf{S}_k$.  This condition is useful for the construction of the coniveau machine because it guarantees the exactness of certain {\it Cousin complexes,} as described below.   In particular, this provides a method of computing the generalized deformation groups and generalized tangent groups of Chow groups. 

\begin{defi}\label{defieffaceability} Let $X\in\mbf{S}_k$ be an affine scheme, and let $\{t_1,...,t_r\}$ be a finite set of points of $X$.  
\begin{enumerate}
\item A cohomology theory with supports $H=\{H^n\}_{n\in\mathbb{Z}}$ on the category of pairs $\mbf{P}_k$ over $\mbf{S}_k$ is called {\bf effaceable}\footnotemark\footnotetext{Colliot-Th\'el\`ene, Hoobler, and Kahn \cite{CHKBloch-Ogus-Gabber97} call this condition {\bf strict effaceability} (definition 5.1.8, page 28) but I drop the adjective ``strict" since this is the only such property used here. } {\bf at} $\{t_1,...,t_r\}$ if, given any integer $p\ge0$, any open neighborhood $W$ of $\{t_1,...,t_r\}$ in $X$, and any closed subset $Z\subset W$ of codimension at least $p+1$, there exists a smaller open neighborhood $U\subseteq W$ of $\{t_1,...,t_r\}$ in $X$ and a closed subset $Z'\subseteq W$, containing $Z$, with $\tn{codim}_{W}(Z')\ge p$, such that the map 
\[H^n_{U\tn{ \footnotesize{on} } Z\cap U}\rightarrow H^n_{U\tn{ \footnotesize{on} } Z'\cap U}\]
is zero for all $n\in\mathbb{Z}$. $H$ is called {\bf effaceable} if this condition is satisfied for any smooth $X$ and any $\{t_1,...,t_r\}$.  
\item A substratum $C$ on $\mbf{S}_k$ is called {\bf effaceable at} $\{t_1,...,t_r\}$ if, given $p, W,$ and $Z$ as above, there exists $U$ and $Z'$ as above such that the map of substrata
\[C_{U\tn{ \footnotesize{on} } Z\cap U}\rightarrow C_{U\tn{ \footnotesize{on} } Z'\cap U}\]
is nullhomotopic.  A substratum is called {\bf effaceable} if this condition is satisfied for any smooth $X$ and any $\{t_1,...,t_r\}$.  
\end{enumerate}
\end{defi}

It is natural to try to draw schematic diagrams illustrating the effaceability condition.  Such diagrams are not necessarily very enlightening, partly because it is hard to represent the conditions involving codimension.  Figure \hyperref[figeffaceability]{\ref{figeffaceability}} below makes no attempt to illustrate codimension accurately, merely showing the inclusion properties of the subsets $W, U, Z,$ and $Z'$ of $X$.  The diagram at least illustrates the heuristic idea that effaceability ``has something to do with interpolation."  

%&&&&&&&&&&&&&&&&&&&&&&&&&&&&&&&&&&&&&&&&&&&&&&&&&&&&&&&&&&&&&&&&&&&&&&&&&&&&&&&&&&
%&&&&&&&&&&&&&&&&&&&&&&&&&&&&&&&&&&&&&&&&&&&&&&&&&&&&&&&&&&&&&&&&&&&&&&&&&&&&&&&&&&
%&&&&                 &&&&    &&&               &&&                    &&&    &&&&    &&&              &&&&                   &&&&&&&&&&&&&&&&&&
%&&&&   &&&&   &&&&    &&&   &&&&&&&&&&&   &&&&&&    &&&&    &&&    &&&     &&&    &&&&&&&&&&&&&&&&&&&&&&&&
%&&&&   &&&&   &&&&    &&&   &&&&&&&&&&&   &&&&&&    &&&&    &&&    &&&     &&&    &&&&&&&&&&&&&&&&&&&&&&&&
%&&&&                 &&&&    &&&   &&&&&&&&&&&   &&&&&&    &&&&    &&&            &&&&&             &&&&&&&&&&&&&&&&&&&&
%&&&&    &&&&&&&&&    &&&   &&&&&&&&&&&   &&&&&&    &&&&    &&&    &&&   &&&&    &&&&&&&&&&&&&&&&&&&&&&&
%&&&&    &&&&&&&&&    &&&   &&&&&&&&&&&   &&&&&&    &&&&    &&&    &&&&   &&&    &&&&&&&&&&&&&&&&&&&&&&&
%&&&&    &&&&&&&&&    &&&                &&&&&&   &&&&&&                   &&&    &&&&&   &&                  &&&&&&&&&&&&&&&&&&
%&&&&&&&&&&&&&&&&&&&&&&&&&&&&&&&&&&&&&&&&&&&&&&&&&&&&&&&&&&&&&&&&&&&&&&&&&&&&&&&&&&
%&&&&&&&&&&&&&&&&&&&&&&&&&&&&&&&&&&&&&&&&&&&&&&&&&&&&&&&&&&&&&&&&&&&&&&&&&&&&&&&&&&
\begin{figure}[H]
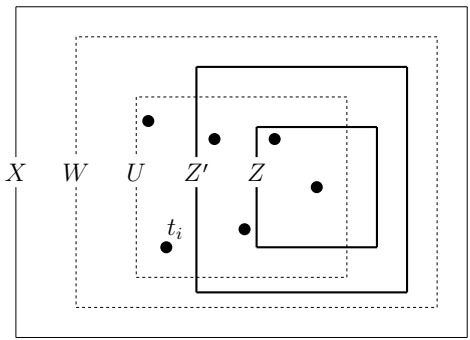

\begin{pgfpicture}{0cm}{0cm}{17cm}{5cm}
\begin{pgfmagnify}{.8}{.8}
\begin{pgftranslate}{\pgfpoint{5cm}{.5cm}}
\pgfxyline(-1,0)(6.5,0)
\pgfxyline(6.5,0)(6.5,5.5)
\pgfxyline(6.5,5.5)(-1,5.5)
\pgfxyline(-1,5.5)(-1,3)
\pgfxyline(-1,2.5)(-1,0)
\begin{pgfscope}
\pgfsetdash{{0.05cm}{0.05cm}}{0cm}
\pgfxyline(0,.5)(6,.5)
\pgfxyline(6,.5)(6,5)
\pgfxyline(6,5)(0,5)
\pgfxyline(0,5)(0,3)
\pgfxyline(0,2.5)(0,.5)
\pgfxyline(1,1)(4.5,1)
\pgfxyline(4.5,1)(4.5,4)
\pgfxyline(4.5,4)(1,4)
\pgfxyline(1,4)(1,3)
\pgfxyline(1,2.5)(1,1)
\end{pgfscope}
\pgfsetlinewidth{1pt}
\pgfxyline(2,.75)(5.5,.75)
\pgfxyline(5.5,.75)(5.5,4.5)
\pgfxyline(5.5,4.5)(2,4.5)
\pgfxyline(2,4.5)(2,3)
\pgfxyline(2,2.5)(2,.75)
\pgfxyline(3,1.5)(5,1.5)
\pgfxyline(5,1.5)(5,3.5)
\pgfxyline(5,3.5)(3,3.5)
\pgfxyline(3,3.5)(3,3)
\pgfxyline(3,2.5)(3,1.5)
\pgfputat{\pgfxy(-1,2.75)}{\pgfbox[center,center]{$X$}}
\pgfputat{\pgfxy(0,2.75)}{\pgfbox[center,center]{$W$}}
\pgfputat{\pgfxy(1,2.75)}{\pgfbox[center,center]{$U$}}
\pgfputat{\pgfxy(2,2.75)}{\pgfbox[center,center]{$Z'$}}
\pgfputat{\pgfxy(3,2.75)}{\pgfbox[center,center]{$Z$}}
\pgfputat{\pgfxy(1.65,1.8)}{\pgfbox[center,center]{$t_i$}}
\pgfnodecircle{Node0}[fill]{\pgfxy(2.3,3.3)}{0.1cm}
\pgfnodecircle{Node0}[fill]{\pgfxy(4,2.5)}{0.1cm}
\pgfnodecircle{Node0}[fill]{\pgfxy(2.8,1.8)}{0.1cm}
\pgfnodecircle{Node0}[fill]{\pgfxy(3.3,3.3)}{0.1cm}
\pgfnodecircle{Node0}[fill]{\pgfxy(1.5,1.5)}{0.1cm}
\pgfnodecircle{Node0}[fill]{\pgfxy(1.2,3.6)}{0.1cm}
\end{pgftranslate}
\end{pgfmagnify}
\end{pgfpicture}
\caption{Effaceability: given $\{t_1,...,t_r\}$, $W$, and $Z$, there exist $U$ and $Z'$.}
\label{figeffaceability}
\end{figure}
%&&&&&&&&&&&&&&&&&&&&&&&&&&&&&&&&&&&&&&&&&&&&&&&&&&&&&&&&&&&&&&&&&&&&&&&&&&&&&&&&&&
%&&&&&&&&&&&&&&&&&&&&&&&&&&&&&&&&&&&&&&&&&&&&&&&&&&&&&&&&&&&&&&&&&&&&&&&&&&&&&&&&&&
%&&&                    &&&       &&&&&    &&&                &&&&&&&&&&&&&&&&&&&&&&&&&&&&&&&&&&&&&&&&&&&&&&&&&&&
%&&&    &&&&&&&&&    &    &&&&   &&&    &&&&    &&&&&&&&&&&&&&&&&&&&&&&&&&&&&&&&&&&&&&&&&&&&&&&&&& 
%&&&    &&&&&&&&&    &&    &&&   &&&    &&&&&    &&&&&&&&&&&&&&&&&&&&&&&&&&&&&&&&&&&&&&&&&&&&&&&&& 
%&&&                 &&&&    &&&    &&   &&&    &&&&&    &&&&&&&&&&&&&&&&&&&&&&&&&&&&&&&&&&&&&&&&&&&&&&&&& 
%&&&    &&&&&&&&&    &&&&    &   &&&    &&&&&    &&&&&&&&&&&&&&&&&&&&&&&&&&&&&&&&&&&&&&&&&&&&&&&&& 
%&&&    &&&&&&&&&    &&&&&       &&&    &&&&     &&&&&&&&&&&&&&&&&&&&&&&&&&&&&&&&&&&&&&&&&&&&&&&&& 
%&&&                    &&&    &&&&&&    &&&                  &&&&&&&&&&&&&&&&&&&&&&&&&&&&&&&&&&&&&&&&&&&&&&&&&&
%&&&&&&&&&&&&&&&&&&&&&&&&&&&&&&&&&&&&&&&&&&&&&&&&&&&&&&&&&&&&&&&&&&&&&&&&&&&&&&&&&&
%&&&&&&&&&&&&&&&&&&&&&&&&&&&&&&&&&&&&&&&&&&&&&&&&&&&&&&&&&&&&&&&&&&&&&&&&&&&&&&&&&&

{\bf Effacement Theorems.}  Colliot-Th\'el\`ene, Hoobler, and Kahn \cite{CHKBloch-Ogus-Gabber97} prove several different versions of an {\it effacement theorem} giving conditions under which cohomology theories with supports, or substrata, are effaceable.   These different versions offer a balance between generality and ease of application.  Algebraic $K$-theory and negative cyclic homology satisfy somewhat stronger hypotheses than those required for the most general effacement theorems, so easier versions may be used for these theories.   The effacement conditions I will use for cohomology theories with supports are called {\it \'etale excision} and the {\it cohomological projective bundle condition}.  The effacement conditions I will use for substrata are called the {\it \'etale Mayer-Veitoris condition} and the {\it projective bundle condition for substrata}.\footnotemark\footnotetext{Colliot-Th\'el\`ene, Hoobler, and Kahn \cite{CHKBloch-Ogus-Gabber97} offer {\it five} different conditions on cohomology theories with supports, labeled {\bf COH1} through {\bf COH5}, and five conditions on substrata, labeled {\bf SUB1} through {\bf SUB5}.  Appropriate pairs of these conditions ensure effaceability.  Here, {\bf COH1} and {\bf SUB1} are \'etale excision and the \'etale Mayer-Veitoris condition, respectively, while {\bf COH5} and {\bf SUB5} are the projective bundle conditions for cohomology theories with supports and substrata, respectively.  The conditions {\bf COH2} and {\bf SUB2} are called ``key lemmas" for the corresponding theories.  The most general effacement theorem appearing in the paper \cite{CHKBloch-Ogus-Gabber97}, namely Theorem 5.1.10 on page 28, states that a cohomology theory with supports satisfying {\bf COH1} and {\bf COH2}, or a substratum satisfying {\bf SUB1} and {\bf SUB2}, is effaceable.  Other versions of the effacement theorem appearing in the paper replace {\bf COH2} for cohomology theories with supports, and {\bf SUB@} for substrata, with less general alternative conditions that are easier to apply.   The alternative conditions I will use here are {\bf COH5} and {\bf SUB5}.}

%SSSSSSSSSSSSSSSSSSSSSSSSSSSSSSSSSSSSSSSSSSSSSSSSSSSSSSSSSSSSSSSSSSSSSSSSSSSSSSSSSS
%SSSSSSSSSSSSSSSSSSSSSSSSSSSSSSSSSSSSSSSSSSSSSSSSSSSSSSSSSSSSSSSSSSSSSSSSSSSSSSSSSS
%SSS                      SSS                       SSS                    SS                          SS           SSS                         SSS          SSSSS      SSSS
%SSS      SSSSSSSSS    SSSSSSSSSS     SSSSSSSSSSSS     SSSSSSS     SSSS    SSSSSS     SSS     S     SSSS     SSSSS
%SSS      SSSSSSSSS    SSSSSSSSSS     SSSSSSSSSSSS     SSSSSSS     SSSS    SSSSSS     SSS     SS     SSS     SSSSS
%SSS                      SSS                  SSSSS    SSSSSSSSSSSS     SSSSSSS     SSSS    SSSSSS     SSS     SSS     SS     SSSSS
%SSSSSSSSSS    SSS    SSSSSSSSSS    SSSSSSSSSSSS     SSSSSSS     SSSS    SSSSSS     SSS     SSSS     S     SSSSS
%SSSSSSSSSS    SSS    SSSSSSSSSS    SSSSSSSSSSSS     SSSSSSS     SSSS    SSSSSS     SSS     SSSSS          SSSSS
%SSS                       SSS                      SSS                       SSSSS     SSSSSS          SSS                         SSS     SSSSSS        SSSSS
%SSSSSSSSSSSSSSSSSSSSSSSSSSSSSSSSSSSSSSSSSSSSSSSSSSSSSSSSSSSSSSSSSSSSSSSSSSSSSSSSSS
%SSSSSSSSSSSSSSSSSSSSSSSSSSSSSSSSSSSSSSSSSSSSSSSSSSSSSSSSSSSSSSSSSSSSSSSSSSSSSSSSSS

\label{effaceabilityconditionscohom}

{\bf Effaceability for Cohomology Theories with Supports.}  Let $H$ and $H'$ be cohomology theories with supports on the category of pairs $\mbf{P}_k$ over the distinguished category of schemes $\mbf{S}_k$.  Following Colliot-Th\'el\`ene, Hoobler, and Kahn \cite{CHKBloch-Ogus-Gabber97}, I will specify a condition {\bf COH1}, involving the behavior of $H$ with respect to \'etale covers of $X$, and a condition {\bf COH5}, involving the behavior of $H$ and $H'$ with respect to projective bundles over subsets of $X$.   

\begin{defi}\label{defiCOH1} The cohomology theory with supports $H$ satisfies the condition {\bf COH1}, called {\bf \'etale excision}, if $H$ is additive, and if for any diagram of the form shown below, where $f$ is \'etale and $f^{-1}(Z)\rightarrow Z$ is an isomorphism, the induced map $f^*:H_{X\tn{\footnotesize{ on }} Z}^q \rightarrow H_{X'\tn{\footnotesize{ on }} Z}^q$ is an isomorphism for all $q$:

%&&&&&&&&&&&&&&&&&&&&&&&&&&&&&&&&&&&&&&&&&&&&&&&&&&&&&&&&&&&&&&&&&&&&&&&&&&&&&&&&&&
%&&&&&&&&&&&&&&&&&&&&&&&&&&&&&&&&&&&&&&&&&&&&&&&&&&&&&&&&&&&&&&&&&&&&&&&&&&&&&&&&&&
%&&&&                 &&&&    &&&               &&&                    &&&    &&&&    &&&              &&&&                   &&&&&&&&&&&&&&&&&&
%&&&&   &&&&   &&&&    &&&   &&&&&&&&&&&   &&&&&&    &&&&    &&&    &&&     &&&    &&&&&&&&&&&&&&&&&&&&&&&&
%&&&&   &&&&   &&&&    &&&   &&&&&&&&&&&   &&&&&&    &&&&    &&&    &&&     &&&    &&&&&&&&&&&&&&&&&&&&&&&&
%&&&&                 &&&&    &&&   &&&&&&&&&&&   &&&&&&    &&&&    &&&            &&&&&             &&&&&&&&&&&&&&&&&&&&
%&&&&    &&&&&&&&&    &&&   &&&&&&&&&&&   &&&&&&    &&&&    &&&    &&&   &&&&    &&&&&&&&&&&&&&&&&&&&&&&
%&&&&    &&&&&&&&&    &&&   &&&&&&&&&&&   &&&&&&    &&&&    &&&    &&&&   &&&    &&&&&&&&&&&&&&&&&&&&&&&
%&&&&    &&&&&&&&&    &&&                &&&&&&   &&&&&&                   &&&    &&&&&   &&                  &&&&&&&&&&&&&&&&&&
%&&&&&&&&&&&&&&&&&&&&&&&&&&&&&&&&&&&&&&&&&&&&&&&&&&&&&&&&&&&&&&&&&&&&&&&&&&&&&&&&&&
%&&&&&&&&&&&&&&&&&&&&&&&&&&&&&&&&&&&&&&&&&&&&&&&&&&&&&&&&&&&&&&&&&&&&&&&&&&&&&&&&&&
\begin{pgfpicture}{0cm}{0cm}{17cm}{2cm}
\begin{pgftranslate}{\pgfpoint{5.5cm}{-.75cm}}

\pgfputat{\pgfxy(2.75,2.5)}{\pgfbox[center,center]{$X'$}}
\pgfputat{\pgfxy(1.5,1)}{\pgfbox[center,center]{$Z$}}
\pgfputat{\pgfxy(2.75,1)}{\pgfbox[center,center]{$X$}}
\pgfputat{\pgfxy(2.95,1.8)}{\pgfbox[center,center]{$f$}}
\pgfsetendarrow{\pgfarrowlargepointed{3pt}}
\pgfxyline(2.75,2.2)(2.75,1.3)
\pgfxyline(1.8,1)(2.5,1)
\pgfxyline(1.6,1.3)(2.4,2.2)
\end{pgftranslate}

\end{pgfpicture}
%&&&&&&&&&&&&&&&&&&&&&&&&&&&&&&&&&&&&&&&&&&&&&&&&&&&&&&&&&&&&&&&&&&&&&&&&&&&&&&&&&&
%&&&&&&&&&&&&&&&&&&&&&&&&&&&&&&&&&&&&&&&&&&&&&&&&&&&&&&&&&&&&&&&&&&&&&&&&&&&&&&&&&&
%&&&                    &&&       &&&&&    &&&                &&&&&&&&&&&&&&&&&&&&&&&&&&&&&&&&&&&&&&&&&&&&&&&&&&&
%&&&    &&&&&&&&&    &    &&&&   &&&    &&&&    &&&&&&&&&&&&&&&&&&&&&&&&&&&&&&&&&&&&&&&&&&&&&&&&&& 
%&&&    &&&&&&&&&    &&    &&&   &&&    &&&&&    &&&&&&&&&&&&&&&&&&&&&&&&&&&&&&&&&&&&&&&&&&&&&&&&& 
%&&&                 &&&&    &&&    &&   &&&    &&&&&    &&&&&&&&&&&&&&&&&&&&&&&&&&&&&&&&&&&&&&&&&&&&&&&&& 
%&&&    &&&&&&&&&    &&&&    &   &&&    &&&&&    &&&&&&&&&&&&&&&&&&&&&&&&&&&&&&&&&&&&&&&&&&&&&&&&& 
%&&&    &&&&&&&&&    &&&&&       &&&    &&&&     &&&&&&&&&&&&&&&&&&&&&&&&&&&&&&&&&&&&&&&&&&&&&&&&& 
%&&&                    &&&    &&&&&&    &&&                  &&&&&&&&&&&&&&&&&&&&&&&&&&&&&&&&&&&&&&&&&&&&&&&&&&
%&&&&&&&&&&&&&&&&&&&&&&&&&&&&&&&&&&&&&&&&&&&&&&&&&&&&&&&&&&&&&&&&&&&&&&&&&&&&&&&&&&
%&&&&&&&&&&&&&&&&&&&&&&&&&&&&&&&&&&&&&&&&&&&&&&&&&&&&&&&&&&&&&&&&&&&&&&&&&&&&&&&&&&

\end{defi}

Some preliminary work is required before stating the condition {\bf COH5}.  Let $V$ be an open subset of the $n$-dimensional affine space $\AA_k^n$ over $k$ for some $n$, and let the diagram

%&&&&&&&&&&&&&&&&&&&&&&&&&&&&&&&&&&&&&&&&&&&&&&&&&&&&&&&&&&&&&&&&&&&&&&&&&&&&&&&&&&
%&&&&&&&&&&&&&&&&&&&&&&&&&&&&&&&&&&&&&&&&&&&&&&&&&&&&&&&&&&&&&&&&&&&&&&&&&&&&&&&&&&
%&&&&                 &&&&    &&&               &&&                    &&&    &&&&    &&&              &&&&                   &&&&&&&&&&&&&&&&&&
%&&&&   &&&&   &&&&    &&&   &&&&&&&&&&&   &&&&&&    &&&&    &&&    &&&     &&&    &&&&&&&&&&&&&&&&&&&&&&&&
%&&&&   &&&&   &&&&    &&&   &&&&&&&&&&&   &&&&&&    &&&&    &&&    &&&     &&&    &&&&&&&&&&&&&&&&&&&&&&&&
%&&&&                 &&&&    &&&   &&&&&&&&&&&   &&&&&&    &&&&    &&&            &&&&&             &&&&&&&&&&&&&&&&&&&&
%&&&&    &&&&&&&&&    &&&   &&&&&&&&&&&   &&&&&&    &&&&    &&&    &&&   &&&&    &&&&&&&&&&&&&&&&&&&&&&&
%&&&&    &&&&&&&&&    &&&   &&&&&&&&&&&   &&&&&&    &&&&    &&&    &&&&   &&&    &&&&&&&&&&&&&&&&&&&&&&&
%&&&&    &&&&&&&&&    &&&                &&&&&&   &&&&&&                   &&&    &&&&&   &&                  &&&&&&&&&&&&&&&&&&
%&&&&&&&&&&&&&&&&&&&&&&&&&&&&&&&&&&&&&&&&&&&&&&&&&&&&&&&&&&&&&&&&&&&&&&&&&&&&&&&&&&
%&&&&&&&&&&&&&&&&&&&&&&&&&&&&&&&&&&&&&&&&&&&&&&&&&&&&&&&&&&&&&&&&&&&&&&&&&&&&&&&&&&
\begin{pgfpicture}{0cm}{0cm}{17cm}{2.25cm}
\begin{pgftranslate}{\pgfpoint{5cm}{-.75cm}}
\pgfputat{\pgfxy(1.5,2.5)}{\pgfbox[center,center]{$\AA_V^1$}}
\pgfputat{\pgfxy(3.25,2.5)}{\pgfbox[center,center]{$\PP_V^1$}}
\pgfputat{\pgfxy(5,2.5)}{\pgfbox[center,center]{$V$}}
\pgfputat{\pgfxy(3.25,1)}{\pgfbox[center,center]{$V$}}
\pgfputat{\pgfxy(2.3,2.75)}{\pgfbox[center,center]{$j$}}
\pgfputat{\pgfxy(4.25,2.7)}{\pgfbox[center,center]{$s_\infty$}}
\pgfputat{\pgfxy(2.1,1.65)}{\pgfbox[center,center]{$\pi$}}
\pgfputat{\pgfxy(3.5,1.9)}{\pgfbox[center,center]{$\tilde{\pi}$}}
\pgfputat{\pgfxy(4.45,1.65)}{\pgfbox[center,center]{$=$}}
\pgfsetendarrow{\pgfarrowlargepointed{3pt}}
\pgfxyline(1.85,2.5)(2.85,2.5)
\pgfxyline(4.75,2.5)(3.65,2.5)
\pgfxyline(1.7,2.2)(3,1.2)
\pgfxyline(4.7,2.2)(3.5,1.2)
\pgfxyline(3.25,2.2)(3.25,1.3)
\end{pgftranslate}
\end{pgfpicture}
%&&&&&&&&&&&&&&&&&&&&&&&&&&&&&&&&&&&&&&&&&&&&&&&&&&&&&&&&&&&&&&&&&&&&&&&&&&&&&&&&&&
%&&&&&&&&&&&&&&&&&&&&&&&&&&&&&&&&&&&&&&&&&&&&&&&&&&&&&&&&&&&&&&&&&&&&&&&&&&&&&&&&&&
%&&&                    &&&       &&&&&    &&&                &&&&&&&&&&&&&&&&&&&&&&&&&&&&&&&&&&&&&&&&&&&&&&&&&&&
%&&&    &&&&&&&&&    &    &&&&   &&&    &&&&    &&&&&&&&&&&&&&&&&&&&&&&&&&&&&&&&&&&&&&&&&&&&&&&&&& 
%&&&    &&&&&&&&&    &&    &&&   &&&    &&&&&    &&&&&&&&&&&&&&&&&&&&&&&&&&&&&&&&&&&&&&&&&&&&&&&&& 
%&&&                 &&&&    &&&    &&   &&&    &&&&&    &&&&&&&&&&&&&&&&&&&&&&&&&&&&&&&&&&&&&&&&&&&&&&&&& 
%&&&    &&&&&&&&&    &&&&    &   &&&    &&&&&    &&&&&&&&&&&&&&&&&&&&&&&&&&&&&&&&&&&&&&&&&&&&&&&&& 
%&&&    &&&&&&&&&    &&&&&       &&&    &&&&     &&&&&&&&&&&&&&&&&&&&&&&&&&&&&&&&&&&&&&&&&&&&&&&&& 
%&&&                    &&&    &&&&&&    &&&                  &&&&&&&&&&&&&&&&&&&&&&&&&&&&&&&&&&&&&&&&&&&&&&&&&&
%&&&&&&&&&&&&&&&&&&&&&&&&&&&&&&&&&&&&&&&&&&&&&&&&&&&&&&&&&&&&&&&&&&&&&&&&&&&&&&&&&&
%&&&&&&&&&&&&&&&&&&&&&&&&&&&&&&&&&&&&&&&&&&&&&&&&&&&&&&&&&&&&&&&&&&&&&&&&&&&&&&&&&&

represent the inclusion of $\AA_k^1$ and the section at infinity into the projective space $\PP_V^1$ over $V$.  Let $F$ be a closed subset of $V$. Let $H$ and $H'$ be cohomology theories with supports on the category $\mbf{P}_k$ of pairs over the distinguished category $\mc{S}_k$ of schemes, with values in a common abelian category $\mbf{A}$, such that for any pair $(X,Z)$ there exists a map
\begin{equation}\label{equprojbundlecohom1}\xymatrixcolsep{2pc}\xymatrix{\tn{Pic}_X\ar[r]&\tn{Hom}_{\mbf{A}}(H_{X\tn{\footnotesize{ on }} Z}^{'*},H_{X\tn{\footnotesize{ on }} Z}^*),}\end{equation}
which is functorial for pairs $(X,Z)$.  Taking $X=\PP_V^1$, $Z=\PP_F^1$, there is a homomorphism
\begin{equation}\label{equprojbundlecohom2}\xymatrixcolsep{4.5pc}\xymatrix{H_{\PP_V^1\tn{\footnotesize{ on }} \PP_F^1}^{'*} \ar[r]^{[\mc{O}(1)]-[\mc{O}]}&H_{\PP_V^1\tn{\footnotesize{ on }} \PP_F^1}^*}.\end{equation}
Composing with $\tilde{\pi}^*$, there is a homomorphism 
\begin{equation}\label{equprojbundlecohom3}\xymatrixcolsep{4pc}\xymatrix{ H_{V\tn{\footnotesize{ on }} F}^{'*}  \ar[r]^{\alpha_{V,F}} &H_{\PP_V^1\tn{\footnotesize{ on }} \PP_F^1}^*},\end{equation}
which is functorial for pairs $(V,F)$.

\begin{defi}\label{defiCOH5} The pair of cohomology theories with supports $(H,H')$ satisfy the condition {\bf COH5}, called the {\bf cohomological projective bundle formula}, if for $V$, $F$, $\tilde{\pi}$ as given above, the natural map
\begin{equation}\label{equprojbundlecohom4}\xymatrixcolsep{4.5pc}\xymatrix{H_{V\tn{\footnotesize{ on }} F}^q\oplus H_{V\tn{\footnotesize{ on }} F}^{'q} \ar[r]^-{\tilde{\pi}^*,\hspace*{.05cm}\alpha_{V,F}} &H_{\PP_V^1\tn{\footnotesize{ on }} \PP_F^1}^q}\end{equation}
is an isomorphism for all $q$.  In particular, if the pair $(H,H)$, satisfies {\bf COH5}, one says that $H$ satisfies {\bf COH5}. 
\end{defi}

%SSSSSSSSSSSSSSSSSSSSSSSSSSSSSSSSSSSSSSSSSSSSSSSSSSSSSSSSSSSSSSSSSSSSSSSSSSSSSSSSSS
%SSSSSSSSSSSSSSSSSSSSSSSSSSSSSSSSSSSSSSSSSSSSSSSSSSSSSSSSSSSSSSSSSSSSSSSSSSSSSSSSSS
%SSSSSSSSSSSSSSSSSSSSSSSSSSSSSSSSSSSSSSSSSSSSSSSSSSSSSSSSSSSSSSSSSSSSSSSSSSSSSSSSSS
%SSS                      SSS                       SSS                    SS                          SS           SSS                         SSS          SSSSS      SSSS
%SSS      SSSSSSSSS    SSSSSSSSSS     SSSSSSSSSSSS     SSSSSSS     SSSS    SSSSSS     SSS     S     SSSS     SSSSS
%SSS      SSSSSSSSS    SSSSSSSSSS     SSSSSSSSSSSS     SSSSSSS     SSSS    SSSSSS     SSS     SS     SSS     SSSSS
%SSS                      SSS                  SSSSS    SSSSSSSSSSSS     SSSSSSS     SSSS    SSSSSS     SSS     SSS     SS     SSSSS
%SSSSSSSSSS    SSS    SSSSSSSSSS    SSSSSSSSSSSS     SSSSSSS     SSSS    SSSSSS     SSS     SSSS     S     SSSSS
%SSSSSSSSSS    SSS    SSSSSSSSSS    SSSSSSSSSSSS     SSSSSSS     SSSS    SSSSSS     SSS     SSSSS          SSSSS
%SSS                       SSS                      SSS                       SSSSS     SSSSSS          SSS                         SSS     SSSSSS        SSSSS
%SSSSSSSSSSSSSSSSSSSSSSSSSSSSSSSSSSSSSSSSSSSSSSSSSSSSSSSSSSSSSSSSSSSSSSSSSSSSSSSSSS
%SSSSSSSSSSSSSSSSSSSSSSSSSSSSSSSSSSSSSSSSSSSSSSSSSSSSSSSSSSSSSSSSSSSSSSSSSSSSSSSSSS

\label{effaceabilityconditionssubstrat}

{\bf Effaceability for Substrata.} Let $C$ and $C'$ be substrata on $\mbf{P}_k$.  Again following \cite{CHKBloch-Ogus-Gabber97}, I specify a condition {\bf SUB1}, involving the behavior of $C$ with respect to \'etale covers of $X$, and a condition {\bf SUB5}, involving the behavior of $C$ and $C'$ with respect to projective bundles over subsets of $X$.   

\begin{defi}\label{defiSUB1} The substratum $C$ satisfies the condition {\bf SUB1}, called the {\bf \'etale Mayer-Vietoris condition}, if $C$ is additive, and if and for any diagram of the form shown below on the left, where $f$ is \'etale and $f^{-1}(Z)\rightarrow Z$ is an isomorphism, the commutative square shown below on the right is homotopy cartesian:

%&&&&&&&&&&&&&&&&&&&&&&&&&&&&&&&&&&&&&&&&&&&&&&&&&&&&&&&&&&&&&&&&&&&&&&&&&&&&&&&&&&
%&&&&&&&&&&&&&&&&&&&&&&&&&&&&&&&&&&&&&&&&&&&&&&&&&&&&&&&&&&&&&&&&&&&&&&&&&&&&&&&&&&
%&&&&                 &&&&    &&&               &&&                    &&&    &&&&    &&&              &&&&                   &&&&&&&&&&&&&&&&&&
%&&&&   &&&&   &&&&    &&&   &&&&&&&&&&&   &&&&&&    &&&&    &&&    &&&     &&&    &&&&&&&&&&&&&&&&&&&&&&&&
%&&&&   &&&&   &&&&    &&&   &&&&&&&&&&&   &&&&&&    &&&&    &&&    &&&     &&&    &&&&&&&&&&&&&&&&&&&&&&&&
%&&&&                 &&&&    &&&   &&&&&&&&&&&   &&&&&&    &&&&    &&&            &&&&&             &&&&&&&&&&&&&&&&&&&&
%&&&&    &&&&&&&&&    &&&   &&&&&&&&&&&   &&&&&&    &&&&    &&&    &&&   &&&&    &&&&&&&&&&&&&&&&&&&&&&&
%&&&&    &&&&&&&&&    &&&   &&&&&&&&&&&   &&&&&&    &&&&    &&&    &&&&   &&&    &&&&&&&&&&&&&&&&&&&&&&&
%&&&&    &&&&&&&&&    &&&                &&&&&&   &&&&&&                   &&&    &&&&&   &&                  &&&&&&&&&&&&&&&&&&
%&&&&&&&&&&&&&&&&&&&&&&&&&&&&&&&&&&&&&&&&&&&&&&&&&&&&&&&&&&&&&&&&&&&&&&&&&&&&&&&&&&
%&&&&&&&&&&&&&&&&&&&&&&&&&&&&&&&&&&&&&&&&&&&&&&&&&&&&&&&&&&&&&&&&&&&&&&&&&&&&&&&&&&
\begin{pgfpicture}{0cm}{0cm}{17cm}{2.25cm}
%\pgfxyline(0,0)(14.5,0)
%\pgfxyline(0,4.4)(14.5,4.4)
%\pgfxyline(0,0)(0,4.4)
%\pgfxyline(14.5,0)(14.5,4.4)
\begin{pgftranslate}{\pgfpoint{2.5cm}{-.75cm}}
\pgfputat{\pgfxy(2.75,2.5)}{\pgfbox[center,center]{$X'$}}
\pgfputat{\pgfxy(1.5,1)}{\pgfbox[center,center]{$Z$}}
\pgfputat{\pgfxy(2.75,1)}{\pgfbox[center,center]{$X$}}
\pgfputat{\pgfxy(2.95,1.8)}{\pgfbox[center,center]{$f$}}
\pgfsetendarrow{\pgfarrowlargepointed{3pt}}
\pgfxyline(2.75,2.2)(2.75,1.3)
\pgfxyline(1.8,1)(2.5,1)
\pgfxyline(1.6,1.3)(2.4,2.2)
\end{pgftranslate}
\begin{pgftranslate}{\pgfpoint{6.5cm}{-.75cm}}
\pgfputat{\pgfxy(1.5,2.5)}{\pgfbox[center,center]{$C_{X'}$}}
\pgfputat{\pgfxy(4,2.5)}{\pgfbox[center,center]{$C_{X'-Z}$}}
\pgfputat{\pgfxy(1.5,1)}{\pgfbox[center,center]{$C_{X}$}}
\pgfputat{\pgfxy(4,1)}{\pgfbox[center,center]{$C_{X-Z}$}}
\pgfputat{\pgfxy(2.55,2.7)}{\pgfbox[center,center]{$v$}}
\pgfputat{\pgfxy(2.55,1.2)}{\pgfbox[center,center]{$u$}}
\pgfsetendarrow{\pgfarrowlargepointed{3pt}}
\pgfxyline(1.5,1.3)(1.5,2.2)
\pgfxyline(4,1.3)(4,2.2)
\pgfxyline(2.1,2.5)(2.95,2.5)
\pgfxyline(2.1,1)(2.95,1)
\end{pgftranslate}
\end{pgfpicture}
%&&&&&&&&&&&&&&&&&&&&&&&&&&&&&&&&&&&&&&&&&&&&&&&&&&&&&&&&&&&&&&&&&&&&&&&&&&&&&&&&&&
%&&&&&&&&&&&&&&&&&&&&&&&&&&&&&&&&&&&&&&&&&&&&&&&&&&&&&&&&&&&&&&&&&&&&&&&&&&&&&&&&&&
%&&&                    &&&       &&&&&    &&&                &&&&&&&&&&&&&&&&&&&&&&&&&&&&&&&&&&&&&&&&&&&&&&&&&&&
%&&&    &&&&&&&&&    &    &&&&   &&&    &&&&    &&&&&&&&&&&&&&&&&&&&&&&&&&&&&&&&&&&&&&&&&&&&&&&&&& 
%&&&    &&&&&&&&&    &&    &&&   &&&    &&&&&    &&&&&&&&&&&&&&&&&&&&&&&&&&&&&&&&&&&&&&&&&&&&&&&&& 
%&&&                 &&&&    &&&    &&   &&&    &&&&&    &&&&&&&&&&&&&&&&&&&&&&&&&&&&&&&&&&&&&&&&&&&&&&&&& 
%&&&    &&&&&&&&&    &&&&    &   &&&    &&&&&    &&&&&&&&&&&&&&&&&&&&&&&&&&&&&&&&&&&&&&&&&&&&&&&&& 
%&&&    &&&&&&&&&    &&&&&       &&&    &&&&     &&&&&&&&&&&&&&&&&&&&&&&&&&&&&&&&&&&&&&&&&&&&&&&&& 
%&&&                    &&&    &&&&&&    &&&                  &&&&&&&&&&&&&&&&&&&&&&&&&&&&&&&&&&&&&&&&&&&&&&&&&&
%&&&&&&&&&&&&&&&&&&&&&&&&&&&&&&&&&&&&&&&&&&&&&&&&&&&&&&&&&&&&&&&&&&&&&&&&&&&&&&&&&&
%&&&&&&&&&&&&&&&&&&&&&&&&&&&&&&&&&&&&&&&&&&&&&&&&&&&&&&&&&&&&&&&&&&&&&&&&&&&&&&&&&&

\end{defi}

A useful fact cited in \cite{CHKBloch-Ogus-Gabber97}, lemma 5.1.2, page 25, is that the above square is homotopy cartesian if and only if the induced map
\[\xymatrix{C_{X\tn{ \footnotesize{on} } Z} \ar[r]^{f} &C_{X'\tn{ \footnotesize{on} } Z}}\]
is a homotopy equivalence. 

Some preliminary work is required before stating condition {\bf SUB5}. As in the case of cohomology theories with supports, let $V$ be an open subset of $\AA_k^n$ for some $n$, and let the diagram

%&&&&&&&&&&&&&&&&&&&&&&&&&&&&&&&&&&&&&&&&&&&&&&&&&&&&&&&&&&&&&&&&&&&&&&&&&&&&&&&&&&
%&&&&&&&&&&&&&&&&&&&&&&&&&&&&&&&&&&&&&&&&&&&&&&&&&&&&&&&&&&&&&&&&&&&&&&&&&&&&&&&&&&
%&&&&                 &&&&    &&&               &&&                    &&&    &&&&    &&&              &&&&                   &&&&&&&&&&&&&&&&&&
%&&&&   &&&&   &&&&    &&&   &&&&&&&&&&&   &&&&&&    &&&&    &&&    &&&     &&&    &&&&&&&&&&&&&&&&&&&&&&&&
%&&&&   &&&&   &&&&    &&&   &&&&&&&&&&&   &&&&&&    &&&&    &&&    &&&     &&&    &&&&&&&&&&&&&&&&&&&&&&&&
%&&&&                 &&&&    &&&   &&&&&&&&&&&   &&&&&&    &&&&    &&&            &&&&&             &&&&&&&&&&&&&&&&&&&&
%&&&&    &&&&&&&&&    &&&   &&&&&&&&&&&   &&&&&&    &&&&    &&&    &&&   &&&&    &&&&&&&&&&&&&&&&&&&&&&&
%&&&&    &&&&&&&&&    &&&   &&&&&&&&&&&   &&&&&&    &&&&    &&&    &&&&   &&&    &&&&&&&&&&&&&&&&&&&&&&&
%&&&&    &&&&&&&&&    &&&                &&&&&&   &&&&&&                   &&&    &&&&&   &&                  &&&&&&&&&&&&&&&&&&
%&&&&&&&&&&&&&&&&&&&&&&&&&&&&&&&&&&&&&&&&&&&&&&&&&&&&&&&&&&&&&&&&&&&&&&&&&&&&&&&&&&
%&&&&&&&&&&&&&&&&&&&&&&&&&&&&&&&&&&&&&&&&&&&&&&&&&&&&&&&&&&&&&&&&&&&&&&&&&&&&&&&&&&
\begin{pgfpicture}{0cm}{0cm}{17cm}{2.25cm}
\begin{pgftranslate}{\pgfpoint{5cm}{-.75cm}}
\pgfputat{\pgfxy(1.5,2.5)}{\pgfbox[center,center]{$\AA_V^1$}}
\pgfputat{\pgfxy(3.25,2.5)}{\pgfbox[center,center]{$\PP_V^1$}}
\pgfputat{\pgfxy(5,2.5)}{\pgfbox[center,center]{$V$}}
\pgfputat{\pgfxy(3.25,1)}{\pgfbox[center,center]{$V$}}
\pgfputat{\pgfxy(2.3,2.75)}{\pgfbox[center,center]{$j$}}
\pgfputat{\pgfxy(4.25,2.7)}{\pgfbox[center,center]{$s_\infty$}}
\pgfputat{\pgfxy(2.1,1.65)}{\pgfbox[center,center]{$\pi$}}
\pgfputat{\pgfxy(3.5,1.9)}{\pgfbox[center,center]{$\tilde{\pi}$}}
\pgfputat{\pgfxy(4.45,1.65)}{\pgfbox[center,center]{$=$}}
\pgfsetendarrow{\pgfarrowlargepointed{3pt}}
\pgfxyline(1.85,2.5)(2.85,2.5)
\pgfxyline(4.75,2.5)(3.65,2.5)
\pgfxyline(1.7,2.2)(3,1.2)
\pgfxyline(4.7,2.2)(3.5,1.2)
\pgfxyline(3.25,2.2)(3.25,1.3)
\end{pgftranslate}
\end{pgfpicture}
%&&&&&&&&&&&&&&&&&&&&&&&&&&&&&&&&&&&&&&&&&&&&&&&&&&&&&&&&&&&&&&&&&&&&&&&&&&&&&&&&&&
%&&&&&&&&&&&&&&&&&&&&&&&&&&&&&&&&&&&&&&&&&&&&&&&&&&&&&&&&&&&&&&&&&&&&&&&&&&&&&&&&&&
%&&&                    &&&       &&&&&    &&&                &&&&&&&&&&&&&&&&&&&&&&&&&&&&&&&&&&&&&&&&&&&&&&&&&&&
%&&&    &&&&&&&&&    &    &&&&   &&&    &&&&    &&&&&&&&&&&&&&&&&&&&&&&&&&&&&&&&&&&&&&&&&&&&&&&&&& 
%&&&    &&&&&&&&&    &&    &&&   &&&    &&&&&    &&&&&&&&&&&&&&&&&&&&&&&&&&&&&&&&&&&&&&&&&&&&&&&&& 
%&&&                 &&&&    &&&    &&   &&&    &&&&&    &&&&&&&&&&&&&&&&&&&&&&&&&&&&&&&&&&&&&&&&&&&&&&&&& 
%&&&    &&&&&&&&&    &&&&    &   &&&    &&&&&    &&&&&&&&&&&&&&&&&&&&&&&&&&&&&&&&&&&&&&&&&&&&&&&&& 
%&&&    &&&&&&&&&    &&&&&       &&&    &&&&     &&&&&&&&&&&&&&&&&&&&&&&&&&&&&&&&&&&&&&&&&&&&&&&&& 
%&&&                    &&&    &&&&&&    &&&                  &&&&&&&&&&&&&&&&&&&&&&&&&&&&&&&&&&&&&&&&&&&&&&&&&&
%&&&&&&&&&&&&&&&&&&&&&&&&&&&&&&&&&&&&&&&&&&&&&&&&&&&&&&&&&&&&&&&&&&&&&&&&&&&&&&&&&&
%&&&&&&&&&&&&&&&&&&&&&&&&&&&&&&&&&&&&&&&&&&&&&&&&&&&&&&&&&&&&&&&&&&&&&&&&&&&&&&&&&&

represent the inclusion of $\AA_k^1$ and the section at infinity into $\PP_V^1$.  Let $F$ be a closed subset of $V$. Let $C$ and $C'$ be substrata on the category $\mbf{P}_k$ of pairs over the distinguished category $\mbf{S}_k$ of schemes, with values in an appropriate category $\mbf{E}$ of complexes or spectra over a common abelian category $\mbf{A}$, such that for any $X$ in $\mbf{S}_k$ there exists a map
\begin{equation}\label{equprojbundlesub1}\xymatrixcolsep{2pc}\xymatrix{\tn{Pic}_X\ar[r]&\tn{Hom}_{\mbf{E}}(C_X',C_X),}\end{equation}
which is functorial for $X$.  Taking $X=\PP_V^1$, there exists a map 
\begin{equation}\label{equprojbundlesub2}\xymatrixcolsep{5pc}\xymatrix{ C_{\PP_V^1}' \ar[r]^{[\mc{O}(1)]-[\mc{O}]} &C_{\PP_V^1}}.\end{equation}
Hence, composing with $\tilde{\pi}^*$, there exists a map 
\begin{equation}\label{equprojbundlesub3}\xymatrixcolsep{4pc}\xymatrix{ C_V' \ar[r]^{\alpha_{V}} &C_{\PP_V^1}},\end{equation}
functorial for $V$.  For spectra, these maps are in the {\it stable homotopy category}. 

\newpage

\begin{defi}\label{defiSUB5} The pair of substrata $(C,C')$ satisfy the condition {\bf COH5}, called the {\bf projective bundle formula for substrata}, if for $V$, $\tilde{\pi}$ as given above, the natural maps
\begin{equation}\label{equprojbundlesub3}\xymatrixcolsep{4.5pc}\xymatrix{C_V\oplus C_V' \ar[r]^-{\tilde{\pi}^*,\alpha_{V}} &C_{\PP_V^1}\hspace*{1cm}\tn{for complexes}}\end{equation}
\begin{equation}\label{equprojbundlesub4}\xymatrixcolsep{4.5pc}\xymatrix{C_V\vee C_V' \ar[r]^-{\tilde{\pi}^*,\alpha_{V}} &C_{\PP_V^1}\hspace*{1cm}\tn{for substrata}}\end{equation}
are homotopy equivalences.  In particular, if the pair $(C,C)$, satisfies {\bf SUB5}, one says that $C$ satisfies {\bf SUB5}. 
\end{defi}

\subsection{Effaceability for Bass-Thomason $K$-Theory and \\ Negative Cyclic Homology}\label{subsectioneffaceabilitynonconnectiveK}

The effaceability conditions for Bass-Thomason $K$-theory follow directly from some of the principal results of Thomason's seminal paper \cite{Thomason-Trobaugh90}, particularly {\it Thomason's localization theorem} (\cite{Thomason-Trobaugh90}, theorem 7.4).  The corresponding results for negative cyclic homology are easier, and follow from parenthetical results in Weibel et al.  \cite{WeibelCycliccdh-CohomNegativeK06}. 

%SSSSSSSSSSSSSSSSSSSSSSSSSSSSSSSSSSSSSSSSSSSSSSSSSSSSSSSSSSSSSSSSSSSSSSSSSSSSSSSSSS
%SSSSSSSSSSSSSSSSSSSSSSSSSSSSSSSSSSSSSSSSSSSSSSSSSSSSSSSSSSSSSSSSSSSSSSSSSSSSSSSSSS
%SSS                      SSS                       SSS                    SS                          SS           SSS                         SSS          SSSSS      SSSS
%SSS      SSSSSSSSS    SSSSSSSSSS     SSSSSSSSSSSS     SSSSSSS     SSSS    SSSSSS     SSS     S     SSSS     SSSSS
%SSS      SSSSSSSSS    SSSSSSSSSS     SSSSSSSSSSSS     SSSSSSS     SSSS    SSSSSS     SSS     SS     SSS     SSSSS
%SSS                      SSS                  SSSSS    SSSSSSSSSSSS     SSSSSSS     SSSS    SSSSSS     SSS     SSS     SS     SSSSS
%SSSSSSSSSS    SSS    SSSSSSSSSS    SSSSSSSSSSSS     SSSSSSS     SSSS    SSSSSS     SSS     SSSS     S     SSSSS
%SSSSSSSSSS    SSS    SSSSSSSSSS    SSSSSSSSSSSS     SSSSSSS     SSSS    SSSSSS     SSS     SSSSS          SSSSS
%SSS                       SSS                      SSS                       SSSSS     SSSSSS          SSS                         SSS     SSSSSS        SSSSS
%SSSSSSSSSSSSSSSSSSSSSSSSSSSSSSSSSSSSSSSSSSSSSSSSSSSSSSSSSSSSSSSSSSSSSSSSSSSSSSSSSS
%SSSSSSSSSSSSSSSSSSSSSSSSSSSSSSSSSSSSSSSSSSSSSSSSSSSSSSSSSSSSSSSSSSSSSSSSSSSSSSSSSS

{\bf Effaceability for Bass-Thomason $K$-Theory.}   Let $\mathbf{K}$ be the substratum assigning a scheme $X$ to the Bass-Thomason nonconnective $K$-theory spectrum $\mbf{K}_X$. 

The following theorem is part of Thomason's localization theorem:

\begin{theorem} Let $X$ be a quasi-compact and quasi-separated scheme.  Let $Z$ be a closed subspace of $X$ such that $X-Z$ is quasi-compact.  Then there is a  homotopy fiber sequence
\begin{equation}\label{equthomasonlocal1}\mathbf{K}_{X\tn{ on } Z}\rightarrow \mathbf{K}_X\rightarrow \mathbf{K}_{X-Z}.\end{equation}
\end{theorem}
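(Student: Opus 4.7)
The strategy is to establish the fiber sequence first at the level of connective $K$-theory and then bootstrap to the nonconnective version via Bass' delooping construction. First I would introduce the Waldhausen category $\mbf{Per}_{X \tn{ on } Z}$ of perfect complexes on $X$ acyclic on $X-Z$, sitting inside $\mbf{Per}_X$, together with the restriction functor $j^{*} : \mbf{Per}_X \to \mbf{Per}_{X-Z}$. By definition $\mbf{K}^{\tn{con}}_{X \tn{ on } Z}$ is the Waldhausen $K$-theory spectrum of $\mbf{Per}_{X \tn{ on } Z}$. The central step at the connective level is to apply Waldhausen's fibration (``approximation'') theorem to the pair $(\mbf{Per}_X,\mbf{Per}_{X \tn{ on } Z})$ equipped with the second class of weak equivalences consisting of maps whose restriction to $X-Z$ is a quasi-isomorphism.

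To invoke the fibration theorem, I must identify the Verdier quotient of the derived category of $\mbf{Per}_X$ by that of $\mbf{Per}_{X \tn{ on } Z}$ with (the idempotent completion of) the derived category of $\mbf{Per}_{X-Z}$. This is precisely the content of Thomason's extension/approximation theorem: the quasi-compactness and quasi-separatedness hypotheses guarantee that every perfect complex on $X-Z$ is, up to a direct summand, the restriction of a perfect complex on $X$. Granting this identification, the fibration theorem produces an exact triangle of connective spectra
\[\mbf{K}^{\tn{con}}_{X \tn{ on } Z} \longrightarrow \mbf{K}^{\tn{con}}_X \xrightarrow{\,j^{*}\,} \mbf{K}^{\tn{con}}_{X-Z},\]
which fails to be a homotopy fiber sequence only in that the rightmost map need not be surjective on $\pi_0$; its image is precisely the subgroup of $K_{0,X-Z}$ consisting of classes restricted from $X$. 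This $K_0$-obstruction is exactly what the nonconnective delooping is designed to kill.

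Next I would apply the Bass construction as in section \ref{subsubsectionbassthomason}: $\mbf{K}_X := \tn{hocolim}(F^{0} \to F^{-1} \to \cdots)$, where $F^{0} = \mbf{K}^{\tn{con}}_X$ and each successive $F^{-n}$ is built using the projective-bundle/$\mbb{A}^{1}$-decomposition of $K$-theory of perfect complexes (the Fundamental Theorem). At each deloop, the cokernel obstruction gets shifted into a degree where the connective triangle is already exact, so passing to the filtered homotopy colimit converts the connective exact triangle into an \emph{honest} homotopy fiber sequence $\mbf{K}_{X \tn{ on } Z} \to \mbf{K}_X \to \mbf{K}_{X-Z}$. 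The main obstacle is the extension/approximation step: verifying that every perfect complex on the open subscheme $X-Z$ extends, up to a direct summand, to a perfect complex on $X$ acyclic outside $Z$. This requires controlling Tor-amplitude under gluing of perfect complexes of quasi-coherent $\mc{O}_X$-modules, and essentially uses quasi-compactness to reduce to a finite affine cover. Once this is established, the rest of the argument is a formal consequence of Waldhausen's machinery combined with the Bass delooping and the preservation of homotopy fiber sequences under filtered homotopy colimits of spectra.
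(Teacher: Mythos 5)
Your outline is correct and is essentially the argument behind the result the paper simply cites (Thomason--Trobaugh, Theorem 7.4): the proto-localization fiber sequence for connective $K$-theory via Waldhausen's fibration theorem applied to perfect complexes acyclic on $X-Z$, the identification of the quotient with the idempotent completion of $\mbf{Per}_{X-Z}$ via the extension theorem, and the Bass delooping to absorb the $\pi_0$-cokernel. The paper itself offers no proof beyond the citation, so there is nothing further to compare.
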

\begin{proof} See \cite{Thomason-Trobaugh90}, Theorem 7.4.
\end{proof}

The following theorem is a preliminary result supporting Thomason's localization theorem:

\begin{theorem} Let $f:X'\rightarrow X$ be a map of quasi-compact and quasi-separated schemes which is \'etale and induces an isomorphism $f^{-1}(Z)\rightarrow Z$.  Let $Z$ be a closed subspace of $X$ such that $X-Z$ is quasi-compact.  Then the map of spectra 
\begin{equation}\label{equthomasonetal2}f^*:\mathbf{K}_{X\tn{ on } Z}\rightarrow \mathbf{K}_{X'\tn{ on } Z}\end{equation}
is a homotopy equivalence. 
\end{theorem}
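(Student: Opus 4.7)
The plan is to reduce the statement to an equivalence between Waldhausen categories of perfect complexes, and then invoke the fundamental invariance property of Waldhausen $K$-theory. Thomason's nonconnective $K$-theory $\mathbf{K}_{X\tn{ on } Z}$ is, by definition, the Bass delooping of the connective $K$-theory spectrum of the complicial biWaldhausen category $\mbf{Per}_{X,Z}$ of perfect complexes of $\ms{O}_X$-modules on $X$ that are acyclic on $X-Z$, as reviewed in section \ref{subsubsectionbassthomason}. Since the Bass construction is functorial for spectrum-level maps and preserves homotopy equivalences, it suffices to show that the induced pullback $f^*\colon \mbf{Per}_{X,Z}\to \mbf{Per}_{X',Z}$ induces a homotopy equivalence of connective $K$-theory spectra.

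First I would observe that because $f$ is \'etale, the derived pullback $f^*=\mathbf{L}f^*$ is exact, preserves perfection, and preserves the acyclicity condition off $f^{-1}(Z)=Z$, so the functor indeed lands in $\mbf{Per}_{X',Z}$. Next I would produce a homotopy inverse, namely the derived pushforward $\mathbf{R}f_*$: under the quasi-compact/quasi-separated hypothesis $\mathbf{R}f_*$ has finite cohomological dimension on complexes supported in $Z$ (since such complexes are determined by their restriction to the open set $f^{-1}(U)$ for any open $U\supseteq Z$, and on $Z$ itself $f$ is an isomorphism), so $\mathbf{R}f_*$ preserves perfection and support in $Z$. The unit $\tn{id}\to \mathbf{R}f_*f^*$ and counit $f^*\mathbf{R}f_*\to \tn{id}$ are checked to be quasi-isomorphisms by localization: away from $Z$ both sides of each map are acyclic by the support condition, while in an \'etale neighborhood of $Z$ the isomorphism $f^{-1}(Z)\overset{\sim}{\to}Z$ together with the henselian/excisive property for \'etale maps inducing isomorphisms on a closed subset forces the adjunction maps to be equivalences. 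Consequently $f^*$ and $\mathbf{R}f_*$ induce inverse equivalences of the associated derived categories $D(\mbf{Per}_{X,Z})$ and $D(\mbf{Per}_{X',Z})$.

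Finally, I would invoke the theorem of Thomason-Trobaugh \cite{Thomason-Trobaugh90} stating that a complicial exact functor inducing an equivalence of derived categories induces a homotopy equivalence of Waldhausen $K$-theory spectra, giving the desired homotopy equivalence of connective $K$-theory spectra, and hence, after applying the Bass delooping, of the nonconnective spectra $\mathbf{K}_{X\tn{ on } Z}\to \mathbf{K}_{X'\tn{ on } Z}$. The main obstacle I expect is the careful verification in the second step that both adjoints genuinely restrict to the support-$Z$ subcategories and that the unit and counit are quasi-isomorphisms there: this is really the content of \'etale (Nisnevich) excision for perfect complexes, and it hinges on translating the set-theoretic isomorphism $f^{-1}(Z)\cong Z$ into a sheaf-theoretic statement about derived (co)limits of perfect complexes, which in turn requires the quasi-compactness and quasi-separatedness hypotheses to guarantee that cohomology commutes with the filtered colimits implicit in passing to \'etale neighborhoods of $Z$.
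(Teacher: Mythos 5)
The paper does not actually prove this statement: its ``proof'' is the single line ``See Thomason \cite{Thomason-Trobaugh90}, theorem 7.1,'' so you are attempting strictly more than the text does. Your overall architecture is nevertheless the right one, and it is Thomason's: reduce to showing that $f^*$ induces an equivalence of derived categories of perfect complexes acyclic off the supports, then invoke the invariance of Waldhausen $K$-theory under derived equivalences of complicial biWaldhausen categories (Thomason--Trobaugh 1.9.8), and note that the Bass delooping preserves spectrum-level equivalences.

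The gap is in your middle step, where you offer $\mathbf{R}f_*$ as an explicit quasi-inverse and assert it ``preserves perfection and support in $Z$'' because it ``has finite cohomological dimension on complexes supported in $Z$.'' Two problems. First, finite cohomological dimension is not what perfection requires: for a non-proper map such as a general \'etale $f$, $\mathbf{R}f_*$ of a perfect complex typically fails even to be pseudo-coherent (already $\mathbf{R}j_*\ms{O}$ for an open immersion $j:\mathbb{G}_m\rightarrow\mathbb{A}^1$ is not perfect), so the support condition has to do all the work. Second, your justification --- that such complexes are ``determined by their restriction to $f^{-1}(U)$ for any open $U\supseteq Z$, and on $Z$ itself $f$ is an isomorphism'' --- conflates the closed subscheme $Z$ with an open neighborhood of it: the hypothesis $f^{-1}(Z)\overset{\sim}{\rightarrow}Z$ yields no open $U\supseteq Z$ on which $f$ is an isomorphism (this is exactly the difference between the Zariski and Nisnevich topologies), so $\mathbf{R}f_*F$ near $Z$ cannot be computed by any such restriction. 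In Thomason--Trobaugh the argument is structured to avoid this: full faithfulness of $f^*$ rests on the key lemma that $G\rightarrow\mathbf{R}f_*f^*G$ is a quasi-isomorphism for $G$ with quasi-coherent cohomology acyclic off $Z$, and essential surjectivity --- that every perfect complex on $X'$ acyclic off $f^{-1}(Z)$ descends --- requires their extension machinery for perfect complexes from \S 5 of \cite{Thomason-Trobaugh90}; the perfection of $\mathbf{R}f_*F$ is then a consequence of the equivalence, not an input to it. You do flag that the unit/counit verification ``is really the content of \'etale excision,'' but that verification together with the perfection claim \emph{is} the theorem, so as written the argument is circular at its core.
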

\begin{proof} See Thomason \cite{Thomason-Trobaugh90}, theorem 7.1. 
\end{proof}

Together, these results imply that the substratum functor $\mathbf{K}$ satisfies  \'etale Mayer-Vietoris condition {\bf SUB1}. 

The following theorem is a special case of another supporting result for Thomason's localization theorem: 

\begin{theorem} Let $V$ be an open subset of $\AA_k^n$ for some $n$.  Let $\tilde{V}$ be the trivial bundle of rank $2$ over $V$, and let $\PP_V^1:=\PP(\tilde{V})$ be the corresponding projective space bundle.  Then there is a homotopy equivalence
\[\mathbf{K}_V\oplus\mathbf{K}_V\rightarrow \mathbf{K}_{\PP_V^1}.\]
\end{theorem}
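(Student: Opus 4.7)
The plan is to prove the statement by reducing to the standard projective bundle theorem for $K$-theory, which in this rank-two trivial-bundle setting amounts to showing that the two line bundles $\ms{O}$ and $\ms{O}(1)$ generate an appropriate semi-orthogonal decomposition of the category of perfect complexes on $\PP_V^1$. First, I would make the map explicit: send $(a,b)\in\mathbf{K}_V\oplus\mathbf{K}_V$ to $\tilde{\pi}^*(a)\cup[\ms{O}]+\tilde{\pi}^*(b)\cup[\ms{O}(1)]$, where $\cup$ denotes the pairing arising from tensor product of perfect complexes. This is well defined at the substratum level because $\tilde{\pi}:\PP_V^1\to V$ is flat and proper, so $\tilde{\pi}^*$ preserves perfect complexes and the line bundles $\ms{O}$, $\ms{O}(1)$ are themselves perfect.

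Next, I would exhibit the semi-orthogonal decomposition of $\mathbf{Per}_{\PP_V^1}$ into the two full triangulated subcategories generated, respectively, by $\tilde{\pi}^*\mathbf{Per}_V\otimes \ms{O}$ and $\tilde{\pi}^*\mathbf{Per}_V\otimes \ms{O}(1)$. The key inputs are: (i) Beilinson's resolution of the diagonal $\Delta\subset\PP^1\times\PP^1$, which yields a short exact sequence
\[0\to\ms{O}(-1)\boxtimes\ms{O}(-1)\to\ms{O}\boxtimes\ms{O}\to\ms{O}_\Delta\to 0,\]
pulled back from $\PP_k^1\times\PP_k^1$ to $\PP_V^1\times_V\PP_V^1$; and (ii) the calculation $R\tilde{\pi}_*\ms{O}(n)=\ms{O}_V$ for $n=0$, zero for $n=-1$, which ensures the two ``columns" of the decomposition are orthogonal in the derived sense. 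Applying the Waldhausen additivity theorem to the associated filtration of $\mathbf{Per}_{\PP_V^1}$ by two admissible subcategories then yields, at the level of connective $K$-theory spectra, a splitting
\[\mathbf{K}_V^{\tn{con}}\vee\mathbf{K}_V^{\tn{con}}\xrightarrow{\sim}\mathbf{K}_{\PP_V^1}^{\tn{con}}\]
equivalent to the map above.

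To pass from the connective to the nonconnective case, I would invoke the fact that Bass-Thomason nonconnective $K$-theory is constructed from connective $K$-theory by an iterated delooping procedure (the homotopy colimit defining $\mathbf{K}$ from $\mathbf{K}^{\tn{con}}$ in section \hyperref[subsubsectionbassthomason]{\ref{subsubsectionbassthomason}}), and this construction is functorial and preserves homotopy equivalences between connective spectra. Concretely, the Bass delooping is built out of $\mathbf{K}^{\tn{con}}$ applied to the schemes $\mbb{A}^1_V\cup\mbb{A}^1_V$ glued along $\mbb{G}_{m,V}$, and the projective bundle formula for $V$ and for each such gluing piece combine to give the projective bundle formula for $\PP_V^1$ at each stage of the delooping. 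Taking the homotopy colimit yields the desired equivalence $\mathbf{K}_V\oplus\mathbf{K}_V\to\mathbf{K}_{\PP_V^1}$ of nonconnective spectra.

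The main obstacle is making rigorous the statement that the two subcategories $\tilde{\pi}^*\mathbf{Per}_V\otimes\ms{O}$ and $\tilde{\pi}^*\mathbf{Per}_V\otimes\ms{O}(1)$ really do generate $\mathbf{Per}_{\PP_V^1}$ as a thick triangulated subcategory, uniformly in $V$. Since $V$ is only an open subset of an affine space, not affine itself, one cannot simply invoke a global tilting argument; instead one must verify the generation statement locally on $V$, using the resolution of the diagonal pulled back from $\PP_k^1\times\PP_k^1$ to $\PP_V^1\times_V\PP_V^1$, and then check that Waldhausen's additivity theorem applies in the required complicial biWaldhausen setting of Thomason--Trobaugh. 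Once this generation is established and the vanishing $R\tilde{\pi}_*\ms{O}(-1)=0$ is verified (which is where the flat base change along $V\to\tn{Spec }k$ enters), the rest of the argument proceeds formally from the additivity theorem and the construction of $\mathbf{K}$ from $\mathbf{K}^{\tn{con}}$.
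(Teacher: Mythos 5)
Your argument is correct in outline, but it is worth knowing that the paper does not prove this statement at all: its entire ``proof'' is the citation of Thomason--Trobaugh, Theorem 7.3. What you have written is essentially a reconstruction of how that cited result is actually established in the source (Theorem 4.1 for the connective case, extended to the nonconnective spectrum in Section 7): the Beilinson resolution of the diagonal on $\PP_V^1\times_V\PP_V^1$ (your sequence is right, since $\Omega^1_{\PP^1}(1)\cong\ms{O}(-1)$) gives generation of $\mbf{Per}_{\PP_V^1}$ by $\tilde{\pi}^*\mbf{Per}_V$ and $\tilde{\pi}^*\mbf{Per}_V\otimes\ms{O}(1)$, the vanishing $R\tilde{\pi}_*\ms{O}(-1)=0$ gives semi-orthogonality, Waldhausen additivity splits the connective spectrum, and the Bass delooping (the homotopy colimit defining $\mathbf{K}$ from $\mathbf{K}^{\tn{con}}$, built from $\mathbf{K}^{\tn{con}}$ of $V\times\mbb{A}^1$ and $V\times\mbb{G}_m$) is compatible with the splitting because the projective bundle formula holds for each of those pieces. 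Two small remarks. First, your splitting uses the basis $([\ms{O}],[\ms{O}(1)])$, whereas the map the paper actually needs for condition \textbf{SUB5} is $(\tilde{\pi}^*,\alpha_V)$ with $\alpha_V$ built from $[\ms{O}(1)]-[\ms{O}]$; these differ by a shear automorphism of $\mathbf{K}_V\oplus\mathbf{K}_V$, so either is an equivalence exactly when the other is, but the identification should be made explicit. Second, the two steps you flag as delicate --- making additivity apply to this semi-orthogonal decomposition in the complicial biWaldhausen framework, and verifying that the delooping preserves the splitting --- are precisely the content of Thomason's Theorems 4.1 and 7.3, so in practice one would still close the argument by citing them rather than re-proving them; your sketch shows why the citation is legitimate rather than replacing it.
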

\begin{proof} See Thomason \cite{Thomason-Trobaugh90} theorem 7.3.
\end{proof}

This result implies that the substratum functor $\mathbf{K}$ satisfies projective bundle condition for substrata {\bf SUB5}. 

%SSSSSSSSSSSSSSSSSSSSSSSSSSSSSSSSSSSSSSSSSSSSSSSSSSSSSSSSSSSSSSSSSSSSSSSSSSSSSSSSSS
%SSSSSSSSSSSSSSSSSSSSSSSSSSSSSSSSSSSSSSSSSSSSSSSSSSSSSSSSSSSSSSSSSSSSSSSSSSSSSSSSSS
%SSS                      SSS                       SSS                    SS                          SS           SSS                         SSS          SSSSS      SSSS
%SSS      SSSSSSSSS    SSSSSSSSSS     SSSSSSSSSSSS     SSSSSSS     SSSS    SSSSSS     SSS     S     SSSS     SSSSS
%SSS      SSSSSSSSS    SSSSSSSSSS     SSSSSSSSSSSS     SSSSSSS     SSSS    SSSSSS     SSS     SS     SSS     SSSSS
%SSS                      SSS                  SSSSS    SSSSSSSSSSSS     SSSSSSS     SSSS    SSSSSS     SSS     SSS     SS     SSSSS
%SSSSSSSSSS    SSS    SSSSSSSSSS    SSSSSSSSSSSS     SSSSSSS     SSSS    SSSSSS     SSS     SSSS     S     SSSSS
%SSSSSSSSSS    SSS    SSSSSSSSSS    SSSSSSSSSSSS     SSSSSSS     SSSS    SSSSSS     SSS     SSSSS          SSSSS
%SSS                       SSS                      SSS                       SSSSS     SSSSSS          SSS                         SSS     SSSSSS        SSSSS
%SSSSSSSSSSSSSSSSSSSSSSSSSSSSSSSSSSSSSSSSSSSSSSSSSSSSSSSSSSSSSSSSSSSSSSSSSSSSSSSSSS
%SSSSSSSSSSSSSSSSSSSSSSSSSSSSSSSSSSSSSSSSSSSSSSSSSSSSSSSSSSSSSSSSSSSSSSSSSSSSSSSSSS
%SSSSSSSSSSSSSSSSSSSSSSSSSSSSSSSSSSSSSSSSSSSSSSSSSSSSSSSSSSSSSSSSSSSSSSSSSSSSSSSSSS

\label{subsectioneffaceabilitynegativecyclic}

{\bf Effaceability for Negative Cyclic Homology} The \'etale Mayer-Vietoris condition {\bf SUB1} is called {\it Nisnevich descent} by Weibel et al. \cite{WeibelCycliccdh-CohomNegativeK06}, page 3.  In more detail, following the terminology of \cite{WeibelCycliccdh-CohomNegativeK06}, an {\bf elementary Nisnevich square} is a cartesian square of schemes 
\[\xymatrix{X'\ar[d]_f  &Y'\ar[l]\ar[d] \\
X&Y\ar[l]_i}\]
for which $X\leftarrow Y:i$ is an open embedding, $f:X'\rightarrow X$ is \'etale, and $(X'-Y')\rightarrow(X-Y)$ is an isomorphism.   
A functor $\mbf{E}$ from the category $\mbf{S}_{\tn{fin}/k}$ of schemes essentially of finite type over a field $k$ with values in a suitable category of spectra satisfies {\bf Nisnevich descent},\footnotemark\footnotetext{Weibel et al. remark that the term ``\'etale descent" used in Weibel and Geller is equivalent to Nisnevich descent for presheaves of $\mathbb{Q}$-modules.} as defined in \cite{WeibelCycliccdh-CohomNegativeK06}, if, for any such cartesian square, the square of spectra given by applying $\mc{E}$ is homotopy cartesian.  Letting $Z$ be a closed subset of $X$ and $Y=X-Z$ its complement, there is an open embedding $X\leftarrow Y:i$, which may be completed to a cartesian square.   If $\mbf{E}$ is contravariant, then the Nisnevich descent condition is precisely the \'etale Mayer-Vietoris condition given by Colliot-Th\'el\`ene, Hoobler and Kahn \cite{CHKBloch-Ogus-Gabber97}.  

\begin{lem}\label{lemHNSUB1}The substratum $\mbf{HN}$ satisfies the \'etale Mayer-Vietoris condition {\bf SUB1}.
\end{lem}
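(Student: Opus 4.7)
The plan is to show that for every elementary Nisnevich square
\[\xymatrix{X'\ar[d]_f  &Y'\ar[l]\ar[d] \\ X&Y\ar[l]_i},\]
the square of spectra obtained by applying $\mbf{HN}$ is homotopy cartesian; equivalently, that the induced map $\mbf{HN}_{X\tn{ on } Z}\rightarrow\mbf{HN}_{X'\tn{ on } Z}$ is a homotopy equivalence, where $Z = X - Y$. My strategy is to deduce this from the corresponding localization sequence for negative cyclic homology, in exact parallel with the Bass--Thomason argument for $\mbf{K}$.

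First I would invoke Keller's construction, as described at the end of section \ref{subsectioncycschemes}: the assignment $X \mapsto \mbf{HN}_X$ arises from the localization pair $(\mbf{Par}_X,\mbf{Ac})$ of perfect complexes of $\ms{O}_X$-modules and its full subcategory of acyclic complexes. The homotopy-fiber spectrum $\mbf{HN}_{X\tn{ on } Z}$ is then the negative cyclic homology of the localization pair $(\mbf{Par}_{X,Z},\mbf{Ac})$ consisting of perfect complexes supported on $Z$. Keller's localization theorem for cyclic-type theories of exact DG categories (cf.\ \cite{KellerCyclicHomologyofSchemes98}) gives a homotopy fiber sequence
\[\mbf{HN}_{X\tn{ on } Z}\rightarrow \mbf{HN}_X\rightarrow \mbf{HN}_{X-Z},\]
entirely analogous to Thomason's localization sequence \eqref{equthomasonlocal1} for $\mbf{K}$. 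This reduces {\bf SUB1} to showing that the pullback functor $f^*:\mbf{Par}_{X,Z}\rightarrow \mbf{Par}_{X',Z'}$ along the \'etale morphism $f$ (with $Z' = f^{-1}(Z) \cong Z$) induces a quasi-equivalence of localization pairs, hence a homotopy equivalence on $\mbf{HN}$.

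For the key step, I would observe that $f^*$ restricted to perfect complexes supported on $Z$ is an equivalence of triangulated categories up to acyclics: because $f$ is \'etale and $f^{-1}(Z)\rightarrow Z$ is an isomorphism, the categories of perfect complexes with support on $Z$ on the two sides agree, by the standard comparison used by Thomason in the proof of (\cite{Thomason-Trobaugh90}, Theorem 7.1). Since Keller's mixed complex is functorial on localization pairs and sends quasi-equivalences to quasi-isomorphisms of mixed complexes, this equivalence of underlying DG categories passes to a homotopy equivalence on the $\mbf{HN}$ spectra. Comparing this with the induced map on the localization fibers gives that the square for $\mbf{HN}$ is homotopy cartesian, as required.

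The hard part will be verifying the support comparison $\mbf{Par}_{X,Z}\simeq \mbf{Par}_{X',Z'}$ in a sufficiently functorial way that Keller's machinery actually produces a map of mixed complexes which is the one implicit in the \'etale Mayer-Vietoris square; in effect, this is the content of Nisnevich descent for $\mbf{HN}$. An alternative that sidesteps this technicality is to cite the Nisnevich descent statement for $\mbf{HN}$ proved in Weibel et al.\ \cite{WeibelCycliccdh-CohomNegativeK06}, where exactly the needed descent property is established for the presheaf of negative cyclic homology spectra on $\mbf{Fin}_F$; unwinding definitions, the Nisnevich descent condition of \cite{WeibelCycliccdh-CohomNegativeK06} is the {\bf SUB1} condition of Colliot-Th\'el\`ene, Hoobler, and Kahn \cite{CHKBloch-Ogus-Gabber97}, completing the proof.
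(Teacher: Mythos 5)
Your proposal is correct, and its closing move is exactly what the paper does: the paper's entire proof of this lemma is the citation ``See \cite{WeibelCycliccdh-CohomNegativeK06} theorem 2.9, page 9,'' after having identified {\bf SUB1} with the Nisnevich descent condition of Weibel et al.\ in the surrounding discussion. What you add beyond the paper is a sketch of how one would actually prove that descent statement: run Keller's localization theorem for $\mbf{HN}$ of the localization pair $(\mbf{Par}_X,\mbf{Ac})$ to get a fiber sequence $\mbf{HN}_{X\tn{ on } Z}\rightarrow \mbf{HN}_X\rightarrow \mbf{HN}_{X-Z}$, and then transport Thomason's \'etale support comparison for perfect complexes through Keller's invariance of the mixed complex under derived equivalences. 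That route is sound in outline and is essentially how the literature proceeds, but you correctly flag its weak point yourself --- making the equivalence $\mbf{Par}_{X,Z}\simeq \mbf{Par}_{X',Z'}$ functorial enough that Keller's machinery produces the \emph{same} map as the one appearing in the Mayer--Vietoris square is precisely the technical content of the cited theorem, so the direct argument as written is a sketch rather than a complete proof. Since you end by invoking \cite{WeibelCycliccdh-CohomNegativeK06} to close that gap, your proof is complete and lands in the same place as the paper's; the extra Keller--Thomason discussion is a useful gloss on \emph{why} the cited descent result holds, which the paper does not attempt.
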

\begin{proof}See \cite{WeibelCycliccdh-CohomNegativeK06} theorem 2.9, page 9.
\end{proof}

\begin{lem}\label{lemHNSUB5}The substratum $\mbf{HN}$ satisfies the projective bundle condition for substrata {\bf SUB5}.
\end{lem}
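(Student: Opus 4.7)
The goal is to show that for an open subset $V \subset \AA_k^n$, the natural map $\mbf{HN}_V \vee \mbf{HN}_V \to \mbf{HN}_{\PP_V^1}$, obtained from $\tilde{\pi}^*$ and the $[\mc{O}(1)] - [\mc{O}]$ action $\alpha_V$, is a homotopy equivalence of spectra. The plan is to derive this from the classical projective bundle formula for negative cyclic homology, which is well known in the works of Beilinson, Keller, and Weibel, and to phrase it in the precise form required by condition \textbf{SUB5}.

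My approach has three steps. First, I would invoke Beilinson's semi-orthogonal decomposition of the derived category of perfect complexes on $\PP_V^1$: the exceptional pair $(\mc{O}(-1), \mc{O})$ yields an equivalence $\mbf{Per}_{\PP_V^1} \simeq \langle \pi^*\mbf{Per}_V \otimes \mc{O}(-1),\, \pi^*\mbf{Per}_V \rangle$ at the level of Keller's localization pairs built from the modified category $\mbf{Par}_X$ defined in section~\ref{subsubsectionperfect}. Second, I would apply the additivity property of Keller's mixed-complex construction (\cite{KellerCyclicHomologyofExactCat96}, \cite{KellerCyclicHomologyofSchemes98}) with respect to semi-orthogonal decompositions: any invariant derived from a localization pair via the machinery of section~\ref{cyclichomologymixed} splits as a direct sum on such decompositions. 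This yields a homotopy equivalence $\mbf{HN}_V \vee \mbf{HN}_V \xrightarrow{\sim} \mbf{HN}_{\PP_V^1}$. Third, I would verify that the two wedge summands are carried precisely by $\tilde{\pi}^*$ and by $\alpha_V$ as specified in equations~\ref{equprojbundlesub1}--\ref{equprojbundlesub3}, using that the action of $\tn{Pic}_{\PP_V^1}$ on $\mbf{HN}_{\PP_V^1}$ through $[\mc{O}(1)] - [\mc{O}]$ corresponds, under Beilinson's decomposition, exactly to the inclusion of the second summand.

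The identification in the third step is the place where one must be careful: one needs that the map $\tn{Pic}_X \to \tn{Hom}_{\mbf{E}}(\mbf{HN}_X, \mbf{HN}_X)$ used to define $\alpha_V$ agrees, up to homotopy, with the one induced by the standard action of line bundles on negative cyclic homology via the Chern character. This compatibility can be extracted from Weibel's construction of cyclic homology of schemes (section~\ref{sectionweibelcychomschemes}) together with the compatibility of the Chern character with the algebraic $K$-theory projective bundle formula of Thomason (\cite{Thomason-Trobaugh90}, Theorem~7.3), which was used to verify the analogous condition \textbf{SUB5} for $\mbf{K}$ in section~\ref{subsectioneffaceabilitynonconnectiveK}.

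The main obstacle will be this last compatibility check, rather than the decomposition itself; the semi-orthogonal splitting is essentially formal once one has Keller's additivity, but extracting the precise map $\alpha_V$ from the abstract splitting requires unwinding definitions at the level of mixed complexes. An alternative, and probably cleaner route, would be to cite Weibel et al.~\cite{WeibelCycliccdh-CohomNegativeK06}, where projective bundle formulas are established for the relevant variants of cyclic and negative cyclic homology in the same spectrum-level framework used here, and then translate their statement into the form of \textbf{SUB5}. In either case, the result is essentially folklore, and the task is one of bookkeeping rather than of proving new content.
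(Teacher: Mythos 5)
Your argument is correct in outline, but the paper's own proof is precisely the ``alternative, cleaner route'' you mention in your last sentence: it consists of a single citation to \cite{WeibelCycliccdh-CohomNegativeK06}, Remark 2.11, page 9, where the projective bundle formula for $\mbf{HN}$ is recorded in the spectrum-level framework already adapted to the substratum axioms. Your primary route --- Beilinson's exceptional pair $(\mc{O}(-1),\mc{O})$ on $\PP_V^1$, additivity of Keller's mixed-complex construction for localization pairs under semi-orthogonal decompositions, and then the identification of the two wedge summands with $\tilde{\pi}^*$ and $\alpha_V$ --- is essentially how the result is proved in the sources, so it is a legitimate and more self-contained expansion of what the paper takes as a black box. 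What it buys is transparency about \emph{why} the formula holds and a uniform treatment parallel to the $K$-theoretic case via Thomason's Theorem 7.3; what it costs is exactly the compatibility check you flag, namely that the $\tn{Pic}$-action $[\mc{O}(1)]-[\mc{O}]$ used to define $\alpha_V$ in equations \hyperref[equprojbundlesub1]{A.6.3}--\hyperref[equprojbundlesub3]{A.6.5} agrees up to homotopy with the inclusion of the second Beilinson summand. That check is nontrivial bookkeeping at the level of mixed complexes and is not carried out in your sketch, so as written your primary route is a proof outline rather than a complete proof; the citation route closes the gap immediately and is what the paper actually does.
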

\begin{proof}See \cite{WeibelCycliccdh-CohomNegativeK06} remark 2.11, page 9.
\end{proof}

\chapter{Coniveau Machine}\label{ChapterConiveau}

\section{Introduction}\label{sectionintroductionconiveau}

In this chapter, I construct the coniveau machine for a suitable cohomology theory with supports $H$ on the category of pairs $\mbf{P}$ over an appropriate category of topological spaces $\mbf{S}$.  In the example of principal interest, $H$ is algebraic $K$-theory, and $\mbf{S}$ is a distinguished category $\mbf{S}_k$ of schemes over a field $k$, satisfying the conditions given at the beginning of section \hyperref[subsectioncohomsupportssubstrata]{\ref{subsectioncohomsupportssubstrata}}, and including smooth algebraic varieties as a subcategory.  Some of the steps involved in the construction are carried out only for this particular case, even though many of these steps are more general.  The approach of entertaining a very broad viewpoint, despite focusing almost exclusively on a particular special case, is chosen in response to two facts: first, many aspects of the construction are very general, without any exclusive restriction to smooth algebraic varieties, or even to the field of algebraic geometry; second, I do not yet know how useful or applicable the theory is in other contexts.  

%SSSSSSSSSSSSSSSSSSSSSSSSSSSSSSSSSSSSSSSSSSSSSSSSSSSSSSSSSSSSSSSSSSSSSSSSSSSSSSSSSS
%SSSSSSSSSSSSSSSSSSSSSSSSSSSSSSSSSSSSSSSSSSSSSSSSSSSSSSSSSSSSSSSSSSSSSSSSSSSSSSSSSS
%SSS                      SSS                       SSS                    SS                          SS           SSS                         SSS          SSSSS      SSSS
%SSS      SSSSSSSSS    SSSSSSSSSS     SSSSSSSSSSSS     SSSSSSS     SSSS    SSSSSS     SSS     S     SSSS     SSSSS
%SSS      SSSSSSSSS    SSSSSSSSSS     SSSSSSSSSSSS     SSSSSSS     SSSS    SSSSSS     SSS     SS     SSS     SSSSS
%SSS                      SSS                  SSSSS    SSSSSSSSSSSS     SSSSSSS     SSSS    SSSSSS     SSS     SSS     SS     SSSSS
%SSSSSSSSSS    SSS    SSSSSSSSSS    SSSSSSSSSSSS     SSSSSSS     SSSS    SSSSSS     SSS     SSSS     S     SSSSS
%SSSSSSSSSS    SSS    SSSSSSSSSS    SSSSSSSSSSSS     SSSSSSS     SSSS    SSSSSS     SSS     SSSSS          SSSSS
%SSS                       SSS                      SSS                       SSSSS     SSSSSS          SSS                         SSS     SSSSSS        SSSSS
%SSSSSSSSSSSSSSSSSSSSSSSSSSSSSSSSSSSSSSSSSSSSSSSSSSSSSSSSSSSSSSSSSSSSSSSSSSSSSSSSSS
%SSSSSSSSSSSSSSSSSSSSSSSSSSSSSSSSSSSSSSSSSSSSSSSSSSSSSSSSSSSSSSSSSSSSSSSSSSSSSSSSSS

\subsection{Generalities}\label{subsectiongeneralities}

The coniveau machine produces ``short exact sequences of spectral sequences," represented by the rows of the following ``commutative diagram:" 

%&&&&&&&&&&&&&&&&&&&&&&&&&&&&&&&&&&&&&&&&&&&&&&&&&&&&&&&&&&&&&&&&&&&&&&&&&&&&&&&&&&
%&&&&&&&&&&&&&&&&&&&&&&&&&&&&&&&&&&&&&&&&&&&&&&&&&&&&&&&&&&&&&&&&&&&&&&&&&&&&&&&&&&
%&&&&                 &&&&    &&&               &&&                    &&&    &&&&    &&&              &&&&                   &&&&&&&&&&&&&&&&&&
%&&&&   &&&&   &&&&    &&&   &&&&&&&&&&&   &&&&&&    &&&&    &&&    &&&     &&&    &&&&&&&&&&&&&&&&&&&&&&&&
%&&&&   &&&&   &&&&    &&&   &&&&&&&&&&&   &&&&&&    &&&&    &&&    &&&     &&&    &&&&&&&&&&&&&&&&&&&&&&&&
%&&&&                 &&&&    &&&   &&&&&&&&&&&   &&&&&&    &&&&    &&&            &&&&&             &&&&&&&&&&&&&&&&&&&&
%&&&&    &&&&&&&&&    &&&   &&&&&&&&&&&   &&&&&&    &&&&    &&&    &&&   &&&&    &&&&&&&&&&&&&&&&&&&&&&&
%&&&&    &&&&&&&&&    &&&   &&&&&&&&&&&   &&&&&&    &&&&    &&&    &&&&   &&&    &&&&&&&&&&&&&&&&&&&&&&&
%&&&&    &&&&&&&&&    &&&                &&&&&&   &&&&&&                   &&&    &&&&&   &&                  &&&&&&&&&&&&&&&&&&
%&&&&&&&&&&&&&&&&&&&&&&&&&&&&&&&&&&&&&&&&&&&&&&&&&&&&&&&&&&&&&&&&&&&&&&&&&&&&&&&&&&
%&&&&&&&&&&&&&&&&&&&&&&&&&&&&&&&&&&&&&&&&&&&&&&&&&&&&&&&&&&&&&&&&&&&&&&&&&&&&&&&&&&
\begin{pgfpicture}{0cm}{0cm}{17cm}{2.6cm}
\begin{pgftranslate}{\pgfpoint{3.5cm}{-.5cm}}
\begin{pgfmagnify}{1.1}{1.1}
\pgfputat{\pgfxy(1.3,2.2)}{\pgfbox[center,center]{$E_{H_{\tn{\fsz{rel}}},\mbf{S}}$}}
\pgfputat{\pgfxy(4,2.2)}{\pgfbox[center,center]{$E_{H_{\tn{\fsz{aug}}},\mbf{S}}$}}
\pgfputat{\pgfxy(6.8,2.2)}{\pgfbox[center,center]{$E_{H,\mbf{S}}$}}
\pgfputat{\pgfxy(2.5,2.5)}{\pgfbox[center,center]{$i$}}
\pgfputat{\pgfxy(5.5,2.5)}{\pgfbox[center,center]{$j$}}
\pgfsetendarrow{\pgfarrowlargepointed{3pt}}
\pgfxyline(2,2.2)(3.2,2.2)
\pgfxyline(4.8,2.2)(6.2,2.2)
\begin{colormixin}{50!white}
\pgfputat{\pgfxy(1.3,.2)}{\pgfbox[center,center]{$E_{H_{\tn{\fsz{rel}}}^+,\mbf{S}}$}}
\pgfputat{\pgfxy(4,.2)}{\pgfbox[center,center]{$E_{H_{\tn{\fsz{aug}}}^+,\mbf{S}}$}}
\pgfputat{\pgfxy(6.8,.2)}{\pgfbox[center,center]{$E_{H^+,\mbf{S}}$}}
\pgfputat{\pgfxy(2.5,.5)}{\pgfbox[center,center]{$i^+$}}
\pgfputat{\pgfxy(5.5,.5)}{\pgfbox[center,center]{$j^+$}}
\pgfputat{\pgfxy(.8,1.2)}{\pgfbox[center,center]{$\tn{ch}_{\tn{\fsz{rel}}}$}}
%\pgfputat{\pgfxy(1.5,1.1)}{\pgfbox[center,center]{$\sim$}}
\pgfputat{\pgfxy(3.5,1.2)}{\pgfbox[center,center]{$\tn{ch}_{\tn{\fsz{aug}}}$}}
\pgfputat{\pgfxy(6.4,1.2)}{\pgfbox[center,center]{$\tn{ch}$}}
\pgfsetendarrow{\pgfarrowlargepointed{3pt}}
\pgfxyline(2,.2)(3.2,.2)
\pgfxyline(4.8,.2)(6.2,.2)
\pgfxyline(1.3,1.8)(1.3,.5)
\pgfxyline(4,1.8)(4,.5)
\pgfxyline(6.8,1.8)(6.8,.5)
\end{colormixin}
\end{pgfmagnify}
\end{pgftranslate}
\pgfputat{\pgfxy(16,1)}{\pgfbox[center,center]{$(4.1.1.1)$}}
\end{pgfpicture}
\label{equconiveaumachinefunctorch4}
%&&&&&&&&&&&&&&&&&&&&&&&&&&&&&&&&&&&&&&&&&&&&&&&&&&&&&&&&&&&&&&&&&&&&&&&&&&&&&&&&&&
%&&&&&&&&&&&&&&&&&&&&&&&&&&&&&&&&&&&&&&&&&&&&&&&&&&&&&&&&&&&&&&&&&&&&&&&&&&&&&&&&&&
%&&&                    &&&       &&&&&    &&&                &&&&&&&&&&&&&&&&&&&&&&&&&&&&&&&&&&&&&&&&&&&&&&&&&&&
%&&&    &&&&&&&&&    &    &&&&   &&&    &&&&    &&&&&&&&&&&&&&&&&&&&&&&&&&&&&&&&&&&&&&&&&&&&&&&&&& 
%&&&    &&&&&&&&&    &&    &&&   &&&    &&&&&    &&&&&&&&&&&&&&&&&&&&&&&&&&&&&&&&&&&&&&&&&&&&&&&&& 
%&&&                 &&&&    &&&    &&   &&&    &&&&&    &&&&&&&&&&&&&&&&&&&&&&&&&&&&&&&&&&&&&&&&&&&&&&&&& 
%&&&    &&&&&&&&&    &&&&    &   &&&    &&&&&    &&&&&&&&&&&&&&&&&&&&&&&&&&&&&&&&&&&&&&&&&&&&&&&&& 
%&&&    &&&&&&&&&    &&&&&       &&&    &&&&     &&&&&&&&&&&&&&&&&&&&&&&&&&&&&&&&&&&&&&&&&&&&&&&&& 
%&&&                    &&&    &&&&&&    &&&                  &&&&&&&&&&&&&&&&&&&&&&&&&&&&&&&&&&&&&&&&&&&&&&&&&&
%&&&&&&&&&&&&&&&&&&&&&&&&&&&&&&&&&&&&&&&&&&&&&&&&&&&&&&&&&&&&&&&&&&&&&&&&&&&&&&&&&&
%&&&&&&&&&&&&&&&&&&&&&&&&&&&&&&&&&&&&&&&&&&&&&&&&&&&&&&&&&&&&&&&&&&&&&&&&&&&&&&&&&&

Here, the term $E_{H,\mbf{S}}$ at the top right is the functor assigning to a topological space $X$ belonging to $\mbf{S}$ the {\it coniveau spectral sequence} $\{E_{H,X,r}\}$ associated with an appropriate cohomology theory with supports $H$ on the category of pairs $\mbf{P}$ over $\mbf{S}$.  The functors $E_{H_{\tn{\fsz{aug}}}}$ and $E_{H_{\tn{\fsz{rel}}}}$ assign to $X$ the coniveau spectral sequences associated with ``augmented" and ``relative" versions of $H$, respectively.   The downward arrows, and the bottom row of the diagram in equation \hyperref[equconiveaumachinefunctorch4]{4.1.1.1}, are shaded, to indicate that they exist only under the hypothesis that $H$ possesses an appropriate ``additive version" $H^+$, related to $H$ by a ``logarithmic-type map" labeled $\tn{ch}$.  Under this hypothesis, $H_{\tn{\fsz{aug}}}^+$ and $H_{\tn{\fsz{rel}}}^+$ are ``augmented" and ``relative" versions of the additive theory $H^+$.   

The foregoing discussion is encapsulated in the following definition, with some deliberate vagueness:

\begin{defi}\label{deficoniveauabstract} Let $\mbf{S}$ be a category of topological spaces, and $H$ a cohomology theory with supports on the category of pairs $\mbf{P}$ over $\mbf{S}$, with values in an abelian category $\mbf{A}$.  Let $H_{\tn{\fsz{aug}}}$ be an appropriate ``augmented version" of $H$, and let $H_{\tn{\fsz{rel}}}$ be the corresponding relative theory.  Let $H^+$ be an ``additive version of $H$," if it exists, and let $H_{\tn{\fsz{aug}}}^+$ and $H_{\tn{\fsz{rel}}}^+$ be the corresponding augmented and relative additive theories.   Then the {\bf coniveau machine} for $H$ on $\mbf{S}$ with respect to the augmentation $H\mapsto H_{\tn{\fsz{aug}}}$, if it exists, is the commutative diagram of functors and natural transformations with ``exact rows," depicted in equation \hyperref[equconiveaumachinefunctorch4]{4.1.0.1} above, where each functor assigns to a space $X$ in $\mbf{S}$ the coniveau spectral sequence for the appropriate cohomology theory with supports. 
\end {defi}

This preliminary definition leaves many unanswered questions, including the following:
\begin{enumerate}
\item Given a cohomology theory with supports $H$ on the category of pairs over $\mbf{S}$, how is the ``augmented theory" $H_{\tn{\fsz{aug}}}$ defined?   In the example of principal interest, $\mbf{S}$ is a category of schemes $\mbf{S}_k$ over a field $k$ satisfying the conditions given at the beginning of section \hyperref[subsectioncohomsupportssubstrata]{\ref{subsectioncohomsupportssubstrata}}, $H$ is algebraic $K$-theory, and the augmented theory $H_{\tn{\fsz{aug}}}$ is defined by ``multiplying by a fixed separated scheme $Y$," as explained in section \hyperref[sectioncohomtheoriessupports]{\ref{sectioncohomtheoriessupports}} above.  Note that to guarantee that the relative Chern character $\tn{ch}_{\tn{\fsz{rel}}}$ induces an isomorphism of spectral sequences in this context, one must assume the further condition that the augmentation is given by nilpotent thickening; i.e., by taking $Y$ to be the prime spectrum of a $k$-algebra generated over $k$ by nilpotent elements.   This suggests that a minimal requirement for the ``entire construction to work" in general is that ``some type of underlying ring structure must be present."   Another way to express this is by making the observation that the most of the cohomology theories with supports to which one can imagine applying the construction are ``mediated by ring structure," in the sense that the corresponding functors are defined in terms of rings of functions on a topological space.  This is true, for example, of the list of cohomology theories with supports given by Colliot-Th\'el\`ene, Hoobler, and Kahn \cite{CHKBloch-Ogus-Gabber97}, sections 7 and 8. 

In some contexts, one may choose to ``ascribe the augmentation to objects of $\mbf{S}$ rather than to $H$."  For example, when ``multiplying by a fixed separated scheme $Y$," sending each scheme $X$ in $\mbf{S}$ to $X\times_kY$, one may choose to think of applying the same cohomology theory $H$ to two different geometric objects $X$ and $X\times_kY$, rather than augmenting $H$.  While this may be a ``better" point of view from the perspective of trying to extend results about particular types of varieties or schemes to a more general context, it is unnatural in the cases of principal interest in this book, in which only nilpotent thickenings are involved.  Not only are the underlying topological spaces the same, in this context, in the absolute, augmented, and relative cases, but the entire method of analysis is based upon a purely topological construction; namely, filtration by codimension.  It is most natural in this context to consider ``different cohomology theories on the same object." 

\item How is the relative theory $H_{\tn{\fsz{rel}}}$ defined?  In this book, I focus mostly on relative $K$-theory and relative cyclic homology for nilpotent thickenings of a smooth algebraic variety $X$ over a field $k$, which reduce to split nilpotent extensions at the level of $k$-algebras. In this case, relative cohomology theories admit easy description at the group level in terms of kernels, as described in definition \hyperref[subsectionnilpotent]{\ref{subsectionnilpotent}}.  More generally, cohomology theories involve intermediary constructs such as complexes, bicomplexes, spectra, spectral sequences, etc., and relative constructions must generally be performed at this level to ensure good functorial properties.  

Of course, the procedure of ``multiplying by a fixed separated scheme $Y$" to define the augmented theory $H_{\tn{\fsz{aug}}}$ is much more general than nilpotent thickening, and the resulting relative theory is generally {\it not} defined in terms of kernels at the group level.  In particular, if $H$ is algebraic $K$-theory, defined in terms of spectrum-valued functors $\mbf{K}_{X\tn{ \fsz{on} } Z}$ of Bass and Thomason, then the augmented theory is defined via the spectrum $\mbf{K}_{X\times_kY}$, and the relative theory is defined via the spectrum $\mbf{K}_{X\times_kY,Y}$ which is by definition the homotopy fiber of the morphism of spectra $\mbf{K}_{X\times_kY}\rightarrow\mbf{K}_{X}$. 

\item What does ``additive version of $H$" mean in general, and what is the map $\tn{ch}$?  When $H$ is algebraic $K$-theory, additive version of $H$ is negative cyclic homology, and the map $\tn{ch}$ is the algebraic Chern character, which may be viewed as a generalized logarithmic map (or derivative thereof), sending ``multiplicative" structure in $K$-theory to ``additive structure" in negative cyclic homology.  More generally, exponential and logarithmic-type maps are ubiquitous in mathematics; e.g., in Lie theory, and this ubiquity is due to the very general defining characteristics of such maps.  Hence, while the term ``additive" is best viewed as an analogy in the general context, it may signify much more for many choices of $\mbf{S}$ and $H$ besides those examined here. 

\item What is the functorial nature of $E_{H,\mbf{S}}$, $E_{H_{\tn{\fsz{aug}}},\mbf{S}}$, etc., in general?  This is somewhat involved.  Returning to the case where $\mbf{S}$ is a category of schemes $\mbf{S}_k$ over a field $k$, and $H$ is algebraic $K$-theory, a morphism of schemes $X\leftarrow X'$ in $\mbf{S}$ leads to a functorial morphism of coniveau spectral sequences $\{E_{H,X,r}\}\rightarrow \{E_{H,X',r}\}$.  Since this is merely a consequence of the fact that $K$ is a contravariant functor, similar statements apply to other cohomology theories for which the coniveau spectral sequence is defined.  At a higher level, one may also consider natural transformations $H\rightarrow H'$ of cohomology theories with supports, and study the existence and properties of induced transformations $E_{H,\mbf{S}}\rightarrow E_{H',\mbf{S}}$ in the covariant direction.  The transformations $i$, $j$, and $\tn{ch}$ (if it exists) are examples of such covariant transformations.  Of course, the prototypical example of $\tn{ch}$ is the algebraic Chern character from algebraic $K$-theory to negative cyclic homology. 
\end{enumerate}

%SSSSSSSSSSSSSSSSSSSSSSSSSSSSSSSSSSSSSSSSSSSSSSSSSSSSSSSSSSSSSSSSSSSSSSSSSSSSSSSSSS
%SSSSSSSSSSSSSSSSSSSSSSSSSSSSSSSSSSSSSSSSSSSSSSSSSSSSSSSSSSSSSSSSSSSSSSSSSSSSSSSSSS
%SSS                      SSS                       SSS                    SS                          SS           SSS                         SSS          SSSSS      SSSS
%SSS      SSSSSSSSS    SSSSSSSSSS     SSSSSSSSSSSS     SSSSSSS     SSSS    SSSSSS     SSS     S     SSSS     SSSSS
%SSS      SSSSSSSSS    SSSSSSSSSS     SSSSSSSSSSSS     SSSSSSS     SSSS    SSSSSS     SSS     SS     SSS     SSSSS
%SSS                      SSS                  SSSSS    SSSSSSSSSSSS     SSSSSSS     SSSS    SSSSSS     SSS     SSS     SS     SSSSS
%SSSSSSSSSS    SSS    SSSSSSSSSS    SSSSSSSSSSSS     SSSSSSS     SSSS    SSSSSS     SSS     SSSS     S     SSSSS
%SSSSSSSSSS    SSS    SSSSSSSSSS    SSSSSSSSSSSS     SSSSSSS     SSSS    SSSSSS     SSS     SSSSS          SSSSS
%SSS                       SSS                      SSS                       SSSSS     SSSSSS          SSS                         SSS     SSSSSS        SSSSS
%SSSSSSSSSSSSSSSSSSSSSSSSSSSSSSSSSSSSSSSSSSSSSSSSSSSSSSSSSSSSSSSSSSSSSSSSSSSSSSSSSS
%SSSSSSSSSSSSSSSSSSSSSSSSSSSSSSSSSSSSSSSSSSSSSSSSSSSSSSSSSSSSSSSSSSSSSSSSSSSSSSSSSS

\subsection{Main Focus: Algebraic $K$-Theory for Nilpotent Thickenings of Smooth Algebraic Varieties}

As already described in the introduction, analyzing the infinitesimal structure of the Chow groups $\tn{Ch}_X^p$ of a smooth algebraic variety $X$ over a field $k$ involves nilpotent thickenings $X\mapsto X\times_kY$, which may be described locally in terms of split nilpotent extensions of $k$-algebras.   The ``nilpotent variables" added via these extensions supply ``infinitesimal degrees of freedom," or ``directions of motion" for algebraic cycles on $X$.   Under these circumstances, the natural transformations $i$ and $j$ in equation \hyperref[equconiveaumachinefunctorintro]{4.1.1.1} split, and the relative Chern character $\tn{ch}_{\tn{\fsz{rel}}}$ is an isomorphism of functors.  The generalized tangent group of the Chow groups $\tn{Ch}_X^p$ with respect to $Y$ is the $p$th Zariski sheaf cohomology group of the relative sheaf $\ms{K}_{p,X\times_kY,Y}$, as described in section \hyperref[subsectiongeneralizeddeftan]{\ref{subsectiongeneralizeddeftan}} above.  This cohomology group may be computed via a flasque resolution of the sheaf $\ms{K}_{p,X\times_kY,Y}$, which arises by sheafifying the $-p$th row of the coniveau spectral sequence for relative $K$-theory on $X$, as described in section \hyperref[sectionconiveaumachine]{\ref{sectionconiveaumachine}} below. 

Since the relative Chern character $\tn{ch}_{\tn{\fsz{rel}}}$ is an isomorphism of functors in this context, it suffices to compute the $p$th Zariski sheaf cohomology group of the relative negative cyclic homology sheaf $\ms{HN}_{p,X\times_kY,Y}$ on $X$.  This cohomology group, in turn, may be computed via the sheafified $-p$th row of the coniveau spectral sequence for $\tn{HN}_{\tn{\fsz{rel}}}$.  The absolute and augmented versions of negative cyclic homology are ``irrelevant" in this context, as are the remaining rows of the coniveau spectral sequences.  Hence, one may simplify the picture by isolating and sheafifying the $-p$th rows of the coniveau spectral sequences for absolute, augmented, and relative $K$-theory, and for relative negative cyclic homology.   It is convenient to transpose these rows to form four columns, which are the Cousin flasque resolutions of the sheaves $\ms{K}_{p,X}$, $\ms{K}_{p,X\times_kY}$, $\ms{K}_{p,X\times_kY,Y}$, and $\ms{HN}_{p,X\times_kY,Y}$, respectively.  The first and second columns are connected by the map of complexes arising from the splitting of the natural transformation $j$ in equation \hyperref[equconiveaumachinefunctorintro]{4.1.1.1}, and the second and third columns are connected by the map of complexes arising from the splitting of the natural transformation $i$.   The third and fourth columns are connected by the map of complexes induced by the relative algebraic Chern character.  Putting all this together yields the schematic diagram of complexes shown in figure \hyperref[figsimplifiedfourcolumnKtheorych4]{\ref{figsimplifiedfourcolumnKtheorych4}} below, arranged so that the arrows go from left to right:

%&&&&&&&&&&&&&&&&&&&&&&&&&&&&&&&&&&&&&&&&&&&&&&&&&&&&&&&&&&&&&&&&&&&&&&&&&&&&&&&&&&
%&&&&&&&&&&&&&&&&&&&&&&&&&&&&&&&&&&&&&&&&&&&&&&&&&&&&&&&&&&&&&&&&&&&&&&&&&&&&&&&&&&
%&&&&                 &&&&    &&&               &&&                    &&&    &&&&    &&&              &&&&                   &&&&&&&&&&&&&&&&&&
%&&&&   &&&&   &&&&    &&&   &&&&&&&&&&&   &&&&&&    &&&&    &&&    &&&     &&&    &&&&&&&&&&&&&&&&&&&&&&&&
%&&&&   &&&&   &&&&    &&&   &&&&&&&&&&&   &&&&&&    &&&&    &&&    &&&     &&&    &&&&&&&&&&&&&&&&&&&&&&&&
%&&&&                 &&&&    &&&   &&&&&&&&&&&   &&&&&&    &&&&    &&&            &&&&&             &&&&&&&&&&&&&&&&&&&&
%&&&&    &&&&&&&&&    &&&   &&&&&&&&&&&   &&&&&&    &&&&    &&&    &&&   &&&&    &&&&&&&&&&&&&&&&&&&&&&&
%&&&&    &&&&&&&&&    &&&   &&&&&&&&&&&   &&&&&&    &&&&    &&&    &&&&   &&&    &&&&&&&&&&&&&&&&&&&&&&&
%&&&&    &&&&&&&&&    &&&                &&&&&&   &&&&&&                   &&&    &&&&&   &&                  &&&&&&&&&&&&&&&&&&
%&&&&&&&&&&&&&&&&&&&&&&&&&&&&&&&&&&&&&&&&&&&&&&&&&&&&&&&&&&&&&&&&&&&&&&&&&&&&&&&&&&
%&&&&&&&&&&&&&&&&&&&&&&&&&&&&&&&&&&&&&&&&&&&&&&&&&&&&&&&&&&&&&&&&&&&&&&&&&&&&&&&&&&
\begin{figure}[H]
\begin{pgfpicture}{0cm}{0cm}{17cm}{4.8cm}
\begin{pgfmagnify}{.9}{.9}
\begin{pgftranslate}{\pgfpoint{.2cm}{-1.7cm}}
\begin{pgftranslate}{\pgfpoint{-.5cm}{0cm}}
\pgfxyline(.9,1.8)(.9,7)
\pgfxyline(.9,7)(3.5,7)
\pgfxyline(3.5,7)(3.5,1.8)
\pgfxyline(3.5,1.8)(.9,1.8)
\pgfputat{\pgfxy(2.2,5.8)}{\pgfbox[center,center]{Cousin}}
\pgfputat{\pgfxy(2.2,5.35)}{\pgfbox[center,center]{resolution }}
\pgfputat{\pgfxy(2.2,4.85)}{\pgfbox[center,center]{of $K$-theory}}
\pgfputat{\pgfxy(2.2,4.45)}{\pgfbox[center,center]{sheaf}}
\pgfputat{\pgfxy(2.2,3.9)}{\pgfbox[center,center]{$\ms{K}_{p,X}$}}
\pgfputat{\pgfxy(2.2,3.45)}{\pgfbox[center,center]{of smooth}}
\pgfputat{\pgfxy(2.2,2.95)}{\pgfbox[center,center]{variety $X$}}
\pgfnodecircle{Node0}[stroke]{\pgfxy(2.2,6.5)}{0.33cm}
\pgfputat{\pgfxy(2.2,6.5)}{\pgfbox[center,center]{\large{$1$}}}
\end{pgftranslate}
\begin{pgftranslate}{\pgfpoint{4.05cm}{0cm}}
\pgfxyline(1,1.8)(1,7)
\pgfxyline(1,7)(3.8,7)
\pgfxyline(3.8,7)(3.8,1.8)
\pgfxyline(3.8,1.8)(1,1.8)
\pgfputat{\pgfxy(2.4,5.8)}{\pgfbox[center,center]{Cousin}}
\pgfputat{\pgfxy(2.4,5.35)}{\pgfbox[center,center]{resolution }}
\pgfputat{\pgfxy(2.4,4.85)}{\pgfbox[center,center]{of $K$-theory}}
\pgfputat{\pgfxy(2.4,4.45)}{\pgfbox[center,center]{sheaf}}
\pgfputat{\pgfxy(2.4,3.9)}{\pgfbox[center,center]{$\ms{K}_{p,X\times_kY}$}}
\pgfputat{\pgfxy(2.4,3.45)}{\pgfbox[center,center]{of thickened}}
\pgfputat{\pgfxy(2.4,2.95)}{\pgfbox[center,center]{variety $X\times_kY$}}
 \pgfnodecircle{Node0}[stroke]{\pgfxy(2.4,6.5)}{0.33cm}
\pgfputat{\pgfxy(2.4,6.5)}{\pgfbox[center,center]{\large{$2$}}}
\pgfsetendarrow{\pgfarrowtriangle{6pt}}
\pgfsetlinewidth{3pt}
\pgfxyline(-.8,4.3)(.4,4.3)
\pgfputat{\pgfxy(-.1,5.4)}{\pgfbox[center,center]{\small{split}}}
\pgfputat{\pgfxy(-.1,5)}{\pgfbox[center,center]{\small{inclusion}}}
\end{pgftranslate}
\begin{pgftranslate}{\pgfpoint{9.1cm}{0cm}}
\pgfxyline(1,1.8)(1,7)
\pgfxyline(1,7)(3.4,7)
\pgfxyline(3.4,7)(3.4,1.8)
\pgfxyline(3.4,1.8)(1,1.8)
\pgfputat{\pgfxy(2.2,5.8)}{\pgfbox[center,center]{Cousin}}
\pgfputat{\pgfxy(2.2,5.35)}{\pgfbox[center,center]{resolution}}
\pgfputat{\pgfxy(2.2,4.9)}{\pgfbox[center,center]{of relative}}
\pgfputat{\pgfxy(2.2,4.4)}{\pgfbox[center,center]{$K$-theory}}
\pgfputat{\pgfxy(2.2,3.95)}{\pgfbox[center,center]{sheaf}}
\pgfputat{\pgfxy(2.2,3.4)}{\pgfbox[center,center]{$\ms{K}_{p,X\times_kY,Y}$}}
\pgfnodecircle{Node0}[stroke]{\pgfxy(2.2,6.5)}{0.33cm}
\pgfputat{\pgfxy(2.2,6.5)}{\pgfbox[center,center]{\large{$3$}}}
\pgfsetendarrow{\pgfarrowtriangle{6pt}}
\pgfsetlinewidth{3pt}
\pgfxyline(-.8,4.3)(.4,4.3)
\pgfputat{\pgfxy(-.1,5.4)}{\pgfbox[center,center]{\small{split}}}
\pgfputat{\pgfxy(-.1,5)}{\pgfbox[center,center]{\small{projection}}}
\end{pgftranslate}
\begin{pgftranslate}{\pgfpoint{13.75cm}{0cm}}
\pgfxyline(1,1.8)(1,7)
\pgfxyline(1,7)(3.4,7)
\pgfxyline(3.4,7)(3.4,1.8)
\pgfxyline(3.4,1.8)(1,1.8)
\pgfputat{\pgfxy(2.2,5.8)}{\pgfbox[center,center]{Cousin}}
\pgfputat{\pgfxy(2.2,5.35)}{\pgfbox[center,center]{resolution}}
\pgfputat{\pgfxy(2.2,4.9)}{\pgfbox[center,center]{of relative}}
\pgfputat{\pgfxy(2.2,4.45)}{\pgfbox[center,center]{negative}}
\pgfputat{\pgfxy(2.2,4)}{\pgfbox[center,center]{cyclic}}
\pgfputat{\pgfxy(2.2,3.55)}{\pgfbox[center,center]{homology}}
\pgfputat{\pgfxy(2.2,3.1)}{\pgfbox[center,center]{sheaf}}
\pgfputat{\pgfxy(2.2,2.55)}{\pgfbox[center,center]{$\ms{HN}_{p,X\times_kY,Y}$}}
\pgfputat{\pgfxy(-.1,5.8)}{\pgfbox[center,center]{\small{relative}}}
\pgfputat{\pgfxy(-.1,5.4)}{\pgfbox[center,center]{\small{Chern}}}
\pgfputat{\pgfxy(-.1,5)}{\pgfbox[center,center]{\small{character}}}
\pgfputat{\pgfxy(-.1,4.3)}{\pgfbox[center,center]{\huge{$\cong$}}}
\pgfnodecircle{Node0}[stroke]{\pgfxy(2.2,6.5)}{0.33cm}
\pgfputat{\pgfxy(2.2,6.5)}{\pgfbox[center,center]{\large{$4$}}}
\end{pgftranslate}
\begin{colormixin}{30!white}
\color{black}
\pgfmoveto{\pgfxy(8.05,3)}
\pgflineto{\pgfxy(13.6,3)}
\pgflineto{\pgfxy(13.6,3.3)}
\pgflineto{\pgfxy(14.6,2.7)}
\pgflineto{\pgfxy(13.6,2.1)}
\pgflineto{\pgfxy(13.6,2.4)}
\pgflineto{\pgfxy(8.05,2.4)}
\pgflineto{\pgfxy(8.05,3)}
\pgffill
\end{colormixin}
\pgfputat{\pgfxy(10.5,2.7)}{\pgfbox[center,center]{tangent}}
\pgfputat{\pgfxy(12,2.67)}{\pgfbox[center,center]{map}}
\end{pgftranslate}
\end{pgfmagnify}
\end{pgfpicture}
\caption{Simplified ``four column version" of the coniveau machine for algebraic $K$-theory on a smooth algebraic variety in the case of a nilpotent thickening.}
\label{figsimplifiedfourcolumnKtheorych4}
\end{figure}
%&&&&&&&&&&&&&&&&&&&&&&&&&&&&&&&&&&&&&&&&&&&&&&&&&&&&&&&&&&&&&&&&&&&&&&&&&&&&&&&&&&
%&&&&&&&&&&&&&&&&&&&&&&&&&&&&&&&&&&&&&&&&&&&&&&&&&&&&&&&&&&&&&&&&&&&&&&&&&&&&&&&&&&
%&&&                    &&&       &&&&&    &&&                &&&&&&&&&&&&&&&&&&&&&&&&&&&&&&&&&&&&&&&&&&&&&&&&&&&
%&&&    &&&&&&&&&    &    &&&&   &&&    &&&&    &&&&&&&&&&&&&&&&&&&&&&&&&&&&&&&&&&&&&&&&&&&&&&&&&& 
%&&&    &&&&&&&&&    &&    &&&   &&&    &&&&&    &&&&&&&&&&&&&&&&&&&&&&&&&&&&&&&&&&&&&&&&&&&&&&&&& 
%&&&                 &&&&    &&&    &&   &&&    &&&&&    &&&&&&&&&&&&&&&&&&&&&&&&&&&&&&&&&&&&&&&&&&&&&&&&& 
%&&&    &&&&&&&&&    &&&&    &   &&&    &&&&&    &&&&&&&&&&&&&&&&&&&&&&&&&&&&&&&&&&&&&&&&&&&&&&&&& 
%&&&    &&&&&&&&&    &&&&&       &&&    &&&&     &&&&&&&&&&&&&&&&&&&&&&&&&&&&&&&&&&&&&&&&&&&&&&&&& 
%&&&                    &&&    &&&&&&    &&&                  &&&&&&&&&&&&&&&&&&&&&&&&&&&&&&&&&&&&&&&&&&&&&&&&&&
%&&&&&&&&&&&&&&&&&&&&&&&&&&&&&&&&&&&&&&&&&&&&&&&&&&&&&&&&&&&&&&&&&&&&&&&&&&&&&&&&&&
%&&&&&&&&&&&&&&&&&&&&&&&&&&&&&&&&&&&&&&&&&&&&&&&&&&&&&&&&&&&&&&&&&&&&&&&&&&&&&&&&&&

For the remainder of this chapter, I will focus mostly on this ``simplified four-column version" of the coniveau machine.  For example, when speaking of the ``first column of the machine," I mean the Cousin resolution of $\ms{K}_{p,X}$, {\it not} the column involving the generalized relative Chern character in equation  \hyperref[equconiveaumachinefunctorintro]{4.1.1.1}.

\section{Coniveau Spectral Sequence}\label{sectionconiveauSS}

Relatively early in the development of modern algebraic geometry, Hartshorne and Grothendieck \cite{HartshorneResiduesDuality66} introduced a very general method for constructing a spectral sequence associated with a complex of sheaves on a filtered topological space.  In the case of principal interest for this book, the topological space is the Zariski space $\tn{Zar}_X$ of an algebraic scheme $X$, filtered by codimension, and the complex of sheaves is defined in terms of a cohomology theory with supports $H$.   In this case, the resulting spectral sequence is called the {\it coniveau spectral sequence} for $H$ on $X$.  The functors appearing in the abstract form of the coniveau machine in definition \hyperref[deficoniveauabstract]{\ref{deficoniveauabstract}} assign such coniveau spectral sequences to objects of a distinguished category of schemes $\mbf{S}$.

Even for experienced practitioners, the use of spectral sequences can present some annoying features, due to the large amount of bookkeeping involved and the presence of certain inconsistencies in the literature.  For this reason, I have included detailed material on spectral sequences in section \hyperref[spectralsequences]{\ref{spectralsequences}} of the appendix.  This material serves both to fix notation and to make the book more self-contained.  For the remainder of this section, I will use the material in \hyperref[spectralsequences]{\ref{spectralsequences}} as needed.  Of particular importance is the general construction of the spectral sequence associated with an {\it exact couple,} 

%SSSSSSSSSSSSSSSSSSSSSSSSSSSSSSSSSSSSSSSSSSSSSSSSSSSSSSSSSSSSSSSSSSSSSSSSSSSSSSSSSS
%SSSSSSSSSSSSSSSSSSSSSSSSSSSSSSSSSSSSSSSSSSSSSSSSSSSSSSSSSSSSSSSSSSSSSSSSSSSSSSSSSS
%SSS                      SSS                       SSS                    SS                          SS           SSS                         SSS          SSSSS      SSSS
%SSS      SSSSSSSSS    SSSSSSSSSS     SSSSSSSSSSSS     SSSSSSS     SSSS    SSSSSS     SSS     S     SSSS     SSSSS
%SSS      SSSSSSSSS    SSSSSSSSSS     SSSSSSSSSSSS     SSSSSSS     SSSS    SSSSSS     SSS     SS     SSS     SSSSS
%SSS                      SSS                  SSSSS    SSSSSSSSSSSS     SSSSSSS     SSSS    SSSSSS     SSS     SSS     SS     SSSSS
%SSSSSSSSSS    SSS    SSSSSSSSSS    SSSSSSSSSSSS     SSSSSSS     SSSS    SSSSSS     SSS     SSSS     S     SSSSS
%SSSSSSSSSS    SSS    SSSSSSSSSS    SSSSSSSSSSSS     SSSSSSS     SSSS    SSSSSS     SSS     SSSSS          SSSSS
%SSS                       SSS                      SSS                       SSSSS     SSSSSS          SSS                         SSS     SSSSSS        SSSSS
%SSSSSSSSSSSSSSSSSSSSSSSSSSSSSSSSSSSSSSSSSSSSSSSSSSSSSSSSSSSSSSSSSSSSSSSSSSSSSSSSSS
%SSSSSSSSSSSSSSSSSSSSSSSSSSSSSSSSSSSSSSSSSSSSSSSSSSSSSSSSSSSSSSSSSSSSSSSSSSSSSSSSSS

\subsection{Filtration by Codimension}\label{subsectionfiltrationbycodim}

In equation \hyperref[equdefconiveau]{\ref{equdefconiveau}} of section \hyperref[subsectiondimension]{\ref{subsectiondimension}} above, I introduced the coniveau filtration on an $n$-dimensional noetherian scheme $X$ as a decreasing filtration   
\begin{equation}\label{coniveaufiltrchapter4} \oslash\subset \tn{Zar}_X^{\ge n}\subset \tn{Zar}_X^{\ge n-1}\subset...\subset \tn{Zar}_X^{\ge 1}\subset \tn{Zar}_X^{\ge 0}=\tn{Zar}_X,\end{equation}
where $\tn{Zar}_X^{\ge p}$ is defined to be the set of all point of $X$ of codimension at least $p$.  The $p$th associated graded piece of the coniveau filtration may be identified with the set $\tn{Zar}_X^p$ of all points of codimension $p$ in $X$.  

It is sometimes useful to consider the more general notion of filtration with respect to an abstract codimension function.  Let $T$ be a topological space, and let $\tn{Irred}_T$ be the set of irreducible closed subsets of $T$, partially ordered by strict inclusion.  

\begin{defi}\label{defidimensionfunction} An abstract {\bf codimension function} is a contravariant order morphism $\tn{codim}_T:\tn{Irred}_T\rightarrow L$, where $L$ is a totally ordered set.
\end{defi}

Usually $L$ is taken to be the nonnegative integers, sometimes augmented by an absorbing element $\infty$ to accommodate the empty set.\footnotemark\footnotetext{There exist many important examples of noninteger dimension and codimension in other areas of mathematics, though these often involve additional structures such as a metric.  Familiar examples include Hausdorff dimension and Minkowski (i.e. ``box-counting") dimension.}   The meaning of a contravariant order morphism is that for any strict inclusion of irreducible closed subsets $U\subset V$ of $T$, $\tn{codim}_X U>\tn{codim}_X V$ in $L$.  

The filtration of the topological space $T$ with respect to an abstract codimension function $\tn{codim}_T$ is the family of subsets $\{T^{\ge l}\}_{l\in L}$,  where $T^{\ge l}:=\tn{codim}_T^{-1}(\{l'\in L|l'\ge l\})\subset \tn{Irred}_T$.  The family $\{T^{\ge l}\}$ is totally ordered by inclusion.  The map from $L$ to the underlying ordered set of $\{T^{\ge l}\}$ sending $l$ to the element corresponding to $T^{\le l}$ is a contravariant order isomorphism.  In this sense, the filtration of $T$ with respect to $\tn{codim}_T$ is a {\it contravariant categorification} of $L$, sending elements to subsets of $\tn{Irred}_T$ and relations to inclusions in the opposite order.   In the case where $L$ is the augmented integers $\ZZ\cup\{\infty\}$, and where $\tn{codim}_T(T)$ is taken to be zero, the filtration of $T$ with respect to $\tn{codim}_T$ may be written as a sequence
\begin{equation}\label{codimsequencech4}\oslash\subset...\subset T^{\ge l}\subset T^{\ge l-1}\subset...\subset T^{\ge 1}\subset T^{\ge 0}=T.\end{equation}
$T$ is called {\bf finite-dimensional} with respect to $\tn{codim}_T$ when this sequence is bounded below.  This places the coniveau filtration on an $n$-dimensional noetherian scheme $X$ in a wider context.  

Filtration with respect to a codimension function is not the only useful way to filter a topological space $T$ by means of its closed subsets.  In particular, given additional structure on $T$, one may design a filtration to organize this information in a convenient way.  An important example of additional structure in this sense is a sheaf, or complex of sheaves, on $T$, with values in an abelian category.  Hartshorne discusses a number of different sheaf-related filtrations on topological spaces in \cite{HartshorneResiduesDuality66}, which is based on ideas of Grothendieck concerning duality of coherent sheaves.  As Balmer observes in \cite{BalmerNiveauSS00}, page 2, the construction of the coniveau spectral sequence does not depend on the details of Zariski codimension in algebraic geometry, but applies to any suitable codimension function.  Balmer calls his corresponding functions dimension functions, and his spectral sequence the {\bf niveau spectral sequence.}  In the terminology of this book, an abstract {\bf dimension function} is a {\it covariant} order morphism $\tn{dim}_X:\tn{Irred}_T\rightarrow L$.\footnotemark\footnotetext{Balmer makes everything covariant by inverting the totally ordered target set, which in his case is the augmented integers.  Hence, his ``dimension function" measuring the ``codimensions" of irreducible closed subsets of schemes is the {\it negative} of the Krull codimension function.  I prefer to maintain the order-theoretic distinction between dimension and codimension, and to adhere to the cohomological/contravariant/codimensional viewpoint familiar in the study of cycle and Chow groups.}

\comment{More extreme generalizations are possible.  In the place of the topological space $T$, one might substitute any site.   Alternatively, in the place of $\tn{Irred}_T$, one might substitute any suitably enriched directed graph.  An example of a familiar enriched directed graph that is not a category is the graph whose set of vertices is the set of all matrices over a ring, and whose set of morphisms is the set of ordered pairs of matrices with defined product.  Morphisms generally do not compose in this example; composing morphisms are mediated by square matrices.  Cycles of morphisms intersect the diagonal in the plane lattice corresponding to square matrices.   Taking the subgraph whose vertices lie below the diagonal gives an enriched subgraph of matrices with more rows than columns.   A natural codimension function on this enriched graph is the row dimension of matrices, or the column dimension, or their sum or product.} 

%SSSSSSSSSSSSSSSSSSSSSSSSSSSSSSSSSSSSSSSSSSSSSSSSSSSSSSSSSSSSSSSSSSSSSSSSSSSSSSSSSS
%SSSSSSSSSSSSSSSSSSSSSSSSSSSSSSSSSSSSSSSSSSSSSSSSSSSSSSSSSSSSSSSSSSSSSSSSSSSSSSSSSS
%SSS                      SSS                       SSS                    SS                          SS           SSS                         SSS          SSSSS      SSSS
%SSS      SSSSSSSSS    SSSSSSSSSS     SSSSSSSSSSSS     SSSSSSS     SSSS    SSSSSS     SSS     S     SSSS     SSSSS
%SSS      SSSSSSSSS    SSSSSSSSSS     SSSSSSSSSSSS     SSSSSSS     SSSS    SSSSSS     SSS     SS     SSS     SSSSS
%SSS                      SSS                  SSSSS    SSSSSSSSSSSS     SSSSSSS     SSSS    SSSSSS     SSS     SSS     SS     SSSSS
%SSSSSSSSSS    SSS    SSSSSSSSSS    SSSSSSSSSSSS     SSSSSSS     SSSS    SSSSSS     SSS     SSSS     S     SSSSS
%SSSSSSSSSS    SSS    SSSSSSSSSS    SSSSSSSSSSSS     SSSSSSS     SSSS    SSSSSS     SSS     SSSSS          SSSSS
%SSS                       SSS                      SSS                       SSSSS     SSSSSS          SSS                         SSS     SSSSSS        SSSSS
%SSSSSSSSSSSSSSSSSSSSSSSSSSSSSSSSSSSSSSSSSSSSSSSSSSSSSSSSSSSSSSSSSSSSSSSSSSSSSSSSSS
%SSSSSSSSSSSSSSSSSSSSSSSSSSSSSSSSSSSSSSSSSSSSSSSSSSSSSSSSSSSSSSSSSSSSSSSSSSSSSSSSSS

\subsection{Coniveau Spectral Sequence}\label{coniveauspectralsequence}

The material in this section is standard, but much more specialized and targeted to the subject of this book than the supporting material in section \hyperref[spectralsequences]{\ref{spectralsequences}} of the appendix.   The main references are Hartshorne \cite{HartshorneResiduesDuality66} Chapter IV, and Colliot-Th\'el\`ene, Hoobler, and Kahn \cite{CHKBloch-Ogus-Gabber97}.  The notation, as usual, is mine, and differs somewhat from that in the references.  For much of this section, I will focus on the case where the topological space $X$ under consideration is an equidimensional and noetherian scheme over a field $k$.

Let $\mbf{S}_k$ be a distinguished category of schemes over a field $k$, and let $X$ be an object of $\mbf{S}_k$.  Let $H:=\{H^n\}_{n\in\ZZ}$ be a cohomology theory with supports on the category of pairs over $\mbf{S}_k$.  Consider a chain $\overline{Z}$ of closed subsets of $X$ of the form $\oslash\subset Z^d\subset Z^{d-1}\subset...\subset Z^0=X$, where the inclusions are proper.  By the definition of a cohomology theory with supports, every inclusion $Z^{p+1}\subset Z^p$ in the chain $\overline{Z}$ induces a long exact cohomology sequence of cohomology groups, of the following form:
\begin{equation}\label{leschainofsubsets}...\longrightarrow H^{p+q}_{X\tn{ \footnotesize{on} } Z^{p+1}}\overset{i_{\overline{Z}}^{p+1,q-1}}{\longrightarrow} H^{p+q}_{X\tn{ \footnotesize{on} } Z^{p}}\overset{j_{\overline{Z}}^{p,q}}{\longrightarrow} H^{p+q}_{X-Z^{p+1}\tn{ \footnotesize{on} } Z^{p}-Z^{p+1}}\overset{k_{\overline{Z}}^{p,q}}{\longrightarrow} H^{p+q+1}_{X\tn{ \footnotesize{on} } Z^{p+1}}\longrightarrow...\end{equation}
These sequences are of the same form as the long exact sequences 
\[...\longrightarrow D^{p+1,q-1}\overset{i^{p+1,q-1}}{\longrightarrow} D^{p,q}\overset{j^{p,q}}{\longrightarrow} E^{p,q}\overset{k^{p,q}}{\longrightarrow} D^{p+1,q}\longrightarrow...\]
associated with an exact couple appearing in equation \hyperref[equlesexactcouple]{\ref{equlesexactcouple}} of section \hyperref[spectralsequences]{\ref{spectralsequences}} of the appendix.  These sequences may be used to {\it define} an exact couple $C_{\overline{Z}}:=(D_{\overline{Z}},E_{\overline{Z}},i_{\overline{Z}},j_{\overline{Z}},k_{\overline{Z}})$ of bigraded $k$-vector spaces, and from this a spectral sequence $\{E_{\overline{Z},r}\}:=\{E_{\overline{Z},r}^{p,q},d_{\overline{Z},r}^{p,q}\}$.   In particular, $D_{\overline{Z}}^{p,q}$ is defined to be the cohomology group $H^{p+q}_{X\tn{ \footnotesize{on} } Z^p}$, and $E_{\overline{Z}}^{p,q}$ is defined to be the cohomology group $H^{p+q}_{X-Z^{p+1}\tn{ \footnotesize{on} } Z^{p}-Z^{p+1}}$.  The maps $i_{\overline{Z}},j_{\overline{Z}},$ and $k_{\overline{Z}}$ are defined to be the maps whose graded pieces appear in the long exact cohomology sequence above.  The exact couple $C_{\overline{Z}}$ yields a spectral sequence $\{E_{\overline{Z},r}\}$, whose $E_{\overline{Z},1}$-terms are $E_{\overline{Z}}^{p,q}$.  This spectral sequences converges to $\{H_X^n\}$ with respect to the filtration $F^pH_X^n=\tn{Im}\big(D_{\overline{Z}}^{p,n-p}\rightarrow H_X^n\big)$, where the map  $D_{\overline{Z}}^{p,n-p}\rightarrow H_X^n$ is the composition
\begin{equation}\label{equCSSequ2}D_{\overline{Z}}^{p,n-p}=H^{n}_{X\tn{ \footnotesize{on} } Z^p}\overset{i_{\overline{Z}}^{p,q}}{\longrightarrow} H^{n}_{X\tn{ \footnotesize{on} } Z^{p-1}}\overset{i_{\overline{Z}}^{p-1,q+1}}{\longrightarrow}... \overset{i_{\overline{Z}}^{2,n-2}}{\longrightarrow}H^{n}_{X\tn{ \footnotesize{on} } Z^{1}}\overset{i_{\overline{Z}}^{1,n-1}}{\longrightarrow} H^{n}_{X}.\end{equation}
Convergence follows from the fact that $C_{\overline{Z}}$ is bounded above; indeed, for $p>d$, the set $Z^p$ is empty, so the cohomology group $D_{\overline{Z}}^{p,n-p}=H^{n}_{X\tn{ \footnotesize{on} } Z^p}$ is zero in this case.  

Now assume that $X$ is equidimensional and noetherian of dimension $d$, and that for all $p$, the codimension of the closed subset $Z^p$ in $X$ is at least $p$.  The family of all chains of closed subsets $\overline{Z}$ of the form $\oslash\subset Z^d\subset Z^{d-1}\subset...\subset Z^0=X$ is partially ordered by the relation $\prec$ defined by setting $\overline{Z}\prec \overline{Z}'$ if and only if $Z^p\subseteq  {Z^p}'$ for all $p$.   The relations $\overline{Z}\prec\overline{Z}'$ may be viewed as the morphisms of an appropriate category of chains of closed subsets.  The construction of the exact couple is covariant with respect to these relations, meaning that the construction is a functor taking each morphism $\overline{Z}\prec\overline{Z}'$ to a morphism of exact couples $C_{\overline{Z}}\rightarrow C_{\overline{Z}'}$.  Hence, there is an induced partial order on the family of exact couples $\{C_{\overline{Z}}\}$.  Passing to the limit in this partially ordered family, one may define a define a new exact couple $C:=(D,E,i,j,k)$, where 
\begin{equation}\label{equlimitDE}D^{p,q}:=\lim_{\{\overline{Z}\}} H^{p+q}_{X\tn{ \footnotesize{on} } Z^p}\tn{\hspace*{.5cm} and \hspace*{.5cm}} E^{p,q}:=\lim_{\{\overline{Z}\}} H^{p+q}_{X-Z^{p+1}\tn{ \footnotesize{on} } Z^{p}-Z^{p+1}}.\end{equation}

\begin{defi}\label{deficoniveauSS} The spectral sequence $\{E_r^{p,q},d_r^{p,q}\}$ associated to the exact couple $C=(D,E,i,j,k)$ defined above is called the {\bf coniveau spectral sequence} for  $H$ on $X$.
\end{defi}

When it is important to emphasize the cohomology theory $H$ and the space $X$, the coniveau spectral sequence may be denoted by $\{E_{H,X,r}\}$, and its terms by $E_{H,X,r}^{p,q}$.   Usually, $H$ and $X$ are clear from context, and the coniveau spectral sequence may be denoted simply by $\{E_{r}\}$ and its terms by $E_{r}^{p,q}$. 

%SSSSSSSSSSSSSSSSSSSSSSSSSSSSSSSSSSSSSSSSSSSSSSSSSSSSSSSSSSSSSSSSSSSSSSSSSSSSSSSSSS
%SSSSSSSSSSSSSSSSSSSSSSSSSSSSSSSSSSSSSSSSSSSSSSSSSSSSSSSSSSSSSSSSSSSSSSSSSSSSSSSSSS
%SSS                      SSS                       SSS                    SS                          SS           SSS                         SSS          SSSSS      SSSS
%SSS      SSSSSSSSS    SSSSSSSSSS     SSSSSSSSSSSS     SSSSSSS     SSSS    SSSSSS     SSS     S     SSSS     SSSSS
%SSS      SSSSSSSSS    SSSSSSSSSS     SSSSSSSSSSSS     SSSSSSS     SSSS    SSSSSS     SSS     SS     SSS     SSSSS
%SSS                      SSS                  SSSSS    SSSSSSSSSSSS     SSSSSSS     SSSS    SSSSSS     SSS     SSS     SS     SSSSS
%SSSSSSSSSS    SSS    SSSSSSSSSS    SSSSSSSSSSSS     SSSSSSS     SSSS    SSSSSS     SSS     SSSS     S     SSSSS
%SSSSSSSSSS    SSS    SSSSSSSSSS    SSSSSSSSSSSS     SSSSSSS     SSSS    SSSSSS     SSS     SSSSS          SSSSS
%SSS                       SSS                      SSS                       SSSSS     SSSSSS          SSS                         SSS     SSSSSS        SSSSS
%SSSSSSSSSSSSSSSSSSSSSSSSSSSSSSSSSSSSSSSSSSSSSSSSSSSSSSSSSSSSSSSSSSSSSSSSSSSSSSSSSS
%SSSSSSSSSSSSSSSSSSSSSSSSSSSSSSSSSSSSSSSSSSSSSSSSSSSSSSSSSSSSSSSSSSSSSSSSSSSSSSSSSS

\subsection{Coniveau Spectral Sequence via Coniveau Filtration}\label{coniveauspectralsequence}

Let $\mbf{S}_k$ be a distinguished category of schemes over a field $k$, and let $X$ be an object of $\mbf{S}_k$.  Assume that $X$ is equidimensional and noetherian of dimension $d$.  Let $H:=\{H^n\}_{n\in\ZZ}$ be a cohomology theory with supports on the category of pairs over $\mbf{S}_k$.  For a point $x$ in $\tn{Zar}_X^p$, define the {\bf cohomology group} $H_{X\tn{ \footnotesize{on} } x}^{p+q}$ {\bf supported at $x$} to be the limit group $\displaystyle \lim_{\{U|x\in U\}} H_{U\tn{ \footnotesize{on} } \overline{x}\cap U}^{p+q}$ over all open sets $U$ containing $x$.  The following lemma, adapted from Colliot-Th\'{e}l\`{e}ne, Hoobler, and Kahn \cite{CHKBloch-Ogus-Gabber97}, Lemma 1.2.1, page 6, is crucial for expressing the coniveau spectral sequence for  $H=\{H^n\}_{n\in\ZZ}$ on $X$ in terms of the coniveau filtration of $X$.  

\begin{lem}\label{lemCHKlemma121}The $E_1$-terms $E_{X,1}^{p,q}$ of the coniveau spectral sequence for $H$ on $X$ are of the form 
\begin{equation}\label{equE1terms}E_{X,1}^{p,q}\cong\coprod_{x\in \tn{\footnotesize{Zar}}_X^{p}} H_{X\tn{ \footnotesize{on} } x}^{p+q}.\end{equation}
\end{lem}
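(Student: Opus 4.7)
The plan is to unpack the definition of $E_{X,1}^{p,q}$ given in equation \hyperref[equlimitDE]{\ref{equlimitDE}} and identify the direct limit as a coproduct of local cohomology groups at codimension-$p$ points. Recall that $E_{X,1}^{p,q}$ is defined as
\[
\lim_{\{\overline{Z}\}} H^{p+q}_{X-Z^{p+1}\tn{ on } Z^{p}-Z^{p+1}},
\]
where $\overline{Z}$ runs over chains $\oslash\subset Z^d\subset Z^{d-1}\subset\ldots\subset Z^0=X$ with $\tn{codim}_X(Z^p)\geq p$, partially ordered by componentwise inclusion. The key observation is that for each such chain, the locally closed subset $Z^p-Z^{p+1}$ of $X-Z^{p+1}$ is a disjoint union of its codimension-zero (in $Z^p$) irreducible components, each of whose generic points lies in $\tn{Zar}_X^{p}$.

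First, I would exploit additivity of the cohomology theory with supports $H$ (implicit in the formalism of \cite{CHKBloch-Ogus-Gabber97}): if $Z^p-Z^{p+1}$ decomposes as a disjoint union of closed subsets $\{V_\alpha\}$ in $X-Z^{p+1}$, then
\[
H^{p+q}_{X-Z^{p+1}\tn{ on } Z^{p}-Z^{p+1}}\;\cong\;\coprod_\alpha H^{p+q}_{X-Z^{p+1}\tn{ on } V_\alpha}.
\]
Each $V_\alpha$ is the intersection of $X-Z^{p+1}$ with the closure $\overline{\{x\}}$ of some codimension-$p$ point $x$ not contained in $Z^{p+1}$. So after decomposition the right-hand side is indexed by pairs $(x,\overline{Z})$ with $x\in\tn{Zar}_X^p\cap(X-Z^{p+1})$, and the summand attached to $(x,\overline{Z})$ is $H^{p+q}_{X-Z^{p+1}\tn{ on }\overline{\{x\}}\cap(X-Z^{p+1})}$.

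Next, I would fix a point $x\in\tn{Zar}_X^p$ and examine the subsystem of chains containing $x$ in $Z^p-Z^{p+1}$. Passing to a cofinal subfamily, the sets $U:=X-Z^{p+1}$ form a cofinal family of open neighborhoods of $x$ in $X$, since one can always enlarge $Z^{p+1}$ to cut out any prescribed codimension-$(p{+}1)$ locus missing $x$. Hence the contribution of $x$ to the direct limit is precisely
\[
\lim_{\{U\mid x\in U\}} H^{p+q}_{U\tn{ on }\overline{\{x\}}\cap U}\;=\;H^{p+q}_{X\tn{ on } x},
\]
the cohomology group supported at $x$ defined at the start of the subsection. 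Because direct limits commute with coproducts, assembling the contributions across all $x\in\tn{Zar}_X^p$ yields the desired isomorphism.

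The main obstacle is the cofinality/bookkeeping argument: one must verify carefully that restricting to chains in which a given codimension-$p$ point appears as the generic point of a component of $Z^p-Z^{p+1}$ indexes a cofinal subsystem, and that the partial order on chains induces, after this cofinal restriction, the system of open neighborhoods of $x$ used to define $H^{p+q}_{X\tn{ on } x}$. This is essentially the content of Lemma 1.2.1 of \cite{CHKBloch-Ogus-Gabber97}, and the argument relies crucially on the noetherian and equidimensional hypotheses on $X$, which guarantee that $Z^p$ has finitely many generic points and that the filtration can be refined freely in codimension $\ge p+1$ without disturbing the chosen point $x$.
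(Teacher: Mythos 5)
Your overall strategy is the same as the paper's (and as Colliot-Th\'el\`ene--Hoobler--Kahn's): decompose the $E_1$-term over the components of $Z^p-Z^{p+1}$, then identify the limit of the contribution of each codimension-$p$ point $x$ with $H^{p+q}_{X\tn{ on }x}$ by a cofinality argument. The limit identification and the cofinality discussion at the end are fine. But there is one step you cannot dismiss as ``implicit in the formalism,'' and it is precisely where the paper does its real work: the claim that for pairwise disjoint closed subsets $V_\alpha$ one has $H^{p+q}_{X-Z^{p+1}\tn{ on }\coprod_\alpha V_\alpha}\cong\coprod_\alpha H^{p+q}_{X-Z^{p+1}\tn{ on }V_\alpha}$. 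The axioms of a cohomology theory with supports only give you the long exact sequence of a triple; for the triple $V_1\subset V_1\cup V_2\subset U$ that sequence produces the group $H^*_{U-V_1\tn{ on }V_2}$, not $H^*_{U\tn{ on }V_2}$, and identifying the two requires excision. The paper's proof establishes the splitting by running the long exact sequences of both triples $V_i\subset V_1\cup V_2\subset U$ and showing that the composite maps $H^*_{U\tn{ on }V_1}\to H^*_{U-V_2\tn{ on }V_1}$ (and symmetrically) are isomorphisms via \'etale excision applied to the open immersion $U-V_2\hookrightarrow U$; the inverses of these isomorphisms split the row and column. So your proof tacitly uses the hypothesis \textbf{COH1} without saying so, and omits the argument that turns it into the direct-sum decomposition.

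A smaller point: your ``key observation'' that for \emph{each} chain $\overline{Z}$ the set $Z^p-Z^{p+1}$ is a disjoint union of its codimension-$p$ components is false as stated --- the components $Y_i$ may meet one another in loci of codimension $\ge p+1$ not contained in $Z^{p+1}$, and $Z^p$ may carry extra components of codimension $>p$. The decomposition only holds after enlarging $Z^{p+1}$ to swallow these intersections and excess components, i.e.\ on a cofinal subfamily of chains. You do acknowledge this in your closing paragraph, but the proof should be phrased so that the disjointness is arranged cofinally rather than asserted chainwise.
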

\begin{proof}  The proof can be found in Colliot-Th\'{e}l\`{e}ne, Hoobler, and Kahn \cite{CHKBloch-Ogus-Gabber97}, but the version I give here gives more details and is more consistent with the choice of notation and terminology in this book.   The first step is to prove the following intermediate statement: {\it if $T_1,...,T_r$ are pairwise disjoint closed subsets of $X$, then for any $p\ge 0$,}
\begin{equation}\label{equlem4231intermediate}\bigoplus_i H_{X\tn{ \footnotesize{on} } T_i}^p\cong H_{X\tn{ \footnotesize{on} } \cup T_i}^p.\end{equation}
To prove this, first assume, by induction, that $r=2$.  Now consider the following diagram:

%&&&&&&&&&&&&&&&&&&&&&&&&&&&&&&&&&&&&&&&&&&&&&&&&&&&&&&&&&&&&&&&&&&&&&&&&&&&&&&&&&&
%&&&&&&&&&&&&&&&&&&&&&&&&&&&&&&&&&&&&&&&&&&&&&&&&&&&&&&&&&&&&&&&&&&&&&&&&&&&&&&&&&&
%&&&&                 &&&&    &&&               &&&                    &&&    &&&&    &&&              &&&&                   &&&&&&&&&&&&&&&&&&
%&&&&   &&&&   &&&&    &&&   &&&&&&&&&&&   &&&&&&    &&&&    &&&    &&&     &&&    &&&&&&&&&&&&&&&&&&&&&&&&
%&&&&   &&&&   &&&&    &&&   &&&&&&&&&&&   &&&&&&    &&&&    &&&    &&&     &&&    &&&&&&&&&&&&&&&&&&&&&&&&
%&&&&                 &&&&    &&&   &&&&&&&&&&&   &&&&&&    &&&&    &&&            &&&&&             &&&&&&&&&&&&&&&&&&&&
%&&&&    &&&&&&&&&    &&&   &&&&&&&&&&&   &&&&&&    &&&&    &&&    &&&   &&&&    &&&&&&&&&&&&&&&&&&&&&&&
%&&&&    &&&&&&&&&    &&&   &&&&&&&&&&&   &&&&&&    &&&&    &&&    &&&&   &&&    &&&&&&&&&&&&&&&&&&&&&&&
%&&&&    &&&&&&&&&    &&&                &&&&&&   &&&&&&                   &&&    &&&&&   &&                  &&&&&&&&&&&&&&&&&&
%&&&&&&&&&&&&&&&&&&&&&&&&&&&&&&&&&&&&&&&&&&&&&&&&&&&&&&&&&&&&&&&&&&&&&&&&&&&&&&&&&&
%&&&&&&&&&&&&&&&&&&&&&&&&&&&&&&&&&&&&&&&&&&&&&&&&&&&&&&&&&&&&&&&&&&&&&&&&&&&&&&&&&&
\begin{pgfpicture}{0cm}{0cm}{17cm}{3.5cm}
\begin{pgftranslate}{\pgfpoint{0cm}{-.25cm}}
\pgfputat{\pgfxy(8.5,.5)}{\pgfbox[center,center]{$H_{X-T_2 \tn{ \footnotesize{on} }T_1}^p$}}
\pgfputat{\pgfxy(5,2)}{\pgfbox[center,center]{$H_{X \tn{ \footnotesize{on} }T_1}^p$}}
\pgfputat{\pgfxy(8.5,2)}{\pgfbox[center,center]{$H_{X \tn{ \footnotesize{on} }T_1\cup T_2}^p$}}
\pgfputat{\pgfxy(12,2)}{\pgfbox[center,center]{$H_{X-T_1 \tn{ \footnotesize{on} }T_2}^p$}}
\pgfputat{\pgfxy(8.5,3.5)}{\pgfbox[center,center]{$H_{X \tn{ \footnotesize{on} }T_2}^p$}}
\pgfsetendarrow{\pgfarrowlargepointed{3pt}}
\pgfxyline(8.5,3.1)(8.5,2.3)
\pgfxyline(8.5,1.6)(8.5,.8)
\pgfxyline(5.9,2)(7.2,2)
\pgfxyline(9.7,2)(10.8,2)
\pgfxyline(5.4,1.5)(7.3,.7)
\pgfxyline(9.4,3.3)(11.3,2.5)
\pgfputat{\pgfxy(6.6,2.3)}{\pgfbox[center,center]{\small{$i$}}}
\pgfputat{\pgfxy(10.2,2.3)}{\pgfbox[center,center]{\small{$j$}}}
\pgfputat{\pgfxy(8.25,2.75)}{\pgfbox[center,center]{\small{$i'$}}}
\pgfputat{\pgfxy(8.75,1.25)}{\pgfbox[center,center]{\small{$j'$}}}
\pgfputat{\pgfxy(6.3,.9)}{\pgfbox[center,center]{\small{$\phi$}}}
\pgfputat{\pgfxy(10.5,3.1)}{\pgfbox[center,center]{\small{$\psi$}}}
\end{pgftranslate}
\end{pgfpicture}
%&&&&&&&&&&&&&&&&&&&&&&&&&&&&&&&&&&&&&&&&&&&&&&&&&&&&&&&&&&&&&&&&&&&&&&&&&&&&&&&&&&
%&&&&&&&&&&&&&&&&&&&&&&&&&&&&&&&&&&&&&&&&&&&&&&&&&&&&&&&&&&&&&&&&&&&&&&&&&&&&&&&&&&
%&&&                    &&&       &&&&&    &&&                &&&&&&&&&&&&&&&&&&&&&&&&&&&&&&&&&&&&&&&&&&&&&&&&&&&
%&&&    &&&&&&&&&    &    &&&&   &&&    &&&&    &&&&&&&&&&&&&&&&&&&&&&&&&&&&&&&&&&&&&&&&&&&&&&&&&& 
%&&&    &&&&&&&&&    &&    &&&   &&&    &&&&&    &&&&&&&&&&&&&&&&&&&&&&&&&&&&&&&&&&&&&&&&&&&&&&&&& 
%&&&                 &&&&    &&&    &&   &&&    &&&&&    &&&&&&&&&&&&&&&&&&&&&&&&&&&&&&&&&&&&&&&&&&&&&&&&& 
%&&&    &&&&&&&&&    &&&&    &   &&&    &&&&&    &&&&&&&&&&&&&&&&&&&&&&&&&&&&&&&&&&&&&&&&&&&&&&&&& 
%&&&    &&&&&&&&&    &&&&&       &&&    &&&&     &&&&&&&&&&&&&&&&&&&&&&&&&&&&&&&&&&&&&&&&&&&&&&&&& 
%&&&                    &&&    &&&&&&    &&&                  &&&&&&&&&&&&&&&&&&&&&&&&&&&&&&&&&&&&&&&&&&&&&&&&&&
%&&&&&&&&&&&&&&&&&&&&&&&&&&&&&&&&&&&&&&&&&&&&&&&&&&&&&&&&&&&&&&&&&&&&&&&&&&&&&&&&&&
%&&&&&&&&&&&&&&&&&&&&&&&&&&&&&&&&&&&&&&&&&&&&&&&&&&&&&&&&&&&&&&&&&&&&&&&&&&&&&&&&&&

This diagram is commutative with exact row and column, and the maps $\phi:=j'\circ i$ and $\psi:=j\circ i'$ are isomorphisms.  The row and column are exact because they are parts of the long exact sequences in cohomology corresponding to the triple inclusions $T_1\subset T_1\cup T_2\subset X$ and $T_2\subset T_1\cup T_2\subset X$, respectively.   To show that $\phi$ is an isomorphism, consider the morphisms of pairs $(X-T_2,T_1)\rightarrow(X,T_1\cup T_2)$ and $(X,T_1\cup T_2)\rightarrow(X,T_1)$ given by the inclusion $X-T_2\rightarrow X$ and the identity $X\rightarrow X$.  These morphisms induce the maps $j'$ and $i$, respectively, and their composition is the morphism of pairs $(X-T_2,T_1)\rightarrow(X,T_1)$ inducing the map $\phi$.  This is the same morphism of pairs appearing in the following \'etale excision square, where all maps are inclusions:

%&&&&&&&&&&&&&&&&&&&&&&&&&&&&&&&&&&&&&&&&&&&&&&&&&&&&&&&&&&&&&&&&&&&&&&&&&&&&&&&&&&
%&&&&&&&&&&&&&&&&&&&&&&&&&&&&&&&&&&&&&&&&&&&&&&&&&&&&&&&&&&&&&&&&&&&&&&&&&&&&&&&&&&
%&&&&                 &&&&    &&&               &&&                    &&&    &&&&    &&&              &&&&                   &&&&&&&&&&&&&&&&&&
%&&&&   &&&&   &&&&    &&&   &&&&&&&&&&&   &&&&&&    &&&&    &&&    &&&     &&&    &&&&&&&&&&&&&&&&&&&&&&&&
%&&&&   &&&&   &&&&    &&&   &&&&&&&&&&&   &&&&&&    &&&&    &&&    &&&     &&&    &&&&&&&&&&&&&&&&&&&&&&&&
%&&&&                 &&&&    &&&   &&&&&&&&&&&   &&&&&&    &&&&    &&&            &&&&&             &&&&&&&&&&&&&&&&&&&&
%&&&&    &&&&&&&&&    &&&   &&&&&&&&&&&   &&&&&&    &&&&    &&&    &&&   &&&&    &&&&&&&&&&&&&&&&&&&&&&&
%&&&&    &&&&&&&&&    &&&   &&&&&&&&&&&   &&&&&&    &&&&    &&&    &&&&   &&&    &&&&&&&&&&&&&&&&&&&&&&&
%&&&&    &&&&&&&&&    &&&                &&&&&&   &&&&&&                   &&&    &&&&&   &&                  &&&&&&&&&&&&&&&&&&
%&&&&&&&&&&&&&&&&&&&&&&&&&&&&&&&&&&&&&&&&&&&&&&&&&&&&&&&&&&&&&&&&&&&&&&&&&&&&&&&&&&
%&&&&&&&&&&&&&&&&&&&&&&&&&&&&&&&&&&&&&&&&&&&&&&&&&&&&&&&&&&&&&&&&&&&&&&&&&&&&&&&&&&
\begin{pgfpicture}{0cm}{0cm}{17cm}{2cm}
\begin{pgftranslate}{\pgfpoint{5cm}{-.25cm}}
\pgfputat{\pgfxy(2,.5)}{\pgfbox[center,center]{$T_1$}}
\pgfputat{\pgfxy(4.5,.5)}{\pgfbox[center,center]{$X$}}
\pgfputat{\pgfxy(2,2)}{\pgfbox[center,center]{$T_1$}}
\pgfputat{\pgfxy(4.5,2)}{\pgfbox[center,center]{$X-T_2$}}
\pgfsetendarrow{\pgfarrowlargepointed{3pt}}
\pgfxyline(2,1.7)(2,.8)
\pgfxyline(2.5,.5)(4,.5)
\pgfxyline(2.5,2)(3.8,2)
\pgfxyline(4.5,1.7)(4.5,.8)
\end{pgftranslate}
\end{pgfpicture}
%&&&&&&&&&&&&&&&&&&&&&&&&&&&&&&&&&&&&&&&&&&&&&&&&&&&&&&&&&&&&&&&&&&&&&&&&&&&&&&&&&&
%&&&&&&&&&&&&&&&&&&&&&&&&&&&&&&&&&&&&&&&&&&&&&&&&&&&&&&&&&&&&&&&&&&&&&&&&&&&&&&&&&&
%&&&                    &&&       &&&&&    &&&                &&&&&&&&&&&&&&&&&&&&&&&&&&&&&&&&&&&&&&&&&&&&&&&&&&&
%&&&    &&&&&&&&&    &    &&&&   &&&    &&&&    &&&&&&&&&&&&&&&&&&&&&&&&&&&&&&&&&&&&&&&&&&&&&&&&&& 
%&&&    &&&&&&&&&    &&    &&&   &&&    &&&&&    &&&&&&&&&&&&&&&&&&&&&&&&&&&&&&&&&&&&&&&&&&&&&&&&& 
%&&&                 &&&&    &&&    &&   &&&    &&&&&    &&&&&&&&&&&&&&&&&&&&&&&&&&&&&&&&&&&&&&&&&&&&&&&&& 
%&&&    &&&&&&&&&    &&&&    &   &&&    &&&&&    &&&&&&&&&&&&&&&&&&&&&&&&&&&&&&&&&&&&&&&&&&&&&&&&& 
%&&&    &&&&&&&&&    &&&&&       &&&    &&&&     &&&&&&&&&&&&&&&&&&&&&&&&&&&&&&&&&&&&&&&&&&&&&&&&& 
%&&&                    &&&    &&&&&&    &&&                  &&&&&&&&&&&&&&&&&&&&&&&&&&&&&&&&&&&&&&&&&&&&&&&&&&
%&&&&&&&&&&&&&&&&&&&&&&&&&&&&&&&&&&&&&&&&&&&&&&&&&&&&&&&&&&&&&&&&&&&&&&&&&&&&&&&&&&
%&&&&&&&&&&&&&&&&&&&&&&&&&&&&&&&&&&&&&&&&&&&&&&&&&&&&&&&&&&&&&&&&&&&&&&&&&&&&&&&&&&

Thus, by \'etale excision, $\phi$ is an isomorphism.  A symmetrical argument shows that $\psi$ is an isomorphism.  The inverses of $\phi$ and $\psi$ split the row and column, so $H_{X \tn{ \footnotesize{on} }T_1}^p\oplus H_{X \tn{ \footnotesize{on} }T_2}^p=H_{X \tn{ \footnotesize{on} }T_1\cup T_2}^p$.  By induction, this proves the intermediate statement. 

To prove the lemma, consider again the directed system $\{\overline{Z},\prec\}$ of chains of closed subsets of $X$.   For a particular chain $\overline{Z}:=\oslash\subset Z^d\subset Z^{d-1}\subset...\subset Z^0=X$, I have assumed that the codimension of $Z^p$ is at least $p$.   Label the irreducible components of $Z^p$ of minimal codimension $p$ by $Y_1,...,Y_r$.   Now consider the set difference $Z^p-Z^{p+1}$.   The idea of the proof is that $Z^p-Z^{p+1}$ is ``approximately" equal to the disjoint union $\displaystyle\coprod_{i=1}^r(Y_i-Z^{p+1})$, and that equality is attained at some finite stage of the limiting process.   In general, the $Y_i$ intersect each other, so $Z^{p+1}$ must include all the intersections (which are of strictly lower dimension) for the union to be disjoint.  Also, $Z^p$ itself may include some components of higher codimension, so $Z^{p+1}$ must include these components as well for the difference $Z^p-Z^{p+1}$ to equal the above disjoint union.   By the definition of the partial order $\prec$, given any chain $\overline{Z}$, there exists another chain $\overline{Z}'$, such that $\overline{Z}\prec\overline{Z}'$ and the above conditions are satisfied in $\overline{Z}'$.   For instance, one could simply let ${Z^p}'=Z^p$, and choose $Z_{p+1}'$ to contain the intersections and lower-dimensional components mentioned above.   Once equality has been attained, 
\begin{equation}\label{equlem4231lim}H^{p+q}_{X-Z^{p+1}\tn{ \footnotesize{on} } Z^{p}-Z^{p+1}}=H^{p+q}_{X-Z^{p+1}\tn{ \footnotesize{on} } \coprod_{i=1}^r(Y_i-Z^{p+1})}=\bigoplus_{i=1}^r H^{p+q}_{X-Z^{p+1}\tn{ \footnotesize{on} }Y_i-Z^{p+1}}.\end{equation}
In the limit, $Z^p$ eventually includes every point of codimension at least $p$, and $Z_{p+1}$ eventually includes every point of codimension at least $p+1$.  The difference $Z^p-Z^{p+1}$ eventually includes every point of $\tn{Zar}_X^p$ and eventually excludes every other point.  Thus, the limit is $\displaystyle E^{p,q}\cong\coprod_{x\in \tn{\footnotesize{Zar}}_X^{p}} H_{X\tn{ \footnotesize{on} } x}^{p+q},$ as claimed.
%Rewrite and improve this!
\end{proof}

The $E_1$-terms, and the convergence property, of the coniveau spectral sequence for $\{H^n\}_{n\in\ZZ}$ on $X$, may be expressed by the concise statement
\begin{equation}\label{equE1convergence}E_1^{p,q}\cong\coprod_{x\in \tn{\footnotesize{Zar}}_X^{p}} H_{X\tn{ \footnotesize{on} } x}^{p+q}\Rightarrow H_X^{p+q}.\end{equation}

\section{Cousin Complexes; Bloch-Ogus Theorem}\label{sectioncousinblochogus}

Figure \hyperref[figconiveauintroreprise]{\ref{figconiveauintroreprise}} below shows the $E_1$-level of the coniveau spectral sequence for a cohomology theory with supports $H$ on an equidimensional noetherian scheme $X$ over a field $k$.  The shading serves to divide the ``plane" of bidegrees into quadrants.\footnotemark\footnotetext{This device is particularly useful in future diagrams in which the bidegree of an object may not be obvious from the identity of the object itself.}  Note that all terms $E_1^{p,q}$ with $p\le0$ vanish, since $X$ has no points of negative codimension.   Also note that $H$ may admit nontrivial cohomology groups with negative degrees.  

%&&&&&&&&&&&&&&&&&&&&&&&&&&&&&&&&&&&&&&&&&&&&&&&&&&&&&&&&&&&&&&&&&&&&&&&&&&&&&&&&&&
%&&&&&&&&&&&&&&&&&&&&&&&&&&&&&&&&&&&&&&&&&&&&&&&&&&&&&&&&&&&&&&&&&&&&&&&&&&&&&&&&&&
%&&&&                 &&&&    &&&               &&&                    &&&    &&&&    &&&              &&&&                   &&&&&&&&&&&&&&&&&&
%&&&&   &&&&   &&&&    &&&   &&&&&&&&&&&   &&&&&&    &&&&    &&&    &&&     &&&    &&&&&&&&&&&&&&&&&&&&&&&&
%&&&&   &&&&   &&&&    &&&   &&&&&&&&&&&   &&&&&&    &&&&    &&&    &&&     &&&    &&&&&&&&&&&&&&&&&&&&&&&&
%&&&&                 &&&&    &&&   &&&&&&&&&&&   &&&&&&    &&&&    &&&            &&&&&             &&&&&&&&&&&&&&&&&&&&
%&&&&    &&&&&&&&&    &&&   &&&&&&&&&&&   &&&&&&    &&&&    &&&    &&&   &&&&    &&&&&&&&&&&&&&&&&&&&&&&
%&&&&    &&&&&&&&&    &&&   &&&&&&&&&&&   &&&&&&    &&&&    &&&    &&&&   &&&    &&&&&&&&&&&&&&&&&&&&&&&
%&&&&    &&&&&&&&&    &&&                &&&&&&   &&&&&&                   &&&    &&&&&   &&                  &&&&&&&&&&&&&&&&&&
%&&&&&&&&&&&&&&&&&&&&&&&&&&&&&&&&&&&&&&&&&&&&&&&&&&&&&&&&&&&&&&&&&&&&&&&&&&&&&&&&&&
%&&&&&&&&&&&&&&&&&&&&&&&&&&&&&&&&&&&&&&&&&&&&&&&&&&&&&&&&&&&&&&&&&&&&&&&&&&&&&&&&&&
\begin{figure}[H]
\begin{pgfpicture}{0cm}{0cm}{17cm}{6cm}
\pgfputat{\pgfxy(3.5,5.5)}{\pgfbox[center,center]{$E_1$-level, abstract form}}
\pgfputat{\pgfxy(12,5.5)}{\pgfbox[center,center]{$E_1$-level, concrete form}}
\begin{pgftranslate}{\pgfpoint{.5cm}{0cm}}
\begin{pgfmagnify}{.85}{.85}
\begin{colormixin}{20!white}
\color{black}
\pgfmoveto{\pgfxy(3,3)}
\pgflineto{\pgfxy(7.5,3)}
\pgflineto{\pgfxy(7.5,6)}
\pgflineto{\pgfxy(3,6)}
\pgflineto{\pgfxy(3,3)}
\pgffill
\pgfmoveto{\pgfxy(3,3)}
\pgflineto{\pgfxy(3,0)}
\pgflineto{\pgfxy(-.5,0)}
\pgflineto{\pgfxy(-.5,3)}
\pgflineto{\pgfxy(3,3)}
\pgffill
\end{colormixin}
\pgfputat{\pgfxy(.5,1)}{\pgfbox[center,center]{\large{$E_1^{-1,-1}$}}}
\pgfputat{\pgfxy(3,1)}{\pgfbox[center,center]{\large{$E_1^{0,-1}$}}}
\pgfputat{\pgfxy(5.5,1)}{\pgfbox[center,center]{\large{$E_1^{1,-1}$}}}
\pgfputat{\pgfxy(.5,3)}{\pgfbox[center,center]{\large{$E_1^{-1,0}$}}}
\pgfputat{\pgfxy(3,3)}{\pgfbox[center,center]{\large{$E_1^{0,0}$}}}
\pgfputat{\pgfxy(5.5,3)}{\pgfbox[center,center]{\large{$E_1^{1,0}$}}}
\pgfputat{\pgfxy(.5,5)}{\pgfbox[center,center]{\large{$E_1^{-1,1}$}}}
\pgfputat{\pgfxy(3,5)}{\pgfbox[center,center]{\large{$E_1^{0,1}$}}}
\pgfputat{\pgfxy(5.5,5)}{\pgfbox[center,center]{\large{$E_1^{1,1}$}}}
\pgfputat{\pgfxy(1.8,1.3)}{\pgfbox[center,center]{\small{$d_1^{-1,-1}$}}}
\pgfputat{\pgfxy(1.8,3.3)}{\pgfbox[center,center]{\small{$d_1^{-1,0}$}}}
\pgfputat{\pgfxy(1.8,5.3)}{\pgfbox[center,center]{\small{$d_1^{-1,1}$}}}
\pgfputat{\pgfxy(4.4,1.3)}{\pgfbox[center,center]{\small{$d_1^{0,-1}$}}}
\pgfputat{\pgfxy(6.9,1.3)}{\pgfbox[center,center]{\small{$d_1^{1,-1}$}}}
\pgfputat{\pgfxy(4.4,3.3)}{\pgfbox[center,center]{\small{$d_1^{0,0}$}}}
\pgfputat{\pgfxy(6.9,3.3)}{\pgfbox[center,center]{\small{$d_1^{1,0}$}}}
\pgfputat{\pgfxy(4.4,5.3)}{\pgfbox[center,center]{\small{$d_1^{0,1}$}}}
\pgfputat{\pgfxy(6.9,5.3)}{\pgfbox[center,center]{\small{$d_1^{1,1}$}}}
\pgfsetendarrow{\pgfarrowlargepointed{3pt}}
\pgfxyline(1.2,1)(2.2,1)
\pgfxyline(3.7,1)(4.7,1)
\pgfxyline(1.2,3)(2.2,3)
\pgfxyline(3.7,3)(4.7,3)
\pgfxyline(1.2,5)(2.2,5)
\pgfxyline(3.7,5)(4.7,5)
\pgfxyline(6.2,1)(7.2,1)
\pgfxyline(6.2,3)(7.2,3)
\pgfxyline(6.2,5)(7.2,5)
\end{pgfmagnify}
\end{pgftranslate}
\begin{pgftranslate}{\pgfpoint{8.5cm}{0cm}}
\begin{pgfmagnify}{.85}{.85}
\begin{colormixin}{20!white}
\color{black}
\pgfmoveto{\pgfxy(3,3)}
\pgflineto{\pgfxy(9,3)}
\pgflineto{\pgfxy(9,6)}
\pgflineto{\pgfxy(3,6)}
\pgflineto{\pgfxy(3,3)}
\pgffill
\pgfmoveto{\pgfxy(3,3)}
\pgflineto{\pgfxy(3,0)}
\pgflineto{\pgfxy(0,0)}
\pgflineto{\pgfxy(0,3)}
\pgflineto{\pgfxy(3,3)}
\pgffill
\end{colormixin}
\pgfputat{\pgfxy(.5,1)}{\pgfbox[center,center]{\large{$0$}}}
\pgfputat{\pgfxy(3,.9)}{\pgfbox[center,center]{$\displaystyle\coprod_{x\in \tn{\footnotesize{Zar}}_X^{0}} H_{X\tn{ \footnotesize{on} } x}^{-1}$}}
\pgfputat{\pgfxy(6.5,.9)}{\pgfbox[center,center]{$\displaystyle\coprod_{x\in \tn{\footnotesize{Zar}}_X^{1}} H_{X\tn{ \footnotesize{on} } x}^{0}$}}
\pgfputat{\pgfxy(.5,3)}{\pgfbox[center,center]{\large{$0$}}}
\pgfputat{\pgfxy(3,2.9)}{\pgfbox[center,center]{$\displaystyle\coprod_{x\in \tn{\footnotesize{Zar}}_X^{0}} H_{X\tn{ \footnotesize{on} } x}^{0}$}}
\pgfputat{\pgfxy(6.5,2.9)}{\pgfbox[center,center]{$\displaystyle\coprod_{x\in \tn{\footnotesize{Zar}}_X^{1}} H_{X\tn{ \footnotesize{on} } x}^{1}$}}
\pgfputat{\pgfxy(.5,5)}{\pgfbox[center,center]{\large{$0$}}}
\pgfputat{\pgfxy(3,4.9)}{\pgfbox[center,center]{$\displaystyle\coprod_{x\in \tn{\footnotesize{Zar}}_X^{0}} H_{X\tn{ \footnotesize{on} } x}^{1}$}}
\pgfputat{\pgfxy(6.5,4.9)}{\pgfbox[center,center]{$\displaystyle\coprod_{x\in \tn{\footnotesize{Zar}}_X^{1}} H_{X\tn{ \footnotesize{on} } x}^{2}$}}
\pgfputat{\pgfxy(4.9,1.3)}{\pgfbox[center,center]{\small{$d_1^{0,-1}$}}}
\pgfputat{\pgfxy(8.4,1.3)}{\pgfbox[center,center]{\small{$d_1^{1,-1}$}}}
\pgfputat{\pgfxy(4.9,3.3)}{\pgfbox[center,center]{\small{$d_1^{0,0}$}}}
\pgfputat{\pgfxy(8.4,3.3)}{\pgfbox[center,center]{\small{$d_1^{1,0}$}}}
\pgfputat{\pgfxy(4.9,5.3)}{\pgfbox[center,center]{\small{$d_1^{0,1}$}}}
\pgfputat{\pgfxy(8.4,5.3)}{\pgfbox[center,center]{\small{$d_1^{1,1}$}}}
\pgfsetendarrow{\pgfarrowlargepointed{3pt}}
\pgfxyline(.8,1)(1.7,1)
\pgfxyline(4.4,1)(5.2,1)
\pgfxyline(.8,3)(1.7,3)
\pgfxyline(4.4,3)(5.2,3)
\pgfxyline(.8,5)(1.7,5)
\pgfxyline(4.4,5)(5.2,5)
\pgfxyline(7.9,1)(8.7,1)
\pgfxyline(7.9,3)(8.7,3)
\pgfxyline(7.9,5)(8.7,5)
\end{pgfmagnify}
\end{pgftranslate}
\end{pgfpicture}
\caption{$E_1$ level of the coniveau spectral sequence, expressed abstractly and in terms of filtration by codimension.}
\label{figconiveauintroreprise}
\end{figure}

\subsection{Cousin Complexes}\label{subsectioncousincomplexes}

The rows of the $E_1$ level of the coniveau spectral sequence $\{E_r^{p,q},d_r^{p,q}\}$ for $H$ on $X$ are important on an individual basis, since they arise in very general settings and possess important universal properties.  In particular, they are {\it Cousin complexes} at the group level.  The general theory of Cousin complexes is outlined in section \hyperref[subsectionlocalcohomsheaves]{\ref{subsectionlocalcohomsheaves}} of the appendix.   As described below, sheafified versions of these Cousin complexes are {\it flasque resolutions} of the sheaves $\ms{H}_X^q$ induced by $H$ on $X$, and may therefore be used to compute the sheaf cohomology groups of $\ms{H}_X^q$.  In particular, when $H$ is algebraic $K$-theory, the sheafified Cousin complexes may be used to compute the Chow groups $\tn{Ch}_X^q=H^q(\ms{K}_{q,X})$.  

\begin{defi}\label{defigroupCousincomplexes} The $q$th row of the $E_1$-level of the coniveau spectral sequence $\{E_r^{p,q},d_r^{p,q}\}$ for $H$ on $X$ is called the $q$th {\bf Cousin complex} for $H$ on $X$.  
\end{defi}

If the dimension of $X$ is $d$, the $q$th Cousin complex has at most $d+1$ nontrivial terms, since the only possible codimensions for points of $X$ are $0,...,d$.  Hence, the $q$th Cousin complex for a cohomology theory with supports $H$ on a $d$-dimensional scheme $X$ is of the form

%&&&&&&&&&&&&&&&&&&&&&&&&&&&&&&&&&&&&&&&&&&&&&&&&&&&&&&&&&&&&&&&&&&&&&&&&&&&&&&&&&&
%&&&&&&&&&&&&&&&&&&&&&&&&&&&&&&&&&&&&&&&&&&&&&&&&&&&&&&&&&&&&&&&&&&&&&&&&&&&&&&&&&&
%&&&&                 &&&&    &&&               &&&                    &&&    &&&&    &&&              &&&&                   &&&&&&&&&&&&&&&&&&
%&&&&   &&&&   &&&&    &&&   &&&&&&&&&&&   &&&&&&    &&&&    &&&    &&&     &&&    &&&&&&&&&&&&&&&&&&&&&&&&
%&&&&   &&&&   &&&&    &&&   &&&&&&&&&&&   &&&&&&    &&&&    &&&    &&&     &&&    &&&&&&&&&&&&&&&&&&&&&&&&
%&&&&                 &&&&    &&&   &&&&&&&&&&&   &&&&&&    &&&&    &&&            &&&&&             &&&&&&&&&&&&&&&&&&&&
%&&&&    &&&&&&&&&    &&&   &&&&&&&&&&&   &&&&&&    &&&&    &&&    &&&   &&&&    &&&&&&&&&&&&&&&&&&&&&&&
%&&&&    &&&&&&&&&    &&&   &&&&&&&&&&&   &&&&&&    &&&&    &&&    &&&&   &&&    &&&&&&&&&&&&&&&&&&&&&&&
%&&&&    &&&&&&&&&    &&&                &&&&&&   &&&&&&                   &&&    &&&&&   &&                  &&&&&&&&&&&&&&&&&&
%&&&&&&&&&&&&&&&&&&&&&&&&&&&&&&&&&&&&&&&&&&&&&&&&&&&&&&&&&&&&&&&&&&&&&&&&&&&&&&&&&&
%&&&&&&&&&&&&&&&&&&&&&&&&&&&&&&&&&&&&&&&&&&&&&&&&&&&&&&&&&&&&&&&&&&&&&&&&&&&&&&&&&&
\begin{pgfpicture}{0cm}{0cm}{17cm}{2cm}
\begin{pgftranslate}{\pgfpoint{-.4cm}{0cm}}
\pgfputat{\pgfxy(16.3,1)}{\pgfbox[center,center]{(4.3.1.0)}}
\pgfputat{\pgfxy(.7,1)}{\pgfbox[center,center]{$0$}}
\pgfputat{\pgfxy(3,.9)}{\pgfbox[center,center]{$\displaystyle\coprod_{x\in \tn{\footnotesize{Zar}}_X^{0}} H_{X\tn{ \footnotesize{on} } x}^{q}$}}
\pgfputat{\pgfxy(6.5,.9)}{\pgfbox[center,center]{$\displaystyle\coprod_{x\in \tn{\footnotesize{Zar}}_X^{1}} H_{X\tn{ \footnotesize{on} } x}^{q+1}$}}
\pgfputat{\pgfxy(12.5,.9)}{\pgfbox[center,center]{$\displaystyle\coprod_{x\in \tn{\footnotesize{Zar}}_X^{d}} H_{X\tn{ \footnotesize{on} } x}^{q+d}$}}
\pgfputat{\pgfxy(15,1)}{\pgfbox[center,center]{$0$}}
\pgfputat{\pgfxy(4.9,1.3)}{\pgfbox[center,center]{\small{$d_1^{0,q}$}}}
\pgfputat{\pgfxy(8.4,1.3)}{\pgfbox[center,center]{\small{$d_1^{1,q}$}}}
\pgfputat{\pgfxy(10.9,1.3)}{\pgfbox[center,center]{\small{$d_1^{d-1,q}$}}}
\pgfsetendarrow{\pgfarrowlargepointed{3pt}}
\pgfxyline(.9,1)(1.7,1)
\pgfxyline(4.4,1)(5.2,1)
\pgfxyline(7.9,1)(8.7,1)
\pgfnodecircle{Node0}[fill]{\pgfxy(8.9,1)}{0.02cm}
\pgfnodecircle{Node0}[fill]{\pgfxy(9,1)}{0.02cm}
\pgfnodecircle{Node0}[fill]{\pgfxy(9.1,1)}{0.02cm}
\pgfnodecircle{Node0}[fill]{\pgfxy(10,1)}{0.02cm}
\pgfnodecircle{Node0}[fill]{\pgfxy(10.1,1)}{0.02cm}
\pgfnodecircle{Node0}[fill]{\pgfxy(10.2,1)}{0.02cm}
\pgfxyline(10.4,1)(11.2,1)
\pgfxyline(13.9,1)(14.7,1)
\end{pgftranslate}
\end{pgfpicture}
%&&&&&&&&&&&&&&&&&&&&&&&&&&&&&&&&&&&&&&&&&&&&&&&&&&&&&&&&&&&&&&&&&&&&&&&&&&&&&&&&&&
%&&&&&&&&&&&&&&&&&&&&&&&&&&&&&&&&&&&&&&&&&&&&&&&&&&&&&&&&&&&&&&&&&&&&&&&&&&&&&&&&&&
%&&&                    &&&       &&&&&    &&&                &&&&&&&&&&&&&&&&&&&&&&&&&&&&&&&&&&&&&&&&&&&&&&&&&&&
%&&&    &&&&&&&&&    &    &&&&   &&&    &&&&    &&&&&&&&&&&&&&&&&&&&&&&&&&&&&&&&&&&&&&&&&&&&&&&&&& 
%&&&    &&&&&&&&&    &&    &&&   &&&    &&&&&    &&&&&&&&&&&&&&&&&&&&&&&&&&&&&&&&&&&&&&&&&&&&&&&&& 
%&&&                 &&&&    &&&    &&   &&&    &&&&&    &&&&&&&&&&&&&&&&&&&&&&&&&&&&&&&&&&&&&&&&&&&&&&&&& 
%&&&    &&&&&&&&&    &&&&    &   &&&    &&&&&    &&&&&&&&&&&&&&&&&&&&&&&&&&&&&&&&&&&&&&&&&&&&&&&&& 
%&&&    &&&&&&&&&    &&&&&       &&&    &&&&     &&&&&&&&&&&&&&&&&&&&&&&&&&&&&&&&&&&&&&&&&&&&&&&&& 
%&&&                    &&&    &&&&&&    &&&                  &&&&&&&&&&&&&&&&&&&&&&&&&&&&&&&&&&&&&&&&&&&&&&&&&&
%&&&&&&&&&&&&&&&&&&&&&&&&&&&&&&&&&&&&&&&&&&&&&&&&&&&&&&&&&&&&&&&&&&&&&&&&&&&&&&&&&&
%&&&&&&&&&&&&&&&&&&&&&&&&&&&&&&&&&&&&&&&&&&&&&&&&&&&&&&&&&&&&&&&&&&&&&&&&&&&&&&&&&&

The next lemma, adapted from Colliot-Th\'el\`ene, Hoobler, and Kahn \cite{CHKBloch-Ogus-Gabber97}, Lemma 1.2.2, page 7, says that the terms in the Cousin complexes sheafify to give flasque sheaves on $X$.  A sheaf is called {\bf flasque} if its restriction maps are surjective.  In the present case, flasqueness is a reflection of reductionism; all the information in every sheaf appearing in each Cousin complex belongs to individual points of $X$, so the information assigned by every such a sheaf to an open subset of $X$ is neither more nor less than the sum of the pointwise information over the relevant points in the subset. 

\begin{lem}\label{lemflasque} For all choices of $n$ and $p$, the presheaf $\ms{H}_X^n$ on $X$ 
\begin{equation}\label{equpresheafHn}U\mapsto H_U^n=\coprod_{x\in \tn{\footnotesize{Zar}}_U^p} H^{n}_{X\tn{ \footnotesize{on} } x}\end{equation}
is a flasque sheaf, which may be identified with the coproduct of skyscraper sheaves
\begin{equation}\label{equpresheafHnissheaf}\coprod_{x\in \tn{\footnotesize{Zar}}_X^p}  \underline{H^{n}_{X\tn{ \footnotesize{on} } x}}.\end{equation}
\end{lem}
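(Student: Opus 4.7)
The proof naturally splits into three steps, and I would handle them in the following order.

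First, I would establish the bookkeeping identity $\tn{Zar}_U^p = U \cap \tn{Zar}_X^p$ for any open $U \subseteq X$. This follows from the fact that for any $x \in U$, the local ring $O_{U,x}$ equals the local ring $O_{X,x}$, so the Krull codimension of $x$ is the same whether measured in $U$ or $X$. This identification lets me reindex the coproduct over the fixed set of codimension-$p$ points of $X$ that happen to lie in $U$, which is precisely the data carried by a direct sum of skyscraper sheaves indexed by $\tn{Zar}_X^p$.

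Second, I would observe that for each $x \in \tn{Zar}_X^p$, the assignment $U \mapsto H^n_{X \tn{ on } x}$ if $x \in U$ and $U \mapsto 0$ otherwise is (by definition) the skyscraper sheaf $\underline{H^n_{X \tn{ on } x}}_x$. By Step 1, the presheaf $\ms{H}_X^n$ therefore agrees termwise with the (presheaf) direct sum of these skyscrapers over $x \in \tn{Zar}_X^p$. To see this presheaf is already a sheaf (so equals the sheaf coproduct), I would check the gluing axiom directly: given an open cover $\{U_i\}$ of $U$ and compatible sections $s_i$, the candidate glued section $s$ has its $x$-component uniquely determined by $s_i$ for any $i$ with $x \in U_i$, and compatibility on overlaps makes this well-defined. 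Separation is automatic from separation of each summand.

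Third, for flasqueness, each skyscraper $\underline{H^n_{X \tn{ on } x}}_x$ is flasque (restriction maps are either the identity or zero), and I would verify directly that a coproduct of flasque sheaves is flasque: given $V \subseteq U$ and a section $t \in \ms{H}_X^n(V)$, lift each nonzero component $t_x$ (for $x \in V \cap \tn{Zar}_X^p \subseteq U \cap \tn{Zar}_X^p$) to a component in the $x$-summand over $U$, which is possible since this summand is constant on open sets containing $x$; reassembling yields a preimage under restriction.

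The main obstacle is the technical subtlety in Step 2: when $\tn{Zar}_X^p$ is infinite (e.g., $p = \dim X$ with infinitely many closed points), one must be careful about the meaning of ``coproduct,'' since the sheaf coproduct is \emph{a priori} the sheafification of the presheaf direct sum and need not coincide with it. The cleanest resolution is to interpret $\coprod$ as the sheaf coproduct throughout (so sections are locally finite sums) and then note that, because the supports $\overline{\{x\}}$ of distinct codimension-$p$ points intersect only in lower-dimensional sets, the sheaf coproduct and presheaf direct sum coincide on any quasi-compact open, which suffices on the noetherian scheme $X$.
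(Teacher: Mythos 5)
Your proof is correct and follows essentially the same route as the paper's: identify $\tn{Zar}_U^p$ with $U\cap\tn{Zar}_X^p$, observe that each skyscraper summand contributes its group to $\Gamma(U,-)$ exactly when $x\in U$, and deduce flasqueness from the fact that every section is supported at finitely many points whose groups do not change under restriction. The paper's own proof is a two-sentence version of this argument; your additional care with the presheaf-direct-sum versus sheaf-coproduct distinction (settled because every open of the noetherian scheme $X$ is quasi-compact, so locally finite families are finite) addresses a point the paper silently elides.
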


\begin{proof} The equality of the specified presheaf and sheaf follows immediately from the direct limit definition of the cohomology group $H^{n}_{X\tn{ \footnotesize{on} } x}$, and the definition of a skyscraper sheaf.  Indeed, the summand $\underline{H^{n}_{X\tn{ \footnotesize{on} } x}}$ of the specified sheaf contributes a factor of $H^{n}_{X\tn{ \footnotesize{on} } x}$ to its group of sections over $U$ if and only if $x\in U$, which gives precisely the same groups of sections as the specified presheaf.  Flasqueness follows from the definition of inclusion and the fact that all the information in the sheaf belongs to individual points of $X$.      
\end{proof}

The sheafified Cousin complexes are of the form 

%&&&&&&&&&&&&&&&&&&&&&&&&&&&&&&&&&&&&&&&&&&&&&&&&&&&&&&&&&&&&&&&&&&&&&&&&&&&&&&&&&&
%&&&&&&&&&&&&&&&&&&&&&&&&&&&&&&&&&&&&&&&&&&&&&&&&&&&&&&&&&&&&&&&&&&&&&&&&&&&&&&&&&&
%&&&&                 &&&&    &&&               &&&                    &&&    &&&&    &&&              &&&&                   &&&&&&&&&&&&&&&&&&
%&&&&   &&&&   &&&&    &&&   &&&&&&&&&&&   &&&&&&    &&&&    &&&    &&&     &&&    &&&&&&&&&&&&&&&&&&&&&&&&
%&&&&   &&&&   &&&&    &&&   &&&&&&&&&&&   &&&&&&    &&&&    &&&    &&&     &&&    &&&&&&&&&&&&&&&&&&&&&&&&
%&&&&                 &&&&    &&&   &&&&&&&&&&&   &&&&&&    &&&&    &&&            &&&&&             &&&&&&&&&&&&&&&&&&&&
%&&&&    &&&&&&&&&    &&&   &&&&&&&&&&&   &&&&&&    &&&&    &&&    &&&   &&&&    &&&&&&&&&&&&&&&&&&&&&&&
%&&&&    &&&&&&&&&    &&&   &&&&&&&&&&&   &&&&&&    &&&&    &&&    &&&&   &&&    &&&&&&&&&&&&&&&&&&&&&&&
%&&&&    &&&&&&&&&    &&&                &&&&&&   &&&&&&                   &&&    &&&&&   &&                  &&&&&&&&&&&&&&&&&&
%&&&&&&&&&&&&&&&&&&&&&&&&&&&&&&&&&&&&&&&&&&&&&&&&&&&&&&&&&&&&&&&&&&&&&&&&&&&&&&&&&&
%&&&&&&&&&&&&&&&&&&&&&&&&&&&&&&&&&&&&&&&&&&&&&&&&&&&&&&&&&&&&&&&&&&&&&&&&&&&&&&&&&&
\begin{pgfpicture}{0cm}{0cm}{17cm}{2cm}
\begin{pgftranslate}{\pgfpoint{-.4cm}{0cm}}
\pgfputat{\pgfxy(16.3,1)}{\pgfbox[center,center]{(4.3.1.3)}}
\pgfputat{\pgfxy(.7,1)}{\pgfbox[center,center]{$0$}}
\pgfputat{\pgfxy(3,.9)}{\pgfbox[center,center]{$\displaystyle\coprod_{x\in \tn{\footnotesize{Zar}}_X^{0}} \underline{H_{X\tn{ \footnotesize{on} } x}^{q}}$}}
\pgfputat{\pgfxy(6.5,.9)}{\pgfbox[center,center]{$\displaystyle\coprod_{x\in \tn{\footnotesize{Zar}}_X^{1}} \underline{H_{X\tn{ \footnotesize{on} } x}^{q+1}}$}}
\pgfputat{\pgfxy(12.5,.9)}{\pgfbox[center,center]{$\displaystyle\coprod_{x\in \tn{\footnotesize{Zar}}_X^{d}} \underline{H_{X\tn{ \footnotesize{on} } x}^{q+d}}$}}
\pgfputat{\pgfxy(15,1)}{\pgfbox[center,center]{$0$,}}
\pgfputat{\pgfxy(4.9,1.3)}{\pgfbox[center,center]{\small{$d_1^{0,q}$}}}
\pgfputat{\pgfxy(8.4,1.3)}{\pgfbox[center,center]{\small{$d_1^{1,q}$}}}
\pgfputat{\pgfxy(10.9,1.3)}{\pgfbox[center,center]{\small{$d_1^{d-1,q}$}}}
\pgfsetendarrow{\pgfarrowlargepointed{3pt}}
\pgfxyline(.9,1)(1.7,1)
\pgfxyline(4.4,1)(5.2,1)
\pgfxyline(7.9,1)(8.7,1)
\pgfnodecircle{Node0}[fill]{\pgfxy(8.9,1)}{0.02cm}
\pgfnodecircle{Node0}[fill]{\pgfxy(9,1)}{0.02cm}
\pgfnodecircle{Node0}[fill]{\pgfxy(9.1,1)}{0.02cm}
\pgfnodecircle{Node0}[fill]{\pgfxy(10,1)}{0.02cm}
\pgfnodecircle{Node0}[fill]{\pgfxy(10.1,1)}{0.02cm}
\pgfnodecircle{Node0}[fill]{\pgfxy(10.2,1)}{0.02cm}
\pgfxyline(10.4,1)(11.2,1)
\pgfxyline(13.9,1)(14.7,1)
\end{pgftranslate}
\end{pgfpicture}
\label{equsheafifiedcousin}
%&&&&&&&&&&&&&&&&&&&&&&&&&&&&&&&&&&&&&&&&&&&&&&&&&&&&&&&&&&&&&&&&&&&&&&&&&&&&&&&&&&
%&&&&&&&&&&&&&&&&&&&&&&&&&&&&&&&&&&&&&&&&&&&&&&&&&&&&&&&&&&&&&&&&&&&&&&&&&&&&&&&&&&
%&&&                    &&&       &&&&&    &&&                &&&&&&&&&&&&&&&&&&&&&&&&&&&&&&&&&&&&&&&&&&&&&&&&&&&
%&&&    &&&&&&&&&    &    &&&&   &&&    &&&&    &&&&&&&&&&&&&&&&&&&&&&&&&&&&&&&&&&&&&&&&&&&&&&&&&& 
%&&&    &&&&&&&&&    &&    &&&   &&&    &&&&&    &&&&&&&&&&&&&&&&&&&&&&&&&&&&&&&&&&&&&&&&&&&&&&&&& 
%&&&                 &&&&    &&&    &&   &&&    &&&&&    &&&&&&&&&&&&&&&&&&&&&&&&&&&&&&&&&&&&&&&&&&&&&&&&& 
%&&&    &&&&&&&&&    &&&&    &   &&&    &&&&&    &&&&&&&&&&&&&&&&&&&&&&&&&&&&&&&&&&&&&&&&&&&&&&&&& 
%&&&    &&&&&&&&&    &&&&&       &&&    &&&&     &&&&&&&&&&&&&&&&&&&&&&&&&&&&&&&&&&&&&&&&&&&&&&&&& 
%&&&                    &&&    &&&&&&    &&&                  &&&&&&&&&&&&&&&&&&&&&&&&&&&&&&&&&&&&&&&&&&&&&&&&&&
%&&&&&&&&&&&&&&&&&&&&&&&&&&&&&&&&&&&&&&&&&&&&&&&&&&&&&&&&&&&&&&&&&&&&&&&&&&&&&&&&&&
%&&&&&&&&&&&&&&&&&&&&&&&&&&&&&&&&&&&&&&&&&&&&&&&&&&&&&&&&&&&&&&&&&&&&&&&&&&&&&&&&&&

where I have used the same notation for the sheaf maps as for the maps at the level of groups. 

%SSSSSSSSSSSSSSSSSSSSSSSSSSSSSSSSSSSSSSSSSSSSSSSSSSSSSSSSSSSSSSSSSSSSSSSSSSSSSSSSSS
%SSSSSSSSSSSSSSSSSSSSSSSSSSSSSSSSSSSSSSSSSSSSSSSSSSSSSSSSSSSSSSSSSSSSSSSSSSSSSSSSSS
%SSS                      SSS                       SSS                    SS                          SS           SSS                         SSS          SSSSS      SSSS
%SSS      SSSSSSSSS    SSSSSSSSSS     SSSSSSSSSSSS     SSSSSSS     SSSS    SSSSSS     SSS     S     SSSS     SSSSS
%SSS      SSSSSSSSS    SSSSSSSSSS     SSSSSSSSSSSS     SSSSSSS     SSSS    SSSSSS     SSS     SS     SSS     SSSSS
%SSS                      SSS                  SSSSS    SSSSSSSSSSSS     SSSSSSS     SSSS    SSSSSS     SSS     SSS     SS     SSSSS
%SSSSSSSSSS    SSS    SSSSSSSSSS    SSSSSSSSSSSS     SSSSSSS     SSSS    SSSSSS     SSS     SSSS     S     SSSSS
%SSSSSSSSSS    SSS    SSSSSSSSSS    SSSSSSSSSSSS     SSSSSSS     SSSS    SSSSSS     SSS     SSSSS          SSSSS
%SSS                       SSS                      SSS                       SSSSS     SSSSSS          SSS                         SSS     SSSSSS        SSSSS
%SSSSSSSSSSSSSSSSSSSSSSSSSSSSSSSSSSSSSSSSSSSSSSSSSSSSSSSSSSSSSSSSSSSSSSSSSSSSSSSSSS
%SSSSSSSSSSSSSSSSSSSSSSSSSSSSSSSSSSSSSSSSSSSSSSSSSSSSSSSSSSSSSSSSSSSSSSSSSSSSSSSSSS

\subsection{Bloch-Ogus Theorem}\label{BlochOgus}

As shown in the last few sections, the construction of the coniveau spectral sequence and Cousin complexes for a cohomology theory with supports $H$ on a scheme $X$ over a field $k$ requires some assumptions on $X$, but none on $H$.   In particular, though the construction of the coniveau spectral sequence requires $X$ to be equidimensional and noetherian, the only property of $H$ used in the construction is the defining property that the sequences of inclusions $Z^{p+1}\subset Z^p\subset X$ induce long exact sequence of cohomology groups with supports.   Indeed, the Cousin complexes exist even under much more general conditions.   The desired use of the sheafified Cousin complexes in the present context, however, is to compute the cohomology groups of the sheaves $\ms{H}_X^n$ on $X$ associated to the cohomology theory with supports $H$.  For this to succeed, the sheafified Cousin complexes must be acyclic resolutions.   Each sheaf in the sheafified Cousin complexes is automatically flasque, and hence acyclic, but the complexes must also be exact, which fails for general cohomology theories with supports, even for equidimensional noetherian schemes.  Hence, it is necessary to introduce some further assumptions on $X$ and $H$.   In this section, I will assume that $X$ is smooth and $H$ is effaceable.  The smoothness criterion can be relaxed to a certain extent. 

The following lemma is adapted from Colliot-Th\'el\`ene, Hoobler, and Kahn \cite{CHKBloch-Ogus-Gabber97}, Proposition 2.1.2, page 8.  The proof I present is easier than the proof given there, because the effaceability condition I use is simpler than the one used in \cite{CHKBloch-Ogus-Gabber97}.\footnotemark\footnotetext{\cite{CHKBloch-Ogus-Gabber97}, Proposition 2.1.2, page 8, involves the weaker effaceability condition which assumes only that the composition $H_{W\tn{ \footnotesize{on} } Z}\rightarrow H_{U\tn{ \footnotesize{on} } Z\cap U}\rightarrow H_{U\tn{ \footnotesize{on} } Z'\cap U}$ vanishes.  The effaceability condition used here, which  \cite{CHKBloch-Ogus-Gabber97} calls {\it strict effaceability}, assumes that the second arrow in this composition itself vanishes.}  Note that the group-level Cousin complexes involving the semilocal ring spectrum $Y$ appearing in this proposition are {\it not} the group-level Cousin complexes for $X$, and their exactness does {\it not} imply exactness of the group-level Cousin complexes for $X$.  However, they sheafify to give the same sheaf-level Cousin complexes, and their exactness implies the exactness of these sheaf complexes. 

\begin{lem}\label{semilocaleffacement} Let $\mbf{S}_k$ be a distinguished category of schemes over a field $k$, and let $X$ be an object of $\mbf{S}_k$.  Assume that $X$ is equidimensional and noetherian of dimension $d$.  Let $H:=\{H^n\}_{n\in\ZZ}$ be a cohomology theory with supports on the category of pairs over $\mbf{S}_k$.   Let $R=O_{t_1,...,t_r}$ be the semilocal ring of $X$ at $(t_1,...,t_r)$, and let $Y=\tn{Spec }R$.  Suppose that $H$ is effaceable at $(t_1,...,t_r)$.  Then in the exact couple defining the coniveau spectral sequence for $H$ on $Y$, the maps $i^{p,q}$ are identically zero for all $p>0$.  Hence, computing the cohomology of the $E_1$-level, 
\[E_2^{p,q}=\begin{cases} H_{\tn{\footnotesize{Zar}}}^p(X,H_X^q) &\tn{if } p = 0 \\
0 & \tn{if } p > 0. \end{cases}\]
Further, the Cousin complexes yield exact sequences

%&&&&&&&&&&&&&&&&&&&&&&&&&&&&&&&&&&&&&&&&&&&&&&&&&&&&&&&&&&&&&&&&&&&&&&&&&&&&&&&&&&
%&&&&&&&&&&&&&&&&&&&&&&&&&&&&&&&&&&&&&&&&&&&&&&&&&&&&&&&&&&&&&&&&&&&&&&&&&&&&&&&&&&
%&&&&                 &&&&    &&&               &&&                    &&&    &&&&    &&&              &&&&                   &&&&&&&&&&&&&&&&&&
%&&&&   &&&&   &&&&    &&&   &&&&&&&&&&&   &&&&&&    &&&&    &&&    &&&     &&&    &&&&&&&&&&&&&&&&&&&&&&&&
%&&&&   &&&&   &&&&    &&&   &&&&&&&&&&&   &&&&&&    &&&&    &&&    &&&     &&&    &&&&&&&&&&&&&&&&&&&&&&&&
%&&&&                 &&&&    &&&   &&&&&&&&&&&   &&&&&&    &&&&    &&&            &&&&&             &&&&&&&&&&&&&&&&&&&&
%&&&&    &&&&&&&&&    &&&   &&&&&&&&&&&   &&&&&&    &&&&    &&&    &&&   &&&&    &&&&&&&&&&&&&&&&&&&&&&&
%&&&&    &&&&&&&&&    &&&   &&&&&&&&&&&   &&&&&&    &&&&    &&&    &&&&   &&&    &&&&&&&&&&&&&&&&&&&&&&&
%&&&&    &&&&&&&&&    &&&                &&&&&&   &&&&&&                   &&&    &&&&&   &&                  &&&&&&&&&&&&&&&&&&
%&&&&&&&&&&&&&&&&&&&&&&&&&&&&&&&&&&&&&&&&&&&&&&&&&&&&&&&&&&&&&&&&&&&&&&&&&&&&&&&&&&
%&&&&&&&&&&&&&&&&&&&&&&&&&&&&&&&&&&&&&&&&&&&&&&&&&&&&&&&&&&&&&&&&&&&&&&&&&&&&&&&&&&
\begin{pgfpicture}{0cm}{0cm}{17cm}{2cm}
\begin{pgftranslate}{\pgfpoint{1.5cm}{0cm}}

\pgfputat{\pgfxy(-1,1)}{\pgfbox[center,center]{$0$}}
\pgfputat{\pgfxy(.5,1)}{\pgfbox[center,center]{$H_Y^n$}}
\pgfputat{\pgfxy(3,.9)}{\pgfbox[center,center]{$\displaystyle\coprod_{x\in \tn{\footnotesize{Zar}}_Y^{0}} H_{Y\tn{ \footnotesize{on} } x}^{n}$}}
\pgfputat{\pgfxy(6.5,.9)}{\pgfbox[center,center]{$\displaystyle\coprod_{x\in \tn{\footnotesize{Zar}}_Y^{1}} H_{Y\tn{ \footnotesize{on} } x}^{n+1}$}}
\pgfputat{\pgfxy(12.5,.9)}{\pgfbox[center,center]{$\displaystyle\coprod_{x\in \tn{\footnotesize{Zar}}_Y^{d}} H_{Y\tn{ \footnotesize{on} } x}^{n+d}$}}
\pgfputat{\pgfxy(15,1)}{\pgfbox[center,center]{$0$}}
\pgfputat{\pgfxy(4.9,1.3)}{\pgfbox[center,center]{\small{$d_1^{0,n}$}}}
\pgfputat{\pgfxy(8.4,1.3)}{\pgfbox[center,center]{\small{$d_1^{1,n}$}}}
\pgfputat{\pgfxy(10.9,1.3)}{\pgfbox[center,center]{\small{$d_1^{d-1,n}$}}}
\pgfsetendarrow{\pgfarrowlargepointed{3pt}}
\pgfxyline(-.8,1)(0,1)
\pgfxyline(.9,1)(1.7,1)
\pgfxyline(4.4,1)(5.2,1)
\pgfxyline(7.9,1)(8.7,1)
\pgfnodecircle{Node0}[fill]{\pgfxy(8.9,1)}{0.02cm}
\pgfnodecircle{Node0}[fill]{\pgfxy(9,1)}{0.02cm}
\pgfnodecircle{Node0}[fill]{\pgfxy(9.1,1)}{0.02cm}
\pgfnodecircle{Node0}[fill]{\pgfxy(10,1)}{0.02cm}
\pgfnodecircle{Node0}[fill]{\pgfxy(10.1,1)}{0.02cm}
\pgfnodecircle{Node0}[fill]{\pgfxy(10.2,1)}{0.02cm}
\pgfxyline(10.4,1)(11.2,1)
\pgfxyline(13.9,1)(14.7,1)
\end{pgftranslate}
\end{pgfpicture}
%&&&&&&&&&&&&&&&&&&&&&&&&&&&&&&&&&&&&&&&&&&&&&&&&&&&&&&&&&&&&&&&&&&&&&&&&&&&&&&&&&&
%&&&&&&&&&&&&&&&&&&&&&&&&&&&&&&&&&&&&&&&&&&&&&&&&&&&&&&&&&&&&&&&&&&&&&&&&&&&&&&&&&&
%&&&                    &&&       &&&&&    &&&                &&&&&&&&&&&&&&&&&&&&&&&&&&&&&&&&&&&&&&&&&&&&&&&&&&&
%&&&    &&&&&&&&&    &    &&&&   &&&    &&&&    &&&&&&&&&&&&&&&&&&&&&&&&&&&&&&&&&&&&&&&&&&&&&&&&&& 
%&&&    &&&&&&&&&    &&    &&&   &&&    &&&&&    &&&&&&&&&&&&&&&&&&&&&&&&&&&&&&&&&&&&&&&&&&&&&&&&& 
%&&&                 &&&&    &&&    &&   &&&    &&&&&    &&&&&&&&&&&&&&&&&&&&&&&&&&&&&&&&&&&&&&&&&&&&&&&&& 
%&&&    &&&&&&&&&    &&&&    &   &&&    &&&&&    &&&&&&&&&&&&&&&&&&&&&&&&&&&&&&&&&&&&&&&&&&&&&&&&& 
%&&&    &&&&&&&&&    &&&&&       &&&    &&&&     &&&&&&&&&&&&&&&&&&&&&&&&&&&&&&&&&&&&&&&&&&&&&&&&& 
%&&&                    &&&    &&&&&&    &&&                  &&&&&&&&&&&&&&&&&&&&&&&&&&&&&&&&&&&&&&&&&&&&&&&&&&
%&&&&&&&&&&&&&&&&&&&&&&&&&&&&&&&&&&&&&&&&&&&&&&&&&&&&&&&&&&&&&&&&&&&&&&&&&&&&&&&&&&
%&&&&&&&&&&&&&&&&&&&&&&&&&&&&&&&&&&&&&&&&&&&&&&&&&&&&&&&&&&&&&&&&&&&&&&&&&&&&&&&&&&

\end{lem}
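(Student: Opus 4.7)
The plan is to exploit a feature particular to the semilocal setting: since $Y=\tn{Spec}\,R$ has exactly the points $t_1,\ldots,t_r$ as its closed points, any Zariski open subset of $Y$ containing all of them must equal $Y$ itself. Consequently, the effaceability hypothesis at $(t_1,\ldots,t_r)$ specializes on $Y$ to the cleaner statement: for every closed $Z\subset Y$ of codimension $\geq p+1$ there is a closed $Z'\subset Y$ containing $Z$, of codimension $\geq p$, such that $H^n_{Y\tn{ on }Z}\to H^n_{Y\tn{ on }Z'}$ is the zero map for every $n$. The mandatory shrinking of the open neighborhood in the general axiom disappears here, which is what drives the entire argument.

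First I would establish that the transition map $i^{p,q}\colon D^{p,q}\to D^{p-1,q+1}$ vanishes in the colimit for every $p>0$. Fix a chain $\overline{Z}:\emptyset\subset Z^d\subset\cdots\subset Z^0=Y$ and a representative $\alpha\in H^{p+q}_{Y\tn{ on }Z^p}$ of a class in $D^{p,q}$. Since $\tn{codim}_Y Z^p\geq p\geq 1$, the semilocal form of effaceability applied to $Z^p$ at level $p-1$ produces a closed $Z'\supseteq Z^p$ of codimension $\geq p-1$ in $Y$ with $H^{p+q}_{Y\tn{ on }Z^p}\to H^{p+q}_{Y\tn{ on }Z'}$ the zero map. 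Define the refined chain $\overline{Z}'$ by replacing $Z^{p-1}$ with $Z^{p-1}\cup Z'$ and leaving every other term unchanged; this is still a valid chain with the required codimension bounds, so $\overline{Z}\prec\overline{Z}'$ in the directed system. Under this refinement the image of $\alpha$ in $H^{p+q}_{Y\tn{ on }Z^{p-1}\cup Z'}$ factors through $H^{p+q}_{Y\tn{ on }Z'}$ and hence is zero. Passing to the filtered colimit over $\{\overline{Z}\}$ gives $i^{p,q}=0$.

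The rest is formal. Vanishing of the $i^{p,q}$ for $p>0$ collapses the long exact sequence of the exact couple into short exact sequences
\[
0\longrightarrow D^{p,q}\xrightarrow{\;j^{p,q}\;} E^{p,q}\xrightarrow{\;k^{p,q}\;} D^{p+1,q}\longrightarrow 0
\]
for every $p\geq 0$. Since $d_1^{p,q}=j^{p+1,q}\circ k^{p,q}$ with $j^{p+1,q}$ injective and, for $p\geq 1$, $k^{p-1,q}$ surjective, a short diagram chase gives $\ker d_1^{p,q}=\tn{im}\,j^{p,q}=\tn{im}\,d_1^{p-1,q}$ for $p\geq 1$, hence $E_2^{p,q}=0$; for $p=0$ the same identifications yield $E_2^{0,q}=D^{0,q}=H^q_Y$, which is the asserted $H^0$ of the sheafified theory on the semilocal $Y$. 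Splicing these short exact sequences together through the injections $j^{p,n}$ produces the augmented Cousin sequence displayed in the lemma, and exactness is checked term by term: at the interior positions it is the vanishing of $E_2^{p,n}$; at the left-most term it is the injectivity of $j^{0,n}$; at the right-most term $E^{d,n}$ it is the surjectivity of $d_1^{d-1,n}$, which follows from $E^{d+1,n}=0$ combined with $E_2^{d,n}=0$.

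The main technical subtlety, rather than a genuine obstacle, is verifying that the chain refinement used to effacer each class lies in the same directed system indexing the limits $D^{p,q}$ and $E^{p,q}$. The semilocal form of effaceability makes this automatic: the closed set $Z'$ is produced directly inside $Y$ with the precise codimension bound required to sit at level $p-1$ of a refined chain, so no shrinking or re-gluing of opens is necessary, and the filtered-colimit argument goes through without friction.
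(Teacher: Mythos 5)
Your proof is correct and follows essentially the same route as the paper: effaceability, pushed through the colimit over chains of closed subsets and over open neighborhoods of $\{t_1,\dots,t_r\}$, kills the maps $i^{p,q}$ for $p>0$, and the rest is exact-couple bookkeeping. In fact your handling of the formal half is more careful than the paper's own write-up, which garbles the exactness identity (writing $\tn{Ker}(k)=\tn{Im}(i)$ where $\tn{Ker}(k)=\tn{Im}(j)$ is meant, and wrongly concluding that the differentials $d_1^{p,q}$ themselves vanish rather than the $E_2^{p,q}$) and then breaks off mid-sentence before treating $d_1^{0,n}$; your derivation of the short exact sequences $0\to D^{p,q}\to E^{p,q}\to D^{p+1,q}\to 0$ and the splicing argument for the augmented Cousin complex supply exactly what that write-up leaves out.
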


\begin{proof} Let $X, U, W, Z,$ and $Z$ be defined as in the effaceability condition, as illustrated in the schematic diagram on the right below, and let $(t_1,...,t_r)$ and $Y$ be defined as in the statement of the proposition.  For each $p>0$, Consider the diagram on the left in figure \hyperref[figeffaceabilityprop]{\ref{figeffaceabilityprop}} below:

%&&&&&&&&&&&&&&&&&&&&&&&&&&&&&&&&&&&&&&&&&&&&&&&&&&&&&&&&&&&&&&&&&&&&&&&&&&&&&&&&&&
%&&&&&&&&&&&&&&&&&&&&&&&&&&&&&&&&&&&&&&&&&&&&&&&&&&&&&&&&&&&&&&&&&&&&&&&&&&&&&&&&&&
%&&&&                 &&&&    &&&               &&&                    &&&    &&&&    &&&              &&&&                   &&&&&&&&&&&&&&&&&&
%&&&&   &&&&   &&&&    &&&   &&&&&&&&&&&   &&&&&&    &&&&    &&&    &&&     &&&    &&&&&&&&&&&&&&&&&&&&&&&&
%&&&&   &&&&   &&&&    &&&   &&&&&&&&&&&   &&&&&&    &&&&    &&&    &&&     &&&    &&&&&&&&&&&&&&&&&&&&&&&&
%&&&&                 &&&&    &&&   &&&&&&&&&&&   &&&&&&    &&&&    &&&            &&&&&             &&&&&&&&&&&&&&&&&&&&
%&&&&    &&&&&&&&&    &&&   &&&&&&&&&&&   &&&&&&    &&&&    &&&    &&&   &&&&    &&&&&&&&&&&&&&&&&&&&&&&
%&&&&    &&&&&&&&&    &&&   &&&&&&&&&&&   &&&&&&    &&&&    &&&    &&&&   &&&    &&&&&&&&&&&&&&&&&&&&&&&
%&&&&    &&&&&&&&&    &&&                &&&&&&   &&&&&&                   &&&    &&&&&   &&                  &&&&&&&&&&&&&&&&&&
%&&&&&&&&&&&&&&&&&&&&&&&&&&&&&&&&&&&&&&&&&&&&&&&&&&&&&&&&&&&&&&&&&&&&&&&&&&&&&&&&&&
%&&&&&&&&&&&&&&&&&&&&&&&&&&&&&&&&&&&&&&&&&&&&&&&&&&&&&&&&&&&&&&&&&&&&&&&&&&&&&&&&&&
\begin{figure}[H]
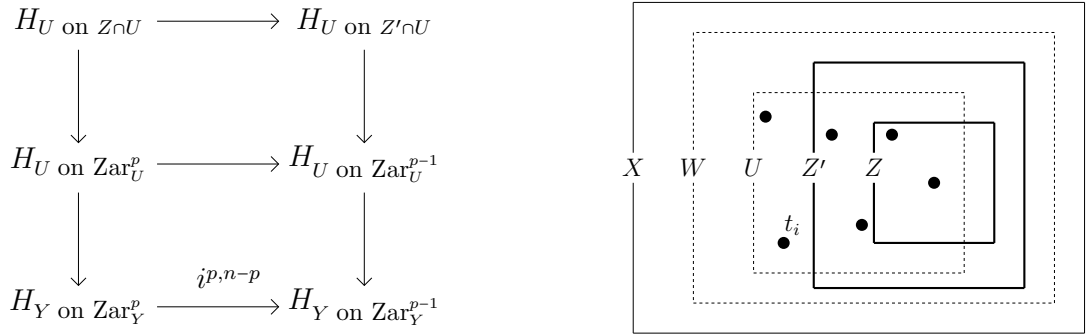

\begin{pgfpicture}{0cm}{0cm}{17cm}{5.5cm}
\begin{pgftranslate}{\pgfpoint{-1.5cm}{0cm}}
\begin{pgfmagnify}{.95}{.95}
\pgfputat{\pgfxy(5.6,1.4)}{\pgfbox[center,center]{$i^{p,n-p}$}}
\pgfputat{\pgfxy(3.5,5)}{\pgfbox[center,center]{$H_{U\tn{ \footnotesize{on} } Z\cap U}$}}
\pgfputat{\pgfxy(7.5,5)}{\pgfbox[center,center]{$H_{U\tn{ \footnotesize{on} } Z'\cap U}$}}
\pgfputat{\pgfxy(3.5,3)}{\pgfbox[center,center]{$H_{U\tn{ \footnotesize{on} } \tn{\footnotesize{Zar}}_U^{p}}$}}
\pgfputat{\pgfxy(7.5,3)}{\pgfbox[center,center]{$H_{U\tn{ \footnotesize{on} } \tn{\footnotesize{Zar}}_U^{p-1}}$}}
\pgfputat{\pgfxy(3.5,1)}{\pgfbox[center,center]{$H_{Y\tn{ \footnotesize{on} } \tn{\footnotesize{Zar}}_Y^{p}}$}}
\pgfputat{\pgfxy(7.5,1)}{\pgfbox[center,center]{$H_{Y\tn{ \footnotesize{on} } \tn{\footnotesize{Zar}}_Y^{p-1}}$}}
\pgfsetendarrow{\pgfarrowlargepointed{3pt}}
\pgfxyline(4.6,5)(6.3,5)
\pgfxyline(4.6,3)(6.3,3)
\pgfxyline(4.6,1)(6.3,1)
\pgfxyline(3.5,4.6)(3.5,3.3)
\pgfxyline(7.5,4.6)(7.5,3.3)
\pgfxyline(3.5,2.6)(3.5,1.3)
\pgfxyline(7.5,2.6)(7.5,1.3)
\end{pgfmagnify}
\end{pgftranslate}
\begin{pgftranslate}{\pgfpoint{10cm}{.6cm}}
\begin{pgfmagnify}{.8}{.8}
\pgfxyline(-1,0)(6.5,0)
\pgfxyline(6.5,0)(6.5,5.5)
\pgfxyline(6.5,5.5)(-1,5.5)
\pgfxyline(-1,5.5)(-1,3)
\pgfxyline(-1,2.5)(-1,0)
\begin{pgfscope}
\pgfsetdash{{0.05cm}{0.05cm}}{0cm}
\pgfxyline(0,.5)(6,.5)
\pgfxyline(6,.5)(6,5)
\pgfxyline(6,5)(0,5)
\pgfxyline(0,5)(0,3)
\pgfxyline(0,2.5)(0,.5)
\pgfxyline(1,1)(4.5,1)
\pgfxyline(4.5,1)(4.5,4)
\pgfxyline(4.5,4)(1,4)
\pgfxyline(1,4)(1,3)
\pgfxyline(1,2.5)(1,1)
\end{pgfscope}
\pgfsetlinewidth{1pt}
\pgfxyline(2,.75)(5.5,.75)
\pgfxyline(5.5,.75)(5.5,4.5)
\pgfxyline(5.5,4.5)(2,4.5)
\pgfxyline(2,4.5)(2,3)
\pgfxyline(2,2.5)(2,.75)
\pgfxyline(3,1.5)(5,1.5)
\pgfxyline(5,1.5)(5,3.5)
\pgfxyline(5,3.5)(3,3.5)
\pgfxyline(3,3.5)(3,3)
\pgfxyline(3,2.5)(3,1.5)
\pgfputat{\pgfxy(-1,2.75)}{\pgfbox[center,center]{$X$}}
\pgfputat{\pgfxy(0,2.75)}{\pgfbox[center,center]{$W$}}
\pgfputat{\pgfxy(1,2.75)}{\pgfbox[center,center]{$U$}}
\pgfputat{\pgfxy(2,2.75)}{\pgfbox[center,center]{$Z'$}}
\pgfputat{\pgfxy(3,2.75)}{\pgfbox[center,center]{$Z$}}
\pgfputat{\pgfxy(1.65,1.8)}{\pgfbox[center,center]{$t_i$}}
\pgfnodecircle{Node0}[fill]{\pgfxy(2.3,3.3)}{0.1cm}
\pgfnodecircle{Node0}[fill]{\pgfxy(4,2.5)}{0.1cm}
\pgfnodecircle{Node0}[fill]{\pgfxy(2.8,1.8)}{0.1cm}
\pgfnodecircle{Node0}[fill]{\pgfxy(3.3,3.3)}{0.1cm}
\pgfnodecircle{Node0}[fill]{\pgfxy(1.5,1.5)}{0.1cm}
\pgfnodecircle{Node0}[fill]{\pgfxy(1.2,3.6)}{0.1cm}
\end{pgfmagnify}
\end{pgftranslate}
\end{pgfpicture}
\caption{Effaceability, the $E_2$-page of the coniveau spectral sequence, and exactness of the corresponding Cousin complexes.}
\label{figeffaceabilityprop}
\end{figure}
%&&&&&&&&&&&&&&&&&&&&&&&&&&&&&&&&&&&&&&&&&&&&&&&&&&&&&&&&&&&&&&&&&&&&&&&&&&&&&&&&&&
%&&&&&&&&&&&&&&&&&&&&&&&&&&&&&&&&&&&&&&&&&&&&&&&&&&&&&&&&&&&&&&&&&&&&&&&&&&&&&&&&&&
%&&&                    &&&       &&&&&    &&&                &&&&&&&&&&&&&&&&&&&&&&&&&&&&&&&&&&&&&&&&&&&&&&&&&&&
%&&&    &&&&&&&&&    &    &&&&   &&&    &&&&    &&&&&&&&&&&&&&&&&&&&&&&&&&&&&&&&&&&&&&&&&&&&&&&&&& 
%&&&    &&&&&&&&&    &&    &&&   &&&    &&&&&    &&&&&&&&&&&&&&&&&&&&&&&&&&&&&&&&&&&&&&&&&&&&&&&&& 
%&&&                 &&&&    &&&    &&   &&&    &&&&&    &&&&&&&&&&&&&&&&&&&&&&&&&&&&&&&&&&&&&&&&&&&&&&&&& 
%&&&    &&&&&&&&&    &&&&    &   &&&    &&&&&    &&&&&&&&&&&&&&&&&&&&&&&&&&&&&&&&&&&&&&&&&&&&&&&&& 
%&&&    &&&&&&&&&    &&&&&       &&&    &&&&     &&&&&&&&&&&&&&&&&&&&&&&&&&&&&&&&&&&&&&&&&&&&&&&&& 
%&&&                    &&&    &&&&&&    &&&                  &&&&&&&&&&&&&&&&&&&&&&&&&&&&&&&&&&&&&&&&&&&&&&&&&&
%&&&&&&&&&&&&&&&&&&&&&&&&&&&&&&&&&&&&&&&&&&&&&&&&&&&&&&&&&&&&&&&&&&&&&&&&&&&&&&&&&&
%&&&&&&&&&&&&&&&&&&&&&&&&&&&&&&&&&&&&&&&&&&&&&&&&&&&&&&&&&&&&&&&&&&&&&&&&&&&&&&&&&&

By effaceability, the top horizontal map is identically zero.  Passing to the limit over $Z$, the middle horizontal map is also zero.  Passing to the limit over $W$, which contains $U$ and $(t_1,...,t_r)$, the bottom horizontal map $i^{p,n-p}$ is zero.   Recall that the differential $d_1^{p,n-p}:E_1^{p,n-p}\rightarrow E_1^{p+1,n-p}$ is the composition $j_1^{p+1,n-p}\circ k_1^{p,n-p}$, since $k$ follows $j$ in the diagram for the exact couple.  By exactness, $\tn{Ker}(k_1^{p,n-p})=\tn{Im}(i_1^{p,n-p})=0$, so $d_1^{p,n-p}$ vanishes for every $p>0$.   This leaves the first differential $d_1^{0,n}$ to analyze.  This is the composition 
\end{proof}

The following corollary of lemma \hyperref[semilocaleffacement]{\ref{semilocaleffacement}} is a version of the {\it Bloch-Ogus theorem.}  It is adapted from \cite{CHKBloch-Ogus-Gabber97} corollary 5.1.11, page 29, and proposition 5.4.3, page 33.   

\begin{cor}\label{corblochogus}(Bloch-Ogus Theorem). Let $k$ be an infinite field, and $\mbf{S}_k$ a category of schemes over $k$ satisfying the conditions given at the beginning of section \hyperref[subsectioncohomsupportssubstrata]{\ref{subsectioncohomsupportssubstrata}}.  Let $H$ be a cohomology theory with supports on the category of pairs over $\mbf{S}_k$, satisfying \'etale excision \tn{({\bf COH1})} and the projective bundle condition \tn{({\bf COH5})}.  Then, for any smooth scheme $X$ belonging to $\mbf{S}_k$, the sheafified Cousin complexes appearing in equation \hyperref[equsheafifiedcousin]{4.3.1.3} above are flasque resolutions of the sheaves $\ms{H}_X^n$ associated to the presheaves $U\mapsto H_U^n$, and the $E_2$-terms of the coniveau spectral sequence for $H$ on $X$ are
\begin{equation}\label{equblochogus}E_2^{p,q}=H_{\tn{\footnotesize{Zar}}}^p(X,\ms{H}_X^q).\end{equation}

\end{cor}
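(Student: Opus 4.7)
The plan is to deduce the corollary from Lemma 4.3.2.1 by checking the effaceability hypothesis locally and then assembling the local information into a sheaf-theoretic statement. The argument has three logical steps: (i) reduce from the global cohomology theory on $X$ to the semilocal situation at finite collections of points, (ii) verify the effaceability hypothesis of Lemma 4.3.2.1 from the axioms \textbf{COH1} and \textbf{COH5}, and (iii) reinterpret semilocal exactness as exactness of the sheafified Cousin complex, and then compute sheaf cohomology.

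First I would invoke the effacement theorem discussed in Section 4.3.2 just above the corollary: for a cohomology theory with supports on the category of pairs over $\mbf{S}_k$, the conjunction of \'etale excision \textbf{COH1} and the cohomological projective bundle formula \textbf{COH5} implies that $H$ is effaceable in the sense of Definition 3.9.5.1 at every finite set of points $(t_1,\dots,t_r)$ of every smooth $X$ in $\mbf{S}_k$ (this uses that $k$ is infinite, in order to produce enough generic projections to apply \textbf{COH5}). Granted this, Lemma 4.3.2.1 applies to the semilocal spectrum $Y=\tn{Spec}(O_{X,t_1,\dots,t_r})$: the maps $i^{p,q}$ in the exact couple vanish for $p>0$, so the Cousin complex on $Y$ is exact and augments to $H^n_Y$, giving an exact sequence
\[
0\to H^n_Y\to\coprod_{x\in\tn{Zar}_Y^0}H^n_{Y\tn{ on }x}\to\coprod_{x\in\tn{Zar}_Y^1}H^{n+1}_{Y\tn{ on }x}\to\cdots\to\coprod_{x\in\tn{Zar}_Y^d}H^{n+d}_{Y\tn{ on }x}\to 0.
\]

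Next I would sheafify. By Lemma 4.3.1.1 each term of the Cousin complex on $X$ is, as a presheaf, the coproduct of skyscraper sheaves $\coprod_{x\in\tn{Zar}_X^p}\underline{H^{n}_{X\tn{ on }x}}$, and is therefore a flasque sheaf. Exactness of a complex of sheaves may be checked on stalks, and since taking stalks at a point $t$ of $X$ corresponds precisely to passing to the semilocal spectrum $Y=\tn{Spec}(O_{X,t})$ (one point suffices, but the same statement holds for finite collections), the exact sequence produced by Lemma 4.3.2.1 is exactly the stalk at $t$ of the sheafified Cousin complex augmented by $\ms{H}^n_X$. Thus the augmented sheafified Cousin complex is an exact complex of flasque sheaves, i.e.\ a flasque resolution of $\ms{H}^n_X$.

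Finally, since $\ms{H}^n_X$ admits a flasque resolution by the $q$-th row of the sheafified $E_1$-page, its Zariski sheaf cohomology is computed by taking global sections of that row and passing to cohomology. Global sections of $\coprod_{x\in\tn{Zar}_X^p}\underline{H^{q}_{X\tn{ on }x}}$ recover the $E_1^{p,q}$-term of the coniveau spectral sequence on $X$, and the differentials match, so $E_2^{p,q}=H^p_{\tn{Zar}}(X,\ms{H}^q_X)$, as claimed. The main obstacle is step (ii): the implication \textbf{COH1}$+$\textbf{COH5}$\Rightarrow$effaceability is the substantive ingredient, and while it is a standard consequence of the Gabber-Colliot-Th\'el\`ene-Hoobler-Kahn geometric presentation lemma for smooth varieties over an infinite field, it requires a nontrivial geometric argument producing the auxiliary open $U$ and closed $Z'$ in Definition 3.9.5.1 via projections to $\AA^1$; once this is in hand, the remainder of the corollary is essentially a formal stalkwise-to-sheaf translation.
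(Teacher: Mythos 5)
Your proposal is correct and follows essentially the same route as the paper: the paper's own proof is a one-line appeal to Lemma \hyperref[semilocaleffacement]{\ref{semilocaleffacement}} (semilocal effaceability kills the maps $i^{p,q}$ for $p>0$, so the semilocal Cousin complexes are exact, and these sheafify to the flasque resolutions whose global sections give the $E_2$-terms). Your version merely makes explicit the two ingredients the paper leaves implicit — the CHK effacement theorem deducing effaceability from {\bf COH1} and {\bf COH5} over an infinite field, and the stalkwise identification of the sheafified Cousin complex with the (semi)local one — both of which are consistent with the paper's setup.
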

\begin{proof} Lemma  \hyperref[semilocaleffacement]{\ref{semilocaleffacement}} implies that the sheafified Cousin complexes are flasque resolutions of $\ms{H}^q_X$, with groups of global sections given by the $E_2$-terms in equation \hyperref[equblochogus]{\ref{equblochogus}}. 
\end{proof}

\section{Coniveau Machine for Algebraic $K$-Theory in the Nilpotent Case}\label{sectionconiveaumachine}

In this section, I construct in detail the coniveau machine for Bass-Thomason algebraic $K$-theory, focusing on the ``simplified four-column version" for a smooth algebraic variety over a field $k$, augmented via a nilpotent thickening.  Here, $K$-theory is viewed as a cohomology theory with supports on the category $\mbf{P}_k$ of pairs over a distinguished category $\mbf{S}_k$ of schemes over a field $k$, satisfying the conditions given at the beginning of section \hyperref[subsectioncohomsupportssubstrata]{\ref{subsectioncohomsupportssubstrata}} above.  For the convenience of the reader, I reproduce in figure \hyperref[figsimplifiedfourcolumnKtheorych4b]{\ref{figsimplifiedfourcolumnKtheorych4b}} below the schematic diagram of the four-column version already appearing in figures \hyperref[figsimplifiedfourcolumnKtheoryintro]{\ref{figsimplifiedfourcolumnKtheoryintro}} and \hyperref[figsimplifiedfourcolumnKtheorych4]{\ref{figsimplifiedfourcolumnKtheorych4}} above.  

The arguments presented in this section, together with previous results from chapters \hyperref[ChapterTechnical]{\ref{ChapterTechnical}} and \hyperref[ChapterConiveau]{\ref{ChapterConiveau}}, actually serve to establish the more general version of the coniveau machine appearing in equation \hyperref[equconiveaumachinefunctorch4]{4.1.1.1} above, but I choose to concentrate mostly on the simpler four-column construction in order to focus attention on the infinitesimal theory of Chow groups.  Throughout this section, schemes over a field $k$ are assumed to be ``as general as is convenient;" for example, in section \hyperref[subsectionfirstcolumn]{\ref{subsectionfirstcolumn}} below, I sometimes make the assumption that a $k$-scheme $X$ is equidimensional and noetherian.  However, these assumptions always include the case of smooth algebraic varieties over $k$.

%&&&&&&&&&&&&&&&&&&&&&&&&&&&&&&&&&&&&&&&&&&&&&&&&&&&&&&&&&&&&&&&&&&&&&&&&&&&&&&&&&&
%&&&&&&&&&&&&&&&&&&&&&&&&&&&&&&&&&&&&&&&&&&&&&&&&&&&&&&&&&&&&&&&&&&&&&&&&&&&&&&&&&&
%&&&&                 &&&&    &&&               &&&                    &&&    &&&&    &&&              &&&&                   &&&&&&&&&&&&&&&&&&
%&&&&   &&&&   &&&&    &&&   &&&&&&&&&&&   &&&&&&    &&&&    &&&    &&&     &&&    &&&&&&&&&&&&&&&&&&&&&&&&
%&&&&   &&&&   &&&&    &&&   &&&&&&&&&&&   &&&&&&    &&&&    &&&    &&&     &&&    &&&&&&&&&&&&&&&&&&&&&&&&
%&&&&                 &&&&    &&&   &&&&&&&&&&&   &&&&&&    &&&&    &&&            &&&&&             &&&&&&&&&&&&&&&&&&&&
%&&&&    &&&&&&&&&    &&&   &&&&&&&&&&&   &&&&&&    &&&&    &&&    &&&   &&&&    &&&&&&&&&&&&&&&&&&&&&&&
%&&&&    &&&&&&&&&    &&&   &&&&&&&&&&&   &&&&&&    &&&&    &&&    &&&&   &&&    &&&&&&&&&&&&&&&&&&&&&&&
%&&&&    &&&&&&&&&    &&&                &&&&&&   &&&&&&                   &&&    &&&&&   &&                  &&&&&&&&&&&&&&&&&&
%&&&&&&&&&&&&&&&&&&&&&&&&&&&&&&&&&&&&&&&&&&&&&&&&&&&&&&&&&&&&&&&&&&&&&&&&&&&&&&&&&&
%&&&&&&&&&&&&&&&&&&&&&&&&&&&&&&&&&&&&&&&&&&&&&&&&&&&&&&&&&&&&&&&&&&&&&&&&&&&&&&&&&&
\begin{figure}[H]
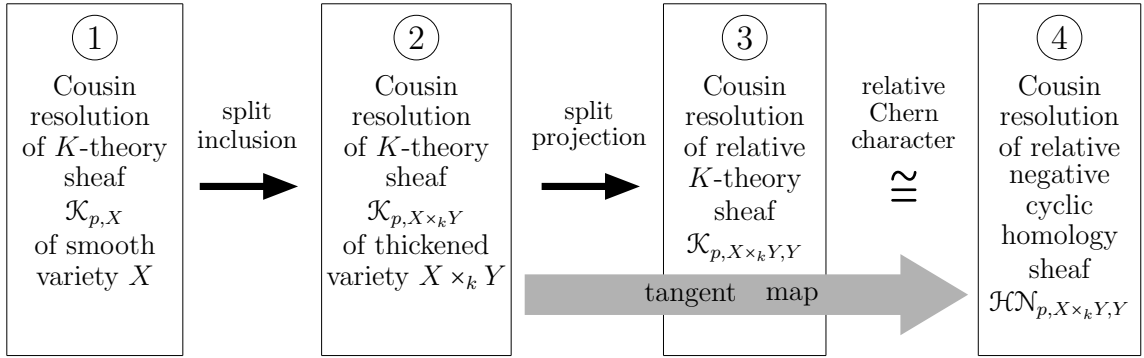

\begin{pgfpicture}{0cm}{0cm}{17cm}{4.7cm}
\begin{pgfmagnify}{.9}{.9}
\begin{pgftranslate}{\pgfpoint{.2cm}{-1.7cm}}
\begin{pgftranslate}{\pgfpoint{-.5cm}{0cm}}
\pgfxyline(.9,1.8)(.9,7)
\pgfxyline(.9,7)(3.5,7)
\pgfxyline(3.5,7)(3.5,1.8)
\pgfxyline(3.5,1.8)(.9,1.8)
\pgfputat{\pgfxy(2.2,5.8)}{\pgfbox[center,center]{Cousin}}
\pgfputat{\pgfxy(2.2,5.35)}{\pgfbox[center,center]{resolution }}
\pgfputat{\pgfxy(2.2,4.85)}{\pgfbox[center,center]{of $K$-theory}}
\pgfputat{\pgfxy(2.2,4.45)}{\pgfbox[center,center]{sheaf}}
\pgfputat{\pgfxy(2.2,3.9)}{\pgfbox[center,center]{$\ms{K}_{p,X}$}}
\pgfputat{\pgfxy(2.2,3.45)}{\pgfbox[center,center]{of smooth}}
\pgfputat{\pgfxy(2.2,2.95)}{\pgfbox[center,center]{variety $X$}}
\pgfnodecircle{Node0}[stroke]{\pgfxy(2.2,6.5)}{0.33cm}
\pgfputat{\pgfxy(2.2,6.5)}{\pgfbox[center,center]{\large{$1$}}}
\end{pgftranslate}
\begin{pgftranslate}{\pgfpoint{4.05cm}{0cm}}
\pgfxyline(1,1.8)(1,7)
\pgfxyline(1,7)(3.8,7)
\pgfxyline(3.8,7)(3.8,1.8)
\pgfxyline(3.8,1.8)(1,1.8)
\pgfputat{\pgfxy(2.4,5.8)}{\pgfbox[center,center]{Cousin}}
\pgfputat{\pgfxy(2.4,5.35)}{\pgfbox[center,center]{resolution }}
\pgfputat{\pgfxy(2.4,4.85)}{\pgfbox[center,center]{of $K$-theory}}
\pgfputat{\pgfxy(2.4,4.45)}{\pgfbox[center,center]{sheaf}}
\pgfputat{\pgfxy(2.4,3.9)}{\pgfbox[center,center]{$\ms{K}_{p,X\times_kY}$}}
\pgfputat{\pgfxy(2.4,3.45)}{\pgfbox[center,center]{of thickened}}
\pgfputat{\pgfxy(2.4,2.95)}{\pgfbox[center,center]{variety $X\times_kY$}}
 \pgfnodecircle{Node0}[stroke]{\pgfxy(2.4,6.5)}{0.33cm}
\pgfputat{\pgfxy(2.4,6.5)}{\pgfbox[center,center]{\large{$2$}}}
\pgfsetendarrow{\pgfarrowtriangle{6pt}}
\pgfsetlinewidth{3pt}
\pgfxyline(-.8,4.3)(.4,4.3)
\pgfputat{\pgfxy(-.1,5.4)}{\pgfbox[center,center]{\small{split}}}
\pgfputat{\pgfxy(-.1,5)}{\pgfbox[center,center]{\small{inclusion}}}
\end{pgftranslate}
\begin{pgftranslate}{\pgfpoint{9.1cm}{0cm}}
\pgfxyline(1,1.8)(1,7)
\pgfxyline(1,7)(3.4,7)
\pgfxyline(3.4,7)(3.4,1.8)
\pgfxyline(3.4,1.8)(1,1.8)
\pgfputat{\pgfxy(2.2,5.8)}{\pgfbox[center,center]{Cousin}}
\pgfputat{\pgfxy(2.2,5.35)}{\pgfbox[center,center]{resolution}}
\pgfputat{\pgfxy(2.2,4.9)}{\pgfbox[center,center]{of relative}}
\pgfputat{\pgfxy(2.2,4.4)}{\pgfbox[center,center]{$K$-theory}}
\pgfputat{\pgfxy(2.2,3.95)}{\pgfbox[center,center]{sheaf}}
\pgfputat{\pgfxy(2.2,3.4)}{\pgfbox[center,center]{$\ms{K}_{p,X\times_kY,Y}$}}
\pgfnodecircle{Node0}[stroke]{\pgfxy(2.2,6.5)}{0.33cm}
\pgfputat{\pgfxy(2.2,6.5)}{\pgfbox[center,center]{\large{$3$}}}
\pgfsetendarrow{\pgfarrowtriangle{6pt}}
\pgfsetlinewidth{3pt}
\pgfxyline(-.8,4.3)(.4,4.3)
\pgfputat{\pgfxy(-.1,5.4)}{\pgfbox[center,center]{\small{split}}}
\pgfputat{\pgfxy(-.1,5)}{\pgfbox[center,center]{\small{projection}}}
\end{pgftranslate}
\begin{pgftranslate}{\pgfpoint{13.75cm}{0cm}}
\pgfxyline(1,1.8)(1,7)
\pgfxyline(1,7)(3.4,7)
\pgfxyline(3.4,7)(3.4,1.8)
\pgfxyline(3.4,1.8)(1,1.8)
\pgfputat{\pgfxy(2.2,5.8)}{\pgfbox[center,center]{Cousin}}
\pgfputat{\pgfxy(2.2,5.35)}{\pgfbox[center,center]{resolution}}
\pgfputat{\pgfxy(2.2,4.9)}{\pgfbox[center,center]{of relative}}
\pgfputat{\pgfxy(2.2,4.45)}{\pgfbox[center,center]{negative}}
\pgfputat{\pgfxy(2.2,4)}{\pgfbox[center,center]{cyclic}}
\pgfputat{\pgfxy(2.2,3.55)}{\pgfbox[center,center]{homology}}
\pgfputat{\pgfxy(2.2,3.1)}{\pgfbox[center,center]{sheaf}}
\pgfputat{\pgfxy(2.2,2.55)}{\pgfbox[center,center]{$\ms{HN}_{p,X\times_kY,Y}$}}
\pgfputat{\pgfxy(-.1,5.8)}{\pgfbox[center,center]{\small{relative}}}
\pgfputat{\pgfxy(-.1,5.4)}{\pgfbox[center,center]{\small{Chern}}}
\pgfputat{\pgfxy(-.1,5)}{\pgfbox[center,center]{\small{character}}}
\pgfputat{\pgfxy(-.1,4.3)}{\pgfbox[center,center]{\huge{$\cong$}}}
\pgfnodecircle{Node0}[stroke]{\pgfxy(2.2,6.5)}{0.33cm}
\pgfputat{\pgfxy(2.2,6.5)}{\pgfbox[center,center]{\large{$4$}}}
\end{pgftranslate}
\begin{colormixin}{30!white}
\color{black}
\pgfmoveto{\pgfxy(8.05,3)}
\pgflineto{\pgfxy(13.6,3)}
\pgflineto{\pgfxy(13.6,3.3)}
\pgflineto{\pgfxy(14.6,2.7)}
\pgflineto{\pgfxy(13.6,2.1)}
\pgflineto{\pgfxy(13.6,2.4)}
\pgflineto{\pgfxy(8.05,2.4)}
\pgflineto{\pgfxy(8.05,3)}
\pgffill
\end{colormixin}
\pgfputat{\pgfxy(10.5,2.7)}{\pgfbox[center,center]{tangent}}
\pgfputat{\pgfxy(12,2.67)}{\pgfbox[center,center]{map}}
\end{pgftranslate}
\end{pgfmagnify}
\end{pgfpicture}
\caption{Simplified ``four column version" of the coniveau machine for algebraic $K$-theory on a smooth algebraic variety in the case of a nilpotent thickening.}
\label{figsimplifiedfourcolumnKtheorych4b}
\end{figure}

\subsection{First Column of the Machine via the Bloch-Ogus Theorem}\label{subsectionfirstcolumn}

Let $\mbf{S}_k$ be a distinguished category of schemes over a field $k$, satisfying the conditions given at the beginning of section \hyperref[subsectioncohomsupportssubstrata]{\ref{subsectioncohomsupportssubstrata}} above, and let $X$ in $\mbf{S}_k$ be smooth.  If $H$ is an effaceable cohomology theory with supports on the category of pairs over $\mbf{S}_k$, then the Bloch-Ogus theorem \hyperref[corblochogus]{\ref{corblochogus}} guarantees ``good enough behavior" to construct the first column of the coniveau machine for $H$ on $X$.  Here, of course, $H$ is Bass-Thomason algebraic $K$-theory.  

%SSSSSSSSSSSSSSSSSSSSSSSSSSSSSSSSSSSSSSSSSSSSSSSSSSSSSSSSSSSSSSSSSSSSSSSSSSSSSSSSSS
%SSSSSSSSSSSSSSSSSSSSSSSSSSSSSSSSSSSSSSSSSSSSSSSSSSSSSSSSSSSSSSSSSSSSSSSSSSSSSSSSSS
%SSS                      SSS                       SSS                    SS                          SS           SSS                         SSS          SSSSS      SSSS
%SSS      SSSSSSSSS    SSSSSSSSSS     SSSSSSSSSSSS     SSSSSSS     SSSS    SSSSSS     SSS     S     SSSS     SSSSS
%SSS      SSSSSSSSS    SSSSSSSSSS     SSSSSSSSSSSS     SSSSSSS     SSSS    SSSSSS     SSS     SS     SSS     SSSSS
%SSS                      SSS                  SSSSS    SSSSSSSSSSSS     SSSSSSS     SSSS    SSSSSS     SSS     SSS     SS     SSSSS
%SSSSSSSSSS    SSS    SSSSSSSSSS    SSSSSSSSSSSS     SSSSSSS     SSSS    SSSSSS     SSS     SSSS     S     SSSSS
%SSSSSSSSSS    SSS    SSSSSSSSSS    SSSSSSSSSSSS     SSSSSSS     SSSS    SSSSSS     SSS     SSSSS          SSSSS
%SSS                       SSS                      SSS                       SSSSS     SSSSSS          SSS                         SSS     SSSSSS        SSSSS
%SSSSSSSSSSSSSSSSSSSSSSSSSSSSSSSSSSSSSSSSSSSSSSSSSSSSSSSSSSSSSSSSSSSSSSSSSSSSSSSSSS
%SSSSSSSSSSSSSSSSSSSSSSSSSSSSSSSSSSSSSSSSSSSSSSSSSSSSSSSSSSSSSSSSSSSSSSSSSSSSSSSSSS

{\bf Homological versus Cohomological $K$-Theory.}  Let $Z$ be a closed subset of $X$, and let $\mbf{K}_X$, $\mbf{K}_{X\tn{ \footnotesize{on} } Z},$ and $\mbf{K}_{X-Z},$ be the $K$-theory spectra of Bass and Thomason.  Let $K_{n, X}$, $K_{n, X\tn{ \footnotesize{on} } Z}$, and $K_{n, X-Z}$ be the corresponding $K$-theory groups, and $\ms{K}_{n, X}$, $\ms{K}_{n, X\tn{ \footnotesize{on} } Z}$, and $\ms{K}_{n, X-Z}$ the corresponding $K$-theory sheaves.    In section \hyperref[subsectioneffaceabilitynonconnectiveK]{\ref{subsectioneffaceabilitynonconnectiveK}} above, I showed how results of Thomason \cite{Thomason-Trobaugh90} imply that $\mbf{K}$ is an effaceable substratum on the category $\mbf{P}_k$ of pairs over $\mbf{S}_k$.  Hence, Bass-Thomason $K$-theory is an effaceable cohomology theory with supports.  In particular, by Thomason's localization theorem ( \cite{Thomason-Trobaugh90}, theorem 7.4), there exists a long exact sequence
\begin{equation}\label{equlesthomasonsec44}...\longleftarrow K_{n, X\tn{ \footnotesize{on} } Z}\longleftarrow K_{n+1, X}\longleftarrow K_{n+1, X-Z}\longleftarrow K_{n+1, X\tn{ \footnotesize{on} } Z}\longleftarrow...\end{equation}
The reason for writing the arrows going to the left is to emphasize that $K$-theory is traditionally expressed in homologically rather than cohomologically.  To view $K$-theory as a cohomology theory with supports, I follow the usual practice of defining cohomology groups by exchanging subscripts for superscripts and additively inverting degrees:
\begin{equation}\label{equKhomtocohom}K_X^n:=K_{-n,X},\hspace*{.5cm} K_{X\tn{ \footnotesize{on} } Z}^n:=K_{-n, X\tn{ \footnotesize{on} } Z},\hspace*{.5cm}\tn{and} \hspace*{.5cm}K_{X-Z}^n:=K_{-n, X-Z}.\end{equation}
Let $K=\{K^n\}_{n\in\ZZ}$ be the resulting cohomology theory with supports, and let $X$ in $\mbf{S}_k$ be a smooth noetherian scheme, equidimensional of dimension $n$ over $k$.  Then the $E_1$-level of the coniveau spectral sequence $\{E_{K,X,r}\}$ for $K$ on $X$ is of the form shown in figure \hyperref[figconiveaucohomologicalK]{\ref{figconiveaucohomologicalK}} below:

%&&&&&&&&&&&&&&&&&&&&&&&&&&&&&&&&&&&&&&&&&&&&&&&&&&&&&&&&&&&&&&&&&&&&&&&&&&&&&&&&&&
%&&&&&&&&&&&&&&&&&&&&&&&&&&&&&&&&&&&&&&&&&&&&&&&&&&&&&&&&&&&&&&&&&&&&&&&&&&&&&&&&&&
%&&&&                 &&&&    &&&               &&&                    &&&    &&&&    &&&              &&&&                   &&&&&&&&&&&&&&&&&&
%&&&&   &&&&   &&&&    &&&   &&&&&&&&&&&   &&&&&&    &&&&    &&&    &&&     &&&    &&&&&&&&&&&&&&&&&&&&&&&&
%&&&&   &&&&   &&&&    &&&   &&&&&&&&&&&   &&&&&&    &&&&    &&&    &&&     &&&    &&&&&&&&&&&&&&&&&&&&&&&&
%&&&&                 &&&&    &&&   &&&&&&&&&&&   &&&&&&    &&&&    &&&            &&&&&             &&&&&&&&&&&&&&&&&&&&
%&&&&    &&&&&&&&&    &&&   &&&&&&&&&&&   &&&&&&    &&&&    &&&    &&&   &&&&    &&&&&&&&&&&&&&&&&&&&&&&
%&&&&    &&&&&&&&&    &&&   &&&&&&&&&&&   &&&&&&    &&&&    &&&    &&&&   &&&    &&&&&&&&&&&&&&&&&&&&&&&
%&&&&    &&&&&&&&&    &&&                &&&&&&   &&&&&&                   &&&    &&&&&   &&                  &&&&&&&&&&&&&&&&&&
%&&&&&&&&&&&&&&&&&&&&&&&&&&&&&&&&&&&&&&&&&&&&&&&&&&&&&&&&&&&&&&&&&&&&&&&&&&&&&&&&&&
%&&&&&&&&&&&&&&&&&&&&&&&&&&&&&&&&&&&&&&&&&&&&&&&&&&&&&&&&&&&&&&&&&&&&&&&&&&&&&&&&&&
\begin{figure}[H]
\begin{pgfpicture}{0cm}{0cm}{17cm}{5.5cm}
\begin{pgftranslate}{\pgfpoint{2cm}{0cm}}
\begin{pgfmagnify}{.85}{.85}
\begin{colormixin}{20!white}
\color{black}
\pgfmoveto{\pgfxy(3,3)}
\pgflineto{\pgfxy(15.5,3)}
\pgflineto{\pgfxy(15.5,6)}
\pgflineto{\pgfxy(3,6)}
\pgflineto{\pgfxy(3,3)}
\pgffill
\pgfmoveto{\pgfxy(3,3)}
\pgflineto{\pgfxy(3,0)}
\pgflineto{\pgfxy(0,0)}
\pgflineto{\pgfxy(0,3)}
\pgflineto{\pgfxy(3,3)}
\pgffill
\end{colormixin}
\pgfputat{\pgfxy(.5,1)}{\pgfbox[center,center]{\large{$0$}}}
\pgfputat{\pgfxy(3,.9)}{\pgfbox[center,center]{$\displaystyle\coprod_{x\in \tn{\footnotesize{Zar}}_X^{0}} K_{X\tn{ \footnotesize{on} } x}^{-1}$}}
\pgfputat{\pgfxy(6.5,.9)}{\pgfbox[center,center]{$\displaystyle\coprod_{x\in \tn{\footnotesize{Zar}}_X^{1}} K_{X\tn{ \footnotesize{on} } x}^{0}$}}
\pgfputat{\pgfxy(12.4,.9)}{\pgfbox[center,center]{$\displaystyle\coprod_{x\in \tn{\footnotesize{Zar}}_X^{n}} K_{X\tn{ \footnotesize{on} } x}^{n-1}$}}
\pgfputat{\pgfxy(15,1)}{\pgfbox[center,center]{\large{$0$}}}
\pgfputat{\pgfxy(.5,3)}{\pgfbox[center,center]{\large{$0$}}}
\pgfputat{\pgfxy(3,2.9)}{\pgfbox[center,center]{$\displaystyle\coprod_{x\in \tn{\footnotesize{Zar}}_X^{0}} K_{X\tn{ \footnotesize{on} } x}^{0}$}}
\pgfputat{\pgfxy(6.5,2.9)}{\pgfbox[center,center]{$\displaystyle\coprod_{x\in \tn{\footnotesize{Zar}}_X^{1}} K_{X\tn{ \footnotesize{on} } x}^{1}$}}
\pgfputat{\pgfxy(12.4,2.9)}{\pgfbox[center,center]{$\displaystyle\coprod_{x\in \tn{\footnotesize{Zar}}_X^{n}} K_{X\tn{ \footnotesize{on} } x}^{n}$}}
\pgfputat{\pgfxy(15,3)}{\pgfbox[center,center]{\large{$0$}}}
\pgfputat{\pgfxy(.5,5)}{\pgfbox[center,center]{\large{$0$}}}
\pgfputat{\pgfxy(3,4.9)}{\pgfbox[center,center]{$\displaystyle\coprod_{x\in \tn{\footnotesize{Zar}}_X^{0}} K_{X\tn{ \footnotesize{on} } x}^{1}$}}
\pgfputat{\pgfxy(6.5,4.9)}{\pgfbox[center,center]{$\displaystyle\coprod_{x\in \tn{\footnotesize{Zar}}_X^{1}} K_{X\tn{ \footnotesize{on} } x}^{2}$}}
\pgfputat{\pgfxy(12.4,4.9)}{\pgfbox[center,center]{$\displaystyle\coprod_{x\in \tn{\footnotesize{Zar}}_X^{n}} K_{X\tn{ \footnotesize{on} } x}^{n+1}$}}
\pgfputat{\pgfxy(15,5)}{\pgfbox[center,center]{\large{$0$}}}
\pgfputat{\pgfxy(4.9,1.3)}{\pgfbox[center,center]{\small{$d_1^{0,-1}$}}}
\pgfputat{\pgfxy(8.4,1.3)}{\pgfbox[center,center]{\small{$d_1^{1,-1}$}}}
\pgfputat{\pgfxy(4.9,3.3)}{\pgfbox[center,center]{\small{$d_1^{0,0}$}}}
\pgfputat{\pgfxy(8.4,3.3)}{\pgfbox[center,center]{\small{$d_1^{1,0}$}}}
\pgfputat{\pgfxy(4.9,5.3)}{\pgfbox[center,center]{\small{$d_1^{0,1}$}}}
\pgfputat{\pgfxy(8.4,5.3)}{\pgfbox[center,center]{\small{$d_1^{1,1}$}}}
\pgfsetendarrow{\pgfarrowlargepointed{3pt}}
\pgfxyline(.8,1)(1.7,1)
\pgfxyline(4.4,1)(5.2,1)
\pgfxyline(.8,3)(1.7,3)
\pgfxyline(4.4,3)(5.2,3)
\pgfxyline(.8,5)(1.7,5)
\pgfxyline(4.4,5)(5.2,5)
\pgfxyline(7.9,1)(8.7,1)
\pgfxyline(7.9,3)(8.7,3)
\pgfxyline(7.9,5)(8.7,5)
\pgfxyline(10.3,1)(11.1,1)
\pgfxyline(10.3,3)(11.1,3)
\pgfxyline(10.3,5)(11.1,5)
\pgfxyline(13.8,1)(14.7,1)
\pgfxyline(13.8,3)(14.7,3)
\pgfxyline(13.8,5)(14.7,5)
\pgfnodecircle{Node0}[fill]{\pgfxy(8.9,1)}{0.02cm}
\pgfnodecircle{Node0}[fill]{\pgfxy(9.05,1)}{0.02cm}
\pgfnodecircle{Node0}[fill]{\pgfxy(9.2,1)}{0.02cm}
\pgfnodecircle{Node0}[fill]{\pgfxy(9.8,1)}{0.02cm}
\pgfnodecircle{Node0}[fill]{\pgfxy(9.95,1)}{0.02cm}
\pgfnodecircle{Node0}[fill]{\pgfxy(10.1,1)}{0.02cm}
\pgfnodecircle{Node0}[fill]{\pgfxy(8.9,3)}{0.02cm}
\pgfnodecircle{Node0}[fill]{\pgfxy(9.05,3)}{0.02cm}
\pgfnodecircle{Node0}[fill]{\pgfxy(9.2,3)}{0.02cm}
\pgfnodecircle{Node0}[fill]{\pgfxy(9.8,3)}{0.02cm}
\pgfnodecircle{Node0}[fill]{\pgfxy(9.95,3)}{0.02cm}
\pgfnodecircle{Node0}[fill]{\pgfxy(10.1,3)}{0.02cm}
\pgfnodecircle{Node0}[fill]{\pgfxy(8.9,5)}{0.02cm}
\pgfnodecircle{Node0}[fill]{\pgfxy(9.05,5)}{0.02cm}
\pgfnodecircle{Node0}[fill]{\pgfxy(9.2,5)}{0.02cm}
\pgfnodecircle{Node0}[fill]{\pgfxy(9.8,5)}{0.02cm}
\pgfnodecircle{Node0}[fill]{\pgfxy(9.95,5)}{0.02cm}
\pgfnodecircle{Node0}[fill]{\pgfxy(10.1,5)}{0.02cm}
\end{pgfmagnify}
\end{pgftranslate}
\end{pgfpicture}
\caption{$E_1$-level of the coniveau spectral sequence for cohomological algebraic $K$-theory on a smooth noetherian scheme of dimension $n$ over a field $k$.}
\label{figconiveaucohomologicalK}
\end{figure}
%&&&&&&&&&&&&&&&&&&&&&&&&&&&&&&&&&&&&&&&&&&&&&&&&&&&&&&&&&&&&&&&&&&&&&&&&&&&&&&&&&&
%&&&&&&&&&&&&&&&&&&&&&&&&&&&&&&&&&&&&&&&&&&&&&&&&&&&&&&&&&&&&&&&&&&&&&&&&&&&&&&&&&&
%&&&                    &&&       &&&&&    &&&                &&&&&&&&&&&&&&&&&&&&&&&&&&&&&&&&&&&&&&&&&&&&&&&&&&&
%&&&    &&&&&&&&&    &    &&&&   &&&    &&&&    &&&&&&&&&&&&&&&&&&&&&&&&&&&&&&&&&&&&&&&&&&&&&&&&&& 
%&&&    &&&&&&&&&    &&    &&&   &&&    &&&&&    &&&&&&&&&&&&&&&&&&&&&&&&&&&&&&&&&&&&&&&&&&&&&&&&& 
%&&&                 &&&&    &&&    &&   &&&    &&&&&    &&&&&&&&&&&&&&&&&&&&&&&&&&&&&&&&&&&&&&&&&&&&&&&&& 
%&&&    &&&&&&&&&    &&&&    &   &&&    &&&&&    &&&&&&&&&&&&&&&&&&&&&&&&&&&&&&&&&&&&&&&&&&&&&&&&& 
%&&&    &&&&&&&&&    &&&&&       &&&    &&&&     &&&&&&&&&&&&&&&&&&&&&&&&&&&&&&&&&&&&&&&&&&&&&&&&& 
%&&&                    &&&    &&&&&&    &&&                  &&&&&&&&&&&&&&&&&&&&&&&&&&&&&&&&&&&&&&&&&&&&&&&&&&
%&&&&&&&&&&&&&&&&&&&&&&&&&&&&&&&&&&&&&&&&&&&&&&&&&&&&&&&&&&&&&&&&&&&&&&&&&&&&&&&&&&
%&&&&&&&&&&&&&&&&&&&&&&&&&&&&&&&&&&&&&&&&&&&&&&&&&&&&&&&&&&&&&&&&&&&&&&&&&&&&&&&&&&

Converting this back to homological $K$-theory, and using the fact that negative $K$-groups vanish on smooth schemes, yields the fourth-quadrant ``trapezoidal diagram" depicted in figure \hyperref[figconiveauhomologicalK]{\ref{figconiveauhomologicalK}} below:

%&&&&&&&&&&&&&&&&&&&&&&&&&&&&&&&&&&&&&&&&&&&&&&&&&&&&&&&&&&&&&&&&&&&&&&&&&&&&&&&&&&
%&&&&&&&&&&&&&&&&&&&&&&&&&&&&&&&&&&&&&&&&&&&&&&&&&&&&&&&&&&&&&&&&&&&&&&&&&&&&&&&&&&
%&&&&                 &&&&    &&&               &&&                    &&&    &&&&    &&&              &&&&                   &&&&&&&&&&&&&&&&&&
%&&&&   &&&&   &&&&    &&&   &&&&&&&&&&&   &&&&&&    &&&&    &&&    &&&     &&&    &&&&&&&&&&&&&&&&&&&&&&&&
%&&&&   &&&&   &&&&    &&&   &&&&&&&&&&&   &&&&&&    &&&&    &&&    &&&     &&&    &&&&&&&&&&&&&&&&&&&&&&&&
%&&&&                 &&&&    &&&   &&&&&&&&&&&   &&&&&&    &&&&    &&&            &&&&&             &&&&&&&&&&&&&&&&&&&&
%&&&&    &&&&&&&&&    &&&   &&&&&&&&&&&   &&&&&&    &&&&    &&&    &&&   &&&&    &&&&&&&&&&&&&&&&&&&&&&&
%&&&&    &&&&&&&&&    &&&   &&&&&&&&&&&   &&&&&&    &&&&    &&&    &&&&   &&&    &&&&&&&&&&&&&&&&&&&&&&&
%&&&&    &&&&&&&&&    &&&                &&&&&&   &&&&&&                   &&&    &&&&&   &&                  &&&&&&&&&&&&&&&&&&
%&&&&&&&&&&&&&&&&&&&&&&&&&&&&&&&&&&&&&&&&&&&&&&&&&&&&&&&&&&&&&&&&&&&&&&&&&&&&&&&&&&
%&&&&&&&&&&&&&&&&&&&&&&&&&&&&&&&&&&&&&&&&&&&&&&&&&&&&&&&&&&&&&&&&&&&&&&&&&&&&&&&&&&
\begin{figure}[H]
\begin{pgfpicture}{0cm}{0cm}{17cm}{11.5cm}
\begin{pgftranslate}{\pgfpoint{1.5cm}{6.5cm}}
\begin{pgfmagnify}{.85}{.85}
\begin{colormixin}{20!white}
\color{black}
\pgfmoveto{\pgfxy(3,3)}
\pgflineto{\pgfxy(17.5,3)}
\pgflineto{\pgfxy(17.5,6)}
\pgflineto{\pgfxy(3,6)}
\pgflineto{\pgfxy(3,3)}
\pgffill
\pgfmoveto{\pgfxy(3,3)}
\pgflineto{\pgfxy(3,-7.5)}
\pgflineto{\pgfxy(-1,-7.5)}
\pgflineto{\pgfxy(-1,3)}
\pgflineto{\pgfxy(3,3)}
\pgffill
\end{colormixin}
\pgfputat{\pgfxy(-.5,-6)}{\pgfbox[center,center]{$0$}}
\pgfputat{\pgfxy(3,-6.1)}{\pgfbox[center,center]{$\displaystyle\coprod_{x\in \tn{\footnotesize{Zar}}_X^{0}} K_{d+1,X\tn{ \footnotesize{on} } x}$}}
\pgfputat{\pgfxy(6.5,-6.1)}{\pgfbox[center,center]{$\displaystyle\coprod_{x\in \tn{\footnotesize{Zar}}_X^{1}} K_{d,X\tn{ \footnotesize{on} } x}$}}
\pgfputat{\pgfxy(13.5,-6.1)}{\pgfbox[center,center]{$\displaystyle\coprod_{x\in \tn{\footnotesize{Zar}}_X^{n}} K_{1,X\tn{ \footnotesize{on} } x}$}}
\pgfputat{\pgfxy(17,-6)}{\pgfbox[center,center]{$0$}}
\pgfputat{\pgfxy(-.5,-4)}{\pgfbox[center,center]{$0$}}
\pgfputat{\pgfxy(3,-4.1)}{\pgfbox[center,center]{$\displaystyle\coprod_{x\in \tn{\footnotesize{Zar}}_X^{0}} K_{d,X\tn{ \footnotesize{on} } x}$}}
\pgfputat{\pgfxy(6.5,-4.1)}{\pgfbox[center,center]{$\displaystyle\coprod_{x\in \tn{\footnotesize{Zar}}_X^{1}} K_{d-1,X\tn{ \footnotesize{on} } x}$}}
\pgfputat{\pgfxy(13.5,-4.1)}{\pgfbox[center,center]{$\displaystyle\coprod_{x\in \tn{\footnotesize{Zar}}_X^{n}} K_{0,X\tn{ \footnotesize{on} } x}$}}
\pgfputat{\pgfxy(17,-4)}{\pgfbox[center,center]{$0$}}
\pgfputat{\pgfxy(-.5,-1)}{\pgfbox[center,center]{$0$}}
\pgfputat{\pgfxy(3,-1.1)}{\pgfbox[center,center]{$\displaystyle\coprod_{x\in \tn{\footnotesize{Zar}}_X^{0}} K_{2,X\tn{ \footnotesize{on} } x}$}}
\pgfputat{\pgfxy(6.5,-1.1)}{\pgfbox[center,center]{$\displaystyle\coprod_{x\in \tn{\footnotesize{Zar}}_X^{1}} K_{1,X\tn{ \footnotesize{on} } x}$}}
\pgfputat{\pgfxy(10,-1.1)}{\pgfbox[center,center]{$\displaystyle\coprod_{x\in \tn{\footnotesize{Zar}}_X^{2}} K_{0,X\tn{ \footnotesize{on} } x}$}}
\pgfputat{\pgfxy(13.5,-1)}{\pgfbox[center,center]{$0$}}
\pgfputat{\pgfxy(17,-1)}{\pgfbox[center,center]{$0$}}
\pgfputat{\pgfxy(-.5,1)}{\pgfbox[center,center]{$0$}}
\pgfputat{\pgfxy(3,.9)}{\pgfbox[center,center]{$\displaystyle\coprod_{x\in \tn{\footnotesize{Zar}}_X^{0}} K_{1,X\tn{ \footnotesize{on} } x}$}}
\pgfputat{\pgfxy(6.5,.9)}{\pgfbox[center,center]{$\displaystyle\coprod_{x\in \tn{\footnotesize{Zar}}_X^{1}} K_{0,X\tn{ \footnotesize{on} } x}$}}
\pgfputat{\pgfxy(13.5,.9)}{\pgfbox[center,center]{$0$}}
\pgfputat{\pgfxy(17,1)}{\pgfbox[center,center]{$0$}}
\pgfputat{\pgfxy(-.5,3)}{\pgfbox[center,center]{$0$}}
\pgfputat{\pgfxy(3,2.9)}{\pgfbox[center,center]{$\displaystyle\coprod_{x\in \tn{\footnotesize{Zar}}_X^{0}} K_{0,X\tn{ \footnotesize{on} } x}$}}
\pgfputat{\pgfxy(6.5,2.9)}{\pgfbox[center,center]{$0$}}
\pgfputat{\pgfxy(13.5,2.9)}{\pgfbox[center,center]{$0$}}
\pgfputat{\pgfxy(17,3)}{\pgfbox[center,center]{$0$}}
\pgfputat{\pgfxy(-.5,5)}{\pgfbox[center,center]{$0$}}
\pgfputat{\pgfxy(3,4.9)}{\pgfbox[center,center]{$0$}}
\pgfputat{\pgfxy(6.5,4.9)}{\pgfbox[center,center]{$0$}}
\pgfputat{\pgfxy(13.5,4.9)}{\pgfbox[center,center]{$0$}}
\pgfputat{\pgfxy(17,5)}{\pgfbox[center,center]{$0$}}
\pgfputat{\pgfxy(10,1)}{\pgfbox[center,center]{$0$}}
\pgfputat{\pgfxy(10,3)}{\pgfbox[center,center]{$0$}}
\pgfputat{\pgfxy(10,5)}{\pgfbox[center,center]{$0$}}
\pgfputat{\pgfxy(4.9,1.3)}{\pgfbox[center,center]{\small{$d_1^{0,-1}$}}}
\pgfputat{\pgfxy(4.9,-.7)}{\pgfbox[center,center]{\small{$d_1^{0,-2}$}}}
\pgfputat{\pgfxy(4.9,-3.7)}{\pgfbox[center,center]{\footnotesize{$d_1^{0,-n}$}}}
\pgfputat{\pgfxy(4.9,-5.7)}{\pgfbox[center,center]{\footnotesize{$d_1^{0,-1-n}$}}}
\pgfputat{\pgfxy(8.4,-.7)}{\pgfbox[center,center]{\small{$d_1^{1,-2}$}}}
\pgfputat{\pgfxy(8.4,-3.7)}{\pgfbox[center,center]{\footnotesize{$d_1^{1,-n}$}}}
\pgfputat{\pgfxy(8.4,-5.7)}{\pgfbox[center,center]{\footnotesize{$d_1^{1,-n-1}$}}}
\pgfputat{\pgfxy(11.8,-3.7)}{\pgfbox[center,center]{\footnotesize{$d_1^{n-1,-n}$}}}
\pgfputat{\pgfxy(11.8,-5.7)}{\pgfbox[center,center]{\footnotesize{$d_1^{n-1,-n-1}$}}}
\pgfsetendarrow{\pgfarrowlargepointed{3pt}}
\pgfxyline(.8,-6)(1.5,-6)
\pgfxyline(4.6,-6)(5.2,-6)
\pgfxyline(8.1,-6)(8.7,-6)
\pgfxyline(11.5,-6)(12.2,-6)
\pgfxyline(15,-6)(15.7,-6)
\pgfxyline(.8,-4)(1.5,-4)
\pgfxyline(4.6,-4)(5.2,-4)
\pgfxyline(8.1,-4)(8.7,-4)
\pgfxyline(11.5,-4)(12.2,-4)
\pgfxyline(15,-4)(15.7,-4)
\pgfxyline(.8,-1)(1.5,-1)
\pgfxyline(4.6,-1)(5.2,-1)
\pgfxyline(8.1,-1)(8.7,-1)
\pgfxyline(11.5,-1)(12.2,-1)
\pgfxyline(15,-1)(15.7,-1)
\pgfxyline(.8,1)(1.5,1)
\pgfxyline(4.6,1)(5.2,1)
\pgfxyline(8.1,1)(8.7,1)
\pgfxyline(11.5,1)(12.2,1)
\pgfxyline(15,1)(15.7,1)
\pgfxyline(.8,3)(1.5,3)
\pgfxyline(4.6,3)(5.2,3)
\pgfxyline(8.1,3)(8.7,3)
\pgfxyline(11.5,3)(12.2,3)
\pgfxyline(15,3)(15.7,3)
\pgfxyline(.8,5)(1.5,5)
\pgfxyline(4.6,5)(5.2,5)
\pgfxyline(8.1,5)(8.7,5)
\pgfxyline(11.5,5)(12.2,5)
\pgfxyline(15,5)(15.7,5)
\pgfnodecircle{Node0}[fill]{\pgfxy(-.5,-2.1)}{0.02cm}
\pgfnodecircle{Node0}[fill]{\pgfxy(-.5,-2)}{0.02cm}
\pgfnodecircle{Node0}[fill]{\pgfxy(-.5,-1.9)}{0.02cm}
\pgfnodecircle{Node0}[fill]{\pgfxy(3,-2.1)}{0.02cm}
\pgfnodecircle{Node0}[fill]{\pgfxy(3,-2)}{0.02cm}
\pgfnodecircle{Node0}[fill]{\pgfxy(3,-1.9)}{0.02cm}
\pgfnodecircle{Node0}[fill]{\pgfxy(6.5,-2.1)}{0.02cm}
\pgfnodecircle{Node0}[fill]{\pgfxy(6.5,-2)}{0.02cm}
\pgfnodecircle{Node0}[fill]{\pgfxy(6.5,-1.9)}{0.02cm}
\pgfnodecircle{Node0}[fill]{\pgfxy(10,-2.1)}{0.02cm}
\pgfnodecircle{Node0}[fill]{\pgfxy(10,-2)}{0.02cm}
\pgfnodecircle{Node0}[fill]{\pgfxy(10,-1.9)}{0.02cm}
\pgfnodecircle{Node0}[fill]{\pgfxy(13.5,-2.1)}{0.02cm}
\pgfnodecircle{Node0}[fill]{\pgfxy(13.5,-2)}{0.02cm}
\pgfnodecircle{Node0}[fill]{\pgfxy(13.5,-1.9)}{0.02cm}
\pgfnodecircle{Node0}[fill]{\pgfxy(17,-2.1)}{0.02cm}
\pgfnodecircle{Node0}[fill]{\pgfxy(17,-2)}{0.02cm}
\pgfnodecircle{Node0}[fill]{\pgfxy(17,-1.9)}{0.02cm}
\pgfnodecircle{Node0}[fill]{\pgfxy(-.5,-3.1)}{0.02cm}
\pgfnodecircle{Node0}[fill]{\pgfxy(-.5,-3)}{0.02cm}
\pgfnodecircle{Node0}[fill]{\pgfxy(-.5,-2.9)}{0.02cm}
\pgfnodecircle{Node0}[fill]{\pgfxy(3,-3.1)}{0.02cm}
\pgfnodecircle{Node0}[fill]{\pgfxy(3,-3)}{0.02cm}
\pgfnodecircle{Node0}[fill]{\pgfxy(3,-2.9)}{0.02cm}
\pgfnodecircle{Node0}[fill]{\pgfxy(6.5,-3.1)}{0.02cm}
\pgfnodecircle{Node0}[fill]{\pgfxy(6.5,-3)}{0.02cm}
\pgfnodecircle{Node0}[fill]{\pgfxy(6.5,-2.9)}{0.02cm}
\pgfnodecircle{Node0}[fill]{\pgfxy(10,-3.1)}{0.02cm}
\pgfnodecircle{Node0}[fill]{\pgfxy(10,-3)}{0.02cm}
\pgfnodecircle{Node0}[fill]{\pgfxy(10,-2.9)}{0.02cm}
\pgfnodecircle{Node0}[fill]{\pgfxy(13.5,-3.1)}{0.02cm}
\pgfnodecircle{Node0}[fill]{\pgfxy(13.5,-3)}{0.02cm}
\pgfnodecircle{Node0}[fill]{\pgfxy(13.5,-2.9)}{0.02cm}
\pgfnodecircle{Node0}[fill]{\pgfxy(17,-3.1)}{0.02cm}
\pgfnodecircle{Node0}[fill]{\pgfxy(17,-3)}{0.02cm}
\pgfnodecircle{Node0}[fill]{\pgfxy(17,-2.9)}{0.02cm}
\pgfnodecircle{Node0}[fill]{\pgfxy(-.5,-7.1)}{0.02cm}
\pgfnodecircle{Node0}[fill]{\pgfxy(-.5,-7)}{0.02cm}
\pgfnodecircle{Node0}[fill]{\pgfxy(-.5,-6.9)}{0.02cm}
\pgfnodecircle{Node0}[fill]{\pgfxy(3,-7.1)}{0.02cm}
\pgfnodecircle{Node0}[fill]{\pgfxy(3,-7)}{0.02cm}
\pgfnodecircle{Node0}[fill]{\pgfxy(3,-6.9)}{0.02cm}
\pgfnodecircle{Node0}[fill]{\pgfxy(6.5,-7.1)}{0.02cm}
\pgfnodecircle{Node0}[fill]{\pgfxy(6.5,-7)}{0.02cm}
\pgfnodecircle{Node0}[fill]{\pgfxy(6.5,-6.9)}{0.02cm}
\pgfnodecircle{Node0}[fill]{\pgfxy(10,-7.1)}{0.02cm}
\pgfnodecircle{Node0}[fill]{\pgfxy(10,-7)}{0.02cm}
\pgfnodecircle{Node0}[fill]{\pgfxy(10,-6.9)}{0.02cm}
\pgfnodecircle{Node0}[fill]{\pgfxy(13.5,-7.1)}{0.02cm}
\pgfnodecircle{Node0}[fill]{\pgfxy(13.5,-7)}{0.02cm}
\pgfnodecircle{Node0}[fill]{\pgfxy(13.5,-6.9)}{0.02cm}
\pgfnodecircle{Node0}[fill]{\pgfxy(17,-7.1)}{0.02cm}
\pgfnodecircle{Node0}[fill]{\pgfxy(17,-7)}{0.02cm}
\pgfnodecircle{Node0}[fill]{\pgfxy(17,-6.9)}{0.02cm}
\pgfnodecircle{Node0}[fill]{\pgfxy(8.9,-4)}{0.02cm}
\pgfnodecircle{Node0}[fill]{\pgfxy(9,-4)}{0.02cm}
\pgfnodecircle{Node0}[fill]{\pgfxy(9.1,-4)}{0.02cm}
\pgfnodecircle{Node0}[fill]{\pgfxy(11.1,-4)}{0.02cm}
\pgfnodecircle{Node0}[fill]{\pgfxy(11.2,-4)}{0.02cm}
\pgfnodecircle{Node0}[fill]{\pgfxy(11.3,-4)}{0.02cm}
\pgfnodecircle{Node0}[fill]{\pgfxy(8.9,-6)}{0.02cm}
\pgfnodecircle{Node0}[fill]{\pgfxy(9,-6)}{0.02cm}
\pgfnodecircle{Node0}[fill]{\pgfxy(9.1,-6)}{0.02cm}
\pgfnodecircle{Node0}[fill]{\pgfxy(11.1,-6)}{0.02cm}
\pgfnodecircle{Node0}[fill]{\pgfxy(11.2,-6)}{0.02cm}
\pgfnodecircle{Node0}[fill]{\pgfxy(11.3,-6)}{0.02cm}
\end{pgfmagnify}
\end{pgftranslate}
\end{pgfpicture}
\caption{$E_1$-level of the coniveau spectral sequence for homological algebraic $K$-theory on a smooth algebraic scheme, incorporating vanishing of negative $K$-groups.}
\label{figconiveauhomologicalK}
\end{figure}

{\bf Existence of the First Column of the Coniveau Machine via Bloch-Ogus.}  By the Bloch-Ogus Theorem \hyperref[corblochogus]{\ref{corblochogus}}, the corresponding sheafified Cousin complexes are flasque resolutions of the algebraic $K$-theory sheaves $\ms{K}_{0,X},\hspace*{.2cm}\ms{K}_{1,X},\hspace*{.2cm}...\hspace*{.2cm},\ms{K}_{n,X},...,$ on $X$.  In the case where $X$ is a smooth algebraic variety, Bloch's formula expresses the Chow groups $\tn{Ch}_X^0$, $\tn{Ch}_X^1$, ..., $\tn{Ch}_X^n$,... as the sheaf cohomology groups
\begin{equation}\label{equsheafcohomsec44}H^0(X,\ms{K}_{0,X}),\hspace*{.2cm}H^1(X,\ms{K}_{1,X}),\hspace*{.2cm}...\hspace*{.2cm},H^n(X,\ms{K}_{n,X}),...\end{equation}
By the definition of sheaf cohomology, these are the groups

%&&&&&&&&&&&&&&&&&&&&&&&&&&&&&&&&&&&&&&&&&&&&&&&&&&&&&&&&&&&&&&&&&&&&&&&&&&&&&&&&&&
%&&&&&&&&&&&&&&&&&&&&&&&&&&&&&&&&&&&&&&&&&&&&&&&&&&&&&&&&&&&&&&&&&&&&&&&&&&&&&&&&&&
%&&&&                 &&&&    &&&               &&&                    &&&    &&&&    &&&              &&&&                   &&&&&&&&&&&&&&&&&&
%&&&&   &&&&   &&&&    &&&   &&&&&&&&&&&   &&&&&&    &&&&    &&&    &&&     &&&    &&&&&&&&&&&&&&&&&&&&&&&&
%&&&&   &&&&   &&&&    &&&   &&&&&&&&&&&   &&&&&&    &&&&    &&&    &&&     &&&    &&&&&&&&&&&&&&&&&&&&&&&&
%&&&&                 &&&&    &&&   &&&&&&&&&&&   &&&&&&    &&&&    &&&            &&&&&             &&&&&&&&&&&&&&&&&&&&
%&&&&    &&&&&&&&&    &&&   &&&&&&&&&&&   &&&&&&    &&&&    &&&    &&&   &&&&    &&&&&&&&&&&&&&&&&&&&&&&
%&&&&    &&&&&&&&&    &&&   &&&&&&&&&&&   &&&&&&    &&&&    &&&    &&&&   &&&    &&&&&&&&&&&&&&&&&&&&&&&
%&&&&    &&&&&&&&&    &&&                &&&&&&   &&&&&&                   &&&    &&&&&   &&                  &&&&&&&&&&&&&&&&&&
%&&&&&&&&&&&&&&&&&&&&&&&&&&&&&&&&&&&&&&&&&&&&&&&&&&&&&&&&&&&&&&&&&&&&&&&&&&&&&&&&&&
%&&&&&&&&&&&&&&&&&&&&&&&&&&&&&&&&&&&&&&&&&&&&&&&&&&&&&&&&&&&&&&&&&&&&&&&&&&&&&&&&&&
\begin{pgfpicture}{0cm}{0cm}{17cm}{2.25cm}
\begin{pgftranslate}{\pgfpoint{0cm}{0cm}}
\pgfputat{\pgfxy(16,1)}{\pgfbox[center,center]{(4.4.1.4)}}
\pgfputat{\pgfxy(2,1)}{\pgfbox[center,center]{$\displaystyle\Gamma\Bigg(\coprod_{x\in \tn{\footnotesize{Zar}}_X^{0}} \underline{K_{0,X\tn{ \footnotesize{on} } x}}\Bigg)$}}
\pgfputat{\pgfxy(3.9,.8)}{\pgfbox[center,center]{,}}
\pgfputat{\pgfxy(8.25,.8)}{\pgfbox[center,center]{,}}
\pgfputat{\pgfxy(8.95,.8)}{\pgfbox[center,center]{...}}
\pgfputat{\pgfxy(9.45,.8)}{\pgfbox[center,center]{,}}
\pgfputat{\pgfxy(13.5,.8)}{\pgfbox[center,center]{,}}
\pgfputat{\pgfxy(14.2,.8)}{\pgfbox[center,center]{...}}
%\pgfputat{\pgfxy(14.5,.8)}{\pgfbox[center,center]{,}}
\pgfputat{\pgfxy(6.25,1)}{\pgfbox[center,center]{$\displaystyle\frac{\Gamma\Bigg(\displaystyle\coprod_{x\in \tn{\footnotesize{Zar}}_X^{1}} \underline{K_{0,X\tn{ \footnotesize{on} } x}}\Bigg)}{\tn{Im}\Big(\Gamma\big(\underline{d_1^{0,-1}}\big)\Big)}$}}
\pgfputat{\pgfxy(11.5,1)}{\pgfbox[center,center]{$\displaystyle\frac{\Gamma\Bigg(\displaystyle\coprod_{x\in \tn{\footnotesize{Zar}}_X^{n}} \underline{K_{0,X\tn{ \footnotesize{on} } x}}\Bigg)}{\tn{Im}\Big(\Gamma\big(\underline{d_1^{n-1,-n}}\big)\Big)}$}}
\end{pgftranslate}
\end{pgfpicture}
\label{equ4415}
%&&&&&&&&&&&&&&&&&&&&&&&&&&&&&&&&&&&&&&&&&&&&&&&&&&&&&&&&&&&&&&&&&&&&&&&&&&&&&&&&&&
%&&&&&&&&&&&&&&&&&&&&&&&&&&&&&&&&&&&&&&&&&&&&&&&&&&&&&&&&&&&&&&&&&&&&&&&&&&&&&&&&&&
%&&&                    &&&       &&&&&    &&&                &&&&&&&&&&&&&&&&&&&&&&&&&&&&&&&&&&&&&&&&&&&&&&&&&&&
%&&&    &&&&&&&&&    &    &&&&   &&&    &&&&    &&&&&&&&&&&&&&&&&&&&&&&&&&&&&&&&&&&&&&&&&&&&&&&&&& 
%&&&    &&&&&&&&&    &&    &&&   &&&    &&&&&    &&&&&&&&&&&&&&&&&&&&&&&&&&&&&&&&&&&&&&&&&&&&&&&&& 
%&&&                 &&&&    &&&    &&   &&&    &&&&&    &&&&&&&&&&&&&&&&&&&&&&&&&&&&&&&&&&&&&&&&&&&&&&&&& 
%&&&    &&&&&&&&&    &&&&    &   &&&    &&&&&    &&&&&&&&&&&&&&&&&&&&&&&&&&&&&&&&&&&&&&&&&&&&&&&&& 
%&&&    &&&&&&&&&    &&&&&       &&&    &&&&     &&&&&&&&&&&&&&&&&&&&&&&&&&&&&&&&&&&&&&&&&&&&&&&&& 
%&&&                    &&&    &&&&&&    &&&                  &&&&&&&&&&&&&&&&&&&&&&&&&&&&&&&&&&&&&&&&&&&&&&&&&&
%&&&&&&&&&&&&&&&&&&&&&&&&&&&&&&&&&&&&&&&&&&&&&&&&&&&&&&&&&&&&&&&&&&&&&&&&&&&&&&&&&&
%&&&&&&&&&&&&&&&&&&&&&&&&&&&&&&&&&&&&&&&&&&&&&&&&&&&&&&&&&&&&&&&&&&&&&&&&&&&&&&&&&&

where $\underline{d_1^{0,-1}}$ is the sheaf map corresponding to $d_1^{0,-1}$ in the sheafified Cousin complex, and $\Gamma\big(\underline{d_1^{0,-1}}\big)$ is the corresponding map on global sections, and similarly for $\underline{d_1^{n-1,-n}}$.  

These results may be rephrased in the context of the coniveau machine as follows:

\begin{lem}\label{lemfirstcolumn}Let $\mbf{S}_k$ be a distinguished category of schemes over a field $k$, satisfying the conditions given at the beginning of section \hyperref[subsectioncohomsupportssubstrata]{\ref{subsectioncohomsupportssubstrata}} above, and let $X$ in $\mbf{S}_k$ be  smooth, equidimensional, and noetherian.  Then the first column of the coniveau machine for Bass-Thomason $K$-theory on $X$ exists; that is, the Cousin complex appearing as the $p$th row of the $E_1$-level of the coniveau spectral sequence for $K$ on $X$ sheafifies to yield a flasque resolution of the $K$-theory sheaf $\ms{K}_{p,X}$ on $X$ for all $p$.  
\end{lem}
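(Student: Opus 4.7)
The plan is to invoke the Bloch-Ogus theorem (Corollary \ref{corblochogus}) directly. That result guarantees that for any cohomology theory with supports $H$ on the category of pairs over $\mbf{S}_k$ satisfying \'etale excision ({\bf COH1}) and the cohomological projective bundle formula ({\bf COH5}), the sheafified Cousin complexes on any smooth $X \in \mbf{S}_k$ are flasque resolutions of the associated sheaves $\ms{H}_X^n$. Since the hypotheses on $X$ in the lemma match the hypotheses of Bloch-Ogus, the entire content of the lemma reduces to verifying that Bass-Thomason $K$-theory, viewed cohomologically via $K^n_{X\tn{ on } Z} := K_{-n,\hspace{.05cm} X \tn{ on } Z}$, satisfies {\bf COH1} and {\bf COH5} on the category of pairs $\mbf{P}_k$.

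First I would establish the substratum-level conditions for the nonconnective spectrum $\mbf{K}$. The \'etale Mayer-Vietoris condition {\bf SUB1} follows from Thomason's \'etale excision theorem (\cite{Thomason-Trobaugh90}, Theorem 7.1), which asserts that for an \'etale morphism $f: X' \to X$ inducing an isomorphism $f^{-1}(Z) \to Z$, the induced map of spectra $\mbf{K}_{X \tn{ on } Z} \to \mbf{K}_{X' \tn{ on } Z}$ is a homotopy equivalence; by the criterion cited after Definition \ref{defiSUB1}, this is precisely {\bf SUB1}. The projective bundle condition {\bf SUB5} follows from Thomason's projective bundle theorem (\cite{Thomason-Trobaugh90}, Theorem 7.3), which provides the required homotopy equivalence $\mbf{K}_V \vee \mbf{K}_V \to \mbf{K}_{\mbb{P}_V^1}$. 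Both assertions were already recorded in section \ref{subsectioneffaceabilitynonconnectiveK} above.

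Next I would pass from substratum conditions to the corresponding cohomology conditions. Applying homotopy groups with the homological-to-cohomological reindexing converts homotopy equivalences of spectra into isomorphisms of abelian groups in every degree, and converts homotopy cartesian squares into long exact Mayer-Vietoris sequences; likewise the splitting \`a la Thomason descends to a direct-sum decomposition on cohomology groups. Thus {\bf SUB1} for $\mbf{K}$ yields {\bf COH1} for $K$, and {\bf SUB5} for $\mbf{K}$ yields {\bf COH5} for $K$. With both cohomological axioms verified, Bloch-Ogus applies verbatim and produces the desired flasque resolutions, identifying $H_{\tn{\fsz{Zar}}}^p(X, \ms{K}_{p,X})$ as the cohomology in degree $p$ of the resulting complex of global sections.

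The main substantive obstacle is the localization and excision package for nonconnective $K$-theory, which is exactly the content of Thomason's localization theorem (\cite{Thomason-Trobaugh90}, Theorem 7.4) and its ancillary results; this is why the non-connective version of $K$-theory is indispensable here, as the connective theory fails {\bf SUB1} in general. Once that input is granted, the remainder of the argument is formal: it is an exercise in translating homotopy-theoretic properties of the substratum $\mbf{K}$ into properties of the cohomology theory $K$, and then applying the Bloch-Ogus machinery assembled in Chapter \ref{ChapterTechnical}. Looking ahead, the same template will be reused for the augmented, relative, and additive columns, with the additional work concentrated in verifying that the effaceability properties survive the operations of ``multiplying by a fixed separated scheme'' and of passing to homotopy fibers.
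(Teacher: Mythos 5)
Your proposal is correct and follows essentially the same route as the paper: the paper's proof also reduces the lemma to the Bloch-Ogus theorem (Corollary \hyperref[corblochogus]{\ref{corblochogus}}), with the effaceability hypotheses supplied by Thomason's \'etale excision and projective bundle theorems establishing {\bf SUB1} and {\bf SUB5} for the substratum $\mbf{K}$, exactly as recorded in section \hyperref[subsectioneffaceabilitynonconnectiveK]{\ref{subsectioneffaceabilitynonconnectiveK}}. Your explicit remarks on the homological-to-cohomological reindexing and on why the nonconnective theory is indispensable are consistent with, and slightly more detailed than, the paper's own discussion.
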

\begin{proof}This is established by the foregoing discussion; the crucial ingredient is the Bloch-Ogus theorem.
\end{proof}

%SSSSSSSSSSSSSSSSSSSSSSSSSSSSSSSSSSSSSSSSSSSSSSSSSSSSSSSSSSSSSSSSSSSSSSSSSSSSSSSSSS
%SSSSSSSSSSSSSSSSSSSSSSSSSSSSSSSSSSSSSSSSSSSSSSSSSSSSSSSSSSSSSSSSSSSSSSSSSSSSSSSSSS
%SSS                      SSS                       SSS                    SS                          SS           SSS                         SSS          SSSSS      SSSS
%SSS      SSSSSSSSS    SSSSSSSSSS     SSSSSSSSSSSS     SSSSSSS     SSSS    SSSSSS     SSS     S     SSSS     SSSSS
%SSS      SSSSSSSSS    SSSSSSSSSS     SSSSSSSSSSSS     SSSSSSS     SSSS    SSSSSS     SSS     SS     SSS     SSSSS
%SSS                      SSS                  SSSSS    SSSSSSSSSSSS     SSSSSSS     SSSS    SSSSSS     SSS     SSS     SS     SSSSS
%SSSSSSSSSS    SSS    SSSSSSSSSS    SSSSSSSSSSSS     SSSSSSS     SSSS    SSSSSS     SSS     SSSS     S     SSSSS
%SSSSSSSSSS    SSS    SSSSSSSSSS    SSSSSSSSSSSS     SSSSSSS     SSSS    SSSSSS     SSS     SSSSS          SSSSS
%SSS                       SSS                      SSS                       SSSSS     SSSSSS          SSS                         SSS     SSSSSS        SSSSS
%SSSSSSSSSSSSSSSSSSSSSSSSSSSSSSSSSSSSSSSSSSSSSSSSSSSSSSSSSSSSSSSSSSSSSSSSSSSSSSSSSS
%SSSSSSSSSSSSSSSSSSSSSSSSSSSSSSSSSSSSSSSSSSSSSSSSSSSSSSSSSSSSSSSSSSSSSSSSSSSSSSSSSS

{\bf ``Column" Terminology.}  In both the introduction to this book \hyperref[subsectionconiveathisthesis]{\ref{subsectionconiveathisthesis}} and the beginning of the present chapter \hyperref[sectionintroductionconiveau]{\ref{sectionintroductionconiveau}}, I described verbally how the ``simplified four-column version" of the coniveau machine for Bass-Thomason algebraic $K$-theory $K$ on a smooth algebraic variety over a field $k$, augmented via a nilpotent thickening, is constructed by isolating and sheafifying certain rows of the coniveau spectral sequence for the absolute, augmented, relative, and relative additive versions of $K$ on $X$, then transposing them to form the four columns of the machine.  Here, I spell this procedure out in more detail for the first column of the machine. 

As indicated in equations \hyperref[equsheafcohomsec44]{\ref{equsheafcohomsec44}} and \hyperref[equ4415]{4.4.1.4} above, the Chow group $\tn{Ch}_X^p$ is the $p$th sheaf cohomology group of the sheaf $\ms{K}_{p,X}$, computed by applying the global sections functor $\Gamma$ to the $p$th sheafified Cousin complex, i.e., the sheaf complex obtained by sheafifying the $-p$th row of the coniveau spectral sequence for $K$ on $X$:

%&&&&&&&&&&&&&&&&&&&&&&&&&&&&&&&&&&&&&&&&&&&&&&&&&&&&&&&&&&&&&&&&&&&&&&&&&&&&&&&&&&
%&&&&&&&&&&&&&&&&&&&&&&&&&&&&&&&&&&&&&&&&&&&&&&&&&&&&&&&&&&&&&&&&&&&&&&&&&&&&&&&&&&
%&&&&                 &&&&    &&&               &&&                    &&&    &&&&    &&&              &&&&                   &&&&&&&&&&&&&&&&&&
%&&&&   &&&&   &&&&    &&&   &&&&&&&&&&&   &&&&&&    &&&&    &&&    &&&     &&&    &&&&&&&&&&&&&&&&&&&&&&&&
%&&&&   &&&&   &&&&    &&&   &&&&&&&&&&&   &&&&&&    &&&&    &&&    &&&     &&&    &&&&&&&&&&&&&&&&&&&&&&&&
%&&&&                 &&&&    &&&   &&&&&&&&&&&   &&&&&&    &&&&    &&&            &&&&&             &&&&&&&&&&&&&&&&&&&&
%&&&&    &&&&&&&&&    &&&   &&&&&&&&&&&   &&&&&&    &&&&    &&&    &&&   &&&&    &&&&&&&&&&&&&&&&&&&&&&&
%&&&&    &&&&&&&&&    &&&   &&&&&&&&&&&   &&&&&&    &&&&    &&&    &&&&   &&&    &&&&&&&&&&&&&&&&&&&&&&&
%&&&&    &&&&&&&&&    &&&                &&&&&&   &&&&&&                   &&&    &&&&&   &&                  &&&&&&&&&&&&&&&&&&
%&&&&&&&&&&&&&&&&&&&&&&&&&&&&&&&&&&&&&&&&&&&&&&&&&&&&&&&&&&&&&&&&&&&&&&&&&&&&&&&&&&
%&&&&&&&&&&&&&&&&&&&&&&&&&&&&&&&&&&&&&&&&&&&&&&&&&&&&&&&&&&&&&&&&&&&&&&&&&&&&&&&&&&
\begin{pgfpicture}{0cm}{0cm}{17cm}{2cm}
\begin{pgftranslate}{\pgfpoint{2.5cm}{0cm}}
\pgfputat{\pgfxy(13.4,1)}{\pgfbox[center,center]{(4.4.1.6)}}
\pgfputat{\pgfxy(.7,1)}{\pgfbox[center,center]{$0$}}
\pgfputat{\pgfxy(3,.9)}{\pgfbox[center,center]{$\displaystyle\coprod_{x\in \tn{\footnotesize{Zar}}_X^{0}} \underline{K_{p,X\tn{ \footnotesize{on} } x}}$}}
\pgfputat{\pgfxy(7,.9)}{\pgfbox[center,center]{$\displaystyle\coprod_{x\in \tn{\footnotesize{Zar}}_X^{1}} \underline{K_{p-1,X\tn{ \footnotesize{on} } x}}$}}
\pgfputat{\pgfxy(5.1,1.3)}{\pgfbox[center,center]{\small{$d_1^{0,-p}$}}}
\pgfputat{\pgfxy(9.3,1.3)}{\pgfbox[center,center]{\small{$d_1^{1,-p}$}}}
\pgfsetendarrow{\pgfarrowlargepointed{3pt}}
\pgfxyline(.9,1)(1.7,1)
\pgfxyline(4.5,1)(5.4,1)
\pgfxyline(8.9,1)(9.7,1)
\pgfnodecircle{Node0}[fill]{\pgfxy(9.9,1)}{0.02cm}
\pgfnodecircle{Node0}[fill]{\pgfxy(10,1)}{0.02cm}
\pgfnodecircle{Node0}[fill]{\pgfxy(10.1,1)}{0.02cm}
\end{pgftranslate}
\end{pgfpicture}
%&&&&&&&&&&&&&&&&&&&&&&&&&&&&&&&&&&&&&&&&&&&&&&&&&&&&&&&&&&&&&&&&&&&&&&&&&&&&&&&&&&
%&&&&&&&&&&&&&&&&&&&&&&&&&&&&&&&&&&&&&&&&&&&&&&&&&&&&&&&&&&&&&&&&&&&&&&&&&&&&&&&&&&
%&&&                    &&&       &&&&&    &&&                &&&&&&&&&&&&&&&&&&&&&&&&&&&&&&&&&&&&&&&&&&&&&&&&&&&
%&&&    &&&&&&&&&    &    &&&&   &&&    &&&&    &&&&&&&&&&&&&&&&&&&&&&&&&&&&&&&&&&&&&&&&&&&&&&&&&& 
%&&&    &&&&&&&&&    &&    &&&   &&&    &&&&&    &&&&&&&&&&&&&&&&&&&&&&&&&&&&&&&&&&&&&&&&&&&&&&&&& 
%&&&                 &&&&    &&&    &&   &&&    &&&&&    &&&&&&&&&&&&&&&&&&&&&&&&&&&&&&&&&&&&&&&&&&&&&&&&& 
%&&&    &&&&&&&&&    &&&&    &   &&&    &&&&&    &&&&&&&&&&&&&&&&&&&&&&&&&&&&&&&&&&&&&&&&&&&&&&&&& 
%&&&    &&&&&&&&&    &&&&&       &&&    &&&&     &&&&&&&&&&&&&&&&&&&&&&&&&&&&&&&&&&&&&&&&&&&&&&&&& 
%&&&                    &&&    &&&&&&    &&&                  &&&&&&&&&&&&&&&&&&&&&&&&&&&&&&&&&&&&&&&&&&&&&&&&&&
%&&&&&&&&&&&&&&&&&&&&&&&&&&&&&&&&&&&&&&&&&&&&&&&&&&&&&&&&&&&&&&&&&&&&&&&&&&&&&&&&&&
%&&&&&&&&&&&&&&&&&&&&&&&&&&&&&&&&&&&&&&&&&&&&&&&&&&&&&&&&&&&&&&&&&&&&&&&&&&&&&&&&&&

In particular, when $p=n$, the dimension of $X$, the sheafified Cousin complex is 

%&&&&&&&&&&&&&&&&&&&&&&&&&&&&&&&&&&&&&&&&&&&&&&&&&&&&&&&&&&&&&&&&&&&&&&&&&&&&&&&&&&
%&&&&&&&&&&&&&&&&&&&&&&&&&&&&&&&&&&&&&&&&&&&&&&&&&&&&&&&&&&&&&&&&&&&&&&&&&&&&&&&&&&
%&&&&                 &&&&    &&&               &&&                    &&&    &&&&    &&&              &&&&                   &&&&&&&&&&&&&&&&&&
%&&&&   &&&&   &&&&    &&&   &&&&&&&&&&&   &&&&&&    &&&&    &&&    &&&     &&&    &&&&&&&&&&&&&&&&&&&&&&&&
%&&&&   &&&&   &&&&    &&&   &&&&&&&&&&&   &&&&&&    &&&&    &&&    &&&     &&&    &&&&&&&&&&&&&&&&&&&&&&&&
%&&&&                 &&&&    &&&   &&&&&&&&&&&   &&&&&&    &&&&    &&&            &&&&&             &&&&&&&&&&&&&&&&&&&&
%&&&&    &&&&&&&&&    &&&   &&&&&&&&&&&   &&&&&&    &&&&    &&&    &&&   &&&&    &&&&&&&&&&&&&&&&&&&&&&&
%&&&&    &&&&&&&&&    &&&   &&&&&&&&&&&   &&&&&&    &&&&    &&&    &&&&   &&&    &&&&&&&&&&&&&&&&&&&&&&&
%&&&&    &&&&&&&&&    &&&                &&&&&&   &&&&&&                   &&&    &&&&&   &&                  &&&&&&&&&&&&&&&&&&
%&&&&&&&&&&&&&&&&&&&&&&&&&&&&&&&&&&&&&&&&&&&&&&&&&&&&&&&&&&&&&&&&&&&&&&&&&&&&&&&&&&
%&&&&&&&&&&&&&&&&&&&&&&&&&&&&&&&&&&&&&&&&&&&&&&&&&&&&&&&&&&&&&&&&&&&&&&&&&&&&&&&&&&
\begin{pgfpicture}{0cm}{0cm}{17cm}{2cm}
\begin{pgftranslate}{\pgfpoint{0cm}{0cm}}
\pgfputat{\pgfxy(15.9,1)}{\pgfbox[center,center]{(4.4.1.7)}}
\pgfputat{\pgfxy(.5,1)}{\pgfbox[center,center]{$0$}}
\pgfputat{\pgfxy(2.8,.9)}{\pgfbox[center,center]{$\displaystyle\coprod_{x\in \tn{\footnotesize{Zar}}_X^{0}} \underline{K_{n,X\tn{ \footnotesize{on} } x}}$}}
\pgfputat{\pgfxy(6.5,.9)}{\pgfbox[center,center]{$\displaystyle\coprod_{x\in \tn{\footnotesize{Zar}}_X^{1}} \underline{K_{n-1,X\tn{ \footnotesize{on} } x}}$}}
\pgfputat{\pgfxy(12,.9)}{\pgfbox[center,center]{$\displaystyle\coprod_{x\in \tn{\footnotesize{Zar}}_X^{d}} \underline{K_{0,X\tn{ \footnotesize{on} } x}}$}}
\pgfputat{\pgfxy(14.5,1)}{\pgfbox[center,center]{$0$,}}
\pgfputat{\pgfxy(4.7,1.3)}{\pgfbox[center,center]{\footnotesize{$d_1^{0,-n}$}}}
\pgfputat{\pgfxy(8.6,1.3)}{\pgfbox[center,center]{\footnotesize{$d_1^{1,-n}$}}}
\pgfputat{\pgfxy(10.4,1.3)}{\pgfbox[center,center]{\footnotesize{$d_1^{n-1,-n}$}}}
\pgfsetendarrow{\pgfarrowlargepointed{3pt}}
\pgfxyline(.8,1)(1.5,1)
\pgfxyline(4.3,1)(5,1)
\pgfxyline(8.2,1)(8.8,1)
\pgfxyline(9.9,1)(10.7,1)
\pgfxyline(13.6,1)(14.2,1)
\pgfnodecircle{Node0}[fill]{\pgfxy(9,1)}{0.02cm}
\pgfnodecircle{Node0}[fill]{\pgfxy(9.1,1)}{0.02cm}
\pgfnodecircle{Node0}[fill]{\pgfxy(9.2,1)}{0.02cm}
\pgfnodecircle{Node0}[fill]{\pgfxy(9.5,1)}{0.02cm}
\pgfnodecircle{Node0}[fill]{\pgfxy(9.6,1)}{0.02cm}
\pgfnodecircle{Node0}[fill]{\pgfxy(9.7,1)}{0.02cm}
\end{pgftranslate}
\end{pgfpicture}
%&&&&&&&&&&&&&&&&&&&&&&&&&&&&&&&&&&&&&&&&&&&&&&&&&&&&&&&&&&&&&&&&&&&&&&&&&&&&&&&&&&
%&&&&&&&&&&&&&&&&&&&&&&&&&&&&&&&&&&&&&&&&&&&&&&&&&&&&&&&&&&&&&&&&&&&&&&&&&&&&&&&&&&
%&&&                    &&&       &&&&&    &&&                &&&&&&&&&&&&&&&&&&&&&&&&&&&&&&&&&&&&&&&&&&&&&&&&&&&
%&&&    &&&&&&&&&    &    &&&&   &&&    &&&&    &&&&&&&&&&&&&&&&&&&&&&&&&&&&&&&&&&&&&&&&&&&&&&&&&& 
%&&&    &&&&&&&&&    &&    &&&   &&&    &&&&&    &&&&&&&&&&&&&&&&&&&&&&&&&&&&&&&&&&&&&&&&&&&&&&&&& 
%&&&                 &&&&    &&&    &&   &&&    &&&&&    &&&&&&&&&&&&&&&&&&&&&&&&&&&&&&&&&&&&&&&&&&&&&&&&& 
%&&&    &&&&&&&&&    &&&&    &   &&&    &&&&&    &&&&&&&&&&&&&&&&&&&&&&&&&&&&&&&&&&&&&&&&&&&&&&&&& 
%&&&    &&&&&&&&&    &&&&&       &&&    &&&&     &&&&&&&&&&&&&&&&&&&&&&&&&&&&&&&&&&&&&&&&&&&&&&&&& 
%&&&                    &&&    &&&&&&    &&&                  &&&&&&&&&&&&&&&&&&&&&&&&&&&&&&&&&&&&&&&&&&&&&&&&&&
%&&&&&&&&&&&&&&&&&&&&&&&&&&&&&&&&&&&&&&&&&&&&&&&&&&&&&&&&&&&&&&&&&&&&&&&&&&&&&&&&&&
%&&&&&&&&&&&&&&&&&&&&&&&&&&&&&&&&&&&&&&&&&&&&&&&&&&&&&&&&&&&&&&&&&&&&&&&&&&&&&&&&&&

so that the $n$th Chow group involves $K_0$.  

\begin{example}\label{exampletoytheorycurves} \tn{In chapter \hyperref[chaptercurves]{\ref{chaptercurves}}, in the ``toy version of the theory" for smooth complex projective algebraic curves, this particular complex is just the sheaf divisor sequence appearing in equation \hyperref[equsheafdivisorsequencecurves]{\ref{equsheafdivisorsequencecurves}}:
\[1\rightarrow\ms{O}_X^*\overset{i}{\longrightarrow}\ms{R}_X^*\overset{\ms{D}\tn{\footnotesize{iv}}_X}{\longrightarrow}\ms{Z}_X^1\rightarrow0,\]
where the leftmost ``$1$" in the sequence reflects the fact that ``addition" in the first graded piece $\ms{K}_{1,X}\cong \ms{O}_X^*$ of the sheaf of rings $\ms{K}_X$ corresponds to ``multiplication" in $\ms{O}_X^*$.  This sequence forms the ``first column of the coniveau machine for curves" appearing in figure \hyperref[figcompletedconiveaucurve]{\ref{figcompletedconiveaucurve}}.   This construction ignores the other Cousin complexes appearing in the coniveau spectral sequence for $K$ on $X$, so the development of Chapter  \hyperref[chaptercurves]{\ref{chaptercurves}} reveals only part of the structural picture, even in the simple case of curves.}
\end{example}
\hspace{16.3cm} $\oblong$

Figure \hyperref[figsheafifiedtrapezoid]{\ref{figsheafifiedtrapezoid}} below illustrates how the first column of the ``simplified four-column version" of the coniveau machine fits into the trapezoidal diagram arising from the coniveau spectral sequence for $K$ on $X$, first shown in figure \hyperref[figconiveauhomologicalK]{\ref{figconiveauhomologicalK}} above.  Here, I have sheafified the entire trapezoidal diagram.  The boxed row, when transposed, is the first column of the coniveau machine.

%&&&&&&&&&&&&&&&&&&&&&&&&&&&&&&&&&&&&&&&&&&&&&&&&&&&&&&&&&&&&&&&&&&&&&&&&&&&&&&&&&&
%&&&&&&&&&&&&&&&&&&&&&&&&&&&&&&&&&&&&&&&&&&&&&&&&&&&&&&&&&&&&&&&&&&&&&&&&&&&&&&&&&&
%&&&&                 &&&&    &&&               &&&                    &&&    &&&&    &&&              &&&&                   &&&&&&&&&&&&&&&&&&
%&&&&   &&&&   &&&&    &&&   &&&&&&&&&&&   &&&&&&    &&&&    &&&    &&&     &&&    &&&&&&&&&&&&&&&&&&&&&&&&
%&&&&   &&&&   &&&&    &&&   &&&&&&&&&&&   &&&&&&    &&&&    &&&    &&&     &&&    &&&&&&&&&&&&&&&&&&&&&&&&
%&&&&                 &&&&    &&&   &&&&&&&&&&&   &&&&&&    &&&&    &&&            &&&&&             &&&&&&&&&&&&&&&&&&&&
%&&&&    &&&&&&&&&    &&&   &&&&&&&&&&&   &&&&&&    &&&&    &&&    &&&   &&&&    &&&&&&&&&&&&&&&&&&&&&&&
%&&&&    &&&&&&&&&    &&&   &&&&&&&&&&&   &&&&&&    &&&&    &&&    &&&&   &&&    &&&&&&&&&&&&&&&&&&&&&&&
%&&&&    &&&&&&&&&    &&&                &&&&&&   &&&&&&                   &&&    &&&&&   &&                  &&&&&&&&&&&&&&&&&&
%&&&&&&&&&&&&&&&&&&&&&&&&&&&&&&&&&&&&&&&&&&&&&&&&&&&&&&&&&&&&&&&&&&&&&&&&&&&&&&&&&&
%&&&&&&&&&&&&&&&&&&&&&&&&&&&&&&&&&&&&&&&&&&&&&&&&&&&&&&&&&&&&&&&&&&&&&&&&&&&&&&&&&&
\begin{figure}[H]
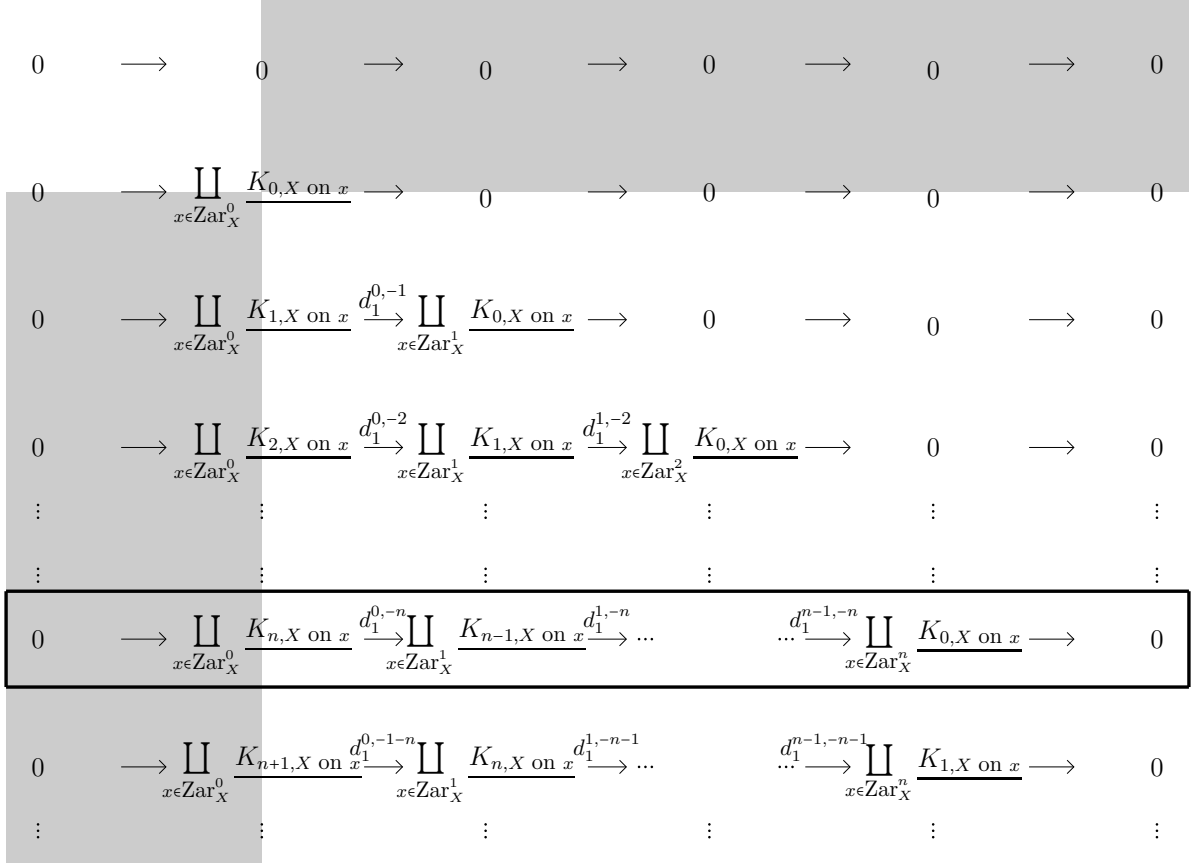

\begin{pgfpicture}{0cm}{0cm}{17cm}{11.5cm}
\begin{pgftranslate}{\pgfpoint{1.5cm}{6.5cm}}
\begin{pgfmagnify}{.85}{.85}
\begin{colormixin}{20!white}
\color{black}
\pgfmoveto{\pgfxy(3,3)}
\pgflineto{\pgfxy(17.5,3)}
\pgflineto{\pgfxy(17.5,6)}
\pgflineto{\pgfxy(3,6)}
\pgflineto{\pgfxy(3,3)}
\pgffill
\pgfmoveto{\pgfxy(3,3)}
\pgflineto{\pgfxy(3,-7.5)}
\pgflineto{\pgfxy(-1,-7.5)}
\pgflineto{\pgfxy(-1,3)}
\pgflineto{\pgfxy(3,3)}
\pgffill
\end{colormixin}
\pgfputat{\pgfxy(-.5,-6)}{\pgfbox[center,center]{$0$}}
\pgfputat{\pgfxy(3,-6.1)}{\pgfbox[center,center]{$\displaystyle\coprod_{x\in \tn{\footnotesize{Zar}}_X^{0}} \underline{K_{n+1,X\tn{ \footnotesize{on} } x}}$}}
\pgfputat{\pgfxy(6.5,-6.1)}{\pgfbox[center,center]{$\displaystyle\coprod_{x\in \tn{\footnotesize{Zar}}_X^{1}} \underline{K_{n,X\tn{ \footnotesize{on} } x}}$}}
\pgfputat{\pgfxy(13.5,-6.1)}{\pgfbox[center,center]{$\displaystyle\coprod_{x\in \tn{\footnotesize{Zar}}_X^{n}} \underline{K_{1,X\tn{ \footnotesize{on} } x}}$}}
\pgfputat{\pgfxy(17,-6)}{\pgfbox[center,center]{$0$}}
\pgfputat{\pgfxy(-.5,-4)}{\pgfbox[center,center]{$0$}}
\pgfputat{\pgfxy(3,-4.1)}{\pgfbox[center,center]{$\displaystyle\coprod_{x\in \tn{\footnotesize{Zar}}_X^{0}} \underline{K_{n,X\tn{ \footnotesize{on} } x}}$}}
\pgfputat{\pgfxy(6.5,-4.1)}{\pgfbox[center,center]{$\displaystyle\coprod_{x\in \tn{\footnotesize{Zar}}_X^{1}} \underline{K_{n-1,X\tn{ \footnotesize{on} } x}}$}}
\pgfputat{\pgfxy(13.5,-4.1)}{\pgfbox[center,center]{$\displaystyle\coprod_{x\in \tn{\footnotesize{Zar}}_X^{n}} \underline{K_{0,X\tn{ \footnotesize{on} } x}}$}}
\pgfputat{\pgfxy(17,-4)}{\pgfbox[center,center]{$0$}}
\pgfputat{\pgfxy(-.5,-1)}{\pgfbox[center,center]{$0$}}
\pgfputat{\pgfxy(3,-1.1)}{\pgfbox[center,center]{$\displaystyle\coprod_{x\in \tn{\footnotesize{Zar}}_X^{0}} \underline{K_{2,X\tn{ \footnotesize{on} } x}}$}}
\pgfputat{\pgfxy(6.5,-1.1)}{\pgfbox[center,center]{$\displaystyle\coprod_{x\in \tn{\footnotesize{Zar}}_X^{1}} \underline{K_{1,X\tn{ \footnotesize{on} } x}}$}}
\pgfputat{\pgfxy(10,-1.1)}{\pgfbox[center,center]{$\displaystyle\coprod_{x\in \tn{\footnotesize{Zar}}_X^{2}} \underline{K_{0,X\tn{ \footnotesize{on} } x}}$}}
\pgfputat{\pgfxy(13.5,-1)}{\pgfbox[center,center]{$0$}}
\pgfputat{\pgfxy(17,-1)}{\pgfbox[center,center]{$0$}}
\pgfputat{\pgfxy(-.5,1)}{\pgfbox[center,center]{$0$}}
\pgfputat{\pgfxy(3,.9)}{\pgfbox[center,center]{$\displaystyle\coprod_{x\in \tn{\footnotesize{Zar}}_X^{0}} \underline{K_{1,X\tn{ \footnotesize{on} } x}}$}}
\pgfputat{\pgfxy(6.5,.9)}{\pgfbox[center,center]{$\displaystyle\coprod_{x\in \tn{\footnotesize{Zar}}_X^{1}} \underline{K_{0,X\tn{ \footnotesize{on} } x}}$}}
\pgfputat{\pgfxy(13.5,.9)}{\pgfbox[center,center]{$0$}}
\pgfputat{\pgfxy(17,1)}{\pgfbox[center,center]{$0$}}
\pgfputat{\pgfxy(-.5,3)}{\pgfbox[center,center]{$0$}}
\pgfputat{\pgfxy(3,2.9)}{\pgfbox[center,center]{$\displaystyle\coprod_{x\in \tn{\footnotesize{Zar}}_X^{0}} \underline{K_{0,X\tn{ \footnotesize{on} } x}}$}}
\pgfputat{\pgfxy(6.5,2.9)}{\pgfbox[center,center]{$0$}}
\pgfputat{\pgfxy(13.5,2.9)}{\pgfbox[center,center]{$0$}}
\pgfputat{\pgfxy(17,3)}{\pgfbox[center,center]{$0$}}
\pgfputat{\pgfxy(-.5,5)}{\pgfbox[center,center]{$0$}}
\pgfputat{\pgfxy(3,4.9)}{\pgfbox[center,center]{$0$}}
\pgfputat{\pgfxy(6.5,4.9)}{\pgfbox[center,center]{$0$}}
\pgfputat{\pgfxy(13.5,4.9)}{\pgfbox[center,center]{$0$}}
\pgfputat{\pgfxy(17,5)}{\pgfbox[center,center]{$0$}}
\pgfputat{\pgfxy(10,1)}{\pgfbox[center,center]{$0$}}
\pgfputat{\pgfxy(10,3)}{\pgfbox[center,center]{$0$}}
\pgfputat{\pgfxy(10,5)}{\pgfbox[center,center]{$0$}}
\pgfputat{\pgfxy(4.9,1.3)}{\pgfbox[center,center]{\small{$d_1^{0,-1}$}}}
\pgfputat{\pgfxy(4.9,-.7)}{\pgfbox[center,center]{\small{$d_1^{0,-2}$}}}
\pgfputat{\pgfxy(4.9,-3.7)}{\pgfbox[center,center]{\footnotesize{$d_1^{0,-n}$}}}
\pgfputat{\pgfxy(4.9,-5.7)}{\pgfbox[center,center]{\footnotesize{$d_1^{0,-1-n}$}}}
\pgfputat{\pgfxy(8.4,-.7)}{\pgfbox[center,center]{\small{$d_1^{1,-2}$}}}
\pgfputat{\pgfxy(8.4,-3.7)}{\pgfbox[center,center]{\footnotesize{$d_1^{1,-n}$}}}
\pgfputat{\pgfxy(8.4,-5.7)}{\pgfbox[center,center]{\footnotesize{$d_1^{1,-n-1}$}}}
\pgfputat{\pgfxy(11.8,-3.7)}{\pgfbox[center,center]{\footnotesize{$d_1^{n-1,-n}$}}}
\pgfputat{\pgfxy(11.8,-5.7)}{\pgfbox[center,center]{\footnotesize{$d_1^{n-1,-n-1}$}}}
\pgfsetendarrow{\pgfarrowlargepointed{3pt}}
\pgfxyline(.8,-6)(1.5,-6)
\pgfxyline(4.6,-6)(5.2,-6)
\pgfxyline(8.1,-6)(8.7,-6)
\pgfxyline(11.5,-6)(12.2,-6)
\pgfxyline(15,-6)(15.7,-6)
\pgfxyline(.8,-4)(1.5,-4)
\pgfxyline(4.6,-4)(5.2,-4)
\pgfxyline(8.1,-4)(8.7,-4)
\pgfxyline(11.5,-4)(12.2,-4)
\pgfxyline(15,-4)(15.7,-4)
\pgfxyline(.8,-1)(1.5,-1)
\pgfxyline(4.6,-1)(5.2,-1)
\pgfxyline(8.1,-1)(8.7,-1)
\pgfxyline(11.5,-1)(12.2,-1)
\pgfxyline(15,-1)(15.7,-1)
\pgfxyline(.8,1)(1.5,1)
\pgfxyline(4.6,1)(5.2,1)
\pgfxyline(8.1,1)(8.7,1)
\pgfxyline(11.5,1)(12.2,1)
\pgfxyline(15,1)(15.7,1)
\pgfxyline(.8,3)(1.5,3)
\pgfxyline(4.6,3)(5.2,3)
\pgfxyline(8.1,3)(8.7,3)
\pgfxyline(11.5,3)(12.2,3)
\pgfxyline(15,3)(15.7,3)
\pgfxyline(.8,5)(1.5,5)
\pgfxyline(4.6,5)(5.2,5)
\pgfxyline(8.1,5)(8.7,5)
\pgfxyline(11.5,5)(12.2,5)
\pgfxyline(15,5)(15.7,5)
\pgfnodecircle{Node0}[fill]{\pgfxy(-.5,-2.1)}{0.02cm}
\pgfnodecircle{Node0}[fill]{\pgfxy(-.5,-2)}{0.02cm}
\pgfnodecircle{Node0}[fill]{\pgfxy(-.5,-1.9)}{0.02cm}
\pgfnodecircle{Node0}[fill]{\pgfxy(3,-2.1)}{0.02cm}
\pgfnodecircle{Node0}[fill]{\pgfxy(3,-2)}{0.02cm}
\pgfnodecircle{Node0}[fill]{\pgfxy(3,-1.9)}{0.02cm}
\pgfnodecircle{Node0}[fill]{\pgfxy(6.5,-2.1)}{0.02cm}
\pgfnodecircle{Node0}[fill]{\pgfxy(6.5,-2)}{0.02cm}
\pgfnodecircle{Node0}[fill]{\pgfxy(6.5,-1.9)}{0.02cm}
\pgfnodecircle{Node0}[fill]{\pgfxy(10,-2.1)}{0.02cm}
\pgfnodecircle{Node0}[fill]{\pgfxy(10,-2)}{0.02cm}
\pgfnodecircle{Node0}[fill]{\pgfxy(10,-1.9)}{0.02cm}
\pgfnodecircle{Node0}[fill]{\pgfxy(13.5,-2.1)}{0.02cm}
\pgfnodecircle{Node0}[fill]{\pgfxy(13.5,-2)}{0.02cm}
\pgfnodecircle{Node0}[fill]{\pgfxy(13.5,-1.9)}{0.02cm}
\pgfnodecircle{Node0}[fill]{\pgfxy(17,-2.1)}{0.02cm}
\pgfnodecircle{Node0}[fill]{\pgfxy(17,-2)}{0.02cm}
\pgfnodecircle{Node0}[fill]{\pgfxy(17,-1.9)}{0.02cm}
\pgfnodecircle{Node0}[fill]{\pgfxy(-.5,-3.1)}{0.02cm}
\pgfnodecircle{Node0}[fill]{\pgfxy(-.5,-3)}{0.02cm}
\pgfnodecircle{Node0}[fill]{\pgfxy(-.5,-2.9)}{0.02cm}
\pgfnodecircle{Node0}[fill]{\pgfxy(3,-3.1)}{0.02cm}
\pgfnodecircle{Node0}[fill]{\pgfxy(3,-3)}{0.02cm}
\pgfnodecircle{Node0}[fill]{\pgfxy(3,-2.9)}{0.02cm}
\pgfnodecircle{Node0}[fill]{\pgfxy(6.5,-3.1)}{0.02cm}
\pgfnodecircle{Node0}[fill]{\pgfxy(6.5,-3)}{0.02cm}
\pgfnodecircle{Node0}[fill]{\pgfxy(6.5,-2.9)}{0.02cm}
\pgfnodecircle{Node0}[fill]{\pgfxy(10,-3.1)}{0.02cm}
\pgfnodecircle{Node0}[fill]{\pgfxy(10,-3)}{0.02cm}
\pgfnodecircle{Node0}[fill]{\pgfxy(10,-2.9)}{0.02cm}
\pgfnodecircle{Node0}[fill]{\pgfxy(13.5,-3.1)}{0.02cm}
\pgfnodecircle{Node0}[fill]{\pgfxy(13.5,-3)}{0.02cm}
\pgfnodecircle{Node0}[fill]{\pgfxy(13.5,-2.9)}{0.02cm}
\pgfnodecircle{Node0}[fill]{\pgfxy(17,-3.1)}{0.02cm}
\pgfnodecircle{Node0}[fill]{\pgfxy(17,-3)}{0.02cm}
\pgfnodecircle{Node0}[fill]{\pgfxy(17,-2.9)}{0.02cm}
\pgfnodecircle{Node0}[fill]{\pgfxy(-.5,-7.1)}{0.02cm}
\pgfnodecircle{Node0}[fill]{\pgfxy(-.5,-7)}{0.02cm}
\pgfnodecircle{Node0}[fill]{\pgfxy(-.5,-6.9)}{0.02cm}
\pgfnodecircle{Node0}[fill]{\pgfxy(3,-7.1)}{0.02cm}
\pgfnodecircle{Node0}[fill]{\pgfxy(3,-7)}{0.02cm}
\pgfnodecircle{Node0}[fill]{\pgfxy(3,-6.9)}{0.02cm}
\pgfnodecircle{Node0}[fill]{\pgfxy(6.5,-7.1)}{0.02cm}
\pgfnodecircle{Node0}[fill]{\pgfxy(6.5,-7)}{0.02cm}
\pgfnodecircle{Node0}[fill]{\pgfxy(6.5,-6.9)}{0.02cm}
\pgfnodecircle{Node0}[fill]{\pgfxy(10,-7.1)}{0.02cm}
\pgfnodecircle{Node0}[fill]{\pgfxy(10,-7)}{0.02cm}
\pgfnodecircle{Node0}[fill]{\pgfxy(10,-6.9)}{0.02cm}
\pgfnodecircle{Node0}[fill]{\pgfxy(13.5,-7.1)}{0.02cm}
\pgfnodecircle{Node0}[fill]{\pgfxy(13.5,-7)}{0.02cm}
\pgfnodecircle{Node0}[fill]{\pgfxy(13.5,-6.9)}{0.02cm}
\pgfnodecircle{Node0}[fill]{\pgfxy(17,-7.1)}{0.02cm}
\pgfnodecircle{Node0}[fill]{\pgfxy(17,-7)}{0.02cm}
\pgfnodecircle{Node0}[fill]{\pgfxy(17,-6.9)}{0.02cm}
\pgfnodecircle{Node0}[fill]{\pgfxy(8.9,-4)}{0.02cm}
\pgfnodecircle{Node0}[fill]{\pgfxy(9,-4)}{0.02cm}
\pgfnodecircle{Node0}[fill]{\pgfxy(9.1,-4)}{0.02cm}
\pgfnodecircle{Node0}[fill]{\pgfxy(11.1,-4)}{0.02cm}
\pgfnodecircle{Node0}[fill]{\pgfxy(11.2,-4)}{0.02cm}
\pgfnodecircle{Node0}[fill]{\pgfxy(11.3,-4)}{0.02cm}
\pgfnodecircle{Node0}[fill]{\pgfxy(8.9,-6)}{0.02cm}
\pgfnodecircle{Node0}[fill]{\pgfxy(9,-6)}{0.02cm}
\pgfnodecircle{Node0}[fill]{\pgfxy(9.1,-6)}{0.02cm}
\pgfnodecircle{Node0}[fill]{\pgfxy(11.1,-6)}{0.02cm}
\pgfnodecircle{Node0}[fill]{\pgfxy(11.2,-6)}{0.02cm}
\pgfnodecircle{Node0}[fill]{\pgfxy(11.3,-6)}{0.02cm}
\pgfclearendarrow
\pgfsetlinewidth{1.5pt}
\pgfxyline(-1,-4.75)(17.5,-4.75)
\pgfxyline(-1,-4.75)(-1,-3.25)
\pgfxyline(17.5,-4.75)(17.5,-3.25)
\pgfxyline(-1,-3.25)(17.5,-3.25)
\end{pgfmagnify}
\end{pgftranslate}
\end{pgfpicture}
\caption{Sheafified version of the trapezoidal diagram in figure \hyperref[figconiveauhomologicalK]{\ref{figconiveauhomologicalK}} above.  Box indicates first column of the coniveau machine.}
\label{figsheafifiedtrapezoid}
\end{figure}
%&&&&&&&&&&&&&&&&&&&&&&&&&&&&&&&&&&&&&&&&&&&&&&&&&&&&&&&&&&&&&&&&&&&&&&&&&&&&&&&&&&
%&&&&&&&&&&&&&&&&&&&&&&&&&&&&&&&&&&&&&&&&&&&&&&&&&&&&&&&&&&&&&&&&&&&&&&&&&&&&&&&&&&
%&&&                    &&&       &&&&&    &&&                &&&&&&&&&&&&&&&&&&&&&&&&&&&&&&&&&&&&&&&&&&&&&&&&&&&
%&&&    &&&&&&&&&    &    &&&&   &&&    &&&&    &&&&&&&&&&&&&&&&&&&&&&&&&&&&&&&&&&&&&&&&&&&&&&&&&& 
%&&&    &&&&&&&&&    &&    &&&   &&&    &&&&&    &&&&&&&&&&&&&&&&&&&&&&&&&&&&&&&&&&&&&&&&&&&&&&&&& 
%&&&                 &&&&    &&&    &&   &&&    &&&&&    &&&&&&&&&&&&&&&&&&&&&&&&&&&&&&&&&&&&&&&&&&&&&&&&& 
%&&&    &&&&&&&&&    &&&&    &   &&&    &&&&&    &&&&&&&&&&&&&&&&&&&&&&&&&&&&&&&&&&&&&&&&&&&&&&&&& 
%&&&    &&&&&&&&&    &&&&&       &&&    &&&&     &&&&&&&&&&&&&&&&&&&&&&&&&&&&&&&&&&&&&&&&&&&&&&&&& 
%&&&                    &&&    &&&&&&    &&&                  &&&&&&&&&&&&&&&&&&&&&&&&&&&&&&&&&&&&&&&&&&&&&&&&&&
%&&&&&&&&&&&&&&&&&&&&&&&&&&&&&&&&&&&&&&&&&&&&&&&&&&&&&&&&&&&&&&&&&&&&&&&&&&&&&&&&&&
%&&&&&&&&&&&&&&&&&&&&&&&&&&&&&&&&&&&&&&&&&&&&&&&&&&&&&&&&&&&&&&&&&&&&&&&&&&&&&&&&&&

{\bf Operations along the Diagonal.}  Note that the intersection of cycles on open subsets of $X$ induces operations along the diagonal of total degree zero in the trapezoidal diagrams appearing in figures \hyperref[figconiveauhomologicalK]{\ref{figconiveauhomologicalK}} and \hyperref[figsheafifiedtrapezoid]{\ref{figsheafifiedtrapezoid}}.  These operations correspond to multiplication in the Chow ring $\tn{Ch}_X$ of $X$.  It would be interesting to look for similar operations among the sheafified $E_1^{p,-p}$-terms of the coniveau spectral sequences for other cohomology theories with supports on suitable categories of topological spaces. 

\subsection{``New Theories Out of Old;" Second and Third Columns}\label{subsectionnewoutofold}

{\bf Viewpoints Concerning Augmentation.}  The existence of the first column of the coniveau machine for algebraic $K$-theory on a smooth algebraic variety over a field $k$, established in lemma \hyperref[lemfirstcolumn]{\ref{lemfirstcolumn}} above, requires a smoothness hypotheses for the scheme $X$.  This hypothesis derives from the smoothness hypothesis in the Bloch-Ogus theorem \hyperref[corblochogus]{\ref{corblochogus}}.   Turning to the second column of the machine, as depicted in figure \hyperref[figsimplifiedfourcolumnKtheorych4b]{\ref{figsimplifiedfourcolumnKtheorych4b}}, this smoothness hypothesis can no longer be invoked, since the second column involves the thickened scheme $X\times_kY$, which is singular.  

As mentioned in section \hyperref[subsectiongeneralities]{\ref{subsectiongeneralities}} at the beginning of this chapter, the ``augmentation" involved in constructing the second column of the coniveau machine may be viewed in at least two different ways: one may either think of ``augmenting the cohomology theory," or ``augmenting the scheme."  Technically, there is no difference between the two choices, since the ``augmented cohomology theory" is defined by applying the original cohomology theory to the ``augmented scheme" $X\times_kY$.  However, ``augmenting the cohomology theory" is the preferred viewpoint in the present context, since the objects of principal interest in this book are the cycle groups and Chow groups of $X$ in the case where $X$ is a smooth algebraic variety.  Stated differently, although the thickened scheme $X\times_kY$ is singular, its singularity is of a very ``uniform" type.  Hence, attempting a wholesale extension of the theory to the case of singular schemes is not really relevant to the construction of the second column.

{\bf Augmented Cohomology Theories with Supports and Substrata.}  Hence, I will describe the second column of the coniveau machine in terms of ``augmented cohomology," viewpoint, following Colliot-Th\'el\`ene, Hoobler, and Kahn's program \cite{CHKBloch-Ogus-Gabber97} of building ``new theories out of old."  In this case, the ``new theory" is the algebraic $K$-theory of $X\times _kY$, {\it viewed as a cohomology theory with supports on the original scheme} $X$, as explained at the end of section \hyperref[subsectionconnectivenonconnective]{\ref{subsectionconnectivenonconnective}} above.  More generally, I use the following definition, adapted from Colliot-Th\'el\`ene, Hoobler, and Kahn \cite{CHKBloch-Ogus-Gabber97} 5.5:

\begin{defi}\label{definewoutofold} Let $\mbf{S}_k$ a category of schemes over a field $k$ satisfying the conditions given at the beginning of section \hyperref[subsectioncohomsupportssubstrata]{\ref{subsectioncohomsupportssubstrata}}.   Let $Y$ be a separated scheme over $k$. 
\begin{enumerate}
\item Let $H$ be a cohomology theory with supports on the category of pairs over $\mbf{S}_k$, with values in an abelian category $\mbf{A}$.    Define a family of functors $H^{Y}={H_n^Y}_{n\in\ZZ}$ on the category of pairs over $\mbf{S}_k$ as follows:
\begin{equation}\label{equnewoutofoldcohom}H_{n,X\tn{ on } Z}^Y:=H_{n,X\times_k Y\tn{ on } Z\times_k Y }.\end{equation}
\item Let $C$ be a substratum with supports on the category of pairs over $\mbf{S}_k$, with values in an abelian category $\mbf{A}$.   Let $Y$ be a separated scheme over $k$.  Define a functor $C^{Y}$ on $\mbf{S}_k$ as follows:
\begin{equation}\label{equnewoutofoldsubstrat}C_X^Y:=C_{X\times_k Y}.\end{equation}
\end{enumerate}
\end{defi}

For this construction to work, it is necessary to verify that the ``augmented" functors  $H_n^Y$ and $C^Y$ satisfy the conditions to define  cohomology theories with supports and substrata, respectively, and that the construction preserves effaceability.  This is verified by the following lemma, also adapted from Colliot-Th\'el\`ene, Hoobler, and Kahn \cite{CHKBloch-Ogus-Gabber97} 5.5:

\begin{lem}\label{lemnewoutofold} Let $\mbf{S}_k$ a category of schemes over a field $k$ satisfying the conditions given at the beginning of section \hyperref[subsectioncohomsupportssubstrata]{\ref{subsectioncohomsupportssubstrata}}.   Let $Y$ be a separated scheme over $k$.
\begin{enumerate}
\item  $H^Y$ is a cohomology theory with supports on  the category of pairs over $\mbf{S}_k$.  If $H$ satisfies \'etale excision \tn{({\bf COH1})}, and the cohomological projective bundle formula \tn{({\bf COH5})}, then so does $H^Y$.  
\item $C^Y$ is a substratum on $\mbf{S}_k$.  If $C$ satisfies the \'etale Mayer-Vietoris property \tn{({\bf SUB1})}, and the projective bundle formula for substrata \tn{({\bf SUB5})}, then so does $C^Y$.  
\end{enumerate}
\end{lem}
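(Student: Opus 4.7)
The proof is essentially a routine verification that base change along the fixed structure morphism $Y \to \tn{Spec } k$ preserves all the structure needed to define a cohomology theory with supports or a substratum, and preserves the axioms {\bf COH1}, {\bf COH5}, {\bf SUB1}, {\bf SUB5}. The unifying strategy is, for each condition, to rewrite the statement about a pair $(X,Z)$ in $\mbf{P}_k$ as the corresponding statement about the base-changed pair $(X\times_kY,Z\times_kY)$, and then invoke the hypothesis on $H$ or $C$. First I would check that $X\mapsto X\times_kY$ defines a functor on pairs: a morphism of pairs $(X',Z')\to(X,Z)$ pulls back to $(X'\times_kY,Z'\times_kY)\to(X\times_kY,Z\times_kY)$ because fiber products preserve the defining inverse-image condition. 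Composing with the contravariant functoriality of $H$ (resp.\ $C$) yields the contravariant functoriality of $H^Y$ (resp.\ $C^Y$). That $Y$ is separated ensures fiber products behave well, and in particular that the categorical setup laid out at the start of section \ref{subsectioncohomsupportssubstrata} is preserved under $-\times_kY$.

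For the long exact sequence that makes $H^Y$ a cohomology theory with supports, given a triple $Z\subseteq W\subseteq X$ of closed subsets in $\mbf{P}_k$, I would base-change to obtain the triple $Z\times_kY\subseteq W\times_kY\subseteq X\times_kY$ and invoke the standard identity $(X-Z)\times_kY=(X\times_kY)-(Z\times_kY)$, which holds because open immersions are stable under base change. The required LES for $H^Y$ on $(X,W,Z)$ is then literally the LES for $H$ on the base-changed triple, with maps defined accordingly. The analogous argument for substrata produces the homotopy fiber sequence of $C^Y$ from the homotopy fiber sequence of $C$ applied to the base-changed triple.

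For {\bf COH1} and {\bf SUB1}, given an \'etale morphism $f\colon X'\to X$ inducing an isomorphism $f^{-1}(Z)\xrightarrow{\sim}Z$, the base change $f\times_k\tn{id}_Y\colon X'\times_kY\to X\times_kY$ is again \'etale (since \'etale morphisms are stable under base change) and still induces an isomorphism on the preimage of $Z\times_kY$; applying the hypothesis on $H$ or $C$ to this base-changed data gives the desired isomorphism or homotopy equivalence for $H^Y$ or $C^Y$. For {\bf COH5} and {\bf SUB5}, one uses the canonical isomorphism $\PP^1_V\times_kY\cong \PP^1_{V\times_kY}$ together with the fact that the tautological bundles match under this identification, so the projective bundle formula for $H$ (resp.\ $C$) on the base-changed data reads off as the projective bundle formula for $H^Y$ (resp.\ $C^Y$) on the original data. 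Throughout, one must verify that the various structural maps $\alpha_{V,F}$, the splittings by $\tilde\pi^*$, and the Picard group input of equations like (3.10.5.1) are all compatible with base change, but this is automatic from the functoriality of $\tn{Pic}$ and projective space under $-\times_kY$. The main obstacle is not conceptual but bookkeeping: one must keep track of which ambient category each object lives in, and verify the compatibility of the auxiliary structures (Picard-group action, projective-bundle trivializations) with base change, but because $Y$ is separated over $k$ and because all the constructions in question commute with fiber products, no substantive difficulty arises.
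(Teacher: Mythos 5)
Your proposal is correct in outline and follows exactly the route the paper intends: the paper offers no argument of its own here, deferring entirely to Colliot-Th\'el\`ene--Hoobler--Kahn (5.5), where the verification is precisely the base-change bookkeeping you describe (functoriality of $-\times_kY$ on pairs, $(X-Z)\times_kY=(X\times_kY)-(Z\times_kY)$ for the long exact sequence and the homotopy fiber sequence, stability of \'etale morphisms under base change for {\bf COH1}/{\bf SUB1}, and $\PP^1_V\times_kY\cong\PP^1_{V\times_kY}$ for the projective bundle formulas).

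One point where ``reads off'' hides a real issue: as stated in this paper, {\bf COH5} and {\bf SUB5} are hypotheses only for $V$ an open subscheme of $\AA^n_k$, whereas verifying them for $H^Y$ (resp.\ $C^Y$) on such a $V$ requires the projective bundle formula for $H$ (resp.\ $C$) over the base $V\times_kY$, which is not an open of affine space over $k$. So the implication does not follow formally from the axiom as literally stated; one needs the projective bundle formula in the wider generality in which it is actually available for the theories of interest (Thomason's theorem for $\mbf{K}$ and the corresponding result for $\mbf{HN}$ hold for all quasi-compact quasi-separated schemes in $\mbf{S}_k$), which is how the cited reference resolves it. The \'etale excision and long-exact-sequence parts of your argument have no such scope problem.
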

\begin{proof} See Colliot-Th\'el\`ene, Hoobler, and Kahn \cite{CHKBloch-Ogus-Gabber97} 5.5. 
\end{proof}

%SSSSSSSSSSSSSSSSSSSSSSSSSSSSSSSSSSSSSSSSSSSSSSSSSSSSSSSSSSSSSSSSSSSSSSSSSSSSSSSSSS
%SSSSSSSSSSSSSSSSSSSSSSSSSSSSSSSSSSSSSSSSSSSSSSSSSSSSSSSSSSSSSSSSSSSSSSSSSSSSSSSSSS
%SSS                      SSS                       SSS                    SS                          SS           SSS                         SSS          SSSSS      SSSS
%SSS      SSSSSSSSS    SSSSSSSSSS     SSSSSSSSSSSS     SSSSSSS     SSSS    SSSSSS     SSS     S     SSSS     SSSSS
%SSS      SSSSSSSSS    SSSSSSSSSS     SSSSSSSSSSSS     SSSSSSS     SSSS    SSSSSS     SSS     SS     SSS     SSSSS
%SSS                      SSS                  SSSSS    SSSSSSSSSSSS     SSSSSSS     SSSS    SSSSSS     SSS     SSS     SS     SSSSS
%SSSSSSSSSS    SSS    SSSSSSSSSS    SSSSSSSSSSSS     SSSSSSS     SSSS    SSSSSS     SSS     SSSS     S     SSSSS
%SSSSSSSSSS    SSS    SSSSSSSSSS    SSSSSSSSSSSS     SSSSSSS     SSSS    SSSSSS     SSS     SSSSS          SSSSS
%SSS                       SSS                      SSS                       SSSSS     SSSSSS          SSS                         SSS     SSSSSS        SSSSS
%SSSSSSSSSSSSSSSSSSSSSSSSSSSSSSSSSSSSSSSSSSSSSSSSSSSSSSSSSSSSSSSSSSSSSSSSSSSSSSSSSS
%SSSSSSSSSSSSSSSSSSSSSSSSSSSSSSSSSSSSSSSSSSSSSSSSSSSSSSSSSSSSSSSSSSSSSSSSSSSSSSSSSS

{\bf Second and Third Columns of the Machine.}  Applying lemma \hyperref[lemnewoutofold]{\ref{lemnewoutofold}} to Bass-Thomason $K$-theory leads to the following result:

 \begin{lem}\label{lemsecondcolumn} Let $\mbf{S}_k$ be a distinguished category of schemes over a field $k$, satisfying the conditions given at the beginning of section \hyperref[subsectioncohomsupportssubstrata]{\ref{subsectioncohomsupportssubstrata}} above, and let $X$ in $\mbf{S}_k$ be  smooth, equidimensional, and noetherian. Let $Y$ be a separated scheme over $k$.  Then the second column of the coniveau machine for Bass-Thomason $K$-theory on $X$ exists; that is, the Cousin complexes appearing as the rows of the $E_1$-level of the coniveau spectral sequence for augmented $K$-theory on $X$ with respect to $Y$ sheafify to yield flasque resolutions of the sheaves $\ms{K}_{p,X\times_kY}$ on $X$.
\end{lem}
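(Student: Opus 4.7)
The plan is to reduce this statement to a direct application of the Bloch--Ogus theorem (Corollary 4.3.2.1) to the augmented cohomology theory built from Bass--Thomason $K$-theory via the ``new theories out of old'' construction. First I would recall from Section 3.10.4 that the Bass--Thomason nonconnective $K$-theory substratum $\mbf{K}$ satisfies the \'etale Mayer--Vietoris condition ({\bf SUB1}) and the projective bundle formula for substrata ({\bf SUB5}); the first is Thomason's localization plus his \'etale excision theorem, and the second is his projective bundle theorem. These are the structural inputs I need before augmenting.

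Next I would apply Lemma 4.4.2.2 to the separated $k$-scheme $Y$ to produce the augmented substratum $\mbf{K}^Y$, defined on $\mbf{S}_k$ by $X \mapsto \mbf{K}_{X\times_k Y}$, and observe that the lemma guarantees that $\mbf{K}^Y$ inherits both {\bf SUB1} and {\bf SUB5} from $\mbf{K}$. Passing to homotopy groups, the associated cohomology theory with supports $K^Y$, whose value on a pair $(X,Z)$ is $K_{n, X\times_k Y \tn{ on } Z\times_k Y}$, therefore satisfies {\bf COH1} and {\bf COH5}. This is the key functorial step: the fiber product $X\mapsto X\times_k Y$ does not disturb the excision or projective bundle properties that drive effaceability.

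With effaceability in hand, I would invoke Corollary 4.3.2.1 (Bloch--Ogus) directly for the cohomology theory with supports $K^Y$ on the smooth, equidimensional, noetherian scheme $X$ in $\mbf{S}_k$. The corollary then says precisely that the sheafified Cousin complexes extracted from the rows of the $E_1$-page of the coniveau spectral sequence for $K^Y$ on $X$ are flasque resolutions of the sheaves $\ms{H}^q$ associated to the presheaves $U\mapsto K^Y_{n,U} = K_{n,U\times_k Y}$. By the definition of the augmented $K$-theory sheaves, these associated sheaves are exactly the $\ms{K}_{p,X\times_k Y}$ on $X$ (indexed by appropriately converting the cohomological indexing back to the homological one, as in Section 4.4.1), which yields the desired conclusion.

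The only potentially subtle step is the bookkeeping between the augmented $K$-theory {\it groups} $K_{n, X\times_kY}$ and the ``augmented $K$-theory sheaves on $X$'' in the sense used for the coniveau machine: one must check that the sheaf $\ms{K}_{p, X\times_k Y}$ appearing in the statement is indeed the Zariski sheafification on $X$ of the presheaf $U\mapsto K_{p, U\times_k Y}$, rather than on $X\times_k Y$, and this is a matter of unwinding Definition 4.4.2.1 and the convention established at the end of Section 3.10.2. Once this identification is made, no new arguments are required and the lemma follows. I do not expect any genuine analytic obstacle; the content of the proof is entirely the transfer of effaceability from $\mbf{K}$ to $\mbf{K}^Y$ via Lemma 4.4.2.2, together with a reference to Bloch--Ogus.
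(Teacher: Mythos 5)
Your proposal is correct and follows essentially the same route as the paper: the paper's proof is precisely the "foregoing discussion" of Section \hyperref[subsectionnewoutofold]{\ref{subsectionnewoutofold}}, namely that $\mbf{K}$ satisfies {\bf SUB1} and {\bf SUB5}, that the augmented substratum $\mbf{K}^Y$ inherits these properties via the ``new theories out of old'' lemma (quoting Colliot-Th\'el\`ene--Hoobler--Kahn 5.5), and that the Bloch--Ogus theorem then yields the flasque Cousin resolutions of $\ms{K}_{p,X\times_kY}$ on $X$. Your remark on the bookkeeping between sheaves on $X$ versus on $X\times_kY$ is a fair point of care, and it is resolved exactly as you say, by the convention that the augmented theory is regarded as a cohomology theory with supports on the original category $\mbf{S}_k$.
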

\begin{proof}This is established by the foregoing discussion. 
\end{proof}

Recall that the sheaves $\ms{K}_{p,X\times_kY}$ are defined in terms of the augmented $K$-theory spectrum $\mbf{K}_{X\times_kY}$ of $X$ with respect to $Y$, defined in definition \hyperref[augmentedKspectrum]{\ref{augmentedKspectrum}} above.  

When $X$ is a smooth algebraic variety, lemma \hyperref[lemsecondcolumn]{\ref{lemsecondcolumn}} enables computation of the generalized deformation groups $D_Y\tn{Ch}_X^p$, defined in definition \hyperref[defigendefgroupChow]{\ref{defigendefgroupChow}} above, in terms of the sheafified Cousin complexes arising from the spectral sequence for augmented $K$-theory.   Note that the augmented $K$-theory groups in negative degrees are generally nontrivial here, since they are ultimately $K$-groups of singular schemes. 

In the case where $X\mapsto X\times_kY$ is a nilpotent thickening, the image of $X$ under the functor $E_{H_{\tn{\fsz{rel}}},\mbf{S}}$ appearing in the generalized form of the coniveau machine is just the kernel of the canonical split surjection of spectral sequences induced by the splitting of the natural transformation $i$:

%&&&&&&&&&&&&&&&&&&&&&&&&&&&&&&&&&&&&&&&&&&&&&&&&&&&&&&&&&&&&&&&&&&&&&&&&&&&&&&&&&&
%&&&&&&&&&&&&&&&&&&&&&&&&&&&&&&&&&&&&&&&&&&&&&&&&&&&&&&&&&&&&&&&&&&&&&&&&&&&&&&&&&&
%&&&&                 &&&&    &&&               &&&                    &&&    &&&&    &&&              &&&&                   &&&&&&&&&&&&&&&&&&
%&&&&   &&&&   &&&&    &&&   &&&&&&&&&&&   &&&&&&    &&&&    &&&    &&&     &&&    &&&&&&&&&&&&&&&&&&&&&&&&
%&&&&   &&&&   &&&&    &&&   &&&&&&&&&&&   &&&&&&    &&&&    &&&    &&&     &&&    &&&&&&&&&&&&&&&&&&&&&&&&
%&&&&                 &&&&    &&&   &&&&&&&&&&&   &&&&&&    &&&&    &&&            &&&&&             &&&&&&&&&&&&&&&&&&&&
%&&&&    &&&&&&&&&    &&&   &&&&&&&&&&&   &&&&&&    &&&&    &&&    &&&   &&&&    &&&&&&&&&&&&&&&&&&&&&&&
%&&&&    &&&&&&&&&    &&&   &&&&&&&&&&&   &&&&&&    &&&&    &&&    &&&&   &&&    &&&&&&&&&&&&&&&&&&&&&&&
%&&&&    &&&&&&&&&    &&&                &&&&&&   &&&&&&                   &&&    &&&&&   &&                  &&&&&&&&&&&&&&&&&&
%&&&&&&&&&&&&&&&&&&&&&&&&&&&&&&&&&&&&&&&&&&&&&&&&&&&&&&&&&&&&&&&&&&&&&&&&&&&&&&&&&&
%&&&&&&&&&&&&&&&&&&&&&&&&&&&&&&&&&&&&&&&&&&&&&&&&&&&&&&&&&&&&&&&&&&&&&&&&&&&&&&&&&&
\begin{pgfpicture}{0cm}{0cm}{17cm}{2.6cm}
\begin{pgftranslate}{\pgfpoint{3.5cm}{-.5cm}}
\begin{pgfmagnify}{1.1}{1.1}
\pgfputat{\pgfxy(1.3,2.2)}{\pgfbox[center,center]{$E_{H_{\tn{\fsz{rel}}},\mbf{S}}$}}
\pgfputat{\pgfxy(4,2.2)}{\pgfbox[center,center]{$E_{H_{\tn{\fsz{aug}}},\mbf{S}}$}}
\pgfputat{\pgfxy(6.8,2.2)}{\pgfbox[center,center]{$E_{H,\mbf{S}}$}}
\pgfputat{\pgfxy(2.5,2.5)}{\pgfbox[center,center]{$i$}}
\pgfputat{\pgfxy(5.5,2.5)}{\pgfbox[center,center]{$j$}}
\pgfsetendarrow{\pgfarrowlargepointed{3pt}}
\pgfxyline(2,2.2)(3.2,2.2)
\pgfxyline(4.8,2.2)(6.2,2.2)
\begin{colormixin}{50!white}
\pgfputat{\pgfxy(1.3,.2)}{\pgfbox[center,center]{$E_{H_{\tn{\fsz{rel}}}^+,\mbf{S}}$}}
\pgfputat{\pgfxy(4,.2)}{\pgfbox[center,center]{$E_{H_{\tn{\fsz{aug}}}^+,\mbf{S}}$}}
\pgfputat{\pgfxy(6.8,.2)}{\pgfbox[center,center]{$E_{H^+,\mbf{S}}$}}
\pgfputat{\pgfxy(2.5,.5)}{\pgfbox[center,center]{$i^+$}}
\pgfputat{\pgfxy(5.5,.5)}{\pgfbox[center,center]{$j^+$}}
\pgfputat{\pgfxy(.8,1.2)}{\pgfbox[center,center]{$\tn{ch}_{\tn{\fsz{rel}}}$}}
%\pgfputat{\pgfxy(1.5,1.1)}{\pgfbox[center,center]{$\sim$}}
\pgfputat{\pgfxy(3.5,1.2)}{\pgfbox[center,center]{$\tn{ch}_{\tn{\fsz{aug}}}$}}
\pgfputat{\pgfxy(6.4,1.2)}{\pgfbox[center,center]{$\tn{ch}$}}
\pgfsetendarrow{\pgfarrowlargepointed{3pt}}
\pgfxyline(2,.2)(3.2,.2)
\pgfxyline(4.8,.2)(6.2,.2)
\pgfxyline(1.3,1.8)(1.3,.5)
\pgfxyline(4,1.8)(4,.5)
\pgfxyline(6.8,1.8)(6.8,.5)
\end{colormixin}
\end{pgfmagnify}
\end{pgftranslate}
\pgfputat{\pgfxy(16,1)}{\pgfbox[center,center]{$(4.4.2.3)$}}
\end{pgfpicture}
\label{equconiveaumachinefunctorch4}
%&&&&&&&&&&&&&&&&&&&&&&&&&&&&&&&&&&&&&&&&&&&&&&&&&&&&&&&&&&&&&&&&&&&&&&&&&&&&&&&&&&
%&&&&&&&&&&&&&&&&&&&&&&&&&&&&&&&&&&&&&&&&&&&&&&&&&&&&&&&&&&&&&&&&&&&&&&&&&&&&&&&&&&
%&&&                    &&&       &&&&&    &&&                &&&&&&&&&&&&&&&&&&&&&&&&&&&&&&&&&&&&&&&&&&&&&&&&&&&
%&&&    &&&&&&&&&    &    &&&&   &&&    &&&&    &&&&&&&&&&&&&&&&&&&&&&&&&&&&&&&&&&&&&&&&&&&&&&&&&& 
%&&&    &&&&&&&&&    &&    &&&   &&&    &&&&&    &&&&&&&&&&&&&&&&&&&&&&&&&&&&&&&&&&&&&&&&&&&&&&&&& 
%&&&                 &&&&    &&&    &&   &&&    &&&&&    &&&&&&&&&&&&&&&&&&&&&&&&&&&&&&&&&&&&&&&&&&&&&&&&& 
%&&&    &&&&&&&&&    &&&&    &   &&&    &&&&&    &&&&&&&&&&&&&&&&&&&&&&&&&&&&&&&&&&&&&&&&&&&&&&&&& 
%&&&    &&&&&&&&&    &&&&&       &&&    &&&&     &&&&&&&&&&&&&&&&&&&&&&&&&&&&&&&&&&&&&&&&&&&&&&&&& 
%&&&                    &&&    &&&&&&    &&&                  &&&&&&&&&&&&&&&&&&&&&&&&&&&&&&&&&&&&&&&&&&&&&&&&&&
%&&&&&&&&&&&&&&&&&&&&&&&&&&&&&&&&&&&&&&&&&&&&&&&&&&&&&&&&&&&&&&&&&&&&&&&&&&&&&&&&&&
%&&&&&&&&&&&&&&&&&&&&&&&&&&&&&&&&&&&&&&&&&&&&&&&&&&&&&&&&&&&&&&&&&&&&&&&&&&&&&&&&&&

 Focusing on the level of individual Cousin complexes and sheafifying yields the existence of the third column of the coniveau machine: 

\begin{lem}\label{lemthirdcolumn} Let $\mbf{S}_k$ be a distinguished category of schemes over a field $k$, satisfying the conditions given at the beginning of section \hyperref[subsectioncohomsupportssubstrata]{\ref{subsectioncohomsupportssubstrata}} above, and let $X$ in $\mbf{S}_k$ be  smooth, equidimensional, and noetherian. Let $Y$ be the prime spectrum of a $k$-algebra generated over $k$ by nilpotent elements.  Then the third column of the coniveau machine for Bass-Thomason $K$-theory on $X$ exists; that is, the Cousin complexes appearing as the rows of the $E_1$-level of the coniveau spectral sequence for relative  $K$-theory on $X$ with respect to $Y$ sheafify to yield flasque resolutions of the sheaves $\ms{K}_{p,X\times_kY,Y}$ on $X$.
\end{lem}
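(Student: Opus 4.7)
The plan is to exhibit the third column as a direct summand of the second, exploiting the splitting of the natural transformation $i$ that is available precisely in the nilpotent thickening case. Since $Y = \tn{Spec}\, A$ with $A$ generated over $k$ by nilpotent elements, the augmentation $A \to k$ sending the nilpotent elements to zero, composed with the structure map $k \to A$, is the identity of $k$. Applying the fiber-product functor $(-) \times_k Y$ yields morphisms $X \to X \times_k Y \to X$ in $\mbf{S}_k$ composing to the identity, and by naturality the same is true for any pair $(U, Z \cap U)$ where $U \subseteq X$ is open and $Z \subseteq X$ is closed. These sections are morphisms of pairs, so the contravariant substratum functor $\mbf{K}$ sends them to a splitting of the morphism of spectra $\mbf{K}_{U\times_k Y \tn{ on }(Z\cap U)\times_k Y} \to \mbf{K}_{U\tn{ on }Z\cap U}$, whose homotopy fiber is by definition the relative spectrum $\mbf{K}_{U\times_k Y,\, Y \tn{ on }(Z\cap U)\times_k Y}$.

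Next I would propagate this splitting through the construction of Section 4.3.2. Because the retraction is natural in $U$ and in closed subsets, it induces a split short exact sequence of exact couples, hence a split short exact sequence of coniveau spectral sequences
\[
0 \longrightarrow E_{K^Y_{\tn{\fsz{rel}}},X,r} \longrightarrow E_{K^Y,X,r} \longrightarrow E_{K,X,r} \longrightarrow 0,
\]
which, at the $E_1$-level in row $-p$, gives a split short exact sequence of Cousin complexes of abelian groups. Sheafifying using Lemma 4.3.1.1 and using that direct sums of skyscraper sheaves at points in a fixed codimension stratum commute with kernels, this produces a split short exact sequence of complexes of sheaves on $X$ in which the middle and right terms are, by Lemmas 4.4.1.1 and 4.4.2.1, flasque resolutions of $\ms{K}_{p,X\times_k Y}$ and $\ms{K}_{p,X}$, respectively.

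I would then finish as follows. In any split short exact sequence of sheaves of abelian groups, the kernel is a direct summand of a flasque sheaf and is therefore itself flasque, so every term of the sheafified relative Cousin complex is flasque. Exactness of the relative complex in positive degrees follows from the split short exact sequence of complexes together with the known exactness of the other two complexes in positive degrees (a short exact sequence in which two of the three rows are acyclic forces the third to be acyclic). The degree-zero cohomology of the relative complex is the kernel of the split surjection $\ms{K}_{p,X\times_k Y}\to \ms{K}_{p,X}$, which is the sheaf $\ms{K}_{p,X\times_k Y,Y}$ by the definition recalled in Section 3.8. This yields the required flasque resolution.

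The main obstacle will be verifying that the splitting induced by $(-)\times_k Y$ passes cleanly through the limit over chains $\overline{Z}$ of closed subsets used in the definition of the exact couple $C = (D,E,i,j,k)$, so that one obtains genuine decompositions at the $E_1$-level rather than decompositions that only hold at finite stages. This reduces to checking functoriality of fiber products with respect to the morphisms of pairs $\overline{Z}\prec \overline{Z}'$ and compatibility of the retraction with the cofiltered system of open neighborhoods defining the punctual groups $H_{X\tn{ \fsz{on} } x}^{p+q}$, both of which are formal. An alternative, more conceptual route is to verify directly that $\mbf{K}^Y_{\tn{\fsz{rel}}}$ inherits \textbf{SUB1} and \textbf{SUB5} from $\mbf{K}^Y$ and $\mbf{K}$ (these conditions being stable under homotopy fibers of split maps of spectra) and then invoke Corollary 4.3.2.2 directly; this avoids limit arguments but demands a little more homotopical bookkeeping.
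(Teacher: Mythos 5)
Your proposal is correct and follows essentially the same route as the paper: the paper's own justification (the "foregoing discussion" preceding the lemma) identifies the relative coniveau spectral sequence as the kernel of the canonical split surjection induced by the splitting of $i$ in the nilpotent case, and then sheafifies the resulting Cousin complexes — exactly the splitting-and-direct-summand argument you elaborate. Your write-up simply supplies the details (flasqueness of direct summands of flasque sheaves, the two-out-of-three exactness argument, identification of the degree-zero cohomology) that the paper leaves implicit.
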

\begin{proof}This is established by the foregoing discussion. 
\end{proof}

When $X$ is a smooth algebraic variety, lemma \hyperref[lemthirdcolumn]{\ref{lemthirdcolumn}} enables computation of the generalized tangent groups at the identity $T_Y\tn{Ch}_X^p$, defined in definition \hyperref[defigendefgroupChow]{\ref{defigendefgroupChow}} above, in terms of the sheafified Cousin complexes arising from the spectral sequence for relative $K$-theory.  As in the case of augmented $K$-theory, the relative $K$-theory groups in negative degrees are generally nontrivial here.   A better method of computation, and the whole reason for constructing the coniveau machine in the first place, is to use the relative additive theory; in this case, negative cyclic homology.

\subsection{Relative Chern Character; Fourth Column}\label{subsectionconiveaucompleted}

As stated in lemma  \hyperref[lemrelchernisomfunctors]{\ref{lemrelchernisomfunctors}}, the relative generalized algebraic Chern character $\tn{ch}$ is an isomorphism between the relative algebraic $K$-theory functor and the relative negative cyclic homology functor, considered as functors from the category $\mbf{Nil}$ of split nilpotent pairs $(R,I)$ defined in definition \hyperref[defisplitnilppairs]{\ref{defisplitnilppairs}} to the category of abelian groups.  Hence, for each $p$, $\tn{ch}$ induces functorial isomorphisms 
\begin{equation}\label{equchernsheafisom}K_{p,X\times_kY,Y}\rightarrow \tn{HN}_{p,X\times_kY,Y}.\end{equation}
Due to the functorial properties of the coniveau spectral sequence, there therefore exists an isomorphism of functors between the coniveau spectral sequence for relative $K$-theory and the coniveau spectral sequence for relative negative cyclic homology.  This immediately yields the following lemma: 

\begin{lem}\label{lemfourthcolumn} Let $\mbf{S}_k$ be a distinguished category of schemes over a field $k$, satisfying the conditions given at the beginning of section \hyperref[subsectioncohomsupportssubstrata]{\ref{subsectioncohomsupportssubstrata}} above, and let $X$ in $\mbf{S}_k$ be  smooth, equidimensional, and noetherian. Let $Y$ be the prime spectrum of a $k$-algebra generated over $k$ by nilpotent elements.  Then the fourth column of the coniveau machine for Bass-Thomason $K$-theory on $X$ exists; that is, the algebraic Chern character induces an isomorphism of functors between the coniveau spectral sequences of relative $K$-theory and relative negative cyclic homology with respect to the nilpotent thickening $X\mapsto X\times_kY$.
\end{lem}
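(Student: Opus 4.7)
The plan is to combine the group-level isomorphism of Lemma 3.8.4.1 (the relative Chern character is an isomorphism of functors on the category $\mbf{Nil}$ of split nilpotent pairs) with the functoriality of the coniveau spectral sequence construction. First I would verify that the relative negative cyclic homology substratum $\mbf{HN}^Y_{\tn{rel}}$, defined as the homotopy fiber of the map $\mbf{HN}^Y \to \mbf{HN}$ produced by ``multiplication by $Y$'' in Definition 4.4.2.1, is effaceable. Lemmas 3.8.4.4 and 3.8.4.5 give the \'etale Mayer--Vietoris condition (SUB1) and the projective bundle condition (SUB5) for $\mbf{HN}$, and Lemma 4.4.2.2 says both conditions are preserved under augmentation by $Y$; homotopy fibers of homotopy cartesian squares are homotopy cartesian, so the same is true for $\mbf{HN}^Y_{\tn{rel}}$. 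Consequently, the Bloch--Ogus theorem 4.3.2.1 applies and realizes the fourth column as flasque resolutions of the sheaves $\ms{HN}_{p, X\times_k Y, Y}$.

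Next I would upgrade the relative Chern character to a morphism of substrata so that the square
\[
\xymatrix{ \mbf{K}_{X\times_kY,Y} \ar[r] \ar[d]_{\tn{ch}_{\tn{rel}}} & \mbf{K}_{X\times_kY} \ar[d]^{\tn{ch}_{\tn{aug}}} \\ \mbf{HN}_{X\times_kY,Y} \ar[r] & \mbf{HN}_{X\times_kY} }
\]
commutes up to homotopy. Naturality in $X$ and in closed subsets $Z \subseteq X$ then yields a morphism of the exact couples defining the two coniveau spectral sequences, hence a morphism of the spectral sequences themselves. To check this is an \emph{iso}morphism it suffices to check the $E_1$-page, because isomorphism there forces isomorphism on every subsequent page. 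By the limit formula of Lemma 4.3.2.1, the $E_1$-map at bidegree $(p,q)$ decomposes as a coproduct over points $x \in \tn{Zar}_X^p$, and at the stalk $x$ the components are identified with the relative Chern character attached to the augmented local ring $O_{X,x} \otimes_k A$, where $A = \Gamma(Y,\ms{O}_Y)$ is nilpotent over $k$ with maximal ideal $\mfr{m}$. Since $O_{X,x} \otimes_k \mfr{m}$ is nilpotent of the same index as $\mfr{m}$ and the surjection $O_{X,x} \otimes_k A \to O_{X,x}$ splits canonically, this pair lies in $\mbf{Nil}$, and Lemma 3.8.4.1 supplies exactly the required isomorphism.

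The main obstacle I anticipate is the second step: promoting $\tn{ch}_{\tn{rel}}$ from a natural transformation of cohomology theories (where Lemma 3.8.4.1 lives) to a morphism of substrata that is compatible with both Thomason's localization triangle for $\mbf{K}$ and Keller's localization-pair machinery for $\mbf{HN}$, and that behaves functorially on the category of pairs $\mbf{P}_k$ rather than only on schemes. The Corti\~nas--Weibel construction of $\tn{ch}_{\tn{rel}}$ via the relative Volodin space does produce an honest map of underlying complexes, so the morphism of exact couples exists once one verifies naturality in the closed subset $Z$ and compatibility of the Volodin and perfect-complex constructions under passage to open complements --- a bookkeeping exercise, but the only step that requires genuine care beyond invoking the earlier results.
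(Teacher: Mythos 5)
Your proposal is correct and follows essentially the same route as the paper: the Corti\~nas--Weibel result that the relative Chern character is an isomorphism of functors on the category $\mbf{Nil}$ of split nilpotent pairs, combined with the functoriality of the coniveau spectral sequence construction. Your write-up is in fact considerably more careful than the paper's, which dispatches the entire argument with the phrase ``due to the functorial properties of the coniveau spectral sequence''; your reduction to the $E_1$-page via the stalkwise coproduct decomposition, and your identification of the substratum-level lifting of $\tn{ch}_{\tn{rel}}$ (naturality in the closed subset $Z$, compatibility with the localization triangles) as the one step requiring genuine care, are precisely the details the paper leaves implicit.
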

\begin{proof}This is established by the foregoing discussion. 
\end{proof}

Combining lemmas \hyperref[lemfirstcolumn]{\ref{lemfirstcolumn}}, \hyperref[lemsecondcolumn]{\ref{lemsecondcolumn}}, \hyperref[lemthirdcolumn]{\ref{lemthirdcolumn}}, and \hyperref[lemfourthcolumn]{\ref{lemfourthcolumn}} leads to the following theorem, which is the main result of this book:

\begin{theorem}\label{theoremconiveaumachineexists} Let $\mbf{S}_k$ be a distinguished category of schemes over a field $k$, satisfying the conditions given at the beginning of section \hyperref[subsectioncohomsupportssubstrata]{\ref{subsectioncohomsupportssubstrata}} above, and let $X$ in $\mbf{S}_k$ be  smooth, equidimensional, and noetherian.  Let $X\mapsto X\times_kY$ be a nilpotent thickening of $X$.  Then the coniveau machine for Bass-Thomason $K$-theory on $X$ with respect to $Y$ exists; that is, the Cousin complexes appearing as the rows of the $E_1$-level of the coniveau spectral sequence for augmented and relative $K$-theory and negative cyclic homology on $X$ with respect to $Y$ sheafify to yield flasque resolutions of the corresponding sheaves, and the algebraic Chern character induces an isomorphism of functors between the coniveau spectral sequences of relative $K$-theory and relative negative cyclic homology. 
\end{theorem}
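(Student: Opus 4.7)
The plan is to assemble the theorem directly from the four column-existence lemmas already established in this section, treating each as a module and verifying that the hypotheses propagate correctly through the construction. Concretely, I would observe that the statement decomposes into four parallel assertions (one per column) plus one compatibility assertion (the Chern character isomorphism of spectral sequences), and that each corresponds to a lemma in \S 4.4.1--\S 4.4.3.

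First, I would invoke Lemma 4.4.1 (existence of the first column) to dispose of the absolute case: because $X$ is smooth, equidimensional, and noetherian in $\mbf{S}_k$, and because the Bass--Thomason spectrum $\mbf{K}$ satisfies \'etale Mayer--Vietoris (\textbf{SUB1}) and the projective bundle property (\textbf{SUB5}) by the results recalled in \S 3.9.6, the Bloch--Ogus theorem (Corollary 4.3.2) supplies flasque Cousin resolutions of each sheaf $\ms{K}_{p,X}$. Next, I would apply Lemma 4.4.2 (``new theories out of old'') with the separated $k$-scheme $Y$: the augmented substratum $\mbf{K}^Y$ defined by $X\mapsto \mbf{K}_{X\times_k Y}$ inherits \textbf{SUB1} and \textbf{SUB5}, hence is again effaceable on the category of pairs over $\mbf{S}_k$, so Bloch--Ogus once more yields flasque Cousin resolutions of the sheaves $\ms{K}_{p,X\times_k Y}$ on $X$. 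This handles the second column without any smoothness hypothesis on $X\times_k Y$, which is essential since nilpotent thickenings are singular.

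For the third column, I would exploit the fact that a nilpotent thickening $X\mapsto X\times_k Y$ corresponds locally to split nilpotent extensions $R\to R\otimes_k A$ of the structure sheaves, so that the canonical map $\mbf{K}_{X\times_k Y}\to \mbf{K}_X$ splits and the relative $K$-theory spectrum $\mbf{K}_{X\times_k Y,Y}$ is realized as the (split) homotopy fiber. Consequently the natural transformation $j$ in diagram $(4.4.2.3)$ is a split surjection of functors, and the relative coniveau spectral sequence is the kernel, row-by-row, of a split surjection of spectral sequences whose source and target have flasque sheafified Cousin rows by the first two columns. Since the category of flasque sheaves is closed under kernels of split surjections, the sheafified Cousin rows of the relative spectral sequence are themselves flasque resolutions of $\ms{K}_{p,X\times_k Y,Y}$, which is Lemma 4.4.2 (third column). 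The fourth column then follows from Lemma 4.4.3: the relative Chern character $\tn{ch}_{\tn{rel}}$ of Lemma 3.8.2 is a natural isomorphism of functors on $\mbf{Nil}$, so it induces a functorial isomorphism of the underlying exact couples (built from relative $K$-theory with supports versus relative negative cyclic homology with supports), and therefore a functorial isomorphism between the two coniveau spectral sequences, in particular identifying the corresponding sheafified Cousin resolutions.

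The main obstacle I anticipate is the third step: one must verify that the splitting of $j$ at the level of spectra, arising from the nilpotent thickening, actually induces a splitting at the level of coniveau spectral sequences that is compatible with the supports filtration used to build the exact couple of \S 4.2.2. This requires checking that the splitting is functorial in pairs $(X,Z)$ and commutes with the limit over chains $\overline{Z}$ used to form $D^{p,q}$ and $E^{p,q}$, so that passing to kernels really commutes with forming the Cousin complexes. Once this functorial splitting is in hand, the four lemmas glue together mechanically, and the theorem follows.
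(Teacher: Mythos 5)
Your proposal follows exactly the paper's route: the paper's proof of this theorem is literally a one-line combination of the four column-existence lemmas (Lemmas \hyperref[lemfirstcolumn]{4.4.1.1}, \hyperref[lemsecondcolumn]{4.4.2.3}, \hyperref[lemthirdcolumn]{4.4.2.4}, and \hyperref[lemfourthcolumn]{4.4.3.1}), and you reconstruct each of those lemmas with their correct ingredients (Bloch--Ogus via \textbf{SUB1}/\textbf{SUB5}, augmentation by a separated scheme, the splitting from the nilpotent thickening, and the relative Chern character on $\mbf{Nil}$) before gluing them. The compatibility point you flag about the splitting commuting with the coniveau filtration is a genuine subtlety that the paper itself leaves implicit, so your treatment is, if anything, slightly more careful than the original.
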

\begin{proof}The statement follows from a straightforward combination of lemmas \hyperref[lemfirstcolumn]{\ref{lemfirstcolumn}}, \hyperref[lemsecondcolumn]{\ref{lemsecondcolumn}}, \hyperref[lemthirdcolumn]{\ref{lemthirdcolumn}}, and \hyperref[lemfourthcolumn]{\ref{lemfourthcolumn}}. 
\end{proof}

\begin{example}\label{exampletangentconiveau} \tn{For the $p$th Chow group $\tn{Ch}_X^p$, using augmentation by the prime spectrum of the algebra of dual numbers, this leads to the version of the coniveau machine shown in figure \hyperref[figsimplifiedfourcolumnKtheorych4c]{\ref{figsimplifiedfourcolumnKtheorych4c}} below:}
\end{example}

%&&&&&&&&&&&&&&&&&&&&&&&&&&&&&&&&&&&&&&&&&&&&&&&&&&&&&&&&&&&&&&&&&&&&&&&&&&&&&&&&&&
%&&&&&&&&&&&&&&&&&&&&&&&&&&&&&&&&&&&&&&&&&&&&&&&&&&&&&&&&&&&&&&&&&&&&&&&&&&&&&&&&&&
%&&&&                 &&&&    &&&               &&&                    &&&    &&&&    &&&              &&&&                   &&&&&&&&&&&&&&&&&&
%&&&&   &&&&   &&&&    &&&   &&&&&&&&&&&   &&&&&&    &&&&    &&&    &&&     &&&    &&&&&&&&&&&&&&&&&&&&&&&&
%&&&&   &&&&   &&&&    &&&   &&&&&&&&&&&   &&&&&&    &&&&    &&&    &&&     &&&    &&&&&&&&&&&&&&&&&&&&&&&&
%&&&&                 &&&&    &&&   &&&&&&&&&&&   &&&&&&    &&&&    &&&            &&&&&             &&&&&&&&&&&&&&&&&&&&
%&&&&    &&&&&&&&&    &&&   &&&&&&&&&&&   &&&&&&    &&&&    &&&    &&&   &&&&    &&&&&&&&&&&&&&&&&&&&&&&
%&&&&    &&&&&&&&&    &&&   &&&&&&&&&&&   &&&&&&    &&&&    &&&    &&&&   &&&    &&&&&&&&&&&&&&&&&&&&&&&
%&&&&    &&&&&&&&&    &&&                &&&&&&   &&&&&&                   &&&    &&&&&   &&                  &&&&&&&&&&&&&&&&&&
%&&&&&&&&&&&&&&&&&&&&&&&&&&&&&&&&&&&&&&&&&&&&&&&&&&&&&&&&&&&&&&&&&&&&&&&&&&&&&&&&&&
%&&&&&&&&&&&&&&&&&&&&&&&&&&&&&&&&&&&&&&&&&&&&&&&&&&&&&&&&&&&&&&&&&&&&&&&&&&&&&&&&&&
\begin{figure}[H]
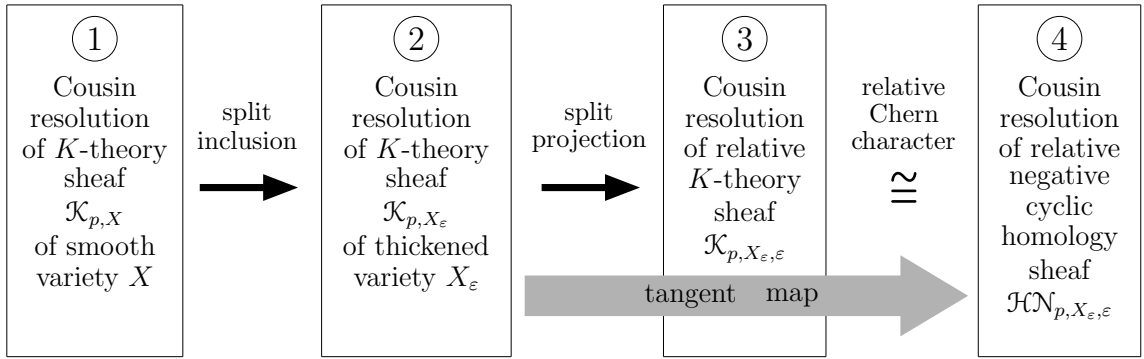

\begin{pgfpicture}{0cm}{0cm}{17cm}{4.7cm}
\begin{pgfmagnify}{.9}{.9}
\begin{pgftranslate}{\pgfpoint{.2cm}{-1.9cm}}
\begin{pgftranslate}{\pgfpoint{-.5cm}{0cm}}
\pgfxyline(.9,1.8)(.9,7)
\pgfxyline(.9,7)(3.5,7)
\pgfxyline(3.5,7)(3.5,1.8)
\pgfxyline(3.5,1.8)(.9,1.8)
\pgfputat{\pgfxy(2.2,5.8)}{\pgfbox[center,center]{Cousin}}
\pgfputat{\pgfxy(2.2,5.35)}{\pgfbox[center,center]{resolution }}
\pgfputat{\pgfxy(2.2,4.85)}{\pgfbox[center,center]{of $K$-theory}}
\pgfputat{\pgfxy(2.2,4.45)}{\pgfbox[center,center]{sheaf}}
\pgfputat{\pgfxy(2.2,3.9)}{\pgfbox[center,center]{$\ms{K}_{p,X}$}}
\pgfputat{\pgfxy(2.2,3.45)}{\pgfbox[center,center]{of smooth}}
\pgfputat{\pgfxy(2.2,2.95)}{\pgfbox[center,center]{variety $X$}}
\pgfnodecircle{Node0}[stroke]{\pgfxy(2.2,6.5)}{0.33cm}
\pgfputat{\pgfxy(2.2,6.5)}{\pgfbox[center,center]{\large{$1$}}}
\end{pgftranslate}
\begin{pgftranslate}{\pgfpoint{4.05cm}{0cm}}
\pgfxyline(1,1.8)(1,7)
\pgfxyline(1,7)(3.8,7)
\pgfxyline(3.8,7)(3.8,1.8)
\pgfxyline(3.8,1.8)(1,1.8)
\pgfputat{\pgfxy(2.4,5.8)}{\pgfbox[center,center]{Cousin}}
\pgfputat{\pgfxy(2.4,5.35)}{\pgfbox[center,center]{resolution }}
\pgfputat{\pgfxy(2.4,4.85)}{\pgfbox[center,center]{of $K$-theory}}
\pgfputat{\pgfxy(2.4,4.45)}{\pgfbox[center,center]{sheaf}}
\pgfputat{\pgfxy(2.4,3.9)}{\pgfbox[center,center]{$\ms{K}_{p,X_\ee}$}}
\pgfputat{\pgfxy(2.4,3.45)}{\pgfbox[center,center]{of thickened}}
\pgfputat{\pgfxy(2.4,2.95)}{\pgfbox[center,center]{variety $X_\ee$}}
 \pgfnodecircle{Node0}[stroke]{\pgfxy(2.4,6.5)}{0.33cm}
\pgfputat{\pgfxy(2.4,6.5)}{\pgfbox[center,center]{\large{$2$}}}
\pgfsetendarrow{\pgfarrowtriangle{6pt}}
\pgfsetlinewidth{3pt}
\pgfxyline(-.8,4.3)(.4,4.3)
\pgfputat{\pgfxy(-.1,5.4)}{\pgfbox[center,center]{\small{split}}}
\pgfputat{\pgfxy(-.1,5)}{\pgfbox[center,center]{\small{inclusion}}}
\end{pgftranslate}
\begin{pgftranslate}{\pgfpoint{9.1cm}{0cm}}
\pgfxyline(1,1.8)(1,7)
\pgfxyline(1,7)(3.4,7)
\pgfxyline(3.4,7)(3.4,1.8)
\pgfxyline(3.4,1.8)(1,1.8)
\pgfputat{\pgfxy(2.2,5.8)}{\pgfbox[center,center]{Cousin}}
\pgfputat{\pgfxy(2.2,5.35)}{\pgfbox[center,center]{resolution}}
\pgfputat{\pgfxy(2.2,4.9)}{\pgfbox[center,center]{of relative}}
\pgfputat{\pgfxy(2.2,4.4)}{\pgfbox[center,center]{$K$-theory}}
\pgfputat{\pgfxy(2.2,3.95)}{\pgfbox[center,center]{sheaf}}
\pgfputat{\pgfxy(2.2,3.4)}{\pgfbox[center,center]{$\ms{K}_{p,X_\ee,\ee}$}}
\pgfnodecircle{Node0}[stroke]{\pgfxy(2.2,6.5)}{0.33cm}
\pgfputat{\pgfxy(2.2,6.5)}{\pgfbox[center,center]{\large{$3$}}}
\pgfsetendarrow{\pgfarrowtriangle{6pt}}
\pgfsetlinewidth{3pt}
\pgfxyline(-.8,4.3)(.4,4.3)
\pgfputat{\pgfxy(-.1,5.4)}{\pgfbox[center,center]{\small{split}}}
\pgfputat{\pgfxy(-.1,5)}{\pgfbox[center,center]{\small{projection}}}
\end{pgftranslate}
\begin{pgftranslate}{\pgfpoint{13.75cm}{0cm}}
\pgfxyline(1,1.8)(1,7)
\pgfxyline(1,7)(3.4,7)
\pgfxyline(3.4,7)(3.4,1.8)
\pgfxyline(3.4,1.8)(1,1.8)
\pgfputat{\pgfxy(2.2,5.8)}{\pgfbox[center,center]{Cousin}}
\pgfputat{\pgfxy(2.2,5.35)}{\pgfbox[center,center]{resolution}}
\pgfputat{\pgfxy(2.2,4.9)}{\pgfbox[center,center]{of relative}}
\pgfputat{\pgfxy(2.2,4.45)}{\pgfbox[center,center]{negative}}
\pgfputat{\pgfxy(2.2,4)}{\pgfbox[center,center]{cyclic}}
\pgfputat{\pgfxy(2.2,3.55)}{\pgfbox[center,center]{homology}}
\pgfputat{\pgfxy(2.2,3.1)}{\pgfbox[center,center]{sheaf}}
\pgfputat{\pgfxy(2.2,2.55)}{\pgfbox[center,center]{$\ms{HN}_{p,X_\ee,\ee}$}}
\pgfputat{\pgfxy(-.1,5.8)}{\pgfbox[center,center]{\small{relative}}}
\pgfputat{\pgfxy(-.1,5.4)}{\pgfbox[center,center]{\small{Chern}}}
\pgfputat{\pgfxy(-.1,5)}{\pgfbox[center,center]{\small{character}}}
\pgfputat{\pgfxy(-.1,4.3)}{\pgfbox[center,center]{\huge{$\cong$}}}
\pgfnodecircle{Node0}[stroke]{\pgfxy(2.2,6.5)}{0.33cm}
\pgfputat{\pgfxy(2.2,6.5)}{\pgfbox[center,center]{\large{$4$}}}
\end{pgftranslate}
\begin{colormixin}{30!white}
\color{black}
\pgfmoveto{\pgfxy(8.05,3)}
\pgflineto{\pgfxy(13.6,3)}
\pgflineto{\pgfxy(13.6,3.3)}
\pgflineto{\pgfxy(14.6,2.7)}
\pgflineto{\pgfxy(13.6,2.1)}
\pgflineto{\pgfxy(13.6,2.4)}
\pgflineto{\pgfxy(8.05,2.4)}
\pgflineto{\pgfxy(8.05,3)}
\pgffill
\end{colormixin}
\pgfputat{\pgfxy(10.5,2.7)}{\pgfbox[center,center]{tangent}}
\pgfputat{\pgfxy(12,2.67)}{\pgfbox[center,center]{map}}
\end{pgftranslate}
\end{pgfmagnify}
\end{pgfpicture}
\caption{Simplified ``four column version" of the coniveau machine for algebraic $K$-theory on a smooth algebraic variety $X$ over a field $k$, thickened by the prime spectrum of the algebra of dual numbers over $k$.}
\label{figsimplifiedfourcolumnKtheorych4c}
\end{figure}
%&&&&&&&&&&&&&&&&&&&&&&&&&&&&&&&&&&&&&&&&&&&&&&&&&&&&&&&&&&&&&&&&&&&&&&&&&&&&&&&&&&
%&&&&&&&&&&&&&&&&&&&&&&&&&&&&&&&&&&&&&&&&&&&&&&&&&&&&&&&&&&&&&&&&&&&&&&&&&&&&&&&&&&
%&&&                    &&&       &&&&&    &&&                &&&&&&&&&&&&&&&&&&&&&&&&&&&&&&&&&&&&&&&&&&&&&&&&&&&
%&&&    &&&&&&&&&    &    &&&&   &&&    &&&&    &&&&&&&&&&&&&&&&&&&&&&&&&&&&&&&&&&&&&&&&&&&&&&&&&& 
%&&&    &&&&&&&&&    &&    &&&   &&&    &&&&&    &&&&&&&&&&&&&&&&&&&&&&&&&&&&&&&&&&&&&&&&&&&&&&&&& 
%&&&                 &&&&    &&&    &&   &&&    &&&&&    &&&&&&&&&&&&&&&&&&&&&&&&&&&&&&&&&&&&&&&&&&&&&&&&& 
%&&&    &&&&&&&&&    &&&&    &   &&&    &&&&&    &&&&&&&&&&&&&&&&&&&&&&&&&&&&&&&&&&&&&&&&&&&&&&&&& 
%&&&    &&&&&&&&&    &&&&&       &&&    &&&&     &&&&&&&&&&&&&&&&&&&&&&&&&&&&&&&&&&&&&&&&&&&&&&&&& 
%&&&                    &&&    &&&&&&    &&&                  &&&&&&&&&&&&&&&&&&&&&&&&&&&&&&&&&&&&&&&&&&&&&&&&&&
%&&&&&&&&&&&&&&&&&&&&&&&&&&&&&&&&&&&&&&&&&&&&&&&&&&&&&&&&&&&&&&&&&&&&&&&&&&&&&&&&&&
%&&&&&&&&&&&&&&&&&&&&&&&&&&&&&&&&&&&&&&&&&&&&&&&&&&&&&&&&&&&&&&&&&&&&&&&&&&&&&&&&&&

In particular, the cases where $X$ is a smooth curve and $p=1$, and where $X$ is a smooth surface and $p=2$, are the ``toy version" of the coniveau machine constructed in Chapter \hyperref[chaptercurves]{\ref{chaptercurves}}, and the version of Green and Griffiths discussed in the introduction, respectively.

\hspace{16.3cm} $\oblong$

The existence of the coniveau machine enables the computation of the generalized tangent groups at the identity of the Chow groups of a smooth algebraic variety $X$ via negative cyclic homology.  This is made precise by the following corollary:

\newpage

\begin{cor}\label{corollarytangpnegcyc} Let $X$ be a smooth algebraic variety over a field $k$, and let $Y$ be a separated scheme over $k$.  Then for all $p$, the relative algebraic Chern character induces canonical isomorphisms
\begin{equation}\label{equtangentgroupsarenegcychomgroups}T_Y\tn{Ch}_X^p\cong H_{\tn{Zar}}^p(X,\ms{HN}_{p,X\times_kY,Y})\end{equation}
between the generalized tangent groups at the identity of the Chow groups $\tn{Ch}_X^p$ are the sheaf cohomology groups of relative negative cyclic homology on $X$ with respect to the nilpotent thickening $X\mapsto X\times_kY$.
\end{cor}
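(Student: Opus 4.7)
The plan is to unfold the definition of the generalized tangent group, invoke the main theorem on the existence of the coniveau machine to obtain flasque resolutions on both sides, and then transfer the isomorphism of the third and fourth columns (provided by the relative Chern character) to an isomorphism on sheaf cohomology. Implicitly, the corollary requires $X \mapsto X \times_k Y$ to be a nilpotent thickening so that Lemma \ref{lemrelchernisomfunctors} and Theorem \ref{theoremconiveaumachineexists} apply; I will assume this throughout, as it is the context in which the fourth column of the machine is constructed.

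First I would recall from Definition \ref{defigendefgroupChow}(2) that
\[ T_Y\tn{Ch}_X^p \;=\; H_{\tn{\fsz{Zar}}}^p\bigl(X,\ms{K}_{p,X\times_kY,Y}\bigr). \]
By Theorem \ref{theoremconiveaumachineexists}, the $-p$th row of the $E_1$-level of the coniveau spectral sequence for relative Bass-Thomason $K$-theory on $X$ with respect to $Y$ sheafifies to a flasque resolution of $\ms{K}_{p,X\times_kY,Y}$ (the third column of the machine). Since flasque sheaves are acyclic for the global sections functor, $H_{\tn{\fsz{Zar}}}^p(X,\ms{K}_{p,X\times_kY,Y})$ is computed as the $p$th cohomology of this sheafified Cousin complex after applying $\Gamma$. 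The same theorem gives an analogous flasque resolution of $\ms{HN}_{p,X\times_kY,Y}$ arising from the $-p$th row of the coniveau spectral sequence for relative negative cyclic homology (the fourth column), which likewise computes $H_{\tn{\fsz{Zar}}}^p(X,\ms{HN}_{p,X\times_kY,Y})$.

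Next I would apply the relative generalized algebraic Chern character. By Lemma \ref{lemrelchernisomfunctors}, $\tn{ch}_{\tn{\fsz{rel}}}$ is an isomorphism of functors on the category $\mbf{Nil}$ of split nilpotent pairs from relative $K$-theory to relative negative cyclic homology. Because the construction of the coniveau spectral sequence is functorial in the cohomology theory with supports, this natural isomorphism produces, at each point $x \in \tn{Zar}_X^q$, an isomorphism $K_{p-q,\,X\times_kY,\,Y \tn{ \fsz{on} } x} \cong \tn{HN}_{p-q,\,X\times_kY,\,Y \tn{ \fsz{on} } x}$, naturally commuting with the Cousin differentials $d_1^{q,-p}$. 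Sheafifying and gathering these termwise isomorphisms yields an isomorphism of the two sheafified Cousin complexes in the third and fourth columns. Applying $\Gamma$ and taking $p$th cohomology gives the claimed canonical isomorphism \eqref{equtangentgroupsarenegcychomgroups}.

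The main technical obstacle is verifying that the relative Chern character, which was originally defined at the level of groups (or of spectra) for split nilpotent pairs of rings, truly passes through the limiting process defining the $E_1$-terms $E_1^{q,-p} = \coprod_{x\in\tn{Zar}_X^q} K_{p-q,\,X\times_kY,\,Y \tn{ \fsz{on} } x}$ of Lemma \ref{lemCHKlemma121} while remaining compatible with the Cousin differentials $d_1$; this is the content of the statement in Lemma \ref{lemrelchernisomfunctors} that $\tn{ch}_{\tn{\fsz{rel}}}$ extends to a natural transformation of functors, together with the effaceability arguments behind Theorem \ref{theoremconiveaumachineexists}. Once this naturality is granted, the descent to sheaf cohomology is formal. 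A secondary, bookkeeping obstacle is to confirm that the augmentation $X \times_k Y$ associated to a nilpotent thickening restricts, stalk by stalk, to split nilpotent extensions of the local rings of $X$, so that Lemma \ref{lemrelchernisomfunctors} is indeed applicable at each codimension-$q$ point; this follows from Example \ref{examplenilpotentthickening} and the definition of localization.
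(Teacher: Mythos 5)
Your proposal is correct and follows essentially the same route as the paper: the paper's proof simply unfolds Definition \hyperref[defigendefgroupChow]{\ref{defigendefgroupChow}} and appeals to the relative Chern character, with the supporting details (flasque Cousin resolutions from Theorem \hyperref[theoremconiveaumachineexists]{\ref{theoremconiveaumachineexists}} and the isomorphism of functors from Lemma \hyperref[lemrelchernisomfunctors]{\ref{lemrelchernisomfunctors}}) left implicit exactly as you have spelled them out. Your observation that the hypothesis must implicitly be a nilpotent thickening for the fourth column to exist is also accurate and matches the paper's own (slightly inconsistent) phrasing of the corollary.
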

\begin{proof} By definition \hyperref[defigendefgroupChow]{\ref{defigendefgroupChow}}, the generalized tangent groups at the identity $T_Y\tn{Ch}_X^p$ of the Chow groups $\tn{Ch}_X^p$ are the Zariski sheaf cohomology groups $H_{\tn{\fsz{Zar}}}^p(X,\ms{K}_{p,X\times_kY})$; the result then follows via the relative Chern character.  
\end{proof}

\begin{example}\label{examplesen} \tn{When $k$ is of characteristic zero and $Y$ is the prime spectrum of the algebra of dual numbers, corollary \hyperref[corollarytangpnegcyc]{\ref{corollarytangpnegcyc}} gives the same answer as Sen Yang's version of the coniveau machine \cite{SenYangThesis}, illustrated in figure \hyperref[figSenversion]{\ref{figSenversion}} above:
\begin{equation}\label{equSentangent}T\tn{Ch}_X^p\cong H_{\tn{Zar}}^p(X,\varOmega_{X/\ZZ}^{p-1}\oplus\varOmega_{X/\ZZ}^{p-3}\oplus...)\end{equation}
}
\end{example}

\hspace{16.3cm} $\oblong$

\subsection{Universal Exactness}\label{subsectionuniversalexactness}

In section \hyperref[subsectionnewoutofold]{\ref{subsectionnewoutofold}} above, I discussed the choice of viewpoint between ``augmenting the scheme" and ``augmenting the cohomology theory" in constructing the second and third columns of the coniveau machine.  The latter alternative proved more appropriate in this context.  A third possibility is to directly ``augment the Cousin complexes" arising from the coniveau spectral sequences, and show that the resulting complexes remain exact under appropriate assumptions.  This is the approach of {\it universal exactness}, which provides some additional ways of extending the theory.  

%SSSSSSSSSSSSSSSSSSSSSSSSSSSSSSSSSSSSSSSSSSSSSSSSSSSSSSSSSSSSSSSSSSSSSSSSSSSSSSSSSS
%SSSSSSSSSSSSSSSSSSSSSSSSSSSSSSSSSSSSSSSSSSSSSSSSSSSSSSSSSSSSSSSSSSSSSSSSSSSSSSSSSS
%SSSSSSSSSSSSSSSSSSSSSSSSSSSSSSSSSSSSSSSSSSSSSSSSSSSSSSSSSSSSSSSSSSSSSSSSSSSSSSSSSS
%SSS                      SSS                       SSS                    SS                          SS           SSS                         SSS          SSSSS      SSSS
%SSS      SSSSSSSSS    SSSSSSSSSS     SSSSSSSSSSSS     SSSSSSS     SSSS    SSSSSS     SSS     S     SSSS     SSSSS
%SSS      SSSSSSSSS    SSSSSSSSSS     SSSSSSSSSSSS     SSSSSSS     SSSS    SSSSSS     SSS     SS     SSS     SSSSS
%SSS                      SSS                  SSSSS    SSSSSSSSSSSS     SSSSSSS     SSSS    SSSSSS     SSS     SSS     SS     SSSSS
%SSSSSSSSSS    SSS    SSSSSSSSSS    SSSSSSSSSSSS     SSSSSSS     SSSS    SSSSSS     SSS     SSSS     S     SSSSS
%SSSSSSSSSS    SSS    SSSSSSSSSS    SSSSSSSSSSSS     SSSSSSS     SSSS    SSSSSS     SSS     SSSSS          SSSSS
%SSS                       SSS                      SSS                       SSSSS     SSSSSS          SSS                         SSS     SSSSSS        SSSSS
%SSSSSSSSSSSSSSSSSSSSSSSSSSSSSSSSSSSSSSSSSSSSSSSSSSSSSSSSSSSSSSSSSSSSSSSSSSSSSSSSSS
%SSSSSSSSSSSSSSSSSSSSSSSSSSSSSSSSSSSSSSSSSSSSSSSSSSSSSSSSSSSSSSSSSSSSSSSSSSSSSSSSSS

{\bf Universal Exactness.}  An exact sequence in an abelian category is called {\it universally exact} if it remains exact following the application of certain additive functors.  The concept of universal exactness was introduced by Daniel Grayson in his short 1985 paper {\it Universal Exactness in Algebraic $K$-Theory} \cite{GraysonUniversalExactness85}, in the special case of abelian groups.  Grayson proved (\cite{GraysonUniversalExactness85}, Corollary 6, page 141) that the sheafified Cousin resolutions for algebraic $K$-theory on smooth algebraic varieties, discussed in a more general context in section \hyperref[sectioncousinblochogus]{\ref{sectioncousinblochogus}} above, are universally exact.  In particular, Grayson observed that applying universal exactness to the Cousin complex for the $p$th $K$-theory sheaf $\ms{K}_{p,X}$ on a smooth algebraic variety $X$, and taking sheaf cohomology, leads to the identity
\begin{equation}\label{equgraysontorsion}\tn{Ch}_X^p\otimes\ZZ_m=H_{\tn{Zar}}^p(X,\ms{K}_{p,X}\otimes\ZZ_m),\end{equation}
via Bloch's theorem.  Here, I outline a more general approach to universal exactness for cohomology theories with supports, based on Colliot-Th\'el\`ene, Hoobler, and Kahn \cite{CHKBloch-Ogus-Gabber97}.  

The following definition of universal exactness is adapted from Colliot-Th\'el\`ene, Hoobler and Kahn \cite{CHKBloch-Ogus-Gabber97}, definition 6.1.1, page 34:

\begin{defi}\label{defiuniversalexactness} Let $\mbf{A}$ be an abelian category.   A complex $C$ of objects of $\mbf{A}$ is called {\bf universally exact} if for any abelian category $\mbf{B}$ and any additive functor $\ms{F}:\mbf{A}\rightarrow\mbf{B}$ commuting with colimits of filtered diagrams, the complex $\ms{F}_{C}$ in $\mbf{B}$ is exact.
\end{defi}

\comment{
\begin{example}\label{exampleaugmentedcousincomplex} \tn{Let $C$ be the $q$th Cousin complex} 

%&&&&&&&&&&&&&&&&&&&&&&&&&&&&&&&&&&&&&&&&&&&&&&&&&&&&&&&&&&&&&&&&&&&&&&&&&&&&&&&&&&
%&&&&&&&&&&&&&&&&&&&&&&&&&&&&&&&&&&&&&&&&&&&&&&&&&&&&&&&&&&&&&&&&&&&&&&&&&&&&&&&&&&
%&&&&                 &&&&    &&&               &&&                    &&&    &&&&    &&&              &&&&                   &&&&&&&&&&&&&&&&&&
%&&&&   &&&&   &&&&    &&&   &&&&&&&&&&&   &&&&&&    &&&&    &&&    &&&     &&&    &&&&&&&&&&&&&&&&&&&&&&&&
%&&&&   &&&&   &&&&    &&&   &&&&&&&&&&&   &&&&&&    &&&&    &&&    &&&     &&&    &&&&&&&&&&&&&&&&&&&&&&&&
%&&&&                 &&&&    &&&   &&&&&&&&&&&   &&&&&&    &&&&    &&&            &&&&&             &&&&&&&&&&&&&&&&&&&&
%&&&&    &&&&&&&&&    &&&   &&&&&&&&&&&   &&&&&&    &&&&    &&&    &&&   &&&&    &&&&&&&&&&&&&&&&&&&&&&&
%&&&&    &&&&&&&&&    &&&   &&&&&&&&&&&   &&&&&&    &&&&    &&&    &&&&   &&&    &&&&&&&&&&&&&&&&&&&&&&&
%&&&&    &&&&&&&&&    &&&                &&&&&&   &&&&&&                   &&&    &&&&&   &&                  &&&&&&&&&&&&&&&&&&
%&&&&&&&&&&&&&&&&&&&&&&&&&&&&&&&&&&&&&&&&&&&&&&&&&&&&&&&&&&&&&&&&&&&&&&&&&&&&&&&&&&
%&&&&&&&&&&&&&&&&&&&&&&&&&&&&&&&&&&&&&&&&&&&&&&&&&&&&&&&&&&&&&&&&&&&&&&&&&&&&&&&&&&
\begin{pgfpicture}{0cm}{0cm}{17cm}{2cm}
\begin{pgftranslate}{\pgfpoint{-.4cm}{0cm}}
%\pgfputat{\pgfxy(16.3,1)}{\pgfbox[center,center]{(4.3.1.0)}}
\pgfputat{\pgfxy(.7,1)}{\pgfbox[center,center]{$0$}}
\pgfputat{\pgfxy(3,.9)}{\pgfbox[center,center]{$\displaystyle\coprod_{x\in \tn{\footnotesize{Zar}}_X^{0}} K_{q, X\tn{ \footnotesize{on} } x}$}}
\pgfputat{\pgfxy(6.5,.9)}{\pgfbox[center,center]{$\displaystyle\coprod_{x\in \tn{\footnotesize{Zar}}_X^{1}} K_{q-1,X\tn{ \footnotesize{on} } x}$}}
\pgfputat{\pgfxy(12.7,.9)}{\pgfbox[center,center]{$\displaystyle\coprod_{x\in \tn{\footnotesize{Zar}}_X^{d}} K_{q-d, X\tn{ \footnotesize{on} } x}$}}
\pgfputat{\pgfxy(15.2,1)}{\pgfbox[center,center]{$0$}}
\pgfputat{\pgfxy(4.9,1.35)}{\pgfbox[center,center]{\small{$d_1^{0,q}$}}}
\pgfputat{\pgfxy(8.4,1.35)}{\pgfbox[center,center]{\small{$d_1^{1,q}$}}}
\pgfputat{\pgfxy(10.9,1.35)}{\pgfbox[center,center]{\small{$d_1^{d-1,q}$}}}
\pgfsetendarrow{\pgfarrowlargepointed{3pt}}
\pgfxyline(.9,1)(1.7,1)
\pgfxyline(4.5,1)(5.1,1)
\pgfxyline(8.2,1)(8.8,1)
\pgfnodecircle{Node0}[fill]{\pgfxy(9.1,1)}{0.02cm}
\pgfnodecircle{Node0}[fill]{\pgfxy(9.2,1)}{0.02cm}
\pgfnodecircle{Node0}[fill]{\pgfxy(9.3,1)}{0.02cm}
\pgfnodecircle{Node0}[fill]{\pgfxy(10,1)}{0.02cm}
\pgfnodecircle{Node0}[fill]{\pgfxy(10.1,1)}{0.02cm}
\pgfnodecircle{Node0}[fill]{\pgfxy(10.2,1)}{0.02cm}
\pgfxyline(10.4,1)(11.1,1)
\pgfxyline(14.4,1)(14.9,1)
\end{pgftranslate}
\end{pgfpicture}
%&&&&&&&&&&&&&&&&&&&&&&&&&&&&&&&&&&&&&&&&&&&&&&&&&&&&&&&&&&&&&&&&&&&&&&&&&&&&&&&&&&
%&&&&&&&&&&&&&&&&&&&&&&&&&&&&&&&&&&&&&&&&&&&&&&&&&&&&&&&&&&&&&&&&&&&&&&&&&&&&&&&&&&
%&&&                    &&&       &&&&&    &&&                &&&&&&&&&&&&&&&&&&&&&&&&&&&&&&&&&&&&&&&&&&&&&&&&&&&
%&&&    &&&&&&&&&    &    &&&&   &&&    &&&&    &&&&&&&&&&&&&&&&&&&&&&&&&&&&&&&&&&&&&&&&&&&&&&&&&& 
%&&&    &&&&&&&&&    &&    &&&   &&&    &&&&&    &&&&&&&&&&&&&&&&&&&&&&&&&&&&&&&&&&&&&&&&&&&&&&&&& 
%&&&                 &&&&    &&&    &&   &&&    &&&&&    &&&&&&&&&&&&&&&&&&&&&&&&&&&&&&&&&&&&&&&&&&&&&&&&& 
%&&&    &&&&&&&&&    &&&&    &   &&&    &&&&&    &&&&&&&&&&&&&&&&&&&&&&&&&&&&&&&&&&&&&&&&&&&&&&&&& 
%&&&    &&&&&&&&&    &&&&&       &&&    &&&&     &&&&&&&&&&&&&&&&&&&&&&&&&&&&&&&&&&&&&&&&&&&&&&&&& 
%&&&                    &&&    &&&&&&    &&&                  &&&&&&&&&&&&&&&&&&&&&&&&&&&&&&&&&&&&&&&&&&&&&&&&&&
%&&&&&&&&&&&&&&&&&&&&&&&&&&&&&&&&&&&&&&&&&&&&&&&&&&&&&&&&&&&&&&&&&&&&&&&&&&&&&&&&&&
%&&&&&&&&&&&&&&&&&&&&&&&&&&&&&&&&&&&&&&&&&&&&&&&&&&&&&&&&&&&&&&&&&&&&&&&&&&&&&&&&&&

\tn{arising from the coniveau spectral sequence for Bass-Thomason $K$-theory on a $d$-dimensional algebraic variety $X$.  Let $X_\ee$ be the thickened variety $X\times_{\tn{Spec } k}\tn{Spec } k[\ee]/\ee^2$.  Then the functor }

\end{example}
\hspace{16.3cm} $\oblong$
}

%SSSSSSSSSSSSSSSSSSSSSSSSSSSSSSSSSSSSSSSSSSSSSSSSSSSSSSSSSSSSSSSSSSSSSSSSSSSSSSSSSS
%SSSSSSSSSSSSSSSSSSSSSSSSSSSSSSSSSSSSSSSSSSSSSSSSSSSSSSSSSSSSSSSSSSSSSSSSSSSSSSSSSS
%SSSSSSSSSSSSSSSSSSSSSSSSSSSSSSSSSSSSSSSSSSSSSSSSSSSSSSSSSSSSSSSSSSSSSSSSSSSSSSSSSS
%SSS                      SSS                       SSS                    SS                          SS           SSS                         SSS          SSSSS      SSSS
%SSS      SSSSSSSSS    SSSSSSSSSS     SSSSSSSSSSSS     SSSSSSS     SSSS    SSSSSS     SSS     S     SSSS     SSSSS
%SSS      SSSSSSSSS    SSSSSSSSSS     SSSSSSSSSSSS     SSSSSSS     SSSS    SSSSSS     SSS     SS     SSS     SSSSS
%SSS                      SSS                  SSSSS    SSSSSSSSSSSS     SSSSSSS     SSSS    SSSSSS     SSS     SSS     SS     SSSSS
%SSSSSSSSSS    SSS    SSSSSSSSSS    SSSSSSSSSSSS     SSSSSSS     SSSS    SSSSSS     SSS     SSSS     S     SSSSS
%SSSSSSSSSS    SSS    SSSSSSSSSS    SSSSSSSSSSSS     SSSSSSS     SSSS    SSSSSS     SSS     SSSSS          SSSSS
%SSS                       SSS                      SSS                       SSSSS     SSSSSS          SSS                         SSS     SSSSSS        SSSSS
%SSSSSSSSSSSSSSSSSSSSSSSSSSSSSSSSSSSSSSSSSSSSSSSSSSSSSSSSSSSSSSSSSSSSSSSSSSSSSSSSSS
%SSSSSSSSSSSSSSSSSSSSSSSSSSSSSSSSSSSSSSSSSSSSSSSSSSSSSSSSSSSSSSSSSSSSSSSSSSSSSSSSSS

{\bf Universal Exactness for Effaceable Substrata.} The complexes of principal interest in the present context are, of course, the Cousin complexes arising from the coniveau spectral sequence of an effaceable cohomology theory with supports $H$, applied to a smooth scheme $X$.  As discussed below, it turns out that if such a cohomology theory arises from an effaceable substratum functor, then universal exactness holds. 

The following lemma is adapted from Colliot-Th\'el\`ene, Hoobler and Kahn \cite{CHKBloch-Ogus-Gabber97}, Theorem 6.2.1, page 35.  

\begin{lem}\label{lemuniversalexactness} Let $\mbf{S}_k$ be a category of schemes over a field $k$ satisfying the conditions given at the beginning of section \hyperref[subsectioncohomsupportssubstrata]{\ref{subsectioncohomsupportssubstrata}} above, and let $X$ be an affine variety in $\mbf{S}_k$.  Let $H$ a cohomology theory with supports on the category of pairs over $\mbf{S}_k$, and assume that $H$ is defined via a substratum $C$, effaceable at a family of points $(t_1,...,t_r)$.  Let $R=O_{t_1,...,t_r}$ be the semilocal ring of $X$ at $(t_1,...,t_r)$, and let $Y=\tn{Spec }R$.  Then the Cousin complexes 

%&&&&&&&&&&&&&&&&&&&&&&&&&&&&&&&&&&&&&&&&&&&&&&&&&&&&&&&&&&&&&&&&&&&&&&&&&&&&&&&&&&
%&&&&&&&&&&&&&&&&&&&&&&&&&&&&&&&&&&&&&&&&&&&&&&&&&&&&&&&&&&&&&&&&&&&&&&&&&&&&&&&&&&
%&&&&                 &&&&    &&&               &&&                    &&&    &&&&    &&&              &&&&                   &&&&&&&&&&&&&&&&&&
%&&&&   &&&&   &&&&    &&&   &&&&&&&&&&&   &&&&&&    &&&&    &&&    &&&     &&&    &&&&&&&&&&&&&&&&&&&&&&&&
%&&&&   &&&&   &&&&    &&&   &&&&&&&&&&&   &&&&&&    &&&&    &&&    &&&     &&&    &&&&&&&&&&&&&&&&&&&&&&&&
%&&&&                 &&&&    &&&   &&&&&&&&&&&   &&&&&&    &&&&    &&&            &&&&&             &&&&&&&&&&&&&&&&&&&&
%&&&&    &&&&&&&&&    &&&   &&&&&&&&&&&   &&&&&&    &&&&    &&&    &&&   &&&&    &&&&&&&&&&&&&&&&&&&&&&&
%&&&&    &&&&&&&&&    &&&   &&&&&&&&&&&   &&&&&&    &&&&    &&&    &&&&   &&&    &&&&&&&&&&&&&&&&&&&&&&&
%&&&&    &&&&&&&&&    &&&                &&&&&&   &&&&&&                   &&&    &&&&&   &&                  &&&&&&&&&&&&&&&&&&
%&&&&&&&&&&&&&&&&&&&&&&&&&&&&&&&&&&&&&&&&&&&&&&&&&&&&&&&&&&&&&&&&&&&&&&&&&&&&&&&&&&
%&&&&&&&&&&&&&&&&&&&&&&&&&&&&&&&&&&&&&&&&&&&&&&&&&&&&&&&&&&&&&&&&&&&&&&&&&&&&&&&&&&
\begin{pgfpicture}{0cm}{0cm}{17cm}{2cm}
\begin{pgftranslate}{\pgfpoint{1.5cm}{0cm}}
\pgfputat{\pgfxy(-1,1)}{\pgfbox[center,center]{$0$}}
\pgfputat{\pgfxy(.5,1)}{\pgfbox[center,center]{$H_Y^n$}}
\pgfputat{\pgfxy(3,.9)}{\pgfbox[center,center]{$\displaystyle\coprod_{x\in \tn{\footnotesize{Zar}}_Y^{0}} H_{Y\tn{ \footnotesize{on} } x}^{n}$}}
\pgfputat{\pgfxy(6.5,.9)}{\pgfbox[center,center]{$\displaystyle\coprod_{x\in \tn{\footnotesize{Zar}}_Y^{1}} H_{Y\tn{ \footnotesize{on} } x}^{n+1}$}}
\pgfputat{\pgfxy(12.5,.9)}{\pgfbox[center,center]{$\displaystyle\coprod_{x\in \tn{\footnotesize{Zar}}_Y^{d}} H_{Y\tn{ \footnotesize{on} } x}^{n+d}$}}
\pgfputat{\pgfxy(15,1)}{\pgfbox[center,center]{$0$}}
\pgfputat{\pgfxy(4.9,1.3)}{\pgfbox[center,center]{\small{$d_1^{0,n}$}}}
\pgfputat{\pgfxy(8.4,1.3)}{\pgfbox[center,center]{\small{$d_1^{1,n}$}}}
\pgfputat{\pgfxy(10.9,1.3)}{\pgfbox[center,center]{\small{$d_1^{d-1,n}$}}}
\pgfsetendarrow{\pgfarrowlargepointed{3pt}}
\pgfxyline(-.8,1)(0,1)
\pgfxyline(.9,1)(1.7,1)
\pgfxyline(4.4,1)(5.2,1)
\pgfxyline(7.9,1)(8.7,1)
\pgfnodecircle{Node0}[fill]{\pgfxy(8.9,1)}{0.02cm}
\pgfnodecircle{Node0}[fill]{\pgfxy(9,1)}{0.02cm}
\pgfnodecircle{Node0}[fill]{\pgfxy(9.1,1)}{0.02cm}
\pgfnodecircle{Node0}[fill]{\pgfxy(10,1)}{0.02cm}
\pgfnodecircle{Node0}[fill]{\pgfxy(10.1,1)}{0.02cm}
\pgfnodecircle{Node0}[fill]{\pgfxy(10.2,1)}{0.02cm}
\pgfxyline(10.4,1)(11.2,1)
\pgfxyline(13.9,1)(14.7,1)
\end{pgftranslate}
\end{pgfpicture}
%&&&&&&&&&&&&&&&&&&&&&&&&&&&&&&&&&&&&&&&&&&&&&&&&&&&&&&&&&&&&&&&&&&&&&&&&&&&&&&&&&&
%&&&&&&&&&&&&&&&&&&&&&&&&&&&&&&&&&&&&&&&&&&&&&&&&&&&&&&&&&&&&&&&&&&&&&&&&&&&&&&&&&&
%&&&                    &&&       &&&&&    &&&                &&&&&&&&&&&&&&&&&&&&&&&&&&&&&&&&&&&&&&&&&&&&&&&&&&&
%&&&    &&&&&&&&&    &    &&&&   &&&    &&&&    &&&&&&&&&&&&&&&&&&&&&&&&&&&&&&&&&&&&&&&&&&&&&&&&&& 
%&&&    &&&&&&&&&    &&    &&&   &&&    &&&&&    &&&&&&&&&&&&&&&&&&&&&&&&&&&&&&&&&&&&&&&&&&&&&&&&& 
%&&&                 &&&&    &&&    &&   &&&    &&&&&    &&&&&&&&&&&&&&&&&&&&&&&&&&&&&&&&&&&&&&&&&&&&&&&&& 
%&&&    &&&&&&&&&    &&&&    &   &&&    &&&&&    &&&&&&&&&&&&&&&&&&&&&&&&&&&&&&&&&&&&&&&&&&&&&&&&& 
%&&&    &&&&&&&&&    &&&&&       &&&    &&&&     &&&&&&&&&&&&&&&&&&&&&&&&&&&&&&&&&&&&&&&&&&&&&&&&& 
%&&&                    &&&    &&&&&&    &&&                  &&&&&&&&&&&&&&&&&&&&&&&&&&&&&&&&&&&&&&&&&&&&&&&&&&
%&&&&&&&&&&&&&&&&&&&&&&&&&&&&&&&&&&&&&&&&&&&&&&&&&&&&&&&&&&&&&&&&&&&&&&&&&&&&&&&&&&
%&&&&&&&&&&&&&&&&&&&&&&&&&&&&&&&&&&&&&&&&&&&&&&&&&&&&&&&&&&&&&&&&&&&&&&&&&&&&&&&&&&

are universally exact. 
\end{lem}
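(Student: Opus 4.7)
The plan is to strengthen the exactness argument of Lemma 4.3.2.1 by lifting it from the level of cohomology groups to the level of substrata, exploiting the fact that the hypothesis here is substratum-level effaceability (in the sense of definition 3.8.4.1(2)) rather than only effaceability of the induced cohomology theory. Recall that in the proof of Lemma 4.3.2.1, the crucial step was to show that in the exact couple $(D,E,i,j,k)$ defining the coniveau spectral sequence on $Y$, the maps $i^{p,q}$ vanish for $p>0$; the resulting Cousin complex is then exact because $d_1 = j \circ k$ and $\tn{Ker}(k) = \tn{Im}(i) = 0$. The effaceability of $H$ at $(t_1,\dots,t_r)$ is what forced $i^{p,q} = 0$ after passing to the limit over chains $\overline{Z}$ and to $Y = \tn{Spec } R$.

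First, I would reconstruct the exact couple $(D,E,i,j,k)$ at the substratum level: for each chain $\overline{Z}:\oslash\subset Z^d\subset\cdots\subset Z^0=Y$, the substratum fiber sequences of lemma 3.8.2.1 assemble into an exact couple of spectra (or complexes), and passing to homotopy groups yields the group-level exact couple used in section 4.2.2. Substratum-level effaceability gives, for each $p>0$ and each $W,Z$, a smaller neighborhood $U$ and a larger closed subset $Z'$ such that the map of substrata $C_{U\tn{ on }Z\cap U}\rightarrow C_{U\tn{ on }Z'\cap U}$ is \emph{nullhomotopic}, not merely zero after applying $\pi_{-n}$. Passing to the colimit over $\overline{Z}$ and over $W\supset\{t_1,\dots,t_r\}$, and using the fact that the relevant hom-spaces in the stable homotopy category (or derived category) commute with filtered colimits of substrata indexed by our directed system, I obtain that the substratum-level maps $\tilde{\imath}^{p,\ast}$ inducing $i^{p,\ast}$ are themselves nullhomotopic for $p>0$.

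A nullhomotopic map in a triangulated (or stable) context splits its fiber sequence up to homotopy: each stage of the filtered colimit admits a splitting, so the tower of substrata defining $D$ and $E$ becomes, in the colimit, a sum of shifts of cofibers, with the Cousin complex arising as the associated sequence of cofibers together with their connecting maps $d_1 = j\circ k$. The key point is that all of this structure—fiber sequences, nullhomotopies, splittings, and filtered colimits—is preserved by any additive functor $\ms{F}:\mbf{A}\to\mbf{B}$ commuting with filtered colimits of objects in our abelian category of homotopy groups: nullhomotopies are sums of identity maps composed with zero, which additive functors send to nullhomotopies, and filtered colimits commute with $\ms{F}$ by hypothesis. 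Consequently, the argument of Lemma 4.3.2.1 reproduces verbatim inside $\mbf{B}$, and $\ms{F}$ applied to the Cousin complex remains exact. This is precisely universal exactness.

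The main obstacle, and the step requiring the most care, is the passage from pointwise nullhomotopies (at each finite stage of the filtered system over $\overline{Z}$ and $W$) to a coherent splitting of the colimit that is stable under application of an arbitrary additive functor. Concretely, one must verify that the choice of nullhomotopies can be arranged compatibly across the directed system so that, in the colimit, the splitting is canonical enough to be preserved by $\ms{F}$; equivalently, one must check that the relevant $\tn{Hom}$-groups between substrata in the pro-category commute with filtered colimits so that a genuine nullhomotopy exists at the limit. This is the content of the corresponding step in Colliot-Th\'el\`ene, Hoobler, and Kahn \cite{CHKBloch-Ogus-Gabber97} Theorem 6.2.1, and the verification for the substrata $\mbf{K}$ and $\mbf{HN}$ is immediate from sections 3.8.5 and 3.8.6, completing the argument for the coniveau machine of Theorem 4.4.3.2.
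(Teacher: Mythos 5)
Your proposal is correct and is essentially the argument of the paper, whose entire proof consists of citing Colliot-Th\'el\`ene, Hoobler, and Kahn, Theorem 6.2.1 --- the substratum-level nullhomotopies supplied by effaceability split the relevant fiber sequences, the induced splittings of the short exact sequences of homotopy groups are equational data in $\mbf{A}$ preserved by any additive functor, and commuting with filtered colimits handles the passage to $Y=\tn{Spec }R$, exactly as you describe. One small caution: the functor $\ms{F}$ is applied only in the abelian category $\mbf{A}$, not to the spectra themselves, so the phrase ``nullhomotopies are sums of identity maps composed with zero'' should be replaced by the statement that the nullhomotopy induces a splitting of each short exact sequence in $\mbf{A}$, and split exactness is what $\ms{F}$ preserves.
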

\begin{proof} See Colliot-Th\'el\`ene, Hoobler and Kahn \cite{CHKBloch-Ogus-Gabber97}, Theorem 6.2.1, page 35.
\end{proof}

Since Bass-Thomason $K$-theory and negative cyclic homology are defined in terms of effaceable substrata, this result allows the coniveau machine for to be generalized further to enable analysis of ``modified generalized deformation groups and tangent groups" analogous to the groups studied by Grayson \cite{GraysonUniversalExactness85}.

\appendix
\renewcommand{\chaptername}{Appendix}
\addtocontents{toc}{\protect\renewcommand\protect\chaptername{Appendix}}

%\begin{appendix}\label{appendix}

%\setcounter{secnumdepth}{-2}

\chapter{  Supporting Material on $K$-Theory and Cyclic Homology}\label{chapterappendix}

\section{Topological Spectra}\label{subsecspectra}

Modern algebraic $K$-theory may be described in terms of the theory of topological spectra.   In this section, I briefly review some standard material from the theory of spectra for the convenience of the reader. The four references \cite{AdamsStableHomotopy74}, \cite{ThomasonKTheoryEtaleCohom85}, \cite{BousfieldFriedlander77}, and \cite{ThomasonLoopSpaceMachines78} represent the spectrum-theoretic background of the papers of Thomason \cite{Thomason-Trobaugh90} and Colliot-Th\'el\`ene, Hoobler, and Khan \cite{CHKBloch-Ogus-Gabber97} in which spectra are crucial.  See \cite{Thomason-Trobaugh90}, Introduction, page 249, and \cite{CHKBloch-Ogus-Gabber97}, section 5.2, page 29.  Schwede's book project \cite{SchwedeSymmetricSpectra2007} seems to be a helptul modern source on the subject.

A topological spectrum $\mbf{E}:=\{E_n,\ee_n\}$ is a sequence of pointed spaces $E_n$ and {\it structure maps} $\ee_n$ mapping the reduced suspension $\Sigma_{E_n}$ of $E_n$ to $E_{n+1}$.  The indices $n$ are usually chosen to run over the integers or the nonnegative integers.\footnotemark\footnotetext{There is also a notion of {\bf coordinate free spectra,} for which there are defined structure maps for every injective map of finite sets; Waldhausen's $K$-theory spectrum may be viewed as coordinate free in this sense (see \cite{SchwedeSymmetricSpectra2007} page 23).}  Since the reduced suspension functor $\Sigma$ is left-adjoint to the loop space functor $\Omega$, the structure maps are equivalent to maps $E_n\rightarrow\Omega E_{n+1}$.  If these maps are isomorphic, then $\mbf{E}$ is called an $\Omega$-spectrum.  Isomorphism in this context may mean homeomorphism, homotopy equivalence, or weak homotopy equivalence, in descending order of strength.

The homotopy groups $\pi_{r,\mbf{E}}$ of a spectrum $\mbf{E}$ are {\bf stable homotopy groups} in the following sense.  The reduced suspension map $E_n\rightarrow \Sigma_{E_n}$ and the structure map $\Sigma_{E_n}\rightarrow E_{n+1}$ together induce homomorphisms on the homotopy groups of spaces:
\[\pi_{n+r,E_n}\rightarrow \pi_{n+r+1,\Sigma_{E_{n}}}\rightarrow \pi_{n+r+1,E_{n+1}},\]
for each $r\in\mbb{Z}$.  Calling the composite maps $s_n$, one has a directed system
\[\xymatrix{ ...\ar[r]^-{s_{n-1}}&\pi_{n+r,E_n}\ar[r]^-{s_n} & \pi_{n+r+1,E_{n+1}}\ar[r]^-{s_{n+1}}&...,}\]
and one defines the homotopy group $\pi_{r,\mbf{E}}$ of $\mbf{E}$ to be the direct limit $\varinjlim_n\pi_{n+r,E_n}$ of this directed system.  In general, the term {\bf stability} in homotopy theory refers to phenomena that do not depend on behavior at any particular dimension or over any finite range of dimensions, but can be detected as the dimension grows arbitrarily large.  Note in particular that the limit $\pi_{r,\mbf{E}}=\varinjlim_n\pi_{n+r,E_n}$ may or may not be attained at some finite $n$, depending on the spectrum $\mbf{E}$.   However, if $\mbf{E}$ is an $\Omega$-spectrum, then the homomorphism $s_n:\pi_{n+r,E_{n}}\rightarrow \pi_{n+r+1,E_{n+1}}$ is an isomorphism for all $n\ge 1-r$, so in this case 
\[\pi_{r,\mbf{E}}=\pi_{n+r,E_{n}}\tn{\hspace*{1cm} for any \hspace*{1cm}} n\ge 1-r.\]
Note also that the homotopy groups $\pi_{r,\mbf{E}}$ are defined, and may be nonzero, for negative $r$, whether or not the index set for the spaces $E_n$ includes negative integers.   In many important cases, the homotopy groups vanish in negative degrees, just as do the homotopy groups of an individual topological space.  For instance, the Waldhausen $K$-theory spectrum (\cite{Thomason-Trobaugh90} page 260) has vanishing negative homotopy groups.  If $\pi_{r,\mbf{E}}=0$ for $r<0$, then $\mbf{E}$ is called a {\bf connective} spectrum.   Otherwise it is called {\bf nonconnective}. 

Just as the connectivity of a spectrum $\mbf{E}$ does not depend on the range of indices of the individual topological spaces $E_n$, it does not depend on the connectivity properties of the $E_n$, although the two types of properties are analogous.  More precisely, a topological space is called {\it $r$-connected} if it is path-connected and its first $r$ homotopy groups (beginning with the fundamental group) vanish.  Hence, a connective spectrum is analogous in this sense to a topological space which is $r$-connected for some nonnegative $r$.  However, one may change the connectivity of a spectrum trivially by simply shifting the indices of the spaces $E_n$; if $\pi_{r}(\mbf{E})\ne0$ for some $r$, then the spectrum $\mbf{E}'$ with $E'_n:=E_{n+r+1}$ is nonconnective since $\pi_{-1}(\mbf{E}')=\pi_{r}(\mbf{E})$.   Usually, however, the indices are fixed by topological considerations specific to the problem at hand.   For instance, if one begins with a pointed topological space $E$ and forms the {\it suspension spectrum} $\mbf{E}:=\Sigma^\infty E$ by repeated application of the reduced suspension functor,  it is natural to index the spaces $E_n$ so that $E_0=E$.

\section{Keller's Mixed Complex}\label{sectioncyclichomologymixedkeller}

For any $k$-algebra $R$, unital or nonunital, Bernhard Keller \cite{KellerCyclicHomologyofExactCat96} defines a mixed complex $(M_R,d,B)$, as I will now describe.  This approach is more general than the approaches to the cyclic homology of algebras outlined in sections \hyperref[subsectioncychomalgcommring]{\ref{subsectioncychomalgcommring}} and \hyperref[subsectionnegcyccomring]{\ref{subsectionnegcyccomring}}, and generalizes better.  

Recall that the map $(1-t)$ links the columns $(\tn{C}_R,-b')$ and $(\tn{C}_R,b)$ of degrees $1$ and $0$ in the cyclic bicomplex $\tn{CC}_R$, where $t$ is the cyclic operator, whose index depends on the row, and where $1$ denotes the identity operator.  Also, the norm operator $N$ links the columns of degrees $2$ and $1$.  Since $\tn{CC}_R$ is a bicomplex, the maps $b,b',(1-t),$ and $N$ satisfy the following identities:
\begin{equation}\label{identitieskeller}b^2=b'^2=b(1-t)-(1-t)b'=Nb-b'N=(1-t)N=N(1-t)=0.\end{equation}
In particular, the map $(1-t)$ is a chain map between the complexes $(C_R,b')$ and $(C_R,b)$.\footnotemark\footnotetext{Note how the minus sign has disappeared on the boundary map $b'$; this is because of the standard convention for bicomplexes; the maps are chosen so that each square {\it anticommutes}, rather than commuting.  Thus, the map $(1-t)$ as it appears in $\tn{CC}_R$ is not quite a chain map; the minus sign must be removed to give one.} The {\bf mapping cone} $(M_R,d)$ over the chain map $(1-t):(C_R,b')\rightarrow (C_R,b)$ is by definition the chain complex whose $n$th term is 
\begin{equation}\label{equmappingcone}M_n:=C_{n,R}\oplus C_{n-1,R}=R^{\otimes n+1}\oplus R^{\otimes n}.\end{equation}
The differential $d$ in $(M_R,d)$ is defined by the matrix 
\begin{equation}\label{equkellerconediff}d = \begin{pmatrix} b & 1-t\\ 0 & -b' \end{pmatrix}.\end{equation}
This is illustrated in figure \hyperref[figkellermixed]{\ref{figkellermixed}} below.  See Keller \cite{KellerCyclicHomologyofExactCat96}, page 5, for details.\footnotemark\footnotetext{Note that the minus sign has reappeared, but this is by definition of the mapping cone. Some other authors define the mapping cone differently; for instance, see Weibel \cite{WeibelHomologicalAlgebra94}, section 1.5, page 18.} 

%&&&&&&&&&&&&&&&&&&&&&&&&&&&&&&&&&&&&&&&&&&&&&&&&&&&&&&&&&&&&&&&&&&&&&&&&&&&&&&&&&&
%&&&&&&&&&&&&&&&&&&&&&&&&&&&&&&&&&&&&&&&&&&&&&&&&&&&&&&&&&&&&&&&&&&&&&&&&&&&&&&&&&&
%&&&&                 &&&&    &&&               &&&                    &&&    &&&&    &&&              &&&&                   &&&&&&&&&&&&&&&&&&
%&&&&   &&&&   &&&&    &&&   &&&&&&&&&&&   &&&&&&    &&&&    &&&    &&&     &&&    &&&&&&&&&&&&&&&&&&&&&&&&
%&&&&   &&&&   &&&&    &&&   &&&&&&&&&&&   &&&&&&    &&&&    &&&    &&&     &&&    &&&&&&&&&&&&&&&&&&&&&&&&
%&&&&                 &&&&    &&&   &&&&&&&&&&&   &&&&&&    &&&&    &&&            &&&&&             &&&&&&&&&&&&&&&&&&&&
%&&&&    &&&&&&&&&    &&&   &&&&&&&&&&&   &&&&&&    &&&&    &&&    &&&   &&&&    &&&&&&&&&&&&&&&&&&&&&&&
%&&&&    &&&&&&&&&    &&&   &&&&&&&&&&&   &&&&&&    &&&&    &&&    &&&&   &&&    &&&&&&&&&&&&&&&&&&&&&&&
%&&&&    &&&&&&&&&    &&&                &&&&&&   &&&&&&                   &&&    &&&&&   &&                  &&&&&&&&&&&&&&&&&&
%&&&&&&&&&&&&&&&&&&&&&&&&&&&&&&&&&&&&&&&&&&&&&&&&&&&&&&&&&&&&&&&&&&&&&&&&&&&&&&&&&&
%&&&&&&&&&&&&&&&&&&&&&&&&&&&&&&&&&&&&&&&&&&&&&&&&&&&&&&&&&&&&&&&&&&&&&&&&&&&&&&&&&&
\begin{figure}[H]
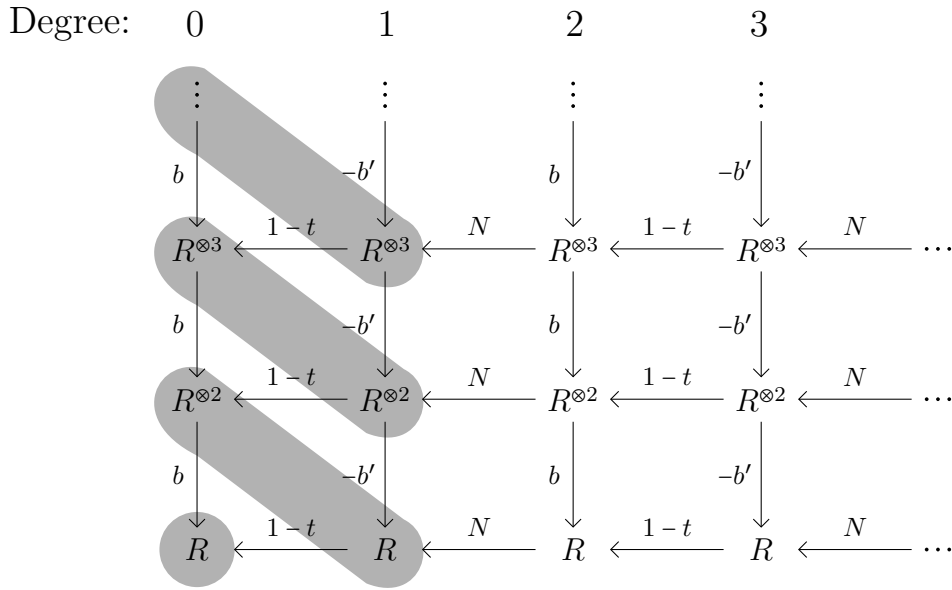

\begin{pgfpicture}{0cm}{0cm}{17cm}{8cm}
\begin{pgftranslate}{\pgfpoint{1.5cm}{-1cm}}
\begin{pgftranslate}{\pgfpoint{-.25cm}{1cm}}
\begin{colormixin}{100!white}
\begin{colormixin}{30!white}
\pgfnodecircle{Node1}[fill]{\pgfxy(1.5,.75)}{0.5cm}
\end{colormixin}
\end{colormixin}
\begin{colormixin}{30!white}
\color{black}
\pgfmoveto{\pgfxy(3.75,.3)}
\pgflineto{\pgfxy(1.5,2)}
\pgfcurveto{\pgfxy(.5,2.5)}{\pgfxy(1,3.35)}{\pgfxy(1.6,3.15)}
\pgflineto{\pgfxy(4.25,1.15)}
\pgfcurveto{\pgfxy(4.75,.9)}{\pgfxy(4.5,0)}{\pgfxy(3.75,.3)}
\pgffill
\end{colormixin}
\end{pgftranslate}
\begin{pgftranslate}{\pgfpoint{-.25cm}{3cm}}
\begin{colormixin}{30!white}
\color{black}
\pgfmoveto{\pgfxy(3.75,.3)}
\pgflineto{\pgfxy(1.5,2)}
\pgfcurveto{\pgfxy(.5,2.5)}{\pgfxy(1,3.35)}{\pgfxy(1.6,3.15)}
\pgflineto{\pgfxy(4.25,1.15)}
\pgfcurveto{\pgfxy(4.75,.9)}{\pgfxy(4.5,0)}{\pgfxy(3.75,.3)}
\pgffill
\end{colormixin}
\end{pgftranslate}
\begin{pgftranslate}{\pgfpoint{-.25cm}{5cm}}
\begin{colormixin}{30!white}
\color{black}
\pgfmoveto{\pgfxy(3.75,.3)}
\pgflineto{\pgfxy(1.5,2)}
\pgfcurveto{\pgfxy(.5,2.5)}{\pgfxy(1,3.35)}{\pgfxy(1.6,3.15)}
\pgflineto{\pgfxy(4.25,1.15)}
\pgfcurveto{\pgfxy(4.75,.9)}{\pgfxy(4.5,0)}{\pgfxy(3.75,.3)}
\pgffill
\end{colormixin}
\end{pgftranslate}
\begin{pgftranslate}{\pgfpoint{-.75cm}{1.25cm}}
\pgfputat{\pgfxy(-.5,7.5)}{\pgfbox[left,center]{\large{Degree:}}}
\pgfputat{\pgfxy(1.85,7.5)}{\pgfbox[left,center]{\large{0}}}
\pgfputat{\pgfxy(4.4,7.5)}{\pgfbox[left,center]{\large{1}}}
\pgfputat{\pgfxy(6.9,7.5)}{\pgfbox[left,center]{\large{2}}}
\pgfputat{\pgfxy(9.35,7.5)}{\pgfbox[left,center]{\large{3}}}
\pgfputat{\pgfxy(2,.5)}{\pgfbox[center,center]{$R$}}
\pgfputat{\pgfxy(2,2.5)}{\pgfbox[center,center]{$R^{\otimes 2}$}}
\pgfputat{\pgfxy(2,4.5)}{\pgfbox[center,center]{$R^{\otimes 3}$}}
\pgfnodecircle{Node1}[fill]{\pgfxy(2,6.4)}{0.025cm}
\pgfnodecircle{Node1}[fill]{\pgfxy(2,6.55)}{0.025cm}
\pgfnodecircle{Node1}[fill]{\pgfxy(2,6.7)}{0.025cm}
\pgfputat{\pgfxy(4.5,.5)}{\pgfbox[center,center]{$R$}}
\pgfputat{\pgfxy(4.5,2.5)}{\pgfbox[center,center]{$R^{\otimes 2}$}}
\pgfputat{\pgfxy(4.5,4.5)}{\pgfbox[center,center]{$R^{\otimes 3}$}}
\pgfnodecircle{Node1}[fill]{\pgfxy(4.5,6.4)}{0.025cm}
\pgfnodecircle{Node1}[fill]{\pgfxy(4.5,6.55)}{0.025cm}
\pgfnodecircle{Node1}[fill]{\pgfxy(4.5,6.7)}{0.025cm}
\pgfputat{\pgfxy(7,.5)}{\pgfbox[center,center]{$R$}}
\pgfputat{\pgfxy(7,2.5)}{\pgfbox[center,center]{$R^{\otimes 2}$}}
\pgfputat{\pgfxy(7,4.5)}{\pgfbox[center,center]{$R^{\otimes 3}$}}
\pgfnodecircle{Node1}[fill]{\pgfxy(7,6.4)}{0.025cm}
\pgfnodecircle{Node1}[fill]{\pgfxy(7,6.55)}{0.025cm}
\pgfnodecircle{Node1}[fill]{\pgfxy(7,6.7)}{0.025cm}
\pgfputat{\pgfxy(1.75,1.5)}{\pgfbox[center,center]{\footnotesize{$b$}}}
\pgfputat{\pgfxy(1.75,3.5)}{\pgfbox[center,center]{\footnotesize{$b$}}}
\pgfputat{\pgfxy(1.75,5.5)}{\pgfbox[center,center]{\footnotesize{$b$}}}
\pgfputat{\pgfxy(4.15,1.5)}{\pgfbox[center,center]{\footnotesize{$-b'$}}}
\pgfputat{\pgfxy(4.15,3.5)}{\pgfbox[center,center]{\footnotesize{$-b'$}}}
\pgfputat{\pgfxy(4.15,5.55)}{\pgfbox[center,center]{\footnotesize{$-b'$}}}
\pgfputat{\pgfxy(6.75,1.5)}{\pgfbox[center,center]{\footnotesize{$b$}}}
\pgfputat{\pgfxy(6.75,3.5)}{\pgfbox[center,center]{\footnotesize{$b$}}}
\pgfputat{\pgfxy(6.75,5.5)}{\pgfbox[center,center]{\footnotesize{$b$}}}
\pgfputat{\pgfxy(3.25,.8)}{\pgfbox[center,center]{\footnotesize{$1-t$}}}
\pgfputat{\pgfxy(3.25,2.8)}{\pgfbox[center,center]{\footnotesize{$1-t$}}}
\pgfputat{\pgfxy(3.25,4.8)}{\pgfbox[center,center]{\footnotesize{$1-t$}}}
\pgfputat{\pgfxy(5.75,.8)}{\pgfbox[center,center]{\footnotesize{$N$}}}
\pgfputat{\pgfxy(5.75,2.8)}{\pgfbox[center,center]{\footnotesize{$N$}}}
\pgfputat{\pgfxy(5.75,4.8)}{\pgfbox[center,center]{\footnotesize{$N$}}}
\pgfsetendarrow{\pgfarrowlargepointed{3pt}}
\pgfxyline(2,2.2)(2,0.8)
\pgfxyline(2,4.2)(2,2.8)
\pgfxyline(2,6.2)(2,4.8)
\pgfxyline(4.5,2.2)(4.5,0.8)
\pgfxyline(4.5,4.2)(4.5,2.8)
\pgfxyline(4.5,6.2)(4.5,4.8)
\pgfxyline(7,2.2)(7,0.8)
\pgfxyline(7,4.2)(7,2.8)
\pgfxyline(7,6.2)(7,4.8)
\pgfxyline(4,.5)(2.5,.5)
\pgfxyline(4,2.5)(2.5,2.5)
\pgfxyline(4,4.5)(2.5,4.5)
\pgfxyline(6.5,.5)(5,.5)
\pgfxyline(6.5,2.5)(5,2.5)
\pgfxyline(6.5,4.5)(5,4.5)
\end{pgftranslate}
\begin{pgftranslate}{\pgfpoint{4.25cm}{1.25cm}}
\pgfputat{\pgfxy(4.5,.5)}{\pgfbox[center,center]{$R$}}
\pgfputat{\pgfxy(4.5,2.5)}{\pgfbox[center,center]{$R^{\otimes 2}$}}
\pgfputat{\pgfxy(4.5,4.5)}{\pgfbox[center,center]{$R^{\otimes 3}$}}
\pgfnodecircle{Node1}[fill]{\pgfxy(4.5,6.4)}{0.025cm}
\pgfnodecircle{Node1}[fill]{\pgfxy(4.5,6.55)}{0.025cm}
\pgfnodecircle{Node1}[fill]{\pgfxy(4.5,6.7)}{0.025cm}
\pgfputat{\pgfxy(4.15,1.5)}{\pgfbox[center,center]{\footnotesize{$-b'$}}}
\pgfputat{\pgfxy(4.15,3.5)}{\pgfbox[center,center]{\footnotesize{$-b'$}}}
\pgfputat{\pgfxy(4.15,5.55)}{\pgfbox[center,center]{\footnotesize{$-b'$}}}
\pgfputat{\pgfxy(3.25,.8)}{\pgfbox[center,center]{\footnotesize{$1-t$}}}
\pgfputat{\pgfxy(3.25,2.8)}{\pgfbox[center,center]{\footnotesize{$1-t$}}}
\pgfputat{\pgfxy(3.25,4.8)}{\pgfbox[center,center]{\footnotesize{$1-t$}}}
\pgfputat{\pgfxy(5.75,.8)}{\pgfbox[center,center]{\footnotesize{$N$}}}
\pgfputat{\pgfxy(5.75,2.8)}{\pgfbox[center,center]{\footnotesize{$N$}}}
\pgfputat{\pgfxy(5.75,4.8)}{\pgfbox[center,center]{\footnotesize{$N$}}}
\pgfnodecircle{Node1}[fill]{\pgfxy(6.7,.5)}{0.025cm}
\pgfnodecircle{Node1}[fill]{\pgfxy(6.85,.5)}{0.025cm}
\pgfnodecircle{Node1}[fill]{\pgfxy(7,.5)}{0.025cm}
\pgfnodecircle{Node1}[fill]{\pgfxy(6.7,2.5)}{0.025cm}
\pgfnodecircle{Node1}[fill]{\pgfxy(6.85,2.5)}{0.025cm}
\pgfnodecircle{Node1}[fill]{\pgfxy(7,2.5)}{0.025cm}
\pgfnodecircle{Node1}[fill]{\pgfxy(6.7,4.5)}{0.025cm}
\pgfnodecircle{Node1}[fill]{\pgfxy(6.85,4.5)}{0.025cm}
\pgfnodecircle{Node1}[fill]{\pgfxy(7,4.5)}{0.025cm}
\pgfsetendarrow{\pgfarrowlargepointed{3pt}}
\pgfxyline(4.5,2.2)(4.5,0.8)
\pgfxyline(4.5,4.2)(4.5,2.8)
\pgfxyline(4.5,6.2)(4.5,4.8)
\pgfxyline(4,.5)(2.5,.5)
\pgfxyline(4,2.5)(2.5,2.5)
\pgfxyline(4,4.5)(2.5,4.5)
\pgfxyline(6.5,.5)(5,.5)
\pgfxyline(6.5,2.5)(5,2.5)
\pgfxyline(6.5,4.5)(5,4.5)
\end{pgftranslate}
\end{pgftranslate}
\end{pgfpicture}
\caption{Mapping cone over the chain map $(1-t)$.}
\label{figkellermixed}
\end{figure}
%&&&&&&&&&&&&&&&&&&&&&&&&&&&&&&&&&&&&&&&&&&&&&&&&&&&&&&&&&&&&&&&&&&&&&&&&&&&&&&&&&&
%&&&&&&&&&&&&&&&&&&&&&&&&&&&&&&&&&&&&&&&&&&&&&&&&&&&&&&&&&&&&&&&&&&&&&&&&&&&&&&&&&&
%&&&                    &&&       &&&&&    &&&                &&&&&&&&&&&&&&&&&&&&&&&&&&&&&&&&&&&&&&&&&&&&&&&&&&&
%&&&    &&&&&&&&&    &    &&&&   &&&    &&&&    &&&&&&&&&&&&&&&&&&&&&&&&&&&&&&&&&&&&&&&&&&&&&&&&&& 
%&&&    &&&&&&&&&    &&    &&&   &&&    &&&&&    &&&&&&&&&&&&&&&&&&&&&&&&&&&&&&&&&&&&&&&&&&&&&&&&& 
%&&&                 &&&&    &&&    &&   &&&    &&&&&    &&&&&&&&&&&&&&&&&&&&&&&&&&&&&&&&&&&&&&&&&&&&&&&&& 
%&&&    &&&&&&&&&    &&&&    &   &&&    &&&&&    &&&&&&&&&&&&&&&&&&&&&&&&&&&&&&&&&&&&&&&&&&&&&&&&& 
%&&&    &&&&&&&&&    &&&&&       &&&    &&&&     &&&&&&&&&&&&&&&&&&&&&&&&&&&&&&&&&&&&&&&&&&&&&&&&& 
%&&&                    &&&    &&&&&&    &&&                  &&&&&&&&&&&&&&&&&&&&&&&&&&&&&&&&&&&&&&&&&&&&&&&&&&
%&&&&&&&&&&&&&&&&&&&&&&&&&&&&&&&&&&&&&&&&&&&&&&&&&&&&&&&&&&&&&&&&&&&&&&&&&&&&&&&&&&
%&&&&&&&&&&&&&&&&&&&&&&&&&&&&&&&&&&&&&&&&&&&&&&&&&&&&&&&&&&&&&&&&&&&&&&&&&&&&&&&&&&

The matrix definition means that if one views an element $m=m_n\in M_n$ as a column vector of the form 
\[m_n = \begin{pmatrix} r_{n} \\ r_{n-1} \end{pmatrix}, r_{n}\in R^{\otimes n+1}, r_{n-1}\in R^{\otimes n},\] 
then
\[d(m)=d_n(m_n)=\begin{pmatrix} b_n & 1-t_{n-1}\\ 0 & -b_{n-1}' \end{pmatrix} \begin{pmatrix} r_{n} \\ r_{n-1} \end{pmatrix}=\begin{pmatrix} b_n(r_{n})+(1-t_{n-1})(r_{n-1}) \\ -b'_{n-1}(r_{n-1}) \end{pmatrix}.\]
Matrix multiplication shows that $d^2=0$, so that $(M_R,d)$ is indeed a complex:
\[\begin{pmatrix} b & 1-t\\ 0 & -b' \end{pmatrix}\begin{pmatrix} b & 1-t\\ 0 & -b' \end{pmatrix}=\begin{pmatrix} b^2 & b(1-t)-(1-t)b'\\ 0 & b'^2 \end{pmatrix}=0.\]
Next, define a cochain map 
\[B^M= \begin{pmatrix} 0 & 0\\ N & 0\end{pmatrix},\]
on $M_R$.\footnotemark\footnotetext{The ``$M$'' stands for ``mixed.''  This is my nonstandard notation; Keller simply uses the notation $B$, but this is extremely confusing since $B$ is used in several different ways in very similar contexts.  In particular, $B^M$ is {\it not} the Connes boundary map $B$.  More on this in the remarks below.} $B^M_n$ maps $M_n$ to $M_{n+1}$. For an element $m_n$ as above, 
\[B^M_n(m_n)=\begin{pmatrix} 0 & 0\\ N_n & 0\end{pmatrix} \begin{pmatrix} r_{n} \\ r_{n-1} \end{pmatrix}=\begin{pmatrix} 0 \\ N_n(r_n) \end{pmatrix}.\]

Figure \hyperref[figkellermixed]{\ref{figkellermixed}} illustrates both the abstract and concrete forms of Keller's mixed complex. $(M_R,d,B^M)$.  

%&&&&&&&&&&&&&&&&&&&&&&&&&&&&&&&&&&&&&&&&&&&&&&&&&&&&&&&&&&&&&&&&&&&&&&&&&&&&&&&&&&
%&&&&&&&&&&&&&&&&&&&&&&&&&&&&&&&&&&&&&&&&&&&&&&&&&&&&&&&&&&&&&&&&&&&&&&&&&&&&&&&&&&
%&&&&                 &&&&    &&&               &&&                    &&&    &&&&    &&&              &&&&                   &&&&&&&&&&&&&&&&&&
%&&&&   &&&&   &&&&    &&&   &&&&&&&&&&&   &&&&&&    &&&&    &&&    &&&     &&&    &&&&&&&&&&&&&&&&&&&&&&&&
%&&&&   &&&&   &&&&    &&&   &&&&&&&&&&&   &&&&&&    &&&&    &&&    &&&     &&&    &&&&&&&&&&&&&&&&&&&&&&&&
%&&&&                 &&&&    &&&   &&&&&&&&&&&   &&&&&&    &&&&    &&&            &&&&&             &&&&&&&&&&&&&&&&&&&&
%&&&&    &&&&&&&&&    &&&   &&&&&&&&&&&   &&&&&&    &&&&    &&&    &&&   &&&&    &&&&&&&&&&&&&&&&&&&&&&&
%&&&&    &&&&&&&&&    &&&   &&&&&&&&&&&   &&&&&&    &&&&    &&&    &&&&   &&&    &&&&&&&&&&&&&&&&&&&&&&&
%&&&&    &&&&&&&&&    &&&                &&&&&&   &&&&&&                   &&&    &&&&&   &&                  &&&&&&&&&&&&&&&&&&
%&&&&&&&&&&&&&&&&&&&&&&&&&&&&&&&&&&&&&&&&&&&&&&&&&&&&&&&&&&&&&&&&&&&&&&&&&&&&&&&&&&
%&&&&&&&&&&&&&&&&&&&&&&&&&&&&&&&&&&&&&&&&&&&&&&&&&&&&&&&&&&&&&&&&&&&&&&&&&&&&&&&&&&
\begin{figure}[H]
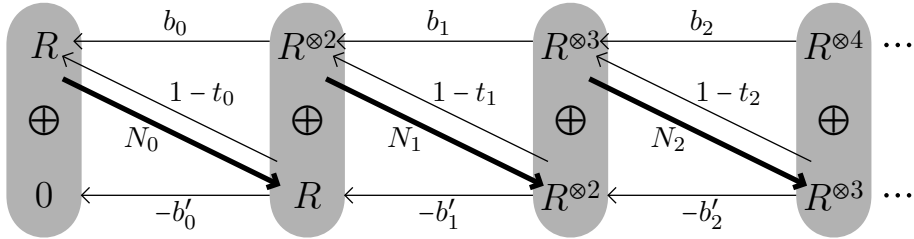

\begin{pgfpicture}{0cm}{0cm}{17cm}{7cm}
%%%%%%%%%%%%%%%%%%%%%%%%%%%%%%

%\pgfxyline(0,0)(16.5,0)
%\pgfxyline(16.5,0)(16.5,7)
%\pgfxyline(16.5,7)(0,7)
%\pgfxyline(0,7)(0,0)
\begin{pgftranslate}{\pgfpoint{0cm}{1.25cm}}
\begin{colormixin}{100!white}
\end{colormixin}
\pgfputat{\pgfxy(.5,5.25)}{\pgfbox[left,center]{Abstract form:}}
\pgfputat{\pgfxy(1.5,4)}{\pgfbox[center,center]{\large{$M_0$}}}
\pgfputat{\pgfxy(3.25,4.4)}{\pgfbox[center,center]{\small{$d$}}}
\pgfputat{\pgfxy(3.25,3.6)}{\pgfbox[center,center]{\small{$B^M$}}}
\pgfsetendarrow{\pgfarrowlargepointed{3pt}}
\pgfsetlinewidth{2pt}
\pgfxyline(2,3.85)(4.5,3.85)
\pgfsetlinewidth{.5pt}
\pgfxyline(4.5,4.15)(2,4.15)
\color{black} 
\begin{pgftranslate}{\pgfpoint{3.5cm}{0cm}}
\pgfputat{\pgfxy(1.5,4)}{\pgfbox[center,center]{\large{$M_1$}}}
\pgfputat{\pgfxy(3.25,4.4)}{\pgfbox[center,center]{\small{$d$}}}
\pgfputat{\pgfxy(3.25,3.6)}{\pgfbox[center,center]{\small{$B^M$}}}
\pgfsetendarrow{\pgfarrowlargepointed{3pt}}
\pgfsetlinewidth{2pt}
\pgfxyline(2,3.85)(4.5,3.85)
\pgfsetlinewidth{.5pt}
\pgfxyline(4.5,4.15)(2,4.15)
\color{black} 
\end{pgftranslate}
\begin{pgftranslate}{\pgfpoint{7cm}{0cm}}
\pgfputat{\pgfxy(1.5,4)}{\pgfbox[center,center]{\large{$M_2$}}}
\pgfputat{\pgfxy(3.25,4.4)}{\pgfbox[center,center]{\small{$d$}}}
\pgfputat{\pgfxy(3.25,3.6)}{\pgfbox[center,center]{\small{$B^M$}}}
\pgfsetendarrow{\pgfarrowlargepointed{3pt}}
\pgfsetlinewidth{2pt}
\pgfxyline(2,3.85)(4.5,3.85)
\pgfsetlinewidth{.5pt}
\pgfxyline(4.5,4.15)(2,4.15)
\color{black} 
\end{pgftranslate}
\begin{pgftranslate}{\pgfpoint{10.5cm}{0cm}}
\pgfputat{\pgfxy(1.5,4)}{\pgfbox[center,center]{\large{$M_3$}}}
\pgfnodecircle{Node1}[fill]{\pgfxy(2.2,4)}{0.03cm}
\pgfnodecircle{Node1}[fill]{\pgfxy(2.35,4)}{0.03cm}
\pgfnodecircle{Node1}[fill]{\pgfxy(2.5,4)}{0.03cm}
\end{pgftranslate}
\end{pgftranslate}
%%%%%%%%%%%%%%%%%%%%%%%%%%%%%%
\begin{pgftranslate}{\pgfpoint{3.5cm}{0cm}}
\pgfputat{\pgfxy(-3,4)}{\pgfbox[left,center]{Concrete form:}}
\begin{pgftranslate}{\pgfpoint{-3.5cm}{0cm}}
\begin{colormixin}{30!white}
\color{black}
\pgfmoveto{\pgfxy(1,.75)}
\pgflineto{\pgfxy(1,2.75)}
\pgfcurveto{\pgfxy(1,3.5)}{\pgfxy(2,3.5)}{\pgfxy(2,2.75)}
\pgflineto{\pgfxy(2,.75)}
\pgfcurveto{\pgfxy(2,0)}{\pgfxy(1,0)}{\pgfxy(1,.75)}
\pgffill
\end{colormixin}
\begin{colormixin}{30!white}
\color{black}
\pgfmoveto{\pgfxy(4.5,.75)}
\pgflineto{\pgfxy(4.5,2.75)}
\pgfcurveto{\pgfxy(4.5,3.5)}{\pgfxy(5.5,3.5)}{\pgfxy(5.5,2.75)}
\pgflineto{\pgfxy(5.5,.75)}
\pgfcurveto{\pgfxy(5.5,0)}{\pgfxy(4.5,0)}{\pgfxy(4.5,.75)}
\pgffill
\end{colormixin}
\begin{colormixin}{30!white}
\color{black}
\pgfmoveto{\pgfxy(8,.75)}
\pgflineto{\pgfxy(8,2.75)}
\pgfcurveto{\pgfxy(8,3.5)}{\pgfxy(9,3.5)}{\pgfxy(9,2.75)}
\pgflineto{\pgfxy(9,.75)}
\pgfcurveto{\pgfxy(9,0)}{\pgfxy(8,0)}{\pgfxy(8,.75)}
\pgffill
\end{colormixin}
\begin{colormixin}{30!white}
\color{black}
\pgfmoveto{\pgfxy(11.5,.75)}
\pgflineto{\pgfxy(11.5,2.75)}
\pgfcurveto{\pgfxy(11.5,3.5)}{\pgfxy(12.5,3.5)}{\pgfxy(12.5,2.75)}
\pgflineto{\pgfxy(12.5,.75)}
\pgfcurveto{\pgfxy(12.5,0)}{\pgfxy(11.5,0)}{\pgfxy(11.5,.75)}
\pgffill
\end{colormixin}
\pgfputat{\pgfxy(1.5,2.75)}{\pgfbox[center,center]{\large{$R$}}}
\pgfputat{\pgfxy(1.5,1.75)}{\pgfbox[center,center]{\large{$\bigoplus$}}}
\pgfputat{\pgfxy(1.5,.75)}{\pgfbox[center,center]{\large{$0$}}}
\pgfputat{\pgfxy(2.8,1.5)}{\pgfbox[center,center]{\small{$N_0$}}}
\pgfputat{\pgfxy(3.25,3.05)}{\pgfbox[center,center]{\small{$b_0$}}}
\pgfputat{\pgfxy(3.6,2.1)}{\pgfbox[center,center]{\small{$1-t_0$}}}
\pgfputat{\pgfxy(3.25,.5)}{\pgfbox[center,center]{\small{$-b'_0$}}}
\pgfsetendarrow{\pgfarrowlargepointed{3pt}}
\pgfsetlinewidth{2pt}
\pgfxyline(1.75,2.3)(4.6,.9)
\pgfsetlinewidth{.5pt}
\pgfxyline(4.5,2.8)(1.9,2.8)
\pgfxyline(4.6,1.2)(1.75,2.6)
\pgfxyline(4.5,.75)(2,.75)
\end{pgftranslate}
\begin{pgftranslate}{\pgfpoint{0cm}{0cm}}
\pgfputat{\pgfxy(1.5,.75)}{\pgfbox[center,center]{\large{$R$}}}
\pgfputat{\pgfxy(1.5,1.75)}{\pgfbox[center,center]{\large{$\bigoplus$}}}
\pgfputat{\pgfxy(1.5,2.75)}{\pgfbox[center,center]{\large{$R^{\otimes 2}$}}}
\pgfputat{\pgfxy(3.25,3.05)}{\pgfbox[center,center]{\small{$b_1$}}}
\pgfputat{\pgfxy(3.6,2.1)}{\pgfbox[center,center]{\small{$1-t_1$}}}
\pgfputat{\pgfxy(3.25,.5)}{\pgfbox[center,center]{\small{$-b'_1$}}}
\pgfputat{\pgfxy(2.8,1.5)}{\pgfbox[center,center]{\small{$N_1$}}}
\pgfsetendarrow{\pgfarrowlargepointed{3pt}}
\pgfsetlinewidth{2pt}
\pgfxyline(1.75,2.3)(4.6,.9)
\pgfsetlinewidth{.5pt}
\pgfxyline(4.5,2.8)(1.9,2.8)
\pgfxyline(4.7,1.2)(1.85,2.6)
\pgfxyline(4.5,.75)(2,.75)
\color{black} 
\end{pgftranslate}
\begin{pgftranslate}{\pgfpoint{3.5cm}{0cm}}
\pgfputat{\pgfxy(1.5,.75)}{\pgfbox[center,center]{\large{$R^{\otimes 2}$}}}
\pgfputat{\pgfxy(1.5,1.75)}{\pgfbox[center,center]{\large{$\bigoplus$}}}
\pgfputat{\pgfxy(1.5,2.75)}{\pgfbox[center,center]{\large{$R^{\otimes 3}$}}}
\pgfputat{\pgfxy(3.25,3.05)}{\pgfbox[center,center]{\small{$b_2$}}}
\pgfputat{\pgfxy(3.6,2.1)}{\pgfbox[center,center]{\small{$1-t_2$}}}
\pgfputat{\pgfxy(3.25,.5)}{\pgfbox[center,center]{\small{$-b'_2$}}}
\pgfputat{\pgfxy(2.8,1.5)}{\pgfbox[center,center]{\small{$N_2$}}}
\pgfsetendarrow{\pgfarrowlargepointed{3pt}}
\pgfsetlinewidth{2pt}
\pgfxyline(1.75,2.3)(4.6,.9)
\pgfsetlinewidth{.5pt}
\pgfxyline(4.5,2.8)(1.9,2.8)
\pgfxyline(4.7,1.2)(1.85,2.6)
\pgfxyline(4.5,.75)(2,.75)
\color{black} 
\end{pgftranslate}
\begin{pgftranslate}{\pgfpoint{7cm}{0cm}}
\pgfputat{\pgfxy(1.5,.75)}{\pgfbox[center,center]{\large{$R^{\otimes 3}$}}}
\pgfputat{\pgfxy(1.5,1.75)}{\pgfbox[center,center]{\large{$\bigoplus$}}}
\pgfputat{\pgfxy(1.5,2.75)}{\pgfbox[center,center]{\large{$R^{\otimes 4}$}}}
\end{pgftranslate}
\pgfnodecircle{Node1}[fill]{\pgfxy(9.2,.75)}{0.03cm}
\pgfnodecircle{Node1}[fill]{\pgfxy(9.35,.75)}{0.03cm}
\pgfnodecircle{Node1}[fill]{\pgfxy(9.5,.75)}{0.03cm}
\pgfnodecircle{Node1}[fill]{\pgfxy(9.2,2.75)}{0.03cm}
\pgfnodecircle{Node1}[fill]{\pgfxy(9.35,2.75)}{0.03cm}
\pgfnodecircle{Node1}[fill]{\pgfxy(9.5,2.75)}{0.03cm}
\end{pgftranslate}
\end{pgfpicture}
\caption{Abstract and Concrete Forms of Keller's Mixed Complex $(M_R,d,B)$.}
\label{figkellermixed}
\end{figure}
%&&&&&&&&&&&&&&&&&&&&&&&&&&&&&&&&&&&&&&&&&&&&&&&&&&&&&&&&&&&&&&&&&&&&&&&&&&&&&&&&&&
%&&&&&&&&&&&&&&&&&&&&&&&&&&&&&&&&&&&&&&&&&&&&&&&&&&&&&&&&&&&&&&&&&&&&&&&&&&&&&&&&&&
%&&&                    &&&       &&&&&    &&&                &&&&&&&&&&&&&&&&&&&&&&&&&&&&&&&&&&&&&&&&&&&&&&&&&&&
%&&&    &&&&&&&&&    &    &&&&   &&&    &&&&    &&&&&&&&&&&&&&&&&&&&&&&&&&&&&&&&&&&&&&&&&&&&&&&&&& 
%&&&    &&&&&&&&&    &&    &&&   &&&    &&&&&    &&&&&&&&&&&&&&&&&&&&&&&&&&&&&&&&&&&&&&&&&&&&&&&&& 
%&&&                 &&&&    &&&    &&   &&&    &&&&&    &&&&&&&&&&&&&&&&&&&&&&&&&&&&&&&&&&&&&&&&&&&&&&&&& 
%&&&    &&&&&&&&&    &&&&    &   &&&    &&&&&    &&&&&&&&&&&&&&&&&&&&&&&&&&&&&&&&&&&&&&&&&&&&&&&&& 
%&&&    &&&&&&&&&    &&&&&       &&&    &&&&     &&&&&&&&&&&&&&&&&&&&&&&&&&&&&&&&&&&&&&&&&&&&&&&&& 
%&&&                    &&&    &&&&&&    &&&                  &&&&&&&&&&&&&&&&&&&&&&&&&&&&&&&&&&&&&&&&&&&&&&&&&&
%&&&&&&&&&&&&&&&&&&&&&&&&&&&&&&&&&&&&&&&&&&&&&&&&&&&&&&&&&&&&&&&&&&&&&&&&&&&&&&&&&&
%&&&&&&&&&&&&&&&&&&&&&&&&&&&&&&&&&&&&&&&&&&&&&&&&&&&&&&&&&&&&&&&&&&&&&&&&&&&&&&&&&&

Clearly $(B^M)^2=0$.  Also, $dB^M+B^Md=0$, as can be shown by matrix multiplication:
\[dB^M+B^Md=\begin{pmatrix} b & 1-t\\ 0 & -b' \end{pmatrix}\begin{pmatrix} 0 & 0\\ N & 0\end{pmatrix}+\begin{pmatrix} 0 & 0\\ N & 0\end{pmatrix} \begin{pmatrix} b & 1-t\\ 0 & -b' \end{pmatrix}=\begin{pmatrix} (1-t)N & 0\\ -b'N & 0\end{pmatrix}+\begin{pmatrix} 0 & 0\\ Nb & N(1-t)\end{pmatrix},\]
which is the zero matrix because 
\[(1-t)N=N(1-t)=Nb-b'N\]
by the identities above.   This shows that $(M_R,d,B^M)$ is a mixed complex, as claimed.

\section{Simplicial and Cyclic Theory}\label{chaptersimplicialcyclic}

In this section, I briefly describe the simplicial and cyclic categories, simplicial and cyclic objects, and cyclic modules.  This material supports a general approach toward cyclic homology useful for dealing with broad structural issues at a high level of algebraic hierarchy.  In particular, Keller's machinery of localization pairs may be understood in this context. 

\subsection{Simplicial Basics} 

The simplicial category $\Delta$ is the category whose objects are finite linearly ordered sets $[n]=\{0,...,n\}$, called $n$-simplices, and whose morphisms are order-preserving (i.e. nondecreasing) maps.   The morphisms in $\Delta$ are conveniently described in terms of special morphisms called {\it face operators} and {\it degeneracy operators}.   The face operator $\delta^n_i:[n]\rightarrow[n+1]$ is the unique injective morphism that omits the element $i$ in the image, and the degeneracy operator $\sigma^{n}_i:[n]\rightarrow[n-1]$ is the unique surjective morphism that has two elements mapping to $i$.   More concretely,
\[\delta^n_i(j) = \left\{ \begin{array}{ll}
         j & \mbox{ if } j<i,\\
        j+1 &  \mbox{ if } j\ge i,
        \end{array} \right. \]    
and
\[\sigma^n_i(j) = \left\{ \begin{array}{ll}
         j & \mbox{ if } j\le i,\\
        j-1 &  \mbox{ if } j> i.
        \end{array} \right. \] 
The face and degeneracy operators satisfy the following identities:
\begin{equation*}
\delta^{n+1}_j\delta^n_i =\delta^{n+1}_i\delta^n_{j-1} \mbox{ if } i<j,
\end{equation*}
\begin{equation*}
\sigma^{n-1}_j\sigma^n_i =\sigma^{n-1}_i\sigma^n_{j+1} \mbox{ if } i\le j,
\end{equation*}
and 
\[\sigma^{n+1}_j\delta^n_i = \left\{ \begin{array}{ll}
         \delta^{n-1}_i\sigma^n_j & \mbox{ if } i<j,\\
         \mbox{Id}_{[n]} & \mbox{ if } i=j \mbox{ or } i=j+1,\\
       \delta^{n-1}_{i-1}\sigma^n_j & \mbox{ if } i>j+1.
        \end{array} \right. \] 
Figure \ref{simplicialcategory} illustrates the face and degeneracy operators in the simplicial category.
        
%&&&&&&&&&&&&&&&&&&&&&&&&&&&&&&&&&&&&&&&&&&&&&&&&&&&&&&&&&&&&&&&&&&&&&&&&&&&&&&&&&&
%&&&&&&&&&&&&&&&&&&&&&&&&&&&&&&&&&&&&&&&&&&&&&&&&&&&&&&&&&&&&&&&&&&&&&&&&&&&&&&&&&&
%&&&&                 &&&&    &&&               &&&                    &&&    &&&&    &&&              &&&&                   &&&&&&&&&&&&&&&&&&
%&&&&   &&&&   &&&&    &&&   &&&&&&&&&&&   &&&&&&    &&&&    &&&    &&&     &&&    &&&&&&&&&&&&&&&&&&&&&&&&
%&&&&   &&&&   &&&&    &&&   &&&&&&&&&&&   &&&&&&    &&&&    &&&    &&&     &&&    &&&&&&&&&&&&&&&&&&&&&&&&
%&&&&                 &&&&    &&&   &&&&&&&&&&&   &&&&&&    &&&&    &&&            &&&&&             &&&&&&&&&&&&&&&&&&&&
%&&&&    &&&&&&&&&    &&&   &&&&&&&&&&&   &&&&&&    &&&&    &&&    &&&   &&&&    &&&&&&&&&&&&&&&&&&&&&&&
%&&&&    &&&&&&&&&    &&&   &&&&&&&&&&&   &&&&&&    &&&&    &&&    &&&&   &&&    &&&&&&&&&&&&&&&&&&&&&&&
%&&&&    &&&&&&&&&    &&&                &&&&&&   &&&&&&                   &&&    &&&&&   &&                  &&&&&&&&&&&&&&&&&&
%&&&&&&&&&&&&&&&&&&&&&&&&&&&&&&&&&&&&&&&&&&&&&&&&&&&&&&&&&&&&&&&&&&&&&&&&&&&&&&&&&&
%&&&&&&&&&&&&&&&&&&&&&&&&&&&&&&&&&&&&&&&&&&&&&&&&&&&&&&&&&&&&&&&&&&&&&&&&&&&&&&&&&&
\begin{figure}[H]
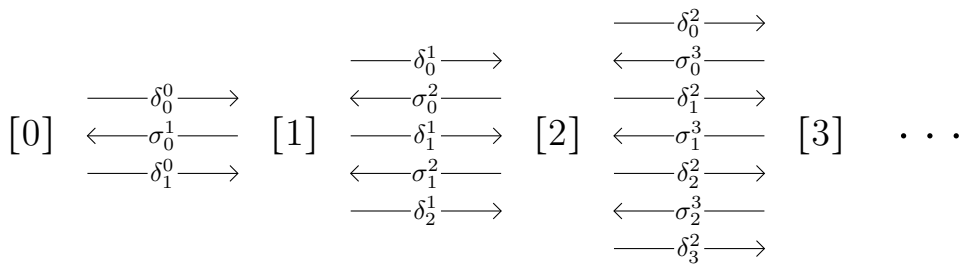

\begin{pgfpicture}{0cm}{0cm}{17cm}{4.6cm}
%\pgfxyline(0,0)(15,0)
%\pgfxyline(0,0)(0,4.5)
%\pgfxyline(0,4.5)(15,4.5)
%\pgfxyline(15,0)(15,4.5)
\begin{pgftranslate}{\pgfpoint{0cm}{1.25cm}}
%\pgfputat{\pgfxy(2.8,2.8)}{\pgfbox[center,center]{\bf{The Simplicial Category.}}}
\pgfputat{\pgfxy(1.5,1)}{\pgfbox[center,center]{\large{$[0]$}}}
\pgfputat{\pgfxy(5,1)}{\pgfbox[center,center]{\large{$[1]$}}}
\pgfputat{\pgfxy(8.5,1)}{\pgfbox[center,center]{\large{$[2]$}}}
\pgfputat{\pgfxy(12,1)}{\pgfbox[center,center]{\large{$[3]$}}}
\begin{pgftranslate}{\pgfpoint{.25cm}{0cm}}
\pgfputat{\pgfxy(3,1.5)}{\pgfbox[center,center]{\footnotesize{$\delta^0_0$}}}
\pgfputat{\pgfxy(3,1)}{\pgfbox[center,center]{\footnotesize{$\sigma^1_0$}}}
\pgfputat{\pgfxy(3,.5)}{\pgfbox[center,center]{\footnotesize{$\delta^0_1$}}}
\pgfxyline(2,1.5)(2.8,1.5)
\pgfxyline(4,1)(3.2,1)
\pgfxyline(2,.5)(2.8,.5)
\pgfsetendarrow{\pgfarrowlargepointed{3pt}}
\pgfxyline(3.2,1.5)(4,1.5)
\pgfxyline(2.8,1)(2,1)
\pgfxyline(3.2,.5)(4,.5)
\pgfclearendarrow
\end{pgftranslate}
\begin{pgftranslate}{\pgfpoint{3.75cm}{0cm}}
\pgfputat{\pgfxy(3,2)}{\pgfbox[center,center]{\footnotesize{$\delta^1_0$}}}
\pgfputat{\pgfxy(3,1.5)}{\pgfbox[center,center]{\footnotesize{$\sigma^2_0$}}}
\pgfputat{\pgfxy(3,1)}{\pgfbox[center,center]{\footnotesize{$\delta^1_1$}}}
\pgfputat{\pgfxy(3,.5)}{\pgfbox[center,center]{\footnotesize{$\sigma^2_1$}}}
\pgfputat{\pgfxy(3,0)}{\pgfbox[center,center]{\footnotesize{$\delta^1_2$}}}
\pgfxyline(2,2)(2.8,2)
\pgfxyline(4,1.5)(3.2,1.5)
\pgfxyline(2,1)(2.8,1)
\pgfxyline(4,.5)(3.2,.5)
\pgfxyline(2,0)(2.8,0)
\pgfsetendarrow{\pgfarrowlargepointed{3pt}}
\pgfxyline(3.2,2)(4,2)
\pgfxyline(2.8,1.5)(2,1.5)
\pgfxyline(3.2,1)(4,1)
\pgfxyline(2.8,.5)(2,.5)
\pgfxyline(3.2,0)(4,0)
\pgfclearendarrow
\end{pgftranslate}
\begin{pgftranslate}{\pgfpoint{7.25cm}{0cm}}
\pgfputat{\pgfxy(3,2.5)}{\pgfbox[center,center]{\footnotesize{$\delta^2_0$}}}
\pgfputat{\pgfxy(3,2)}{\pgfbox[center,center]{\footnotesize{$\sigma^3_0$}}}
\pgfputat{\pgfxy(3,1.5)}{\pgfbox[center,center]{\footnotesize{$\delta^2_1$}}}
\pgfputat{\pgfxy(3,1)}{\pgfbox[center,center]{\footnotesize{$\sigma^3_1$}}}
\pgfputat{\pgfxy(3,.5)}{\pgfbox[center,center]{\footnotesize{$\delta^2_2$}}}
\pgfputat{\pgfxy(3,0)}{\pgfbox[center,center]{\footnotesize{$\sigma^3_2$}}}
\pgfputat{\pgfxy(3,-.5)}{\pgfbox[center,center]{\footnotesize{$\delta^2_3$}}}
\pgfxyline(2,2.5)(2.8,2.5)
\pgfxyline(4,2)(3.2,2)
\pgfxyline(2,1.5)(2.8,1.5)
\pgfxyline(4,1)(3.2,1)
\pgfxyline(2,.5)(2.8,.5)
\pgfxyline(4,0)(3.2,0)
\pgfxyline(2,-.5)(2.8,-.5)
\pgfsetendarrow{\pgfarrowlargepointed{3pt}}
\pgfxyline(3.2,2.5)(4,2.5)
\pgfxyline(2.8,2)(2,2)
\pgfxyline(3.2,1.5)(4,1.5)
\pgfxyline(2.8,1)(2,1)
\pgfxyline(3.2,.5)(4,.5)
\pgfxyline(2.8,0)(2,0)
\pgfxyline(3.2,-.5)(4,-.5)
\pgfclearendarrow
\end{pgftranslate}
\pgfnodecircle{Node1}[fill]{\pgfxy(13.1,1)}{0.035cm}
\pgfnodecircle{Node1}[fill]{\pgfxy(13.45,1)}{0.035cm}
\pgfnodecircle{Node1}[fill]{\pgfxy(13.8,1)}{0.035cm}
\end{pgftranslate}
\end{pgfpicture}
\caption{The Simplicial Category.}
\label{simplicialcategory}
\end{figure}
%&&&&&&&&&&&&&&&&&&&&&&&&&&&&&&&&&&&&&&&&&&&&&&&&&&&&&&&&&&&&&&&&&&&&&&&&&&&&&&&&&&
%&&&&&&&&&&&&&&&&&&&&&&&&&&&&&&&&&&&&&&&&&&&&&&&&&&&&&&&&&&&&&&&&&&&&&&&&&&&&&&&&&&
%&&&                    &&&       &&&&&    &&&                &&&&&&&&&&&&&&&&&&&&&&&&&&&&&&&&&&&&&&&&&&&&&&&&&&&
%&&&    &&&&&&&&&    &    &&&&   &&&    &&&&    &&&&&&&&&&&&&&&&&&&&&&&&&&&&&&&&&&&&&&&&&&&&&&&&&& 
%&&&    &&&&&&&&&    &&    &&&   &&&    &&&&&    &&&&&&&&&&&&&&&&&&&&&&&&&&&&&&&&&&&&&&&&&&&&&&&&& 
%&&&                 &&&&    &&&    &&   &&&    &&&&&    &&&&&&&&&&&&&&&&&&&&&&&&&&&&&&&&&&&&&&&&&&&&&&&&& 
%&&&    &&&&&&&&&    &&&&    &   &&&    &&&&&    &&&&&&&&&&&&&&&&&&&&&&&&&&&&&&&&&&&&&&&&&&&&&&&&& 
%&&&    &&&&&&&&&    &&&&&       &&&    &&&&     &&&&&&&&&&&&&&&&&&&&&&&&&&&&&&&&&&&&&&&&&&&&&&&&& 
%&&&                    &&&    &&&&&&    &&&                  &&&&&&&&&&&&&&&&&&&&&&&&&&&&&&&&&&&&&&&&&&&&&&&&&&
%&&&&&&&&&&&&&&&&&&&&&&&&&&&&&&&&&&&&&&&&&&&&&&&&&&&&&&&&&&&&&&&&&&&&&&&&&&&&&&&&&&
%&&&&&&&&&&&&&&&&&&&&&&&&&&&&&&&&&&&&&&&&&&&&&&&&&&&&&&&&&&&&&&&&&&&&&&&&&&&&&&&&&&

Every morphism $\alpha:[n]\rightarrow[m]$ in the simplicial category has a unique factorization $\alpha=\delta\sigma$, where $\sigma$ is an epimorphism consisting of a composition of degeneracy operators, and $\delta$ is a monomorphism consisting of a composition of face operators. 

%SSSSSSSSSSSSSSSSSSSSSSSSSSSSSSSSSSSSSSSSSSSSSSSSSSSSSSSSSSSSSSSSSSSSSSSSSSSSSSSSSS
%SSSSSSSSSSSSSSSSSSSSSSSSSSSSSSSSSSSSSSSSSSSSSSSSSSSSSSSSSSSSSSSSSSSSSSSSSSSSSSSSSS
%SSS                      SSS                       SSS                    SS                          SS           SSS                         SSS          SSSSS      SSSS
%SSS      SSSSSSSSS    SSSSSSSSSS     SSSSSSSSSSSS     SSSSSSS     SSSS    SSSSSS     SSS     S     SSSS     SSSSS
%SSS      SSSSSSSSS    SSSSSSSSSS     SSSSSSSSSSSS     SSSSSSS     SSSS    SSSSSS     SSS     SS     SSS     SSSSS
%SSS                      SSS                  SSSSS    SSSSSSSSSSSS     SSSSSSS     SSSS    SSSSSS     SSS     SSS     SS     SSSSS
%SSSSSSSSSS    SSS    SSSSSSSSSS    SSSSSSSSSSSS     SSSSSSS     SSSS    SSSSSS     SSS     SSSS     S     SSSSS
%SSSSSSSSSS    SSS    SSSSSSSSSS    SSSSSSSSSSSS     SSSSSSS     SSSS    SSSSSS     SSS     SSSSS          SSSSS
%SSS                       SSS                      SSS                       SSSSS     SSSSSS          SSS                         SSS     SSSSSS        SSSSS
%SSSSSSSSSSSSSSSSSSSSSSSSSSSSSSSSSSSSSSSSSSSSSSSSSSSSSSSSSSSSSSSSSSSSSSSSSSSSSSSSSS
%SSSSSSSSSSSSSSSSSSSSSSSSSSSSSSSSSSSSSSSSSSSSSSSSSSSSSSSSSSSSSSSSSSSSSSSSSSSSSSSSSS

{\bf Simplicial Objects.}  Let $\mathcal{C}$ be a category.  Simplicial and cosimplicial objects in $\mathcal{C}$ are families of objects and morphisms in $\mathcal{C}$ that either reproduce or reverse the abstract structure of the face and degeneracy morphisms in the simplicial category $\Delta$.  Unfortunately, the objects that reverse the structure are called ``simplicial," while the objects that reproduce the structure are called ``cosimplicial."   This terminological stumbling block is a historical artifact that is too deeply ingrained to contravene. 

Formally, a {\it simplicial object} in $\mathcal{C}$ is a contravariant functor from $\Delta$ to $\mathcal{C}$.  More concretely, a simplicial object in $\mathcal{C}$ is a sequence $C_0,C_1,...$ of objects in $\mathcal{C}$, together with {\it face operators} $d_n^i:C_n\rightarrow C_{n-1}$ for $0\le i\le n$ and {\it degeneracy operators} $s^i_n:C_{n}\rightarrow C_{n+1}$ for $0\le i\le n$, satisfying the following identites:
\begin{equation*}
d_{n+1}^id_n^j =d_{n+1}^{j-1}d_n^{i} \mbox{ if } i<j,
\end{equation*}
\begin{equation*}
s_{n-1}^is_n^j =s_{n-1}^{j+1}s_n^{i} \mbox{ if } i\le j,
\end{equation*}
and 
\[d_{n+1}^is_n^j = \left\{ \begin{array}{ll}
         s_{n-1}^{j-1}d_n^i & \mbox{ if } i<j,\\
         \mbox{Id}_{C_n} & \mbox{ if } i=j \mbox{ or } i=j+1,\\
       s_{n-1}^{j}d_n^{i-1}& \mbox{ if } i>j+1.
        \end{array} \right. \] 
Figure \ref{simplicialobject} shows a simplicial object in an arbitrary category $\mc{C}$.  Pay special attention to the fact that the face and degeneracy operators of a simplicial object go in the opposite directions from the face and degeneracy operators in the simplicial category $\Delta$.  They really ought to be called coface and codegeneracy operators, but instead these names are assigned to the corresponding operators in cosimplicial objects (which ought to be called simplicial objects).

%&&&&&&&&&&&&&&&&&&&&&&&&&&&&&&&&&&&&&&&&&&&&&&&&&&&&&&&&&&&&&&&&&&&&&&&&&&&&&&&&&&
%&&&&&&&&&&&&&&&&&&&&&&&&&&&&&&&&&&&&&&&&&&&&&&&&&&&&&&&&&&&&&&&&&&&&&&&&&&&&&&&&&&
%&&&&                 &&&&    &&&               &&&                    &&&    &&&&    &&&              &&&&                   &&&&&&&&&&&&&&&&&&
%&&&&   &&&&   &&&&    &&&   &&&&&&&&&&&   &&&&&&    &&&&    &&&    &&&     &&&    &&&&&&&&&&&&&&&&&&&&&&&&
%&&&&   &&&&   &&&&    &&&   &&&&&&&&&&&   &&&&&&    &&&&    &&&    &&&     &&&    &&&&&&&&&&&&&&&&&&&&&&&&
%&&&&                 &&&&    &&&   &&&&&&&&&&&   &&&&&&    &&&&    &&&            &&&&&             &&&&&&&&&&&&&&&&&&&&
%&&&&    &&&&&&&&&    &&&   &&&&&&&&&&&   &&&&&&    &&&&    &&&    &&&   &&&&    &&&&&&&&&&&&&&&&&&&&&&&
%&&&&    &&&&&&&&&    &&&   &&&&&&&&&&&   &&&&&&    &&&&    &&&    &&&&   &&&    &&&&&&&&&&&&&&&&&&&&&&&
%&&&&    &&&&&&&&&    &&&                &&&&&&   &&&&&&                   &&&    &&&&&   &&                  &&&&&&&&&&&&&&&&&&
%&&&&&&&&&&&&&&&&&&&&&&&&&&&&&&&&&&&&&&&&&&&&&&&&&&&&&&&&&&&&&&&&&&&&&&&&&&&&&&&&&&
%&&&&&&&&&&&&&&&&&&&&&&&&&&&&&&&&&&&&&&&&&&&&&&&&&&&&&&&&&&&&&&&&&&&&&&&&&&&&&&&&&&
\begin{figure}[H]
\begin{pgfpicture}{0cm}{0cm}{17cm}{4.6cm}
%\pgfxyline(0,0)(15,0)
%\pgfxyline(0,0)(0,4.5)
%\pgfxyline(0,4.5)(15,4.5)
%\pgfxyline(15,0)(15,4.5)
\begin{pgftranslate}{\pgfpoint{0cm}{1.25cm}}
%\pgfputat{\pgfxy(2.3,2.8)}{\pgfbox[center,center]{\bf{A Simplicial Object.}}}
\pgfputat{\pgfxy(1.5,1)}{\pgfbox[center,center]{\large{$C_0$}}}
\pgfputat{\pgfxy(5,1)}{\pgfbox[center,center]{\large{$C_1$}}}
\pgfputat{\pgfxy(8.5,1)}{\pgfbox[center,center]{\large{$C_2$}}}
\pgfputat{\pgfxy(12,1)}{\pgfbox[center,center]{\large{$C_3$}}}
\begin{pgftranslate}{\pgfpoint{.25cm}{0cm}}
\pgfputat{\pgfxy(3,1.5)}{\pgfbox[center,center]{\footnotesize{$d_1^0$}}}
\pgfputat{\pgfxy(3,1)}{\pgfbox[center,center]{\footnotesize{$s_0^0$}}}
\pgfputat{\pgfxy(3,.5)}{\pgfbox[center,center]{\footnotesize{$d_1^1$}}}
\pgfxyline(4,1.5)(3.2,1.5)
\pgfxyline(2,1)(2.8,1)
\pgfxyline(4,.5)(3.2,.5)
\pgfsetendarrow{\pgfarrowlargepointed{3pt}}
\pgfxyline(2.8,1.5)(2,1.5)
\pgfxyline(3.2,1)(4,1)
\pgfxyline(2.8,.5)(2,.5)
\pgfclearendarrow
\end{pgftranslate}
\begin{pgftranslate}{\pgfpoint{3.75cm}{0cm}}
\pgfputat{\pgfxy(3,2)}{\pgfbox[center,center]{\footnotesize{$d_2^0$}}}
\pgfputat{\pgfxy(3,1.5)}{\pgfbox[center,center]{\footnotesize{$s_1^0$}}}
\pgfputat{\pgfxy(3,1)}{\pgfbox[center,center]{\footnotesize{$d_2^1$}}}
\pgfputat{\pgfxy(3,.5)}{\pgfbox[center,center]{\footnotesize{$s_1^1$}}}
\pgfputat{\pgfxy(3,0)}{\pgfbox[center,center]{\footnotesize{$d_2^2$}}}
\pgfxyline(4,2)(3.2,2)
\pgfxyline(2,1.5)(2.8,1.5)
\pgfxyline(4,1)(3.2,1)
\pgfxyline(2,.5)(2.8,.5)
\pgfxyline(4,0)(3.2,0)
\pgfsetendarrow{\pgfarrowlargepointed{3pt}}
\pgfxyline(2.8,2)(2,2)
\pgfxyline(3.2,1.5)(4,1.5)
\pgfxyline(2.8,1)(2,1)
\pgfxyline(3.2,.5)(4,.5)
\pgfxyline(2.8,0)(2,0)
\pgfclearendarrow
\end{pgftranslate}
\begin{pgftranslate}{\pgfpoint{7.25cm}{0cm}}
\pgfputat{\pgfxy(3,2.5)}{\pgfbox[center,center]{\footnotesize{$d_3^0$}}}
\pgfputat{\pgfxy(3,2)}{\pgfbox[center,center]{\footnotesize{$s_2^0$}}}
\pgfputat{\pgfxy(3,1.5)}{\pgfbox[center,center]{\footnotesize{$d_3^1$}}}
\pgfputat{\pgfxy(3,1)}{\pgfbox[center,center]{\footnotesize{$s_2^1$}}}
\pgfputat{\pgfxy(3,.5)}{\pgfbox[center,center]{\footnotesize{$d_3^2$}}}
\pgfputat{\pgfxy(3,0)}{\pgfbox[center,center]{\footnotesize{$s_2^2$}}}
\pgfputat{\pgfxy(3,-.5)}{\pgfbox[center,center]{\footnotesize{$d_3^3$}}}
\pgfxyline(4,2.5)(3.2,2.5)
\pgfxyline(2,2)(2.8,2)
\pgfxyline(4,1.5)(3.2,1.5)
\pgfxyline(2,1)(2.8,1)
\pgfxyline(4,.5)(3.2,.5)
\pgfxyline(2,0)(2.8,0)
\pgfxyline(4,-.5)(3.2,-.5)
\pgfsetendarrow{\pgfarrowlargepointed{3pt}}
\pgfxyline(2.8,2.5)(2,2.5)
\pgfxyline(3.2,2)(4,2)
\pgfxyline(2.8,1.5)(2,1.5)
\pgfxyline(3.2,1)(4,1)
\pgfxyline(2.8,.5)(2,.5)
\pgfxyline(3.2,0)(4,0)
\pgfxyline(2.8,-.5)(2,-.5)
\pgfclearendarrow
\end{pgftranslate}
\pgfnodecircle{Node1}[fill]{\pgfxy(13.1,1)}{0.035cm}
\pgfnodecircle{Node1}[fill]{\pgfxy(13.45,1)}{0.035cm}
\pgfnodecircle{Node1}[fill]{\pgfxy(13.8,1)}{0.035cm}
\end{pgftranslate}
\end{pgfpicture}
\caption{A simplicial object.}
\label{simplicialobject}
\end{figure}

\subsection{Cyclic Basics} 

The cyclic category $\Delta C$ has the same objects as the simplicial category, but has extra morphisms generated by the cyclic operators $\tau^n:[n]\rightarrow[n]$ defined by 
\[\tau^n(i) = \left\{ \begin{array}{ll}
         i-1 & \mbox{ if } i>0,\\
        n &  \mbox{ if } i=0.
        \end{array} \right. \]    
Note that $\tau^n$ acts in the opposite way from the cyclic operators $t_n$ defined above.  This is because cyclic objects are defined to be contravariant functors, just as simplicial objects are defined to be contravariant functors.  

In addition to the identities satisfied by the face and degeneracy operators, the following identities are also satisfied in the cyclic category:
\[\tau^n\delta^n_i = \left\{ \begin{array}{ll}
         \delta^n_{i-1}\tau^n & \mbox{ if } i>0,\\
          \delta^n_n& \mbox{ if } i=0.
        \end{array} \right. \]    
\[\tau^n\sigma^{n+1}_i = \left\{ \begin{array}{ll}
         \sigma^{n+1}_{i-1}\tau^n & \mbox{ if } i>0,\\
          \sigma^{n+1}_n(\tau^{n+1})^2& \mbox{ if } i=0.
        \end{array} \right. \]    
\[(\tau^n)^{n+1}=\mbox{Id}_{[n]}\]
For example, the curious-looking identity $\tau^n\sigma^{n+1}_0=\sigma^{n+1}_n(\tau^{n+1})^2$ may be verified graphically as illustrated in figure \ref{identity}. 

%&&&&&&&&&&&&&&&&&&&&&&&&&&&&&&&&&&&&&&&&&&&&&&&&&&&&&&&&&&&&&&&&&&&&&&&&&&&&&&&&&&
%&&&&&&&&&&&&&&&&&&&&&&&&&&&&&&&&&&&&&&&&&&&&&&&&&&&&&&&&&&&&&&&&&&&&&&&&&&&&&&&&&&
%&&&&                 &&&&    &&&               &&&                    &&&    &&&&    &&&              &&&&                   &&&&&&&&&&&&&&&&&&
%&&&&   &&&&   &&&&    &&&   &&&&&&&&&&&   &&&&&&    &&&&    &&&    &&&     &&&    &&&&&&&&&&&&&&&&&&&&&&&&
%&&&&   &&&&   &&&&    &&&   &&&&&&&&&&&   &&&&&&    &&&&    &&&    &&&     &&&    &&&&&&&&&&&&&&&&&&&&&&&&
%&&&&                 &&&&    &&&   &&&&&&&&&&&   &&&&&&    &&&&    &&&            &&&&&             &&&&&&&&&&&&&&&&&&&&
%&&&&    &&&&&&&&&    &&&   &&&&&&&&&&&   &&&&&&    &&&&    &&&    &&&   &&&&    &&&&&&&&&&&&&&&&&&&&&&&
%&&&&    &&&&&&&&&    &&&   &&&&&&&&&&&   &&&&&&    &&&&    &&&    &&&&   &&&    &&&&&&&&&&&&&&&&&&&&&&&
%&&&&    &&&&&&&&&    &&&                &&&&&&   &&&&&&                   &&&    &&&&&   &&                  &&&&&&&&&&&&&&&&&&
%&&&&&&&&&&&&&&&&&&&&&&&&&&&&&&&&&&&&&&&&&&&&&&&&&&&&&&&&&&&&&&&&&&&&&&&&&&&&&&&&&&
%&&&&&&&&&&&&&&&&&&&&&&&&&&&&&&&&&&&&&&&&&&&&&&&&&&&&&&&&&&&&&&&&&&&&&&&&&&&&&&&&&&
\begin{figure}[H]
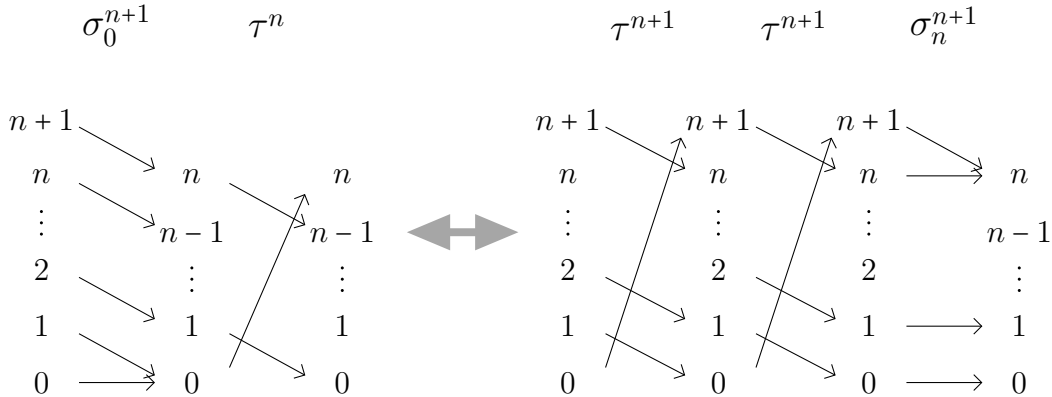

\begin{pgfpicture}{0cm}{0cm}{17cm}{6.1cm}
%\pgfxyline(0,0)(15,0)
%\pgfxyline(0,0)(0,6)
%\pgfxyline(0,6)(15,6)
%\pgfxyline(15,0)(15,6)
\begin{pgftranslate}{\pgfpoint{0cm}{0cm}}
\pgfputat{\pgfxy(1,4)}{\pgfbox[center,center]{$n+1$}}
\pgfputat{\pgfxy(1,3.25)}{\pgfbox[center,center]{$n$}}
\pgfnodecircle{Node1}[fill]{\pgfxy(1,2.775)}{0.02cm}
\pgfnodecircle{Node1}[fill]{\pgfxy(1,2.625)}{0.02cm}
\pgfnodecircle{Node1}[fill]{\pgfxy(1,2.475)}{0.02cm}
\pgfputat{\pgfxy(1,2)}{\pgfbox[center,center]{$2$}}
\pgfputat{\pgfxy(1,1.25)}{\pgfbox[center,center]{$1$}}
\pgfputat{\pgfxy(1,.5)}{\pgfbox[center,center]{$0$}}
\pgfsetendarrow{\pgfarrowlargepointed{3pt}}
\pgfxyline(1.5,3.9)(2.5,3.35)
\pgfxyline(1.5,3.15)(2.5,2.6)
\pgfxyline(1.5,1.9)(2.5,1.35)
\pgfxyline(1.5,1.15)(2.5,0.6)
\pgfxyline(1.5,.5)(2.5,.5)
\end{pgftranslate}
\begin{pgftranslate}{\pgfpoint{2cm}{0cm}}
\pgfputat{\pgfxy(1,3.25)}{\pgfbox[center,center]{$n$}}
\pgfputat{\pgfxy(1,2.5)}{\pgfbox[center,center]{$n-1$}}
\pgfnodecircle{Node1}[fill]{\pgfxy(1,2.025)}{0.02cm}
\pgfnodecircle{Node1}[fill]{\pgfxy(1,1.875)}{0.02cm}
\pgfnodecircle{Node1}[fill]{\pgfxy(1,1.725)}{0.02cm}
\pgfputat{\pgfxy(1,1.25)}{\pgfbox[center,center]{$1$}}
\pgfputat{\pgfxy(1,.5)}{\pgfbox[center,center]{$0$}}
\pgfsetendarrow{\pgfarrowlargepointed{3pt}}
\pgfxyline(1.5,3.15)(2.5,2.6)
\pgfxyline(1.5,1.15)(2.5,0.6)
\pgfxyline(1.5,.7)(2.5,3)
\end{pgftranslate}
\begin{pgftranslate}{\pgfpoint{4cm}{0cm}}
\pgfputat{\pgfxy(1,3.25)}{\pgfbox[center,center]{$n$}}
\pgfputat{\pgfxy(1,2.5)}{\pgfbox[center,center]{$n-1$}}
\pgfnodecircle{Node1}[fill]{\pgfxy(1,2.025)}{0.02cm}
\pgfnodecircle{Node1}[fill]{\pgfxy(1,1.875)}{0.02cm}
\pgfnodecircle{Node1}[fill]{\pgfxy(1,1.725)}{0.02cm}
\pgfputat{\pgfxy(1,1.25)}{\pgfbox[center,center]{$1$}}
\pgfputat{\pgfxy(1,.5)}{\pgfbox[center,center]{$0$}}
\end{pgftranslate}
\begin{pgftranslate}{\pgfpoint{7cm}{0cm}}
\pgfputat{\pgfxy(1,4)}{\pgfbox[center,center]{$n+1$}}
\pgfputat{\pgfxy(1,3.25)}{\pgfbox[center,center]{$n$}}
\pgfnodecircle{Node1}[fill]{\pgfxy(1,2.775)}{0.02cm}
\pgfnodecircle{Node1}[fill]{\pgfxy(1,2.625)}{0.02cm}
\pgfnodecircle{Node1}[fill]{\pgfxy(1,2.475)}{0.02cm}
\pgfputat{\pgfxy(1,2)}{\pgfbox[center,center]{$2$}}
\pgfputat{\pgfxy(1,1.25)}{\pgfbox[center,center]{$1$}}
\pgfputat{\pgfxy(1,.5)}{\pgfbox[center,center]{$0$}}
\pgfsetendarrow{\pgfarrowlargepointed{3pt}}
\pgfxyline(1.5,3.9)(2.55,3.35)
\pgfxyline(1.5,1.9)(2.55,1.35)
\pgfxyline(1.5,1.15)(2.55,0.6)
\pgfxyline(1.5,.7)(2.5,3.75)
\end{pgftranslate}
\begin{pgftranslate}{\pgfpoint{9cm}{0cm}}
\pgfputat{\pgfxy(1,4)}{\pgfbox[center,center]{$n+1$}}
\pgfputat{\pgfxy(1,3.25)}{\pgfbox[center,center]{$n$}}
\pgfnodecircle{Node1}[fill]{\pgfxy(1,2.775)}{0.02cm}
\pgfnodecircle{Node1}[fill]{\pgfxy(1,2.625)}{0.02cm}
\pgfnodecircle{Node1}[fill]{\pgfxy(1,2.475)}{0.02cm}
\pgfputat{\pgfxy(1,2)}{\pgfbox[center,center]{$2$}}
\pgfputat{\pgfxy(1,1.25)}{\pgfbox[center,center]{$1$}}
\pgfputat{\pgfxy(1,.5)}{\pgfbox[center,center]{$0$}}
\pgfsetendarrow{\pgfarrowlargepointed{3pt}}
\pgfxyline(1.5,3.9)(2.55,3.35)
\pgfxyline(1.5,1.9)(2.55,1.35)
\pgfxyline(1.5,1.15)(2.55,0.6)
\pgfxyline(1.5,.7)(2.5,3.75)
\end{pgftranslate}
\begin{pgftranslate}{\pgfpoint{11cm}{0cm}}
\pgfputat{\pgfxy(1,4)}{\pgfbox[center,center]{$n+1$}}
\pgfputat{\pgfxy(1,3.25)}{\pgfbox[center,center]{$n$}}
\pgfnodecircle{Node1}[fill]{\pgfxy(1,2.775)}{0.02cm}
\pgfnodecircle{Node1}[fill]{\pgfxy(1,2.625)}{0.02cm}
\pgfnodecircle{Node1}[fill]{\pgfxy(1,2.475)}{0.02cm}
\pgfputat{\pgfxy(1,2)}{\pgfbox[center,center]{$2$}}
\pgfputat{\pgfxy(1,1.25)}{\pgfbox[center,center]{$1$}}
\pgfputat{\pgfxy(1,.5)}{\pgfbox[center,center]{$0$}}
\pgfsetendarrow{\pgfarrowlargepointed{3pt}}
\pgfxyline(1.5,3.9)(2.5,3.35)
\pgfxyline(1.5,3.25)(2.5,3.25)
\pgfxyline(1.5,1.25)(2.5,1.25)
\pgfxyline(1.5,.5)(2.5,.5)
\end{pgftranslate}
\begin{pgftranslate}{\pgfpoint{13cm}{0cm}}
\pgfputat{\pgfxy(1,3.25)}{\pgfbox[center,center]{$n$}}
\pgfputat{\pgfxy(1,2.5)}{\pgfbox[center,center]{$n-1$}}
\pgfnodecircle{Node1}[fill]{\pgfxy(1,2.025)}{0.02cm}
\pgfnodecircle{Node1}[fill]{\pgfxy(1,1.875)}{0.02cm}
\pgfnodecircle{Node1}[fill]{\pgfxy(1,1.725)}{0.02cm}
\pgfputat{\pgfxy(1,1.25)}{\pgfbox[center,center]{$1$}}
\pgfputat{\pgfxy(1,.5)}{\pgfbox[center,center]{$0$}}
\end{pgftranslate}
\begin{colormixin}{35!white}
\pgfsetendarrow{\pgfarrowtriangle{8pt}}
\pgfsetstartarrow{\pgfarrowtriangle{8pt}}
\pgfsetlinewidth{3pt}
\pgfxyline(6.1,2.5)(7.1,2.5)
\end{colormixin}
\pgfputat{\pgfxy(2,5.25)}{\pgfbox[center,center]{\large{$\sigma^{n+1}_0$}}}
\pgfputat{\pgfxy(4,5.25)}{\pgfbox[center,center]{\large{$\tau^{n}$}}}
\pgfputat{\pgfxy(9,5.25)}{\pgfbox[center,center]{\large{$\tau^{n+1}$}}}
\pgfputat{\pgfxy(11,5.25)}{\pgfbox[center,center]{\large{$\tau^{n+1}$}}}
\pgfputat{\pgfxy(13,5.25)}{\pgfbox[center,center]{\large{$\sigma^{n+1}_n$}}}
\end{pgfpicture}
\caption{Example: the identity $\tau^n\sigma^{n+1}_0=\sigma^{n+1}_n(\tau^{n+1})^2.$}
\label{identity}
\end{figure}
%&&&&&&&&&&&&&&&&&&&&&&&&&&&&&&&&&&&&&&&&&&&&&&&&&&&&&&&&&&&&&&&&&&&&&&&&&&&&&&&&&&
%&&&&&&&&&&&&&&&&&&&&&&&&&&&&&&&&&&&&&&&&&&&&&&&&&&&&&&&&&&&&&&&&&&&&&&&&&&&&&&&&&&
%&&&                    &&&       &&&&&    &&&                &&&&&&&&&&&&&&&&&&&&&&&&&&&&&&&&&&&&&&&&&&&&&&&&&&&
%&&&    &&&&&&&&&    &    &&&&   &&&    &&&&    &&&&&&&&&&&&&&&&&&&&&&&&&&&&&&&&&&&&&&&&&&&&&&&&&& 
%&&&    &&&&&&&&&    &&    &&&   &&&    &&&&&    &&&&&&&&&&&&&&&&&&&&&&&&&&&&&&&&&&&&&&&&&&&&&&&&& 
%&&&                 &&&&    &&&    &&   &&&    &&&&&    &&&&&&&&&&&&&&&&&&&&&&&&&&&&&&&&&&&&&&&&&&&&&&&&& 
%&&&    &&&&&&&&&    &&&&    &   &&&    &&&&&    &&&&&&&&&&&&&&&&&&&&&&&&&&&&&&&&&&&&&&&&&&&&&&&&& 
%&&&    &&&&&&&&&    &&&&&       &&&    &&&&     &&&&&&&&&&&&&&&&&&&&&&&&&&&&&&&&&&&&&&&&&&&&&&&&& 
%&&&                    &&&    &&&&&&    &&&                  &&&&&&&&&&&&&&&&&&&&&&&&&&&&&&&&&&&&&&&&&&&&&&&&&&
%&&&&&&&&&&&&&&&&&&&&&&&&&&&&&&&&&&&&&&&&&&&&&&&&&&&&&&&&&&&&&&&&&&&&&&&&&&&&&&&&&&
%&&&&&&&&&&&&&&&&&&&&&&&&&&&&&&&&&&&&&&&&&&&&&&&&&&&&&&&&&&&&&&&&&&&&&&&&&&&&&&&&&&

Figure \ref{cycliccategory} illustrates the face, degeneracy, and cyclic operators in the cyclic category.

%&&&&&&&&&&&&&&&&&&&&&&&&&&&&&&&&&&&&&&&&&&&&&&&&&&&&&&&&&&&&&&&&&&&&&&&&&&&&&&&&&&
%&&&&&&&&&&&&&&&&&&&&&&&&&&&&&&&&&&&&&&&&&&&&&&&&&&&&&&&&&&&&&&&&&&&&&&&&&&&&&&&&&&
%&&&&                 &&&&    &&&               &&&                    &&&    &&&&    &&&              &&&&                   &&&&&&&&&&&&&&&&&&
%&&&&   &&&&   &&&&    &&&   &&&&&&&&&&&   &&&&&&    &&&&    &&&    &&&     &&&    &&&&&&&&&&&&&&&&&&&&&&&&
%&&&&   &&&&   &&&&    &&&   &&&&&&&&&&&   &&&&&&    &&&&    &&&    &&&     &&&    &&&&&&&&&&&&&&&&&&&&&&&&
%&&&&                 &&&&    &&&   &&&&&&&&&&&   &&&&&&    &&&&    &&&            &&&&&             &&&&&&&&&&&&&&&&&&&&
%&&&&    &&&&&&&&&    &&&   &&&&&&&&&&&   &&&&&&    &&&&    &&&    &&&   &&&&    &&&&&&&&&&&&&&&&&&&&&&&
%&&&&    &&&&&&&&&    &&&   &&&&&&&&&&&   &&&&&&    &&&&    &&&    &&&&   &&&    &&&&&&&&&&&&&&&&&&&&&&&
%&&&&    &&&&&&&&&    &&&                &&&&&&   &&&&&&                   &&&    &&&&&   &&                  &&&&&&&&&&&&&&&&&&
%&&&&&&&&&&&&&&&&&&&&&&&&&&&&&&&&&&&&&&&&&&&&&&&&&&&&&&&&&&&&&&&&&&&&&&&&&&&&&&&&&&
%&&&&&&&&&&&&&&&&&&&&&&&&&&&&&&&&&&&&&&&&&&&&&&&&&&&&&&&&&&&&&&&&&&&&&&&&&&&&&&&&&&
\begin{figure}[H]
\begin{pgfpicture}{0cm}{0cm}{17cm}{4.6cm}
%\pgfxyline(0,0)(15,0)
%\pgfxyline(0,0)(0,4.5)
%\pgfxyline(0,4.5)(15,4.5)
%\pgfxyline(15,0)(15,4.5)
\begin{pgftranslate}{\pgfpoint{0cm}{1.25cm}}
%\pgfputat{\pgfxy(2.8,2.8)}{\pgfbox[center,center]{\bf{The Simplicial Category.}}}
\pgfputat{\pgfxy(1.5,1)}{\pgfbox[center,center]{\large{$[0]$}}}
\pgfputat{\pgfxy(5,1)}{\pgfbox[center,center]{\large{$[1]$}}}
\pgfputat{\pgfxy(8.5,1)}{\pgfbox[center,center]{\large{$[2]$}}}
\pgfputat{\pgfxy(12,1)}{\pgfbox[center,center]{\large{$[3]$}}}
\begin{pgftranslate}{\pgfpoint{.25cm}{0cm}}
\pgfputat{\pgfxy(3,1.5)}{\pgfbox[center,center]{\footnotesize{$\delta^0_0$}}}
\pgfputat{\pgfxy(3,1)}{\pgfbox[center,center]{\footnotesize{$\sigma^1_0$}}}
\pgfputat{\pgfxy(3,.5)}{\pgfbox[center,center]{\footnotesize{$\delta^0_1$}}}
\pgfputat{\pgfxy(1.25,-.3)}{\pgfbox[center,center]{\footnotesize{$\tau^0$}}}
\pgfxyline(2,1.5)(2.8,1.5)
\pgfxyline(4,1)(3.2,1)
\pgfxyline(2,.5)(2.8,.5)
\pgfsetendarrow{\pgfarrowlargepointed{3pt}}
\pgfxyline(3.2,1.5)(4,1.5)
\pgfxyline(2.8,1)(2,1)
\pgfxyline(3.2,.5)(4,.5)
\pgfmoveto{\pgfxy(1,.6)}
\pgfcurveto{\pgfxy(.5,-.3)}{\pgfxy(2,-.3)}{\pgfxy(1.5,.6)}
\pgfstroke
\pgfclearendarrow
\end{pgftranslate}
\begin{pgftranslate}{\pgfpoint{3.75cm}{0cm}}
\pgfputat{\pgfxy(3,2)}{\pgfbox[center,center]{\footnotesize{$\delta^1_0$}}}
\pgfputat{\pgfxy(3,1.5)}{\pgfbox[center,center]{\footnotesize{$\sigma^2_0$}}}
\pgfputat{\pgfxy(3,1)}{\pgfbox[center,center]{\footnotesize{$\delta^1_1$}}}
\pgfputat{\pgfxy(3,.5)}{\pgfbox[center,center]{\footnotesize{$\sigma^2_1$}}}
\pgfputat{\pgfxy(3,0)}{\pgfbox[center,center]{\footnotesize{$\delta^1_2$}}}
\pgfputat{\pgfxy(1.25,-.3)}{\pgfbox[center,center]{\footnotesize{$\tau^1$}}}
\pgfxyline(2,2)(2.8,2)
\pgfxyline(4,1.5)(3.2,1.5)
\pgfxyline(2,1)(2.8,1)
\pgfxyline(4,.5)(3.2,.5)
\pgfxyline(2,0)(2.8,0)
\pgfsetendarrow{\pgfarrowlargepointed{3pt}}
\pgfxyline(3.2,2)(4,2)
\pgfxyline(2.8,1.5)(2,1.5)
\pgfxyline(3.2,1)(4,1)
\pgfxyline(2.8,.5)(2,.5)
\pgfxyline(3.2,0)(4,0)
\pgfmoveto{\pgfxy(1,.6)}
\pgfcurveto{\pgfxy(.5,-.3)}{\pgfxy(2,-.3)}{\pgfxy(1.5,.6)}
\pgfstroke
\pgfclearendarrow
\end{pgftranslate}
\begin{pgftranslate}{\pgfpoint{7.25cm}{0cm}}
\pgfputat{\pgfxy(3,2.5)}{\pgfbox[center,center]{\footnotesize{$\delta^2_0$}}}
\pgfputat{\pgfxy(3,2)}{\pgfbox[center,center]{\footnotesize{$\sigma^3_0$}}}
\pgfputat{\pgfxy(3,1.5)}{\pgfbox[center,center]{\footnotesize{$\delta^2_1$}}}
\pgfputat{\pgfxy(3,1)}{\pgfbox[center,center]{\footnotesize{$\sigma^3_1$}}}
\pgfputat{\pgfxy(3,.5)}{\pgfbox[center,center]{\footnotesize{$\delta^2_2$}}}
\pgfputat{\pgfxy(3,0)}{\pgfbox[center,center]{\footnotesize{$\sigma^3_2$}}}
\pgfputat{\pgfxy(3,-.5)}{\pgfbox[center,center]{\footnotesize{$\delta^2_3$}}}
\pgfputat{\pgfxy(1.25,-.3)}{\pgfbox[center,center]{\footnotesize{$\tau^2$}}}
\pgfxyline(2,2.5)(2.8,2.5)
\pgfxyline(4,2)(3.2,2)
\pgfxyline(2,1.5)(2.8,1.5)
\pgfxyline(4,1)(3.2,1)
\pgfxyline(2,.5)(2.8,.5)
\pgfxyline(4,0)(3.2,0)
\pgfxyline(2,-.5)(2.8,-.5)
\pgfsetendarrow{\pgfarrowlargepointed{3pt}}
\pgfxyline(3.2,2.5)(4,2.5)
\pgfxyline(2.8,2)(2,2)
\pgfxyline(3.2,1.5)(4,1.5)
\pgfxyline(2.8,1)(2,1)
\pgfxyline(3.2,.5)(4,.5)
\pgfxyline(2.8,0)(2,0)
\pgfxyline(3.2,-.5)(4,-.5)
\pgfmoveto{\pgfxy(1,.6)}
\pgfcurveto{\pgfxy(.5,-.3)}{\pgfxy(2,-.3)}{\pgfxy(1.5,.6)}
\pgfstroke
\pgfclearendarrow
\end{pgftranslate}
\begin{pgftranslate}{\pgfpoint{10.75cm}{0cm}}
\pgfputat{\pgfxy(1.25,-.3)}{\pgfbox[center,center]{\footnotesize{$\tau^3$}}}
\pgfsetendarrow{\pgfarrowlargepointed{3pt}}
\pgfmoveto{\pgfxy(1,.6)}
\pgfcurveto{\pgfxy(.5,-.3)}{\pgfxy(2,-.3)}{\pgfxy(1.5,.6)}
\pgfstroke
\end{pgftranslate}
\pgfnodecircle{Node1}[fill]{\pgfxy(13.1,1)}{0.035cm}
\pgfnodecircle{Node1}[fill]{\pgfxy(13.45,1)}{0.035cm}
\pgfnodecircle{Node1}[fill]{\pgfxy(13.8,1)}{0.035cm}
\end{pgftranslate}
\end{pgfpicture}
\caption{The cyclic category.}
\label{cycliccategory}
\end{figure}
%&&&&&&&&&&&&&&&&&&&&&&&&&&&&&&&&&&&&&&&&&&&&&&&&&&&&&&&&&&&&&&&&&&&&&&&&&&&&&&&&&&
%&&&&&&&&&&&&&&&&&&&&&&&&&&&&&&&&&&&&&&&&&&&&&&&&&&&&&&&&&&&&&&&&&&&&&&&&&&&&&&&&&&
%&&&                    &&&       &&&&&    &&&                &&&&&&&&&&&&&&&&&&&&&&&&&&&&&&&&&&&&&&&&&&&&&&&&&&&
%&&&    &&&&&&&&&    &    &&&&   &&&    &&&&    &&&&&&&&&&&&&&&&&&&&&&&&&&&&&&&&&&&&&&&&&&&&&&&&&& 
%&&&    &&&&&&&&&    &&    &&&   &&&    &&&&&    &&&&&&&&&&&&&&&&&&&&&&&&&&&&&&&&&&&&&&&&&&&&&&&&& 
%&&&                 &&&&    &&&    &&   &&&    &&&&&    &&&&&&&&&&&&&&&&&&&&&&&&&&&&&&&&&&&&&&&&&&&&&&&&& 
%&&&    &&&&&&&&&    &&&&    &   &&&    &&&&&    &&&&&&&&&&&&&&&&&&&&&&&&&&&&&&&&&&&&&&&&&&&&&&&&& 
%&&&    &&&&&&&&&    &&&&&       &&&    &&&&     &&&&&&&&&&&&&&&&&&&&&&&&&&&&&&&&&&&&&&&&&&&&&&&&& 
%&&&                    &&&    &&&&&&    &&&                  &&&&&&&&&&&&&&&&&&&&&&&&&&&&&&&&&&&&&&&&&&&&&&&&&&
%&&&&&&&&&&&&&&&&&&&&&&&&&&&&&&&&&&&&&&&&&&&&&&&&&&&&&&&&&&&&&&&&&&&&&&&&&&&&&&&&&&
%&&&&&&&&&&&&&&&&&&&&&&&&&&&&&&&&&&&&&&&&&&&&&&&&&&&&&&&&&&&&&&&&&&&&&&&&&&&&&&&&&&

Every morphism $\beta:[n]\rightarrow[m]$ in the cyclic category has a unique factorization $\beta=(\tau^m)^l\alpha$, where $\alpha$ is a morphism in the simplicial category $\Delta$, and $(\tau^m)^l$ is an element of the cyclic group of order $m+1$ generated by $\tau^m$.   

Unlike the simplicial category, the cyclic category is isomorphic to its opposite category.

{\bf Cyclic Objects.}  Similar to a simplicial object, a cyclic object in a category $\mc{C}$ is a family of objects and morphisms in $\mc{C}$ that reverses the abstract structure of the cyclic category.  Cyclic objects really ought to be called cocyclic objects, but it is too late to change the terminology.  

Formally, a {\it cyclic object} in $\mathcal{C}$ is a contravariant functor from $\Delta C$ to $\mathcal{C}$.  More concretely, a cyclic object in $\mathcal{C}$ is a sequence $C_0,C_1,...$ of objects in $\mathcal{C}$, together with {\it face operators} $d_n^i:C_n\rightarrow C_{n-1}$ for $0\le i\le n$, {\it degeneracy operators} $s^i_n:C_{n}\rightarrow C_{n+1}$ for $0\le i\le n$, and {\it cyclic operators} $t_n:C_n\rightarrow C_n$, satisfying the following identites:
\begin{equation*}
d_{n+1}^id_n^j =d_{n+1}^{j-1}d_n^{i} \mbox{ if } i<j,
\end{equation*}
\begin{equation*}
s_{n-1}^is_n^j =s_{n-1}^{j+1}s_n^{i} \mbox{ if } i\le j,
\end{equation*}
\[d_{n+1}^is_n^j = \left\{ \begin{array}{ll}
         s_{n-1}^{j-1}d_n^i & \mbox{ if } i<j,\\
         \mbox{Id}_{C_n} & \mbox{ if } i=j \mbox{ or } i=j+1,\\
       s_{n-1}^{j}d_n^{i-1}& \mbox{ if } i>j+1,
        \end{array} \right. \] 
        
\[d^i_nt_n = \left\{ \begin{array}{ll}
        t_{n-1}d^{i-1}_n & \mbox{ if } i>0,\\
          d^n_n& \mbox{ if } i=0.
        \end{array} \right. \]    
\[s^i_nt_n = \left\{ \begin{array}{ll}
         t_{n+1}s^{i-1}_n & \mbox{ if } i>0,\\
          (t_{n+1})^2s_n^n& \mbox{ if } i=0.
        \end{array} \right. \]    
\[(t_n)^{n+1}=\mbox{Id}_{C_n}\]
        
Figure \ref{cyclicobject} shows a cyclic object in an arbitrary category $\mc{C}$.  Again, note that the face, degeneracy, and cyclic operators of a cyclic object go in the opposite directions from the face, degeneracy, and cyclic operators in the cyclic category $\Delta$. 
        
%&&&&&&&&&&&&&&&&&&&&&&&&&&&&&&&&&&&&&&&&&&&&&&&&&&&&&&&&&&&&&&&&&&&&&&&&&&&&&&&&&&
%&&&&&&&&&&&&&&&&&&&&&&&&&&&&&&&&&&&&&&&&&&&&&&&&&&&&&&&&&&&&&&&&&&&&&&&&&&&&&&&&&&
%&&&&                 &&&&    &&&               &&&                    &&&    &&&&    &&&              &&&&                   &&&&&&&&&&&&&&&&&&
%&&&&   &&&&   &&&&    &&&   &&&&&&&&&&&   &&&&&&    &&&&    &&&    &&&     &&&    &&&&&&&&&&&&&&&&&&&&&&&&
%&&&&   &&&&   &&&&    &&&   &&&&&&&&&&&   &&&&&&    &&&&    &&&    &&&     &&&    &&&&&&&&&&&&&&&&&&&&&&&&
%&&&&                 &&&&    &&&   &&&&&&&&&&&   &&&&&&    &&&&    &&&            &&&&&             &&&&&&&&&&&&&&&&&&&&
%&&&&    &&&&&&&&&    &&&   &&&&&&&&&&&   &&&&&&    &&&&    &&&    &&&   &&&&    &&&&&&&&&&&&&&&&&&&&&&&
%&&&&    &&&&&&&&&    &&&   &&&&&&&&&&&   &&&&&&    &&&&    &&&    &&&&   &&&    &&&&&&&&&&&&&&&&&&&&&&&
%&&&&    &&&&&&&&&    &&&                &&&&&&   &&&&&&                   &&&    &&&&&   &&                  &&&&&&&&&&&&&&&&&&
%&&&&&&&&&&&&&&&&&&&&&&&&&&&&&&&&&&&&&&&&&&&&&&&&&&&&&&&&&&&&&&&&&&&&&&&&&&&&&&&&&&
%&&&&&&&&&&&&&&&&&&&&&&&&&&&&&&&&&&&&&&&&&&&&&&&&&&&&&&&&&&&&&&&&&&&&&&&&&&&&&&&&&&
\begin{figure}[H]
\begin{pgfpicture}{0cm}{0cm}{17cm}{4.6cm}
%\pgfxyline(0,0)(15,0)
%\pgfxyline(0,0)(0,4.5)
%\pgfxyline(0,4.5)(15,4.5)
%\pgfxyline(15,0)(15,4.5)
\begin{pgftranslate}{\pgfpoint{0cm}{1.25cm}}
%\pgfputat{\pgfxy(2.3,2.8)}{\pgfbox[center,center]{\bf{A Simplicial Object.}}}
\pgfputat{\pgfxy(1.5,1)}{\pgfbox[center,center]{\large{$C_0$}}}
\pgfputat{\pgfxy(5,1)}{\pgfbox[center,center]{\large{$C_1$}}}
\pgfputat{\pgfxy(8.5,1)}{\pgfbox[center,center]{\large{$C_2$}}}
\pgfputat{\pgfxy(12,1)}{\pgfbox[center,center]{\large{$C_3$}}}
\begin{pgftranslate}{\pgfpoint{.25cm}{0cm}}
\pgfputat{\pgfxy(3,1.5)}{\pgfbox[center,center]{\footnotesize{$\delta_1^0$}}}
\pgfputat{\pgfxy(3,1)}{\pgfbox[center,center]{\footnotesize{$\sigma_0^0$}}}
\pgfputat{\pgfxy(3,.5)}{\pgfbox[center,center]{\footnotesize{$\delta_1^1$}}}
\pgfputat{\pgfxy(1.25,-.3)}{\pgfbox[center,center]{\footnotesize{$\tau_0$}}}
\pgfxyline(4,1.5)(3.2,1.5)
\pgfxyline(2,1)(2.8,1)
\pgfxyline(4,.5)(3.2,.5)
\pgfsetendarrow{\pgfarrowlargepointed{3pt}}
\pgfxyline(2.8,1.5)(2,1.5)
\pgfxyline(3.2,1)(4,1)
\pgfxyline(2.8,.5)(2,.5)
\pgfmoveto{\pgfxy(1.5,.6)}
\pgfcurveto{\pgfxy(2,-.3)}{\pgfxy(.5,-.3)}{\pgfxy(1,.6)}
\pgfstroke
\pgfclearendarrow
\end{pgftranslate}
\begin{pgftranslate}{\pgfpoint{3.75cm}{0cm}}
\pgfputat{\pgfxy(3,2)}{\pgfbox[center,center]{\footnotesize{$\delta_2^0$}}}
\pgfputat{\pgfxy(3,1.5)}{\pgfbox[center,center]{\footnotesize{$\sigma_1^0$}}}
\pgfputat{\pgfxy(3,1)}{\pgfbox[center,center]{\footnotesize{$\delta_2^1$}}}
\pgfputat{\pgfxy(3,.5)}{\pgfbox[center,center]{\footnotesize{$\sigma_1^1$}}}
\pgfputat{\pgfxy(3,0)}{\pgfbox[center,center]{\footnotesize{$\delta_2^2$}}}
\pgfputat{\pgfxy(1.25,-.3)}{\pgfbox[center,center]{\footnotesize{$\tau_1$}}}
\pgfxyline(4,2)(3.2,2)
\pgfxyline(2,1.5)(2.8,1.5)
\pgfxyline(4,1)(3.2,1)
\pgfxyline(2,.5)(2.8,.5)
\pgfxyline(4,0)(3.2,0)
\pgfsetendarrow{\pgfarrowlargepointed{3pt}}
\pgfxyline(2.8,2)(2,2)
\pgfxyline(3.2,1.5)(4,1.5)
\pgfxyline(2.8,1)(2,1)
\pgfxyline(3.2,.5)(4,.5)
\pgfxyline(2.8,0)(2,0)
\pgfmoveto{\pgfxy(1.5,.6)}
\pgfcurveto{\pgfxy(2,-.3)}{\pgfxy(.5,-.3)}{\pgfxy(1,.6)}
\pgfstroke
\pgfclearendarrow
\end{pgftranslate}
\begin{pgftranslate}{\pgfpoint{7.25cm}{0cm}}
\pgfputat{\pgfxy(3,2.5)}{\pgfbox[center,center]{\footnotesize{$\delta_3^0$}}}
\pgfputat{\pgfxy(3,2)}{\pgfbox[center,center]{\footnotesize{$\sigma_2^0$}}}
\pgfputat{\pgfxy(3,1.5)}{\pgfbox[center,center]{\footnotesize{$\delta_3^1$}}}
\pgfputat{\pgfxy(3,1)}{\pgfbox[center,center]{\footnotesize{$\sigma_2^1$}}}
\pgfputat{\pgfxy(3,.5)}{\pgfbox[center,center]{\footnotesize{$\delta_3^2$}}}
\pgfputat{\pgfxy(3,0)}{\pgfbox[center,center]{\footnotesize{$\sigma_2^2$}}}
\pgfputat{\pgfxy(3,-.5)}{\pgfbox[center,center]{\footnotesize{$\delta_3^3$}}}
\pgfputat{\pgfxy(1.25,-.3)}{\pgfbox[center,center]{\footnotesize{$\tau_2$}}}
\pgfxyline(4,2.5)(3.2,2.5)
\pgfxyline(2,2)(2.8,2)
\pgfxyline(4,1.5)(3.2,1.5)
\pgfxyline(2,1)(2.8,1)
\pgfxyline(4,.5)(3.2,.5)
\pgfxyline(2,0)(2.8,0)
\pgfxyline(4,-.5)(3.2,-.5)
\pgfsetendarrow{\pgfarrowlargepointed{3pt}}
\pgfxyline(2.8,2.5)(2,2.5)
\pgfxyline(3.2,2)(4,2)
\pgfxyline(2.8,1.5)(2,1.5)
\pgfxyline(3.2,1)(4,1)
\pgfxyline(2.8,.5)(2,.5)
\pgfxyline(3.2,0)(4,0)
\pgfxyline(2.8,-.5)(2,-.5)
\pgfmoveto{\pgfxy(1.5,.6)}
\pgfcurveto{\pgfxy(2,-.3)}{\pgfxy(.5,-.3)}{\pgfxy(1,.6)}
\pgfstroke
\pgfclearendarrow
\end{pgftranslate}
\begin{pgftranslate}{\pgfpoint{10.75cm}{0cm}}
\pgfputat{\pgfxy(1.25,-.3)}{\pgfbox[center,center]{\footnotesize{$\tau_3$}}}
\pgfsetendarrow{\pgfarrowlargepointed{3pt}}
\pgfmoveto{\pgfxy(1.5,.6)}
\pgfcurveto{\pgfxy(2,-.3)}{\pgfxy(.5,-.3)}{\pgfxy(1,.6)}
\pgfstroke
\end{pgftranslate}
\pgfnodecircle{Node1}[fill]{\pgfxy(13.1,1)}{0.035cm}
\pgfnodecircle{Node1}[fill]{\pgfxy(13.45,1)}{0.035cm}
\pgfnodecircle{Node1}[fill]{\pgfxy(13.8,1)}{0.035cm}
\end{pgftranslate}
\end{pgfpicture}
\caption{A cyclic object.}
\label{cyclicobject}
\end{figure}
%&&&&&&&&&&&&&&&&&&&&&&&&&&&&&&&&&&&&&&&&&&&&&&&&&&&&&&&&&&&&&&&&&&&&&&&&&&&&&&&&&&
%&&&&&&&&&&&&&&&&&&&&&&&&&&&&&&&&&&&&&&&&&&&&&&&&&&&&&&&&&&&&&&&&&&&&&&&&&&&&&&&&&&
%&&&                    &&&       &&&&&    &&&                &&&&&&&&&&&&&&&&&&&&&&&&&&&&&&&&&&&&&&&&&&&&&&&&&&&
%&&&    &&&&&&&&&    &    &&&&   &&&    &&&&    &&&&&&&&&&&&&&&&&&&&&&&&&&&&&&&&&&&&&&&&&&&&&&&&&& 
%&&&    &&&&&&&&&    &&    &&&   &&&    &&&&&    &&&&&&&&&&&&&&&&&&&&&&&&&&&&&&&&&&&&&&&&&&&&&&&&& 
%&&&                 &&&&    &&&    &&   &&&    &&&&&    &&&&&&&&&&&&&&&&&&&&&&&&&&&&&&&&&&&&&&&&&&&&&&&&& 
%&&&    &&&&&&&&&    &&&&    &   &&&    &&&&&    &&&&&&&&&&&&&&&&&&&&&&&&&&&&&&&&&&&&&&&&&&&&&&&&& 
%&&&    &&&&&&&&&    &&&&&       &&&    &&&&     &&&&&&&&&&&&&&&&&&&&&&&&&&&&&&&&&&&&&&&&&&&&&&&&& 
%&&&                    &&&    &&&&&&    &&&                  &&&&&&&&&&&&&&&&&&&&&&&&&&&&&&&&&&&&&&&&&&&&&&&&&&
%&&&&&&&&&&&&&&&&&&&&&&&&&&&&&&&&&&&&&&&&&&&&&&&&&&&&&&&&&&&&&&&&&&&&&&&&&&&&&&&&&&
%&&&&&&&&&&&&&&&&&&&&&&&&&&&&&&&&&&&&&&&&&&&&&&&&&&&&&&&&&&&&&&&&&&&&&&&&&&&&&&&&&&

\subsection{Cyclic Modules}\label{subsectioncyclicmodules}

A {\bf cyclic module} is a cyclic object in the category of $k$-modules.  A cyclic module induces a mixed complex, whose corresponding bicomplex may be used to compute cyclic homology and negative cyclic homology.   Not every mixed complex comes from a cyclic module, but many important examples of mixed complexes arise in this way.  In particular, the Hochschild complex of a unital $k$-algebra is a cyclic $k$-module.  

%SSSSSSSSSSSSSSSSSSSSSSSSSSSSSSSSSSSSSSSSSSSSSSSSSSSSSSSSSSSSSSSSSSSSSSSSSSSSSSSSSS
%SSSSSSSSSSSSSSSSSSSSSSSSSSSSSSSSSSSSSSSSSSSSSSSSSSSSSSSSSSSSSSSSSSSSSSSSSSSSSSSSSS
%SSS                      SSS                       SSS                    SS                          SS           SSS                         SSS          SSSSS      SSSS
%SSS      SSSSSSSSS    SSSSSSSSSS     SSSSSSSSSSSS     SSSSSSS     SSSS    SSSSSS     SSS     S     SSSS     SSSSS
%SSS      SSSSSSSSS    SSSSSSSSSS     SSSSSSSSSSSS     SSSSSSS     SSSS    SSSSSS     SSS     SS     SSS     SSSSS
%SSS                      SSS                  SSSSS    SSSSSSSSSSSS     SSSSSSS     SSSS    SSSSSS     SSS     SSS     SS     SSSSS
%SSSSSSSSSS    SSS    SSSSSSSSSS    SSSSSSSSSSSS     SSSSSSS     SSSS    SSSSSS     SSS     SSSS     S     SSSSS
%SSSSSSSSSS    SSS    SSSSSSSSSS    SSSSSSSSSSSS     SSSSSSS     SSSS    SSSSSS     SSS     SSSSS          SSSSS
%SSS                       SSS                      SSS                       SSSSS     SSSSSS          SSS                         SSS     SSSSSS        SSSSS
%SSSSSSSSSSSSSSSSSSSSSSSSSSSSSSSSSSSSSSSSSSSSSSSSSSSSSSSSSSSSSSSSSSSSSSSSSSSSSSSSSS
%SSSSSSSSSSSSSSSSSSSSSSSSSSSSSSSSSSSSSSSSSSSSSSSSSSSSSSSSSSSSSSSSSSSSSSSSSSSSSSSSSS

{\bf Cyclic Modules.} Let $k$ be a commutative ring.  A {\bf cyclic $k$-module} is a cyclic object in the category of $k$-modules.  It is represented by a diagram as shown in figure \ref{cyclicobject} above, where the objects $C_n$ are $k$-modules, and the face operators $\delta_n$, degeneracy operators $\sigma_n$, and cyclic operators $\tau_n$ are $k$-module morphisms.   A cyclic module $C$ defines a chain complex $(C,b)$, where for each $n$, $b_n:C_n\rightarrow C_{n-1}$ is given by the alternating sum of the face operators
\begin{equation}\label{equcycmodboundary}b=\sum_{i=0}^n(-1)^i \delta_n^i.\end{equation}
This formula is analogous to the boundary map for the Hochschild complex $\tn{C}_R$ of a unital $k$-algebra $R$, appearing in equation \hyperref[equhochschildsum]{\ref{equhochschildsum}} above.  

%SSSSSSSSSSSSSSSSSSSSSSSSSSSSSSSSSSSSSSSSSSSSSSSSSSSSSSSSSSSSSSSSSSSSSSSSSSSSSSSSSS
%SSSSSSSSSSSSSSSSSSSSSSSSSSSSSSSSSSSSSSSSSSSSSSSSSSSSSSSSSSSSSSSSSSSSSSSSSSSSSSSSSS
%SSS                      SSS                       SSS                    SS                          SS           SSS                         SSS          SSSSS      SSSS
%SSS      SSSSSSSSS    SSSSSSSSSS     SSSSSSSSSSSS     SSSSSSS     SSSS    SSSSSS     SSS     S     SSSS     SSSSS
%SSS      SSSSSSSSS    SSSSSSSSSS     SSSSSSSSSSSS     SSSSSSS     SSSS    SSSSSS     SSS     SS     SSS     SSSSS
%SSS                      SSS                  SSSSS    SSSSSSSSSSSS     SSSSSSS     SSSS    SSSSSS     SSS     SSS     SS     SSSSS
%SSSSSSSSSS    SSS    SSSSSSSSSS    SSSSSSSSSSSS     SSSSSSS     SSSS    SSSSSS     SSS     SSSS     S     SSSSS
%SSSSSSSSSS    SSS    SSSSSSSSSS    SSSSSSSSSSSS     SSSSSSS     SSSS    SSSSSS     SSS     SSSSS          SSSSS
%SSS                       SSS                      SSS                       SSSSS     SSSSSS          SSS                         SSS     SSSSSS        SSSSS
%SSSSSSSSSSSSSSSSSSSSSSSSSSSSSSSSSSSSSSSSSSSSSSSSSSSSSSSSSSSSSSSSSSSSSSSSSSSSSSSSSS
%SSSSSSSSSSSSSSSSSSSSSSSSSSSSSSSSSSSSSSSSSSSSSSSSSSSSSSSSSSSSSSSSSSSSSSSSSSSSSSSSSS

{\bf Cyclic Homology of a Cyclic Module.} A cyclic module defines a mixed complex $(C,b,B)$, where for each $n$, $B=B_n$ is defined by the formula 
\[B_n:=(1-\tau_{n+1})\sigma_{n}N_n,\]
where the maps $1$, $\tau_{n+1}$, $\sigma_{n}$, and $N_n$ are the abstract analogues of the maps $1$, $t_{n+1}$, $s_n$, and  $N_n$ appearing in the definition of Connes' boundary map in equation \hyperref[equconnesboundary]{\ref{equconnesboundary}} of section \hyperref[subsectioncychomalgcommring]{\ref{subsectioncychomalgcommring}} above.  In particular, the extra degeneracy $\sigma_{n}=\sigma_{n}^{n+1}$ is defined in terms of the last ``ordinary" degeneracy $\sigma_n^n$ and the cyclic operator $\tau_{n+1}$ as $\sigma_n^{n+1}=(-1)^{n+1}\tau_{n+1}\sigma_n^n$.

\begin{defi}\label{deficychomcycmod} Let $C$ be a cyclic module. The {\bf cyclic homology groups} $\tn{HN}_{n,C}$ of $C$ are the homology groups of the product total complex of the associated mixed complex.  A similar definition holds of negative cyclic homology. 
\end{defi}

\section{Weibel's Cyclic Homology for Schemes}\label{sectionweibelcychomschemes}

{\bf Weibel's Approach.}  Charles Weibel \cite{WeibelCyclicSchemes91} defines the cyclic homology of a scheme $X$ over a commutative ring $k$  by sheafifying the $\tn{B}$-bicomplex $\tn{B}_R$ illustrated in figure \hyperref[figBcomplex]{\ref{figBcomplex}} above, and taking {\it Cartan-Eilenberg hypercohomology.}   The bicomplex  $\tn{B}_R$ may either be sheafified directly, or the Hochschild complex $\tn{C}_R$ may be sheafified first, and the corresponding bicomplex of sheaves may be constructed afterwards using the resulting sheaf complex, in a manner analogous to the construction of $\tn{B}_R$ from $\tn{C}_R$.  Here, I begin by discussing the sheafification of the Hochschild complex, since this approach also allows for the construction of Hochschild homology of schemes along the way. 

%SSSSSSSSSSSSSSSSSSSSSSSSSSSSSSSSSSSSSSSSSSSSSSSSSSSSSSSSSSSSSSSSSSSSSSSSSSSSSSSSSS
%SSSSSSSSSSSSSSSSSSSSSSSSSSSSSSSSSSSSSSSSSSSSSSSSSSSSSSSSSSSSSSSSSSSSSSSSSSSSSSSSSS
%SSS                      SSS                       SSS                    SS                          SS           SSS                         SSS          SSSSS      SSSS
%SSS      SSSSSSSSS    SSSSSSSSSS     SSSSSSSSSSSS     SSSSSSS     SSSS    SSSSSS     SSS     S     SSSS     SSSSS
%SSS      SSSSSSSSS    SSSSSSSSSS     SSSSSSSSSSSS     SSSSSSS     SSSS    SSSSSS     SSS     SS     SSS     SSSSS
%SSS                      SSS                  SSSSS    SSSSSSSSSSSS     SSSSSSS     SSSS    SSSSSS     SSS     SSS     SS     SSSSS
%SSSSSSSSSS    SSS    SSSSSSSSSS    SSSSSSSSSSSS     SSSSSSS     SSSS    SSSSSS     SSS     SSSS     S     SSSSS
%SSSSSSSSSS    SSS    SSSSSSSSSS    SSSSSSSSSSSS     SSSSSSS     SSSS    SSSSSS     SSS     SSSSS          SSSSS
%SSS                       SSS                      SSS                       SSSSS     SSSSSS          SSS                         SSS     SSSSSS        SSSSS
%SSSSSSSSSSSSSSSSSSSSSSSSSSSSSSSSSSSSSSSSSSSSSSSSSSSSSSSSSSSSSSSSSSSSSSSSSSSSSSSSSS
%SSSSSSSSSSSSSSSSSSSSSSSSSSSSSSSSSSSSSSSSSSSSSSSSSSSSSSSSSSSSSSSSSSSSSSSSSSSSSSSSSS

\label{sheafificationhochschild}

{\bf Sheafification of the Hochschild Complex.} For any $n\ge 0$, let $\mathscr{C}_{n,X}$ be the sheaf associated to the presheaf
\[U\mapsto O_U^{\otimes n+1},\]
where as usual tensors are over $k$.  By elementary sheaf theory, the Hochschild boundary maps $b_{n,U}:O_U^{\otimes n+1}\rightarrow O_U^{\otimes n}$ for each open set $U\subset X$ assemble to give a complex of sheaves
\begin{equation}\label{equhochcomplexscheme}\xymatrix{...\ar[r]^b  &\mathscr{C}_{n,X}\ar[r]^b&\mathscr{C}_{n-1,X}\ar[r]^b&....}\end{equation}
called the {\it Hochschild complex} of $X$.   The Hochschild complex is a cyclic object in the category of sheaves on $X$. 

%SSSSSSSSSSSSSSSSSSSSSSSSSSSSSSSSSSSSSSSSSSSSSSSSSSSSSSSSSSSSSSSSSSSSSSSSSSSSSSSSSS
%SSSSSSSSSSSSSSSSSSSSSSSSSSSSSSSSSSSSSSSSSSSSSSSSSSSSSSSSSSSSSSSSSSSSSSSSSSSSSSSSSS
%SSS                      SSS                       SSS                    SS                          SS           SSS                         SSS          SSSSS      SSSS
%SSS      SSSSSSSSS    SSSSSSSSSS     SSSSSSSSSSSS     SSSSSSS     SSSS    SSSSSS     SSS     S     SSSS     SSSSS
%SSS      SSSSSSSSS    SSSSSSSSSS     SSSSSSSSSSSS     SSSSSSS     SSSS    SSSSSS     SSS     SS     SSS     SSSSS
%SSS                      SSS                  SSSSS    SSSSSSSSSSSS     SSSSSSS     SSSS    SSSSSS     SSS     SSS     SS     SSSSS
%SSSSSSSSSS    SSS    SSSSSSSSSS    SSSSSSSSSSSS     SSSSSSS     SSSS    SSSSSS     SSS     SSSS     S     SSSSS
%SSSSSSSSSS    SSS    SSSSSSSSSS    SSSSSSSSSSSS     SSSSSSS     SSSS    SSSSSS     SSS     SSSSS          SSSSS
%SSS                       SSS                      SSS                       SSSSS     SSSSSS          SSS                         SSS     SSSSSS        SSSSS
%SSSSSSSSSSSSSSSSSSSSSSSSSSSSSSSSSSSSSSSSSSSSSSSSSSSSSSSSSSSSSSSSSSSSSSSSSSSSSSSSSS
%SSSSSSSSSSSSSSSSSSSSSSSSSSSSSSSSSSSSSSSSSSSSSSSSSSSSSSSSSSSSSSSSSSSSSSSSSSSSSSSSSS

\label{weibelhochschildhomschemes}

{\bf Weibel's Hochschild Homology of Schemes.} The {\bf Hochschild homology} of $X$ is by definition the {\it Cartan-Eilenberg hypercohomology} of the unbounded cochain complex $\ms{C}_X^*$, where $\ms{C}_X^n:=\ms{C}_{-n,X}$.   Figure \ref{mirror} illustrates graphically how this definition converts $\ms{C}_X^*$ to be a cochain complex concentrated in negative degrees.  Section \hyperref[sectioncartaneilenberg]{\ref{sectioncartaneilenberg}} of the appendix provides a description of Cartan-Eilenberg hypercohomology.  Weibel \cite{WeibelCyclicSchemes91} also contains a good discussion. 

%&&&&&&&&&&&&&&&&&&&&&&&&&&&&&&&&&&&&&&&&&&&&&&&&&&&&&&&&&&&&&&&&&&&&&&&&&&&&&&&&&&
%&&&&&&&&&&&&&&&&&&&&&&&&&&&&&&&&&&&&&&&&&&&&&&&&&&&&&&&&&&&&&&&&&&&&&&&&&&&&&&&&&&
%&&&&                 &&&&    &&&               &&&                    &&&    &&&&    &&&              &&&&                   &&&&&&&&&&&&&&&&&&
%&&&&   &&&&   &&&&    &&&   &&&&&&&&&&&   &&&&&&    &&&&    &&&    &&&     &&&    &&&&&&&&&&&&&&&&&&&&&&&&
%&&&&   &&&&   &&&&    &&&   &&&&&&&&&&&   &&&&&&    &&&&    &&&    &&&     &&&    &&&&&&&&&&&&&&&&&&&&&&&&
%&&&&                 &&&&    &&&   &&&&&&&&&&&   &&&&&&    &&&&    &&&            &&&&&             &&&&&&&&&&&&&&&&&&&&
%&&&&    &&&&&&&&&    &&&   &&&&&&&&&&&   &&&&&&    &&&&    &&&    &&&   &&&&    &&&&&&&&&&&&&&&&&&&&&&&
%&&&&    &&&&&&&&&    &&&   &&&&&&&&&&&   &&&&&&    &&&&    &&&    &&&&   &&&    &&&&&&&&&&&&&&&&&&&&&&&
%&&&&    &&&&&&&&&    &&&                &&&&&&   &&&&&&                   &&&    &&&&&   &&                  &&&&&&&&&&&&&&&&&&
%&&&&&&&&&&&&&&&&&&&&&&&&&&&&&&&&&&&&&&&&&&&&&&&&&&&&&&&&&&&&&&&&&&&&&&&&&&&&&&&&&&
%&&&&&&&&&&&&&&&&&&&&&&&&&&&&&&&&&&&&&&&&&&&&&&&&&&&&&&&&&&&&&&&&&&&&&&&&&&&&&&&&&&
\begin{figure}[H]
\begin{pgfpicture}{0cm}{0cm}{17cm}{2cm}
\begin{pgftranslate}{\pgfpoint{0cm}{-.75cm}}
%\pgfxyline(0,0)(15,0)
%\pgfxyline(0,0)(0,3)5
%\pgfxyline(0,3)(15,3)
%\pgfxyline(15,0)(15,3)
%\pgfxyline(7.5,0)(7.5,3)
\pgfputat{\pgfxy(7.5,2.5)}{\pgfbox[center,center]{mirror}}
\pgfputat{\pgfxy(9,1.5)}{\pgfbox[center,center]{$\ms{C}_0$}}
\pgfputat{\pgfxy(10.5,1.5)}{\pgfbox[center,center]{$\ms{C}_1$}}
\pgfputat{\pgfxy(12,1.5)}{\pgfbox[center,center]{$\ms{C}_2$}}
\pgfputat{\pgfxy(13.5,1.5)}{\pgfbox[center,center]{$\ms{C}_3$}}
\pgfputat{\pgfxy(6,1.5)}{\pgfbox[center,center]{$\ms{C}^0$}}
\pgfputat{\pgfxy(4.5,1.5)}{\pgfbox[center,center]{$\ms{C}^{-1}$}}
\pgfputat{\pgfxy(3,1.5)}{\pgfbox[center,center]{$\ms{C}^{-2}$}}
\pgfputat{\pgfxy(1.5,1.5)}{\pgfbox[center,center]{$\ms{C}^{-3}$}}
\pgfputat{\pgfxy(7.5,1.5)}{\pgfbox[center,center]{$0$}}
\pgfsetendarrow{\pgfarrowlargepointed{3pt}}
\pgfxyline(14.3,1.5)(13.8,1.5)
\pgfxyline(13.1,1.5)(12.4,1.5)
\pgfxyline(11.6,1.5)(10.9,1.5)
\pgfxyline(10.1,1.5)(9.4,1.5)
\pgfxyline(8.6,1.5)(7.9,1.5)
\pgfxyline(.7,1.5)(1.1,1.5)
\pgfxyline(1.9,1.5)(2.6,1.5)
\pgfxyline(3.4,1.5)(4.1,1.5)
\pgfxyline(4.9,1.5)(5.6,1.5)
\pgfxyline(6.4,1.5)(7.1,1.5)
\pgfnodecircle{Node1}[fill]{\pgfxy(14.45,1.5)}{0.025cm}
\pgfnodecircle{Node1}[fill]{\pgfxy(14.6,1.5)}{0.025cm}
\pgfnodecircle{Node1}[fill]{\pgfxy(14.75,1.5)}{0.025cm}
\pgfnodecircle{Node1}[fill]{\pgfxy(.5,1.5)}{0.025cm}
\pgfnodecircle{Node1}[fill]{\pgfxy(.35,1.5)}{0.025cm}
\pgfnodecircle{Node1}[fill]{\pgfxy(.2,1.5)}{0.025cm}
\end{pgftranslate}
\end{pgfpicture}
\caption{Converting $\ms{C}_{X,*}$ to a cochain complex $\ms{C}_X^*$.}
\label{mirror}
\end{figure}
%&&&&&&&&&&&&&&&&&&&&&&&&&&&&&&&&&&&&&&&&&&&&&&&&&&&&&&&&&&&&&&&&&&&&&&&&&&&&&&&&&&
%&&&&&&&&&&&&&&&&&&&&&&&&&&&&&&&&&&&&&&&&&&&&&&&&&&&&&&&&&&&&&&&&&&&&&&&&&&&&&&&&&&
%&&&                    &&&       &&&&&    &&&                &&&&&&&&&&&&&&&&&&&&&&&&&&&&&&&&&&&&&&&&&&&&&&&&&&&
%&&&    &&&&&&&&&    &    &&&&   &&&    &&&&    &&&&&&&&&&&&&&&&&&&&&&&&&&&&&&&&&&&&&&&&&&&&&&&&&& 
%&&&    &&&&&&&&&    &&    &&&   &&&    &&&&&    &&&&&&&&&&&&&&&&&&&&&&&&&&&&&&&&&&&&&&&&&&&&&&&&& 
%&&&                 &&&&    &&&    &&   &&&    &&&&&    &&&&&&&&&&&&&&&&&&&&&&&&&&&&&&&&&&&&&&&&&&&&&&&&& 
%&&&    &&&&&&&&&    &&&&    &   &&&    &&&&&    &&&&&&&&&&&&&&&&&&&&&&&&&&&&&&&&&&&&&&&&&&&&&&&&& 
%&&&    &&&&&&&&&    &&&&&       &&&    &&&&     &&&&&&&&&&&&&&&&&&&&&&&&&&&&&&&&&&&&&&&&&&&&&&&&& 
%&&&                    &&&    &&&&&&    &&&                  &&&&&&&&&&&&&&&&&&&&&&&&&&&&&&&&&&&&&&&&&&&&&&&&&&
%&&&&&&&&&&&&&&&&&&&&&&&&&&&&&&&&&&&&&&&&&&&&&&&&&&&&&&&&&&&&&&&&&&&&&&&&&&&&&&&&&&
%&&&&&&&&&&&&&&&&&&&&&&&&&&&&&&&&&&&&&&&&&&&&&&&&&&&&&&&&&&&&&&&&&&&&&&&&&&&&&&&&&&

The {\bf Hochschild homology} $\mathbb{H}\tn{H}_{n,X}$ of an algebraic scheme $X$ is defined to be the Cartan-Eilenberg hypercohomology of the complex $\ms{C}^*(X)$:
\begin{equation}\label{equweibelhochhomscheme}\mathbb{H}\tn{H}_{n,X}:=\mathbb{H}^{-n}(\ms{C}_X^*),\end{equation}
where $\HH$ denotes Cartan-Eilenberg hypercohomology.   If $X=\tn{Spec} (R)$ for a ring $R$, then $\mathbb{H}\tn{H}_{n,X}=\tn{HH}_{n,R}$, the usual Hochschild homology of $R$, for $n\ge0$.\footnotemark\footnotetext{Weibel \cite{WeibelCyclicSchemes91} cites a paper by Weibel and Geller for this result.}  Note, however, that the Hochschild homology of a scheme $X$ is generally nonzero in negative degrees.   In particular, as noted by Weibel, $H^n(\mc{O}_X)$ is a summand of $\mathbb{H}\tn{H}_{-n,X}$.   However, if $X$ is quasi-compact and quasi-separated, only finitely many of the Hochschild homology modules in negative degrees can be nonzero.   For example, $\mathbb{H}\tn{H}_{n,X}=0$ for $n<-\tn{dim}(X)$ whenever $X$ is finite dimensional and noetherian over $k$.  See \cite{WeibelCyclicSchemes91}, page 2, for more details. 

%SSSSSSSSSSSSSSSSSSSSSSSSSSSSSSSSSSSSSSSSSSSSSSSSSSSSSSSSSSSSSSSSSSSSSSSSSSSSSSSSSS
%SSSSSSSSSSSSSSSSSSSSSSSSSSSSSSSSSSSSSSSSSSSSSSSSSSSSSSSSSSSSSSSSSSSSSSSSSSSSSSSSSS
%SSS                      SSS                       SSS                    SS                          SS           SSS                         SSS          SSSSS      SSSS
%SSS      SSSSSSSSS    SSSSSSSSSS     SSSSSSSSSSSS     SSSSSSS     SSSS    SSSSSS     SSS     S     SSSS     SSSSS
%SSS      SSSSSSSSS    SSSSSSSSSS     SSSSSSSSSSSS     SSSSSSS     SSSS    SSSSSS     SSS     SS     SSS     SSSSS
%SSS                      SSS                  SSSSS    SSSSSSSSSSSS     SSSSSSS     SSSS    SSSSSS     SSS     SSS     SS     SSSSS
%SSSSSSSSSS    SSS    SSSSSSSSSS    SSSSSSSSSSSS     SSSSSSS     SSSS    SSSSSS     SSS     SSSS     S     SSSSS
%SSSSSSSSSS    SSS    SSSSSSSSSS    SSSSSSSSSSSS     SSSSSSS     SSSS    SSSSSS     SSS     SSSSS          SSSSS
%SSS                       SSS                      SSS                       SSSSS     SSSSSS          SSS                         SSS     SSSSSS        SSSSS
%SSSSSSSSSSSSSSSSSSSSSSSSSSSSSSSSSSSSSSSSSSSSSSSSSSSSSSSSSSSSSSSSSSSSSSSSSSSSSSSSSS
%SSSSSSSSSSSSSSSSSSSSSSSSSSSSSSSSSSSSSSSSSSSSSSSSSSSSSSSSSSSSSSSSSSSSSSSSSSSSSSSSSS

\label{weibelcyclichomschemes}

{\bf Weibel's Cyclic Homology of Schemes.}  Let $X$ be an algebraic scheme over $k$ as above.  I will denote the sheafified $\tn{B}$ complex on $X$ by $\tn{B}_X$.  

\begin{defi}\label{defiweibelcyclicschemes}  Let $X$ be a scheme over a commutative ring $k$.  The {\bf cyclic homology groups} $\mathbb{H}\tn{C}_{n,X}$ of $X$ are the Cartan-Eilenberg hypercohomology groups $\mathbb{H}^{-n}(\tn{Tot } \ms{B}_X)$ of the cohomological version of the total complex $\tn{Tot } \ms{B}_X$ of the sheafified $\tn{B}$-bicomplex $\ms{B}_X$. 
\end{defi}

To recapitulate, the cyclic homology groups $\mathbb{H}\tn{C}_{n,X}$ may be constructed by means of the following steps:

\begin{enumerate}
\item Sheafify the $\tn{B}$-bicomplex $\tn{B}_R$ to obtain a bicomplex $\ms{B}_X$ of sheaves on $X$.  This may be done either by sheafifying $\tn{B}_R$ directly, or by first sheafifying the Hochschild complex $\tn{C}_R$ to obtain a cyclic object in the category of sheaves on $X$, then forming the corresponding bicomplex.  $\ms{B}_X$ is a first-quadrant bicomplex of sheaves on $X$. 
\item Take the total complex $\tn{Tot }\ms{B}_X$ of $\ms{B}_X$.  This is a chain complex of sheaves concentrated in non-negative degrees.  The number of factors in each total degree is finite, so there is no need to choose between sum and product total complexes.
\item Convert $\mbox{Tot }\ms{B}_X$ to a cochain complex of sheaves concentrated in non-positive degrees.
\item Take Cartan-Eilenberg hypercohomology. 
\end{enumerate}

{\bf Negative Cyclic Homology via Weibel's Approach.}  Although Weibel \cite{WeibelCyclicSchemes91} does not mention negative cyclic homology explicitly, it is ``obvious" how to modify the above construction to yield a negative variant: 

%SSSSSSSSSSSSSSSSSSSSSSSSSSSSSSSSSSSSSSSSSSSSSSSSSSSSSSSSSSSSSSSSSSSSSSSSSSSSSSSSSS
%SSSSSSSSSSSSSSSSSSSSSSSSSSSSSSSSSSSSSSSSSSSSSSSSSSSSSSSSSSSSSSSSSSSSSSSSSSSSSSSSSS
%SSS                      SSS                       SSS                    SS                          SS           SSS                         SSS          SSSSS      SSSS
%SSS      SSSSSSSSS    SSSSSSSSSS     SSSSSSSSSSSS     SSSSSSS     SSSS    SSSSSS     SSS     S     SSSS     SSSSS
%SSS      SSSSSSSSS    SSSSSSSSSS     SSSSSSSSSSSS     SSSSSSS     SSSS    SSSSSS     SSS     SS     SSS     SSSSS
%SSS                      SSS                  SSSSS    SSSSSSSSSSSS     SSSSSSS     SSSS    SSSSSS     SSS     SSS     SS     SSSSS
%SSSSSSSSSS    SSS    SSSSSSSSSS    SSSSSSSSSSSS     SSSSSSS     SSSS    SSSSSS     SSS     SSSS     S     SSSSS
%SSSSSSSSSS    SSS    SSSSSSSSSS    SSSSSSSSSSSS     SSSSSSS     SSSS    SSSSSS     SSS     SSSSS          SSSSS
%SSS                       SSS                      SSS                       SSSSS     SSSSSS          SSS                         SSS     SSSSSS        SSSSS
%SSSSSSSSSSSSSSSSSSSSSSSSSSSSSSSSSSSSSSSSSSSSSSSSSSSSSSSSSSSSSSSSSSSSSSSSSSSSSSSSSS
%SSSSSSSSSSSSSSSSSSSSSSSSSSSSSSSSSSSSSSSSSSSSSSSSSSSSSSSSSSSSSSSSSSSSSSSSSSSSSSSSSS

{\bf Weibel's Uniqueness Result.}  Weibel \cite{WeibelCyclicSchemes91} cites an important uniqueness result about cyclic homology theories for schemes over $k$.   Weibel defines a  {\bf cyclic homology theory} for schemes over $k$ to be a family of $\mathbb{Z}$-graded $k$ modules $\tn{HC}_{n,X}$, associated to every scheme $X$ over $k$ satisfying {\it naturality,} {\it contravariance,} {\it agreement} with ordinary cyclic homology in the case of affine schemes, and {\it existence of Mayer-Vietoris sequences.}\footnotemark\footnotetext{See Weibel \cite{WeibelCyclicSchemes91} page 1 for precise statements of these conditions and more details.}  The uniqueness result is then as follows: 

\begin{lem}\label{lemweibeluniquenesscyclic} Suppose that $\tn{HC}\rightarrow \tn{HC}'$ is a morphism of cyclic homology theories. If $X$ is a quasi-compact, quasi-separated scheme over a commutative ring $k$, then the corresponding cyclic homology groups of $X$ are isomorphic: $\tn{HC}_{n,X}\cong \tn{HC}_{n,X}'$.
\end{lem}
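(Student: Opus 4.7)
The plan is to prove this uniqueness statement by induction on the minimal size of a covering of $X$ by affine open subschemes, using the Mayer--Vietoris property together with the five lemma. The hypotheses of quasi-compactness and quasi-separatedness are tailored exactly to make such an induction run: quasi-compactness guarantees that $X$ admits a finite affine open covering $\{U_1,\ldots,U_n\}$, and quasi-separatedness ensures that the intersection of any two quasi-compact opens in $X$ is again quasi-compact, so that the pairwise and multiple intersections arising in the induction inherit the same finiteness property.

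First I would handle the base case $n=1$, in which $X$ itself is affine. Here the agreement axiom for a cyclic homology theory says that $\tn{HC}_{n,X}$ and $\tn{HC}'_{n,X}$ both coincide functorially with the ordinary cyclic homology $\tn{HC}_{n,O_X}$ of the coordinate ring, and the given morphism of theories $\tn{HC}\to\tn{HC}'$ restricts on affine schemes to the identity natural transformation on ordinary cyclic homology, so the induced map on $X$ is an isomorphism. Next I would carry out the inductive step: assume the conclusion holds for every quasi-compact, quasi-separated $k$-scheme admitting an affine open cover of size strictly less than $n$, and suppose $X$ is covered by $U_1,\ldots,U_n$. Set $U=U_1\cup\cdots\cup U_{n-1}$ and $V=U_n$, so that $X=U\cup V$, with $U$ quasi-compact and quasi-separated with a cover of size $n-1$, and with $V$ affine. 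The crucial point is that $U\cap V = (U_1\cap U_n)\cup\cdots\cup(U_{n-1}\cap U_n)$, and since each $U_i\cap U_n$ is the intersection of two quasi-compact opens in the quasi-separated scheme $X$, it is quasi-compact; hence by a standard refinement argument each $U_i\cap U_n$ admits a finite affine open cover and so $U\cap V$ itself admits a finite affine open cover, with strictly fewer than $n$ members (in fact one may arrange this bound; if not directly, one does a secondary induction on the cover of $U\cap V$).

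Now I would invoke the Mayer--Vietoris axiom for both theories, obtaining a commutative ladder of long exact sequences connected vertically by the morphism $\tn{HC}\to\tn{HC}'$:
\[
\xymatrix@C=1.2pc{
\cdots\ar[r] & \tn{HC}_{n,X}\ar[r]\ar[d] & \tn{HC}_{n,U}\oplus\tn{HC}_{n,V}\ar[r]\ar[d] & \tn{HC}_{n,U\cap V}\ar[r]\ar[d] & \tn{HC}_{n-1,X}\ar[r]\ar[d] & \cdots \\
\cdots\ar[r] & \tn{HC}'_{n,X}\ar[r] & \tn{HC}'_{n,U}\oplus\tn{HC}'_{n,V}\ar[r] & \tn{HC}'_{n,U\cap V}\ar[r] & \tn{HC}'_{n-1,X}\ar[r] & \cdots.
}
\]
The inductive hypothesis applies to each of $U$, $V$, and $U\cap V$, so the second, third, and (by shifting degree) fifth vertical arrows are isomorphisms; the five lemma then forces the map on $\tn{HC}_{n,X}$ to be an isomorphism as well, completing the induction.

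The main obstacle will be bookkeeping rather than conceptual: one must be careful to check that the inductive hypothesis genuinely applies to $U\cap V$, which requires either a lexicographic double induction (first on the size of the cover of $X$, then on the size of the cover of $U\cap V$) or an appeal to a standard refinement lemma producing a single affine cover of $U\cap V$ of controlled size. One also needs to verify that the morphism $\tn{HC}\to\tn{HC}'$ is compatible with the Mayer--Vietoris connecting maps; this should be automatic from naturality of the morphism together with naturality of the Mayer--Vietoris sequences built into the axioms, but it is the one functoriality check that deserves explicit mention.
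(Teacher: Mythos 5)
Your Mayer--Vietoris induction is correct and is essentially the argument of the source the paper defers to here: the paper gives no proof of its own beyond ``See Weibel,'' and Weibel's uniqueness theorem in \emph{Cyclic Homology for Schemes} is proved exactly by induction on the size of a finite affine cover, using the agreement axiom in the affine base case and the five lemma on the ladder of Mayer--Vietoris sequences (centered on $\tn{HC}_{n,X}$, so that all four flanking terms are instances of the inductive hypothesis for $U$, $V$, and $U\cap V$). The bookkeeping point you flag about $U\cap V$ is real but handled by the standard two-stage induction: first prove the statement for all quasi-compact open subschemes of an affine scheme, inducting on the number of basic (principal) affine opens needed, whose pairwise intersections are again basic and hence affine; then for general quasi-compact quasi-separated $X$ the intersection $U\cap V=\bigcup_{i<n}(U_i\cap U_n)$ is a quasi-compact open of the affine $U_n$ and so is already covered by the first stage.
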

\begin{proof} See Weibel \cite{WeibelCyclicSchemes91}. 
\end{proof}

Like Hochschild homology, the cyclic homology of a scheme $X$ over a commutative ring $k$ is generally nonzero in negative degrees.  Weibel \cite{WeibelCyclicSchemes91} also mentions that it is possible to define lambda operations on the sheafified Hochschild complex $\ms{C}_X$ and the sheafified bicomplex $\ms{B}_X$, which descend to lambda operations on the Hochschild homology groups $\HH\tn{H}_{n,X}$ and the cyclic homology groups $\HH\tn{C}_{n,X}$.\footnotemark\footnotetext{See Weibel \cite{WeibelCyclicSchemes91}, page 3, for this remark, which cites the paper \cite{WeibelHodgeCyclic94}.}

\section{Cartan-Eilenberg Cohomology}\label{sectioncartaneilenberg}

The Cartan-Eilenberg hypercohomology is defined as follows.   Since the category of abelian sheaves on $X$ has enough injective objects, there exists an upper-half plane bicomplex $\ms{I}^{**}$, together with an augmentation map of complexes $\ms{C}_X^*\rightarrow \ms{I}^{*0}$, where each sheaf $\ms{I}^{ij}$ is injective, such that the (vertical) maps induced on coboundaries and cohomology fit together to give injective resolutions of the coboundaries and cohomology of $\ms{C}_X^*$.  
Note that in this context, cohomology means ordinary cohomology of the cochain complex $\ms{C}_X^*$, not derived functors of the global section functor.  The bicomplex $\ms{I}^{**}$ is called a (right) {\it Cartan-Eilenberg resolution} of See Weibel page 149 for more details.  

Since the complex $\ms{C}_X^*$ is concentrated in negative degrees in this case, the bicomplex $\ms{I}^{**}$ is a second-quadrant bicomplex.  Figure \ref{cartaneilenbergres} shows a Cartan-Eilenberg resolution of $\ms{C}_X^*$.  
 
%&&&&&&&&&&&&&&&&&&&&&&&&&&&&&&&&&&&&&&&&&&&&&&&&&&&&&&&&&&&&&&&&&&&&&&&&&&&&&&&&&&
%&&&&&&&&&&&&&&&&&&&&&&&&&&&&&&&&&&&&&&&&&&&&&&&&&&&&&&&&&&&&&&&&&&&&&&&&&&&&&&&&&&
%&&&&                 &&&&    &&&               &&&                    &&&    &&&&    &&&              &&&&                   &&&&&&&&&&&&&&&&&&
%&&&&   &&&&   &&&&    &&&   &&&&&&&&&&&   &&&&&&    &&&&    &&&    &&&     &&&    &&&&&&&&&&&&&&&&&&&&&&&&
%&&&&   &&&&   &&&&    &&&   &&&&&&&&&&&   &&&&&&    &&&&    &&&    &&&     &&&    &&&&&&&&&&&&&&&&&&&&&&&&
%&&&&                 &&&&    &&&   &&&&&&&&&&&   &&&&&&    &&&&    &&&            &&&&&             &&&&&&&&&&&&&&&&&&&&
%&&&&    &&&&&&&&&    &&&   &&&&&&&&&&&   &&&&&&    &&&&    &&&    &&&   &&&&    &&&&&&&&&&&&&&&&&&&&&&&
%&&&&    &&&&&&&&&    &&&   &&&&&&&&&&&   &&&&&&    &&&&    &&&    &&&&   &&&    &&&&&&&&&&&&&&&&&&&&&&&
%&&&&    &&&&&&&&&    &&&                &&&&&&   &&&&&&                   &&&    &&&&&   &&                  &&&&&&&&&&&&&&&&&&
%&&&&&&&&&&&&&&&&&&&&&&&&&&&&&&&&&&&&&&&&&&&&&&&&&&&&&&&&&&&&&&&&&&&&&&&&&&&&&&&&&&
%&&&&&&&&&&&&&&&&&&&&&&&&&&&&&&&&&&&&&&&&&&&&&&&&&&&&&&&&&&&&&&&&&&&&&&&&&&&&&&&&&&
\begin{figure}[H]
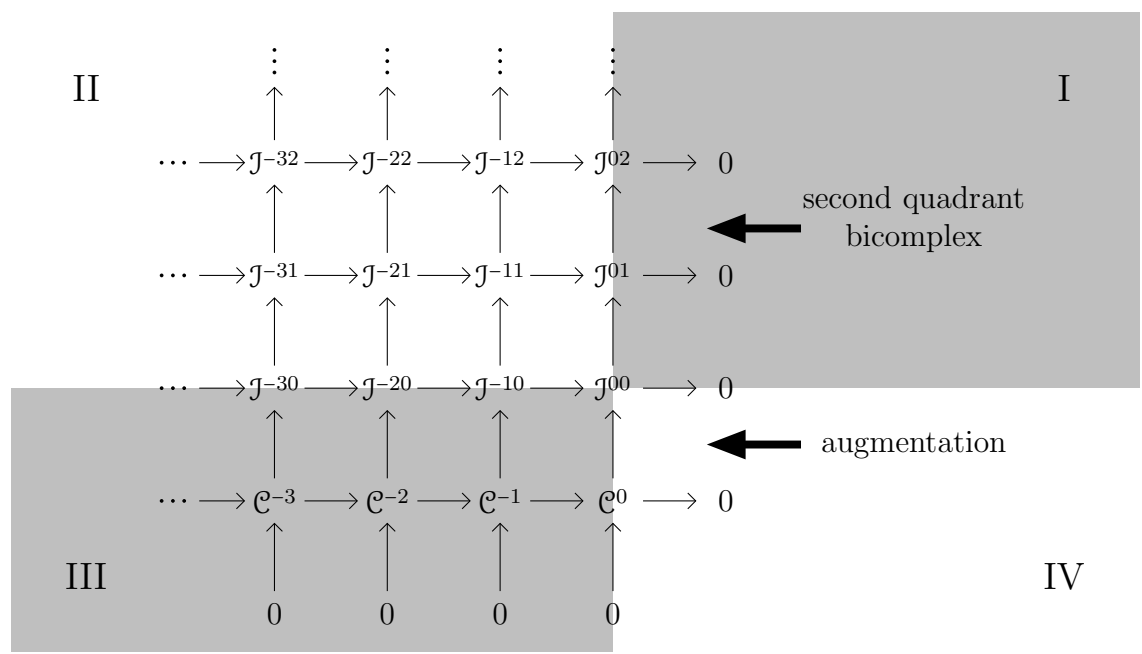

\begin{pgfpicture}{0cm}{0cm}{17cm}{8.6cm}
\begin{colormixin}{25!white}
\color{black}
\pgfmoveto{\pgfxy(8,3.5)}
\pgflineto{\pgfxy(8,8.5)}
\pgflineto{\pgfxy(15,8.5)}
\pgflineto{\pgfxy(15,3.5)}
\pgflineto{\pgfxy(8,3.5)}
\pgffill
\end{colormixin}
\begin{colormixin}{25!white}
\color{black}
\pgfmoveto{\pgfxy(8,3.5)}
\pgflineto{\pgfxy(8,0)}
\pgflineto{\pgfxy(0,0)}
\pgflineto{\pgfxy(0,3.5)}
\pgflineto{\pgfxy(8,3.5)}
\pgffill
\end{colormixin}
%\pgfxyline(0,0)(15,0)
%\pgfxyline(0,0)(0,8.5)
%\pgfxyline(0,8.5)(15,8.5)
%\pgfxyline(15,0)(15,8.5)
\pgfputat{\pgfxy(14,7.5)}{\pgfbox[center,center]{\large{I}}}
\pgfputat{\pgfxy(1,7.5)}{\pgfbox[center,center]{\large{II}}}
\pgfputat{\pgfxy(1,1)}{\pgfbox[center,center]{\large{III}}}
\pgfputat{\pgfxy(14,1)}{\pgfbox[center,center]{\large{IV}}}
\pgfputat{\pgfxy(12,2.75)}{\pgfbox[center,center]{augmentation}}
\pgfputat{\pgfxy(12,6)}{\pgfbox[center,center]{second quadrant}}
\pgfputat{\pgfxy(12,5.5)}{\pgfbox[center,center]{bicomplex}}
%\begin{pgfscope}
%\begin{pgftranslate}{\pgfpoint{-.3cm}{.1cm}}
%\pgfsetlinewidth{.7pt}
%\pgfmoveto{\pgfxy(10.2,8)}
%\pgfcurveto{\pgfxy(10.6,8.1)}{\pgfxy(10.2,5.75)}{\pgfxy(10.5,5.75)}
%\pgfcurveto{\pgfxy(10.2,5.75)}{\pgfxy(10.6,3.4)}{\pgfxy(10.2,3.5)}
%\pgfstroke
%\end{pgftranslate}{\pgfpoint{2cm}{-1cm}}
%\end{pgfscope}
\begin{pgfscope}
\pgfsetlinewidth{3pt}
\pgfsetendarrow{\pgfarrowtriangle{6pt}}
\pgfxyline(10.5,2.75)(9.5,2.75)
\pgfxyline(10.5,5.625)(9.5,5.625)
\end{pgfscope}
\begin{pgftranslate}{\pgfpoint{2cm}{-1cm}}
\pgfputat{\pgfxy(6,1.5)}{\pgfbox[center,center]{$0$}}
\pgfputat{\pgfxy(4.5,1.5)}{\pgfbox[center,center]{$0$}}
\pgfputat{\pgfxy(3,1.5)}{\pgfbox[center,center]{$0$}}
\pgfputat{\pgfxy(1.5,1.5)}{\pgfbox[center,center]{$0$}}
\pgfsetendarrow{\pgfarrowlargepointed{3pt}}
\pgfxyline(6,1.8)(6,2.7)
\pgfxyline(4.5,1.8)(4.5,2.7)
\pgfxyline(3,1.8)(3,2.7)
\pgfxyline(1.5,1.8)(1.5,2.7)
\end{pgftranslate}
\begin{pgftranslate}{\pgfpoint{2cm}{.5cm}}
\pgfputat{\pgfxy(7.5,1.5)}{\pgfbox[center,center]{$0$}}
\pgfputat{\pgfxy(6,1.5)}{\pgfbox[center,center]{$\ms{C}^0$}}
\pgfputat{\pgfxy(4.5,1.5)}{\pgfbox[center,center]{$\ms{C}^{-1}$}}
\pgfputat{\pgfxy(3,1.5)}{\pgfbox[center,center]{$\ms{C}^{-2}$}}
\pgfputat{\pgfxy(1.5,1.5)}{\pgfbox[center,center]{$\ms{C}^{-3}$}}
\pgfsetendarrow{\pgfarrowlargepointed{3pt}}
\pgfxyline(.5,1.5)(1.1,1.5)
\pgfxyline(1.9,1.5)(2.6,1.5)
\pgfxyline(3.4,1.5)(4.1,1.5)
\pgfxyline(4.9,1.5)(5.6,1.5)
\pgfxyline(6.4,1.5)(7.1,1.5)
\pgfnodecircle{Node1}[fill]{\pgfxy(.3,1.5)}{0.025cm}
\pgfnodecircle{Node1}[fill]{\pgfxy(.15,1.5)}{0.025cm}
\pgfnodecircle{Node1}[fill]{\pgfxy(0,1.5)}{0.025cm}
\pgfxyline(6,1.8)(6,2.7)
\pgfxyline(4.5,1.8)(4.5,2.7)
\pgfxyline(3,1.8)(3,2.7)
\pgfxyline(1.5,1.8)(1.5,2.7)
\end{pgftranslate}
\begin{pgftranslate}{\pgfpoint{2cm}{2cm}}
\pgfputat{\pgfxy(7.5,1.5)}{\pgfbox[center,center]{$0$}}
\pgfputat{\pgfxy(6,1.5)}{\pgfbox[center,center]{$\ms{I}^{00}$}}
\pgfputat{\pgfxy(4.5,1.5)}{\pgfbox[center,center]{$\ms{I}^{-10}$}}
\pgfputat{\pgfxy(3,1.5)}{\pgfbox[center,center]{$\ms{I}^{-20}$}}
\pgfputat{\pgfxy(1.5,1.5)}{\pgfbox[center,center]{$\ms{I}^{-30}$}}
\pgfsetendarrow{\pgfarrowlargepointed{3pt}}
\pgfxyline(.5,1.5)(1.1,1.5)
\pgfxyline(1.9,1.5)(2.6,1.5)
\pgfxyline(3.4,1.5)(4.1,1.5)
\pgfxyline(4.9,1.5)(5.6,1.5)
\pgfxyline(6.4,1.5)(7.1,1.5)
\pgfnodecircle{Node1}[fill]{\pgfxy(.3,1.5)}{0.025cm}
\pgfnodecircle{Node1}[fill]{\pgfxy(.15,1.5)}{0.025cm}
\pgfnodecircle{Node1}[fill]{\pgfxy(0,1.5)}{0.025cm}
\pgfxyline(6,1.8)(6,2.7)
\pgfxyline(4.5,1.8)(4.5,2.7)
\pgfxyline(3,1.8)(3,2.7)
\pgfxyline(1.5,1.8)(1.5,2.7)
\end{pgftranslate}
\begin{pgftranslate}{\pgfpoint{2cm}{3.5cm}}
\pgfputat{\pgfxy(7.5,1.5)}{\pgfbox[center,center]{$0$}}
\pgfputat{\pgfxy(6,1.5)}{\pgfbox[center,center]{$\ms{I}^{01}$}}
\pgfputat{\pgfxy(4.5,1.5)}{\pgfbox[center,center]{$\ms{I}^{-11}$}}
\pgfputat{\pgfxy(3,1.5)}{\pgfbox[center,center]{$\ms{I}^{-21}$}}
\pgfputat{\pgfxy(1.5,1.5)}{\pgfbox[center,center]{$\ms{I}^{-31}$}}
\pgfsetendarrow{\pgfarrowlargepointed{3pt}}
\pgfxyline(.5,1.5)(1.1,1.5)
\pgfxyline(1.9,1.5)(2.6,1.5)
\pgfxyline(3.4,1.5)(4.1,1.5)
\pgfxyline(4.9,1.5)(5.6,1.5)
\pgfxyline(6.4,1.5)(7.1,1.5)
\pgfnodecircle{Node1}[fill]{\pgfxy(.3,1.5)}{0.025cm}
\pgfnodecircle{Node1}[fill]{\pgfxy(.15,1.5)}{0.025cm}
\pgfnodecircle{Node1}[fill]{\pgfxy(0,1.5)}{0.025cm}
\pgfxyline(6,1.8)(6,2.7)
\pgfxyline(4.5,1.8)(4.5,2.7)
\pgfxyline(3,1.8)(3,2.7)
\pgfxyline(1.5,1.8)(1.5,2.7)
\end{pgftranslate}
\begin{pgftranslate}{\pgfpoint{2cm}{5cm}}
\pgfputat{\pgfxy(7.5,1.5)}{\pgfbox[center,center]{$0$}}
\pgfputat{\pgfxy(6,1.5)}{\pgfbox[center,center]{$\ms{I}^{02}$}}
\pgfputat{\pgfxy(4.5,1.5)}{\pgfbox[center,center]{$\ms{I}^{-12}$}}
\pgfputat{\pgfxy(3,1.5)}{\pgfbox[center,center]{$\ms{I}^{-22}$}}
\pgfputat{\pgfxy(1.5,1.5)}{\pgfbox[center,center]{$\ms{I}^{-32}$}}
\pgfsetendarrow{\pgfarrowlargepointed{3pt}}
\pgfxyline(.5,1.5)(1.1,1.5)
\pgfxyline(1.9,1.5)(2.6,1.5)
\pgfxyline(3.4,1.5)(4.1,1.5)
\pgfxyline(4.9,1.5)(5.6,1.5)
\pgfxyline(6.4,1.5)(7.1,1.5)
\pgfnodecircle{Node1}[fill]{\pgfxy(.3,1.5)}{0.025cm}
\pgfnodecircle{Node1}[fill]{\pgfxy(.15,1.5)}{0.025cm}
\pgfnodecircle{Node1}[fill]{\pgfxy(0,1.5)}{0.025cm}
\pgfxyline(6,1.8)(6,2.5)
\pgfxyline(4.5,1.8)(4.5,2.5)
\pgfxyline(3,1.8)(3,2.5)
\pgfxyline(1.5,1.8)(1.5,2.5)
\pgfnodecircle{Node1}[fill]{\pgfxy(1.5,2.7)}{0.025cm}
\pgfnodecircle{Node1}[fill]{\pgfxy(1.5,2.85)}{0.025cm}
\pgfnodecircle{Node1}[fill]{\pgfxy(1.5,3)}{0.025cm}
\pgfnodecircle{Node1}[fill]{\pgfxy(3,2.7)}{0.025cm}
\pgfnodecircle{Node1}[fill]{\pgfxy(3,2.85)}{0.025cm}
\pgfnodecircle{Node1}[fill]{\pgfxy(3,3)}{0.025cm}
\pgfnodecircle{Node1}[fill]{\pgfxy(4.5,2.7)}{0.025cm}
\pgfnodecircle{Node1}[fill]{\pgfxy(4.5,2.85)}{0.025cm}
\pgfnodecircle{Node1}[fill]{\pgfxy(4.5,3)}{0.025cm}
\pgfnodecircle{Node1}[fill]{\pgfxy(6,2.7)}{0.025cm}
\pgfnodecircle{Node1}[fill]{\pgfxy(6,2.85)}{0.025cm}
\pgfnodecircle{Node1}[fill]{\pgfxy(6,3)}{0.025cm}
\end{pgftranslate}
\end{pgfpicture}
\caption{Cartan-Eilenberg resolution of the complex $\ms{C}_X^*$.}
\label{cartaneilenbergres}
\end{figure}
%&&&&&&&&&&&&&&&&&&&&&&&&&&&&&&&&&&&&&&&&&&&&&&&&&&&&&&&&&&&&&&&&&&&&&&&&&&&&&&&&&&
%&&&&&&&&&&&&&&&&&&&&&&&&&&&&&&&&&&&&&&&&&&&&&&&&&&&&&&&&&&&&&&&&&&&&&&&&&&&&&&&&&&
%&&&                    &&&       &&&&&    &&&                &&&&&&&&&&&&&&&&&&&&&&&&&&&&&&&&&&&&&&&&&&&&&&&&&&&
%&&&    &&&&&&&&&    &    &&&&   &&&    &&&&    &&&&&&&&&&&&&&&&&&&&&&&&&&&&&&&&&&&&&&&&&&&&&&&&&& 
%&&&    &&&&&&&&&    &&    &&&   &&&    &&&&&    &&&&&&&&&&&&&&&&&&&&&&&&&&&&&&&&&&&&&&&&&&&&&&&&& 
%&&&                 &&&&    &&&    &&   &&&    &&&&&    &&&&&&&&&&&&&&&&&&&&&&&&&&&&&&&&&&&&&&&&&&&&&&&&& 
%&&&    &&&&&&&&&    &&&&    &   &&&    &&&&&    &&&&&&&&&&&&&&&&&&&&&&&&&&&&&&&&&&&&&&&&&&&&&&&&& 
%&&&    &&&&&&&&&    &&&&&       &&&    &&&&     &&&&&&&&&&&&&&&&&&&&&&&&&&&&&&&&&&&&&&&&&&&&&&&&& 
%&&&                    &&&    &&&&&&    &&&                  &&&&&&&&&&&&&&&&&&&&&&&&&&&&&&&&&&&&&&&&&&&&&&&&&&
%&&&&&&&&&&&&&&&&&&&&&&&&&&&&&&&&&&&&&&&&&&&&&&&&&&&&&&&&&&&&&&&&&&&&&&&&&&&&&&&&&&
%&&&&&&&&&&&&&&&&&&&&&&&&&&&&&&&&&&&&&&&&&&&&&&&&&&&&&&&&&&&&&&&&&&&&&&&&&&&&&&&&&&

The {\it Cartan-Eilenberg hypercohomology} of $\ms{C}_X^*$ is defined to be the ordinary cohomology of the complex of global sections of the product total complex of $\ms{I}^{**}$:
\[\mathbb{H}^*(X, \ms{C}_X^*):=H^i(\Gamma(\mbox{ToT } \ms{I}^{**})).\]
For technical reasons (e.g. when a complex is not bounded below), the Cartan-Eilenberg hypercohomology of a complex is not necessarily equal to the corresponding hyper-derived cohomology.   The appendix of Weibel1991, beginning on page 5, is devoted to this subject.  Here Weibel explains the construction of the hyper-derived cohomology, and discusses when the Cartan-Eilenberg hypercohomology and hyper-derived cohomology are equal.   He proves that there are examples of complexes of sheaves on topological spaces for which the two definitions differ.  He does not make clear if this occurs for the Hochschild complex.

\section{Spectral Sequences}\label{spectralsequences}

{\bf Generalities.}  The material in this section is standard.  The main references are Weibel \cite{WeibelHomologicalAlgebra94}, Chapter 5, and McCleary \cite{McClearySpectralSequences01}, Chapter 2.   These references present modern, synthetic treatments of the original theory developed by Leray, Serre, Koszul, Massey, and others. The purpose of this material in the present context is to facilitate the construction of the coniveau spectral sequence for a cohomology theory with supports on a scheme over a field $k$, in section \hyperref[coniveauspectralsequence]{6.1.3}.  The coniveau spectral sequence is a spectral sequence of cohomological type, whose terms are $k$-vector spaces.  It stabilizes at a finite level, and converges with respect to a finite filtration.   In contrast with the specific nature its intended application, the material in this section is somewhat more general, with the goal of rendering transparent the terminology and results concerning the coniveau spectral sequence in the literature.  While I do restrict the discussion to spectral sequences and exact couples of cohomological type, I describe most of the constructions in terms of a general abelian category, which is taken to possess convenient limit properties whenever necessary.\footnotemark\footnotetext{A much more thorough discussion, in terms of Grothendieck's axioms for abelian categories, is included in Weibel \cite{WeibelHomologicalAlgebra94}.}  I do not assume that all spectral sequences stabilize at a finite level or converge with respect to a finite filtration, but this generality is intended only for context.  

A {\bf spectral sequence} of cohomological type in an abelian category $\mbf{A}$ is a family $\{E_r^{p,q}\}$ of objects of $\mbf{A}$ together with morphisms $d_r^{p,q}:E_r^{p,q}\rightarrow E_r^{p+r,q-r+1}$, called {\bf differentials}, such that each object $E_{r+1}^{p,q}$ with lower index $r+1$ is isomorphic to the cohomology object $\tn{Ker} (d_r^{p,q})/\tn{Im}( d_r^{p-r,q-r+1})$ of differentials with lower index $r$.  Here, $r$ ranges from some minimum integer $r_0$ to infinity, while $p$ and $q$ range over the integers.  A spectral sequence of cohomological type may be denoted by $\{E_r^{p,q},d_r^{p,q}\}$, or simply by $\{E_r\}$ for short.  The subfamily of $\{E_r\}$ consisting of objects with a particular lower index $r$ is a two-dimensional array called the {\bf $E_r$-level} of $\{E_r\}$.  The qualifier ``of cohomological type" means that the differentials ``go to the right," in the sense that they increase the first upper index $p$.   For the same reason, the objects $\tn{Ker} (d_r^{p,q})/\tn{Im}( d_r^{p-r,q-r+1})$ are called cohomology objects rather than homology objects.\footnotemark\footnotetext{In fact, many references {\it do} call them homology objects, but this can only cause trouble.}  A spectral sequence for which the differentials ``go to the left" is called a spectral sequence of homological type. All spectral sequences here are of cohomological type, so I will drop this qualifier going forward.  A spectral sequence may begin at any level, though most spectral sequences of interest begin at the $E_0$ or $E_1$-level.  The coniveau spectral sequence, developed in the next section, begins at the $E_1$-level.    Figure \hyperref[figspectralsequences]{\ref{figspectralsequences}} below illustrates spectral sequences. 

The $E_0$, $E_1$, and $E_2$-levels of a spectral sequence $\{E_r\}$ of cohomological type are illustrated in the following diagram.  The shading indicates the four quadrants of the planar lattice\footnotemark\footnotetext{Unfortunately, the ``counterclockwise" convention I use here for the numbering of the quadrants is not universal; some references interchange the third and fourth quadrants.}  whose horizontal axis corresponds to the first upper index $p$ and whose vertical axis corresponds to the second upper index $q$ of the terms $E_r^{p,q}$.   A spectral sequence is called a {\bf first-quadrant} spectral sequence if all its nontrivial terms have nonnegative upper indices; i.e., if $E_r^{p,q}=0$ whenever $p$ or $q$ is less than zero.  The {\bf boundary terms} terms of a first-quadrant spectral sequence are the terms $E_r^{p,q}$ with $p=0$ or $q=0$.  Analogous definitions apply to the other quadrants.  Similarly, a spectral sequence is called a {\bf right half-plane} spectral sequence if all its nontrivial terms have nonnegative first index $p$, and the {\bf boundary terms} are the terms $E_r^{p,q}$ with $p=0$.  Analogous definitions apply for other half-planes, and also for more general subsets of the planar lattice.\footnotemark\footnotetext{More generally, $E_r^{p,q}$ is an {\bf initial boundary term} if the term $E_r^{p-r,q+r-1}$ mapping into $E_r^{p,q}$ under $d_r^{p-r,q+r-1}$ is zero, and $E_r^{p,q}$ is a {\bf terminal boundary term} if the term $E_r^{p+r,q-r+1}$ that $E_r^{p,q}$ maps into under $d_r^{p,q}$ is zero.   If $E_r^{p,q}$ is an initial boundary term, then the corresponding term $E_{r+1}^{p,q}$ at the next level is subobject of $E_r^{p,q}$, and if $E_r^{p,q}$ is a terminal boundary term, then $E_{r+1}^{p,q}$ is a quotient of $E_r^{p,q}$.  The motivation for distinguishing boundary terms in the present context is to define edge morphisms below.} 

Many important spectral sequences {\bf stabilize} at some finite level $r_s$, meaning that for all $r\ge r_s$, the objects $E_r^{p,q}$ are independent of $r$ for all $p$ and $q$.  Whether or not a spectral sequence stabilizes, the terms $E_r^{p,q}$ may approach limits as $r$ increases.  Such stable or limiting terms are denoted by $E_{\infty}^{p,q}$.  Their existence depends both on the construction of the spectral sequence $\{E_r\}$ and on the axioms governing the ambient category $\mbf{A}$.\footnotemark\footnotetext{See \cite{WeibelHomologicalAlgebra94} page 125 for a preliminary discussion of this.}  Zeros in a spectral sequence are stable, meaning that whenever $E_r^{p,q}$ vanishes, so does $E_{r+1}^{p,q}$.  This is an immediate consequence of the fact that each level of a spectral sequence is given by taking the cohomology of the previous level.  This implies, in particular, that if the initial level of a spectral sequence belongs to a particular quadrant or half plane, then the entire spectral sequence belongs to the same quandrant or half-plane.  If any level of a spectral sequence has a finite number of nonzero rows or columns, then the spectral sequence stabilizes at a finite level.  This is true because zeros are stable, and because in this case the differentials into and out of each term are of the form $0\rightarrow E_{r}^{p,q}\rightarrow 0$ for sufficiently large $r$. 

The most common use of spectral sequences is to compute sequences of graded objects, such as cohomology groups, by successive approximations.  Let $\{H^n\}_{n\in\ZZ}$ be a sequence of graded objects in the abelian category $\mbf{A}$.   A {\bf decreasing filtration} $F^p$ on $\{H^n\}_{n\in\ZZ}$ is a family of decreasing filtrations on the objects $H^n$ for each $n$.  These filtrations consist of chains of subobjects $...\subset F^{p+1}H^n\subset F^{p}H^n\subset F^{p-1}H^n\subset...$ of $H^n$, usually defined by some canonical procedure for all values of $n$.  The filtration $F^p$ is called {\bf exhaustive} if $\cup_pF^pH^n=H^n$ for all $n$, and is called {\bf separated} if $\cap_pF^pH^n=0$ for all $n$.  The {\bf associated graded objects} of the filtration $F^p$ are the quotients $F^pH^n/F^{p+1}H^n$.    A spectral sequence $\{E_r\}$ in $\mbf{A}$ with stable or limiting terms $E_{\infty}^{p,q}$ is said to {\bf converge to $\{H^n\}$ with respect to the filtration $F^p$} if the objects of $\{E_r\}$ approach the associated graded objects of $F^p$ in an appropriate sense.\footnotemark\footnotetext{To clarify the terminology appearing in the literature, $\{E_r\}$ is said to {\bf converge weakly} to $\{H^n\}$ with respect to $F^p$ if $E_{\infty}^{p,q}\cong F^pH^{p+q}/F^{p+1}H^{p+q}$ for all $p$ and $q$.   If in addition, $F^p$ is exhaustive and separated, then $\{E_r\}$ is said to {\bf approach} $\{H^n\}$ or {\bf abut} to $\{H^n\}$ with respect to $F^p$.  Finally, if $\{E_r\}$ approaches $\{H^n\}$, then it is said to {\bf converge} to $\{H^n\}$ with respect to $F^p$ if $H^n$ is equal to the inverse limit $\displaystyle\lim_{\longleftarrow}H^n/F^pH^n$ for all $n$.}  In particular, if the filtration $F^p$ is finite, exhaustive, and separated, then $\{E_r\}$ is said to {\bf converge} to $\{H^n\}$ with respect to $F^p$ if $E_{\infty}^{p,q}\cong F^pH^{p+q}/F^{p+1}H^{p+q}$ for all $p$ and $q$.   If a spectral sequence beginning at the $E_{r_0}$-level converges to $\{H^n\}$, this is usually expressed via the concise notation $E_{r_0}^{p,q}\Rightarrow H^{p+q}$.

%&&&&&&&&&&&&&&&&&&&&&&&&&&&&&&&&&&&&&&&&&&&&&&&&&&&&&&&&&&&&&&&&&&&&&&&&&&&&&&&&&&
%&&&&&&&&&&&&&&&&&&&&&&&&&&&&&&&&&&&&&&&&&&&&&&&&&&&&&&&&&&&&&&&&&&&&&&&&&&&&&&&&&&
%&&&&                 &&&&    &&&               &&&                    &&&    &&&&    &&&              &&&&                   &&&&&&&&&&&&&&&&&&
%&&&&   &&&&   &&&&    &&&   &&&&&&&&&&&   &&&&&&    &&&&    &&&    &&&     &&&    &&&&&&&&&&&&&&&&&&&&&&&&
%&&&&   &&&&   &&&&    &&&   &&&&&&&&&&&   &&&&&&    &&&&    &&&    &&&     &&&    &&&&&&&&&&&&&&&&&&&&&&&&
%&&&&                 &&&&    &&&   &&&&&&&&&&&   &&&&&&    &&&&    &&&            &&&&&             &&&&&&&&&&&&&&&&&&&&
%&&&&    &&&&&&&&&    &&&   &&&&&&&&&&&   &&&&&&    &&&&    &&&    &&&   &&&&    &&&&&&&&&&&&&&&&&&&&&&&
%&&&&    &&&&&&&&&    &&&   &&&&&&&&&&&   &&&&&&    &&&&    &&&    &&&&   &&&    &&&&&&&&&&&&&&&&&&&&&&&
%&&&&    &&&&&&&&&    &&&                &&&&&&   &&&&&&                   &&&    &&&&&   &&                  &&&&&&&&&&&&&&&&&&
%&&&&&&&&&&&&&&&&&&&&&&&&&&&&&&&&&&&&&&&&&&&&&&&&&&&&&&&&&&&&&&&&&&&&&&&&&&&&&&&&&&
%&&&&&&&&&&&&&&&&&&&&&&&&&&&&&&&&&&&&&&&&&&&&&&&&&&&&&&&&&&&&&&&&&&&&&&&&&&&&&&&&&&
\begin{figure}[H]
\begin{pgfpicture}{0cm}{0cm}{17cm}{12.5cm}
\pgfputat{\pgfxy(3.5,12)}{\pgfbox[center,center]{$E_0$-level}}
\pgfputat{\pgfxy(13,12)}{\pgfbox[center,center]{$E_1$-level}}
\pgfputat{\pgfxy(3.5,6)}{\pgfbox[center,center]{$E_2$-level}}
\pgfputat{\pgfxy(13,6)}{\pgfbox[center,center]{Quadrants}}
\begin{pgftranslate}{\pgfpoint{1.5cm}{7cm}}
\begin{pgfmagnify}{.75}{.75}
\begin{colormixin}{20!white}
\color{black}
\pgfmoveto{\pgfxy(3,3)}
\pgflineto{\pgfxy(6,3)}
\pgflineto{\pgfxy(6,6)}
\pgflineto{\pgfxy(3,6)}
\pgflineto{\pgfxy(3,3)}
\pgffill
\pgfmoveto{\pgfxy(3,3)}
\pgflineto{\pgfxy(3,0)}
\pgflineto{\pgfxy(0,0)}
\pgflineto{\pgfxy(0,3)}
\pgflineto{\pgfxy(3,3)}
\pgffill
\end{colormixin}
\pgfputat{\pgfxy(1,1)}{\pgfbox[center,center]{\large{$E_0^{-1,-1}$}}}
\pgfputat{\pgfxy(3,1)}{\pgfbox[center,center]{\large{$E_0^{0,-1}$}}}
\pgfputat{\pgfxy(5,1)}{\pgfbox[center,center]{\large{$E_0^{1,-1}$}}}
\pgfputat{\pgfxy(1,3)}{\pgfbox[center,center]{\large{$E_0^{-1,0}$}}}
\pgfputat{\pgfxy(3,3)}{\pgfbox[center,center]{\large{$E_0^{0,0}$}}}
\pgfputat{\pgfxy(5,3)}{\pgfbox[center,center]{\large{$E_0^{1,0}$}}}
\pgfputat{\pgfxy(1,5)}{\pgfbox[center,center]{\large{$E_0^{-1,1}$}}}
\pgfputat{\pgfxy(3,5)}{\pgfbox[center,center]{\large{$E_0^{0,1}$}}}
\pgfputat{\pgfxy(5,5)}{\pgfbox[center,center]{\large{$E_0^{1,1}$}}}
\pgfputat{\pgfxy(.5,2)}{\pgfbox[center,center]{\small{$d_0^{-1,-1}$}}}
\pgfputat{\pgfxy(.5,4)}{\pgfbox[center,center]{\small{$d_0^{-1,0}$}}}
\pgfputat{\pgfxy(2.5,2)}{\pgfbox[center,center]{\small{$d_0^{0,-1}$}}}
\pgfputat{\pgfxy(2.5,4)}{\pgfbox[center,center]{\small{$d_0^{0,0}$}}}
\pgfputat{\pgfxy(4.5,2)}{\pgfbox[center,center]{\small{$d_0^{1,-1}$}}}
\pgfputat{\pgfxy(4.5,4)}{\pgfbox[center,center]{\small{$d_0^{1,0}$}}}
\pgfsetendarrow{\pgfarrowlargepointed{3pt}}
\pgfxyline(1,.1)(1,.5)
\pgfxyline(3,.1)(3,.5)
\pgfxyline(5,.1)(5,.5)
\pgfxyline(1,1.6)(1,2.4)
\pgfxyline(3,1.6)(3,2.4)
\pgfxyline(5,1.6)(5,2.4)
\pgfxyline(1,3.6)(1,4.4)
\pgfxyline(3,3.6)(3,4.4)
\pgfxyline(5,3.6)(5,4.4)
\pgfxyline(1,5.5)(1,5.9)
\pgfxyline(3,5.5)(3,5.9)
\pgfxyline(5,5.5)(5,5.9)
\end{pgfmagnify}
\end{pgftranslate}
\begin{pgftranslate}{\pgfpoint{11cm}{7cm}}
\begin{pgfmagnify}{.75}{.75}
\begin{colormixin}{20!white}
\color{black}
\pgfmoveto{\pgfxy(3,3)}
\pgflineto{\pgfxy(6,3)}
\pgflineto{\pgfxy(6,6)}
\pgflineto{\pgfxy(3,6)}
\pgflineto{\pgfxy(3,3)}
\pgffill
\pgfmoveto{\pgfxy(3,3)}
\pgflineto{\pgfxy(3,0)}
\pgflineto{\pgfxy(0,0)}
\pgflineto{\pgfxy(0,3)}
\pgflineto{\pgfxy(3,3)}
\pgffill
\end{colormixin}
\pgfputat{\pgfxy(1,1)}{\pgfbox[center,center]{\large{$E_1^{-1,-1}$}}}
\pgfputat{\pgfxy(3,1)}{\pgfbox[center,center]{\large{$E_1^{0,-1}$}}}
\pgfputat{\pgfxy(5,1)}{\pgfbox[center,center]{\large{$E_1^{1,-1}$}}}
\pgfputat{\pgfxy(1,3)}{\pgfbox[center,center]{\large{$E_1^{-1,0}$}}}
\pgfputat{\pgfxy(3,3)}{\pgfbox[center,center]{\large{$E_1^{0,0}$}}}
\pgfputat{\pgfxy(5,3)}{\pgfbox[center,center]{\large{$E_1^{1,0}$}}}
\pgfputat{\pgfxy(1,5)}{\pgfbox[center,center]{\large{$E_1^{-1,1}$}}}
\pgfputat{\pgfxy(3,5)}{\pgfbox[center,center]{\large{$E_1^{0,1}$}}}
\pgfputat{\pgfxy(5,5)}{\pgfbox[center,center]{\large{$E_1^{1,1}$}}}
\pgfputat{\pgfxy(2.2,1.3)}{\pgfbox[center,center]{\small{$d_1^{-1,-1}$}}}
\pgfputat{\pgfxy(4.2,1.3)}{\pgfbox[center,center]{\small{$d_1^{0,-1}$}}}
\pgfputat{\pgfxy(2.2,3.3)}{\pgfbox[center,center]{\small{$d_1^{-1,0}$}}}
\pgfputat{\pgfxy(4.2,3.3)}{\pgfbox[center,center]{\small{$d_1^{0,0}$}}}
\pgfputat{\pgfxy(2.2,5.3)}{\pgfbox[center,center]{\small{$d_1^{-1,1}$}}}
\pgfputat{\pgfxy(4.2,5.3)}{\pgfbox[center,center]{\small{$d_1^{1,0}$}}}
\pgfsetendarrow{\pgfarrowlargepointed{3pt}}
\pgfxyline(0,1)(.4,1)
\pgfxyline(0,3)(.4,3)
\pgfxyline(0,5)(.4,5)
\pgfxyline(1.6,1)(2.4,1)
\pgfxyline(3.6,1)(4.4,1)
\pgfxyline(1.6,3)(2.4,3)
\pgfxyline(3.6,3)(4.4,3)
\pgfxyline(1.6,5)(2.4,5)
\pgfxyline(3.6,5)(4.4,5)
\pgfxyline(5.5,1)(5.9,1)
\pgfxyline(5.5,3)(5.9,3)
\pgfxyline(5.5,5)(5.9,5)
\end{pgfmagnify}
\end{pgftranslate}
\begin{pgftranslate}{\pgfpoint{1.75cm}{1cm}}
\begin{pgfmagnify}{.75}{.75}
\begin{colormixin}{20!white}
\color{black}
\pgfmoveto{\pgfxy(3,3)}
\pgflineto{\pgfxy(9,3)}
\pgflineto{\pgfxy(9,6)}
\pgflineto{\pgfxy(3,6)}
\pgflineto{\pgfxy(3,3)}
\pgffill
\pgfmoveto{\pgfxy(3,3)}
\pgflineto{\pgfxy(3,-1)}
\pgflineto{\pgfxy(-1,-1)}
\pgflineto{\pgfxy(-1,3)}
\pgflineto{\pgfxy(3,3)}
\pgffill
\end{colormixin}
\pgfputat{\pgfxy(1,1)}{\pgfbox[center,center]{\large{$E_2^{-1,-1}$}}}
\pgfputat{\pgfxy(3,1)}{\pgfbox[center,center]{\large{$E_2^{0,-1}$}}}
\pgfputat{\pgfxy(5,1)}{\pgfbox[center,center]{\large{$E_2^{1,-1}$}}}
\pgfputat{\pgfxy(1,3)}{\pgfbox[center,center]{\large{$E_2^{-1,0}$}}}
\pgfputat{\pgfxy(3,3)}{\pgfbox[center,center]{\large{$E_2^{0,0}$}}}
\pgfputat{\pgfxy(5,3)}{\pgfbox[center,center]{\large{$E_2^{1,0}$}}}
\pgfputat{\pgfxy(1,5)}{\pgfbox[center,center]{\large{$E_2^{-1,1}$}}}
\pgfputat{\pgfxy(3,5)}{\pgfbox[center,center]{\large{$E_2^{0,1}$}}}
\pgfputat{\pgfxy(5,5)}{\pgfbox[center,center]{\large{$E_2^{1,1}$}}}
\pgfputat{\pgfxy(3.8,4)}{\pgfbox[center,center]{\small{$d_2^{-1,1}$}}}
\pgfputat{\pgfxy(3.8,2)}{\pgfbox[center,center]{\small{$d_2^{-1,0}$}}}
\pgfputat{\pgfxy(3.8,0)}{\pgfbox[center,center]{\small{$d_2^{-1,-1}$}}}
\pgfputat{\pgfxy(5.8,4)}{\pgfbox[center,center]{\small{$d_2^{0,1}$}}}
\pgfputat{\pgfxy(5.8,2)}{\pgfbox[center,center]{\small{$d_2^{0,0}$}}}
\pgfputat{\pgfxy(5.8,0)}{\pgfbox[center,center]{\small{$d_2^{0,-1}$}}}
\pgfputat{\pgfxy(7.8,4)}{\pgfbox[center,center]{\small{$d_2^{1,1}$}}}
\pgfputat{\pgfxy(7.8,2)}{\pgfbox[center,center]{\small{$d_2^{1,0}$}}}
\pgfputat{\pgfxy(7.8,0)}{\pgfbox[center,center]{\small{$d_2^{1,-1}$}}}
\pgfsetendarrow{\pgfarrowlargepointed{3pt}}
\pgfxyline(1.3,4.8)(4.5,3.2)
\pgfxyline(1.3,2.8)(4.5,1.2)
\pgfxyline(1.3,.8)(4.5,-.8)
\pgfxyline(3.3,4.8)(6.5,3.2)
\pgfxyline(3.3,2.8)(6.5,1.2)
\pgfxyline(3.3,.8)(6.5,-.8)
\pgfxyline(5.3,4.8)(8.5,3.2)
\pgfxyline(5.3,2.8)(8.5,1.2)
\pgfxyline(5.3,.8)(8.5,-.8)
\pgfxyline(-.5,5.7)(.5,5.2)
\pgfxyline(-.5,3.7)(.5,3.2)
\pgfxyline(-.5,1.7)(.5,1.2)
\pgfxyline(1.5,5.7)(2.5,5.2)
\pgfxyline(1.5,3.7)(2.5,3.2)
\pgfxyline(1.5,1.7)(2.5,1.2)
\pgfxyline(3.5,5.7)(4.5,5.2)
\end{pgfmagnify}
\end{pgftranslate}
\begin{pgftranslate}{\pgfpoint{11cm}{1cm}}
\begin{pgfmagnify}{.75}{.75}
\begin{colormixin}{20!white}
\color{black}
\pgfmoveto{\pgfxy(3,3)}
\pgflineto{\pgfxy(6,3)}
\pgflineto{\pgfxy(6,6)}
\pgflineto{\pgfxy(3,6)}
\pgflineto{\pgfxy(3,3)}
\pgffill
\pgfmoveto{\pgfxy(3,3)}
\pgflineto{\pgfxy(3,0)}
\pgflineto{\pgfxy(0,0)}
\pgflineto{\pgfxy(0,3)}
\pgflineto{\pgfxy(3,3)}
\pgffill
\end{colormixin}
\pgfputat{\pgfxy(4.5,4.5)}{\pgfbox[center,center]{\huge{1}}}
\pgfputat{\pgfxy(1.5,4.5)}{\pgfbox[center,center]{\huge{2}}}
\pgfputat{\pgfxy(1.5,1.5)}{\pgfbox[center,center]{\huge{3}}}
\pgfputat{\pgfxy(4.5,1.5)}{\pgfbox[center,center]{\huge{4}}}
\end{pgfmagnify}
\end{pgftranslate}
\end{pgfpicture}
\caption{Spectral sequences.}
\label{figspectralsequences}
\end{figure}
%&&&&&&&&&&&&&&&&&&&&&&&&&&&&&&&&&&&&&&&&&&&&&&&&&&&&&&&&&&&&&&&&&&&&&&&&&&&&&&&&&&
%&&&&&&&&&&&&&&&&&&&&&&&&&&&&&&&&&&&&&&&&&&&&&&&&&&&&&&&&&&&&&&&&&&&&&&&&&&&&&&&&&&
%&&&                    &&&       &&&&&    &&&                &&&&&&&&&&&&&&&&&&&&&&&&&&&&&&&&&&&&&&&&&&&&&&&&&&&
%&&&    &&&&&&&&&    &    &&&&   &&&    &&&&    &&&&&&&&&&&&&&&&&&&&&&&&&&&&&&&&&&&&&&&&&&&&&&&&&& 
%&&&    &&&&&&&&&    &&    &&&   &&&    &&&&&    &&&&&&&&&&&&&&&&&&&&&&&&&&&&&&&&&&&&&&&&&&&&&&&&& 
%&&&                 &&&&    &&&    &&   &&&    &&&&&    &&&&&&&&&&&&&&&&&&&&&&&&&&&&&&&&&&&&&&&&&&&&&&&&& 
%&&&    &&&&&&&&&    &&&&    &   &&&    &&&&&    &&&&&&&&&&&&&&&&&&&&&&&&&&&&&&&&&&&&&&&&&&&&&&&&& 
%&&&    &&&&&&&&&    &&&&&       &&&    &&&&     &&&&&&&&&&&&&&&&&&&&&&&&&&&&&&&&&&&&&&&&&&&&&&&&& 
%&&&                    &&&    &&&&&&    &&&                  &&&&&&&&&&&&&&&&&&&&&&&&&&&&&&&&&&&&&&&&&&&&&&&&&&
%&&&&&&&&&&&&&&&&&&&&&&&&&&&&&&&&&&&&&&&&&&&&&&&&&&&&&&&&&&&&&&&&&&&&&&&&&&&&&&&&&&
%&&&&&&&&&&&&&&&&&&&&&&&&&&&&&&&&&&&&&&&&&&&&&&&&&&&&&&&&&&&&&&&&&&&&&&&&&&&&&&&&&&

A spectral sequence $\{E_r\}$ is said to {\bf collapse} at the $E_r$-level if the lattice $E_r$ has exactly one nonzero row or column.  In this case, $\{E_r\}$ stabilizes at the $E_r$-level, since $\tn{Ker}(d_r^{p,q})=E_r^{p,q}$ and $\tn{Im}(d_r^{p-r,q+r-1})=0$.   Collapse implies convergence, and the family $\{H^n\}$ to which a collapsing sequence converges may be easily obtained in the following way.  Assuming convergence for the moment, stability at the $E_r$-level implies that $E_r^{p,q}\cong E_{\infty}^{p,q}\cong F^pH^{p+q}/F^{p+1}H^{p+q}$.  The diagonal in the lattice given by setting $p+q=n$ intersects the nonzero row or column in exactly one term, so there exists at most one choice of $p$ and $q$ summing to $n$ such that $F^pH^n/F^{p+1}H^n$ is nonzero.  This means that the filtration has only two distinct levels, $p$ and $p+1$.  Since the filtration is decreasing, it follows that for this particular choice of $p$, $F^pH^n=H^n$ and $F^{p+1}H^n=0$.  Hence, $H^n$ is the unique term $E_r^{p+q}$ in the nonzero row or column with $p+q=n$.  

Whenever a convergent spectral sequence belongs to a particular quadrant or half-plane, there exist special morphisms involving its boundary terms, called edge morphisms.  In particular, if $\{E_r\}$ is a right half-plane spectral sequence, then the source of every differential landing on the vertical axis is zero, so each term of the form $E_{r+1}^{0,q}$ is the subobject $\tn{Ker}(d_r^{p,q})$ of the term $E_{r}^{0,q}$ at the previous level.  Hence, there exists a sequence\footnotemark\footnotetext{Possibly transfinite, if the sequence does not stabilize at a finite level.} of inclusions
\[E_\infty^{0,q}\subset...\subset E_{r+1}^{0,q}\subset E_{r}^{0,q}\subset E_{r-1}^{0,q}\subset...\]
Since $E_\infty^{p,q}\cong F^pH^{p+q}/F^{p+1}H^{p+q}=0$ by the definition of convergence whenever $p<0$, it follows that the filtration levels $...F^{0},F^{-1},F^{-2}$ are all equal, so $F^0H^q=H^q$.  Hence, $E_\infty^{0,q}\cong F^0H^q/F^1H^q=H^q/F^1H^q$ is a quotient of $H^q$ by the definition of convergence.  The compositions of the corresponding quotient morphism with appropriate sequences of the above inclusion maps define {\bf edge morphisms} $H^q\rightarrow E_{r}^{0,q}$:
\[H^q\rightarrow H^q/F^1H^q\cong E_\infty^{p,q}\subset...\subset  E_{r+1}^{0,q}\subset E_{r}^{0,q}.\]
In the special case where $\{E_r\}$ collapses at the $E_r$-level with nonzero column given by $p=0$, the edge morphisms are injections.  Indeed, in this case $F^0H^q=H^q$ and $F^1H^q=0$, so $H^q\cong E_\infty^{p,q}$, which maps by inclusion into each $E_{r}^{0,q}$.

%SSSSSSSSSSSSSSSSSSSSSSSSSSSSSSSSSSSSSSSSSSSSSSSSSSSSSSSSSSSSSSSSSSSSSSSSSSSSSSSSSS
%SSSSSSSSSSSSSSSSSSSSSSSSSSSSSSSSSSSSSSSSSSSSSSSSSSSSSSSSSSSSSSSSSSSSSSSSSSSSSSSSSS
%SSS                      SSS                       SSS                    SS                          SS           SSS                         SSS          SSSSS      SSSS
%SSS      SSSSSSSSS    SSSSSSSSSS     SSSSSSSSSSSS     SSSSSSS     SSSS    SSSSSS     SSS     S     SSSS     SSSSS
%SSS      SSSSSSSSS    SSSSSSSSSS     SSSSSSSSSSSS     SSSSSSS     SSSS    SSSSSS     SSS     SS     SSS     SSSSS
%SSS                      SSS                  SSSSS    SSSSSSSSSSSS     SSSSSSS     SSSS    SSSSSS     SSS     SSS     SS     SSSSS
%SSSSSSSSSS    SSS    SSSSSSSSSS    SSSSSSSSSSSS     SSSSSSS     SSSS    SSSSSS     SSS     SSSS     S     SSSSS
%SSSSSSSSSS    SSS    SSSSSSSSSS    SSSSSSSSSSSS     SSSSSSS     SSSS    SSSSSS     SSS     SSSSS          SSSSS
%SSS                       SSS                      SSS                       SSSSS     SSSSSS          SSS                         SSS     SSSSSS        SSSSS
%SSSSSSSSSSSSSSSSSSSSSSSSSSSSSSSSSSSSSSSSSSSSSSSSSSSSSSSSSSSSSSSSSSSSSSSSSSSSSSSSSS
%SSSSSSSSSSSSSSSSSSSSSSSSSSSSSSSSSSSSSSSSSSSSSSSSSSSSSSSSSSSSSSSSSSSSSSSSSSSSSSSSSS

{\bf Spectral Sequence of an Exact Couple.}\label{exactcouples} An {\bf exact couple} is an algebraic structure which gives rise to a spectral sequence under appropriate conditions.  Abstractly, an exact couple $C$ consists of a pair of objects $(D,E)$ in an abelian category $\mbf{A}$, together with a triple of morphisms $i,j,k$, fitting into the {\bf exact diagram} below, where exactness means that $\tn{Im}(i)=\tn{Ker}(j)$, $\tn{Im}(j)=\tn{Ker}(k)$, and $\tn{Im}(k)=\tn{Ker}(i)$.  Hence, $C$ stands for the quintuple $\{D,E,i,j,k\}$.  The composition $j\circ k:E\rightarrow E$ is denoted by $d$.   It is a {\bf differential} in the sense that $d\circ d=0$.   

%&&&&&&&&&&&&&&&&&&&&&&&&&&&&&&&&&&&&&&&&&&&&&&&&&&&&&&&&&&&&&&&&&&&&&&&&&&&&&&&&&&
%&&&&&&&&&&&&&&&&&&&&&&&&&&&&&&&&&&&&&&&&&&&&&&&&&&&&&&&&&&&&&&&&&&&&&&&&&&&&&&&&&&
%&&&&                 &&&&    &&&               &&&                    &&&    &&&&    &&&              &&&&                   &&&&&&&&&&&&&&&&&&
%&&&&   &&&&   &&&&    &&&   &&&&&&&&&&&   &&&&&&    &&&&    &&&    &&&     &&&    &&&&&&&&&&&&&&&&&&&&&&&&
%&&&&   &&&&   &&&&    &&&   &&&&&&&&&&&   &&&&&&    &&&&    &&&    &&&     &&&    &&&&&&&&&&&&&&&&&&&&&&&&
%&&&&                 &&&&    &&&   &&&&&&&&&&&   &&&&&&    &&&&    &&&            &&&&&             &&&&&&&&&&&&&&&&&&&&
%&&&&    &&&&&&&&&    &&&   &&&&&&&&&&&   &&&&&&    &&&&    &&&    &&&   &&&&    &&&&&&&&&&&&&&&&&&&&&&&
%&&&&    &&&&&&&&&    &&&   &&&&&&&&&&&   &&&&&&    &&&&    &&&    &&&&   &&&    &&&&&&&&&&&&&&&&&&&&&&&
%&&&&    &&&&&&&&&    &&&                &&&&&&   &&&&&&                   &&&    &&&&&   &&                  &&&&&&&&&&&&&&&&&&
%&&&&&&&&&&&&&&&&&&&&&&&&&&&&&&&&&&&&&&&&&&&&&&&&&&&&&&&&&&&&&&&&&&&&&&&&&&&&&&&&&&
%&&&&&&&&&&&&&&&&&&&&&&&&&&&&&&&&&&&&&&&&&&&&&&&&&&&&&&&&&&&&&&&&&&&&&&&&&&&&&&&&&&
\begin{pgfpicture}{0cm}{0cm}{17cm}{2.25cm}
\begin{pgftranslate}{\pgfpoint{5cm}{-.3cm}}
\pgfputat{\pgfxy(1.5,2)}{\pgfbox[center,center]{$D$}}
\pgfputat{\pgfxy(5,2)}{\pgfbox[center,center]{$D$}}
\pgfputat{\pgfxy(3.25,.5)}{\pgfbox[center,center]{$E$}}
\pgfputat{\pgfxy(3.25,2.25)}{\pgfbox[center,center]{$i$}}
\pgfputat{\pgfxy(2.1,1.1)}{\pgfbox[center,center]{$k$}}
\pgfputat{\pgfxy(4.4,1.1)}{\pgfbox[center,center]{$j$}}
\pgfsetendarrow{\pgfarrowlargepointed{3pt}}
\pgfxyline(1.8,2)(4.7,2)
\pgfxyline(3,.7)(1.7,1.7)
\pgfxyline(4.7,1.7)(3.5,.7)
\end{pgftranslate}
\end{pgfpicture}
%&&&&&&&&&&&&&&&&&&&&&&&&&&&&&&&&&&&&&&&&&&&&&&&&&&&&&&&&&&&&&&&&&&&&&&&&&&&&&&&&&&
%&&&&&&&&&&&&&&&&&&&&&&&&&&&&&&&&&&&&&&&&&&&&&&&&&&&&&&&&&&&&&&&&&&&&&&&&&&&&&&&&&&
%&&&                    &&&       &&&&&    &&&                &&&&&&&&&&&&&&&&&&&&&&&&&&&&&&&&&&&&&&&&&&&&&&&&&&&
%&&&    &&&&&&&&&    &    &&&&   &&&    &&&&    &&&&&&&&&&&&&&&&&&&&&&&&&&&&&&&&&&&&&&&&&&&&&&&&&& 
%&&&    &&&&&&&&&    &&    &&&   &&&    &&&&&    &&&&&&&&&&&&&&&&&&&&&&&&&&&&&&&&&&&&&&&&&&&&&&&&& 
%&&&                 &&&&    &&&    &&   &&&    &&&&&    &&&&&&&&&&&&&&&&&&&&&&&&&&&&&&&&&&&&&&&&&&&&&&&&& 
%&&&    &&&&&&&&&    &&&&    &   &&&    &&&&&    &&&&&&&&&&&&&&&&&&&&&&&&&&&&&&&&&&&&&&&&&&&&&&&&& 
%&&&    &&&&&&&&&    &&&&&       &&&    &&&&     &&&&&&&&&&&&&&&&&&&&&&&&&&&&&&&&&&&&&&&&&&&&&&&&& 
%&&&                    &&&    &&&&&&    &&&                  &&&&&&&&&&&&&&&&&&&&&&&&&&&&&&&&&&&&&&&&&&&&&&&&&&
%&&&&&&&&&&&&&&&&&&&&&&&&&&&&&&&&&&&&&&&&&&&&&&&&&&&&&&&&&&&&&&&&&&&&&&&&&&&&&&&&&&
%&&&&&&&&&&&&&&&&&&&&&&&&&&&&&&&&&&&&&&&&&&&&&&&&&&&&&&&&&&&&&&&&&&&&&&&&&&&&&&&&&&

In practice, $D$ and $E$ are usually bigraded objects, often bigraded modules over a commutative unital ring $R$.  The bigraded pieces of $D$ and $E$ may be denoted by $D^{p,q}$ and $E^{p,q}$.  Restricting the morphisms $i,j,k$, and $d$ to each bigraded piece of $D$ or $E$ produces morphisms $i^{p,q},j^{p,q},k^{p,q},$ and $d^{p,q}$.\footnotemark\footnotetext{Note that the differential $d^{p,q}$ on $E^{p,q}$ is the composition $j^{p+1,q}\circ k^{p,q}$, since $k$ follows $j$ in the exact diagram.}  The morphisms $i,j,k,$ and $d$ are usually defined so as to respect the bigrading on $D$ and $E$ up to an appropriate shift of bidegrees.  For instance, $i$ might map each bigraded piece $D^{p,q}$ of $D$ into $D^{p+m,q+n}$ for some fixed $m$ and $n$.  In this case, $i$ is said to have {\bf bidegree} $(m,n)$. Similar statements apply to $j$, $k$, and $d$.  Often a large number of the bigraded pieces $D^{p,q}$ and $E^{p,q}$ vanish, and this is helpful in proving results about the associated spectral sequence.  In particular, an exact couple $\{D,E,i,j,k\}$ of bigraded objects in $\mbf{A}$ is called {\bf bounded above} if for each integer $n$ there exists an integer $N$ such that $D^{p,q}=0$ whenever $p>N$.  

The family of exact couples over $\mbf{A}$ becomes a category when equipped with an appropriate family of morphisms.  A {\bf morphism} of exact couples $C=\{D,E,i,j,k\}$ and $C'=\{D',E',i',j',k'\}$ is a pair of morphisms $\phi_D:D\rightarrow D'$ and $\phi_E:E\rightarrow E'$ in $\mbf{A}$, that fit together with the morphisms $i,j,k$ and $i',j',k'$ to yield a commutative diagram of the form shown in the diagram below.  Exact couples are often constructed in terms of some topological or geometric object, and it is often important to show that such a construction has appropriate functorial properties.   A construction of this type is well-behaved if it is a functor from the appropriate topological or geometric category into the category of exact couples over $\mbf{A}$.   The exact couples associated to chains of inclusions of closed subspaces of a topological space in the next section is an example of such a functorial construction. 

%&&&&&&&&&&&&&&&&&&&&&&&&&&&&&&&&&&&&&&&&&&&&&&&&&&&&&&&&&&&&&&&&&&&&&&&&&&&&&&&&&&
%&&&&&&&&&&&&&&&&&&&&&&&&&&&&&&&&&&&&&&&&&&&&&&&&&&&&&&&&&&&&&&&&&&&&&&&&&&&&&&&&&&
%&&&&                 &&&&    &&&               &&&                    &&&    &&&&    &&&              &&&&                   &&&&&&&&&&&&&&&&&&
%&&&&   &&&&   &&&&    &&&   &&&&&&&&&&&   &&&&&&    &&&&    &&&    &&&     &&&    &&&&&&&&&&&&&&&&&&&&&&&&
%&&&&   &&&&   &&&&    &&&   &&&&&&&&&&&   &&&&&&    &&&&    &&&    &&&     &&&    &&&&&&&&&&&&&&&&&&&&&&&&
%&&&&                 &&&&    &&&   &&&&&&&&&&&   &&&&&&    &&&&    &&&            &&&&&             &&&&&&&&&&&&&&&&&&&&
%&&&&    &&&&&&&&&    &&&   &&&&&&&&&&&   &&&&&&    &&&&    &&&    &&&   &&&&    &&&&&&&&&&&&&&&&&&&&&&&
%&&&&    &&&&&&&&&    &&&   &&&&&&&&&&&   &&&&&&    &&&&    &&&    &&&&   &&&    &&&&&&&&&&&&&&&&&&&&&&&
%&&&&    &&&&&&&&&    &&&                &&&&&&   &&&&&&                   &&&    &&&&&   &&                  &&&&&&&&&&&&&&&&&&
%&&&&&&&&&&&&&&&&&&&&&&&&&&&&&&&&&&&&&&&&&&&&&&&&&&&&&&&&&&&&&&&&&&&&&&&&&&&&&&&&&&
%&&&&&&&&&&&&&&&&&&&&&&&&&&&&&&&&&&&&&&&&&&&&&&&&&&&&&&&&&&&&&&&&&&&&&&&&&&&&&&&&&&
\begin{pgfpicture}{0cm}{0cm}{17cm}{5cm}
\begin{pgftranslate}{\pgfpoint{4cm}{-.3cm}}
\pgfputat{\pgfxy(1.5,2)}{\pgfbox[center,center]{$D$}}
\pgfputat{\pgfxy(5,2)}{\pgfbox[center,center]{$D$}}
\pgfputat{\pgfxy(3.25,.5)}{\pgfbox[center,center]{$E$}}
\pgfputat{\pgfxy(3.25,2.25)}{\pgfbox[center,center]{$i$}}
\pgfputat{\pgfxy(2.1,1.1)}{\pgfbox[center,center]{$k$}}
\pgfputat{\pgfxy(4.4,1.1)}{\pgfbox[center,center]{$j$}}
\pgfxyline(1.8,2)(3.95,2)
\pgfsetendarrow{\pgfarrowlargepointed{3pt}}
\pgfxyline(4.15,2)(4.7,2)
\pgfxyline(3,.7)(1.7,1.7)
\pgfxyline(4.7,1.7)(3.5,.7)
\end{pgftranslate}
\begin{pgftranslate}{\pgfpoint{5.5cm}{2.5cm}}
\pgfputat{\pgfxy(1.5,2)}{\pgfbox[center,center]{$D'$}}
\pgfputat{\pgfxy(5,2)}{\pgfbox[center,center]{$D'$}}
\pgfputat{\pgfxy(3.25,.5)}{\pgfbox[center,center]{$E'$}}
\pgfputat{\pgfxy(3.25,2.25)}{\pgfbox[center,center]{$i'$}}
\pgfputat{\pgfxy(2.1,1.1)}{\pgfbox[center,center]{$k'$}}
\pgfputat{\pgfxy(3.7,1.3)}{\pgfbox[center,center]{$j'$}}
\pgfsetendarrow{\pgfarrowlargepointed{3pt}}
\pgfxyline(1.8,2)(4.7,2)
\pgfxyline(3,.7)(1.7,1.7)
\pgfxyline(4.6,1.7)(3.4,.7)
\end{pgftranslate}
\begin{pgftranslate}{\pgfpoint{1cm}{0cm}}
\pgfsetendarrow{\pgfarrowlargepointed{3pt}}
\pgfxyline(6.4,.5)(7.6,2.7)
\pgfxyline(4.65,2)(5.85,4.2)
\pgfxyline(8.15,2)(9.35,4.2)
\pgfputat{\pgfxy(4.95,3.3)}{\pgfbox[center,center]{$\phi_D$}}
\pgfputat{\pgfxy(6.95,2.2)}{\pgfbox[center,center]{$\phi_E$}}
\pgfputat{\pgfxy(9.2,3.3)}{\pgfbox[center,center]{$\phi_D$}}
\end{pgftranslate}
\end{pgfpicture}
%&&&&&&&&&&&&&&&&&&&&&&&&&&&&&&&&&&&&&&&&&&&&&&&&&&&&&&&&&&&&&&&&&&&&&&&&&&&&&&&&&&
%&&&&&&&&&&&&&&&&&&&&&&&&&&&&&&&&&&&&&&&&&&&&&&&&&&&&&&&&&&&&&&&&&&&&&&&&&&&&&&&&&&
%&&&                    &&&       &&&&&    &&&                &&&&&&&&&&&&&&&&&&&&&&&&&&&&&&&&&&&&&&&&&&&&&&&&&&&
%&&&    &&&&&&&&&    &    &&&&   &&&    &&&&    &&&&&&&&&&&&&&&&&&&&&&&&&&&&&&&&&&&&&&&&&&&&&&&&&& 
%&&&    &&&&&&&&&    &&    &&&   &&&    &&&&&    &&&&&&&&&&&&&&&&&&&&&&&&&&&&&&&&&&&&&&&&&&&&&&&&& 
%&&&                 &&&&    &&&    &&   &&&    &&&&&    &&&&&&&&&&&&&&&&&&&&&&&&&&&&&&&&&&&&&&&&&&&&&&&&& 
%&&&    &&&&&&&&&    &&&&    &   &&&    &&&&&    &&&&&&&&&&&&&&&&&&&&&&&&&&&&&&&&&&&&&&&&&&&&&&&&& 
%&&&    &&&&&&&&&    &&&&&       &&&    &&&&     &&&&&&&&&&&&&&&&&&&&&&&&&&&&&&&&&&&&&&&&&&&&&&&&& 
%&&&                    &&&    &&&&&&    &&&                  &&&&&&&&&&&&&&&&&&&&&&&&&&&&&&&&&&&&&&&&&&&&&&&&&&
%&&&&&&&&&&&&&&&&&&&&&&&&&&&&&&&&&&&&&&&&&&&&&&&&&&&&&&&&&&&&&&&&&&&&&&&&&&&&&&&&&&
%&&&&&&&&&&&&&&&&&&&&&&&&&&&&&&&&&&&&&&&&&&&&&&&&&&&&&&&&&&&&&&&&&&&&&&&&&&&&&&&&&&

An exact couple $\{D,E,i,j,k\}$ may be extended to a sequence of derived exact couples.  In cases where $D$ and $E$ are bigraded, and the morphisms $i,j,$ and $k$ have appropriate bidegrees, this sequence leads to a spectral sequence.\footnotemark\footnotetext{In particular, the bidegrees of $i,j,$ and $k$ determine whether the exact couple defines a spectral sequence of homological type or of cohomological type.  The choice of bidegrees $(-1,1)$, $(0,0)$, and $(1,0)$ below gives a spectral sequence of cohomological type.} The {\bf derived exact couple} $C':=\{D',E',i',j',k'\}$ of an exact couple $C:=\{D,E,i,j,k\}$ is a new exact couple, where the {\bf derived object} $D'$ is the image of the endomorphism $i$ on $D$, and the {\bf derived object} $E'$ is the cohomology object\footnotemark\footnotetext{Calling this a cohomology object is justified by the assumption that the bidegrees of $i,j,$ and $k$  are chosen so as to define a spectral sequence of cohomological type.} $\tn{Ker}(d)/\tn{Im}(d)$ of the differential $d$ on $E$.  The morphisms $i', j'$ and $k'$ are induced by $i,j,$ and $k$ in a straightforward way: $i'$ is the restriction of $i$ to $D$, $j'$ is the composition of $j$ and the quotient morphism $\tn{Ker}(d)\rightarrow E'$, and $k'$ sends a class in $E'$ to the image under $k$ of any representative.  The composition $j'\circ k'$ is the derived differential $d'$.  The exactness of $C$ ensures that $C'$ is well-defined and exact.  The process of taking derived couples may be iterated, producing a sequence $C=C_1,C'=C_2,...,C_r,...$ of derived exact couples. 

Each derived object $E_r$ in the sequence $E=E_1,E'=E_2,...,E_r,...$ is a quotient of a subobject of the previous derived object. It is useful to analyze the subobjects involved in terms of the maps $i,j,$ and $k$.  Define the {\bf cocycle object} $A_1$ to be the kernel of $d$ and the {\bf coboundary object} $B_1$ to be the image of $d$.\footnotemark\footnotetext{Usually, these objects are denoted by $Z_1$ and $B_1$, but I use $A_1$ for the cocycles, since $Z$ is already overused.}  Then $E_2$ is equal to the cohomology object $A_1/B_1$.   In terms of the maps $i,j$ and $k$, $A_1$ is the inverse image $k^{-1}(\tn{Im}(i))$ of the image of $i$, and $B_1$ is the image $j(\tn{Ker}(i))$ of the kernel of $i$.  In a similar way, define higher cocycle objects $A_r:=k^{-1}(\tn{Im}(i^{r-1}))$ and higher coboundary objects $B_r:=j(\tn{Ker}(i^{r-1}))$, where the superscript $r-1$ denotes the $(r-1)$-fold iteration of $i$.  It is easy to check that the $r$th derived object $E_r$ is isomorphic to the $r$th cohomology object $A_r/B_r$.  If the abelian category $\mbf{A}$ includes the appropriate intersections and unions, then a limiting cocycle object $A_\infty=\cap_rA_r$ and coboundary object $B_\infty=\cup_rB_r$ may be defined.  In this case, $E_\infty$ is defined to be the quotient $A_\infty/B_\infty$.  

Now let $C=\{D^{p,q},E^{p,q},i^{p,q},j^{p,q},k^{p,q}\}$ be an exact couple of bigraded objects in $\mbf{A}$, such that the morphisms $i_1,j_1,k_1$ have bidegrees $(-1,1)$, $(0,0)$, and $(1,0)$.  Note that these choices endow the differential $d_1$ with bidegree $(1,0)$. For the purposes of the next section, note that the existence of such an exact couple is equivalent to the existence of long exact sequences of the form 
\begin{equation}\label{equlesexactcouple}...\longrightarrow D^{p+1,q-1}\overset{i^{p+1,q-1}}{\longrightarrow} D^{p,q}\overset{j^{p,q}}{\longrightarrow} E^{p,q}\overset{k^{p,q}}{\longrightarrow} D^{p+1,q}\longrightarrow...\end{equation}
for each $p$.  It is a standard result (\cite{McClearySpectralSequences01} Theorem 2.8, page 39) that the exact couple $C_1$ yields a spectral sequence $\{E_r^{p,q},d_r^{p,q}\}$, where $E_r^{p,q}$ is the $(p,q)$th bigraded piece of the object $E_r$ appearing in the $r$th derived exact couple $C_r:=\{D_r^{p,q},E_r^{p,q},i_r^{p,q},j_r^{p,q},k_r^{p,q}\}$ of $C$, and $d_r^{p,q}$ is the restriction of the differential $d_r$ to $E_r^{p,q}$.  In particular, the object $E^{p,q}$ of the original exact couple $C$ is $E_1^{p,q}$, so the spectral sequence $\{E_r\}$ begins at the $E_1$-level.   The cocycle objects $A_r$ and coboundary objects $B_r$ may be decomposed into bigraded pieces $A_r^{p,q}=k^{-1}\big(\tn{Im}\big(i^{r-1}|_{D^{p+r,q-r+1}}\big)\big)$ and $B_r^{p,q}=j\big(\tn{Ker}\big(i^{r-1}|_{D^{p+r,q-r+1}}\big)\big)$ such that $E_r^{p,q}$ is isomorphic to the quotient $A_r^{p,q}/B_r^{p,q}$.  The spectral sequence $\{E_r\}$ has limiting or stable terms $E_\infty^{p,q}$ if and only if the limiting objects $A_\infty$ and $B_\infty$ are defined in $\mbf{A}$, in which case $E_\infty^{p,q}$ is isomorphic to the quotient $A_\infty^{p,q}/B_\infty^{p,q}$.  

Now consider the convergence properties of the spectral sequence $\{E_r\}$ of the exact couple $C$.   For each integer $n$, consider the sequence of morphisms
\[...\overset{i^{p+1,n-p-1}}{\longrightarrow}D^{p,n-p}\overset{i^{p,n-p}}{\longrightarrow}D^{p-1,n-p+1}\overset{i_{}^{p-1,n-p+1}}{\longrightarrow}...\]
This sequence forms a directed system in $\mbf{A}$.  Denote by $H^n$ the directed limit $\displaystyle\lim_{\longrightarrow}D^{p,n-p}$, provided this limit exists in $\mbf{A}$.  Denote by $F^pH^n$ the image of $D^{p,n-p}$ in $H^n$.   The inclusions $...\subset F^{p+1}H^n\subset F^pH^n\subset F^{p-1}H^n\subset...$ form an exhaustive decreasing filtration of $H^n$.   If $\{E_r\}$ has stable or limiting terms $E_\infty^{p,q}$, then there exists a natural inclusion of the quotient $F^pH^n/F^{p+1}H^n$ into $E_\infty^{p,n-p}$.  Assuming appropriate axioms on the category $\mbf{A}$, it is a standard result that this inclusion is an isomorphism if and only if the limiting cocycle object $A_\infty=\cap_rk^{-1}(\tn{Im}\big(i^{r-1})\big)$ is equal to $\tn{Ker}(k)=\tn{Im}(j)$.\footnotemark\footnotetext{See Weibel \cite{WeibelHomologicalAlgebra94} proposition 5.9.6 page 156 for a homological version of this result.}  The sense of this statement is that the images of the iterates of $i$ become sufficiently small as $r$ increases.   Now suppose that the exact couple $C$ is bounded above.  Then for each $p$ and $q$, there exists an $r$ such that $i^{r-1}(D^{p+r,q-r+1})=i^r(0)=0$; i.e., $A_r^{p,q}=k^{-1}(0)$.  It follows that if $C$ is bounded above, then the corresponding spectral sequence $\{E_r\}$ is bounded above and converges to $\{H^n\}=\displaystyle\lim_{\longrightarrow}D^{p,n-p}$.

\section{Local Cohomology; Cousin Complexes}\label{subsectionlocalcohomsheaves}

The Cousin complex of an abelian sheaf on a topological space, first described by Hartshorne and Grothedieck \cite{HartshorneResiduesDuality66}, is a very general construction that provides a flasque resolution in many important cases.  Cousin complexes are defined in terms of local cohomology.

{\bf Local Cohomology.} The local cohomology of an abelian sheaf $\ms{F}$ plays a central role in the uniqueness properties, as well as in the actual construction, of its Cousin complex.  Local cohomology organizes information about sections of $\ms{F}$ whose support is contained in various closed or locally closed subsets of $X$.  Several different versions of local cohomology may be defined.  One may consider a single closed subset of $X$, a nested pair of closed subsets, an entire family of closed subsets, pairs of families, and so on.  One may also consider groups of sections of $\ms{F}$ whose supports lie in a closed subset, induced sheaves whose groups of sections are defined by restricting $\ms{F}$ to open sets and taking supports in relatively closed subsets, and various combinations of these ideas.  These different versions lead to local cohomology groups, relative local cohomology groups, local cohomology sheaves, relative local cohomology sheaves, and so on, via the machinery of derived functors.  [HartRD] analyzes the various possibilities in great detail.  

The version of local cohomology necessary in the present context involves the local cohomology sheaves $H_{\varphi}^i(\ms{F})$, where $\varphi$ is a {\it sheaf of families of supports} in the sense of [HartRD] variation 5, page 222.  Rather than going into the details of this, it suffices at present to note that the subsets $Z^p$ appearing in the coniveau filtration induce sheaves of families of supports in a canonical way.  I will denote the corresponding  local cohomology sheaves by $H_{Z^p}^i(\ms{F})$.   Note in particular that sheaves of families of supports assign a family of supports to each open set $U$ of $X$, so expressions like $Z^p(U)$ make sense in this context. 

%SSSSSSSSSSSSSSSSSSSSSSSSSSSSSSSSSSSSSSSSSSSSSSSSSSSSSSSSSSSSSSSSSSSSSSSSSSSSSSSSSS
%SSSSSSSSSSSSSSSSSSSSSSSSSSSSSSSSSSSSSSSSSSSSSSSSSSSSSSSSSSSSSSSSSSSSSSSSSSSSSSSSSS
%SSS                      SSS                       SSS                    SS                          SS           SSS                         SSS          SSSSS      SSSS
%SSS      SSSSSSSSS    SSSSSSSSSS     SSSSSSSSSSSS     SSSSSSS     SSSS    SSSSSS     SSS     S     SSSS     SSSSS
%SSS      SSSSSSSSS    SSSSSSSSSS     SSSSSSSSSSSS     SSSSSSS     SSSS    SSSSSS     SSS     SS     SSS     SSSSS
%SSS                      SSS                  SSSSS    SSSSSSSSSSSS     SSSSSSS     SSSS    SSSSSS     SSS     SSS     SS     SSSSS
%SSSSSSSSSS    SSS    SSSSSSSSSS    SSSSSSSSSSSS     SSSSSSS     SSSS    SSSSSS     SSS     SSSS     S     SSSSS
%SSSSSSSSSS    SSS    SSSSSSSSSS    SSSSSSSSSSSS     SSSSSSS     SSSS    SSSSSS     SSS     SSSSS          SSSSS
%SSS                       SSS                      SSS                       SSSSS     SSSSSS          SSS                         SSS     SSSSSS        SSSSS
%SSSSSSSSSSSSSSSSSSSSSSSSSSSSSSSSSSSSSSSSSSSSSSSSSSSSSSSSSSSSSSSSSSSSSSSSSSSSSSSSSS
%SSSSSSSSSSSSSSSSSSSSSSSSSSSSSSSSSSSSSSSSSSSSSSSSSSSSSSSSSSSSSSSSSSSSSSSSSSSSSSSSSS

{\bf Cousin Complexes.}  Before stating the appropriate existence and uniqueness result for Cousin complexes, there are two additional technical concepts to introduce.  The first is a cohomological condition called {\it depth}.  The $Z^p$-depth of $\ms{F}$ is defined to be the the largest integer $n$, or $\infty$, such that $H_{Z^p}^i(\ms{F})=0$ for all $i<n$.  The second is the concept of a {\it sheaf lying on the $Z^p/Z^{p+1}$-skeleton of $X$}.  This may be defined in several equivalent ways, two of which are the following: one says that $\ms{F}$ lies on the $Z^p/Z^{p+1}$-skeleton of $X$ if it has supports in $Z^p$, has $Z^{p+1}$-depth at least $1$, and is flasque, or if $\ms{F}$ has supports in $Z^p$, and $Z^{p+1}$-depth at least $2$.  

The appropriate existence and uniqueness result for Cousin complexes is the following:

{\bf Proposition 1.} Let $\ms{F}$ be an abelian sheaf on $X$, and let $X=Z^0\supset Z^1\supset...\supset Z^d\supset\oslash$ be the coniveau filtration.  There exists an augmented complex $C^\bullet$, called the Cousin complex of $\ms{F}$, of the form 
\[\xymatrixcolsep{2.5pc}\xymatrix{0\ar[r]&\ms{F}\ar[r]^{\gamma^0}&C^0\ar[r]^-{\gamma^1}&C^1\ar[r]&...,}\]
satisfying the following properties:
\begin{enumerate}
\item For each $p\ge 0$, $C^p$ lies on the $Z^p/Z^{p+1}$-skeleton of $X$.
\item For each $p> 0$, $H^p(C^\bullet)$ has supports in $Z^{p+2}$.
\item The map $\ms{F}\rightarrow H^0(C^\bullet)$ has kernel with supports in $Z^1$, and cokernel with supports in $Z^2$. 
\end{enumerate}
Furthermore, $C^\bullet$ is unique up to unique isomorphism of complexes and depends functorially on $\ms{F}$. 

Note that in the above proposition, the $H^p$ for $p\ge 0$ are sheaves given by taking kernels modulo images in the sheaf complex $C^{\bullet}$; they are not derived functors of the global sections functor. 

%\end{appendix}

\setcounter{secnumdepth}{-2}
\chapter{Acknowledgements}\label{ChapterAcknowledgements}

I want to thank Jan Stienstra, Wilberd Van der Kallen, and Lars Hesselholt for their kind and thoughtful correspondence.  I also thank Jon Rosenberg, Charles Weibel, and Marco Schlichting for taking the initiative to make me aware of some interesting related work.  I thank my doctoral advisor, J. W. Hoffman, and his student Sen Yang, for working with me on related projects.  Special thanks go to Jimmie Lawson for agreeing to join my doctoral committee at the last moment, and for invaluable advice on a variety of mathematical subjects over the last few years, as well as James Madden, for support on algebraic foundations and the philosophy of mathematics.  I thank my family for their support and understanding.   Finally, I thank LSU's 460-pound mascot Mike the Tiger for his friendly greetings when stopping by his enclosure on my way to and from the math department. 
\chapter{References}

In the information age it is easy, and therefore not very helpful, to exhibit hundreds of papers on any given scientific subject not wholly obscure.  I have chosen instead to restrict attention to a minimal number of reliable and well-written references, subject to the requirement of conveying the proper priority of authorship. Only the names of the actual articles cited are in italics; journal names, books containing articles, etc., are in normal font.  Bold digits in typical journal citations indicate the {\it volume} of the journal cited; succeeding digits indicate the {\it number} or {\it issue,} the {\it page numbers} (preceded by ``pp.") or {\it article number,} and finally the {\it year published.} References are listed in the order in which they are {\it first cited} in the paper.  The chapter headings below indicate in which sections these initial citations appear.  I beg indulgence for any unintended slights due to my own ignorance. 

\normalsize

\begingroup
\renewcommand{\chapter}[2]{}%

\endgroup

%\vspace*{-1.5cm}
%\renewcommand{\refname}{}
%\begin{thebibliography}{100}
%\vspace*{.5cm}

%\end{thebibliography}

\end{singlespace}

\end{document}